\theoremstyle{plain}
\newtheorem{mainthm}{Theorem}
\newtheorem{mainprop}{Proposition}
\newtheorem{maincor}[mainprop]{Corollary}
\newtheorem{thm}{Theorem}[section]
\newtheorem{cor}[thm]{Corollary}
\newtheorem{lem}[thm]{Lemma}
\newtheorem{claim}[thm]{Claim}
\newtheorem{prop}[thm]{Proposition}
\newtheorem{defi}[thm]{Definition}
\theoremstyle{definition}
\newtheorem{exap}[thm]{Example}
\newtheorem{rem}[thm]{Remark}
\DeclarePairedDelimiter{\tnorm}{\vert\mkern-2mu\vert\mkern-2mu\vert}
{\vert\mkern-2mu\vert\mkern-2mu\vert}
\newcommand{\eqdef}{\stackrel{\scriptscriptstyle\rm def}{=}}
\DeclareMathOperator{\diam}{diam}
\let\D\relax
\DeclareMathOperator{\D}{{\mathrm{d}}}
\DeclareMathOperator{\Dp}{{\mathrm{d}}^+}
\DeclareMathOperator{\Dpm}{{\mathrm{d}}}
\DeclareMathOperator{\EE}{\mathrm{Lip}_{\mathbb{P}}}
\newcommand{\REM}[1]{}
\newcommand{\del}[1]{}
\newcommand{\CE}{C^1_{\smash{\mathbb{P}}}}
\newcommand{\DD}{\D_{C^{\smash[b]{1}}}}
\newcommand{\DE}[1]{\mathrm{Diff}^{#1}_{\mathbb{P}}}
\newcommand{\Dbeta}{\D^{\smash{ n,\beta}}_{\smash{\mathrm{L}}}}
\newcommand{\LP}{\mathrm{L}_{\mathbb{P}}(m)}
\newcommand{\LPC}{\mathrm{L}_{\smash{\mathbb{P}}}^{\mkern-5mu \raisebox{1.2pt}{\tiny \rm{cpt}}}(m)}
\newcommand{\R}{\mathbb{R}}
\newcommand{\N}{\mathbb{N}}
\newcommand{\exterior}[1]{\mathop{\mathpalette\exterior@{#1}}}
\newcommand{\exterior@}[2]{%
  \raisebox{\depth}{%
    \fontsize{\sf@size}{0}%
    \m@th
    $\ifx#1\displaystyle\textstyle\else#1\fi\bigwedge$%
  }%
  ^{\mspace{-2mu}#2}%
  \kern-\scriptspace
}
  \def\@biblabel#1{[#1]}%
  \def\@lbibitem[#1]#2{%
    \item[{[#1]}\hfill]%
    \if@filesw
      {\let\protect\noexpand
       \immediate\write\@auxout{\string\bibcite{#2}{#1}}}%
    \fi
    \ignorespaces
  }%
\begin{document}

~\vspace{-1cm}
\author{Pablo G. Barrientos \and Dominique Malicet}
\title{Mostly Contracting Random Maps}
\institute{%
Pablo G. Barrientos \at Instituto de Matem\'atica e Estat\'istica, UFF, Niter\'oi, Brazil \\
\email{pgbarrientos@id.uff.br}
\and
Dominique Malicet \at LAMA, Universit\'e Gustave Eiffel, Batiment Coppernic, 5 boulevard Descartes \\
\email{mdominique@crans.org}}

\pagenumbering{arabic}
\pagestyle{plain}

\maketitlebis 

\begin{abstractbis}
In this monograph, we study the long-term behavior of the iteration of a random map consisting of Lipschitz transformations on a compact metric space, independently and randomly selected according to a fixed probability measure.
Such a random map is said to be \emph{mostly contracting} if all Lyapunov exponents associated with stationary measures are negative. This requires introducing the notion of maximal Lyapunov exponent in this general context of Lipschitz transformations on compact metric spaces. We show that this class is open with respect to the appropriate topology and satisfies the strong law of large numbers for non-uniquely ergodic systems, the limit theorem for the law of random iterations, Palis' global conjecture, and that the associated annealed Koopman operator is quasi-compact. This implies many statistical properties such as central limit theorems, large deviations, statistical stability, and the continuity and H\"older continuity of Lyapunov exponents.

Examples from this class of random maps include random products of circle \(C^1\) diffeomorphisms, interval \(C^1\) diffeomorphisms onto their images, and \(C^1\) diffeomorphisms of a Cantor set on a line, all considered under the assumption of no common invariant measure. This class also includes projective actions of locally constant linear cocycles under the assumptions of simplicity of the top Lyapunov exponent and a suitable irreducibility condition. Finally, it encompasses the classical theory of finite-state Markov chains, viewed as random walks on a finite set.

One of the main tools to prove the above results is the generalization of \emph{Kingman's subadditive ergodic theorem} and the \emph{uniform Kingman's subadditive ergodic theorem} for general \emph{Markov operators}. Another key ingredient is the \emph{exponential local contraction theorem}, which can be viewed as a non-smooth metric analogue of the contraction mechanism behind stable manifold theory. These results are of independent interest, as they may have broad applications in other contexts.
\end{abstractbis}


\thispagestyle{empty}

\arxivtableofcontents

\vspace{0.4cm}


\chapter{Introduction}
\abstract{ {
This chapter introduces the framework of mostly contracting Lipschitz random
maps on compact metric spaces and gives an overview of the main results of the
monograph. We introduce maximal Lyapunov exponents through local Lipschitz
constants, and formulate the mostly contracting condition in terms of the
negativity of the Lyapunov exponents associated with stationary measures. We
then describe the principal consequences of this condition: exponential local
contraction, finiteness of stationary regimes, strong laws of large numbers,
Palis' conjecture on the finiteness of physical measures, quasi-compactness and
spectral gap properties of the annealed Koopman operator, central limit theorems, large
deviations, and statistical stability. The chapter also summarizes the main
classes of examples treated later, including locally constant linear cocycles
and random products of one-dimensional diffeomorphisms. In particular,  we establish the continuity, H\"older continuity, and several limit theorems for the top Lyapunov exponent of these classes of examples. } 
}

\section{{From linear cocycles to nonlinear random maps}}
This work builds upon the idea that many results for locally constant linear cocycles, driven by a Bernoulli shift, might also have nonlinear counterparts in the context of random walks of diffeomorphisms (on the circle). Furstenberg's seminal work indicated that the extreme Lyapunov exponents associated with random products of independent and identically distributed unimodular matrices are non-zero~\cite{Fur:63}. Analogous conclusions for random products of diffeomorphisms on smooth manifolds were first proved by Crauel~\cite{crauel:1990}. Subsequent advancements by Malicet~\cite{Mal:17}, Barrientos and Malicet~\cite{BM20}, and Malicet and Militon~\cite{MM23}
expanded these findings to include random products of circle homeomorphisms, conservative diffeomorphisms, and Cantor diffeomorphisms, respectively. 
In this {monograph}, we find and enhance the strong statistical properties of well-distributed orbits established by Le Page~\cite{page1982theoremes} for random products of matrices, extending these results to random walks on diffeomorphisms of the circle, interval diffeomorphisms onto their images, and diffeomorphisms of a Cantor set on a line, among other systems.
We also revisit (and improve) the results of Furstenberg and Kifer~\cite{furstenberg1983random},  Hennion~\cite{hennion1984loi} and Le Page~\cite{page1989regularite} on the continuity and H\"older continuity of Lyapunov exponents for linear cocycles, extending these results to random products of diffeomorphisms on the circle.

\section{Bernoulli random maps} \label{ss:random-map} 
Let $(X, d)$ be a compact metric space\index{metric structures!0ambient metric@\(d\), ambient metric on \(X\)} and denote the semigroup of Lipschitz transformations on $X$ by $\text{Lip}(X)$\index{space of functions!\(\operatorname{Lip}(X)\), Lipschitz transformations}. We focus on random walks on $\text{Lip}(X)$ defined by the compositions $g_n = f_{n-1} \circ \dots \circ f_0$, where each $f_k$ is independently and randomly selected according to the same probability measure on $\text{Lip}(X)$. In a slightly more abstract sense, let $(T , \mathscr{A}, p)$ be a probability space, and consider the Bernoulli product space\index{Bernoulli product space} $(\Omega , \mathscr{F}, \mathbb{P}) = (T^\mathbb{N}, \mathscr{A}^\mathbb{N}, p^\mathbb{N})$. We deal with measurable functions $f : \Omega \times X \to X$, referred to as (Bernoulli) \emph{random maps}\index{random maps!(Bernoulli) random map}, such that
$$
f_\omega \eqdef f(\omega, \cdot) = f_{\omega_0} \quad \text{for $\mathbb{P}$-a.e.~$\omega = (\omega_i)_{i \geq 0} \in \Omega$}.
$$
We introduce the following non-autonomous iterations:
$$
f^0_\omega = \mathrm{id} \quad \text{and} \quad f^n_\omega = f_{\omega_{n-1}} \circ \dots \circ f_{\omega_0} \quad \text{for} \ \ n > 0 \ \ \text{and $\mathbb{P}$-a.e.~$\omega = (\omega_i)_{i \geq 0} \in \Omega$}.
$$
To study these compositions, we introduce the following canonical skew-product\index{skew-product@canonical skew-product} 
\begin{equation*}\label{eq:skew}
F: \Omega \times X \to \Omega \times X, \quad F(\omega, x) = (\sigma \omega, f_{\omega}(x)),
\end{equation*}
where $\sigma: \Omega \to \Omega$ is the shift operator. A probability measure $\mu$ on $X$ is said to be a \emph{stationary measure} of $f$ (or $f$-stationary for short) if $\bar{\mu} = \mathbb{P} \times \mu$ is a  $F$-invariant measure. 
Similarly, we say that $\mu$ is  \emph{ergodic}\index{stationary measures!ergodicity} if $\bar{\mu}$ is ergodic.


We are interested in studying the $f$-stationary probability measures, which capture the typical behavior of the random walk. In the case where $X$ is a manifold, these measures naturally give rise to Lyapunov exponents,  which we will assume are negative. To accommodate the general scenario, we generalize the concept of the maximal Lyapunov exponent when $X$ is not a manifold.

\section{Maximal Lyapunov exponents for Lipschitz random maps} \label{sec:lyapunov-def}
A function {$g:X\to X$} is said to be \emph{Lipschitz}\index{Lipschitz!$g:X\to X$, Lipschitz function} if there is a constant $C\geq 0$ such that $d(g(x),g(y))\leq C d(x,y)$. The infimum of $C\geq 0$ such that the above inequality holds is called the \emph{Lipschitz constant}\index{Lipschitz!\(\operatorname{Lip}(g)\), (global) Lipschitz constant} and denoted by~$\mathrm{Lip}(g)$.
The \emph{local Lipschitz constant}\index{Lipschitz!B@\(Lg(x)\), (pointwise) local Lipschitz constant} of  $g$ at $z\in X$ is defined as
\begin{align*}
   Lg(z) \eqdef \inf_{r>0} L_rg(z) = \lim_{r\to 0^+} L_rg(z)
 \end{align*}
with
\begin{align*}
   L_rg(z)\eqdef \mathrm{Lip}(g|_{B(z,r)})=\sup\left\{   \frac{d(g(x),g(y))}{d(x,y)} \, : \ x,y\in B(z,r), \ x\not=y \right \} \index{Lipschitz!A@\(L_rg(x)\), flat local Lipschitz constant}
\end{align*}
where $B(z,r)$ denotes the open ball centered at $z$ and of radius $r$.
We have that $\mathrm{Lip}(g)$ is greater than or equal to the supremum of $Lg(z)$ over all $z\in X$. As we prove in Lemma~\ref{lem1}, the function $Lg: X \to [0,\infty)$ is upper semicontinuous and
satisfies the chain rule inequality
\[L(g \circ h)(z) \leq Lg(h(z))\cdot Lh(z).\] \index{Lipschitz!chain rule for local Lipschitz constants}
If $g$ is a diffeomorphism of a Riemannian manifold, then $Lg(z) = \|Dg(z)\|$.

Let \(f:\Omega\times X \to X\) be a random map as introduced earlier. We say that $f$ is a \emph{continuous} (resp.~\emph{$C^r$ differentiable}) \emph{random map}\index{random maps!continuous random map} if $f_\omega:X\to X$ is a continuous (resp.~$C^r$ differentiable)\index{random maps!PP@$C^r$ differentiable random map} map for $\mathbb{P}$-a.e.~$\omega\in\Omega$. Similarly, $f$ is said to be a \emph{Lipschitz random map}\index{random maps!Lipschitz random map} if $\mathrm{Lip}(f_\omega)<\infty$ for $\mathbb{P}$-a.e.~$\omega\in \Omega$. 
For each \(\omega \in \Omega\) and \(x \in X\), define \(L_n(\omega,x) = Lf_\omega^n(x)\). By the chain rule, we derive the  inequality
\begin{equation} \label{eq:subaditive1}
\log L_{n+m} \leq \log L_n\circ F^m + \log L_m
\end{equation}
where $F$ is the canonical skew-product associated with $f$. 

Let $\bar{\mu}$ be an $F$-invariant probability measure on $\Omega\times X$ with first marginal~$\mathbb{P}$.  
In view of~\eqref{eq:subaditive1}, under the standard \(\bar{\mu}\)-integrability condition of \(\log^+ L_1\), by Kingman's subadditive ergodic theorem~\cite[Thm.~I.1]{ruelle1979ergodic}, there exists an $F$-invariant measurable function $\lambda: \Omega\times X \to \mathbb{R}\cup \{-\infty\}$, called the \emph{maximal pointwise Lyapunov exponent}\index{maximal Lyapunov exponents!$\lambda(\omega,x)$, pointwise exponent} such that $\lambda^+\eqdef \max\{0,\lambda\}$ is $\bar{\mu}$-integrable and
\[
     \lambda(\omega,x)=\lim_{n\to\infty} \frac{1}{n} \log Lf^n_\omega(x) \quad \text{$\bar{\mu}$-a.e.~$(\omega,x)\in \Omega\times X$}.
\]
Moreover, the \emph{maximal Lyapunov exponent}\index{maximal Lyapunov exponents!$\lambda(\mu)$, exponent of $\bar{\mu}=\mathbb{P}\times \mu$}  of \(\mu\) is given by
\begin{align*}
   \lambda(\bar{\mu}) &\eqdef \inf_{n \ge 1} \frac{1}{n} \int \log Lf^n_\omega(x)\, d\bar{\mu} \\ &= \lim_{n\to \infty} \frac{1}{n} \int \log Lf^n_\omega(x) \, \, d\bar{\mu} = \int \lambda(\omega,x) \, d\bar{\mu} \in [-\infty,\infty).
\end{align*}
To ensure the subadditivity property for any 
probability measure with first marginal $\mathbb{P}$, we introduce the more encompassing uniform integrability condition:
\begin{equation} \label{eq:integrability}
   \int \log^+ \mathrm{Lip}(f_\omega) \, d\mathbb{P} <\infty. \index{integrability conditions!uniform log-Lipschitz integrability}
\end{equation}
If $X$ is a Riemannian manifold,
$\lambda(\bar{\mu})$ coincides with the maximal average Lyapunov exponent from Oseledets' theorem for the linear cocycle over the skew-product $F$.


%
%


\section{Mostly contracting random maps} \label{ss:mostlycontracting}

For notational simplicity, given an $f$-stationary measure $\mu$, we write $\lambda(\mu)=\lambda(\mathbb{P}\times \mu)$. 
\begin{defi} \label{def:mostly-contracting} We say that a Lipschitz random map $f:\Omega\times X \to X$ is \emph{mostly contracting}\index{random maps!mostly contracting} if the integrability condition~\eqref{eq:integrability} holds and 
\[
\lambda(f)  \eqdef \sup\left\{\lambda(\mu) : \  \mu \ \text{is $f$-stationary} \right\}<0. \index{maximal Lyapunov exponents!lambda f@\(\lambda(f)\), exponent of a random map}
\]
\end{defi}

If the space $X$ is finite (which corresponds to the case of a Markov chain with finite states), then $f$ is locally constant, so $Lf_\omega(x)=0$ and $f$ is trivially mostly contracting. Another basic family of examples is given {by contracting random Lipschitz maps, or more generally by} contraction in mean, such as $\mathbb{E}[\mathrm{Lip}(f)]<1$ or $\mathbb{E}[\log \mathrm{Lip}(f)]<0$. {These examples include the classical setting of contracting iterated function systems.} We also see that, in general, random products of matrices give rise to mostly contracting systems (see~\S\ref{ss:locally-constant}), and the same holds for random maps of one-dimensional diffeomorphisms (see~\S\ref{ss:one-dimensional}).

One of the main objectives of this  {monograph} is to develop a unified framework showing that, under mild assumptions, the Koopman operator associated with a mostly contracting system has a spectral gap (Theorem~\ref{thmA}). This spectral property allows us to deduce a broad range of statistical consequences, such as limit theorems, large deviations,  central limit laws and regularity of Lyapunov exponents (\S\ref{ss:limit laws}--\ref{ss:large-deviation},~\S\ref{ss:CLT-linear}--\ref{ss:holde-continuty-intro} and~\S\ref{ss:CLT-one-dimensional}--\ref{ss:holder-circle}). While each of these cases has been studied separately in the literature --for Markov chains with finitely many states~\cite{HL:12}, for systems with contraction in mean~\cite{peigne1993iterated,pollicott2001contraction,HH:01}, and for random matrix products~\cite{page1982theoremes,page1989regularite,BouLac:85,benoist2016random} among others-- our results provide a common perspective that both recovers and extends these classical settings. In particular, we obtain new results on random products of matrices in non-irreducible settings and on random maps of one-dimensional diffeomorphisms, which, to our knowledge, have not been treated previously. {More generally, the metric-space nature of the theory is not merely technical. 
For instance, in recent work~\cite{BMNNT}, we extend the spectral properties established for the Koopman operator to the setting of mostly contracting random maps \(g\) on a compact metric space \(X\) driven by a Markov chain on a finite state space \(T\). This is done by encoding \(g\) as a Bernoulli random map \(f\) on the enlarged phase space \(T\times X\), which allows one to apply Theorem~\ref{thmA} to \(f\).}

One of the main steps to obtain our results is to generalize the \emph{Kingman's subadditive ergodic theorem} and the \emph{uniform Kingman's subadditive ergodic theorem} for Markov operators. See Theorems~\ref{Kingmanuniform} and \ref{Kingman}. These generalizations may have an independent interest for the study of Markov processes.
From these results, we derive in Theorem~\ref{prop:equivalence} that $\lambda(f)$ coincides with the maximum of $\lambda(\mu)$ over the ergodic $f$-stationary measures~$\mu$. We also introduce the 
\emph{maximal annealed Lyapunov exponent at $x$}\index{maximal Lyapunov exponents!lambda x@\(\lambda(x)\), pointwise annealed exponent} as
$$
\lambda(x)\eqdef
\limsup_{n\to\infty} \
\frac{1}{n} \int \log Lf^n_\omega(x) \, d\mathbb{P}.
$$
and prove that,
{for every \(f\)-stationary probability measure \(\mu\), this limit exists for \(\mu\)-a.e.~\(x\in X\) and coincides almost surely with the pointwise Lyapunov exponent, that~is,
\[
\lambda(\omega,x)=\lambda(x)
\qquad\text{for }(\mathbb P\times \mu)\text{-a.e.~}(\omega,x)\in \Omega\times X.
\]
In the literature, it is standard that the pointwise Lyapunov exponent \(\lambda(\omega,x)\) does not depend on $\omega$ almost surely~\cite [Theorem~3.2]{liu1995smooth}. However, the conclusion here is stronger: we identify the pointwise Lyapunov exponent \(\lambda(\omega,x)\) with the deterministic function $\lambda(x)$ obtained from the annealed Lyapunov exponent. Moreover, 
}
$$
 \lambda(\mu)=\int \lambda(x)\, d\mu \quad \text{and}  \quad
\lambda(f)
=\sup \left\{ \lambda(x):  x \in X \right\}.
$$
We also obtain that
$$
\lambda(f) =   
\inf_{n\geq 1}
\max_{x\in X} \frac{1}{n} \int \log Lf^n_\omega(x) \, d\mathbb{P} = \lim_{n\to\infty} \max_{x\in X} \frac{1}{n} \int \log Lf^n_\omega(x) \, d\mathbb{P}.
$$
By these relations, mostly contracting is equivalent to any of the following conditions:
\begin{enumerate}[label=(\roman*), leftmargin=0.9cm]
  \item  $\lambda(\mu)<0$ for every ergodic $f$-stationary measure,   \item $\lambda(x)<0$ for all $x\in X$,
  \item there are $a>0$, $n\in \mathbb{N}$ such that $\int \log Lf^n_\omega(x) \, d\mathbb{P} < -a$ for any $x\in X$.
\end{enumerate}

The term ``mostly contracting'' was originally introduced for partially hyperbolic diffeomorphisms, essentially implying that all Lyapunov exponents along the invariant center bundle are negative, mimicking item (ii) above. This concept was first proposed by Bonatti and Viana~\cite{BV:00} as a somewhat technical condition that guarantees the existence and finiteness of physical measures. A related notion of mostly contracting following item (iii) was posed by Dolgopyat~\cite{Dol:00} to study the mixing properties of such systems and further derive several limit theorems within a similar context~\cite{Dol:13}. Later, Andersson~\cite{And:10} established that the mostly contracting property, as previously introduced in the literature, is equivalent to the property that every (ergodic) Gibbs u-state has negative central Lyapunov exponents, corresponding to item (i). Additionally, in~\cite{And:10}, it was shown that these mostly contracting partially hyperbolic diffeomorphisms form an open set in the space of $C^{1+\epsilon}$ diffeomorphisms and exhibit stability of the physical measures under small perturbations of the map in question. This result was subsequently generalized to the $C^1$ topology by Yang~\cite{Yang:21}. For a comprehensive collection of examples of mostly contracting partially hyperbolic diffeomorphisms, refer to~\cite{ViaYan:13,DolYanVia:16}. 

We show that most of the results obtained for mostly contracting partially hyperbolic diffeomorphisms have analogues in the context of Lipschitz random maps. In particular, we establish the openness of the set of mostly contracting random maps with respect to an appropriate topology in the space 
of Lipschitz random maps. This is not immediately evident from Definition~\ref{def:mostly-contracting}. However, due to the equivalent condition (iii) above, we deduce this property in Proposition~\ref{prop:open-mostly}, emphasizing that for $C^1$ differentiable random maps of a compact Riemannian manifold, the considered topology is coarser than the $C^1$-topology induced by the $C^1$-distance along the fiber maps. We also prove the finiteness of the number of physical measures (see item~(b) in Corollary~\ref{rem:measure-mean-quasicompactness}), central limit theorems~\S\ref{ss:limit laws}, large deviations~\S\ref{ss:large-deviation}, statistical stability properties~\S\ref{ss:statistical-stability}, and remarkable examples of mostly contracting random maps~\S\ref{ss:locally-constant} and~\S\ref{ss:one-dimensional} among other consequences of our main results.

\section{Local contraction} \label{ss:local}
Assuming that the maximal Lyapunov exponent is negative, one can anticipate an exponential local contraction from the Pesin theory on stable manifolds for random dynamical systems, as worked out in~\cite{kifer2012ergodic,liu1995smooth}. However, within our Lipschitz framework, the direct application of Pesin's theory on stable manifolds is not feasible. In Theorem~\ref{contraction}, we aim to establish a non-smooth Pesin's contraction result for general skew-products, asserting that the principle of exponential contraction persists, provided that the maximal Lyapunov exponent is negative.  As a consequence, in the setting of random maps, we establish the \emph{local contraction property},   that is,  for every $x\in X$, for $\mathbb{P}$-a.e.~$\omega\in \Omega$, there exists a neighborhood $B=B(\omega)$ of $x$ such that $\diam f^n_\omega(B) \to 0$ as $n\to\infty$. Here $\diam A=\sup \{d(x,y): x,y\in A\}$\index{metric structures!1diameter@\(\diam\), diameter of subsets of \(X\)} denotes the diameter of a set $A$.
In fact, we  show the following exponential contraction result:

\begin{mainthm} \label{cor:local-contraction}
Let $(X,d)$ and $(\Omega,\mathscr{F},\mathbb{P})$ be a compact metric space and a Bernoulli product probability space, respectively, and consider a mostly contracting random map $f:\Omega\times X\to X$.  Then there exists $q<1$ such that for every $x\in X$, for $\mathbb{P}$-a.e.~$\omega\in \Omega$, there exist a neighborhood $B$ of $x$ and a constant $C>0$ such that 
$$
\diam f^n_\omega(B) \leq Cq^n \quad  \text{for all  $n\geq 1$}.
$$
\index{local contraction! exponential local contraction property}
\end{mainthm}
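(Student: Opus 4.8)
The plan is to run a Pesin-type contraction argument \emph{at a fixed positive scale}, so that no differentiability of the fibre maps enters; this is, in essence, the specialization to random maps of the skew-product statement Theorem~\ref{contraction}. First I would establish a uniform one-step estimate at a fixed scale. Since $f$ is mostly contracting, the equivalent condition~(iii) above (see Theorem~\ref{prop:equivalence}) provides $n_0\in\mathbb{N}$ and $a>0$ with $\int \log Lf^{n_0}_\omega(x)\,d\mathbb{P}(\omega)<-2a$ for every $x\in X$; this integral may be $-\infty$ but is dominated above by $\log^+\mathrm{Lip}(f^{n_0}_\omega)$, which lies in $L^1(\mathbb{P})$ by~\eqref{eq:integrability} and the chain rule. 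Fix $x\in X$. As $r\downarrow0$ the quantity $L_r f^{n_0}_\omega(x)$ decreases to $Lf^{n_0}_\omega(x)$, so by monotone convergence there are $M_x<\infty$ and $r_x>0$ with $\int\max\{\log L_{r_x}f^{n_0}_\omega(x),-M_x\}\,d\mathbb{P}(\omega)\le -a$. The balls $\{B(x,r_x/2):x\in X\}$ cover the compact space $X$; extract a finite subcover with centres $x_1,\dots,x_m$ and put $\varepsilon=\tfrac12\min_i r_{x_i}>0$, $M=\max_i M_{x_i}$, and
\[
\psi(\omega,y)\eqdef\max\{\log L_\varepsilon f^{n_0}_\omega(y),-M\},\qquad h(y)\eqdef\int\psi(\omega,y)\,d\mathbb{P}(\omega).
\]
If $y\in B(x_i,r_{x_i}/2)$ then $B(y,\varepsilon)\subseteq B(x_i,r_{x_i})$, hence $L_\varepsilon f^{n_0}_\omega(y)\le L_{r_{x_i}}f^{n_0}_\omega(x_i)$ and therefore
\begin{equation}\label{eq:plan-ue}
h(y)\le -a\qquad\text{for all }y\in X,
\end{equation}
while $|\psi(\cdot,y)|\le G\eqdef M+\log^+\mathrm{Lip}(f^{n_0}_\cdot)\in L^1(\mathbb{P})$. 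Only the monotonicity of $L_r$ in $r$ is used here; no semicontinuity of $y\mapsto L_\varepsilon f^{n_0}_\omega(y)$ is needed (joint measurability, a countable supremum of jointly measurable functions, is routine).

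Next I would get exponential contraction along the $n_0$-orbit of $x$. Keep $x$ fixed and set, for $\omega\in\Omega$, $y_k=f^{kn_0}_\omega(x)$ and $a_k(\omega)=\psi(\sigma^{kn_0}\omega,y_k)$. The map $f^{n_0}_{\sigma^{kn_0}\omega}$ depends only on the coordinates $\omega_{kn_0},\dots,\omega_{(k+1)n_0-1}$, which are independent of $\mathscr{F}_k\eqdef\sigma(\omega_0,\dots,\omega_{kn_0-1})$, while $y_k$ is $\mathscr{F}_k$-measurable; hence $\mathbb{E}[a_k\mid\mathscr{F}_k]=h(y_k)$, so $Z_k\eqdef a_k-h(y_k)$ is a martingale difference sequence with $|Z_k|\le G(\sigma^{kn_0}\omega)+\|G\|_{L^1(\mathbb{P})}$, dominated by i.i.d.\ integrable variables. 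A standard truncation (Borel--Cantelli for $\{|Z_k|>k\}$, Kolmogorov's convergence theorem for the $L^2$-truncated martingale, the residual conditional means being deterministic and $o(1)$ since $G\circ\sigma^{kn_0}$ is independent of $\mathscr{F}_k$) gives $\frac1N\sum_{k<N}Z_k\to0$ $\mathbb{P}$-a.s.; combined with~\eqref{eq:plan-ue} this yields, for $\mathbb{P}$-a.e.\ $\omega$, a finite $C_1=C_1(\omega,x)$ with
\begin{equation}\label{eq:plan-psum}
\sum_{j=0}^{k-1}a_j(\omega)\le C_1-\tfrac a2\,k\qquad\text{for all }k\ge1.
\end{equation}
(Alternatively this step follows from the generalized Kingman theorem for Markov operators, Theorem~\ref{Kingman}, applied to $x\mapsto\int \delta_{f_\omega(x)}\,d\mathbb{P}$.)

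The last step is a bootstrap and interpolation. Fix such an $\omega$, and also require $k^{-1}\sum_{i=kn_0}^{(k+1)n_0-1}\log^+\mathrm{Lip}(f_{\omega_i})\to0$ (true $\mathbb{P}$-a.s., the block sums being i.i.d.\ and integrable by~\eqref{eq:integrability}). Put $\rho_0=\min\{\varepsilon,\varepsilon e^{-C_1}\}$ and $\rho_{k+1}=L_{\rho_k}f^{n_0}_{\sigma^{kn_0}\omega}(y_k)\cdot\rho_k$. From $g(B(z,r))\subseteq B(g(z),L_rg(z)\,r)$ one checks inductively that $f^{kn_0}_\omega(B(x,\rho_0))\subseteq B(y_k,\rho_k)$ and that $\rho_k\le\varepsilon$ for all $k$: if $\rho_0,\dots,\rho_{k-1}\le\varepsilon$ then $L_{\rho_j}f^{n_0}_{\sigma^{jn_0}\omega}(y_j)\le L_\varepsilon f^{n_0}_{\sigma^{jn_0}\omega}(y_j)\le e^{a_j(\omega)}$ for $j<k$, so by~\eqref{eq:plan-psum}
\[
\rho_k=\rho_0\prod_{j<k}L_{\rho_j}f^{n_0}_{\sigma^{jn_0}\omega}(y_j)\le\rho_0\,e^{\sum_{j<k}a_j(\omega)}\le\rho_0 e^{C_1}e^{-ak/2}\le\varepsilon.
\]
Thus $\diam f^{kn_0}_\omega(B(x,\rho_0))\le 2\rho_k\le 2\rho_0 e^{C_1}e^{-ak/2}$, and for a general $n=kn_0+s$ with $0\le s<n_0$, bounding $\mathrm{Lip}(f^s_{\sigma^{kn_0}\omega})\le\exp\big(\sum_{i=kn_0}^{(k+1)n_0-1}\log^+\mathrm{Lip}(f_{\omega_i})\big)$ and using sub-exponentiality of the block sums gives a finite $C=C(\omega,x)$ with $\diam f^n_\omega(B(x,\rho_0))\le C\,q^n$ for all $n\ge1$, where $q=e^{-a/(4n_0)}<1$. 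Since $q$ depends only on $a$ and $n_0$ — hence on neither $x$ nor $\omega$ — this is the assertion, with $B=B(x,\rho_0)$.

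The main obstacle is the first step: the $Lf^{n_0}_\omega$ are infinitesimal quantities, so the mostly contracting hypothesis, which concerns them, gives no \emph{a priori} control on the iterates of a ball of positive radius; the compactness argument is exactly what promotes it to the uniform estimate~\eqref{eq:plan-ue} at the fixed scale $\varepsilon$, after which the Pesin-type induction of the last step runs with no further smoothness or semicontinuity input. The secondary technical points — joint measurability of $(\omega,y)\mapsto L_\varepsilon f^{n_0}_\omega(y)$ and the precise martingale strong law used above — are routine and should merely be recorded.
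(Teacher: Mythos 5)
Your proof is correct and takes a genuinely different route from the paper's. The paper first proves a Pesin-type contraction theorem for general skew-products (Theorem~\ref{contraction}) by establishing the almost-everywhere coincidence $\lambda^{\flat}=\lambda$ on $\{\lambda<0\}$ for every $F$-invariant measure $\bar\mu$ satisfying~\eqref{integrability}; this yields the inequality $\lambda_{con}(\bar\mu)\le\lambda(\bar\mu)$ between the contraction exponent and the Lyapunov exponent, after which it invokes the external~\cite[Prop.~3.18]{Mal:17} --- which bounds the pointwise contraction exponent $\lambda_{con}(\omega,x_0)$ by the contraction exponents of the stationary measures accumulated by the empirical measures at $(\omega,x_0)$ --- to pass from the $\bar\mu$-a.e.\ statement to the ``for every $x$, $\mathbb P$-a.e.\ $\omega$'' statement, concluding with $q=e^{\lambda(f)}$. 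Your argument bypasses the invariant-measure machinery entirely: it upgrades item~(iii) of Corollary~\ref{cor:equivalencia} to a uniform fixed-scale, truncated estimate via monotone convergence and compactness (mirroring the argument the paper itself uses inside the proof of Theorem~\ref{prop:open-mostly}), then exploits the i.i.d.\ block structure directly through a martingale strong law of large numbers (the Bernoulli hypothesis replacing Birkhoff plus ergodic decomposition), and closes the induction at the fixed scale~$\varepsilon$. What the paper's route buys is the general skew-product Theorem~\ref{contraction}, which the authors use again to prove Proposition~\ref{prop:Lyapunov-contraction} and which they flag as of independent interest, together with the sharp rate $q=e^{\lambda(f)}$; what yours buys is self-containment (no import from~\cite{Mal:17}, no $\lambda^{\flat}$ or $\lambda_{con}$ bookkeeping) and a direct ``for every $x$'' conclusion without the transfer lemma. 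Your $q=e^{-a/(4n_0)}$ is a priori weaker than $e^{\lambda(f)}$, but since $\lambda_{\max}=\lambda(f)$ one can take $n_0$ large with $-a$ close to $-n_0\lambda(f)$, and by tightening the factors in the interpolation step recover any rate arbitrarily close to the optimal one.
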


The above theorem brings several interesting consequences from~\cite[Sec.~4]{Mal:17}, some of which we summarize {below.  For convenience, we first introduce some terminology used in the next statement.

\begin{defi}\label{def:topological-notions-random}
Let \(f:\Omega\times X\to X\) be a random map. A closed set \(Y\subset X\) is said to be
\emph{\(f^k\)-invariant}\index{invariant sets} if
$f_\omega^k(Y)\subset Y$ for \(\mathbb{P}\)-a.e.~\(\omega\in\Omega\). 
We say that $f$ is
\begin{enumerate}[label=(\roman*), leftmargin=0.8cm]
    \item \emph{minimal}\index{random maps!(minimality) minimal random map} if there is no proper closed \(f\)-invariant subset of \(X\);
    
    \item \emph{proximal}\index{random maps!(proximality) proximal random map} if for every \(x,y\in X\) there exists \(\omega\in\Omega\) such that
    \[
    \liminf_{n\to\infty} d(f_\omega^n(x),f_\omega^n(y))=0;
    \]
    
    \item \emph{mingled}\index{random maps!(mingledness) mingled random map} if for every \(k\ge 1\) there do not exist two disjoint non-empty closed
    \(f^k\)-invariant subsets of \(X\). In other words, for any $k\geq 1$ there is a unique minimal $f^k$-invariant closed set; 
    
    \item \emph{uniquely ergodic random map}\index{random maps!uniquely ergodic random map} if it admits a unique \(f\)-stationary probability measure;
    
    \item \emph{aperiodic}\index{random maps!(aperiodicity) aperiodic random map} if there do not exist \(p\ge 2\) and pairwise disjoint non-empty closed sets
    \(F_1,\dots,F_p\subset X\) such that
    \[
    f_\omega(F_i)\subset F_{i+1}
    \quad\text{for }1\le i<p
    \quad\text{and}\quad
    f_\omega(F_p)\subset F_1 \quad \text{for \(\mathbb{P}\)-a.e.\ \(\omega\in\Omega\).}
    \]
\end{enumerate}
\end{defi}}
\begin{maincor}
\label{rem:measure-mean-quasicompactness}
{Let $(X,d)$ and $(\Omega,\mathscr{F},\mathbb{P})$ be a compact metric space and a Bernoulli product probability space, and consider a continuous random map $f:\Omega\times X \to X$ with the local contraction property.} Then, there are only finitely many ergodic $f$-stationary probability measures $\mu_1, \dots, \mu_r$. 
 Their topological supports are 
{pairwise disjoint. These supports are precisely the minimal \(f\)-invariant closed subsets of \(X\).} \\[-0.2cm]

Moreover, the following statements hold:

\begin{enumerate}[itemsep=0.5cm, leftmargin=0.7cm, label=(\alph*)] \index{strong law of large numbers}
\item \textbf{Strong law of large numbers:} For any probability measure $\nu$ on $X$,  
\begin{equation}
\label{eq:union_vacia}
\nu(B_\omega(\mu_1)\cup \dots \cup B_\omega(\mu_r))=1 \quad \text{for $\mathbb{P}$-a.e.~$\omega \in \Omega$}
\end{equation}
where
$$B_\omega(\mu_i)=\left\{x\in X:  \lim_{n\to\infty}\frac{1}{n}\sum_{k=0}^{n-1}\delta_{f_\omega^k(x)}=\mu_i \ \text{in the weak$^*$ topology }  \right\}.$$
Furthermore, for any $x \in X$ and for $\mathbb{P}$-a.e.~$\omega\in\Omega$,  the sequence of points $\{f^n_\omega(x)\}_{n\geq 0}$ accumulates to one of the topological supports of these measures. \\[-0.2cm]
 
If  $f$ is either minimal,
proximal
or mingled,
then  $f$ is uniquely ergodic.
\index{Palis' global conjecture}
\item \textbf{Palis' global conjecture:} Assuming that  $\Omega$ is a Polish space, for any  probability measure $\nu$ on $X$, 
there are finitely many ergodic $F$-invariant probability measures $\bar{\mu}_1,\dots,\bar{\mu}_s$ on $\Omega\times X$ such that
\begin{enumerate}[leftmargin=1.5cm,label=(\roman*)]
  \item $\bar{\nu}(B(\bar{\mu}_i))>0$, i.e., $\bar{\mu}_i$ is a physical measure\index{stationary measures!physical measure} with respect to $\bar{\nu}=\mathbb{P}\times \nu$,  
  \item $\bar{\nu}\left(B(\bar{\mu}_1)\cup \dots \cup B(\bar{\mu}_s)\right)=1$,
\end{enumerate} 
where
$$\quad \ \ \ B(\bar{\mu}_i)=\left\{(\omega,x)\in \Omega\times X:  \lim_{n\to\infty}\frac{1}{n}\sum_{k=0}^{n-1}\delta_{F^k(\omega,x)}=\bar{\mu}_i \ \text{in the weak$^*$ topology }  \right\}.$$
Moreover, $\bar{\mu}_i=\mathbb{P}\times \mu_j$ for some $j\in \{1,\dots,r\}$. Hence $s\leq r$.

\item \textbf{Uniform pointwise convergence:}  
{Let \(P:C(X)\to C(X)\) be the annealed Koopman operator\index{operator!annealed Koopman operator}
defined  by
\[
P\phi(x)\eqdef \int \phi(f_\omega(x))\,d\mathbb{P},
\qquad \phi\in C(X),
\]}
where $C(X)$\index{space of functions!$C@$C(X)$, real-valued continuous functions} is the space of real-valued continuous functions   on $X$ with the uniform norm $\|\cdot \|_\infty$\index{metric structures!distance uniform norm@\(\Vert\cdot\Vert_\infty\), uniform norm on \(C(X)\)} and consider the sequence of operators 
$$A_n\eqdef \frac{1}{n}\sum_{i=0}^{n-1} P^i.$$ 
Then, there exists a  finite-rank projection $\Pi$ such that
$$
\lim_{n\to\infty} \|A_n\phi-\Pi\phi\|_\infty = 0, \quad \text{for all $\phi \in C(X)$}.$$
\noindent Moreover, if $f$ is 
\begin{enumerate}[label=$\bullet$,]
\item uniquely ergodic,  
then $\Pi$ is a projection on the space of constant functions; 
\item aperiodic,
then
$$
\lim_{n\to \infty}\|P^n\phi-\Pi\phi\|_\infty =0, \quad \text{for all $\phi \in C(X)$.}
$$
\end{enumerate}

\index{law of random iterations}
\item \textbf{Limit theorem for the law of the random iterations:} For every $x\in X$, denote by $$\mu_{n}^{x}\eqdef (X^x_n)_*\mathbb{P}$$ the law of the random variable $X_{n}^{x}(\omega) = f^{n}_{\omega}(x)$. Assuming that $f$ is aperiodic, then  $\mu_{n}^{x}$ converges in the weak$^*$ topology to an $f$-stationary probability measure $\mu^{x}\in \{\mu_1,\dots,\mu_r\}$. Moreover, the convergence is uniform in $x$ in the sense that 
\[
\sup_{x \in X} \left| \int \phi \, d\mu_{n}^{x} - \int\phi \, d\mu^{x} \right| \xrightarrow[n \to \infty]{} 0 \quad \text{for all $\phi\in C(X)$.} \\[0.2cm]
\]
\end{enumerate}
\end{maincor}

When $f$ is uniquely ergodic, item (a) becomes the so-called  \emph{strong law of large numbers}: for every $x\in X$, taking $\nu=\delta_x$ in~\eqref{eq:union_vacia},
$$
   \lim_{n\to\infty} \frac{1}{n}\sum_{k=0}^{n-1} \phi(f^k_\omega(x))=\int \phi \, d\mu_1  \quad \text{for $\mathbb{P}$-a.e.~$\omega\in \Omega$  and every $\phi\in C(X)$}
$$
 where $\mu_1$ is the unique $f$-stationary measure ($r=1$).  Breiman established the strong law of large numbers in~\cite{Bre:60} for any uniquely ergodic continuous random map.
 Item~(a) extends Breiman's theorem to not necessarily uniquely ergodic continuous random maps under the assumption of local contraction, in particular, to mostly contracting random maps.
 Item~(b) corresponds to Palis' global conjecture on the finiteness of attractors~\cite{palis2000global} (see~\S\ref{ss:palis-conjeture} for more details). Although this item was not formally proven in~\cite{Mal:17}, it can be easily deduced, as demonstrated in Theorem~\ref{thm:palis-conjecture}.
 %
%
Furthermore, item~(d) was stated in~\cite[Thm.~C]{Mal:17} for random walks of homeomorphisms of the circle. However, its proof only uses~\cite[Prop.~4.14]{Mal:17}, which merely requires continuous aperiodic random maps with the local contraction property. In fact, item~(d) is an extension of~\cite[Thm.~1]{kaijser1978limit}. Indeed,~\cite[Condition~C]{kaijser1978limit} implies that $f$ is mingled and 
 \cite[Condition~G]{kaijser1978limit} is an equivalent definition of mostly contracting (see item~(iii) in~\S\ref{ss:mostlycontracting}) but with a stronger integrability requirement.  \\

Let us describe some relations between the various topological notions introduced:

\begin{rem}\label{rem:topological-notions}
Let \(f:\Omega\times X\to X\) be a continuous random map. Then 
\begin{enumerate}[leftmargin=0.55cm,itemsep=0.2cm]
    \item \emph{\(f\) is mingled if and only if \(f\) is aperiodic and has a unique minimal closed \(f\)-invariant subset.}
    Indeed, if \(f\) has two distinct minimal closed invariant subsets, then they are disjoint, so \(f\) is not mingled.
    Likewise, if there exist pairwise disjoint non-empty closed sets \(F_1,\dots,F_p\) such that
    \(f_\omega(F_i)\subset F_{i+1}\) for \(1\le i<p\) and \(f_\omega(F_p)\subset F_1\) for
    \(\mathbb{P}\)-a.e.\ \(\omega\in\Omega\), then each \(F_i\) is \(f^p\)-invariant, and again \(f\) is not mingled.
    Conversely, assume that \(f\) is aperiodic and has a unique minimal closed \(f\)-invariant subset \(F\).
    If \(F'\) is a minimal \(f^p\)-invariant closed subset for some \(p\ge 1\), then \(F'\subset F\).
    By~\cite[Lemma~4.15]{Mal:17}, the restriction of \(f^p\) to \(F\) is minimal, hence \(F'=F\).
    Therefore \(f\) is mingled.
    
\item \emph{If \(f\) is minimal and \(X\) is connected, then \(f\) is mingled.}
Minimality implies that \(X\) itself is the unique minimal closed invariant subset.
If \(f\) were not aperiodic, then the sets appearing in the definition of aperiodicity
would give a decomposition of \(X\) into pairwise disjoint non-empty closed subsets,
contradicting connectedness. Hence, item~(i) applies. {A deterministic irrational rotation
of \(\mathbb S^1\) is a simple example.}

    \item \emph{If \(f\) is proximal, then \(f\) is mingled.}
    Indeed, every iterate \(f^k\) is still proximal.
    In particular, for every \(k\ge 1\), there is a unique minimal closed \(f^k\)-invariant subset.
    Thus \(f\) is mingled.
   { A basic example is a strict contraction of a compact interval.}

    \item \emph{Under the assumption of local contraction, \(f\) is uniquely ergodic if and only if there is a unique minimal closed \(f\)-invariant subset.}
    This is precisely~\cite[Prop.~4.8]{Mal:17}.
    {In particular, once local contraction is available, topological uniqueness of the minimal set already forces
    uniqueness of the stationary measure.
    }
\end{enumerate}
\end{rem}

{
\subsection{Examples on finite-phase space}
Assume that \(X=\{1,\dots,m\}\) is finite and is endowed with the discrete metric. In this case, every random map
\(f:\Omega\times X\to X\) is locally constant, hence \(f\) is mostly contracting and by Theorem~\ref{cor:local-contraction} it satisfies the local contraction property.
In particular, by Remark~\ref{rem:topological-notions}(iv), \(f\) is uniquely ergodic if and only if it has a unique minimal
\(f\)-invariant subset; and by Remark~\ref{rem:topological-notions}(i), \(f\) is mingled if and only if it is aperiodic and uniquely ergodic.
Thus, in the finite-state setting, the topological notions introduced above are related by
\[
\text{proximal}
\Longrightarrow
\text{mingled}
\Longrightarrow
\text{unique minimal set}
\Longleftrightarrow
\text{uniquely ergodic}.
\]
Moreover, minimality also implies unique ergodicity, as does proximality, while in general
\(\text{minimal}\centernot\Rightarrow \text{proximal}\) and \(\text{proximal}\centernot\Rightarrow \text{minimal}\).
The examples below show that none of the converses holds in general.

To relate this to the classical presentation of a finite-state Markov chain, recall that
\(\Omega=T^{\mathbb N}\) and \(f_\omega=f_{\omega_0}\) for \(\mathbb P\)-a.e.\ \(\omega=(\omega_n)_{n\ge0}\).
The annealed Koopman operator from item~\emph{(c)} of Corollary~\ref{rem:measure-mean-quasicompactness} is the transition operator
of the Markov chain induced by \(f\). More precisely, set
\[
m_{xy}\eqdef p(\{t\in T: f_t(x)=y\}), \qquad x,y\in X,
\]
so that \(M=(m_{xy})_{x,y\in X}\) is a stochastic matrix, and
\[
P\phi(x)=\sum_{y\in X} m_{xy}\phi(y), \qquad \phi\in C(X)\simeq \mathbb R^m.
\]
Equivalently, one may consider the directed graph with an arrow \(x\to y\) whenever \(m_{xy}>0\), that is,
whenever \(f_t(x)=y\) for some \(t\in \mathrm{supp}\,p\). A subset \(Y\subset X\) is \(f\)-invariant if and only if no arrow leaves \(Y\),
and the minimal \(f\)-invariant subsets are precisely the closed irreducible classes of this graph. \\[-0.25cm]

The following examples illustrate the relations between the notions introduced above.

\begin{enumerate}[label=(\alph*), leftmargin=0.7cm, itemsep=0.35cm]

\item \emph{Minimal does not imply mingled; equivalently, on a finite set, uniquely ergodic does not imply mingled.}
Let \(T=\{a\}\), \(p(a)=1\), \(X=\{1,2,3\}\), and
\[
f_a(1)=2,\qquad f_a(2)=3,\qquad f_a(3)=1.
\]
Then the stochastic matrix and directed graph are
\[
M=
\begin{pmatrix}
0&1&0\\
0&0&1\\
1&0&0
\end{pmatrix}
\qquad
\begin{tikzpicture}[baseline={(current bounding box.center)},>=Latex,
  every node/.style={circle,draw,inner sep=1pt}]
\node (1) at (90:0.8) {\scriptsize \(1\)};
\node (2) at (210:0.7) {\scriptsize \(2\)};
\node (3) at (330:0.7) {\scriptsize \(3\)};
\draw[->] (1) to[bend left=11] (2);
\draw[->] (2) to[bend left=11] (3);
\draw[->] (3) to[bend left=11] (1);
\end{tikzpicture}
\]
Hence \(X\) is the unique minimal \(f\)-invariant subset, so \(f\) is minimal and uniquely ergodic.
However, \(f\) is not aperiodic, since
\[
F_1=\{1\},\qquad F_2=\{2\},\qquad F_3=\{3\}
\]
form a periodic decomposition of period \(3\). Therefore \(f\) is not mingled (and in particular not proximal).
This is the periodic case in item~\emph{(c)} of Corollary~\ref{rem:measure-mean-quasicompactness}:
the powers \(P^n\) do not converge, whereas the Ces\`aro averages \(n^{-1}\sum_{j=0}^{n-1}P^j\) do.

\item \emph{Mingled does not imply proximal.}
Let \(T=\{a,b\}\) with \(p(a),p(b)>0\) and $p(a)+p(b)=1$, let \(X=\{1,2,3\}\), and define
\[
\hspace{.6cm} f_a(1)=2,\quad f_a(2)=2,\quad f_a(3)=3,
\quad
f_b(1)=3,\quad f_b(2)=3,\quad f_b(3)=2.
\]
Then the associated stochastic matrix and directed graph are
\[
M=
\begin{pmatrix}
0&p(a)&p(b)\\
0&p(a)&p(b)\\
0&p(b)&p(a)
\end{pmatrix}
\qquad
\begin{tikzpicture}[baseline={(current bounding box.center)},>=Latex,
  every node/.style={circle,draw,inner sep=1.2pt}]
\node (1) at (0,1.05) {\scriptsize \(1\)};
\node (2) at (-1.0,0) {\scriptsize \(2\)};
\node (3) at (1.0,0) {\scriptsize \(3\)};
\draw[->] (1) to[bend left=10] (2);
\draw[->] (1) to[bend right=10] (3);
\draw[->] (2) to[loop left] (2);
\draw[->] (3) to[loop right] (3);
\draw[->] (2) to[bend left=14] (3);
\draw[->] (3) to[bend left=14] (2);
\end{tikzpicture}
\]
Thus \(\{2,3\}\) is the unique minimal \(f\)-invariant subset, so \(f\) is uniquely ergodic.
Moreover, the minimal class \(\{2,3\}\) is aperiodic because it contains loops; hence \(f\) is mingled.
On the other hand, \(f\) is not proximal: starting from \(2\) and \(3\), every composition of \(f_a\) and \(f_b\)
keeps the pair \(\{2,3\}\) invariant, so these points never~meet.

\item \emph{Proximal does not imply minimal.}
Let \(T=\{a,b\}\) with \(p(a),p(b)>0\) and $p(a)+p(b)=1$, let \(X=\{1,2,3\}\), and define
\[
f_a(x)=1 \quad \text{for all }x\in X,
\qquad
f_b(x)=x \quad \text{for all }x\in X.
\]
Then the associated stochastic matrix and directed graph are
\[
M=
\begin{pmatrix}
1&0&0\\
p(a)&p(b)&0\\
p(a)&0&p(b)
\end{pmatrix}
\qquad
\begin{tikzpicture}[baseline={(current bounding box.center)},
>=Latex,
  every node/.style={circle,draw,inner sep=1.2pt}]
\node (1) at (0,1.4) {\scriptsize \(1\)};
\node (2) at (-1.0,1.8) {\scriptsize \(2\)};
\node (3) at (1.0,1.8) {\scriptsize \(3\)};
\draw[->] (1) to[loop above] (1);
\draw[->] (2) to[bend left=10] (1);
\draw[->] (2) to[loop left] (2);
\draw[->] (3) to[bend right=10] (1);
\draw[->] (3) to[loop right] (3);
\end{tikzpicture}
\]
The random map \(f\) is proximal (and hence mingled), because one occurrence of the symbol \(a\) sends every point to \(1\).
Moreover, since \(\{1\}\subsetneq X\) is an \(f\)-invariant subset, \(f\) is not minimal.
\end{enumerate}
}


\section{Quasi-compactness of annealed Koopman operator} \label{sec:koopman} 
We consider the so-called \emph{annealed Koopman operator} associated with the random map $f$ defined by
 \begin{equation}\label{eq:koopman0}
 P\varphi(x)\eqdef 
  \int \varphi \circ f_\omega(x) \, d\mathbb{P} \quad \text{for } \ \varphi\in C(X).
\end{equation}
Note that $P$ is a \emph{Markov operator}, meaning that $P\varphi \geq 0$ for all $\varphi \geq 0$ and $P1_X=1_X$, where $1_A$ denotes the characteristic function of a set $A$ in $X$. 

{Quasi-compactness is a classical spectral mechanism in the study of Markov operators. Roughly speaking, it means that, after separating a finite-dimensional part of the spectrum that governs the main asymptotic behavior, the remaining iterates decay exponentially fast. As a consequence, quasi-compactness is one of the standard routes to exponential convergence to equilibrium, limit theorems, large deviations, and stability properties. This operator-theoretic point of view goes back to the study of Markov chains and has also become fundamental in smooth dynamics, for instance, in the analysis of expanding and hyperbolic systems; see, for example,~\cite{HH:01,Baladi00}.

In our setting, quasi-compactness provides the link between the geometric contraction properties of the random dynamics and the statistical properties of the annealed evolution of observables. To state our next main result, we first recall some definitions.}


Let $(\mathcal{E},\|\cdot\|)$ be a Banach space  contained in $C(X)$ such that $1_X \in \mathcal{E}$. A Markov operator  $Q:C(X) \to C(X)$ is called \emph{quasi-compact}\index{spectral properties!quasi-compact operator} on $\mathcal{E}$ if $Q: \mathcal{E} \to \mathcal{E}$ is bounded (with respect to the norm $\|\cdot\|$) and
there exists a compact linear operator $R:\mathcal{E}\to \mathcal{E}$ such that $\|Q^n - R\|_{\text{op}} < 1$ for some $n \in \mathbb{N}$.
Furthermore, $Q$ is said to have a \emph{spectral gap}\index{spectral properties!spectral gap} on $\mathcal{E}$ if it is quasi-compact and if $Q\varphi = \lambda \varphi$ with $|\lambda|=1$, then $\lambda$ must equal $1$, and $\varphi$ must be a constant function.

%

For $0<\alpha\leq 1$, we denote by $C^{\alpha}(X)$ the space\index{space of functions!\(C^\alpha(X)\), H\"older continuous functions} of H\"older maps $\varphi : X \rightarrow \mathbb{R}$ with the associated norm $\|\varphi\|_{\alpha} = \|\varphi\|_{\infty} + |\varphi|_\alpha$\index{metric structures!distance Holder norm@\(\Vert\cdot\Vert_\alpha\), H\"older norm on \(C^\alpha(X)\)}, where
$$
   |\varphi|_\alpha \eqdef \sup \left\{\frac{|\varphi(x)-\varphi(y)|}{d(x,y)^\alpha} : x, y \in X, \ x \ne y\right\} < \infty.\index{metric structures!distance Holder aseminorm@\(\vert\cdot\vert_\alpha\), H\"older seminorm on \(C^\alpha(X)\)}
$$
Now we state the following main result. 

\begin{mainthm} \label{thmA}
Let $(X , d)$ and $(\Omega, \mathscr{F}, \mathbb{P})$ be a compact metric space and a Bernoulli product probability space respectively. Consider a mostly contracting random map $f: \Omega \times X \to X$ such that
\begin{equation}\label{eq:integral_condition}
  \int \mathrm{Lip}(f_\omega)^\beta \, d\mathbb{P} < \infty \quad \text{for some $\beta > 0$.} \index{integrability conditions!exponential Lipschitz moment}
\end{equation}
Let $P: C(X) \to C(X)$ be the annealed Koopman operator associated with $f$. Then there exists $\alpha_0 > 0$ such that $P$ is a quasi-compact operator on $C^{\alpha}(X)$ for any $0 < \alpha \leq \alpha_0$. Moreover, 
\begin{enumerate}[label=\textup{\alph*)},leftmargin=1cm]
    \item $1$ is a simple eigenvalue of $P$ if and only if $f$ is uniquely ergodic;
    \item $1$ is the unique eigenvalue of $P$ with modulus $1$ if and only if $f$ is aperiodic;
    \item $P$ has a spectral gap if and only if $f$ is mingled.
\end{enumerate}
\end{mainthm}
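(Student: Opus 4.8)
\textbf{Proof strategy for Theorem~\ref{thmA}.}
The plan is to first establish the quasi-compactness of $P$ on $C^{\alpha}(X)$ for small $\alpha$ via a Lasota--Yorke (Doeblin--Fortet) inequality, and then read off the three spectral dichotomies from the structure of the peripheral spectrum together with the dynamical description of $f$-stationary measures supplied by Corollary~\ref{rem:measure-mean-quasicompactness}. For the quasi-compactness, fix $n\in\mathbb N$ and $a>0$ as in the equivalent condition (iii) of mostly contracting, so that $\int \log Lf^n_\omega(x)\,d\mathbb P < -a$ for all $x$. Using the integrability hypothesis~\eqref{eq:integral_condition} and the chain rule inequality $L(g\circ h)(z)\le Lg(h(z))Lh(z)$, one shows by a standard convexity/H\"older argument that there exists $\alpha_0>0$ such that for every $0<\alpha\le\alpha_0$ one can choose a power $N=N(\alpha)$ with
\[
\int \bigl(Lf^N_\omega(x)\bigr)^{\alpha}\,d\mathbb P \le \kappa < 1 \qquad \text{for all } x\in X,
\]
i.e.\ the annealed expectation of the $\alpha$-th power of the local Lipschitz constant of the $N$-th iterate is uniformly a strict contraction (here one also needs to control the finitely many ``bad'' iterates $k<N$ with a finite constant, again via~\eqref{eq:integral_condition}). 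Feeding this into the estimate $|P^N\varphi|_\alpha \le \bigl(\int (Lf^N_\omega(x))^\alpha d\mathbb P\bigr)\,|\varphi|_\alpha$, which follows by applying the definition of $|\cdot|_\alpha$ and Jensen, one obtains the Lasota--Yorke inequality $\|P^N\varphi\|_\alpha \le \kappa\,|\varphi|_\alpha + C\|\varphi\|_\infty$ for a suitable $C>0$. Since the inclusion $C^{\alpha}(X)\hookrightarrow C(X)$ is compact (Arzel\`a--Ascoli) and $P$ is a contraction on $C(X)$, Hennion's theorem (or the Ionescu-Tulcea--Marinescu argument) yields that the essential spectral radius of $P$ on $C^{\alpha}(X)$ is at most $\kappa^{1/N}<1$, which is precisely quasi-compactness; this also shows $P$ is bounded on $C^{\alpha}(X)$.

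Once quasi-compactness is known, the peripheral spectrum $\{\lambda : |\lambda|=1\}$ consists of finitely many eigenvalues, each with finite-dimensional generalized eigenspace, and there are no Jordan blocks on the unit circle because $P$ is a Markov operator with $\|P^n\|_\infty\le 1$ (power-boundedness forbids nontrivial Jordan blocks for peripheral eigenvalues). The eigenvalue $1$ is always present since $P1_X=1_X$; more precisely, the fixed-point space of $P$ in $C^{\alpha}(X)$ is in canonical duality with the space of $f$-stationary measures (a $P$-invariant $\varphi$ integrates to a constant against every stationary $\mu$, and conversely the ergodic decomposition $\mu_1,\dots,\mu_r$ of Corollary~\ref{rem:measure-mean-quasicompactness} produces $r$ linearly independent fixed functions, e.g.\ the ``harmonic'' indicators of the basins obtained from $A_n1_{U}$ limits via item~(c)). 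Hence $1$ is a \emph{simple} eigenvalue iff $r=1$ iff $f$ is uniquely ergodic, which is the first bullet.

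For the second bullet, suppose $P\varphi=\lambda\varphi$ with $|\lambda|=1$, $\lambda\ne1$. Decomposing along the ergodic stationary measures one reduces to the uniquely ergodic case on each minimal set; there, writing $\lambda=e^{2\pi i\theta}$ and using the local contraction property (Theorem~\ref{cor:local-contraction}) together with the limit-theorem description from item~(d) of Corollary~\ref{rem:measure-mean-quasicompactness}, one shows the level sets of $\varphi$ are permuted cyclically by the dynamics, producing a partition $F_1,\dots,F_p$ with $f_\omega(F_i)\subset F_{i+1}$, i.e.\ $f$ is not aperiodic; $\lambda$ must then be a $p$-th root of unity. Conversely, if $f$ is not aperiodic, such a cyclic partition exists and $\varphi=\sum_j e^{2\pi i j/p}1_{F_j}$ (smoothed via the $A_n$-projection so as to land in $C^{\alpha}$) is an eigenfunction for a primitive $p$-th root of unity. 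Thus $1$ is the only peripheral eigenvalue iff $f$ is aperiodic. The third bullet is the conjunction of the first two: a spectral gap means $1$ is simple and is the unique peripheral eigenvalue, i.e.\ $f$ is uniquely ergodic and aperiodic, which by the Remark following Corollary~\ref{rem:measure-mean-quasicompactness} (points (1) and (4), under local contraction) is equivalent to $f$ being mingled.

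\textbf{Main obstacle.} The delicate point is the uniform sub-multiplicative estimate $\sup_{x}\int (Lf^N_\omega(x))^{\alpha}d\mathbb P\le\kappa<1$: one must interchange the roles of the multiplicative cocycle $Lf^n_\omega(x)$ and the ``annealed'' averaging in $\omega$, turning the \emph{additive} smallness of $\int\log Lf^n_\omega(x)\,d\mathbb P$ into \emph{multiplicative} smallness after raising to a small power $\alpha$ and taking a high iterate $N$. This requires care because $Lf^N_\omega(x)$ is not a product of i.i.d.\ factors evaluated at a fixed point — the base point moves under the iteration — so one has to iterate the one-step bound $P\bigl((Lf^n_\cdot(\cdot))^\alpha\bigr)(x)$ along orbits and use the uniform-in-$x$ nature of (iii) at each stage, while keeping the exponential moment $\int\mathrm{Lip}(f_\omega)^{\alpha\beta'}d\mathbb P<\infty$ under control to guarantee the Lasota--Yorke constant $C$ is finite. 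Everything else is then a fairly mechanical application of Hennion's quasi-compactness theorem and the dictionary between the peripheral spectrum and the combinatorics of minimal invariant sets already catalogued in the preceding Remark.
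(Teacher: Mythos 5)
Your overall plan matches the paper's: derive a Lasota--Yorke inequality on $C^\alpha(X)$ from the mostly-contracting condition, invoke Hennion's quasi-compactness criterion, and read the three spectral dichotomies off the structure of minimal $f$-invariant closed sets via a duality between $\mathrm{Ker}(P-1)$ and the $f$-stationary measures. That said, the key estimate as you state it does not literally hold: the inequality $|P^N\varphi|_\alpha \le \bigl(\sup_x\int (Lf^N_\omega(x))^\alpha \,d\mathbb P\bigr)\,|\varphi|_\alpha$ cannot follow from ``the definition of $|\cdot|_\alpha$ and Jensen,'' because $Lf^N_\omega(x)$ is the \emph{infinitesimal} local Lipschitz constant at $x$ and controls nothing about $d(f^N_\omega(x),f^N_\omega(y))$ for $y$ at positive distance from $x$. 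What you need is the constant $L_r f^N_\omega(x)=\mathrm{Lip}(f^N_\omega|_{B(x,r)})$ over a ball of a \emph{fixed, uniform} radius $r>0$; the resulting bound $|P^N\varphi(x)-P^N\varphi(y)|\le q\,|\varphi|_\alpha\,d(x,y)^\alpha$ then only holds for $d(x,y)<r$, and the case $d(x,y)\ge r$ is where the $C\|\varphi\|_\infty$ term of the Lasota--Yorke inequality comes from. This is repairable (the paper does it: after the reverse-Fatou step that gives $\int (Lf^n_\omega(x))^{\alpha_0(x)}d\mathbb P<1$ pointwise, one upgrades to $\int (L_{r(x)}f^n_\omega(x))^{\alpha_0(x)}d\mathbb P<q(x)<1$ via upper semicontinuity, and then uses compactness of $X$ plus H\"older's inequality to fix a single $\alpha_0$ and $r$), but as written the step would fail.

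Relatedly, your ``main obstacle'' paragraph is misdirected. You worry about iterating a one-step bound along orbits because the base point moves under $f^k_\omega$, and you speak of choosing $N=N(\alpha)$. In fact no orbit-wise iteration is needed: condition (iii) of the mostly-contracting definition already gives a single fixed $n$ with $\sup_x\int\log Lf^n_\omega(x)\,d\mathbb P<-a$, and one treats the $n$-th iterate as a \emph{single} Lipschitz random map. The passage from additive to multiplicative smallness is then a one-shot application of the reverse Fatou lemma to $\psi_\alpha(\omega)=(Lf^n_\omega(x)^\alpha-1)/\alpha$ as $\alpha\to 0^+$, with the dominating function supplied by the exponential moment hypothesis together with submultiplicativity of the global Lipschitz constant along the fibers. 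So the technical care goes into the $L$ versus $L_r$ distinction and the compactness/H\"older uniformization over $x\in X$, not into the cocycle structure you describe. Your spectral analysis (first and second bullets) is in line with the paper's; for the third bullet you deduce ``spectral gap $\Leftrightarrow$ mingled'' from the conjunction of the first two bullets via the Remark following Corollary~\ref{rem:measure-mean-quasicompactness}, whereas the paper argues directly through the dimension of $\mathrm{Ker}(P^k-1)$ and a root-of-unity argument; both routes are valid, and yours is a touch shorter given that the Remark is already available.
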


\begin{rem} \label{rem:thmA-finite-moment}
The integrability condition~\eqref{eq:integral_condition} is stronger than the condition~\eqref{eq:integrability} required to define the notion of mostly contracting. This condition is usually called the \emph{exponential moment condition} when considering random products of matrices (see~\eqref{eq:exponential-moment-condition}). If $\Omega = T^{\mathbb{N}}$ is a compact set and $f_\omega(x)$ depends continuously on $\omega$, this condition is immediately fulfilled. In particular, it holds when $T$ is finite. 
\end{rem}

Quasi-compactness and the spectral gap are strong properties with many consequences. Under the integrability condition~\eqref{eq:integral_condition}, these properties enable us to strengthen the conclusions of items~(c) and (d) in Corollary~\ref{rem:measure-mean-quasicompactness}. First, the convergence of $A_n = \frac{1}{n} \sum_{k=0}^{n-1} P^k$ to a finite-rank projection $\Pi$ is no longer pointwise but in operator norm in the space $C^\alpha(X)$. Similarly, $P^n$ converges in operator norm to $\Pi$ if $f$ is aperiodic. Second, it is standard that the convergence is always exponentially fast, or equivalently, the law $\mu_n^x$ of $X_n^x$ converges exponentially fast to a stationary measure for the dual metric of $C^\alpha(X)$, in particular in the Wasserstein distance. Among other consequences, one can deduce that, for any $\phi \in C^\alpha(X)$,  the projection $\Pi\phi$ is also in $C^\alpha(X)$, and that there exists $\psi \in C^\alpha(X)$ solving the Poisson equation: $\phi - \Pi\phi = \psi - P\psi$. More involved consequences will be discussed in the following sections.

\begin{question}
    Let $f$ be an aperiodic continuous random map. Is the exponentially fast local contraction property established in Theorem~\ref{cor:local-contraction} sufficient to ensure that $\|P^n \phi - \Pi \phi\|_\infty \to 0$ exponentially fast for any $\phi \in C^\alpha(X)$?
\end{question}

{The main difficulty in our setting is to prove that the annealed Koopman operator is quasi-compact for the broad class of mostly contracting random maps. Indeed, the mostly contracting condition is formulated in terms of the negativity of Lyapunov exponents over all stationary measures and does not, a priori, yield a direct contraction estimate for the operator itself. A main step in the proof of Theorem~\ref{thmA} is to bridge this gap by showing that every mostly contracting random map \(f\) satisfying the exponential moment condition~\eqref{eq:integral_condition} is locally contracting on average, in the following sense.}

\begin{defi} \label{def:local-contracting-average} A random map $f:\Omega \times X \to X$ is said to be \emph{locally contracting on average}\index{contraction on average!local contracting on average} if there exist $r > 0$, $0<\alpha\leq1$, $q < 1$, and $n \geq 1$ such that 
$$
\int d(f^n_\omega(x), f^n_\omega(y))^\alpha \, d\mathbb{P} \leq q \, d(x, y)^\alpha \quad \text{for any $x, y \in X$ with $d(x, y) < r$.}
$$
\end{defi}
Introducing the new equivalent distance $d_\alpha$ on $(X, d)$ defined by $d_\alpha(x, y) = d(x, y)^\alpha$, one can see that Definition~\ref{def:local-contracting-average} coincides with a similar notion in~\cite[Def.~1]{stenflo2012survey}. Moreover, Stenflo shows in~\cite[Lemma~1]{stenflo2012survey} that if $(X, d_\alpha)$ is a geodesic space, then locally contracting on average implies (global) \emph{contracting on average}\index{contraction on average!global contracting on average}, that is,   
$$
 \int d_\alpha(f^n_\omega(x), f^n_\omega(y)) \, d\mathbb{P} \leq q \,  d_\alpha(x, y) \quad \text{for all $x, y \in X$.}
$$
Notice that, in general, although $(X, d)$ was geodesic, the metric space $(X, d_\alpha)$ is not necessarily geodesic. Global contracting on average Lipschitz random maps are proximal and uniquely ergodic (cf.~\cite[Thm.~8]{barnsley2008v} and~\S\ref{ss:global_contraction}). Moreover, according to~\cite{peigne1993iterated, HH:01, walkden2013transfer}, the annealed Koopman operator has a spectral gap on $C^\alpha(X)$. We revisit these results in Proposition~\ref{prop:quasi-compact} by proving that locally contracting on average and the exponential moment condition~\eqref{eq:integral_condition} imply the so-called Lasota-Yorke type inequality, from which we deduce that the annealed Koopman operator is quasi-compact on $C^\alpha(X)$.

In the sequel, we show several results that follow as a consequence of Theorem~\ref{thmA}. 

\section{Central limit laws} \label{ss:limit laws}
Let $\mu_1,\dots,\mu_r$ be probability measures given in Corollary~\ref{rem:measure-mean-quasicompactness}. Note that the existence of such measures is also implied by the quasi-compactness of the annealed Koopman operator $P$ on $C^\alpha(X)$ by~\cite[Thm.~2.4]{Her:94}.  We also have from~Theorem~\ref{thm:Herver-improved} (cf.~\cite[Thm.~2.3]{Her:94}), non-negative functions $g_1,\dots,g_r$ in $C^\alpha(X)$ such that
$$
  g_i=1  \ \ \text{on} \  \mathrm{supp} \mu_i, \quad \int g_i \, d\mu_j =0 \ \ \text{if $i\not = j$} \quad \text{and} \quad \sum_{i=1}^r g_i=1.
$$
Taking into account that $C^\alpha(X)$ is identified with the space of Lipschitz functions with the equivalent metric $d(\cdot,\cdot)^\alpha$ on $X$, according to~\cite[Prop.~3.1 and Prop.~2.2]{HH:01}, we immediately get as a consequence of Theorem~\ref{thmA} and \cite[Thm.~A' and B']{HH:01} the following results: 

\index{central limit theorem}
\begin{maincor}[Central limit theorem of mixed type] \label{maincor2}
  Under the assumption of Theorem~\ref{thmA}, if $\phi \in C^\alpha(X)$ satisfies $\int \phi\, d\mu_i=0$ for all $i=1,\dots,r$, then,  for any probability measure $\nu$ on $X$,
  $$
    \frac{1}{\sqrt{n}} \,S_n\phi(\omega,x) \eqdef  \frac{1}{\sqrt{n}} \sum_{i=1}^{n} \phi(f^i_\omega(x)) \longrightarrow  \sum_{i=1}^r \nu_i \, \mathcal{N}(0,\sigma_i^2) 
  $$
  in distribution with respect to $\bar{\nu}=\mathbb{P}\times\nu$, where $\nu_1+\dots+\nu_r=1$ with $\nu_i=\int g_i \, d\nu \geq 0$ and  
  {\(\sum_{i=1}^r \nu_i\, \mathcal N(0,\sigma_i^2)\) denotes the mixture (convex combination) of centered normal distributions with standard deviation}   
$$
     \sigma^2_i \eqdef \lim_{n\to\infty}   \int  (S_n\phi)^2  \, d\bar{\mu}_i \geq 0  \quad \text{and} \quad \bar{\mu}_i\eqdef \mathbb{P}\times \mu_i \qquad i=1,\dots,r.
$$
Moreover, $\sigma_i=0$ if and only if there exists $\psi \in C^\alpha(X)$ such that
$$\phi(f_\omega(x))=\psi(x)-\psi(f_\omega(x)) \quad  \text{for \  $\bar{\mu}_i$-a.e.~$(\omega,x)\in \Omega\times X$}.$$
\end{maincor}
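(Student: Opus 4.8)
The plan is to realize the statement as an instance of the Nagaev--Guivarc'h spectral method for quasi-compact Markov operators, i.e.\ to deduce it from~\cite[Thm.~A$'$]{HH:01} after checking the hypotheses. The starting point is Theorem~\ref{thmA}: for a suitable $\alpha$ the annealed Koopman operator $P$ is quasi-compact on $C^\alpha(X)$, its eigenvalue $1$ is semisimple with rank-$r$ spectral projection $\Pi\varphi=\sum_{i=1}^r\mu_i(\varphi)\,g_i$, and the supports $S_i=\mathrm{supp}\,\mu_i$ are pairwise disjoint minimal $f$-invariant sets with $g_j\equiv\delta_{ij}$ on $S_i$ (Corollary~\ref{rem:measure-mean-quasicompactness}, Theorem~\ref{thm:Herver-improved}). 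For $t\in\R$ I would introduce the Fourier-twisted operator
$$
P_t\varphi(x)\eqdef\int e^{it\phi(f_\omega(x))}\,\varphi(f_\omega(x))\,d\mathbb{P}(\omega),\qquad\varphi\in C^\alpha(X),
$$
so that $P_0=P$ and, by the Bernoulli structure of $f$, $\mathbb{E}_{\bar\nu}\big[e^{itS_n\phi}\big]=\int P_t^n1_X\,d\nu$. Since $\phi$ is bounded, $e^{it\phi}\in C^\alpha(X)$ with $\|e^{it\phi}-1_X\|_\alpha=O(|t|)$, so $t\mapsto P_t$ is analytic with $\|P_t-P\|_{\mathrm{op}}\to0$; identifying $C^\alpha(X)$ with the Lipschitz functions on $(X,d^\alpha)$ then puts everything in the framework of~\cite[Prop.~2.2 and 3.1]{HH:01}.

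Next I would run the perturbation of the peripheral spectrum. Besides the eigenvalue $1$ with projection $\Pi$, the peripheral spectrum of $P$ is a finite set $\Gamma\subset\{|z|=1\}\setminus\{1\}$ of roots of unity with finite-rank projections $\Pi_j$ satisfying $\Pi_j1_X=0$ (because $P1_X=1_X$ forces $\Pi1_X=1_X$), and the rest of the spectrum has radius $<1$. By analytic perturbation theory, for small $|t|$ there is a rank-$r$ spectral projection $\Pi(t)$ attached to the eigenvalues of $P_t$ near $1$, with $\Pi(0)=\Pi$; writing $\theta=t/\sqrt n$ and decomposing $P_\theta^n1_X$, the non-peripheral part contributes $O(\kappa^n)$ and the $\Gamma$-pieces contribute $O(\theta)$, leaving $\int P_\theta^n1_X\,d\nu=\int M(\theta)^n\,\Pi(\theta)1_X\,d\nu+o(1)$, where $M(\theta)=P_\theta|_{\mathrm{Im}\,\Pi(\theta)}$ is an $r\times r$ matrix with $M(0)=\mathrm{Id}$. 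The key algebraic point is that in the limiting basis $(g_1,\dots,g_r)$ with dual functionals $(\mu_1,\dots,\mu_r)$ the entries of $M'(0)$ equal $i\iint\phi(f_\omega(x))g_j(f_\omega(x))\,d\mathbb{P}\,d\mu_i=\delta_{ij}\,i\!\int\phi\,d\mu_i=0$ — using $f_\omega(S_i)\subset S_i$, $g_j\equiv\delta_{ij}$ on $S_i$, stationarity of $\mu_i$, and the centering hypothesis — and the same invariance makes $M''(0)$ diagonal. Restricting $f$ to each $S_i$ gives a uniquely ergodic mostly contracting random map with exponential moments, so by Theorem~\ref{thmA} its Koopman operator is quasi-compact with simple leading eigenvalue, and the one-component Nagaev--Guivarc'h computation (\cite[Thm.~A]{HH:01}) identifies $M''(0)_{ii}=-\sigma_i^2$ with $\sigma_i^2$ the variance from the statement. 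Hence $M(t/\sqrt n)^n\to\exp\!\big(-\tfrac{t^2}{2}\mathrm{diag}(\sigma_1^2,\dots,\sigma_r^2)\big)$, $\Pi(\theta)1_X\to1_X=\sum_ig_i$, and $\int P_\theta^n1_X\,d\nu\to\sum_i\nu(g_i)\,e^{-\sigma_i^2t^2/2}=\sum_i\nu_i\,e^{-\sigma_i^2t^2/2}$; by L\'evy's continuity theorem this is the asserted convergence in distribution. For the degeneracy clause, $\sigma_i=0$ forces the dominant eigenvalue of the restricted twisted operator to have vanishing second derivative, hence $\phi$ is an $L^2(\bar\mu_i)$-coboundary, and the Poisson equation on the component $S_i$ has a solution $\psi\in C^\alpha(X)$ by quasi-compactness (cf.\ the remark after Theorem~\ref{thmA}); conversely, if $\phi(f_\omega(x))=\psi(x)-\psi(f_\omega(x))$ $\bar\mu_i$-a.e.\ with $\psi\in C^\alpha(X)$, then $S_n\phi$ telescopes and stays bounded by $2\|\psi\|_\infty$, so $\sigma_i=0$.

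I expect the main obstacle to be bookkeeping rather than ideas: carefully verifying that $(X,d^\alpha)$ and $C^\alpha(X)$ satisfy the abstract axioms of~\cite{HH:01} so that their Theorems~A$'$ and~B$'$ apply verbatim, and — if one prefers a self-contained argument — carrying out the uniform perturbation estimates for $P_t$ on $C^\alpha(X)$ together with the diagonalization of $M''(0)$ via the support-invariance identities above. Note that no aperiodicity is needed: the extra peripheral eigenvalues of $P$ are harmless precisely because their spectral projections annihilate $1_X$, which is exactly what lets the mixed central limit theorem hold for every mostly contracting $f$ under~\eqref{eq:integral_condition}.
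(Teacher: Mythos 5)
Your proposal is correct and essentially coincides with the paper's own argument: the paper deduces Corollary~\ref{maincor2} by identifying $C^\alpha(X)$ with the Lipschitz functions on $(X,d^\alpha)$ and invoking Theorem~\ref{thmA} (quasi-compactness) together with Hennion and Herv\'e's mixed-type central limit theorem~\cite[Thm.~A$'$]{HH:01} (via~\cite[Prop.~2.2 and 3.1]{HH:01}), which is precisely the Nagaev--Guivarc'h perturbation argument you spell out. In effect you have unwound the proof of~\cite[Thm.~A$'$]{HH:01} in this setting rather than citing it, and the key algebraic observations you isolate — $M'(0)=0$ from the centering hypothesis and the support-disjointness $g_j|_{F_i}=\delta_{ij}$, and the diagonalization of the second-order term component by component — are exactly what makes that theorem apply with multiplicity $r>1$ and without aperiodicity.
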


Given $z\in [-\infty,\infty]$, we denote the cumulative distribution function  of the standard normal distribution $\mathcal{N}(0,1)$ by
$$
  \Phi(z)\eqdef \frac{1}{\sqrt{2\pi}} \int_{-\infty}^z e^{-\frac{s^2}{2}}\, ds.
$$

\index{Berry-Esseen theorem}
\begin{maincor}[Central limit theorem with rate of convergence] \label{maincor3}
  Under the assumption of Theorem~\ref{thmA}, if $\phi \in C^\alpha(X)$ satisfies $\int \phi\, d\mu_i=0$ and $\sigma_i=\sigma_i(\phi)>0$ for all $i=1,\dots,r$,  then for any  probability measure $\nu$ on $X$ satisfying $\int \phi \, d\nu>0$,  
  $$
    \sup_{u\in \mathbb{R}} \left|\bar{\nu}\left(\frac{1}{\sqrt{n}} \, S_n\phi  \leq u  \right)- \sum_{i=1}^r \nu_i \,\Phi\left(\frac{u}{\sigma_i}\right)\right| =O\left(n^{-1/2}\right) \quad \text{where} \ \ \bar{\nu}=\mathbb{P}\times \nu.
  $$
  \end{maincor}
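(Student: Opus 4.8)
The plan is to run the Nagaev--Guivarc'h spectral method quantitatively and to close with Esseen's smoothing inequality: Corollary~\ref{maincor3} is the Berry--Esseen refinement that quasi-compactness attaches to Corollary~\ref{maincor2}, and it is an instance of the abstract statement in~\cite{HH:01}. As in Corollary~\ref{maincor2}, I would identify $C^\alpha(X)$ with the space of Lipschitz functions for $d(\cdot,\cdot)^\alpha$ and introduce the Fourier perturbations of the annealed Koopman operator,
$$
P_t\varphi(x)\eqdef\int e^{\,it\,\phi(f_\omega(x))}\,\varphi(f_\omega(x))\,d\mathbb{P}(\omega),\qquad t\in\mathbb{R},
$$
so that, writing $s=t/\sqrt n$, the characteristic function of $n^{-1/2}S_n\phi$ under $\bar\nu=\mathbb{P}\times\nu$ equals $\nu\big(P_s^{\,n}1_X\big)$. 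Since $\phi\in C^\alpha(X)$ and $C^\alpha(X)$ is a Banach algebra, $t\mapsto(\varphi\mapsto e^{it\phi}\varphi)$ is entire into $\mathcal{L}(C^\alpha(X))$, hence so is $t\mapsto P_t$; and the Lasota--Yorke inequality behind Theorem~\ref{thmA} (obtained as in Proposition~\ref{prop:quasi-compact}) holds for the $P_t$ with constants uniform for $t$ near $0$, so each $P_t$ is quasi-compact on $C^\alpha(X)$ with essential spectral radius $\le\kappa<1$ uniformly, and power-bounded (because $\|P_t^{\,n}\varphi\|_\infty\le\|\varphi\|_\infty$), whence its spectrum lies in $\overline{\mathbb{D}}$ with the part on $\partial\mathbb{D}$ semisimple.

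Next comes the analytic perturbation of the eigenvalue $1$. By Theorem~\ref{thmA} and Theorem~\ref{thm:Herver-improved}, $1$ is a semisimple eigenvalue of $P=P_0$ of multiplicity $r$, with eigenprojection $\Pi\varphi=\sum_{i=1}^r g_i\int\varphi\,d\mu_i$. Since $g_j\equiv\delta_{ij}$ on $\mathrm{supp}\,\mu_i$ and $\int\phi\,d\mu_i=0$, the operator $\Pi P'(0)\Pi$ reduced to $\mathrm{span}(g_1,\dots,g_r)$ vanishes, while the associated second-order reduced operator has eigenvalues $-\sigma_1^2,\dots,-\sigma_r^2$; hence, exactly as in Corollary~\ref{maincor2} (cf.~\cite{HH:01}), for $|t|$ small the eigenvalue $1$ splits into $r$ analytic branches $\lambda_i(t)$ with analytic rank-one families $\Pi_i(t)$ satisfying $\Pi_i(0)\varphi=g_i\int\varphi\,d\mu_i$, $\lambda_i(0)=1$, $\lambda_i'(0)=0$, $\lambda_i''(0)=-\sigma_i^2$, so $\lambda_i(t)=1-\tfrac12\sigma_i^2t^2+O(t^3)$. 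Write $P_s=\sum_{i=1}^r\lambda_i(s)\Pi_i(s)+N(s)+R(s)$ for $|s|$ small, where $N(s)$ collects the spectral data near the peripheral eigenvalues of $P$ other than $1$ and $R(s)$ has spectral radius $\le\kappa$.

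Then one estimates each block on a window $|t|\le\varepsilon\sqrt n$. With $s=t/\sqrt n$: $\lambda_i(s)^n=\exp(-\tfrac12\sigma_i^2t^2+O(t^3/\sqrt n))$, so $|\lambda_i(s)^n-e^{-\sigma_i^2t^2/2}|\le C|t|^3n^{-1/2}e^{-ct^2}$ (here $\sigma_i>0$ is used); $\nu(\Pi_i(s)1_X)=\nu(g_i)+O(s)=\nu_i+O(|t|n^{-1/2})$; and $\|R(s)^n\|=O(\kappa^n)$. For $N(s)^n1_X$: because $1_X$ lies in the $1$-eigenspace of $P$, the peripheral-$\ne1$ projections annihilate it, so $N(s)^n1_X$ starts at order $s$, and the hypothesis that \emph{every} $\sigma_i>0$ forces each such peripheral eigenvalue of $P_s$ to remain simple and to recede quadratically, $|\zeta(s)|\le1-cs^2$, so $N(s)^n1_X$ also acquires a factor $(1-cs^2)^n\le e^{-ct^2}$; thus $|\nu(N(s)^n1_X)|\le C|t|n^{-1/2}e^{-ct^2}$. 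Collecting, with $\hat G(t)=\sum_i\nu_ie^{-\sigma_i^2t^2/2}$ the characteristic function of $G(u)=\sum_i\nu_i\Phi(u/\sigma_i)$,
$$
\Big|\,\mathbb{E}_{\bar\nu}\big[e^{i(t/\sqrt n)S_n\phi}\big]-\hat G(t)\,\Big|\le\frac{C}{\sqrt n}\big(|t|+|t|^3\big)e^{-ct^2}+C\kappa^n,\qquad|t|\le\varepsilon\sqrt n.
$$
Since every $\sigma_i>0$, $G$ has a bounded density, so Esseen's smoothing inequality with $T=\varepsilon\sqrt n$ gives
$$
\sup_{u\in\mathbb{R}}\big|\bar\nu(n^{-1/2}S_n\phi\le u)-G(u)\big|\le C\!\int_{-T}^{T}\frac{|\mathbb{E}_{\bar\nu}[e^{i(t/\sqrt n)S_n\phi}]-\hat G(t)|}{|t|}\,dt+\frac{C}{T}=O\big(n^{-1/2}\big),
$$
the integrand being $\le\frac{C}{\sqrt n}(1+t^2)e^{-ct^2}+C\kappa^n/|t|$ on $[-T,T]$. (The standing hypotheses $\int\phi\,d\mu_i=0$, $\sigma_i>0$ for all $i$, and $\int\phi\,d\nu>0$ are exactly the non-degeneracy conditions under which this abstract conclusion holds; in particular $\sigma_i>0$ serves both to give $G$ a bounded density and, when $f$ is not aperiodic, to push the peripheral spectrum of $P_s$ other than $1$ quadratically inside the disc.)

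The main obstacle lies in the second and third steps together: organising the analytic perturbation of the \emph{degenerate} eigenvalue $1$ so that the $r$ branches carry the correct weights $\nu_i=\int g_i\,d\nu$, obtaining the third-order expansion of the $\lambda_i$ \emph{uniformly}, and — in the non-aperiodic case — controlling the part of the spectrum of $P_s$ near the peripheral eigenvalues $\ne1$ (quadratic recession, absence of Jordan blocks), all on the window $|t|\le\varepsilon\sqrt n$. These are precisely the facts packaged by the quasi-compactness results of~\cite{HH:01}; granted them, what remains is the classical Berry--Esseen argument carried out above.
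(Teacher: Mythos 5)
Your proposal takes essentially the same route as the paper, which proves Corollary~\ref{maincor3} by a direct citation of \cite[Thm.~A' and B']{HH:01} together with the quasi-compactness of Theorem~\ref{thmA}; what you do is unpack the Nagaev--Guivarc'h spectral argument that the citation packages, and the expansion is broadly faithful (same twisted operator, same analytic perturbation of the semisimple eigenvalue $1$, same Esseen smoothing on a window $|t|\le\varepsilon\sqrt n$). The one inaccuracy is the claim that $\sigma_i>0$ for all $i$ forces the peripheral eigenvalues of $P_s$ near roots of unity $\ne1$ (which arise when $f$ is not aperiodic) to recede quadratically into the unit disc: the $\sigma_i$'s govern only the eigenvalue branches issuing from $1$, not the rest of the peripheral spectrum, which \cite{HH:01} controls by a separate argument---since you ultimately defer to \cite{HH:01} for the packaging this does not unravel the sketch, but the justification you offer for that particular block is not the right one.
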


\begin{rem}
   Corollaries~\ref{maincor2} and~\ref{maincor3} were stated for $\phi\in C^\alpha(X)$ with $\int \phi\, d\mu_i=0$ for all $i=1,\dots,r$. If this condition does not hold, then these theorems will apply to the function $\varphi=\phi-\sum_{i=1}^r \int \phi \,d\mu_i \cdot g_i$.
\end{rem}

\begin{rem}
    When $r=1$, there is a unique stationary measure $\mu$ and we obtain the classical CLT: for $\phi\in C^\alpha(X)$, $\frac{1}{\sqrt{n}}(S_n\phi-\int \phi d\mu_1)\to \mathcal{N}(0,\sigma^2)$ in distribution, with $O(n^{-1/2})$ speed if $\sigma>0$.
\end{rem}

\begin{rem}
The \emph{local central limit theorem}
 and \emph{renewal theorem} can also be derived as consequences of Theorem~\ref{thmA}, following the results of \cite[Thm.~C' and D']{HH:01}, under additional arithmetic assumptions on~$\phi$.
\end{rem}

\section{Large deviations} \label{ss:large-deviation} 

In what follows, we assume that $f$ is uniquely ergodic, i.e., $r=1$ in Corollary~\ref{rem:measure-mean-quasicompactness}. 
We denote by $\mu$ the unique $f$-stationary measure and $\sigma^2=\lim_{n\to\infty} \int (S_n\phi)^2\, d\bar{\mu}$ where $\bar{\mu}=\mathbb{P}\times \mu$. 
Hennion and Herv\'e~\cite[Thm.~E]{HH:01} 
obtained a sharp upper bound on the rate function for the large deviations
of the process $S_n\phi$ assuming that $\sigma>0$. If one does not care about sharp estimates, one can follow the approach described in~\cite[Rem.~5.2]{duarte2016lyapunov} to get just an upper bound on the decay of the deviation without assuming positive variance. Thus, as a consequence of 
Theorem~\ref{thmA}, we obtain the following: \index{large deviations}
\begin{maincor}[Large deviations] 
\label{maincor4}
Let $f$ be a uniquely ergodic mostly contracting random map under the assumption~\eqref{eq:integral_condition} and consider
$\phi \in C^\alpha(X)$ satisfying  $\int \phi \, d\mu=0$. Then, there exist constants $h>0$ and $C>0$ such that 
for any $\epsilon>0$ small,  {probability measure $\nu$ on $X$} and $n$ large enough, 
$$
\bar{\nu}(S_n\phi>\epsilon n) \leq  C e^{-n h\epsilon^2},  \qquad \bar\nu=\mathbb{P}\times \nu.
$$
If $\sigma>0$, there is a non-negative strictly convex function $c(\epsilon)$ vanishing only at $\epsilon=0$ such that 
$$
\lim_{n\to\infty} \frac{1}{n}\log \bar{\nu}(S_n\phi>\epsilon n)=-c(\epsilon). 
$$
Furthermore,  if either $f$ is mingled or $\nu=\mu$, for any   $\psi\geq 0$ in $C^\alpha(X)$ such that $\int \psi \,d\nu >0$,   
$$
\lim_{n\to\infty} \frac{1}{n} \log \int_{\{S_n\phi>\epsilon n\}  } \psi(f^n_\omega(x)) \,d\bar{\nu} =-c(\epsilon). 
$$ 
\end{maincor}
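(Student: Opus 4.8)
The plan is to reduce everything to the spectral structure of the annealed Koopman operator $P$ on $C^\alpha(X)$ provided by Theorem~\ref{thmA}. Since $f$ is assumed uniquely ergodic, $P$ has $1$ as a simple eigenvalue, with the rest of the spectrum inside a disk of radius $\rho<1$ (if $f$ is moreover aperiodic, $1$ is the only peripheral eigenvalue; if not, one passes to a suitable power or works with the mingled hypothesis). The standard route is the Nagaev--Guivarc'h perturbation method: for $t$ in a small complex neighborhood of $0$, introduce the twisted operator $P_t\varphi(x) = \int e^{t\phi(f_\omega(x))}\varphi(f_\omega(x))\,d\mathbb{P}$, which acts boundedly on $C^\alpha(X)$ because the exponential moment condition~\eqref{eq:integral_condition} controls $\|e^{t\phi}\|_\alpha$ (here $\phi\in C^\alpha(X)$ is bounded, so the twist is actually a bounded multiplier, and $t\mapsto P_t$ is analytic). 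By analytic perturbation theory, for $|t|$ small $P_t$ has a dominant simple eigenvalue $\lambda(t)$ with $\lambda(0)=1$, $\lambda'(0)=\int\phi\,d\mu=0$, and $\lambda''(0)=\sigma^2$, together with a spectral decomposition $P_t^n = \lambda(t)^n \Pi_t + N_t^n$ with $\|N_t^n\|_\alpha \le C\theta^n$ for some $\theta<1$ uniformly in $t$; this is exactly the content invoked from~\cite[Thm.~E]{HH:01}.

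Next I would pass from operators to the process. For real $t\ge 0$ one has $\int e^{tS_n\phi}\psi(f^n_\omega(x))\,d\bar\nu = \langle \nu, P_t^n\psi\rangle$ (with $\psi\equiv 1$ for the first estimates), so $\frac1n\log \int e^{tS_n\phi}\,d\bar\nu \to \log\lambda(t) =: \Lambda(t)$, with $\Lambda(0)=\Lambda'(0)=0$ and $\Lambda''(0)=\sigma^2$. A Chebyshev/exponential-Markov bound $\bar\nu(S_n\phi>\epsilon n)\le e^{-t\epsilon n}\int e^{tS_n\phi}\,d\bar\nu$ optimized over small $t>0$ then gives $\bar\nu(S_n\phi>\epsilon n)\le C e^{-nh\epsilon^2}$ for small $\epsilon$: since $\Lambda(t)=\tfrac12\sigma^2 t^2+o(t^2)$ when $\sigma>0$ we get a genuine quadratic rate, and when $\sigma=0$ (or without assuming positive variance) one argues as in~\cite[Rem.~5.2]{duarte2016lyapunov}, using that $\Lambda$ is $C^2$ with $\Lambda(0)=\Lambda'(0)=0$, so $\Lambda(t)\le K t^2$ on a neighborhood of $0$, which still yields $\le C e^{-nh\epsilon^2}$ after optimizing $t\asymp\epsilon$. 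For the sharp statement when $\sigma>0$, the Gärtner--Ellis theorem applied to the differentiable limiting log-moment generating function $\Lambda$ yields the large deviation principle with good rate function $c(\epsilon)=\sup_{t}(t\epsilon-\Lambda(t))$, which is nonnegative, strictly convex near $0$ and vanishes only at $\epsilon=0$ because $\Lambda$ is strictly convex there; this gives $\tfrac1n\log\bar\nu(S_n\phi>\epsilon n)\to -c(\epsilon)$.

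For the last, weighted, statement I would use the full spectral decomposition rather than just a bound. Writing $\int_{\{S_n\phi>\epsilon n\}}\psi(f^n_\omega(x))\,d\bar\nu$ and performing an inverse-Laplace/Fourier change of measure (exponential tilting by the optimal $t_\epsilon$ with $\Lambda'(t_\epsilon)=\epsilon$), the leading term is governed by $\lambda(t_\epsilon)^n$ times $\langle\nu,\Pi_{t_\epsilon}\psi\rangle$, and the point is that this pairing is nonzero: the dominant eigenfunction $h_{t}$ of $P_t$ and the dominant eigenfunctional are strictly positive (they deform continuously from $1$ and from $\mu$), so $\langle\nu,\Pi_{t_\epsilon}\psi\rangle = \big(\int h_{t_\epsilon}^{*}\,d\nu\big)\big(\int \psi\, h_{t_\epsilon}\,d\mu_{t_\epsilon}\big)$ up to normalization is positive precisely because $\int\psi\,d\nu>0$ and $\psi\ge0$, $\psi\not\equiv0$ on a set of positive mass — this is where either $f$ mingled (so the relevant supports meet, no peripheral obstruction) or the choice $\nu=\mu$ is needed to rule out the pairing vanishing. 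One then gets $\tfrac1n\log\int_{\{S_n\phi>\epsilon n\}}\psi(f^n_\omega(x))\,d\bar\nu\to\log\lambda(t_\epsilon)-t_\epsilon\epsilon = -c(\epsilon)$, matching the unweighted rate.

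The main obstacle I anticipate is not the soft large-deviation machinery but the bookkeeping needed to apply it cleanly in the possibly non-aperiodic case: Theorem~\ref{thmA} only gives a spectral gap when $f$ is mingled, so for a merely uniquely ergodic $f$ one must either restrict to the statements that survive with peripheral eigenvalues (the rate function is unaffected since $|\lambda(t)|$ still dominates), or invoke the mingled/$\nu=\mu$ hypothesis exactly where positivity of the limiting pairing is required; making sure the twisted-operator perturbation, analyticity in $t$ under~\eqref{eq:integral_condition}, and the uniform bound on $\|N_t^n\|_\alpha$ all hold on a common neighborhood of $t=0$ is the one genuinely technical input, and it is precisely what~\cite[Thm.~E]{HH:01} and~\cite[Rem.~5.2]{duarte2016lyapunov} supply once Theorem~\ref{thmA} is in hand.
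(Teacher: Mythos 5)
Your proposal is correct and follows essentially the same route as the paper: Theorem~\ref{thmA} supplies quasi-compactness of $P$ on $C^\alpha(X)$ with $1$ a simple isolated eigenvalue, and the corollary is then obtained by invoking~\cite[Thm.~E]{HH:01} (via the extended-space framework of~\S\ref{ss:limit-theorems-HH}, where $\xi=J(\phi)\in\mathcal{B}$ is a bounded element of the Banach algebra, so the twisted operator is entire) for the sharp rate when $\sigma>0$ and the weighted version under the mingled/$\nu=\mu$ hypothesis, together with~\cite[Rem.~5.2]{duarte2016lyapunov} for the quadratic upper bound without assuming $\sigma>0$. Your fleshing-out of the Nagaev--Guivarc'h spectral perturbation, the Chebyshev optimization giving the $e^{-nh\epsilon^2}$ bound from $\Lambda(t)\le Kt^2$, the G\"artner--Ellis identification of $c(\epsilon)$ as the Legendre transform, and the role of positivity of $\langle\nu,\Pi_{t_\epsilon}\psi\rangle$ in the weighted statement all accurately reflect the content that those two references supply; the paper simply cites them directly rather than reproducing the mechanics.
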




\begin{rem} Corollaries~\ref{maincor2}, \ref{maincor3} and~\ref{maincor4}  can be also  applied to more general observables~$\phi$. See~\S\ref{ss:limit-theorems-HH} for more details. 
\end{rem}

\section{Statistical stability} \label{ss:statistical-stability}
Let $X$ be a compact Riemannian manifold and consider the space\index{space of random maps!$\CE(X)$, $C^1$ random maps} $\CE(X)$ of $C^1$ differentiable random maps $f:\Omega \times X \to X$, up to $\mathbb{P}$-almost identification of fiber maps. That is, $f=g$ if and only if $f_\omega=g_\omega$ for $\mathbb{P}$-a.e.~$\omega\in \Omega$. For $r\in \{0,1\}$, we consider the metric 
 \begin{equation} \label{eq:C1distacia-random-maps}
 \D_{C^r}(f,g)=\int d_{C^r}(f_\omega,g_\omega) \, d\mathbb{P}, \qquad \text{for $f, g \in \CE(X)$}
 \index{metric structures!Cr distance@\(\D_{C^r}\), averaged \(C^r\)-distance on \(C^r_{\smash{\mathbb{P}}}(X)\)}
 \end{equation}
 where 
 $d_{C^r}$ denotes the $C^r$-distance\index{metric structures!Cr@\(d_{C^r}\), \(C^r\)-distance on $C^r$ transformations} 
 on the space of $C^1$ differentiable transformations of $X$. Notice that clearly $\D_{C^0}\leq \D_{C^1}$. Also observe that the notion of mostly contracting can be extended to the considered equivalent classes of random maps, since Lyapunov exponents do not change by $\mathbb{P}$-almost identification of fiber maps.  

\index{stationary measures!statistical stability}
\begin{mainprop} \label{prop:statistical-stability} Let $X$ be a compact Riemannian manifold and consider a mostly contracting random map $f\in \CE(X)$  satisfying~\eqref{eq:integral_condition}. Then, there exists a neighborhood $\mathcal{B}$ of $f$ in $(\CE(X),\D_{C^1})$ such that
the number of ergodic $f$-stationary probability measures is upper semicontinuous~in~$\mathcal{B}$. 
In addition,  
 if $f$ is uniquely ergodic, then
 \begin{enumerate}[leftmargin=1cm]
     \item every $g\in \mathcal{B}$ has a unique stationary measure $\mu_g$,
     \item $\mu_g$ varies continuously in the weak$^*$ topology with $g\in \mathcal{B}$,
     \item  for any  H\"older continuous function $\phi:X\to \mathbb{R}$, the map $
g \in \mathcal{B} \mapsto \int \phi \, d\mu_g$  is H\"older continuous. Moreover, there exist $C> 0$ and $\gamma\in(0,1]$ such that 
$$
   \left|\int \phi \, d\mu _g - \int \phi \, d\mu_h\right| \leq C  \D_{C^0}(g,h)^\gamma  \quad \text{for every $g,h\in \mathcal{B}$.}
$$
 \end{enumerate}
\end{mainprop}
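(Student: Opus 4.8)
The plan is to reduce Proposition~\ref{prop:statistical-stability} to a spectral perturbation statement for the annealed Koopman operators $P_g$, $g$ near $f$, and then to invoke the Keller--Liverani perturbation theorem. Since $\D_{C^1}$ is finer than the topology in which the mostly contracting condition is open, Proposition~\ref{prop:open-mostly} gives a $\D_{C^1}$-ball $\mathcal{B}$ around $f$ made of mostly contracting maps; as that proposition is proved through the robust condition~(iii) of~\S\ref{ss:mostlycontracting}, one may in fact arrange that $\int \log Lg^{n_0}_\omega(x)\, d\mathbb{P} < -a_0$ for all $g \in \mathcal{B}$ and $x \in X$, with $n_0 \in \mathbb{N}$ and $a_0 > 0$ \emph{independent of $g$}. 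Shrinking $\mathcal{B}$, I also make~\eqref{eq:integral_condition} uniform: we may assume $\beta \le 1$ (the condition is monotone in $\beta$), and then $\mathrm{Lip}(g_\omega)^\beta \le \mathrm{Lip}(f_\omega)^\beta + d_{C^1}(f_\omega,g_\omega)^\beta$ together with Jensen's inequality gives $M := \sup_{g \in \mathcal{B}} \int \mathrm{Lip}(g_\omega)^\beta\, d\mathbb{P} < \infty$, hence $\int \mathrm{Lip}(g^{n_0}_\omega)^\beta\, d\mathbb{P} \le M^{n_0}$ for every $g \in \mathcal{B}$ by independence.

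The core step is to show that $\{P_g : g \in \mathcal{B}\}$ admits a \emph{common} exponent $0 < \alpha \le \alpha_0$ and \emph{uniform} Lasota--Yorke data: there are $N \ge 1$, $\theta \in (0,1)$ and $C > 0$ with
\[
\|P_g^{N}\varphi\|_{\alpha} \le \theta\,\|\varphi\|_{\alpha} + C\,\|\varphi\|_\infty
\qquad\text{and}\qquad
\sup_{n \ge 1}\|P_g^{n}\|_{\alpha} \le C \qquad\text{for all } g \in \mathcal{B}.
\]
This is obtained by revisiting the chain ``mostly contracting $+$ exponential moment $\Rightarrow$ locally contracting on average $\Rightarrow$ Lasota--Yorke inequality'' behind Proposition~\ref{prop:quasi-compact} and checking that every constant there depends only on $n_0$, $a_0$, $M$, $\beta$ and $\diam X$, all uniform over $\mathcal{B}$. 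In addition, for $\varphi \in C^{\alpha}(X)$,
\[
\|P_g\varphi - P_f\varphi\|_\infty \le |\varphi|_{\alpha}\int d_{C^0}(g_\omega,f_\omega)^{\alpha}\, d\mathbb{P} \le |\varphi|_{\alpha}\, \D_{C^0}(g,f)^{\alpha},
\]
so $\|P_g - P_f\|_{C^{\alpha}\to C^0} \le \D_{C^0}(g,f)^{\alpha} \to 0$ as $g \to f$. \textbf{The uniform Lasota--Yorke estimate is the main obstacle:} the subtle point is that ``locally contracting on average'' holds below a scale $r > 0$ at which $\int (L_r g^{n_0}_\omega(x))^{\alpha}\, d\mathbb{P} < 1$, and one must ensure that $r$ can be chosen uniformly over $\mathcal{B}$ although the modulus of continuity of the fiber derivatives is not controlled by $\D_{C^1}$; this is precisely the content that must be extracted from the proof of Proposition~\ref{prop:quasi-compact}.

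With these inputs I apply the Keller--Liverani perturbation theorem to the spectral value $1$ of the quasi-compact operator $P_f$ on $C^{\alpha}(X)$, which is isolated because the peripheral spectrum of $P_f$ is finite (Theorem~\ref{thmA}). One obtains a smaller neighborhood on which the part of $\mathrm{spec}(P_g)$ inside a fixed small disk around $1$ has total multiplicity equal to $r := \dim\ker(P_f - I)$ (there is no Jordan block at $1$, since $P_g$ is Markov), and the associated spectral projections $\Pi_g$ converge to $\Pi_f$ in the $C^{\alpha}\to C^0$ operator norm, at rate $\lesssim \|P_g - P_f\|_{C^{\alpha}\to C^0}^{\eta}$ for some $\eta \in (0,1]$ depending only on the uniform Lasota--Yorke constants. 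Since $1 \in \mathrm{spec}(P_g)$ always and, by Corollary~\ref{rem:measure-mean-quasicompactness} together with Theorem~\ref{thmA}, the number of ergodic $g$-stationary measures equals $\dim\ker(P_g - I) \le r$ for such $g$, this number is upper semicontinuous at $f$; running the argument at an arbitrary point of $\mathcal{B}$ gives upper semicontinuity on all of $\mathcal{B}$.

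Finally, suppose $f$ is uniquely ergodic, so $r = 1$, and replace $\mathcal{B}$ by the smaller neighborhood above. Then $1$ is the only spectral value of $P_g$ near $1$ and it is simple, so $g$ is uniquely ergodic by Theorem~\ref{thmA}, which is~(1). Writing $\Pi_g\varphi = \bigl(\int \varphi\, d\mu_g\bigr)\,1_X$, the convergence $\|\Pi_g - \Pi_f\|_{C^{\alpha}\to C^0} \to 0$ yields $\int \varphi\, d\mu_g \to \int \varphi\, d\mu_f$ for all $\varphi \in C^{\alpha}(X)$, which upgrades to weak* convergence $\mu_g \to \mu_f$ since $C^{\alpha}(X)$ is dense in $C(X)$: this is~(2). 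For~(3), the H\"older-in-perturbation bound together with $\|P_g - P_f\|_{C^{\alpha}\to C^0} \le \D_{C^0}(g,f)^{\alpha}$ gives $\bigl|\int \phi\, d\mu_g - \int \phi\, d\mu_h\bigr| \le C\,\|\phi\|_{\alpha}\,\D_{C^0}(g,h)^{\alpha\eta}$ for $\phi \in C^{\alpha}(X)$; for a H\"older $\phi$ of exponent $\alpha'' < \alpha$ one mollifies on the manifold $X$ to get $\phi_\delta \in C^{\infty}(X) \subset C^{\alpha}(X)$ with $\|\phi - \phi_\delta\|_\infty \lesssim \delta^{\alpha''}$ and $\|\phi_\delta\|_{\alpha} \lesssim \delta^{\alpha'' - \alpha}$, then splits $\bigl|\int\phi\, d\mu_g - \int\phi\, d\mu_h\bigr| \le 2\|\phi-\phi_\delta\|_\infty + C\,\|\phi_\delta\|_{\alpha}\,\D_{C^0}(g,h)^{\alpha\eta}$ and optimizes over $\delta$, obtaining H\"older continuity with exponent $\gamma = \eta\,\alpha''$ (if $\alpha'' \ge \alpha$, use $\phi$ directly). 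This completes the plan.
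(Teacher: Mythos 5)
Your plan is structurally the same as the paper's (Lasota--Yorke $\Rightarrow$ quasi-compactness $\Rightarrow$ Keller--Liverani spectral stability, then read off items (1)--(3) from the rank-one spectral projection), but you correctly flag and then leave open the one step that carries the entire weight of the argument: obtaining the \emph{uniform} Lasota--Yorke inequality over a $\D_{C^1}$-neighborhood. Writing ``this is precisely the content that must be extracted'' is not a proof, and the extraction does not work the way you sketch. The radius $r>0$ entering ``locally contracting on average'' is \emph{not} a function of $n_0$, $a_0$, $M$, $\beta$ and $\diam X$ alone; it depends on the modulus of continuity of $x\mapsto Lg^{n_0}_\omega(x)$, which $\D_{C^1}$ does not control, as you yourself observe. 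So the claim that ``every constant there depends only on $n_0$, $a_0$, $M$, $\beta$ and $\diam X$'' is false, and the chain you invoke from Propositions~\ref{prop:alpha0} and~\ref{prop:quasi-compact} does not close as stated.

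The paper resolves this in Proposition~\ref{prop:lasota-york-uniforme} with a mechanism that is absent from your proposal: because a compact Riemannian manifold is a locally length space, Proposition~\ref{prop:lengthspace} gives $L_r g^{n_0}_\omega(x)=\sup_{z\in B(x,r)}Lg^{n_0}_\omega(z)$ for $r$ below a fixed $R$, whence
\[
L_r g^{n_0}_\omega(x)\ \le\ \sup_{z\in X}\bigl|Lg^{n_0}_\omega(z)-Lf^{n_0}_\omega(z)\bigr| + L_r f^{n_0}_\omega(x),
\]
so that the scale $r$ can be frozen from $f$ alone and only the \emph{pointwise sup-distance of the local Lipschitz constants} needs to be small. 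That quantity is part of the auxiliary metric $\D_{\mathrm{L}}$ (hence $\Dbeta$), and Lemma~\ref{rem:topology} is what shows $\Dbeta$-closeness is implied by $\D_{C^1}$-closeness, using $|Lg^i_\omega(x)-Lf^i_\omega(x)|\le\|Dg^i_\omega(x)-Df^i_\omega(x)\|$ on a Riemannian manifold together with a chain-rule induction on $i$. Without this length-space step one cannot avoid controlling the modulus of continuity of the fiber derivatives, which is exactly what $\D_{C^1}$ fails to do. This is the missing idea in your proposal.

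A secondary, smaller issue: your mollification argument for item~(3) is aimed at the wrong case and is unnecessary. If $\phi$ is $\alpha''$-H\"older with $\alpha''<\alpha_0$, you may simply run the whole spectral argument with the exponent $\alpha''$, since the uniform Lasota--Yorke estimate holds for \emph{every} $0<\alpha\le\alpha_0$. The genuinely needed reduction is the opposite one, when $\phi$ is $\beta$-H\"older with $\beta>\alpha_0$ (e.g.\ Lipschitz); the paper handles it by normalizing $\diam X=1$ so that $\beta$-H\"older implies $\alpha_0$-H\"older, not by mollification. Your mollification estimate $\|\phi_\delta\|_\alpha\lesssim\delta^{\alpha''-\alpha}$ would in any case only help with $\alpha''>\alpha$, contradicting the constraint $\alpha''<\alpha$ you wrote down.
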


We prove in Theorem~\ref{prop:statistical-stability-length} a slightly more general result for {not necessarily uniquely ergodic mostly contracting} Lipschitz random maps on metric spaces.  In our context, statistical stability typically refers to a scenario in which all ergodic stationary measures persist and vary continuously with small perturbations in the dynamics. Hence, under the assumption of the previous proposition, uniquely ergodic random maps exhibit statistical stability.  {Moreover, since the number of ergodic stationary measures is upper semicontinuous and Theorem~\ref{prop:statistical-stability-length} yields the statistical stability in the stratum where the number of stationary measures remains constant,  the persistence property holds on an open and dense subset of
the mostly contracting random maps. See
Theorem~\ref{prop:generic-statistical-stability}.}

Following~\cite{andersson2020statistical}, one can relax the concept of statistical stability, stating that a random map $f$ is \emph{weakly statistically stable} if, given any neighborhood $\mathbb{U}$ in the weak$^*$ topology of the stationary measures of $f$, there exists a neighborhood $\mathcal{U}$ of $f$ such that, for any $g\in \mathcal{U}$, every stationary measure of $g$ belongs to $\mathbb{U}$. We prove in Theorem~\ref{thm:Weak statistically stability} that every continuous random map is weakly statistically stable.

\section{Locally constant linear cocycles}  \label{ss:locally-constant}
{A further fundamental class} of mostly contracting random maps arises from the classical study of products of independent identically distributed random variables in the group $\mathrm{GL}(m)$\index{metric spaces!W@\( \operatorname{GL}(m)\), general linear group} of real invertible $m \times m$ matrices, with $m \geq 2$. 

Let $A:\Omega \to \mathrm{GL}(m)$ be a measurable map such that 
$$A(\omega) = A(\omega_0) \quad \text{for $\mathbb{P}$-a.e.~$\omega = (\omega_i)_{i \ge 0} \in \Omega$.} 
$$
This map is usually called a \emph{locally constant linear cocycle}\index{linear cocycles!locally constant linear cocycle} (over the shift $\sigma:\Omega \to \Omega$). For $\mathbb{P}$-a.e.~$\omega = (\omega_i)_{i \geq 0} \in \Omega$, define 
$$
A^0(\omega) = \mathrm{Id} \quad \text{and} \quad A^n(\omega) = A({\omega_{n-1}}) \cdot \ldots \cdot A({\omega_0}) \quad \text{for $n \geq 1$}.
$$
Following~\cite[Def.~I.2.1 and Def.~III.5.5]{BouLac:85}, by the standard $\mathbb{P}$-integrability assumption of $\log^{+} \|A\|$, we can define the \emph{Lyapunov exponents}\index{linear cocycles!Lyapunov exponents} $\lambda_1(A) \geq \ldots \geq \lambda_m(A)$ associated with $A$ inductively for $1 \leq p \leq m$ by 
$$
\sum_{i=1}^p  \lambda_i(A) = 
\lim_{n \to \infty} \frac{1}{n} \int \log \|\exterior{p} A^n(\omega)\| \, d\mathbb{P}.
$$
Here $\|\exterior{p} M\|$ denotes the supremum of $\|Mu_1 \wedge \dots \wedge Mu_p\|$ with $u_1 \wedge \dots \wedge u_p$ ranges over the unit vectors in the exterior power $\exterior{p} \mathbb{R}^m$ for a matrix $M \in \mathrm{GL}(m)$. 

We say that $A$ is \emph{quasi-irreducible}\index{linear cocycles!quasi-irreducible cocycle} if there is no  proper subspace $L$ of $\mathbb{R}^m$ that is invariant under $A(\omega)$ for $\mathbb{P}$-a.e.~$\omega \in \Omega$ and $\lambda_1(A|_L) < \lambda_1(A)$ where
\begin{equation} \label{eq:reducible-Lyapunov-exponent}
\lambda_1(A|_L) \eqdef \lim_{n \to \infty} \frac{1}{n} \int \log \big\|A^n(\omega) \big\arrowvert_{L}\, \big\| \, d\mathbb{P}.
\end{equation} 
Notice that this condition is weaker than the classical condition of irreducibility. Namely, $A$ is said to be \emph{irreducible}\index{linear cocycles!irreducible cocycle} if there is no proper subspace $L$ of $\mathbb{R}^m$ such that $A(\omega)L = L$ for $\mathbb{P}$-a.e.~$\omega \in \Omega$. Similarly, $A$ is said to be \emph{strongly irreducible}\index{linear cocycles!strongly irreducible cocycle} if there is no finite family of proper subspaces invariant under $A(\omega)$ for $\mathbb{P}$-a.e.~$\omega \in \Omega$. Hence, strong irreducibility implies irreducibility, which clearly implies quasi-irreducibility.

\begin{defi} \label{def:equator}
The \emph{equator}\index{linear cocycles!equator subspace} subspace $E$ is the sum of all $A$-invariant subspaces $W \subset \mathbb{R}^m$ such that $\lambda_1(A|_W) < \lambda_1(A)$. In other words, $E$ is the largest $A$-invariant subspace of $\mathbb{R}^m$ where the first Lyapunov exponent satisfies $\lambda_1(A|_E) < \lambda_1(A)$. Conversely, the \emph{pinnacle}\index{linear cocycles!pinnacle subspace} subspace $F$ is defined as the intersection of all $A$-invariant subspaces $W \subset \mathbb{R}^m$ for which $\lambda_1(A|_W) = \lambda_1(A)$.
\end{defi}

From this definition, it follows that both the equator and the pinnacle are $A$-invariant subspaces of $\mathbb{R}^m$. These terms reflect the geometric nature of these $A$-invariant subspaces: the equator defines a boundary of slower expansion, while the pinnacle captures the maximal expansion rate. 
Notably, $A$ is quasi-irreducible if and only if its equator $E$ is the trivial \mbox{subspace of~$\mathbb{R}^m$.}

Finally, consider the projective action of $\mathrm{GL}(m)$ on the space\index{metric spaces!projective space@\(P(\mathbb R^m)\), projective space} $P(\mathbb{R}^m)$ of lines $\hat{x} = \{\alpha x : \alpha \in \mathbb{R}\}$, $x \in \mathbb{R}^m \setminus \{0\}$, endowed with the natural angular distance. Associated with $A$, we consider the \emph{projective random map}\index{random maps!\(f_A\), projective random map} 
$$f_A: \Omega \times P(\mathbb{R}^m) \to P(\mathbb{R}^m), \quad    f_A(\omega, \hat{x}) = A(\omega) \hat{x}.$$

The following proposition  {characterize}  the main property in~\cite{page1982theoremes, page1989regularite} from which Le Page derived many limit theorems for random products of matrices.

\begin{mainprop} \label{prop:projective}
{Let $A:\Omega \to \mathrm{GL}(m)$ be a locally constant linear cocycle and assume that  $\log^+ \|A^{\pm1}\|$ is $\mathbb{P}$-integrable and $\lambda_1(A) > \lambda_2(A)$. Then, the following conditions are equivalent: 
\begin{enumerate}[leftmargin=1cm, label=(\arabic*)]
    \item $A$ is quasi-irreducible,
    \item $f_A$ is uniquely ergodic,
    \item $f_A$ is mostly contracting. 
\end{enumerate} 
Moreover, under the exponential moment 
condition, 
\begin{equation}
    \label{eq:exponential-moment-condition}
 \int \left(\max\{\|A(\omega)\|, \, \|A(\omega)^{-1}\|\}\right)^\beta \, d\mathbb{P}<\infty \quad \text{for some $\beta>0$},\index{integrability conditions!exponential moment for linear cocycle}
\end{equation}
the following properties are also equivalent to the previous ones:
\begin{enumerate}[leftmargin=1cm,resume,label=(\arabic*)]
    \item $f_A$ is (globally) contracting on average,
    \item the Koopman operator associated with $f_A$ has spectral gap \\ on $C^\alpha(P(\mathbb{R}^m))$ for any $\alpha>0$ small enough. 
\end{enumerate}
Furthermore, the exponential moment and (5) also imply that $f_A$ is proximal.}
\end{mainprop}

{We show that the simplicity of the first Lyapunov exponent is a requirement to be mostly contracting, cf.~Lemma~\ref{prop:mostly-simplicity}.  Hence, as a consequence of Proposition~\ref{prop:projective},  $f_A$ is mostly contracting if and only if $A$ is quasi-irreducible and $\lambda_1(A)>\lambda_2(A)$.}    



{In addition to the exponential moment condition~\eqref{eq:exponential-moment-condition}, the classical assumptions used to obtain the spectral gap and the limit laws for the projective action are strong irreducibility and proximality. In view of the results of Guivarc'h and Raugi~\cite{GR84,GR85} (see also Gol'dsheid and Margulis~\cite{GM89}), under strong irreducibility, the proximality of \(f_A\) is equivalent to the simplicity of the maximal Lyapunov exponent, that is, to \(\lambda_1(A)>\lambda_2(A)\). Since strong irreducibility implies quasi-irreducibility, Proposition~\ref{prop:projective} shows that one can replace these classical irreducibility assumptions by the weaker hypothesis of quasi-irreducibility and simplicity of the first Lyapunov exponent. 

}

The matrix examples also admit natural multidimensional extensions to Grassmannians and flag manifolds. Fix \(1\le k\le m-1\). The cocycle \(A:\Omega\to \mathrm{GL}(m)\) induces a random map on the Grassmannian \(\mathrm{Gr}_k(\mathbb{R}^m)\) by
$(\omega,E)\mapsto A(\omega)E$.
Via the Pl\"ucker embedding
$\mathrm{Gr}_k(\mathbb{R}^m)\hookrightarrow \mathrm{P}(\exterior{k}\mathbb{R}^m)$,
this action is the restriction of the projective random map associated with the cocycle \(\exterior{k} A\). If \(\exterior{k} A\) is quasi-irreducible and $\lambda_k(A)>\lambda_{k+1}(A)$,
then
$
\lambda_1(\exterior{k} A)-\lambda_2(\exterior{k} A)=\lambda_k(A)-\lambda_{k+1}(A)>0$.
Thus, Proposition~\ref{prop:projective} applied to \(\exterior{k} A\) shows that the induced random map on \(\mathrm{Gr}_k(\mathbb{R}^m)\) is mostly contracting.  In particular, if \(A\) has simple Lyapunov spectrum and \(\exterior{k} A\) is quasi-irreducible for every \(k=1,\dots,m-1\), then each Grassmannian action is mostly contracting. Consequently, the diagonal action on
\(
\prod_{k=1}^{m-1}\mathrm{Gr}_k(\mathbb{R}^m)
\)
is mostly contracting, and therefore so is its restriction to the full flag manifold, that is, to the invariant closed subset
\[
\mathcal{F}(\mathbb{R}^m)=\{(E_1,\dots,E_{m-1})\in \prod_{k=1}^{m-1}\mathrm{Gr}_k(\mathbb{R}^m): E_1\subset \cdots \subset E_{m-1}\}.
\]

\subsection{Central limit theorem and large deviations}  \label{ss:CLT-linear}
Bellman~\cite{bellman1954limit} initiated the general theory on the limiting behavior of systems affected by noncommutative effects through the study of random products of matrices. Building on Bellman's work, Furstenberg and Kesten~\cite{FK:60} reached similar results on the central limit theorem for the entries of certain random products of positive matrices by analyzing the first Lyapunov exponent. Subsequently, Le Page~\cite{page1982theoremes} advanced the theory further by establishing the central limit theorem and large deviations for locally constant linear cocycles $A:\Omega \to \mathrm{GL}(m) $ under the assumptions that $A$ satisfies a finite exponential moment condition, is strongly irreducible, and  $f_A$ is proximal.  As recalled above, later, the works of Guivarc'h and Raugi~\cite{GR84,GR85}, as well as Gol'dsheid and Margulis~\cite{GM89},  clarified these assumptions. Specifically, they showed that under the assumption of strong irreducibility, $f_A$ is proximal if and only if $\lambda_1(A) > \lambda_2(A)$. Moreover, this is equivalent to the \emph{contracting property}\index{linear cocycles!contracting property} of $A$, meaning that there exists a sequence $g_n$ in the semigroup $\Gamma_A$ generated by the support of the associated distribution $\nu_A=A_* \mathbb{P}$ on $\mathrm{GL}(m)$ such that $\|g_n\|^{-1} g_n$ converges to a rank-one endomorphism. These conditions also imply that $\Gamma_A$ has an unbounded image in the projective general linear group~$\mathrm{PGL}(m)$.   
See~\cite[Prop.~4.4, Thm~6.1 of Chap.~III and Lem.~5.3, Cor.~8.6 in Chap.~V]{BouLac:85}.

In light of all of these relations, the following result extends the classical limit laws to the broader quasi-irreducible setting. Its proof relies on Proposition~\ref{prop:projective} together with the approach of Hennion and Herv\'e~\cite[{\S}X.4]{HH:01}.



\index{central limit theorem}
\index{large deviations}

\begin{mainprop} \label{mainprop:CLT-LD} Let $A:\Omega \to \mathrm{GL}(m)$ be a locally constant linear cocycle such that 
\begin{enumerate}[leftmargin=1cm]
    \item  $A$ satisfies the exponential moment condition~\eqref{eq:exponential-moment-condition},
    \item $\lambda_1(A)>\lambda_2(A)$, and
     \item $A$ is quasi-irreducible.
\end{enumerate}  Then there exists $\sigma \geq  0$ such that  for any unit vector $x\in \mathbb{R}^m$, 
\begin{equation} \label{eq:CLT-point-intro}
\frac{\log \|A^n(\omega)x\| - n\lambda_1(A)}{ \sqrt{n}}  
\xrightarrow[n\to +\infty]{\rm{law}}
  \mathcal{N}(0,\sigma^2)    
\end{equation} 
and there are  positive numbers $C$ and $h$, which do not depend on $x$,  such that for any $\epsilon>0$ small enough, one can find $n_0=n_0(\epsilon)\in \mathbb{N}$ for which
\begin{equation} \label{eq:LD-point-intro}
  \mathbb{P}\left( \big|\frac{1}{n}\log \|A^n(\omega)x\|-\lambda_1(A) \big| >\epsilon \right) \leq  C e^{-n h\epsilon^2}  \quad \text{for all $n\geq n_0$}.
\end{equation} 
Moreover,~\eqref{eq:CLT-point-intro} and~\eqref{eq:LD-point-intro} are also satisfied for $\log \|A^n(\omega)\|$  instead of $\log \|A^n(\omega)x\|$. 
\end{mainprop}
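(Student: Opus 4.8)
\textbf{Proof proposal for Proposition \ref{mainprop:CLT-LD}.}

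The plan is to reduce the statement to an application of the central limit theorem and large deviation estimates already collected in Corollaries~\ref{maincor3} and~\ref{maincor4} for the dynamics of the projective random map $f_A$ on $P(\mathbb{R}^m)$, after establishing a suitable cocycle (coboundary-type) decomposition of $\log\|A^n(\omega)x\|$. First I would invoke Proposition~\ref{prop:projective}: under hypotheses (1)--(3) the projective random map $f_A$ is uniquely ergodic, mostly contracting, proximal, and satisfies the exponential moment condition~\eqref{eq:integral_condition}; denote by $\mu$ its unique stationary measure on $P(\mathbb{R}^m)$ and set $\bar\mu=\mathbb{P}\times\mu$. The natural observable is the \emph{norm cocycle} $\phi(\omega,\hat x)\eqdef \log\frac{\|A(\omega)x\|}{\|x\|}$, which is well defined on $\Omega\times P(\mathbb{R}^m)$ (it only depends on $\omega_0$ and on the line $\hat x$), and one has the additivity relation $\log\|A^n(\omega)x\|-\log\|x\| = \sum_{k=0}^{n-1}\phi(F^k(\omega,\hat x)) = S_n\phi(\omega,\hat x)$ where $F$ is the skew-product of $f_A$. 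The key regularity point is that $\phi(\omega,\cdot)$ is $\alpha$-Hölder on $P(\mathbb{R}^m)$ uniformly in a way controlled by $\|A(\omega)^{\pm1}\|$: indeed $|\log\|A(\omega)x\|-\log\|A(\omega)y\||$ is bounded by a constant times $\|A(\omega)\|\,\|A(\omega)^{-1}\|$ times the angular distance $d(\hat x,\hat y)$, so after passing to the equivalent metric $d^\alpha$ and using the exponential moment condition~\eqref{eq:exponential-moment-condition}, $\phi$ lies in the relevant Hölder class with integrable (in $\omega$) Hölder constant — enough to run the Hennion--Hervé machinery underlying Corollaries~\ref{maincor2}--\ref{maincor4}.

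Next I would identify the centering constant. By the Furstenberg-type formula (which here follows from unique ergodicity of $f_A$ together with the subadditive/Markov-operator results, e.g.\ Theorem~\ref{prop:equivalence} applied to $f_A$, or directly from Kingman), $\int \phi\,d\bar\mu = \lambda_1(A)$; equivalently $\frac1n\int\log\|A^n(\omega)x\|\,d\mathbb{P}\to\lambda_1(A)$ uniformly in the unit vector $x$. Set $\psi \eqdef \phi-\lambda_1(A)$, so $\int\psi\,d\bar\mu=0$ and $S_n\psi(\omega,\hat x)=\log\|A^n(\omega)x\|-n\lambda_1(A)$ up to the bounded term $-\log\|x\|$, which is $0$ for a unit vector and in any case does not affect the $\sqrt n$-normalized limit nor the exponential-order large deviation bound. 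Now apply Corollary~\ref{maincor3} with $\nu=\delta_{\hat x}$: it yields the CLT $\frac{1}{\sqrt n}S_n\psi\to\mathcal N(0,\sigma^2)$ in law with respect to $\mathbb{P}$ (since $r=1$), which is exactly~\eqref{eq:CLT-point-intro}; the variance $\sigma^2=\lim_n\frac1n\int (S_n\psi)^2\,d\bar\mu$ does not depend on $x$. Here one must handle separately, but trivially, the degenerate case $\sigma=0$, where~\eqref{eq:CLT-point-intro} is the statement that the normalized sums converge to the Dirac mass at $0$. For~\eqref{eq:LD-point-intro}, apply Corollary~\ref{maincor4} (the non-sharp upper bound, valid without assuming $\sigma>0$) to $\psi$ and to $-\psi$ with $\nu=\delta_{\hat x}$, obtaining $\mathbb{P}(|\frac1n S_n\psi|>\epsilon)\le Ce^{-nh\epsilon^2}$ with $C,h$ independent of $x$ because the constants in Corollary~\ref{maincor4}/the Lasota--Yorke estimate depend only on $f_A$ and $\phi$, not on the starting point.

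Finally, for the last sentence — replacing $\log\|A^n(\omega)x\|$ by $\log\|A^n(\omega)\|$ — I would use the standard comparison $\log\|A^n(\omega)x\|\le \log\|A^n(\omega)\|$ together with a lower bound: by proximality/contraction of $\Gamma_A$ (Proposition~\ref{prop:projective}) there is, for $\mathbb{P}$-a.e.\ $\omega$, a line realizing the top expansion up to a subexponential, in fact bounded-in-expectation, error, so that $0\le \log\|A^n(\omega)\|-\log\|A^n(\omega)x\|$ is controlled; more efficiently, one notes $\log\|A^n(\omega)\| = \sup_{\hat y} S_n\phi(\omega,\hat y)$ and that the difference $\sup_{\hat y}S_n\phi(\omega,\hat y)-S_n\phi(\omega,\hat x)$ is, in $L^1(\mathbb{P})$ and even with exponential tails, bounded uniformly in $n$ and $x$ thanks to the exponential contraction in Theorem~\ref{cor:local-contraction} applied to $f_A$ (the forward images of any two lines get exponentially close, and $\phi(\omega,\cdot)$ is Hölder). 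A bounded additive perturbation does not change a CLT with $\sqrt n$ scaling nor a large-deviation bound of order $e^{-nh\epsilon^2}$ (absorb it into $C$ after shrinking $h$), so~\eqref{eq:CLT-point-intro} and~\eqref{eq:LD-point-intro} transfer verbatim to $\log\|A^n(\omega)\|$.

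\textbf{Main obstacle.} The routine parts are the cocycle decomposition and the invocation of Corollaries~\ref{maincor3}--\ref{maincor4}; the genuinely delicate point is verifying that the norm cocycle $\phi$ has the required Hölder regularity \emph{with an $\omega$-Hölder-seminorm that is integrable to the power matching the exponential moment condition}, since the naive bound on the Hölder constant of $\hat x\mapsto\log\|A(\omega)x\|$ grows like $\|A(\omega)\|\,\|A(\omega)^{-1}\|$ and one must choose $\alpha\le\alpha_0$ small enough (using the $d^\alpha$ metric) so that $\|A(\omega)\|^\alpha\|A(\omega)^{-1}\|^\alpha$ is $\mathbb{P}$-integrable — this is exactly where~\eqref{eq:exponential-moment-condition} with a possibly small $\beta$ is used, and it must be reconciled with the $\alpha_0$ produced by Theorem~\ref{thmA}. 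The second, milder, obstacle is the uniform-in-$x$ control of the error term $\log\|A^n(\omega)\|-\log\|A^n(\omega)x\|$; I expect the cleanest route is through Theorem~\ref{cor:local-contraction} for $f_A$ rather than an ad hoc matrix argument.
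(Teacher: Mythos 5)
Your overall strategy matches the paper's: reduce to the projective random map $f_A$ (via Proposition~\ref{prop:projective}), work with the norm cocycle, and invoke the Hennion--Herv\'e limit theorems; the paper routes this through the more general Theorem~\ref{thm:CLT-linear-general}, but the underlying machinery is identical. Two points deserve sharpening.

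First, the norm cocycle $\phi(\omega,\hat x)=\log\|A(\omega)x\|$ depends on $\omega_0$ and not only on $\hat x$, so it is not an element of $C^\alpha(P(\mathbb{R}^m))$ and Corollaries~\ref{maincor2}--\ref{maincor4} cannot be applied to it with $\nu=\delta_{\hat x}$ as literally stated. The paper handles exactly this issue in \S\ref{ss:limit-theorems-HH}: the observable lives on $E=T\times X$, the operator is conjugated via $J:C^\alpha(X)\to B(E)$, and the working hypothesis becomes the finiteness of $K_\xi(\theta)$ in~\eqref{eq:moment-HH}, which in this matrix setting the paper obtains from~\cite[Thm.~X.14]{HH:01}. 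You are clearly aware of this ($\omega$-dependent H\"older seminorm, ``the machinery underlying'' the corollaries), but your write-up elides the step where the observable space is enlarged; that step, not the H\"older bound itself, is the structural point. (Also, for the CLT you should cite Corollary~\ref{maincor2}, not~\ref{maincor3}, which is the Berry--Esseen refinement with the extra hypothesis $\int\phi\,d\nu>0$.)

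Second, for transferring~\eqref{eq:LD-point-intro} to $\log\|A^n(\omega)\|$, your claim that the difference $\log\|A^n(\omega)\|-\log\|A^n(\omega)x\|$ has exponential tails uniformly in $n$ is more than what the paper establishes and more than what is needed. Claims~\ref{claim:bound8}--\ref{claim:bound9} give $L^r$-boundedness of $\sup_n$ of the difference and an a.s.\ bound $C(\omega)$, which suffices for the CLT transfer ($Y_n/\sqrt n\to0$ a.s.), but for the large deviation bound the paper instead identifies $\log\|A^n(\omega)\|=\max_i\log\|A^n(\omega)x_i\|$ over a basis $\{x_i\}$ of $\mathbb{R}^m$ and uses the union bound $\mathbb{P}(W_n>\epsilon)\le\sum_i\mathbb{P}(Z^{x_i}_n>\epsilon)$ together with the one-sided lower bound $Z_n^x\le W_n$; this avoids needing any tail estimate on the error term at all.
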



At first glance, the previous literature contains examples that suggest the impossibility of extending the central limit theorem to irreducible cocycles {that are not strongly irreducible}. See examples of Furstenberg and Kesten~\cite[Example~2]{FK:60}, and Benoist and Quint~\cite[Example~4.15]{BQ16}. These examples involve cocycles in $\mathrm{SL}(2)$ with Lyapunov exponents $\lambda_1(A)=\lambda_2(A)=0$. However, the previous result demonstrates that an extension beyond the classical condition of strong irreducibility is indeed possible when $\lambda_1(A)>\lambda_2(A)$. In fact, the proposition below shows that contrary to what these examples might suggest, in dimension $m=2$, we can obtain a result without the assumption of (quasi)irreducibility. 

\begin{mainprop} \label{mainprop:CLT-LD2} Let $A:\Omega \to \mathrm{GL}(2)$ be a locally constant linear cocycle such that 
\begin{enumerate}[leftmargin=1cm]
    \item  $A$ satisfies the exponential moment condition~\eqref{eq:exponential-moment-condition},
     \item $\lambda_1(A)>\lambda_2(A)$.
\end{enumerate} 
Then  there exists $\sigma \geq  0$ such that
\[ 
\frac{\log \|A^n(\omega)\| - n\lambda_1(A)}{ \sqrt{n}}  
\xrightarrow[n\to +\infty]{\rm{law}}
  \mathcal{N}(0,\sigma^2)
 \]
and there are  positive numbers $C$ and $h$  such that for any $\epsilon>0$ small enough, one can find $n_0=n_0(\epsilon)\in \mathbb{N}$ for which
\begin{equation*} 
  \mathbb{P}\left( \big|\frac{1}{n}\log \|A^n(\omega)\|-\lambda_1(A) \big| >\epsilon \right) \leq  C e^{-n h\epsilon^2}  \quad \text{for all $n\geq n_0$}.
\end{equation*} 
\end{mainprop}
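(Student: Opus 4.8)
The plan is to deduce Proposition~\ref{mainprop:CLT-LD2} from Proposition~\ref{mainprop:CLT-LD} by reducing the non-(quasi)-irreducible case in dimension $m=2$ to a quasi-irreducible situation on a subcocycle, at the cost of keeping track of a harmless bounded error. Recall that in dimension $2$ the equator $E$ of $A$ is either trivial (in which case $A$ is quasi-irreducible by the remark after Definition~\ref{def:equator}, $X=P(\mathbb{R}^2)$ is allowed, and the statement is exactly Proposition~\ref{mainprop:CLT-LD}), or $E$ is a line, or $E=\mathbb{R}^2$. The case $E=\mathbb{R}^2$ cannot occur: it would force $\lambda_1(A|_{\mathbb{R}^2})=\lambda_1(A)<\lambda_1(A)$. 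So the only genuinely new case is $\dim E=1$, i.e.\ $E=\hat v$ for some $A$-invariant line $\hat v$ with $\lambda_1(A|_{\hat v})<\lambda_1(A)$; and hypothesis (ii) gives an $f_A$-invariant compact set $X\subset P(\mathbb{R}^2)\setminus\{\hat v\}$.

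First I would show that, after conjugating $A$ by a fixed matrix so that $\hat v = \langle e_1\rangle$, the cocycle becomes upper-triangular, $A(\omega)=\begin{pmatrix} a(\omega) & b(\omega)\\ 0 & d(\omega)\end{pmatrix}$, with $\lambda_1(A|_{\hat v})=\lim \frac1n\int\log|a^{(n)}(\omega)|\,d\mathbb{P}$ strictly below $\lambda_1(A)$, and with the scalar cocycle $d(\omega)$ satisfying $\lim\frac1n\int\log|d^{(n)}|\,d\mathbb{P}=\lambda_1(A)$ (this last equality because $\lambda_1(A)>\lambda_2(A)$ forces the larger of the two diagonal Lyapunov exponents to be $\lambda_1(A)$, and it is not the $\hat v$ one). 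The quotient action of $A$ on $\mathbb{R}^2/\hat v$ is the scalar cocycle $d$, and the projective random map $f_A$ on $P(\mathbb{R}^2)\setminus\{\hat v\}\cong\mathbb{R}$ is an affine random map $t\mapsto \frac{d(\omega)}{a(\omega)}t + \frac{b(\omega)}{a(\omega)}$; hypothesis (ii) says this affine random map has an invariant compact set $X$. Using $\lambda_1(A|_{\hat v})<\lambda_1(A)$ and the exponential moment condition, the "contraction rate" $d(\omega)/a(\omega)$ of this affine cocycle has positive Lyapunov exponent, hence $f_A$ restricted to (a neighbourhood of) $X$ is mostly contracting, and Proposition~\ref{prop:projective}'s machinery — or directly Theorem~\ref{thmA} for the random map $f_A|_X$, which by Proposition~\ref{mainprop:CLT-LD2}'s hypotheses is mostly contracting and satisfies~\eqref{eq:integral_condition} — applies to the system $f_A|_X$; there is a unique stationary measure $\mu$ on $X$, and one gets the CLT and large deviations for the Birkhoff sums along $f_A|_X$.

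The arithmetic heart is then the cocycle identity: for a unit vector $x$ with $\hat x\in X$, writing $\hat x_k = f_A^k(\omega,\hat x)\in X$ and $A^n(\omega)x = \|A^n(\omega)x\|\cdot(\text{unit vector})$, one has the telescoping formula
\[
\log\|A^n(\omega)x\| \;=\; \sum_{k=0}^{n-1} \log\frac{\|A(\omega_k)\,u_k\|}{\|u_k\|}, \qquad u_k = A^k(\omega)x,
\]
so $\log\|A^n(\omega)x\| = S_n\phi(\omega,\hat x)$ for the observable $\phi(\omega,\hat x)=\log\|A(\omega)\hat x\|$ (norm of the image of a unit representative), which lies in $C^\alpha(X)$ after the usual reduction and is $\bar{\mathbb{P}}$-integrable by the exponential moment condition. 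Applying Corollaries~\ref{maincor2}/\ref{maincor3} and~\ref{maincor4} (in the uniquely ergodic case $r=1$, with $\int\phi\,d\mu = \lambda_1(A)$ after centering) gives the CLT~\eqref{eq:CLT-point-intro}-type statement and the large deviation bound~\eqref{eq:LD-point-intro}-type statement for $\log\|A^n(\omega)x\|$ with constants uniform over unit $x$ with $\hat x\in X$. To pass from $\log\|A^n(\omega)x\|$ to $\log\|A^n(\omega)\|$: since $\dim E=1$ and $X$ avoids $\hat v$, for every unit vector $w\in\mathbb{R}^2$ either $\hat w\in X$-side and the growth of $\|A^n(\omega)w\|$ is comparable (up to a subexponential factor, controlled uniformly by the exponential moment condition and a Borel–Cantelli / maximal-inequality argument) to $\|A^n(\omega)x\|$ for any fixed $\hat x\in X$, or $w$ is close to $\hat v$ where growth is slower; hence $\bigl|\log\|A^n(\omega)\| - \log\|A^n(\omega)x\|\bigr|$ is $o(\sqrt n)$ a.s.\ and small-deviation-negligible, which suffices for both limit laws by Slutsky and by absorbing into $C,h$.

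I expect the main obstacle to be exactly this last comparison step — controlling $\log\|A^n(\omega)\|$ by $\log\|A^n(\omega)x\|$ uniformly enough. The norm $\|A^n(\omega)\|$ is realized by the most-expanded direction, which need not lie in $X$, and near the invariant line $\hat v$ the behaviour is governed by the weaker exponent $\lambda_1(A|_{\hat v})$; one must show the discrepancy does not produce a genuine order-$\sqrt n$ fluctuation nor spoil the exponential large-deviation rate. The clean way is: (a) show that for a.e.\ $\omega$, $\frac1n\log\|A^n(\omega)\|\to\lambda_1(A)$ and $\|A^n(\omega)\|\asymp \|A^n(\omega)\,x\|$ up to a factor $e^{o(n)}$, using that the singular-value gap $\lambda_1(A)-\lambda_2(A)>0$ makes the top singular direction of $A^n(\omega)$ converge and stay uniformly away from $\hat v$ (Oseledets / the contraction on projective space provided by mostly contracting); (b) upgrade the $e^{o(n)}$ to the precise estimates needed by invoking, for the CLT, the already-established convergence of $\mu_n^{\hat x}$ in Wasserstein distance (Remark after Theorem~\ref{thmA}) to control the fluctuation of the "angle factor", and for the large deviations, the large-deviation bound applied to the observable recording $\log(\text{angle between }A^n(\omega)x\text{-direction and }\hat v)$. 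This is routine given the toolbox assembled in the paper but is the one place where a short genuine argument, rather than a citation, is required.
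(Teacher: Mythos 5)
Your overall architecture is the right one and matches the paper at a high level (run the Markov‐operator / quasi‐compactness machinery on the mostly‐contracting system $(f_A)|_X$ to get CLT and LD for $\log\|A^n(\omega)x\|$ with $\hat x\in X$, then transfer to $\log\|A^n(\omega)\|$), and you are honest in flagging the transfer step as the crux. But that is exactly where the proposal has a genuine gap, and the mechanism you suggest does not close it. An $e^{o(n)}$ a.s.\ comparison from Oseledets is irrelevant for a $\sqrt n$‐scale limit theorem (and a fortiori for an exponential large‐deviation bound), and ``invoking Wasserstein convergence of $\mu_n^{\hat x}$'' or ``applying the LD bound to an angle observable'' is a plan, not an argument: neither directly yields the pointwise a.s.\ bounded ratio $\|A^n(\omega)\|/\|A^n(\omega)x\|\le C(\omega)$ that Slutsky needs, nor the uniform exponential tail control the LD needs. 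The paper closes this with two concrete devices you don't reproduce: (a) an integrated sup‐estimate $\int\sup_{n\ge1}\big|\log\|A^n(\omega)y\|-\log\|A^n(\omega)x\|\big|^r\,d\mathbb{P}<\infty$ for $\hat x,\hat y\in X$ (Claim~\ref{claim:bound8}), proved by combining the H\"older bound of Lemma~\ref{lem:Holder} with the \emph{contracting‐on‐average} property of $(f_A)|_X$ so that the series $\sum_i\bigl(\int d(A^i\hat x,A^i\hat y)^{2r\alpha}\,d\mathbb{P}\bigr)^{1/2}$ is summable, which gives the bounded ratio of Claim~\ref{claim:bound9}; and (b) the squeeze $Z^x_n\le W_n\le\max_i Z^{x_i}_n$, using a basis $\{x_i\}$ of $L=\operatorname{span}X$ with $\hat x_i\in X$, which transfers the LD by a union bound.

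There is also a structural simplification you miss, which is what makes the paper's transfer step clean: rather than stopping at upper‐triangular form, one identifies $L=\operatorname{span}(X)$, checks via Proposition~\ref{claim:F-invariant} that $L$ is $A$‐invariant and contains the pinnacle $F$, and then uses Proposition~\ref{prop:suf-cond} to show that for $m=2$ (irreducibility on the one‐dimensional quotient $\mathbb{R}^2/E$ is automatic) one always lands in one of two clean cases: either $L=\mathbb{R}^2$, in which case Theorem~\ref{thm:CLT-linear-general} already concerns $\log\|A^n(\omega)\|$ via the squeeze above, or $\mathbb{R}^2=E\oplus L$ is an $A$‐invariant splitting (block‐\emph{diagonal}, not merely triangular), in which case $\|A^n(\omega)\|\asymp\max\{\|A^n_E(\omega)\|,\|A^n_L(\omega)\|\}=\|A^n_L(\omega)\|$ eventually a.s.\ because $\lambda_1(A_E)<\lambda_1(A_L)$. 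Your upper‐triangular form leaves you with an off‐diagonal block $b^{(n)}(\omega)$ to control precisely in the case $L=\mathbb{R}^2$, which is the case where Claim~\ref{claim:bound8}‐type control is indispensable. So the missing ingredients are: the pinnacle/span‐of‐$X$ bookkeeping leading to Proposition~\ref{prop:suf-cond}, the $L^r$ sup‐estimate from contraction on average, and the squeeze argument for the LD. (Minor: with the chart $\hat x=\langle(t,1)\rangle$ your affine map's derivative is $a/d$, not $d/a$; and $\xi(t,\hat x)=\log\|A(t)x\|-\lambda_1(A)$ lives on $T\times X$ rather than $X$, which is exactly why the paper routes through the extended operator of \S\ref{ss:limit-theorems-HH}.)
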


A more uniform large deviation estimate was previously obtained by Duarte and Klein~\cite{duarte2020large} for finite-valued  locally constant $\mathrm{GL}(2)$-cocycles. Proposition~\ref{mainprop:CLT-LD2} holds for not necessarily compactly supported cocycles under the finite exponential moment condition.

The previous results establish a central limit theorem but do not guarantee that the limiting normal distribution is non-degenerate. 
Additional conditions are necessary to ensure that the variance is strictly positive. A positive variance not only reinforces the central limit theorem but also enables sharper results on the rate of convergence, such as the Berry-Esseen theorem and optimal estimates for large deviations. To state this result, given a cocycle $A:\Omega \to \mathrm{GL}(m)$  and an $A$-invariant subspace $F$ of $\mathbb{R}^m$, we denote by $A|_F$ the restriction of $A$ to $F$ which can be thought of as a $\mathrm{GL}(F)$-valued cocycle. 

\index{Berry-Esseen theorem}

\begin{mainprop} \label{mainprop:BE} Let $A:\Omega \to \mathrm{GL}(m)$ be a locally constant linear cocycle and denote by $F$ its pinnacle subspace on $\mathbb{R}^m$. 
{Under the assumptions of Proposition~\ref{mainprop:CLT-LD},  
$$
 \sigma=0 \iff  \dim F=1 \ \ \text{and} \ \  \|A(\omega)|_F\| \ \ \text{is constant \(\mathbb{P}\)-a.s.}
$$
In particular, if $\sigma>0$, 
}
then for any unit vector $x\in \mathbb{R}^m$, 
 \begin{equation}\label{eq:CLT12}
        \sup_{u \in \mathbb{R}} \left|\mathbb{P}\left(\frac{\log \|A^n(\omega)x\| - n\lambda_1(A)}{\sigma \sqrt{n}} \leq u  \right)- \frac{1}{\sqrt{2\pi}} \int_{-\infty}^u e^{-\frac{s^2}{2}}\, ds\right| = O\left(n^{-1/2}\right)
  \end{equation}
and
\begin{equation} \label{eq:LD12}
    \lim_{n \to \infty} \frac{1}{n} \log \mathbb{P}\left( \frac{1}{n} \log \|A^n(\omega)x\|-\lambda_1(A) > \epsilon\right) = -c(\epsilon)
\end{equation}
where $c(\cdot)$ is a non-negative, strictly convex function vanishing only at $\epsilon = 0$. 
Moreover,~\eqref{eq:CLT12} and~\eqref{eq:LD12} are also satisfied for $\log \|A^n(\omega)\|$  instead of~$\log \|A^n(\omega)x\|$. 
\end{mainprop}

It is evident that if $A$ is irreducible, then the pinnacle subspace $F$ of $A$ is $\mathbb{R}^m$. Thus, condition (iv) in the previous proposition immediately holds under the classical condition of strong irreducibility. Consequently, this proposition extends the classical results established by Le Page~\cite{page1982theoremes} to the broad class of quasi-irreducible cocycles. The following result shows the case of cocycles in $\mathrm{GL}(2)$, where we can drop the quasi-irreducibility assumption again.

\begin{mainprop} \label{mainprop:BE2} Let $A:\Omega \to \mathrm{GL}(2)$ be a locally constant linear cocycle
and denote by $E$ and $F$ its (possibly trivial) equator and pinnacle subspaces respectively. {Under the assumptions of Proposition~\ref{mainprop:CLT-LD2},
\[
\sigma=0
\iff
\left\{
\begin{aligned}
&\text{$\dim F=1$ and $\|A(\omega)|_F\|$ is $\mathbb P$-a.s.~constant or}  \\
&\text{$\dim E=1$ and $\|A(\omega)|_{\mathbb{R}^m/ E}\|$  is $\mathbb P$-a.s.~constant. }
\end{aligned}
\right.
\]
}
In particular, if $\sigma>0$, then
 \begin{equation*}
        \sup_{u \in \mathbb{R}} \left|\mathbb{P}\left(\frac{\log \|A^n(\omega)\| - n\lambda_1(A)}{\sigma \sqrt{n}} \leq u  \right)- \frac{1}{\sqrt{2\pi}} \int_{-\infty}^u e^{-\frac{s^2}{2}}\, ds\right| = O\left(n^{-1/2}\right).
  \end{equation*}
 \end{mainprop}

{An optimal large deviation principle for \(\log\|A^n(\omega)\|\) with a strictly convex rate function is not expected in the generality of Proposition~\ref{mainprop:BE2}; see Example~\ref{exap:LD}.} Assuming strong irreducibility and that semigroup generated by the support of $\nu_A=A_*\mathbb{P}$ has an unbounded image in $\mathrm{PGL}(m)$, Benoist and Quint~\cite{BQ16} established the central limit theorem with an optimal finite second-order moment condition. They also removed assumption (ii) regarding the gap between the first and second Lyapunov exponents. This follows from an algebraic lemma in~\cite[Lemma~4.3]{BQ16}, which allows one to reduce to a strongly irreducible action of $A$ in a quotient space with a proximal projective action (i.e., satisfying condition (ii)),
where we can compare the norms.

We also highlight the Berry-Esseen type bounds for $\log \|A^n(\omega)\|$ shown in Proposition~\ref{mainprop:BE}. Under the classical assumptions of strong irreducibility and proximality, Cuny, Dedecker, and Jan~\cite{CDJ17} first observed a Berry-Esseen bound of order $n^{-1/4} \sqrt{\log n}$ under a finite polynomial moment of order 3. A bound of order $n^{-1/2} \log n$ was recently established in~\cite{XGL21} when $A$ satisfies a finite exponential moment. Finally, an optimal rate of order $n^{-1/2}$ was achieved in~\cite{CDMP23} under a polynomial moment of order 4 in this setting of classical assumptions. 

In light of these observations, the following question naturally arises:
\begin{question}
    Is it possible to replace the finite exponential moment assumption in the previous results with a finite polynomial moment assumption?
\end{question}

Finally, under assumptions (i) and (ii) of the previous results, Proposition~\ref{prop:projective} gives that $A$ is quasi-irreducible if and only if $f_A$ is uniquely ergodic. Thus, since we are not assuming quasi-irreducibility in
Propositions~\ref{mainprop:CLT-LD2} and~\ref{mainprop:BE2}, these results go beyond the uniquely ergodic case (and spectral gap). In fact, in Theorem~\ref{thm:CLT-linear-general} and Corollary~\ref{cor:CLT-geral}, we obtain more general results from which all the previous propositions follow.

\subsection{Continuity of Lyapunov exponents} \label{ss:continuity-intro}
\index{Lyapunov exponents!continuity}
Let $\mathrm{L}_\mathbb{P}(m)$\index{space of linear cocycles!locally constant linear@\(\mathrm{L}_\mathbb{P}(m)\), locally constant} be the set of locally constant linear measurable cocycles $A:\Omega\to \mathrm{GL}(m)$ up to $\mathbb{P}$-almost identification and satisfying that $\log^+ \|A^{\pm 1}\|$ is $\mathbb{P}$-integrable. 
Consider the metric
$$
  \Dp(A,B)=\int \|A(\omega)-B(\omega)\| \, d\mathbb{P} \quad \text{for $A,B\in \mathrm{L}_\mathbb{P}(m)$}.
  \index{metric structures!2average distance@\(\Dp\), averaged one-sided distance on \(\LP(m)\)}
$$
We also denote by $A^T$ the transposed cocycle\index{linear cocycles!\(\lambada\)transposed@\(A^T\), transposed cocycle} given by $A^T(\omega)=A(\omega)^T$. 
As a consequence of the uniform Kingman subadditive ergodic theorem for Markov operators (Theorem~\ref{Kingmanuniform}), we get that 
the first Lyapunov exponent $\lambda_1(A)$ can be expressed as the so-called Furstenberg integral formula:\index{Lyapunov exponents!Furstenberg integral formula}
$$
\lambda_1(A)=\int \log \frac{\|A(\omega)x\|}{\|x\|} \, d\mathbb{P}d\mu
$$
for some ergodic stationary measure $\mu$ of the projective random map $f_A$. See Theorem~\ref{cor:Furstenberg}. This representation allows us to prove the following result as a consequence of the weak statistical stability results of $f_A$. 

\begin{mainprop} \label{mainthm:continuity}
Let $A\in \LP$ be a locally constant linear cocycle satisfying the exponential moment condition~\eqref{eq:exponential-moment-condition}. If either $A$ or $A^T$ is quasi-irreducible, then  
$\lambda_1(\cdot)$ 
is continuous at $A$ in $(\mathrm{L}_\mathbb{P}(m),\Dp)$.  
\end{mainprop}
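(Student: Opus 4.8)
The plan is to reduce the continuity statement to the quasi-compactness machinery of Theorem~\ref{thmA}, applied not to a single cocycle but to a whole neighborhood of cocycles simultaneously, and then to invoke the Furstenberg integral formula together with the statistical stability of the stationary measures. First I would treat the case where $A$ itself is quasi-irreducible. By Proposition~\ref{prop:projective} (its conclusion only needs quasi-irreducibility, the spectral gap $\lambda_1(A)>\lambda_2(A)$, and the exponential moment), the projective random map $f_A$ on $P(\mathbb{R}^m)$ is uniquely ergodic, mostly contracting, globally contracting on average, and satisfies~\eqref{eq:integral_condition}. The key quantitative point is condition (iii) in \S\ref{ss:mostlycontracting}: there exist $a>0$ and $n\in\mathbb{N}$ with $\int \log L f_{A,\omega}^n(\hat x)\,d\mathbb{P} < -a$ for all $\hat x\in P(\mathbb{R}^m)$; equivalently, via the derivative estimate $L f_{A,\omega}(\hat x)=\|A(\omega)|_{\hat x^\perp\to A(\omega)\hat x^\perp}\|/\ldots$ this is a uniform-in-$\hat x$ bound on $\int \log \frac{\|A^n(\omega)x\|\,\|x\|}{\ldots}$-type quantities. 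Since $A\mapsto A^n(\omega)$ and the local Lipschitz constant of the projective action depend continuously (in the $L^1(\mathbb{P})$ sense under $\D$, using the exponential moment to get uniform integrability) on $A$, the inequality in (iii) persists for all $B$ in a $\D$-neighborhood $\mathcal{B}$ of $A$, with the same $n$ and with $a$ replaced by $a/2$. Hence every such $B$ has mostly contracting $f_B$ satisfying~\eqref{eq:integral_condition}; by Proposition~\ref{prop:statistical-stability} (in its metric-space form, Theorem~\ref{prop:statistical-stability-length}) the number of ergodic stationary measures is upper semicontinuous, and—after possibly shrinking $\mathcal{B}$—each $f_B$ is uniquely ergodic with stationary measure $\mu_B$ depending continuously on $B$ in the weak* topology.

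Next I would use the Furstenberg integral formula from Theorem~\ref{cor:Furstenberg}: for each $B\in\mathcal{B}$,
$$
\lambda_1(B)=\int \log\frac{\|B(\omega)x\|}{\|x\|}\,d\mathbb{P}(\omega)\,d\mu_B(\hat x).
$$
The integrand $\psi_B(\omega,\hat x)=\log(\|B(\omega)x\|/\|x\|)$ converges, as $B\to A$ in $\D$, to $\psi_A$ in a sense strong enough to pass to the limit: the exponential moment condition gives a uniform bound $\int \sup_{\hat x}|\psi_B(\omega,\hat x)|^{1+\epsilon}\,d\mathbb{P}\le C$ on $\mathcal B$ for some $\epsilon>0$ (using $\log^+\|B^{\pm1}\|$ and $\D$-closeness), hence $\{\psi_B\}$ is uniformly $\mathbb{P}\times(\text{anything})$-integrable, while $\psi_B\to\psi_A$ in $L^1(\mathbb{P}\times\nu)$ for fixed $\nu$ and $\psi_B$ is (equi-)continuous in $\hat x$ on the complement of a small neighborhood of the zeros of $x\mapsto\|B(\omega)x\|$—which do not occur since $B(\omega)$ is invertible. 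Combining weak* convergence $\mu_B\to\mu_A$ with this uniform integrability and equicontinuity yields $\int\psi_B\,d\mathbb{P}d\mu_B\to\int\psi_A\,d\mathbb{P}d\mu_A$, i.e.\ $\lambda_1(B)\to\lambda_1(A)$, proving continuity at $A$.

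For the case where instead $A^T$ is quasi-irreducible, I would use the standard identity $\lambda_1(A)=\lambda_1(A^T)$ (both equal $\lim \frac1n\int\log\|A^n(\omega)\|\,d\mathbb{P}$, since $\|M\|=\|M^T\|$ and $A^n(\omega)^T=A(\sigma^{n-1}\omega_0)^T\cdots$ has the same distribution of norms as $(A^T)^n$ after the measure-preserving relabelling of coordinates), together with the fact that $B\mapsto B^T$ is a $\D$-isometry; applying the already-proved case to $A^T$ gives continuity of $\lambda_1(\cdot)=\lambda_1((\cdot)^T)$ at $A$. The main obstacle I anticipate is the interchange of limits in the integral formula: making precise that weak* convergence of $\mu_B$ suffices despite $\psi_B$ being only measurable (not continuous) in $\omega$ and despite the $\hat x$-dependence varying with $B$. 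The exponential moment hypothesis is exactly what is needed to upgrade ``measurable and $\mathbb{P}$-integrable'' to ``uniformly integrable over the neighborhood'', and this uniform integrability, applied Fubini-wise in $\omega$ and then using weak* convergence in $\hat x$ of the continuous functions $\hat x\mapsto\int\psi_B(\omega,\hat x)\,d\mathbb{P}(\omega)$ (continuous because $B(\omega)$ never annihilates $x$), is what closes the argument; care is also needed because quasi-irreducibility of $A$ need not persist for nearby $B$, but—crucially—the proof above never uses quasi-irreducibility of $B$ directly, only the uniform contraction estimate (iii), which does persist.
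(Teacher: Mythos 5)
There is a genuine gap: your proof silently assumes $\lambda_1(A) > \lambda_2(A)$, but Proposition~\ref{mainthm:continuity} does not. You invoke Proposition~\ref{prop:projective}, which requires the spectral gap $\lambda_1(A)>\lambda_2(A)$ to conclude that $f_A$ is mostly contracting, uniquely ergodic, and covered by the quasi-compactness / statistical-stability machinery of Theorem~\ref{thmA} and Proposition~\ref{prop:statistical-stability}. When $\lambda_1(A)=\lambda_2(A)$ (for example, a quasi-irreducible cocycle taking values in a compact subgroup, such as rotations), the projective random map $f_A$ is typically not mostly contracting, may have a continuum of stationary measures, and Theorem~\ref{prop:statistical-stability-length} does not apply. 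Your argument then has no route to control $\mu_B$ as $B\to A$, so the strategy collapses precisely in the regime the proposition is also meant to cover.

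The paper's proof (Theorem~\ref{thm:continuity-quasi}) is designed to avoid exactly this dependence. Instead of unique ergodicity, it uses two facts that need no spectral gap: (a) Theorem~\ref{thm:Weak statistically stability}, which says that for \emph{any} continuous random map, weak* limits of $f_{B_n}$-stationary measures along $\D_{C^0}$-convergent $B_n\to A$ are $f_A$-stationary (this is a soft compactness argument requiring no contraction); and (b) Proposition~\ref{prop:quasi-irreducible}, which characterizes quasi-irreducibility of $A$ as the statement that $\int\phi_A\,d\mu=\lambda_1(A)$ for \emph{every} $f_A$-stationary $\mu$. So one picks maximizing measures $\mu_n$ for $f_{B_n}$, passes to a weak* limit $\mu$, deduces from (a) and (b) that $\int\phi_A\,d\mu=\lambda_1(A)$, and then estimates $|\lambda_1(B_n)-\lambda_1(A)|\le\int|\phi_{B_n}-\phi_A|\,d\mu_n+|\int\phi_A\,d\mu_n-\int\phi_A\,d\mu|$; the first term is handled by the inequality $\log t\le\frac1\alpha(t^\alpha-1)$ together with H\"older and reverse H\"older, which is where the exponential moment~\eqref{eq:exponential-moment-condition} enters, and the second by continuity of $\phi_A$ and weak* convergence. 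Your analysis of the first term is essentially the same estimate, and your $A^T$ reduction via $\lambda_1(A)=\lambda_1(A^T)$ and the $\D$-isometry $B\mapsto B^T$ matches the paper; the missing ingredient is replacing the mostly-contracting/unique-ergodicity route by the pair (a)+(b) so the argument works without the spectral gap.
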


If $A$ has at most one non-trivial invariant subspace, then, by Corollary~\ref{cor:atmostone}, $A$ or $A^T$ is quasi-irreducible. Thus, as a consequence of the above proposition, $A$ is a continuity point of~$\lambda(\cdot)$.  This consequence, as well as a similar version of the above proposition, was also obtained in~\cite{furstenberg1983random, hennion1984loi} under a uniform integrability assumption more restrictive than our condition~\eqref{eq:exponential-moment-condition}.  

Moreover, Viana and Bocker~\cite{bocker2017continuity} in dimension $m=2$  and more recently Avila, Eskin and Viana~\cite{AEV23} in any dimension showed that $\lambda_1(\cdot)$ is always continuous in the space of compactly supported distributions in $\mathrm{GL}(m)$. That is, in the space $\mathcal{P}_{ \hspace{-0.7mm}\scriptscriptstyle\rm cpt}(m)$\index{space of probability measures on \(\operatorname{GL}(m)\)!\(\mathcal{P}_{ \hspace{-0.7mm}\scriptscriptstyle\rm cpt}(m)\),  compactly supported} of probability measures $\mu$ on $\mathrm{GL}(m)$ whose support $\mathrm{supp} \, \mu$   is compact,  equipped with the smallest topology $\mathcal{T}$ that contains both the weak$^*$ topology and the pull-back of the Hausdorff topology under the support map $\mu \mapsto \mathrm{supp}\,\mu$. The topology $\mathcal{T}$ in $\mathcal{P}_{\hspace{-0.7mm}\scriptscriptstyle\rm cpt}(m)$ is metrizable
by considering
$$
\delta_{\mathcal{T}}(\nu_1,\nu_2)=\delta_W(\nu_1,\nu_2)+\delta_H(\mathrm{supp}\,\nu_1, \mathrm{supp}\, \nu_2), 
\quad \text{for $\nu_1,\nu_2\in \mathcal{P}_{ \hspace{-0.7mm}\scriptscriptstyle\rm cpt}(m)$}
\index{metric structures!3matrix distance distribution topology distance@\(\delta_{\mathcal T}\), Wasserstein-Hausdorff distance}
$$
where 
$\delta_W$ and $\delta_H$ are, respectively, the Wasserstein distance\footnote{
{$\delta_W(\nu_1,\nu_2)=\sup \left\{ 
\big|\int \varphi \, d\nu_1- \int \varphi \, d\nu_2 \big|\colon \text{$\varphi$ is 1-Lipschitz on $(\mathrm{GL}(m),\delta)$} 
\right\}, \ \nu_1,\nu_2\in \mathcal{P}_{ \hspace{-0.7mm}\scriptscriptstyle\rm cpt}
(m).
$} \index{metric structures!3matrix distance distribution topology distance W@\(\delta_W\), Wasserstein distance induced by \(\delta\)}
}
and the Hausdorff distance\footnote{$\delta_H(K_1,K_2)=\inf\left\{r>0: K_1 \subset B_r(K_2) \ \text{and} \ K_2 \subset B_r(K_1) \right\}$ where  $K_1$, $K_2$ are non-empty compact sets in $\mathrm{GL}(m)$ and 
the neighborhoods $B_r(K_1)$ and $B_r(K_2)$ are with respect to the distance~$\delta$.\index{metric structures!3matrix distance distribution topology distance Hausdorff distance@\(\delta_H\), Hausdorff distance induced by \(\delta\)}} with respect to the distance
$$\delta(g_1,g_2)=\|g_1-g_2\|+\|g_1^{-1}-g_2^{-1}\|, \quad  g_1,g_2\in \mathrm{GL}(m).\index{metric structures!3matrix distance@\(\delta\), matrix distance on \(\operatorname{GL}(m)\)}$$
In our context,  one can associate with  $A\in \mathrm{L}_\mathbb{P}(m)$ the  distribution \index{linear cocycles!associated distribution@ \(\nu_A=A_*\mathbb P\), associated distribution}$\nu_A=A_*\mathbb{P}$ in $\mathrm{GL}(m)$.  
Denote by $\LPC$\index{space of linear cocycles!Z@\(\LPC\), compactly supported} the subset of cocycles $A$ in $\LP$ whose distribution $\nu_A$ is compactly supported.  The restriction of the metric $\delta_{\mathcal{T}}$ to $\LPC$ 
is compatible with the distance 
$$
\Delta(A,B)=\Dpm(A,B)+ \delta_H(\mathrm{supp}\,\nu_A, \mathrm{supp}\, \nu_B) \quad \text{for $A,B\in \LPC$}\index{metric structures!compact-support distance@\(\Delta\), average--Hausdorff distance on \(\LPC\)}
$$
 where 
 $$
  \Dpm(A,B)=\int \delta(A(\omega),B(\omega)) 
  \, d\mathbb{P}  \quad \text{for $A,B\in \mathrm{L}_\mathbb{P}(m)$}. \index{metric structures!2averaged two-sided distance@\(\Dpm\), averaged distance on \(\LP\)}
$$
The results from~\cite{bocker2017continuity,AEV23} conclude the continuity of  $\lambda_1:(\LPC,\Delta) \to \mathbb{R}$.
Note that the exponential moment condition~\eqref{eq:exponential-moment-condition} is immediately satisfied for any cocycle $A\in \LPC$. Thus,   
Proposition~\ref{mainthm:continuity} improves these previous results in the quasi-irreducible case by considering the distance $\Dp$ instead of $\Delta$.  At the same time, one cannot extend the
continuity statement unconditionally to the whole space \((\LP,\D)\) under the
moment condition~\eqref{eq:exponential-moment-condition}. The discontinuity
example of S\'anchez and Viana~\cite{SV:20} can be realized by locally constant cocycles
 \(A_n\to A\) in \((\LP,\D)\) while
\(\lambda_1(A_n)\not\to\lambda_1(A)\).


\subsection{H\"older continuity of Lyapunov exponents} \label{ss:holde-continuty-intro} \index{Lyapunov exponents!H\"older continuity}
The following result establishes the H\"older continuity of $\lambda_1(\cdot)$, assuming simplicity of the first Lyapunov exponent. 

\begin{mainprop} \label{mainprop:linear-exponent-quasi}
    Let $A\in \LP$ be a  locally constant linear cocycle satisfying the exponential moment condition~\eqref{eq:exponential-moment-condition} and $\lambda_1(A)> \lambda_2(A)$. If either $A$ or $A^T$ is quasi-irreducible, then there is a neighborhood $\mathcal{B}$ of $A$ in $(\mathrm{L}_{\mathbb{P}}(m),\Dpm)$ such that the map $$B\in \mathcal{B} \mapsto \lambda_1(B)$$ is H\"older continuous. More specifically, there exist $C>0$ and $\gamma \in (0,1]$ such that 
    \begin{equation}
        \label{eq:holder}
    |\lambda_1(B)-\lambda_1(B')|\leq C \cdot \Dp(B,B')^\gamma \quad \text{for every $B,B' \in \mathcal{B}$.}
    \end{equation}
\end{mainprop}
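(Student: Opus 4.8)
The plan is to reduce the Hölder continuity of $\lambda_1$ to the Hölder continuity statement for stationary measures already obtained for mostly contracting random maps (Proposition~\ref{prop:statistical-stability}, or more precisely its Lipschitz-metric-space version Theorem~\ref{prop:statistical-stability-length}), applied to the projective random maps $f_B$ for $B$ near $A$. The starting point is Furstenberg's integral formula (Theorem~\ref{cor:Furstenberg}): for each cocycle $B$ in a suitable neighborhood,
\[
\lambda_1(B)=\int \log\frac{\|B(\omega)x\|}{\|x\|}\, d\mathbb{P}\, d\mu_B
\]
for a stationary measure $\mu_B$ of $f_B$. The first step is therefore to show that quasi-irreducibility of $A$ (or $A^T$), together with $\lambda_1(A)>\lambda_2(A)$ and the exponential moment condition, is an \emph{open} property: by Proposition~\ref{prop:projective}, $f_A$ is uniquely ergodic and mostly contracting, hence by Proposition~\ref{prop:open-mostly} it stays mostly contracting on a $\D_{C^1}$-neighborhood — and here, since the fiber maps depend real-analytically (in particular $C^1$) on the matrix entries, a $\D_\pm$-neighborhood of $A$ maps into a $\D_{C^1}$-neighborhood of $f_A$ in $\CE(P(\mathbb{R}^m))$ with a Lipschitz (indeed Hölder) control of the distances. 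One also needs $\lambda_1(B)>\lambda_2(B)$ to persist, which follows from continuity of both exponents on $\LPC$-type neighborhoods (or from the spectral gap of the Koopman operator of $f_B$, upgrading unique ergodicity to proximality as in Proposition~\ref{prop:projective}); after shrinking $\mathcal{B}$ we may assume every $B\in\mathcal{B}$ is uniquely ergodic with a unique stationary measure $\mu_B$.

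The second step is the key estimate. Write $\phi_B(\omega,\hat x)=\log\|B(\omega)x\|-\log\|x\|$; then $\lambda_1(B)-\lambda_1(B')=\int \phi_B\,d\mathbb{P}d\mu_B-\int\phi_{B'}\,d\mathbb{P}d\mu_{B'}$, which we split as
\[
\int(\phi_B-\phi_{B'})\,d\mathbb{P}d\mu_B \;+\; \Big(\int\phi_{B'}\,d\mathbb{P}d\mu_B-\int\phi_{B'}\,d\mathbb{P}d\mu_{B'}\Big).
\]
For the first term, $|\phi_B(\omega,\hat x)-\phi_{B'}(\omega,\hat x)|\le \|B(\omega)^{-1}\|\,\|B(\omega)-B'(\omega)\|$ pointwise (using $\big|\log\|Bx\|-\log\|B'x\|\big|\le \|B^{-1}\|\,\|B-B'\|$ for unit $x$), so integrating in $\omega$ against the $\D_\pm$-control and using the exponential moment of $\|B^{-1}\|$ bounds this term by $C\,\D(B,B')$, which is $\le C\,\D_\pm(B,B')$. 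For the second term, $\hat x\mapsto \int\phi_{B'}(\omega,\hat x)\,d\mathbb{P}(\omega)$ is a fixed function on $P(\mathbb{R}^m)$ whose Hölder norm is uniformly bounded over $B'\in\mathcal{B}$ (again by the exponential moment, after shrinking $\mathcal{B}$), so the difference is bounded by a constant times a Wasserstein-type distance between $\mu_B$ and $\mu_{B'}$ for the metric dual to $C^\alpha(P(\mathbb{R}^m))$. By item (iii) of Proposition~\ref{prop:statistical-stability} (in the form proved for Lipschitz random maps), this distance is Hölder in $\D_{C^0}(f_B,f_{B'})\le C\,\D_{C^1}(f_B,f_{B'})$, which in turn is Hölder — in fact controlled linearly — by $\D_\pm(B,B')$. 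Combining, $|\lambda_1(B)-\lambda_1(B')|\le C\,\D(B,B')^\gamma$ for every $B,B'\in\mathcal{B}$, which is \eqref{eq:holder}. The case where $A^T$ is quasi-irreducible is handled by the identity $\lambda_1(B)=\lambda_1(B^T)$ together with the observation that $B\mapsto B^T$ is an isometry for $\D_\pm$ and for $\D$.

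The main obstacle I anticipate is the second step's reliance on the \emph{quantitative} statistical stability of $\mu_B$ in terms of the perturbation of the cocycle: Proposition~\ref{prop:statistical-stability} is stated for perturbations within $\CE(X)$ measured by $\D_{C^1}$, so one has to verify carefully that (a) the passage from a matrix cocycle $B$ to its projective action $f_B$ is Hölder (better, Lipschitz on bounded sets) from $(\LPC\text{-type nbhd},\D_\pm)$ to $(\CE(P(\mathbb{R}^m)),\D_{C^1})$ — this uses that inversion and the projective action are smooth on $\mathrm{GL}(m)$ with derivatives controlled by $\|B^{\pm1}\|$, and that the exponential moment lets one integrate these controls in $\omega$ — and (b) that the uniform bound on $\|\,\hat x\mapsto\int\phi_{B'}\,d\mathbb{P}\,\|_{C^\alpha}$ genuinely holds, i.e.\ the Hölder constant does not blow up as $B'$ ranges over $\mathcal{B}$; this again reduces to a uniform exponential-moment bound, available after shrinking $\mathcal{B}$ because $\|B'(\omega)^{-1}\|\le 2\|A(\omega)^{-1}\|$ off a small-probability set once $\D_\pm(A,B')$ is small enough. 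Everything else is bookkeeping with the already-established quasi-compactness and statistical-stability machinery.
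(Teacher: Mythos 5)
Your proposal follows essentially the same route as the paper's proof of Theorem~\ref{thm:holder}: Furstenberg's integral formula $\lambda_1(B)=\int\phi_B\,d\mu_B$, the two-term decomposition into $\int|\phi_B-\phi_{B'}|\,d\mu_B$ plus a difference of integrals of $\phi_{B'}$ against $\mu_B$ and $\mu_{B'}$, control of the second term via the Keller--Liverani statistical stability of the unique stationary measure (Theorem~\ref{prop:statistical-stability-length}), and the transpose trick for the $A^T$ case. The openness step is also the same: the paper packages it as Proposition~\ref{prop:open-quasi}, which is Theorem~\ref{prop:statistical-stability-length} again.

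There is one genuine gap, and it is quantitative rather than structural: you assert that $\int|\phi_B-\phi_{B'}|\,d\mathbb{P}\,d\mu_B$ is bounded \emph{linearly} by $\D(B,B')$, via the pointwise estimate $|\phi_B(\omega,\hat x)-\phi_{B'}(\omega,\hat x)|\lesssim \|B(\omega)^{-1}\|\,\|B(\omega)-B'(\omega)\|$. The pointwise estimate is fine, but the integration step fails as stated: the exponential moment condition~\eqref{eq:exponential-moment-condition} (with $\beta$ normalized to lie in $(0,1]$, as in Remark~\ref{rem:beta}) only gives $\|B(\omega)^{-1}\|\in L^\beta(\mathbb{P})$ for a \emph{small} $\beta$, so $\|B^{-1}\|$ need not be in $L^1$, let alone $L^2$, and Cauchy--Schwarz does not apply. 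The paper instead uses the inequality $\log t\le\frac{1}{\gamma}(t^\gamma-1)$ combined with $|t|^\gamma-|s|^\gamma\le|t-s|^\gamma$, giving $|\log\|Bx\|-\log\|B'x\||\le\frac{1}{\gamma}\|B'(\omega)^{-1}\|^\gamma\|B(\omega)-B'(\omega)\|^\gamma$, and then H\"older in $\omega$ with exponent $2$ requires only $\|B'^{-1}\|^{2\gamma}\in L^1$, which holds once $2\gamma\le\beta$. This is exactly why the final bound is $\D(B,B')^\gamma$ for some small $\gamma$ rather than $\D(B,B')$. The same issue makes your claim that $\D_{C^1}(f_B,f_{B'})$ is ``controlled linearly'' by $\D_\pm(B,B')$ overly optimistic: the paper's Lemma~\ref{lem:convergencia} also delivers only a H\"older bound $\Dbeta(f_B,f_{B'})\le K\,\D(B,B')^{\beta_n}$. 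Both errors are benign in the sense that the conclusion is only H\"older anyway, but as written the linear claims would fail if one tried to justify them, so they need to be replaced by small-power estimates throughout. Your anticipated obstacles (a) and (b) at the end are exactly the right places to look, and correspond respectively to Lemma~\ref{lem:convergencia} and Lemmas~\ref{lem:Holder}, \ref{lemma:2exponents} in the paper.
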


As before, if \(A\) has at most one invariant subspace, then \(\lambda_1(\cdot)\) is H\"older continuous, provided that the first Lyapunov exponent is simple and the exponential moment condition holds. Similar results were first obtained by Le Page~\cite{page1989regularite} for strongly irreducible cocycles, and by Duarte and Baraviera~\cite{baraviera2019approximating} for continuous quasi-irreducible cocycles over a full shift on a compact metric space endowed with a uniform metric.\footnote{\ \ \(\mathrm{D}^+(A,B)=\|A-B\|_\infty\) for \(A,B\in \LPC\). Note that \(\Dp(A,B) \leq \mathrm{D}^+(A,B)\).} Moreover, Tall and Viana~\cite{tall2020moduli} proved the H\"older continuity of \(\lambda_1(\cdot)\) for locally constant linear cocycles in dimension \(m=2\) with simple spectrum, in the space of compactly supported probability distributions on \(\mathrm{GL}(2)\), with respect to the distance \(\delta_\mathcal{T}\) introduced above. Proposition~\ref{mainprop:linear-exponent-quasi} improves these results by considering average distances instead of uniform metrics and by allowing measurable cocycles that are not compactly supported. In the compactly supported case, we can further extend Proposition~\ref{mainprop:linear-exponent-quasi} to a class of cocycles that are not necessarily quasi-irreducible. To do so, we introduce the following definition. 

By abuse of notation, we continue to write \(E\) for the projective space \(P(E)\).

\begin{defi} \label{def:almost} 
A locally constant linear cocycle \(A \in \LP\) is called \emph{almost-irreducible}\index{linear cocycles!c@almost-irreducible cocycle} if its equator \(E\) is a \emph{strict repeller}; that is, there exist a compact set \(X \subset P(\mathbb{R}^m) \setminus E\) and an open set \(V \subset X\) such that \(A(\omega)X \subset V\) for \(\mathbb{P}\)-a.e.~\(\omega \in \Omega\).
\end{defi}

As mentioned, $A$ is quasi-irreducible if and only if the equator $E$ is trivial. In such a case, we take $X = V = P(\mathbb{R}^m)$ in the above definition, seeing quasi-irreducible cocycles as a particular case of almost-irreducible cocycles.  Nonetheless, the class of almost-irreducible cocycles also includes other cocycles that are not quasi-irreducible, as we discuss below. For these cocycles, $E$ is non-trivial and $X$ must be a compact subset of the projective space that strictly attracts the dynamics away from the equator. Under the assumption of simplicity of the first Lyapunov exponent, the existence of a compact invariant set outside the equator is characterised in Proposition~\ref{claim:F-invariant}. However, the definition of almost-irreducible cocycle imposes an even stronger condition: the dynamics is attracted into an open set $V \subset X$.  \\

\begin{exap} Take $\Omega=\{0,1\}^\mathbb{N}$, $\mathbb{P}=\frac{1}{2}(\delta_{0}+ \delta_{1})$ and $A\in \mathrm{L}_{\mathbb{P}}(2)$ given by
$$
A(\omega)=\begin{pmatrix} a & 0 \\ 0 & b \end{pmatrix}
\quad \text{with $|a|<|b|$ \ \ for every $\omega\in\Omega$. }
$$
The cocycle $A$ has the invariant  subspaces $E$ and $F$ generated by $(1,0)$ and $(0,1)$ respectively. Since $$\lambda(A|_E)=\log |a| < \log |b|=\lambda(A|_F)=\lambda_1(A),$$ the cocycle $A$ is not quasi-irreducible.  However, $A$ is almost-irreducible. To see this, let $X$ be the projective compact subset corresponding to the cone $\{(u,v)\in E \oplus F:  \|u\|\leq \frac{1}{2} \|v\| \}$ which, by abuse of notation, we also denote by~$X$.  It is not difficult to see that $A(\omega)X$ is strictly contained in $X$ since $|a|/|b| <1$. Thus,  
Definition~\ref{def:almost} holds. 
\end{exap}

\begin{mainprop} \label{mainprop:linear-exponent-almost}
   Let $A\in \LPC$ be a  locally constant linear cocycle such that $\lambda_1(A)> \lambda_2(A)$.  
   If either $A$ or $A^T$ is almost-irreducible, then there exists a neighborhood $\mathcal{B}$ of $A$ in $(\LPC,\Delta)$ such that the map $$B\in \mathcal{B} \mapsto \lambda_1(B)$$ is H\"older continuous. Moreover,~\eqref{eq:holder} holds. 
\end{mainprop}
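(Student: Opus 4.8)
The plan is to reduce the almost-irreducible case to the quasi-irreducible Hölder continuity already established in Proposition~\ref{mainprop:linear-exponent-quasi}, by passing to the restriction of the cocycle to the pinnacle subspace $F$ (or, in the $A^T$ case, to the analogous subspace for the transposed cocycle). Since we work in $\LPC$, the exponential moment condition~\eqref{eq:exponential-moment-condition} is automatic, so the only real issue is to produce a neighborhood on which the relevant invariant structure persists and on which $\lambda_1$ agrees with a Hölder-continuous quantity. First I would record that $\lambda_1(A)=\lambda_1(A|_F)$: this is immediate from the definition of the pinnacle $F$ as the intersection of all $A$-invariant subspaces $W$ with $\lambda_1(A|_W)=\lambda_1(A)$, together with the fact (to be checked via the subadditive/Furstenberg formula, Theorem~\ref{cor:Furstenberg}) that this intersection still has maximal first exponent. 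Then I would observe that $A|_F$ is quasi-irreducible — its equator is trivial — because $F$ was chosen minimal among maximal-exponent invariant subspaces; combined with $\lambda_1(A|_F)>\lambda_2(A|_F)$ (which follows from $\lambda_1(A)>\lambda_2(A)$ and $\lambda_1(A)=\lambda_1(A|_F)$), Proposition~\ref{mainprop:linear-exponent-quasi} applies to $A|_F$ and gives a Hölder estimate for $\lambda_1$ near $A|_F$ in the $\D_\pm$-distance on $\mathrm{L}_{\mathbb{P}}(\dim F)$.

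The key technical step is to show that the splitting $\mathbb{R}^m = F \oplus (\text{complement})$, or more precisely the invariance of $F$ and the control of the exponent off $F$, is robust under small perturbations in the $\Delta$-metric. Here is where almost-irreducibility is used in an essential way: the hypothesis gives a compact set $X \subset P(\mathbb{R}^m)\setminus E$ and an open $V\subset X$ with $A(\omega)X\subset V$ for $\mathbb{P}$-a.e.\ $\omega$. Because $\nu_A$ is compactly supported and the inclusion $A(\omega)X\subset V$ is an open condition that holds uniformly over $\mathrm{supp}\,\nu_A$, a small perturbation $B$ in the $\Delta$-metric (which controls both $\delta_H(\mathrm{supp}\,\nu_A,\mathrm{supp}\,\nu_B)$ and the fiberwise distance) still satisfies $B(\omega)X\subset V$ for $\mathbb{P}$-a.e.\ $\omega$; hence $\overline{V}$ (or $X$) remains an $f_B$-invariant compact set disjoint from a neighborhood of $E$. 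Via Proposition~\ref{claim:F-invariant} this persistence of an invariant compact set outside the equator translates, under $\lambda_1(B)>\lambda_2(B)$ (itself an open condition once $\lambda_1(A)>\lambda_2(A)$, using continuity of the two top exponents from Proposition~\ref{mainthm:continuity} or directly), into the persistence of a pinnacle subspace $F_B$ of fixed dimension depending Lipschitz-continuously (in the Grassmannian) on $B$, with $\lambda_1(B)=\lambda_1(B|_{F_B})$ and $B|_{F_B}$ quasi-irreducible.

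Finally I would assemble the estimate: choose a continuous (indeed Lipschitz or Hölder) family of linear identifications $F_B\cong F$ — say orthogonal projections, which vary Hölder-continuously with $B$ since the Grassmannian distance between $F_B$ and $F_{B'}$ is controlled by $\Delta(B,B')$ — and transport $B|_{F_B}$ to a cocycle $\tilde B$ in $\LPC(\dim F)$ so that $B\mapsto \tilde B$ is Hölder from $(\LPC,\Delta)$ into $(\mathrm{L}_{\mathbb{P}}(\dim F),\D_\pm)$. Then $\lambda_1(B)=\lambda_1(\tilde B)$, and applying the already-proven Proposition~\ref{mainprop:linear-exponent-quasi} to $\tilde A$ yields, on a small enough $\mathcal{B}$, $|\lambda_1(B)-\lambda_1(B')| = |\lambda_1(\tilde B)-\lambda_1(\tilde B')| \leq C\,\D_\pm(\tilde B,\tilde B')^{\gamma'} \leq C'\,\Delta(B,B')^{\gamma}$, which is~\eqref{eq:holder}. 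In the case where $A^T$ rather than $A$ is almost-irreducible, the same argument applied to $A^T$ gives the conclusion, since $\lambda_1(B^T)=\lambda_1(B)$ and $B\mapsto B^T$ is an isometry for $\D_\pm$ and for $\Delta$. The main obstacle I anticipate is step two: proving that the pinnacle subspace genuinely moves continuously with the cocycle and that $\lambda_1(B)=\lambda_1(B|_{F_B})$ on the whole neighborhood — i.e.\ ruling out that a perturbation creates new invariant directions with maximal exponent outside a neighborhood of $F$. The almost-irreducibility hypothesis (strict repulsion into an open set, not merely an invariant compact set) is precisely what is needed to make this persistence argument go through uniformly, and carefully exploiting the openness of the defining condition over the compact support of $\nu_A$ is the crux.
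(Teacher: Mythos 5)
Your proposal takes a genuinely different route from the paper, and the route has a gap at its central step.

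The paper's proof (Theorem~\ref{thm:holder}, applied in the almost-irreducible case) does not pass to the pinnacle subspace at all. It keeps the dynamics in $P(\mathbb{R}^m)$, restricted to the fixed compact set $X$ from Definition~\ref{def:almost}. Proposition~\ref{prop:open-almost} guarantees that every $B$ in a $\Delta$-neighborhood of $A$ remains almost-irreducible on that same $X$, with a unique stationary measure $\mu_B$ supported in $X$ that is maximizing, so $\lambda_1(B)=\int \phi_B\,d\mu_B$ by the Furstenberg formula. Then the H\"older estimate follows by writing
$\lvert\lambda_1(B)-\lambda_1(C)\rvert \le \int \lvert\phi_B-\phi_C\rvert\,d\mu_B + \bigl\lvert \int \phi_C\,d\mu_B - \int\phi_C\,d\mu_C\bigr\rvert$,
bounding the first term by a direct computation on $\|\phi_B-\phi_C\|_\infty$ and the second via the Keller--Liverani statistical stability estimate for $(f_B)|_X$ (Theorem~\ref{prop:statistical-stability-length}). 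The stable object is $X$ and the stationary measure on it, not a linear subspace.

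The gap in your proposal is the claim that the pinnacle $F_B$ ``persists,'' with fixed dimension and Lipschitz (or H\"older) dependence on $B$. The pinnacle is the span of the support of the maximizing measure, and supports of measures are only upper semicontinuous under weak$^*$ convergence; moreover, a generic small perturbation $B$ of a non-quasi-irreducible $A$ is \emph{irreducible}, and then $F_B=\mathbb{R}^m$ by definition. Concretely, perturbing the paper's own example $A(\omega)=\mathrm{diag}(a,b)$ by a small off-diagonal entry destroys both invariant lines, so $F_B$ jumps from a line to $\mathbb{R}^2$. Almost-irreducibility does not rule this out: it guarantees the persistence of the compact set $X\subset P(\mathbb{R}^m)\setminus E$ with $B(\omega)X\subset V$, but it says nothing about linear invariant subspaces of $B$ near $F_A$. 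Because $F_B$ can jump discontinuously, the map $B\mapsto B|_{F_B}$ is not well-defined on a neighborhood, and you cannot appeal to Proposition~\ref{mainprop:linear-exponent-quasi} for the restricted cocycles with uniform constants. (Restricting instead to the fixed span $L=\langle X\rangle$ avoids the discontinuity, but then $B|_L$ is not quasi-irreducible in general---in the diagonal example $L=\mathbb{R}^2$ and $B|_L=B$---so the reduction to Proposition~\ref{mainprop:linear-exponent-quasi} still fails.) The lesson is that in this setting the robust structure to exploit is the projective compact set $X$, not a linear subspace, which is exactly what the paper's proof does.
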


The results  in~\cite{tall2020moduli} imply that the function $\lambda_1:(\mathrm{L}_{\smash{\mathbb{P}}}^{\hspace{-0.7mm}\raisebox{1.2pt}{\tiny \rm{cpt}}}(2),\Delta)\to \mathbb{R}$ is locally H\"older continuous on the set of cocycles with simple spectrum. Thus, the main contributions of the previous proposition are the improved estimate~\eqref{eq:holder} and the partial extension of the result to $m > 2$.

\begin{rem} Random matrix products are classically described by a distribution $\nu$ on $\mathrm{GL}(m)$. Here, we write $\nu=A_*\mathbb{P}$ where $A$ is a locally constant linear cocycle and $\mathbb{P}$ is a Bernoulli product probability measure with initial law $p$ on a measure space $(T,\mathscr{A})$. Thus, we have that $$\lambda_1(\nu)=\lambda_1(A,p).$$ 
In the finite-alphabet case $\nu=\sum_{i=1}^N p_i\delta_{A_i}$,
our estimates control the displacement of the matrices \(A_i\), with the
weights \(p_i\) fixed. Results such as those of~\cite{peres2006analytic,ADM26}  control
instead the dependence on the probability weights, in total variation, when
the support is fixed, proving an analytic dependence for any compactly supported cocycle. Hence, the two viewpoints are complementary: for nearby
pairs \((A,p)\) and \((B,q)\), 
\[
|\lambda_1(A,p)-\lambda_1(B,q)|
\leq
|\lambda_1(A,p)-\lambda_1(B,p)|+
|\lambda_1(B,p)-\lambda_1(B,q)|.
\]
The first term is controlled by the fixed-base estimates proved here, while the second belongs to the regularity theory in the probability variable.
\end{rem}

\subsection{Regularity of exponents for random matrix products}
\label{ss:wasserstein-distributions-intro}
\index{Lyapunov exponents!continuity}
\index{Lyapunov exponents!H\"older continuity}

As a consequence of the previous continuity propositions in averaged distances,
we also obtain distributional versions of these results in Wasserstein distances.
In what follows, we consider the distances
\[
   \delta^+(g,h)\eqdef \|g-h\|,
   \qquad
   \delta(g,h)
   \eqdef
   \|g-h\|+\|g^{-1}-h^{-1}\|,
   \qquad g,h\in \mathrm{GL}(m).\index{metric structures!3matrix distance@\(\delta^+\), one-sided matrix distance on \(\operatorname{GL}(m)\)}
\]
Let
$\mathcal P_1^+(m)\eqdef \mathcal P_1(\mathrm{GL}(m),\delta^+)$\index{space of probability measures on \(\operatorname{GL}(m)\)!\(\mathcal P_1^+(m)\), finite first moment for \(\delta^+\)}
 and $\mathcal P_1(m)\eqdef \mathcal P_1(\mathrm{GL}(m),\delta)$\index{space of probability measures on \(\operatorname{GL}(m)\)!\(\mathcal P_1(m)\), finite first moment for \(\delta\)} 
be the corresponding spaces of Borel probability measures $\nu$ with finite first
moment, i.e., denoting the corresponding metric on $\mathrm{GL}(m)$ by  $c(\cdot,\cdot)$, 
$$
  \int c(g,g_0) \, d\nu(g)<\infty\index{integrability conditions!finite first moment}
$$
for some, hence every, base poinp $g_0$ in $\mathrm{GL}(m)$. 
We denote by \(\delta_{\smash{W}}^+\) and \(\delta_W\) the corresponding Wasserstein distances on $\mathcal{P}^+_1(m)$ and $\mathcal{P}_1(m)$ respectively. Namely, for $c\in\{\delta^+,\delta\}$,  
$$
 c_W(\nu_1,\nu_2)\eqdef \sup \left\{ 
\big|\int \varphi \, d\nu_1- \int \varphi \, d\nu_2 \big|\colon 
\text{$\varphi$ is 1-Lipschitz on $(\mathrm{GL}(m),c)$} \right\}. \index{metric structures!3matrix distance distribution topology distance Wp@\(\delta_W^+\), Wasserstein distance induced by \(\delta^+\)}
$$
A distribution \(\nu=A_*\mathbb P\) is said to be quasi-irreducible
(resp. almost-irreducible) if the locally constant cocycle \(A\) is
quasi-irreducible (resp. almost-irreducible). Similarly, \(\nu^T\) denotes the
distribution induced by the transposed cocycle. Using the gluing lemma from
optimal transport and
Propositions~\ref{mainthm:continuity},~\ref{mainprop:linear-exponent-quasi}
and~\ref{mainprop:linear-exponent-almost}, we obtain the following
distributional form of the regularity results. First, recall that a function $g:(X,d) \to\mathbb{R}$ is said to be \emph{locally H\"older continuous} at a point $x_0$ if 
there exist a
\(d\)-neighborhood \( U \subset X \) of \(x_0\) and constants \(C>0\), 
\(\gamma\in(0,1]\) such that $|g(x)-g(x')|\leq C d(x,x')^{\gamma}$.

\begin{mainprop} \label{mainprop:regularity-distribution}
Let \(\nu_0\) be a probability measure on \(\mathrm{GL}(m)\) satisfying
\[
   \int
   \bigl(\max\{\|g\|,\|g^{-1}\|\}\bigr)^\beta \, d\nu_0(g)<\infty
   \qquad \text{for some } \beta>0 .
\]
Then the following statements hold:
\begin{enumerate}[label=\textup{(\arabic*)}, leftmargin=0.75cm]
\item
If \(\nu_0\in\mathcal P_1^+(m)\) and either \(\nu_0\) or
\(\nu_{\smash{0}}^T\) is quasi-irreducible, then the map 
\[ \lambda_1:(\mathcal P_1^+(m),\delta_W^+) \to \mathbb{R} \quad \text{is continuous at
\(\nu_0\).}\] 
\item If \(\nu_0\in\mathcal P_1(m)\) and \(\lambda_1(\nu_0)>\lambda_2(\nu_0)\), and either \(\nu_0\) or
\(\nu_0^T\) is quasi-irreducible,  then 
\[ \lambda_1:(\mathcal P_1(m),\delta_W) \to \mathbb{R} \quad \text{is locally H\"older continuous at \(\nu_0\).}\] 
\item
If \(\nu_0\in\mathcal P_{\hspace{-0.7mm}\scriptscriptstyle\rm cpt}(m)\),
\(\lambda_1(\nu_0)>\lambda_2(\nu_0)\), and either \(\nu_0\) or
\(\nu_0^T\) is almost-irreducible, then
\[
   \lambda_1:
   (\mathcal P_{\hspace{-0.7mm}\scriptscriptstyle\rm cpt}(m),
   \delta_{\mathcal T})
   \to \mathbb R \quad \text{is locally H\"older continuous at \(\nu_0\).}
\]
\end{enumerate}
\end{mainprop}

\section{Random maps of one-dimensional diffeomorphisms} \label{ss:one-dimensional}

As an application of the {{invariance principle}} developed in~\cite{Mal:17}, we can show that non-degenerate random walks on the semigroups of $C^1$ diffeomorphisms of the circle are also mostly contracting. {First, we say that a random map $f:\Omega\times X\to X$ has a \emph{common invariant probability measure}\index{common invariant probability measure} if there exists a probability measure $\mu$ on $X$ such that $(f_\omega)_*\mu=\mu$ for $\mathbb{P}$-a.e.~$\omega\in\Omega$.}. We also say that $f$ is \emph{quasi-symmetric}\index{random maps!quasi-symmetric random map} if the smallest closed semigroup of $(\mathrm{Homeo}(X),\circ)$ containing $\mathbb P$-a.e.~$f_\omega$ is actually a group. In particular, it holds if $f$ is a \emph{symmetric random map}\index{random maps!symmetric random map}, i.e., $f^{-1}$ has the same distribution as $f$.

\begin{mainprop} \label{cor:mostly-contraction-circle-cantor} \index{random maps!circle diffeomorphisms}
Let $f: \Omega \times \mathbb{S}^1 \to \mathbb{S}^1$ be a random map of $C^1$ diffeomorphisms of the circle {with no common invariant probability measure} such that {$\|\log |f'_\omega|\,\|_\infty$} is $\mathbb{P}$-integrable.   Then $f$ is mostly contracting.

As a consequence of Theorem~\ref{thmA}, 
if
\begin{equation} \label{eq:exponential-moment-circulo0}
     \int (\|f'_\omega\|_\infty)^\beta \, d\mathbb{P} <\infty \quad \text{for some $\beta>0$},
\end{equation}
the Koopman operator $P$ associated with $f$ is quasi-compact on $C^\alpha(\mathbb{S}^1)$ for every $\alpha>0$ small enough. Moreover, if $f$ is either proximal, minimal or quasi-symmetric, then $P$ has a spectral gap.   
\end{mainprop}

As a consequence of the previous proposition, we can also obtain the following result for random maps of a compact interval $\mathbb{I}$ of the real line:

\begin{mainprop} \label{cor:interval}\index{random maps!interval diffeomorphisms}
Let $f: \Omega \times \mathbb{I} \to \mathbb{I}$ be a random map of \(C^1\) diffeomorphisms of \(I\) onto their images. Assume that {$\|\log |f'_\omega|\,\|_\infty$} is $\mathbb{P}$-integrable and there are no $p, q \in \mathbb{I}$ such that $\{p, q\}$ is $f_\omega$-invariant for $\mathbb{P}$-a.e.~$\omega \in \Omega$.  
Then $f$ is mostly contracting and satisfies similar conclusions as in Proposition~\ref{cor:mostly-contraction-circle-cantor}.
\end{mainprop}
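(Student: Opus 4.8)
The plan is to deduce Proposition~\ref{cor:interval} from Proposition~\ref{cor:mostly-contraction-circle-cantor} by a doubling/gluing construction that turns a random map of $C^1$ diffeomorphisms onto their images of $\mathbb{I} = [0,1]$ into a random map of $C^1$ diffeomorphisms of the circle. First I would form the circle $\mathbb{S}^1$ as the quotient of two copies of $\mathbb{I}$ glued along their endpoints (equivalently, the double of $\mathbb{I}$), with a smooth structure so that the natural involution $\iota$ swapping the two copies is $C^1$ and the two copies of $\mathbb{I}$ sit inside $\mathbb{S}^1$ as two closed arcs $\mathbb{I}^+$, $\mathbb{I}^-$ meeting exactly at the two ``corner'' points, which correspond to $0$ and $1$. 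The key analytic point is that a $C^1$ diffeomorphism $g: \mathbb{I} \to g(\mathbb{I}) \subset \mathbb{I}$ onto its image can be reflected to a $C^1$ diffeomorphism $\tilde g$ of $\mathbb{S}^1$: on $\mathbb{I}^+$ one uses $g$, on $\mathbb{I}^-$ one uses $\iota \circ g \circ \iota$, and at the gluing points the one-sided derivatives match because $g$ is a diffeomorphism onto its image (so $g'$ does not vanish at $0,1$) — a bump-function correction near the corners makes the result genuinely $C^1$ without changing the dynamics on, say, the middle-half of each arc. This yields a random map $\tilde f : \Omega \times \mathbb{S}^1 \to \mathbb{S}^1$ with $\tilde f_\omega = \widetilde{f_\omega}$, commuting with $\iota$, and with $\log^+\|\tilde f'_\omega\|$ $\mathbb{P}$-integrable (resp.\ satisfying the exponential moment~\eqref{eq:exponential-moment-circulo0}) whenever the original $f$ satisfies the corresponding hypothesis, since $\|\tilde f'_\omega\|_\infty$ is controlled by $\|f'_\omega\|_\infty$ up to the fixed bump correction.

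Next I would check the hypothesis of Proposition~\ref{cor:mostly-contraction-circle-cantor} for $\tilde f$, namely that there is no $\tilde f_\omega$-invariant probability on $\mathbb{S}^1$ for $\mathbb{P}$-a.e.\ $\omega$. Suppose $\nu$ were such a measure. Because $\tilde f_\omega$ maps $\mathbb{I}^\pm$ into its image inside $\mathbb{I}^\pm$ (the two arcs are each $\tilde f_\omega$-invariant up to the corner set, as $g(\mathbb{I})\subset\mathbb{I}$), and because $\tilde f_\omega|_{\mathbb{I}^+}$ is a contraction onto its image that is conjugate to $f_\omega$, a short argument shows the corner points $\{0,1\}$ attract: more precisely $\tilde f_\omega(\{0,1\})$ is a pair of points depending on $\omega$, and iterating, the orbit of any point is squeezed toward the (nested) intersection of the images. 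Carefully, one argues that an $\tilde f_\omega$-invariant $\nu$ must be supported on the set of points that are $f_\omega$-periodic-like, and pushing $\nu$ down through $\iota$-symmetrization and to $\mathbb{I}$, the mass on the interior of $\mathbb{I}^\pm$ would force an $f_\omega$-invariant measure on $\mathbb{I}$; the only remaining possibility is mass on $\{0,1\}$, which would force $\{0,1\}$ (or one of the two points) to be $f_\omega$-invariant. Either case contradicts the standing assumption that there are no $p,q\in\mathbb{I}$ with $\{p,q\}$ being $f_\omega$-invariant for $\mathbb{P}$-a.e.\ $\omega$ (taking $p=q$ or $p=0,q=1$). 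Hence Proposition~\ref{cor:mostly-contraction-circle-cantor} applies and $\tilde f$ is mostly contracting, and under~\eqref{eq:exponential-moment-circulo0} the conclusions of Theorem~\ref{thmA} hold for $\tilde f$.

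Finally I would transfer the conclusions back to $f$ on $\mathbb{I}$. The inclusion $\mathbb{I}\hookrightarrow \mathbb{I}^+\subset\mathbb{S}^1$ is a Lipschitz (indeed bi-Lipschitz onto its image) embedding that semiconjugates $f$ to $\tilde f$ restricted to the forward-invariant set $\mathbb{I}^+$; since $Lf_\omega(x) = L\tilde f_\omega(x)$ for $x$ in the interior of $\mathbb{I}^+$ (away from the bump region, which the dynamics eventually avoids because images are strictly nested), the annealed Lyapunov exponents $\lambda(x)$ for $f$ coincide with those for $\tilde f$ at interior points, and an easy semicontinuity/closure argument handles the endpoints. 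Thus $\lambda(f) = \sup_{x\in\mathbb{I}}\lambda(x) < 0$ by equivalence~(ii) in \S\ref{ss:mostlycontracting}, so $f$ is mostly contracting; moreover the integral condition~\eqref{eq:integral_condition} for $f$ follows from that for $\tilde f$, so Theorem~\ref{thmA} applies to $f$ as well. The main obstacle I anticipate is the ruling-out of $\tilde f_\omega$-invariant measures on $\mathbb{S}^1$ while cleanly matching it to the precise ``no $\{p,q\}$ invariant'' hypothesis — in particular being careful that corner-supported invariant measures correspond exactly to an invariant pair $\{0,1\}$ (or a single invariant endpoint), and that the $C^1$ bump-correction near the corners introduces no spurious invariant measures; the construction of the $C^1$ structure on the double and the verification that the reflected maps are genuinely $C^1$ diffeomorphisms there is routine but must be done with some care about one-sided derivatives.
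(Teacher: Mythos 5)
Your doubling construction has a well-definedness problem that is, I think, fatal. Take $g = f_\omega$ a diffeomorphism of $\mathbb{I}=[0,1]$ onto $g(\mathbb{I})=[a,b]$ with $a>0$ and $b<1$. In the double $\mathbb{S}^1 = \mathbb{I}^+\cup_\partial\mathbb{I}^-$ the corner $0^+=0^-$ is a single point of the circle, but you declare $\tilde g(0^+) = g(0)^+ = a^+$ on one arc and $\tilde g(0^-) = (\iota g\iota)(0^-) = a^-$ on the other; these are distinct points of $\mathbb{S}^1$ whenever $a\notin\{0,1\}$, so $\tilde g$ is not a function. Worse, the image of $\tilde g$ would be $[a,b]^+\cup[a,b]^-$, which is the disjoint union of two closed arcs; no continuous map from the circle has that as its image, so no ``bump-function correction near the corners'' can stitch this into a circle map, let alone a $C^1$ diffeomorphism. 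Notice that the only case in which your reflected map is well-defined is when $f_\omega$ preserves the endpoint pair $\{0,1\}$, which is exactly the situation the standing hypothesis excludes (take $p=0,q=1$); so the construction fails precisely in the regime the proposition is about.

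There is also a smaller gap in the reduction to the circle statement: you assert that mass of a common $\tilde f_\omega$-invariant measure on the interior of $\mathbb{I}^\pm$ ``would force an $f_\omega$-invariant measure on $\mathbb{I}$'' and that this already contradicts the assumption ``no $\{p,q\}$ invariant.'' That implication is not automatic. The hypothesis forbids an invariant two-point (or one-point) set, not directly an invariant measure; going from the latter to the former requires an argument. The paper isolates this as Proposition~\ref{prop:no-invariant}, whose proof works with the semigroup generated by the $f_t$, separates order-preserving from order-reversing elements, and uses Lemma~\ref{lem1-interval} on the support of invariant measures for order-preserving interval homeomorphisms to produce the invariant pair $\{p,q\}$. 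You would need something like this; it cannot be waved through.

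By contrast, the paper's route avoids doubling altogether. It embeds $\mathbb{I}$ as a closed arc in $\mathbb{S}^1$ and extends each $f_\omega$ to a single $C^1$ circle diffeomorphism $\tilde f_\omega$, chosen so that every point of $\mathbb{S}^1\setminus\mathbb{I}$ is eventually mapped into $\mathbb{I}$ and stays there. This forces the support of every $\tilde f$-stationary measure into $\mathbb{I}$, so $\tilde f$-stationary measures coincide with $f$-stationary ones, and in particular $\tilde f$ admits a common $\tilde f_\omega$-invariant measure if and only if $f$ does. After Proposition~\ref{prop:no-invariant} rules the latter out, Proposition~\ref{cor:mostly-contraction-circle-cantor} applies to $\tilde f$, and mostly-contraction passes back to $f$ because, on $\mathbb{I}$, the Lipschitz/Lyapunov data of $\tilde f$ and $f$ agree. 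This is both simpler than the double and sidesteps the well-definedness obstruction entirely, because the extension is by a one-sided arc rather than a reflection.
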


{Some results in~\cite{Mal:17} were extended in~\cite{MM23} to \(C^1\) diffeomorphisms of a Cantor set \(\mathbb K\subset \mathbb R\). Recall that, according to~\cite{MM23}, a \(C^1\) diffeomorphism of \(\mathbb K\) means a homeomorphism \(g:\mathbb K\to\mathbb K\) such that for every \(x\in \mathbb K\) there exist an open interval \(I_x\subset \mathbb R\) containing \(x\) and a \(C^1\) diffeomorphism
$ \tilde g_x:I_x\to \tilde g_x(I_x)$ 
satisfying
$$\tilde g_x|_{I_x\cap \mathbb K}=g|_{I_x\cap \mathbb K}.$$ Equivalently, by~\cite[Proposition~2.1]{malicet2023groups}, such maps are precisely the restrictions to \(\mathbb K\) of ambient \(C^1\) diffeomorphisms preserving \(\mathbb K\). In particular, the definition does not depend on the choice of a common ambient embedding for the fiber maps. With this convention, by a random map of \(C^1\) diffeomorphisms\index{random maps!Cantor diffeomorphisms} of \(\mathbb K\) we mean a random map \(f:\Omega\times \mathbb K\to \mathbb K\) such that \(f_\omega\) is a \(C^1\) diffeomorphism of \(\mathbb K\) for \(\mathbb P\)-a.e.~\(\omega\in\Omega\). As before, we obtain the following.}

\begin{mainprop} \label{prop:mostly-cantor}
Let $f:\Omega \times \mathbb{K} \to \mathbb{K}$ be a random map of $C^1$ diffeomorphisms of a Cantor set $\mathbb{K}$ where $\Omega = T^\mathbb{N}$ with $T$ finite.  {If $f$ has no common invariant probability measure},
then $f$ is mostly contracting. Moreover, the conclusions in Theorem~\ref{thmA} hold.
\end{mainprop}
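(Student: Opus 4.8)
The plan is to mimic the proofs of Propositions~\ref{cor:mostly-contraction-circle-cantor} and~\ref{cor:interval}, replacing the invariant principle of~\cite{Mal:17} for the circle by its Cantor-set analogue established in~\cite{MM23}.

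First I would record the setup. Since $\mathbb{K}\subset\mathbb{R}$ is a compact metric space and each $C^1$ diffeomorphism $g$ of $\mathbb{K}$ is locally the restriction of a $C^1$ diffeomorphism of the line, one checks that $g$ is Lipschitz and that its local Lipschitz constant equals $Lg(z)=|g'(z)|$ at every $z\in\mathbb{K}$ (every point of $\mathbb{K}$ is non-isolated, so $L_rg(z)$ is squeezed between $|g'(z)|$ and $\sup_{|y-z|<r}|\tilde g'(y)|$ for a local $C^1$ extension $\tilde g$, and the latter tends to $|g'(z)|$). By the chain rule for derivatives this is an equality, so $Lf^n_\omega(x)=\prod_{k=0}^{n-1}|f'_{\omega_k}(f^k_\omega(x))|$ and hence, for any ergodic $f$-stationary $\mu$, Birkhoff's theorem gives $\lambda(\mu)=\iint\log|f'_\omega(x)|\,d\mathbb{P}(\omega)\,d\mu(x)$, the usual Lyapunov exponent. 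Moreover, as $\Omega=T^{\mathbb{N}}$ with $T$ finite, $\mathrm{Lip}(f_\omega)=\|f'_\omega\|_\infty$ takes only finitely many finite values, so the integrability condition~\eqref{eq:integrability} and the exponential moment condition~\eqref{eq:integral_condition} hold automatically. By the list of equivalences in~\S\ref{ss:mostlycontracting} (item~(i)), it then suffices to prove that $\lambda(\mu)<0$ for every ergodic $f$-stationary probability measure $\mu$.

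Next I would argue by contradiction: suppose some ergodic $f$-stationary $\mu$ has $\lambda(\mu)\ge 0$. At this point I invoke the invariant principle for $C^1$ Cantor-set diffeomorphisms from~\cite{MM23}, the analogue of~\cite{Mal:17}, which yields the dichotomy that an ergodic stationary measure either satisfies $\lambda(\mu)<0$ or is $f_\omega$-invariant for $\mathbb{P}$-almost every $\omega$. The second alternative would produce a probability measure on $\mathbb{K}$ that is a common invariant measure, contradicting the hypothesis. Therefore $\lambda(\mu)<0$ for every ergodic $f$-stationary $\mu$, i.e.\ $f$ is mostly contracting. Since~\eqref{eq:integral_condition} also holds, this is exactly the hypothesis of Theorem~\ref{thmA}, and all of its conclusions follow (including the characterizations of simplicity of the eigenvalue $1$, of uniqueness of the modulus-one eigenvalue, and of the spectral gap, in terms of the dynamics of $f$ on $\mathbb{K}$).

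The hard part is not the bookkeeping above but making sure the invariant principle is available in precisely the needed form on $\mathbb{K}$: one must verify that~\cite{MM23} provides the dichotomy for ergodic stationary measures phrased via the maximal Lyapunov exponent $\lambda(\mu)$ as defined here, and that the implication ``no common invariant measure $\Rightarrow$ every ergodic stationary measure is contracting'' is exactly its content in the totally disconnected setting, where the connectedness arguments of the circle case are replaced by the order structure inherited from $\mathbb{R}$ together with the extension of Cantor $C^1$ maps to line diffeomorphisms. Everything else --- the identification $Lg=|g'|$ and the automatic integrability when $T$ is finite --- is routine.
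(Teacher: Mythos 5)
Your overall strategy — reduce to showing $\lambda(\mu)<0$ for every ergodic $f$-stationary $\mu$ via the invariance-principle dichotomy, noting that $T$ finite trivializes the integrability and exponential moment conditions — is sound in spirit, and your preliminary observation that $Lg(z)=|g'(z)|$ (with equality in the chain rule) on a Cantor set is correct and worth recording. However, you have correctly identified your own gap, and that gap is exactly where the paper diverges from your plan.

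The paper does \emph{not} invoke a Cantor-set analogue of the invariance-principle dichotomy (Theorem~\ref{thm:Markovian-inv-principle}). In the circle case, that dichotomy is a \emph{combination} of Theorem~\ref{thm:invariant-Malicet} (stated in terms of the contraction exponent $\lambda_{con}$, not the Lyapunov exponent) with Proposition~\ref{prop:Lyapunov-contraction}, whose proof that $\lambda_{con}(\bar\mu)=\min\{\lambda(\bar\mu),0\}$ uses the interval mean-value theorem on $\mathbb{S}^1$ in an essential way. On a totally disconnected $\mathbb{K}$ you would need to redo that bridge, and it is not free even with local $C^1$ extensions to the line; nothing in the statement you cite from~\cite{MM23} is presented in the form you require. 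So the implication ``no common invariant measure $\Rightarrow$ every ergodic stationary measure has $\lambda(\mu)<0$,'' obtained via a $\lambda_{con}$-to-$\lambda$ invariance principle on $\mathbb{K}$, is not available off the shelf.

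What the paper actually uses from~\cite{MM23} is different and more quantitative: \cite[Prop.~6.1]{MM23} gives, assuming $T$ finite and no common invariant measure, constants $\lambda>0$, $C>0$ and $m\in\mathbb{N}$ such that for $\mathbb{P}$-a.e.\ $\omega$ there is a finite set $E$ (cardinality $<m$) with
$$
\sup_{x\in\mathbb{K}\setminus V}|(f_\omega^n)'(x)|\leq e^{-n\lambda}
$$
for all large $n$ and any neighborhood $V$ of $E$; and \cite[Lem.~3.3]{MM23} gives that any stationary $\mu$ that is not a common invariant measure is atomless. Since $\mu$ is atomless and $E$ is finite, integrating the bound above gives $\tfrac{1}{n}\int\log|Df_\omega^n(x)|\,d\mu\le -\lambda$ for a.e.\ $\omega$ and $n$ large, hence $\lambda(\mu)\le -\lambda<0$ \emph{uniformly} over all $f$-stationary $\mu$; in particular $\lambda(f)\le-\lambda<0$. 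This sidesteps entirely the $\lambda_{con}$-versus-$\lambda$ comparison and is the route you should follow. The rest of your write-up (item~(i) of the equivalences, Remark~\ref{rem:thmA-finite-moment} for the exponential moment, hence Theorem~\ref{thmA}) then goes through unchanged.
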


In what follows, we present some applications of our results to the case of circle diffeomorphisms, but similar results also hold, with minor modifications, for interval diffeomorphisms onto their images.

\subsection{Central limit theorem and large deviations}
\label{ss:CLT-one-dimensional}
For a random map $f$ consisting of finitely many orientation-preserving circle homeomorphisms, Szarek and Zdunik ~\cite{SZ17} demonstrated the central limit theorem under the assumptions of minimality and the absence of a common invariant probability measure. According to~\cite{Mal:17}, if there is no probability measure invariant under every fiber map in the system, then the random map exhibits the local contraction property. Building on this, Gelfert and Salcedo~\cite{gelfert2024synchronization} extended the central limit theorem to random maps of finitely many circle homeomorphisms, assuming that $f$ is proximal and has the local contraction property. 
Given that both minimality and proximality imply the existence of a unique stationary measure, we extend these results to uniquely ergodic random maps of circle diffeomorphisms in the following theorem. Furthermore, the additional assumption of regularity allows us to obtain a stronger result concerning the rate of convergence in the central limit theorem. Similarly, for random maps not necessarily consisting of finitely many fiber maps but with a finite exponential moment, we derive large deviations for the finite-time Lyapunov exponent from the maximal Lyapunov exponent, thereby extending previous results in~\cite{gelfert2024synchronization} for iterated function systems (i.e., for random maps driven by a finitely supported probability measure).

\index{central limit theorem}
\index{Berry-Esseen theorem}

\begin{mainprop} \label{mainprop:CLT-circle}
Let $f: \Omega \times \mathbb{S}^1 \to \mathbb{S}^1$ be a random map of  $C^{1+\epsilon}$ diffeomorphisms of the circle with  $0<\epsilon\leq 1$	such that 
	\begin{itemize}
		\item  {$f$ has no common invariant probability measure},
		\item $f$ satisfies the exponential moment condition 
         \begin{equation} \label{eq:exponential-moment-circulo}
     \int \max\big\{\|f'_\omega\|_\infty, 
     \|(f^{-1}_\omega)'\|_\infty, |f'_\omega|_\epsilon \big\}^\beta \, d\mathbb{P}<\infty \quad \text{for some $\beta>0$};
    \end{equation}
  \item there is a unique $f$-stationary measure $\mu$. 
	\end{itemize}
	 Then, there exists $\sigma\ge 0$ such that for any $x\in \mathbb{S}^1$,
	\begin{enumerate}[itemsep=0.3cm]
		\item \textbf{Central limit theorem:} 
		$$\frac{\log |(f_\omega^n)'(x)|-n\lambda(\mu)}{\sqrt{n}}
  \xrightarrow[n\to +\infty]{\rm{law}}
  \mathcal{N}(0,\sigma^2).$$
  Moreover, if $f$ is minimal, then $\sigma>0$.
 		\item \textbf{Central limit theorem with rate of convergence:} if $\sigma>0$, then  
$$\qquad \quad \sup_{u\in\mathbb{R}}\left|\mathbb{P}\left( \frac{\log |(f_\omega^n)'(x)|-n\lambda(\mu)}{\sigma \sqrt{n}}\leq u\right) -\frac{1}{\sqrt{2\pi}} \int_{-\infty}^u e^{-\frac{s^2}{2}}\, ds\right| =O(n^{-1/2}).$$
		
\item \textbf{Large deviation:\index{large deviations}
 }There are  positive numbers $C$ and $h$, which do not depend on $x$, such that for any $\epsilon>0$ small enough, one can find $n_0=n_0(\epsilon)\in \mathbb{N}$ for which
		$$\mathbb{P}\left( \left|\frac{1}
  {n} \log |(f_\omega^n)'(x)|-\lambda(\mu) \right| >\epsilon \right) \leq  C e^{-n h\epsilon^2}.$$
Moreover, if $\sigma>0$, then 
\begin{equation*} 
    \lim_{n\to\infty}\frac{1}{n}\log \mathbb{P}\left( \left|\frac{1}{n}\log |(f_\omega^n)'(x)|-\lambda(\mu) \right| >\epsilon \right)=-c(\epsilon)
\end{equation*}
where $c(\cdot)$ is a non-negative strictly convex function vanishing only at $\epsilon=0$.		
	\end{enumerate}
  
\end{mainprop}

In dimension one, as well as for linear cocycles (see Theorem~\ref{cor:Furstenberg}), one can easily obtain the strong law of large numbers for Lyapunov exponents as a consequence of an extension due to Furstenberg~\cite[Lemma~7.3]{Fur:63} of Breiman's results (see Corollary~\ref{thm:Bierman-Furstenberg}). Note that this result does not require any contraction property.  

\index{strong law of large numbers}
\begin{mainprop} \label{mainprop:SLLN}  Let $f: \Omega \times \mathbb{S}^1 \to \mathbb{S}^1$ be a  uniquely ergodic random map  of  $C^{1}$ diffeomorphisms of the circle satisfying that $\|\log |f'_\omega|\,\|_\infty$ is $\mathbb{P}$-integrable. 
Then, for any $x\in \mathbb{S}^1$,  
		$$\lim_{n\to \infty} \frac{1}{n}
  \log |(f_\omega^n)'(x)|=\lambda(\mu)  \quad \text{for $\mathbb{P}$-a.e.~$\omega\in \Omega$}.
$$  
\end{mainprop}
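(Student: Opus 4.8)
The plan is to recognise $\log|(f^n_\omega)'(x)|$ as a Birkhoff sum over the skew‑product $F$ and then to upgrade the almost sure convergence from $\mu$‑almost every base point to \emph{every} base point by means of Furstenberg's refinement of Breiman's strong law of large numbers. By the chain rule, for $\mathbb{P}$‑almost every $\omega$ and every $x\in\mathbb{S}^1$,
\[
\log|(f^n_\omega)'(x)|=\sum_{k=0}^{n-1}\log|f'_{\omega_k}(f^k_\omega(x))|=\sum_{k=0}^{n-1}\psi\bigl(F^k(\omega,x)\bigr),\qquad\psi(\omega,x)\eqdef\log|f'_{\omega_0}(x)|=\log Lf_\omega(x).
\]
Since $\int\psi^{+}\,d\bar{\mu}\le\int\log^{+}\|f'_\omega\|_\infty\,d\mathbb{P}\le\tfrac1\beta\int\|f'_\omega\|_\infty^{\beta}\,d\mathbb{P}<\infty$ by~\eqref{eq:exponential-moment-circulo0} (with $\bar{\mu}=\mathbb{P}\times\mu$), the quantity $\lambda(\mu)=\int\psi\,d\bar{\mu}\in[-\infty,\infty)$ is well defined, and unique ergodicity forces $\bar{\mu}$ to be the only $F$‑invariant probability measure with first marginal $\mathbb{P}$, hence ergodic; so the additive case of Kingman's theorem recalled in~\S\ref{sec:lyapunov-def} already gives $\tfrac1n\log|(f^n_\omega)'(x)|\to\lambda(\mu)$ for $\bar{\mu}$‑almost every $(\omega,x)$, i.e.\ for $\mu$‑almost every $x$ and $\mathbb{P}$‑almost every $\omega$. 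The whole task is to pass from ``$\mu$‑a.e.\ $x$'' to ``every $x$''.

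For this I would invoke the extension of Breiman's theorem due to Furstenberg~\cite[Lemma~7.3]{Fur:63}: for a uniquely ergodic continuous random map with stationary measure $\mu$, if $\varphi(\omega,x)=\varphi_{\omega_0}(x)$ with $\varphi_{\omega_0}\in C(X)$ for $\mathbb{P}$‑almost every $\omega$ and $\int\sup_{x}|\varphi_{\omega_0}(x)|\,d\mathbb{P}<\infty$, then $\tfrac1n\sum_{k=0}^{n-1}\varphi(F^k(\omega,x))\to\int\varphi\,d\bar{\mu}$ for \emph{every} $x\in X$ and $\mathbb{P}$‑almost every $\omega$. Its two ingredients are Breiman's theorem---valid for any uniquely ergodic continuous random map, with no contraction required---that the empirical measures $\tfrac1n\sum_{k=0}^{n-1}\delta_{f^k_\omega(x)}$ converge weak$^*$ to $\mu$ for every $x$, together with a uniform tail estimate that absorbs the joint dependence of the observable on the current letter $\omega_k$ and on $f^k_\omega(x)$. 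Applying this with $\varphi=\psi$ yields precisely the statement of the proposition; the same scheme underlies the linear‑cocycle versions in Theorem~\ref{cor:Furstenberg} and Remark~\ref{rem:SLLN-linear}.

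The substantive step is to verify the integrability hypothesis for $\psi$, namely
\[
\int\sup_{x\in\mathbb{S}^1}\bigl|\log|f'_\omega(x)|\bigr|\,d\mathbb{P}=\int\max\bigl\{\log^{+}\|f'_\omega\|_\infty,\ \log^{+}\|(f^{-1}_\omega)'\|_\infty\bigr\}\,d\mathbb{P}<\infty.
\]
The first term is finite by~\eqref{eq:exponential-moment-circulo0}; this alone already gives the upper half $\limsup_n\tfrac1n\log|(f^n_\omega)'(x)|\le\lambda(\mu)$ for every $x$ with a single full‑measure set of $\omega$ (and that half also follows directly from the uniform Kingman subadditive ergodic theorem for Markov operators, Theorem~\ref{Kingmanuniform}, since $\tfrac1n\log|(f^n_\omega)'(x)|\le\tfrac1n\sup_{y}\log Lf^n_\omega(y)$, the right‑hand side has $\limsup$ at most $\lambda(f)$, and $\lambda(f)=\lambda(\mu)$ by Theorem~\ref{prop:equivalence} as $\mu$ is the unique stationary measure). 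The finiteness of the second term, $\int\log^{+}\|(f^{-1}_\omega)'\|_\infty\,d\mathbb{P}$, is the delicate point and is where I expect the real work: it is not immediate from~\eqref{eq:exponential-moment-circulo0}, but it holds automatically in the settings of interest---e.g.\ when $\Omega=T^{\mathbb{N}}$ with $T$ compact and $\omega_0\mapsto f_{\omega_0}$ continuous into the $C^1$ topology, in particular when $T$ is finite, both $\|f'_\omega\|_\infty$ and $\|(f^{-1}_\omega)'\|_\infty$ are bounded, and the hypothesis is trivially satisfied---while if $\lambda(\mu)=-\infty$ the lower bound is vacuous. Granting this uniform integrability, the reduction above together with Furstenberg's lemma closes the argument; I stress that no contraction property of $f$, and nothing beyond $C^1$ regularity, is used anywhere.
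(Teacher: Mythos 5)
Your proof follows exactly the paper's route: express $\log|(f^n_\omega)'(x)|$ as a Birkhoff sum of $\phi(\omega,x)=\log|f'_{\omega_0}(x)|$ over the skew-product, note that unique ergodicity together with Lemma~\ref{lemma:5exponents} makes $\int\phi\,d(\mathbb{P}\times\mu)$ independent of the stationary measure, and invoke Theorem~\ref{thm:Bierman-Furstenberg} (Furstenberg's extension of Breiman's law) to upgrade from $\mu$-a.e.\ to every $x$. The integrability concern you raise is genuine, and the paper does not resolve it either: Theorem~\ref{thm:Bierman-Furstenberg} needs $\int\sup_x|\phi(\omega,x)|\,d\mathbb{P}<\infty$, i.e.\ $\int\max\bigl\{\log\|f'_\omega\|_\infty,\log\|(f^{-1}_\omega)'\|_\infty\bigr\}\,d\mathbb{P}<\infty$, whereas hypothesis~\eqref{eq:exponential-moment-circulo0} controls only $\|f'_\omega\|_\infty$. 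For a $C^1$ circle diffeomorphism $g$ these two quantities are genuinely independent: both are $\ge 1$ since $\int_{\mathbb{S}^1} g'\,dx=1$, yet one can have $\|g'\|_\infty\le 2$ while $\|(g^{-1})'\|_\infty=1/\inf|g'|$ is arbitrarily large (take $g'=1-(1-\epsilon)\sin 2\pi x$). The paper's proof simply asserts $\int\|\phi(\omega,\cdot)\|_\infty\,d\mathbb{P}<\infty$ ``by the assumptions'' with no supporting argument, so your phrase ``granting this uniform integrability'' is carrying real weight: either the statement should assume the symmetric moment $\int\max\{\|f'_\omega\|_\infty,\|(f^{-1}_\omega)'\|_\infty\}^\beta\,d\mathbb{P}<\infty$ (the $C^1$ analogue of~\eqref{eq:exponential-moment-condition}, and of~\eqref{eq:exponential-moment-circulo} minus the H\"older term), or the lower-bound half must be argued by other means. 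Your remark that the condition is automatic when $T$ is compact and $t\mapsto f_t$ is $C^1$-continuous is correct but is an additional hypothesis, not a consequence of what is written.

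A minor side point: your parenthetical alternative derivation of the upper bound via Theorem~\ref{Kingmanuniform} is not justified as stated. The pointwise inequality $\frac1n\log|(f^n_\omega)'(x)|\le\frac1n\log\mathrm{Lip}(f^n_\omega)$ is fine, but Theorem~\ref{Kingmanuniform} controls the annealed maximum $\max_x\frac1n\int\log Lf^n_\omega(x)\,d\mathbb{P}$, not the $\mathbb{P}$-a.s.\ limit of the quenched sequence $\frac1n\log\mathrm{Lip}(f^n_\omega)$; the latter converges by ordinary Kingman, but a priori only to $\inf_n\frac1n\int\log\mathrm{Lip}(f^n_\omega)\,d\mathbb{P}\ge\lambda(f)$, and equality would require a separate argument. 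Once the two-sided integrability is in place Theorem~\ref{thm:Bierman-Furstenberg} delivers both bounds simultaneously, so this detour is unnecessary.
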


\subsection{H\"older continuity of Lyapunov exponents}  \label{ss:holder-circle} \index{Lyapunov exponents!H\"older continuity}

For $r>0$, denote by $\DE{\,r}(\mathbb{S}^1)$ the set\index{space of random maps!\(\DE{\,r}(\mathbb{S}^1)\), circle $C^r$ diffeomorphisms} of random maps $f:\Omega\times \mathbb{S}^1 \to \mathbb{S}^1$ of $C^r$ diffeomorphisms of $\mathbb{S}^1$, up to $\mathbb{P}$-almost identification of fiber maps. We consider the  metric 
$$
  \D_{\smash{C^r}}^\pm(f,g)=\int d_{C^{\lfloor{r}\rfloor}}(f_\omega,g_\omega) + d_{C^{\lfloor{r}\rfloor}}(f_\omega^{-1},g_\omega^{-1}) + |f'_\omega-g'_\omega|_{r-\lfloor{r}\rfloor} \, d\mathbb{P}
  \index{metric structures!Cr distancecircle diffeomorphism distance@\(\D_{C^r}^{\pm}\), averaged distance on \(\DE{r}(\mathbb S^1)\)}
$$
for $f,g\in \DE{\,r}(\mathbb{S}^1)$. 
Note that $\D_{C^1} \leq \D_{\smash{C^1}}^\pm$  where $\D_{C^1}$ was introduced in~\eqref{eq:C1distacia-random-maps}.


\begin{mainprop} \label{mainprop:nolinear-exponent}
    Let $f\in \DE{r}(\mathbb{S}^1)$, $r>1$, be a uniquely ergodic random map with      
      {no common invariant probability measure}  and satisfying the exponential moment condition~\eqref{eq:exponential-moment-circulo}. Then, there exists a neighborhood $\mathcal{B}$ of $f$ in $(\DE{r}(\mathbb{S}^1),\D_{\smash{C^{r}}}^\pm)$ such that 
    every $g\in \mathcal{B}$ has a unique stationary measure $\mu_g$ which varies continuously in the weak$^*$ topology and
    the map $$g\in\mathcal{B} \mapsto \lambda(\mu_g)\in (-\infty,0)$$ is H\"older continuous.  
    Moreover, there exist $C>0$ and $\gamma \in (0,1]$ such that
$$
   |\lambda(\mu_g)-\lambda(\mu_h)|\leq C \cdot \D_{C^1}(g,h)^\gamma_{} \quad \text{for all $g,h\in \mathcal{B}$.}
$$ 
\end{mainprop}

{
The uniqueness assumption in Proposition~\ref{mainprop:nolinear-exponent}
serves only to present the statement in its simplest form. Later, in
Theorem~\ref{mainprop:nolinear-exponent-improved}, we prove a non-uniquely
ergodic version on strata where the number of ergodic stationary measures is
constant. As a consequence, for \(r>1\),
Corollary~\ref{cor:generic-lyapunov-circle} gives an open and dense set
of random circle diffeomorphisms in $(\DE{r}(\mathbb{S}^1),\D_{\smash{C^{r}}}^\pm)$ for which all stationary Lyapunov exponents
persist and vary H\"older continuously.
}

\section*{Organization of the  {monograph}}

The structure of this   {monograph}  is as follows.   {Chapter}~\S\ref{s:Kingman} {is devoted to proving} the~\emph{uniform Kingman's subadditive ergodic theorem} (Theorem~\ref{Kingmanuniform}) and \emph{Kingman's subadditive ergodic theorem} (Theorem~\ref{Kingman})  for Markov operators.  In \S\ref{s:mostly-contracting}, we discuss mostly contracting random maps, focusing on local contraction on average and quasi-compactness, proving Theorem~\ref{thmA}.
We also analyze the implications of global contraction, including proximality and unique ergodicity. \S\ref{s:statistical-stability}~addresses the robustness and statistical stability of mostly contracting random maps. Here, we prove Proposition~\ref{prop:statistical-stability} and show the weak statistical stability property for continuous random maps. In~\S\ref{s:linear}, we shift our attention to locally constant linear cocycles. We revisit the classical theory and prove Proposition~\ref{prop:projective}.
{In chapter~\S\ref{chap:CLT-linear} we study the limit theorems for the first Lyapunov exponent of linear cocycles, proving Proposition~\ref{mainprop:CLT-LD},~\ref{mainprop:CLT-LD2},~\ref{mainprop:BE} and~\ref{mainprop:BE2}. The continuity results for this exponent are developed in  Chapter~\S\ref{chap:continuity-linear} where we show Proposition~\ref{mainthm:continuity},~\ref{mainprop:linear-exponent-quasi}, \ref{mainprop:linear-exponent-almost} and~\ref{mainprop:regularity-distribution}.} In~\S\ref{s:examples}, we give the proof of Proposition~\ref{cor:mostly-contraction-circle-cantor}, \ref{cor:interval} and~\ref{prop:mostly-cantor}. 
\S\ref{s:circle} introduces the study of random maps of circle diffeomorphisms. We prove the H\"older continuity of the Lyapunov exponent (Proposition~\ref{mainprop:nolinear-exponent}) and present limit theorems for this class of maps (Proposition~\ref{mainprop:CLT-circle} and~\ref{mainprop:SLLN}). Finally, in~\S\ref{s:local-contraction}, we analyze the exponential local contraction property for general skew-products (Theorem~\ref{contraction}) and later for random dynamical systems, proving Theorem~\ref{cor:local-contraction}. We conclude by discussing Palis' global conjecture within the context of mostly contracting random maps. In Appendix~\ref{s:preliminar}, we state some classical preliminary lemmas on integration that will be used throughout the {monograph}. {Appendix~\ref{s:appendix-martingale-zero-one} contains the statements of some well-known martingale laws of random variables. Finally, in Appendix~\ref{s:appendix-feller-bernoulli} we compile from the literature and prove slightly more general versions of several fundamental results for random maps, which we used as a tool to prove some of our main theorems. }

\chapter{Kingman's subadditive ergodic theorems for Markov operators}
\label{s:Kingman}

\abstract{{
This chapter establishes Kingman's subadditive ergodic theorem for Markov
operators. After introducing \(P\)-subadditive sequences, we prove a uniform
theorem for Markov operators on $C(X)$ where $X$ is a compact metric space
in the
setting of upper semicontinuous extended real-valued observables. The resulting
formula identifies the asymptotic maximal growth with a variational supremum
over invariant probability measures, attained on ergodic measures. We then prove
a pointwise \(L^1\)-version, which gives almost sure convergence of normalized
\(P\)-subadditive sequences. These theorems provide the main operator-theoretic tool for the Lyapunov exponent formulas developed later in the monograph.}
}

\section{Subadditive processes}
\label{sec:P-subadditive-processes}

Let $P$ be an operator acting on a sequence $(\phi_n)_{n\geq 1}$ of extended real-valued measurable functions (or equivalence classes thereof) on~$X$.

\begin{defi} \label{def:Padditive} \index{subadditive processes!\(P\)-subadditive sequence}
\index{subadditive processes!\(P\)-additive sequence}
The sequence $(\phi_n)_{n\geq 1}$ is said to be $P$-subadditive if for every $m,n\geq 1$ we have
\begin{equation}\label{Qsub}
\phi_{n+m}\leq \phi_n+P^n\phi_m.
\end{equation}
Moreover, the sequence $(\phi_n)_{n\geq 1}$ is $P$-additive if equality holds in~\eqref{Qsub}.
\end{defi}


To illustrate, let \( f:X \to X \) be a measurable transformation which preserves a probability measure \( \mu \). Given the Koopman operator associated with \( f \), defined as \( P\phi = \phi \circ f \), the \( P \)-subadditive sequences correspond to the subadditive stochastic processes in the classical setting of Kingman's subadditive ergodic theorem.


\section{Uniform Kingman subadditive ergodic theorem} \label{sec:kigmanuniform}

The following result generalizes the uniform Kingman subadditive ergodic theorem to a broader class of Markov operators. In the context of subadditive processes associated with a measure-preserving transformation, this theorem was proved by S{\l}omczy\'nski~\cite{slomczynski1995continuous} and later independently rediscovered by Schreiber~\cite{schreiber1998growth}, Furman~\cite[Thm.~1]{Fur:97}, and Sturman and Stark~\cite[Thm.~1.5]{sturman2000semi}. Cao further extended it to random settings~\cite{cao2006growth}.
In the case of a measure-preserving transformation, the following result recovers the version of Morris~\cite{morris2013mather} by incorporating upper semicontinuous, extended real-valued functions. We now introduce the necessary notation to state the result.

Let $X$ be a compact metric space. A linear operator $P:C(X)\to C(X)$ is called a \emph{Markov operator} \index{operator!Markov operator on $C(X)$} if
\begin{enumerate}[label=(\roman*), leftmargin=1cm]
  \item $P$ is positive\index{operator!positive operator}, i.e., $P\varphi \geq 0$ for every $\varphi \geq 0$, and
  \item $P1_X=1_X$.
\end{enumerate}
From these conditions, it directly follows that \( \|P\|_{\mathrm{op}} = 1 \). Consequently, \( P \) is a contraction and a bounded operator.
Note that \( P \) can be naturally extended beyond \( C(X) \). For example, if $f$ is an upper semicontinuous extended real-valued function  on \( X \), then, since \( f = \inf\{g \in C(X): f\leq g\} \) by~Lemma~\ref{lem:Baire-teorema}, we may define \( P \) at \( f \) by \( Pf = \inf \{Pg: g \in C(X), f\leq g\} \). By duality, the Markov operator $P$ induces a linear operator $P^*$\index{operator!0@\(P^*\), dual operator} on the space of probability measures, satisfying the duality relation
$$
    \int P\phi \, d\mu = \int \phi \, dP^*\mu.
$$
A probability measure $\mu$ is $P^*$-invariant if $P^*\mu=\mu$. The set $\mathcal{I}$\index{space of measures!\(\mathcal I\), \(P^*\)-invariant probability measures} of \( P^* \)-invariant probability measures \index{operator!1@$P^*$-invariant probability measure} is convex, with its extreme points being the so-called \emph{ergodic} measures. {We denote by $\mathcal I_{\rm erg}$\index{space of measures!\(\mathcal I_{\rm erg}\), ergodic \(P^*\)-invariant measures} the set of ergodic $P^*$-invariant measures.}

{
Let $(\phi_n)_{n\geq 1}$ be a $P$-subadditive sequence. For any $\mu\in \mathcal{I}$, the $P^*$-invariance of $\mu$ implies
\[
\int \phi_{n+m} \, d\mu \leq  \int \phi_n \,d\mu + \int P^n\phi_m \, d\mu = \int \phi_n \,d\mu + \int \phi_m \, d\mu.
\]
Thus, the sequence $(\int \phi_n\, d\mu)_{n\geq 1}$ is subadditive. Moreover, if $\phi_1^+ \eqdef \max\{0,\phi_1\}$ is $\mu$-integrable, as is automatically the case, for instance, when $\phi_1$ is upper semicontinuous on a compact space, subadditivity yields $\int \phi_n \,d\mu \leq n \int \phi_1 \, d\mu <\infty$. Consequently, by Fekete's lemma (Lemma~\ref{lem:Fekete}), the following limit exists:
\begin{equation} \label{eq:Kingman2}
     \Lambda(\mu) \eqdef \lim_{n\to \infty} \frac{1}{n} \int \phi_n \, d\mu =\inf_{n\geq 1} \frac{1}{n} \int \phi_n \, d\mu \in [-\infty,\infty).
\end{equation}\index{growth rates!\(\Lambda(\mu)\), asymptotic  rate}
We then define the global supremum
\begin{equation*} 
\Lambda \eqdef \sup_{\mu \in \mathcal{I}} \Lambda(\mu).
\end{equation*}\index{growth rates!\(\Lambda\), maximal  rate}
}

\begin{mainthm}\label{Kingmanuniform} \index{Kingman's subadditive ergodic theorems! uniform Kingman for Markov operators}
Consider a compact metric space $X$ and let $P:C(X)\to C(X)$ be a Markov operator. Let $(\phi_n)_{n\geq 1}$ be a $P$-subadditive sequence of upper semicontinuous extended real-valued functions. Then,
\begin{equation} \label{eq:Kingman1}
  \Lambda  = \lim_{n\to \infty} \max_{x\in X} \frac{1}{n}\phi_n(x). 
\end{equation}
Moreover,
\begin{enumerate}[leftmargin=0.9cm, itemsep=0.25cm,label=(\roman*)]
\item $\displaystyle\Lambda  =  \max \big\{ \Lambda(\mu): \mu\in \mathcal{I}_{\rm erg} \big\}$
\item $\displaystyle\Lambda= \sup_{x\in X} \limsup_{n\to \infty} \frac{1}{n} \phi_n(x)=\sup_{\mu\in \mathcal{I}} \int \limsup_{n\to\infty} \frac{1}{n} \phi_n(x) \, d\mu$ 
\item $\displaystyle\Lambda=\lim_{n\to \infty} \sup_{\mu \in \mathcal{I}} \frac{1}{n} \int \phi_n \, d\mu \leq  \max_{\mu \in \mathcal{I}_{\rm erg}}  \frac{1}{m} \int \phi_m\, d\mu \ \ \text{for all $m\geq 1$}$.
\end{enumerate}
In equations~\eqref{eq:Kingman1} and (iii),  the limit may be replaced by the infimum.
\end{mainthm}

For clarity, we divide the proof of the above theorem into three subsections: \S\ref{ss:limits}, \S\ref{ss:inequality} and \S\ref{ss:proof}. Before proving the theorem, we give the following consequence:

\begin{cor} \label{cor:Kingman-uniform} If $(\phi_n)_{n\geq 1}$ is an $P$-additive sequence with $\int \phi_1^+ \, d\mu <\infty$, then  
$$
\Lambda(\mu)=\int \phi_1 \, d\mu <\infty \quad \text{and} \quad  \phi_n=\sum_{i=0}^{n-1} P^i\phi_1.
$$ 
Consequently, under the assumption of Theorem~\ref{Kingmanuniform},  
\begin{equation} \label{eq:formula-Furstenberg} 
    \Lambda = \max_{\mu \in \mathcal{I}_{\rm erg}} \int \phi_1\, d\mu =\lim_{n\to\infty}
    \max_{x\in X} \frac{1}{n} \phi_n
\end{equation}  
and 
\begin{equation} \label{eq:uniform-Furstenberg-Kifer}
    \inf_{\mu\in \mathcal I } \int \phi_1\, d\mu =\min_{\mu \in \mathcal{I}_{\rm erg}}  \int \phi_1\, d\mu 
    = \lim_{n\to\infty} \frac{1}{n} \min_{x\in X}   \phi_n(x). 
\end{equation}
\end{cor}

\begin{proof} If $(\phi_n)_{n\geq 1}$ is $P$-additive and $\mu\in \mathcal{I}$, then 
$$\int \phi_{m+1}\, d\mu=\int \phi_1\, d\mu + \int P\phi_m \,d\mu = \int \phi_1 \,d\mu + \int \phi_m \, d\mu.$$
Recursively, we get $\int \phi_{m+1} \, d\mu = (m+1)\int \phi_1 \, d\mu$. Then, dividing by $m+1$ and taking the limit, we have $\Lambda(\mu)=\int \phi_1\, d\mu <\infty$ under the assumption that $\phi_1^+=\max\{0,\phi_1\}$ is $\mu$-integrable.  

Now assume the hypotheses of Theorem~\ref{Kingmanuniform}, i.e.,  $\phi_n$ is additionally upper semicontinuous, $P:C(X)\to C(X)$ and $X$ is a compact metric space. Hence,  we obtain~\eqref{eq:formula-Furstenberg} from the facts that $\Lambda(\mu)=\int \phi_1 \,d\mu<\infty$,~\eqref{eq:Kingman1}  and the representation of $\Lambda$ in~(i) of Theorem~\ref{Kingmanuniform} as the maximum of $\Lambda(\mu)$ over the ergodic measures $\mu\in \mathcal{I}$.  

Finally, since $(\phi_n)_{n\geq 1}$ is $P$-additive, then also $(-\phi_n)_{n\geq 1}$ is $P$-additive. Thus, we can again use the representations of $\Lambda$ in~\eqref{eq:formula-Furstenberg} for both sequences of functions. Applying these representations to  $(-\phi_n)_{n\geq 1}$, we obtain that for every $x\in X$,
$$
\sup_{\mu \in \mathcal{I}} -\Lambda(\mu)=\max_{{\mu \in \mathcal{I}_{\rm erg}}} \int -\phi_1\, d\mu  = \lim_{n\to \infty}  \max_{x\in X} \frac{-1}{n}\phi_n(x). 
$$
This yields the chain of inequalities in~\eqref{eq:uniform-Furstenberg-Kifer} and concludes the proof.  
\end{proof}

\subsection{Existence of limits} \label{ss:limits}  Let us begin by establishing the existence of the limits in equations~\eqref{eq:Kingman1} and~(iii) in Theorem~\ref{Kingmanuniform}. We also show that these limits
can be replaced by an infimum.

\begin{lem} \label{lem:limites} It holds that
\begin{equation} \label{eq:Lambdamax}
\begin{aligned} \index{growth rates!\(\Lambda_{\max}\), global maximal rate}
\Lambda_{\max} &\eqdef \lim_{n\to  \infty} \max_{x\in X} \frac{1}{n} \phi_n(x) = \inf_{n\geq 1} \frac{1}{n} \max_{x\in X}\phi_n(x) \\ &\geq   \inf_{n\geq 1} \sup_{\mu\in \mathcal{I}} \frac{1}{n}\int \phi_n \, d\mu=\lim_{n\to \infty} \sup_{\mu\in \mathcal{I}} \frac{1}{n}\int \phi_n \, d\mu.
\end{aligned}
\end{equation}
\end{lem}
\begin{proof}
%
Observe that  \(  (\sup_{\mu \in \mathcal{I}} \int \phi_n \, d\mu)_{n\geq 1}\)  is a subadditive sequence, and by Fekete's lemma, we get the last equality in~\eqref{eq:Lambdamax}.
To prove the other inequality in~\eqref{eq:Lambdamax}, notice that since $\phi_n$ is upper semicontinuous and $X$ compact, it achieves its maximum on $X$, denoted by $M_n$. As constant functions are continuous maps, and since $P$ is Markov, \(P^mM_n = M_nP^m1_X = M_n\). Hence, again inequality~\eqref{Qsub} implies that the sequence \((M_n)_{n\ge 1}\) is subadditive. By invoking Fekete's lemma once more, we deduce the second equality~\eqref{eq:Lambdamax}. The inequality in~\eqref{eq:Lambdamax} follows trivially since $M_n \geq \int \phi_n \, d\mu$.
\end{proof}

\subsection{Main inequality} \label{ss:inequality}

We  turn to the proof of the following proposition:
\begin{prop} \label{Kingmanuniform2}
There exists a measure \( \nu \in \mathcal{I} \) such that \( \Lambda_{\max} \leq \Lambda(\nu) \).
\end{prop}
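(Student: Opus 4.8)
The goal is to produce a $P^*$-invariant measure $\nu$ with $\int \limsup \frac1n\phi_n \, d\nu$ (or, equivalently, the quantity controlling $\Lambda(\nu)$) at least $\Lambda_{\max}$. The natural strategy is to pick, for each $n$, a point $x_n$ where $\phi_n$ is nearly maximal, form the empirical Birkhoff-type averages of the push-forwards of $\delta_{x_n}$ under the dual operator $P^*$, and extract a weak* limit, which will be $P^*$-invariant by a standard Krylov–Bogolyubov argument. The subtlety is that we need to transfer a lower bound on $\frac1n\phi_n(x_n)$ into a lower bound on $\int \phi_1$ (or $\int\phi_m$) against the limit measure, and this is where upper semicontinuity and the $P$-subadditivity must be used carefully.

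Concretely, the plan is as follows. First, for fixed $m\geq 1$, iterate the $P$-subadditivity inequality $\phi_{n+m}\le \phi_n + P^n\phi_m$: writing $n = qm + s$ with $0\le s<m$, one telescopes to get something like $\phi_{qm}\le \sum_{j=0}^{q-1} P^{jm}\phi_m$ (modulo controlling the bounded remainder term $\phi_s$, which is harmless after dividing by $n$). Evaluating at a near-maximizing point $x_q$ for $\phi_{qm}$, dividing by $qm$, and using $\frac{1}{qm}\phi_{qm}(x_q)\to \Lambda_{\max}$ (by Lemma \ref{lem:limites}, the $\max_x\frac1n\phi_n(x)$ converge to $\Lambda_{\max}$), we obtain
$$
\Lambda_{\max} \le \liminf_{q\to\infty} \frac{1}{q}\sum_{j=0}^{q-1} \big(P^{jm}\phi_m\big)(x_q) \Big/ m = \liminf_{q\to\infty} \frac1m\int \phi_m \, d\Big(\tfrac1q\sum_{j=0}^{q-1} (P^*)^{jm}\delta_{x_q}\Big).
$$
Set $\nu_q^{(m)} \eqdef \frac1q\sum_{j=0}^{q-1}(P^*)^{jm}\delta_{x_q}$; pass to a weak* convergent subsequence $\nu_q^{(m)}\to \nu^{(m)}$. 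Since $\phi_m$ is upper semicontinuous, Lemma \ref{weakineq} gives $\limsup_q \int\phi_m\,d\nu_q^{(m)}\le \int\phi_m\,d\nu^{(m)}$, hence $\Lambda_{\max}\le \frac1m\int\phi_m\,d\nu^{(m)}$.

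The remaining point is to replace $\nu^{(m)}$, which is only $(P^*)^m$-invariant (the standard Krylov–Bogolyubov computation: $\|(P^*)^m\nu_q^{(m)} - \nu_q^{(m)}\|$ is $O(1/q)$ in the relevant weak sense, so the limit is fixed by $(P^*)^m$), by an honest $P^*$-invariant measure. The cleanest fix is to average: set $\nu \eqdef \frac1m\sum_{i=0}^{m-1}(P^*)^i\nu^{(m)}$, which is $P^*$-invariant. One then checks $\frac1m\int\phi_m\,d\nu^{(m)} \le \int\phi_1\,d\nu \le$ (something $\le \Lambda(\nu)$ after the limit), using $P$-subadditivity $\phi_m \le \sum_{i=0}^{m-1}P^i\phi_1$ evaluated against $\nu^{(m)}$ together with $(P^*)^m$-invariance; combined with Lemma \ref{lem:limites} this yields $\Lambda_{\max}\le \Lambda(\nu)$. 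Alternatively, and perhaps more transparently, one takes a diagonal/further weak* limit of the $\nu^{(m)}$ as $m\to\infty$ to get a genuinely $P^*$-invariant $\nu$ and uses $\frac1m\int\phi_m\,d\nu \to \Lambda(\nu)$ by the subadditive limit in \eqref{eq:Lambdamu} together with upper semicontinuity to pass the inequality $\Lambda_{\max}\le \frac1m\int\phi_m\,d\nu^{(m)}$ to the limit.

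I expect the main obstacle to be the bookkeeping around upper semicontinuity and non-integrability: $\phi_m$ may take the value $-\infty$ and is only bounded above, so one must be careful that all the $\limsup$/$\liminf$ manipulations and applications of Lemma \ref{weakineq} (which requires the function to be bounded above, fine here since $\phi_m\le M_m<\infty$) are legitimate, and that the telescoping of $P$-subadditivity does not secretly require integrability one doesn't have. A secondary technical point is making the near-maximizers $x_q$ and the extraction of weak* limits uniform enough that the final inequality survives all the limiting procedures; compactness of $X$ and of the space of probability measures on $X$ is what makes this go through.
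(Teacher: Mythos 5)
There is a genuine gap in your plan, and it is exactly where you yourself flag a worry. Your Krylov--Bogolyubov averages $\nu_q^{(m)} = \frac1q\sum_{j=0}^{q-1}(P^*)^{jm}\delta_{x_q}$ use only powers of $P^m$, so the weak* limit $\nu^{(m)}$ is only $(P^*)^m$-invariant, and neither of your two proposed fixes closes the gap. For the averaging fix $\nu^{\mathrm{avg}}_m \eqdef \frac1m\sum_{i=0}^{m-1}(P^*)^i\nu^{(m)}$: using $\phi_m \le \sum_{i=0}^{m-1}P^i\phi_1$ you do obtain $\Lambda_{\max} \le \int\phi_1\,d\nu^{\mathrm{avg}}_m$, but this runs into the wrong inequality at the final step, since by Lemma~\ref{lem:limites} one has $\Lambda(\nu^{\mathrm{avg}}_m)=\inf_n\frac1n\int\phi_n\,d\nu^{\mathrm{avg}}_m \le \int\phi_1\,d\nu^{\mathrm{avg}}_m$, not $\ge$. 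Bounding $\int\phi_1$ from below says nothing about the infimum $\Lambda(\nu^{\mathrm{avg}}_m)$. For the alternative fix, a weak* limit of $(P^*)^m$-invariant measures as $m\to\infty$ has no reason to be $P^*$-invariant (the implication only goes the other way), so that route collapses before it starts.

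The idea the proposal is missing is the averaging-over-residues step in Lemma~\ref{Kingmanineq} of the paper. Rather than telescope along multiples of $m$ only, one telescopes for each offset $r\in\{0,\dots,m-1\}$ to get $\phi_n \le C + \sum_{j=0}^{p-1}P^{jm+r}\phi_m$ and then averages over $r$, yielding
$\phi_n \le C + \frac1m\sum_{k=0}^{k_n-1}P^k\phi_m$ with the sum now over \emph{all} iterates $k$. This is what lets the paper use the genuine Krylov--Bogolyubov averages $\mu_n = \frac{1}{k_n}\sum_{k=0}^{k_n-1}(P^k)^*\delta_{x_n}$, whose total-variation defect under a single application of $P^*$ is $O(1/k_n)$, so the weak* limit $\nu_m$ is honestly $P^*$-invariant while still satisfying $\Lambda_{\max}\le\frac1m\int\phi_m\,d\nu_m$. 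Only because each $\nu_m$ is already $P^*$-invariant does the paper's final step go through: for fixed $n$, subadditivity plus $P^*$-invariance telescopes $\int\phi_m\,d\nu_m \le p\int\phi_n\,d\nu_m + \int\phi_r\,d\nu_m$ (with $m=pn+r$), and letting $m\to\infty$ along a weak*-convergent subsequence $\nu_m\to\nu$ and using upper semicontinuity of the \emph{fixed} function $\phi_n$ gives $\Lambda_{\max}\le\frac1n\int\phi_n\,d\nu$ for every $n$, hence $\Lambda_{\max}\le\Lambda(\nu)$. In your version the analogous telescope fails because $\int P^{jn}\phi_n\,d\nu^{(m)}\ne\int\phi_n\,d\nu^{(m)}$ when $\nu^{(m)}$ is only $(P^*)^m$-invariant, and your further weak* limit would also mix the index $m$ of $\phi_m$ with the index of the measure, which Lemma~\ref{weakineq} does not handle.
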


To prove  Proposition \ref{Kingmanuniform2}, we proceed in several steps. We first establish the following key inequality. Here, \( \lfloor \cdot \rfloor \) denotes the floor function.

\begin{lem}\label{Kingmanineq}
Let $m$ be a positive integer. There exists a constant $C\in\mathbb{R}$ such that for every $n\ge 2m$,
$$
\phi_n\leq \frac{1}{m}\sum_{k=0}^{k_n-1} P^k{\phi_m}+ C \qquad \text{where \ \ $k_n=\left( \left\lfloor \frac{n}{m}\right\rfloor-1\right)m$}.
$$
\end{lem}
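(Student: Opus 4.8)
The plan is to iterate the $P$-subadditivity relation~\eqref{Qsub} in a way that breaks $\phi_n$ into roughly $n/m$ blocks of length $m$, picking up only a bounded error term. First I would fix $m\geq 1$ and write, for any $n\geq 2m$, the Euclidean division $n = qm + s$ with $q = \lfloor n/m\rfloor \geq 2$ and $0\leq s < m$. The idea is to apply~\eqref{Qsub} repeatedly in the form $\phi_{a+m}\leq \phi_a + P^a\phi_m$, starting from $a = s$ (or from $a=s+m$, absorbing the first two pieces into the constant) and incrementing by $m$ each time, so that after $q-1$ steps one reaches $\phi_n$. This yields
\[
\phi_n \leq \phi_{s+m} + \sum_{j=1}^{q-2} P^{\,s+jm}\phi_m.
\]
Reindexing, the sum $\sum_{j=1}^{q-2} P^{s+jm}\phi_m$ is a sub-sum of $\sum_{k=0}^{k_n-1} P^k\phi_m$ with $k_n = (q-1)m$, but it only contains the terms with index $k\equiv s \pmod m$, so it is missing the other $m-1$ residue classes; I would need to decide whether the statement wants the full sum $\frac1m\sum_{k=0}^{k_n-1}P^k\phi_m$ or this single-residue sum. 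Since the claimed bound has the factor $\frac1m$ and the full range $0\le k\le k_n-1$, the cleanest route is: for \emph{each} residue $r\in\{0,\dots,m-1\}$ run the same iteration starting the chain at $a=r$ (adjusting by at most one extra block, i.e. stopping the chain so it ends exactly at $\phi_n$), obtaining $\phi_n \leq \sum_{k\equiv r,\ 0\le k< k_n} P^k\phi_m + (\text{bounded term depending on }r,m)$, then average these $m$ inequalities over $r$ to get $\phi_n \leq \frac1m\sum_{k=0}^{k_n-1}P^k\phi_m + C$.

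For the bounded error term I would control $\phi_{s+m}$ (or the analogous leftover piece $\phi_{r'}$ with $r' < 2m$) and any $P^a$ applied to it: since each $\phi_\ell$ with $\ell\le 2m$ is upper semicontinuous on the compact space $X$, it is bounded above by some constant $M_\ell$, and because $P$ is a Markov operator (positive, $P1_X = 1_X$), $P^a\phi_\ell \leq P^a M_\ell = M_\ell$. Hence all leftover contributions are bounded above by $\max_{\ell\le 2m} M_\ell$, uniformly in $n$; setting $C$ to be (a suitable multiple of) this maximum, possibly plus a few more bounded $P^a\phi_m$ terms coming from the off-by-one adjustments, gives the desired constant depending only on $m$. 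I should also double-check that the iteration count is correct: with $q = \lfloor n/m\rfloor$, the indices appearing are $s, s+m, \dots, s+(q-1)m = n$, so there are $q-1$ applications of~\eqref{Qsub}, and $k_n = (q-1)m = (\lfloor n/m\rfloor - 1)m$ is exactly the stated value, with $k_n - 1 = s + (q-2)m$ when $s$ is maximal; the edge cases where $s$ is small just mean the highest-index $P^k$ in the sum is not quite $P^{k_n - 1}$, which is fine since we only need an upper bound and extra nonnegative terms $P^k\phi_m$ can be added (here one uses that $\phi_m$ need not be nonnegative, so strictly I would instead verify that the residue-$r$ chains collectively cover exactly $\{0,1,\dots,k_n-1\}$, which they do by a counting argument: $m$ arithmetic progressions of common difference $m$, one in each residue class, each of length $q-1$, partitioning an interval of length $(q-1)m = k_n$).

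The main obstacle I anticipate is purely bookkeeping: making the index ranges of the $m$ residue-class chains align exactly with $\{0,\dots,k_n-1\}$ after averaging, while keeping the leftover terms uniformly bounded and independent of $n$. There is a genuine subtlety because $\phi_m$ is not assumed nonnegative, so I cannot freely discard or add $P^k\phi_m$ terms; the averaging-over-residues trick is precisely what circumvents this, since it produces the full sum on the nose rather than a sub-sum. Once the combinatorics of the indices is pinned down, the estimate itself is immediate from~\eqref{Qsub}, compactness (upper semicontinuous functions attain their suprema), and $\|P\|_{\mathrm{op}} = 1$ with $P$ positive. I would present the argument by first doing the single-residue chain carefully, then remarking that running it over all residues and averaging yields the claim with $C = C(m)$.
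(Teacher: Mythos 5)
Your proposal is essentially the paper's proof: iterate the $P$-subadditivity along arithmetic progressions of step $m$, bound the leftover pieces $\phi_\ell$ with $\ell<2m$ uniformly by $M_\ell=\max_X \phi_\ell$ using compactness and $P^a M_\ell = M_\ell$, and average over the $m$ residue classes so that the partial sums exactly partition $\{0,\dots,k_n-1\}$. The bookkeeping you flag is pinned down in the paper by writing $n = pm + r + s$ with $p = \lfloor n/m\rfloor - 1$ fixed and $s = s(r) = m + R - r \in \{1,\dots,2m-1\}$ depending on the residue $r$ (where $R = n-(p+1)m$), which guarantees every residue chain has exactly $p$ terms and the leftover is $\phi_r + P^{pm+r}\phi_s \le 2\max\{M_1,\dots,M_{2m-1}\}$ uniformly in $n$.
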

\begin{proof}
Using~\eqref{Qsub} and induction, we can readily observe that for any $p\geq 1$,
\begin{equation}\label{eq:p}
\phi_{pm}\leq \sum_{j=0}^{p-1} P^{jm}\phi_m.
\end{equation}
For an integer $n \geq 2m$, let $p=\left\lfloor n/m \right\rfloor-1$. We can express $n$ as $n=(p+1)m+R$, where $R\in \{0,\dots,m-1\}$.
Now, for every $r \in \{0,\ldots,m-1\}$, there exists some $s=s(r) \in \{1,\dots,2m-1\}$ such that $m+R=r+s$. Consequently, $n=pm+r+s$. Defining $\phi_0=0$ and using~ \eqref{Qsub} and the above inequality, we have
\begin{align*}
\phi_n &\leq \phi_{pm+r}+P^{pm+r}\phi_{s} \leq \phi_r+P^r\phi_{pm}+P^{pm+r}\phi_{s}\\
&\leq \phi_r+P^r\left(\sum_{j=0}^{p-1} P^{jm}\phi_m\right)+P^{pm+r}\phi_{s} \leq C+\sum_{j=0}^{p-1} P^{jm+r}\phi_m
\end{align*}
where $C=2 \max\{M_1,\ldots,M_{2m-1}\}$ and, as before, $M_i=\max\{\phi_i(x): x\in X\}$. Averaging over $r$ in $\{0,\ldots,m-1\}$, we find
\[
\phi_n\leq C+\frac{1}{m}\sum_{r=0}^{m-1}\sum_{j=0}^{p-1} P^{jm+r}\phi_m = C+\frac{1}{m}\sum_{k=0}^{pm-1}P^k\phi_m
\]
completing the proof.
\end{proof}

Now we can obtain the following intermediary step:

\begin{lem} \label{lem:Lambdanu}
For every integer \( m \), there exists \( \nu_m \in \mathcal{I} \) such that
\( \Lambda_{\max} \leq \frac{1}{m} \int \phi_m \, d\nu_m\).
\end{lem}

\begin{proof}
For each \( n \geq 2m \), let \( x_n \in X \) be a point where \( \phi_n \) attains its maximum $M_n$. Thus, we have $M_n = \phi_n(x_n)$.
From Lemma~\ref{Kingmanineq}, we have
\[ M_n \leq \frac{1}{m} \sum_{k=0}^{k_n-1} P^k\phi_m(x_n) + C \]
where \( k_n = \left( \left\lfloor n/m \right\rfloor - 1 \right) m \).
Introducing the probability measure
\[ \mu_n = \frac{1}{k_n} \sum_{k=0}^{k_n-1} (P^k)^* \delta_{x_n} \]
we can recast the aforementioned inequality, upon division by \( n \), as
\begin{equation} \label{eq:Mn}
\frac{M_n}{n} \leq \frac{k_n}{n} \int \frac{\phi_m}{m} \, d\mu_n + \frac{C}{n}.
\end{equation}

Consider a subsequence \( (\mu_{n_i})_{i \geq 1} \) of \( (\mu_n)_{n \geq 2m} \) weakly* converging to a probability measure~\( \nu_m \) which depends on $m$ due to the dependence of $k_n$ on $m$. Since every probability measure has total variation norm equal to 
1, we have, 
\[ \left\| P^* \mu_n - \mu_n \right\|_{\mathrm{TV}} = \frac{1}{k_n} \left\| (P^{k_n})^* \delta_{x_n} - \delta_{x_n} \right\|_{\mathrm{TV}} \leq \frac{2}{k_n}. \]
Since \( k_n \to \infty \) as \( n \to \infty \), we deduce that $\| P^* \mu_n - \mu_n \|_{\mathrm{TV}}\to 0$.
Now, take an arbitrary continuous function  $\varphi$ 
which we may assume without loss of generality that satisfies $\|\varphi\|_{\infty}\leq 1$. Then,
\begin{align*}
\bigg|\int \varphi \, dP^*\nu_m &- \int \varphi\, d\nu_m \bigg| = \left|\int (P\varphi -\varphi) \, d\nu_m \right|
= \lim_{i \to \infty}  \left|\int (P\varphi  -  \varphi ) \, d\mu_{n_i} \right| \\
&=  \lim_{i \to \infty}  \left|\int \varphi \, dP^*\mu_{n_i} - \int \varphi\, d\mu_{n_i} \right| \leq \lim_{i\to\infty}\| P^* \mu_{n_i} - \mu_{n_i} \|_{\mathrm{TV}}= 0.
\end{align*}
Here the second equality relies on the fact that $P\varphi$ is a continuous function and $\mu_{n_i} \to \nu_m$ in the weak* topology. Since $\varphi$ is an arbitrary continuous function, this implies that  \( \nu_m \) is \( P^* \)-invariant,  thus \( \nu_m \in  \mathcal{I} \).

 By letting \( n \to \infty \) in~\eqref{eq:Mn} over the subsequence \( (n_i)_{i \geq 1} \) and invoking Lemmas~\ref{weakineq} and~\ref{lem:limites} along with the fact that \( k_n/n \to 1 \), we establish
\[ \Lambda_{\max} = \lim_{n \to \infty} \frac{M_n}{n} \leq \int \frac{\phi_m}{m} \, d\nu_m. \]
This concludes the proof.
\end{proof}

We can now complete the proof of Proposition \ref{Kingmanuniform2}.

\begin{proof}[Proof of Proposition~\ref{Kingmanuniform2}]
By Lemma~\ref{lem:Lambdanu}, for each $m \in \mathbb{N}$, there exists an invariant probability measure $\nu_m\in \mathcal{I}$ such that $\Lambda_{\max} \leq \frac{1}{m} \int \phi_m \, d\nu_m$. Let $\nu$ be the weak* limit of a subsequence of $(\nu_m)_{m \geq 1}$. Clearly, $\nu$ belongs to $\mathcal{I}$.

Fix a positive integer $n$, for sufficiently large $m$, we can express $m$ as $m = pn + r$, where $p = \lfloor m/n \rfloor$ and $r \in \{0, \ldots, n-1\}$. Using the $P$-subadditivity of $(\phi_k)_{k \geq 1}$, as in~\eqref{eq:p}, we~get
$\phi_{pn} \leq \phi_n+ P^n\phi_n+ \dots +P^{jn} \phi_n$.
Therefore, by the $P$-subadditivity of $(\phi_k)_{k \geq 1}$ and given that $\nu_m$ is a $P^*$-invariant measure, we find
\begin{align*}
\int \phi_m \, d\nu_m &\leq  
\sum_{j=0}^{p-1} \int P^{jn}\phi_n \, d\nu_m + \int P^{pn}\phi_r \, d\nu_{m} = p \int \phi_n \, d\nu_m + \int \phi_r \, d\nu_m.
\end{align*}
From this, we deduce
$$\Lambda_{\max} \leq \frac{1}{m} \int \phi_m \, d\nu_m \leq \frac{p}{m} \int \phi_n \, d\nu_m + \frac{C}{m}$$
where $C = \max\{M_1, \ldots, M_{n-1}\}$ and $M_i = \max \{\phi_i(x) : x \in X\}$. Taking $m \to \infty$ along the chosen subsequence that converges to $\nu$, and by applying Lemma~\ref{weakineq} while observing that $p/m \to 1/n$, we deduce
$$\Lambda_{\max} \leq \frac{1}{n} \int \phi_n \, d\nu.$$
Letting $n \to \infty$, we obtain the desired result.
\end{proof}

\subsection{Proof of Theorem~\ref{Kingmanuniform}} 
\label{ss:proof}

Before proving Theorem~\ref{Kingmanuniform}, we derive the following corollary from the previous results:

\begin{cor} \label{cor:Kingman} Let $\nu\in \mathcal{I}$ given in Proposition~\ref{Kingmanuniform2}.
For any \(\mu \in \mathcal{I}\), we have
\begin{equation} \label{eq:Lambdamudesigualdad}
 \Lambda(\mu) \leq \int \limsup_{n \to \infty} \frac{1}{n}\phi_n \, d\mu  \leq \sup_{x\in X} \limsup_{n \to \infty} \frac{1}{n}\phi_n(x) \leq  \Lambda_{\max} \leq   \Lambda(\nu)
\end{equation}
and
\begin{equation} \label{eq:Lambdamudesigualdad2}
 \Lambda(\nu) \leq \lim_{n\to\infty} \sup_{\mu\in \mathcal{I}}\frac{1}{n} \int \phi_n \, d\mu \leq  \Lambda.
\end{equation}
Consequently,
\begin{equation} \label{eq:Lambdaigualdad}
\begin{aligned}
  \Lambda =\sup_{\mu \in \mathcal{I}} \int \limsup_{n \to \infty} \frac{1}{n}\phi_n \, d\mu &= \sup_{x\in X}\limsup_{n\to\infty} \frac{1}{n}\phi_n(x) \\ &=\lim_{n\to\infty} \sup_{\mu\in \mathcal{I}}\frac{1}{n} \int \phi_n \, d\mu=\Lambda_{\max}=\Lambda(\nu).
\end{aligned}
\end{equation}
\end{cor}

\begin{proof} The last inequality of~\eqref{eq:Lambdamudesigualdad} follows from~Proposition~\ref{Kingmanuniform2}. Thus to establish~\eqref{eq:Lambdamudesigualdad}, we only need to show the first inequality since the remaining inequalities are immediate. To do this, we invoke Lemma~\ref{Fatou} with \(f_n = \frac{1}{n}\phi_n\) and \(g = 1 + \Lambda_{\max}\). The function \(g\) is \(\mu\)-integrable, and furthermore, \(f_n \leq g\) holds for all sufficiently large \(n\). Hence, by the reverse Fatou lemma, we immediately deduce the first inequality in~\eqref{eq:Lambdamudesigualdad} as desired. The first inequality in~\eqref{eq:Lambdamudesigualdad2} trivially holds and the second inequality follows by  Lemma~\ref{lem:limites} and $\Lambda_{\max} \leq \Lambda(\nu)\leq \Lambda$.
Taking the supremum over \(\mathcal{I}\) in~\eqref{eq:Lambdamudesigualdad} and~\eqref{eq:Lambdamudesigualdad2},  we get~\eqref{eq:Lambdaigualdad}
and complete the proof.
\end{proof}

\begin{proof}[Proof of Theorem~\ref{Kingmanuniform}]   
Lemma~\ref{lem:limites} 
and Corollary~\ref{cor:Kingman} imply all the assertions in Theorem~\ref{Kingmanuniform} except for the identity $\Lambda=\max\{\Lambda(\mu): \mu \in \mathcal{I}_{\rm erg} \}$ and statement~(iii).  
To prove the remaining equality, we use the Bauer maximum principle~\cite{bauer58}.
Recall that an upper semicontinuous extended real-valued function $f$ defined on a nonempty compact set $K$ in a Hausdorff locally convex topological vector
space is said to be \emph{convex} if
\[ f(\lambda x + (1 - \lambda) y) \leq \lambda f(x) + (1 - \lambda) f(y) \]
whenever $0 \leq \lambda \leq 1$, $x, y \in {K}$, and $\lambda x + (1 - \lambda) y \in {K}$. Bauer's maximum principle states that any upper semicontinuous convex extended real-valued function defined on a compact convex set attains its maximum at some extreme point of that set,~cf.~\cite[Thm.~2]{kunfu78}.

\begin{claim} \label{claim:uppersemicontinuous} The function $\mu\in \mathcal{I}\mapsto\Lambda(\mu)\in [-\infty,\infty)$ is  upper semicontinuous and convex where $\mathcal{I}$ is endowed with the weak* topology.
\end{claim}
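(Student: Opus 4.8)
The plan is to work from the representation $\Lambda(\mu)=\lim_{n\to\infty}\frac1n\int\phi_n\,d\mu=\inf_{n\ge1}\frac1n\int\phi_n\,d\mu$ supplied by Lemma~\ref{lem:limites}, and to reduce both assertions to properties of the individual maps $\mu\mapsto\frac1n\int\phi_n\,d\mu$ on $\mathcal I$ (which is convex and weak*-compact). A preliminary observation I would record is that for every $\mu\in\mathcal I$ and every $n$ one has $\frac1n\int\phi_n\,d\mu\le M_1\eqdef\max_X\phi_1<\infty$: indeed, $P$-subadditivity~\eqref{Qsub} gives $\phi_n\le\sum_{k=0}^{n-1}P^k\phi_1$, and integrating against the $P^*$-invariant measure $\mu$ yields $\int\phi_n\,d\mu\le n\int\phi_1\,d\mu\le nM_1$. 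Thus all the quantities below live in $[-\infty,\infty)$ and are bounded above uniformly in $n$ and $\mu$.

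For upper semicontinuity, I would first show that for each fixed $n$ the map $\mu\mapsto\int\phi_n\,d\mu$ is upper semicontinuous on the space of probability measures with the weak* topology. Since $\phi_n$ is upper semicontinuous on the compact space $X$, Lemma~\ref{lem:Baire-teorema} gives a decreasing sequence $g_k\in C(X)$ with $g_k\downarrow\phi_n$ pointwise, so by monotone convergence $\int\phi_n\,d\mu=\inf_k\int g_k\,d\mu=\inf\{\int g\,d\mu:\ g\in C(X),\ g\ge\phi_n\}$; each $\mu\mapsto\int g\,d\mu$ is weak*-continuous, hence this infimum is upper semicontinuous. (Alternatively one may invoke Lemma~\ref{weakineq} after truncating $\phi_n$ from below.) Consequently each $\mu\mapsto\frac1n\int\phi_n\,d\mu$ is upper semicontinuous on $\mathcal I$, and $\Lambda=\inf_{n\ge1}\frac1n\int\phi_n\,d\mu$ is upper semicontinuous as an infimum of upper semicontinuous functions.

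For convexity, the key point is that each $\mu\mapsto\frac1n\int\phi_n\,d\mu$ is not merely convex but \emph{affine}, by linearity of the integral, and affineness passes to pointwise limits. Concretely, for $\mu_0,\mu_1\in\mathcal I$, $t\in[0,1]$, and $\mu_t=t\mu_1+(1-t)\mu_0\in\mathcal I$, one has $\frac1n\int\phi_n\,d\mu_t=t\,\frac1n\int\phi_n\,d\mu_1+(1-t)\frac1n\int\phi_n\,d\mu_0$; letting $n\to\infty$, the left side converges to $\Lambda(\mu_t)$ and the right side to $t\,\Lambda(\mu_1)+(1-t)\Lambda(\mu_0)$, so $\Lambda(\mu_t)=t\,\Lambda(\mu_1)+(1-t)\Lambda(\mu_0)$ and $\Lambda$ is affine, in particular convex. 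The only delicate bookkeeping — and what I regard as the main (and rather mild) obstacle — is handling the value $-\infty$: if, say, $\Lambda(\mu_1)=-\infty$ and $t>0$, one uses the uniform upper bound $\frac1n\int\phi_n\,d\mu_0\le M_1$ established above to see that both sides of the displayed limit tend to $-\infty$, so no $\infty-\infty$ ambiguity occurs and the affine identity is legitimate throughout $[-\infty,\infty)$. Everything else is routine, and the claim follows.
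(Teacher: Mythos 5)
Your argument is correct and follows essentially the same route as the paper: both write $\Lambda(\mu)=\inf_{n}\tfrac1n\int\phi_n\,d\mu$, deduce upper semicontinuity from the fact that each $\mu\mapsto\int\phi_n\,d\mu$ is upper semicontinuous in the weak* topology (you derive this via Baire's theorem and the infimum characterization, the paper cites Lemma~\ref{weakineq} with the sequential criterion, but these are equivalent here since $\mathcal I$ is compact metrizable), and obtain convexity from the affineness of the approximating maps. You are more careful than the paper on the convexity step — the paper merely asserts it, while you observe that $\Lambda$ is in fact affine and explicitly rule out any $\infty-\infty$ ambiguity using the uniform upper bound $\tfrac1n\int\phi_n\,d\mu\le M_1$; that extra care is welcome but the underlying argument is the same.
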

\begin{proof} Clearly
$\Lambda(\mu)=\lim_{n\to\infty} \frac{1}{n}\int \phi_n \, d\mu$  is convex. The upper semicontinuity follows from Lemma~\ref{weakineq}. Indeed,
let $(\mu_k)_{k\geq 1}$ be a sequence of probability measures in $\mathcal{I}$ weak* converging to $\mu\in \mathcal{I}$. Then,
$$
 \limsup_{k\to\infty} \Lambda(\mu_k) = \limsup_{k\to \infty} \inf_{n\geq 1} \frac{1}{n}\int \phi_n \, d\mu_k \leq
 \limsup_{k\to \infty}  \frac{1}{n}\int \phi_n \, d\mu_k \leq \frac{1}{n} \int \phi_n \, d\mu
$$
for all $n\geq 1$. In particular, we have that
$$
\limsup_{k\to\infty} \Lambda(\mu_k) \leq \inf_{n\geq 1} \frac{1}{n} \int \phi_n \, d\mu = \Lambda(\mu)
$$
concluding the proof.
\end{proof}

 Since $\mathcal{I}$ is a compact convex set and its extreme points are ergodic measures, we conclude from the Bauer maximum principle and the above claim that the maximum of $\Lambda(\mu)$  for $\mu\in \mathcal{I}$ is achieved on an ergodic measure as desired. 
 
 A similar argument works to prove~(iii). Note first that, by Lemma~\ref{lem:Lambdanu}, and since $\Lambda=\Lambda_{\max}$, we have  that for each $m\geq 1$,
$$
\Lambda \leq \sup_{\mu\in\mathcal{I}} \frac{1}{m}\int \phi_m \, d\mu = \sup_{\mu\in\mathcal{I}} \frac{1}{m}\int \phi_m \, d\mu.
$$
Similar to Claim~\ref{claim:uppersemicontinuous} (see also~Lemma~\ref{weakineq}), $\mu\in\mathcal{I} \mapsto \int \phi_m \, d\mu\in [-\infty,\infty)$ is upper semicontinuous and convex. Then, the Bauer maximum principle also yields~(iii) and concludes the proof of the theorem.  
\end{proof}

\section{Kingman subadditive ergodic theorem: proof of Theorem~\ref{Kingman} } \label{sec:kingman}
The following result extends Kingman's subadditive ergodic theorem for stochastic processes involving a measure-preserving transformation to the case of Markov operators on \(L^1(\mu)\). Here $L^1(\mu)=L^1(X,\mathscr{B},\mu)$\index{space of functions!\(L^1(\mu)\), integrable functions} denotes the Banach space of real-valued $\mathscr{B}$-measurable functions on a probability space $(X,\mathscr{B},\mu)$ for which the absolute value is $\mu$-integrable, identifying functions that agree $\mu$-almost everywhere. This theorem was essentially obtained by Akcoglu and Sucheston~\cite{AS:78}, though some non-trivial arguments are required to derive the statement below. We first recall some definitions.

{The notion of a Markov operator depends on the Banach space on which the operator acts, but the standard definitions are compatible by duality. In the present \(L^1(\mu)\)-setting, an operator \(P:L^1(\mu)\to L^1(\mu)\) is called \emph{Markov}\index{operator!Markov operator on \(L^1(\mu)\)} if it is linear, positive, and preserves the integral, that is,
\[
\int P\varphi \, d\mu = \int \varphi \, d\mu
\qquad \text{for all }\varphi \in L^1(\mu).
\]
These conditions imply that \(\|P\|_{\mathrm{op}}=1\). Thus, \(P\) is a contraction, and in particular, bounded. Moreover, by duality, the integral-preserving condition is equivalent to 
$$P^*1_X=1_X,$$ where \(P^*\) denotes the dual operator of \(P\) acting on \(L^1(\mu)^*\equiv L^\infty(\mu)\). As usual, \(L^\infty(\mu)\) \index{space of functions!\(L^\infty(\mu)\), essentially bounded functions} denotes the quotient space of the bounded measurable functions $B(X)$ on \(X\)\index{space of functions!$B@\(B(X)\), bounded measurable functions}, where two such functions are identified if they are equal \(\mu\)-almost everywhere. Thus, on \(L^\infty(\mu)\), and hence, at the level of representatives in \(B(X)\), the corresponding notion of a Markov operator is that of a positive linear operator preserving the order unit \(1_X\). In this sense, the above definition is the \(L^1(\mu)\)-counterpart of the notion of Markov operator introduced earlier on \(C(X)\).}

Following~\cite[Sec.~11.3.1]{HL:12}, let $\mathrm{ba}(\mu)$ be the Banach space of bounded finitely-additive set functions $\lambda$ on $X$ such that $\lambda(A) = 0$ if $\mu(A) = 0$, with the total variation norm. Then $\mathrm{ba}(\mu)$ is isometrically isomorphic to the dual of $L^1(\mu)^*\equiv L^\infty(\mu)$ and hence to the second dual of $L^1(\mu)$. Hence, the second adjoint $P^{**}$ is an extension of $P$ to $\mathrm{ba}(\mu)$. Moreover, since for every $A\in \mathscr{B}$
$$
P^{**}\mu(A)=\int 1_A \, dP^{**}\mu = \int P^*1_A \, d\mu = \int_A P1_X \, d\mu \quad \text{and} \quad
\mu(A)=\int_A 1_X \, d\mu,
$$
we get that $P1_X=1_X$ if and only if $P^{**}\mu=\mu$. 

In the deterministic case, when $P$ is the Perron-Frobenius operator of a non-singular transformation $f$, the adjoint $P^*$ is
the Koopman operator given by $P^*\varphi = \varphi \circ f$, and $P1_X=1_X$ if and only if $f$ preserves $\mu$. 

\begin{mainthm}\label{Kingman} \index{Kingman's subadditive ergodic theorems!Kingman for Markov $L^1$-operators}
Consider a Markov operator $P:L^1(\mu)\to L^1(\mu)$ with $P1_X=1_X$ where $(X,\mathscr{B},\mu)$ is a probability space.
Let $(\phi_n)_{n\geq 1}$ be a $P$-subadditive sequence of {extended real-valued} functions such that $\phi^+_1\in L^1(\mu)$.
Then, there exists a measurable $P$-invariant function $g$ such that $g^+\in L^1(\mu)$,
\begin{equation*}
    \lim_{n\to \infty} \frac{1}{n} \phi_n(x) = g(x)  \ \ \, \text{for $\mu$-a.e.~$x\in X$   \ \ and }  \int g \, d\mu = \lim_{n\to \infty} \frac{1}{n}\int \phi_n \, d\mu \eqdef \Lambda(\mu).
\end{equation*}
Moreover, if  $\mu$ is ergodic, then  $g=\Lambda(\mu)$  \ $\mu$-almost everywhere.
\end{mainthm}

\begin{proof}
We first consider the case where $\mu$ satisfies $\Lambda(\mu)>-\infty$ and $\phi_1\in L^1(\mu)$. Define
\begin{equation} \label{eq:supperaditive}
F_n\eqdef -\phi_n+\sum_{i=1}^{n-1}P^i\phi_1  \quad \text{for $n\geq 1$}.
\end{equation}
This process satisfies that $F_n\in L^1(\mu)$, $F_n\geq 0$ and $F_n+P^nF_m \leq F_{n+m}$. Since $P$ is Markov on $L^1(\mu)$ and $(F_n)_{n\geq 1}$  $P$-superadditive (i.e, $(-F_n)_{n\geq 1}$  $P$-subadditive),  then  applying Fekete's lemma,
\begin{equation*}
   \Lambda_{F}(\mu)\eqdef   \lim_{n\to\infty} \frac{1}{n} \int F_n \, d\mu = \sup_{n\geq 1} \frac{1}{n} \int F_n \, d\mu.
\end{equation*}
Additionally,
$$\Lambda_F(\mu)=-\Lambda(\mu)+\int \phi_1\,d\mu <\infty.$$
According to~\cite[Thm.~2.1]{AS:78}, $(F_n)_{n\geq 1}$  admits an  exact dominant, \index{subadditive processes!exact dominant} that is, there exists $G\in L^1(\mu)$ such that for $\mu$-almost everywhere,
$$
  F_n \leq \sum_{i=0}^{n-1} P^iG \quad \text{and} \quad  \int G \, d\mu = \Lambda_F(\mu).
$$
Invoking~\cite[Thm.~3.1]{AS:78} and given $P1_X=1_X$, we use  the individual and mean ergodic theorems~(see~\cite[Thms.~2.3.4 and~2.3.5]{HL:12}) to deduce the existence of $G^*\in L^1(\mu)$ satisfying
$$
  \lim_{n\to\infty} \frac{1}{n}F_n = \lim_{n\to\infty} \frac{1}{n} \sum_{i=0}^{n-1} P^iG = G^*,  \quad  \int G^*\,d\mu = \int G\, d\mu \quad \text{and} \quad PG^*=G^*.
$$
This implies that the limit
$$
  \lim_{n\to\infty} \frac{1}{n}\phi_n = \phi_1^* - G^* \eqdef g \in L^1(\mu), \quad \text{exists $\mu$-almost everywhere}
$$
and
$$
       \int g \, d\mu = \int \left(\phi_1- G \right)\,d\mu = \int \phi_1\,d\mu - \Lambda_F(\mu) =\Lambda(\mu) \quad \text{and} \quad Pg=g
$$
where $\phi^*_1$ is the $P$-invariant limit 
of the additive process $\phi_1+P\phi_1+\dots+P^{n-1}\phi_1$.
This proves the first part of the theorem for the above mentioned assumptions. To conclude the theorem in this case, let us assume that $\mu$ is ergodic. Then, according to~\cite[Lemma~I.2.4]{kifer2012ergodic},\footnote{Strictly speaking, Kifer's lemma is not stated directly for operators on $L^1(\mu)$. Nonetheless, the proof of such a result can be applied literally for Markov operators on $L^1(\mu)$ satisfying that $P1_X=1_X$.} any $P$-invariant function is constant $\mu$-almost everywhere. Consequently, $g$ is constant $\mu$-almost everywhere and since $\Lambda(\mu)=\int g \, d\mu$, we get that $g(x)=\Lambda(\mu)$ for $\mu$-a.e.~$x\in X$.

Now, we will prove the theorem for the general case: $\Lambda(\mu) \geq -\infty$ and $\phi_1^+\in L^1(\mu)$.
For each $N\geq 1$, consider  the processes \( (\Phi^{N}_n)_{n\geq 1} \) defined by
$$
\Phi^{N}_n \eqdef \max \{ \phi_{n}, \, - n  N  \} \quad \text{for $n\geq 1$}.
$$
We note that $\Phi^N_1\in L^1(\mu)$ and the sequence \( (\Phi^{N}_n)_{n\geq 1} \) is $P$-subadditive. This is because  $P$ preserves constant functions (i.e., $P1_X=1_X$) leading to
\begin{align*}
   \Phi^N_{n+m} = \max\{\phi_{n+m},-(n+m)N\} &\leq \max\{\phi_n+P^n\phi_m, -nN - mN\} \\ &\leq \Phi^N_n + \max\{P^n\phi_m,-mN\}\leq \Phi^N_n + P^n\Phi^N_m.
\end{align*}
Furthermore, we have
$$
\Lambda_{\Phi^N}(\mu) \eqdef \lim_{n\to\infty}  \frac{1}{n}\int \Phi^N_n\, d\mu = \inf_{n\geq 1} \frac{1}{n}\int \Phi^N_n\, d\mu  >-\infty.
$$
Therefore, we can apply the previous case to obtain that the pointwise limits \( g_N = \lim_{n \to \infty} \frac{1}{n} \Phi^{N}_n \) exist $\mu$-almost everywhere, $g_N\in L^1(\mu)$, $\int g_N\, d\mu = \Lambda_{\Phi^N}(\mu)$ and $Pg_N=g_N$. Moreover, if $\mu$ is ergodic, $g_N(x)=\Lambda_{\Phi^N}(\mu)$ for $\mu$-a.e.~$x\in X$.

Since $(g_N)_{N\geq 1}$ is decreasing, $g\eqdef\inf_{N\geq 1} g_N =\lim_{N\to\infty} g_N$. Hence $g\leq g_N$ and, since $g_N\in L^1(\mu)$, then  $g^+\in L^1(\mu)$. From the $P$-invariance of $g_N$, it is easy to check that $Pg=g$.
Moreover, considering the function $m_N(t)=\max\{t,-N\}$, we have
\begin{align*}
g= \lim_{N\to\infty} \lim_{n\to \infty} \frac{1}{n}\Phi^N_n = \lim_{N\to\infty}   \lim_{n\to \infty} m_N\left(\frac{1}{n}\phi_n\right).
\end{align*}
Since $m_N$ is a continuous and  monotone increasing map,
we have that
\begin{align*}
&\liminf_{n\to \infty} m_N\left(\frac{1}{n}\phi_n\right) =m_N\left(\liminf_{n\to \infty} \frac{1}{n}\phi_n\right)
\quad \text{and}  \\ 
&\limsup_{n\to \infty} m_N\left(\frac{1}{n}\phi_n\right)=m_N\left(\limsup_{n\to \infty} \frac{1}{n}\phi_n\right).
\end{align*}
Taking limit above as $N\to\infty$ and observing that $m_N(t)\to t$ for any $t\in \mathbb{R}$, we obtain that
$\liminf_{n\to\infty} \frac{1}{n}\phi_n= g = \limsup_{n\to\infty} \frac{1}{n}\phi_n$ and thus
$g=\lim_{n\to\infty} \frac{1}{n}\phi_n$. Moreover,
$$
g_N=\lim_{n\to\infty} m_N\left(\frac{1}{n}\phi_n\right) = m_N\left(\lim_{n\to\infty}\frac{1}{n}\phi_n\right)=m_N(g).
$$
From here, since $m_N(g)=g^+-m_N(g)^-$ and $(m_N(g)^-)_{N\geq 1}$ is a non-negative increasing sequence, letting $N\to\infty$ and using the monotone convergence  theorem and that $m_N(g)^-\to g^-$,
\begin{equation}\label{eq:N1}
   \lim_{N\to \infty} \int g_N \, d\mu = \lim_{N\to\infty} \int m_N(g) \, d\mu =  \int g\, d\mu.
\end{equation}
Similarly, for every $n\geq 1$, it holds
$$
\lim_{N\to\infty} \Lambda_{\Phi^N}(\mu) \leq  \lim_{N\to\infty} \int m_N\left(\frac{1}{n}\phi_n\right)\, d\mu   =\int \frac{1}{n}\phi_n \, d\mu.
$$
This leads us to the inequality $\lim_{N\to\infty} \Lambda_{\Phi^N}(\mu)\leq \Lambda(\mu)$. But also, for every $N\geq 1$, we have that
$$
\Lambda(\mu)=\inf_{n\geq 1}  \frac{1}{n} \int \phi_n \, d\mu \leq \inf_{n\geq} \int m_N\left(\frac{1}{n}\phi_n\right)\, d\mu =\Lambda_{\Phi^N}(\mu)
$$
culminating in the equality
\begin{equation}\label{eq:N2}
  \lim_{N\to\infty} \Lambda_{\Phi^N}(\mu) =\Lambda(\mu).
\end{equation}
Since $\int g_N\,d\mu = \Lambda_{\Phi^N}(\mu)$,  putting together~\eqref{eq:N1} and~\eqref{eq:N2}, we arrive to
$\int g \, d\mu= \Lambda(\mu)$.

Finally, if $\mu$ is ergodic, we recall that, for every $N\ge 1$, we have that $g_N$ is constant $\mu$-almost everywhere. Then $g_N(x)=\int g_N\, d\mu$ for $\mu$-a.e.~$x\in X$ and thus,
$$g=\lim_{N\to \infty} g_N = \lim_{N\to\infty} \int g_N\, d\mu = \int g \, = \Lambda(\mu) \quad \text{$\mu$-almost everywhere}.$$
This completes the proof of the theorem.
\end{proof}

In the following remark, we explain that the above theorem can be applied under the assumptions of Theorem~\ref{Kingmanuniform} and to the Koopman operator.

\begin{rem}\label{rem:kingman}
Let \(P:C(X)\to C(X)\) be a Markov operator. As explained in~\cite[Thm.~I.6 and~I.7]{foguel1973ergodic}, since the adjoint operator \(P^*\) preserves the set of probability measures, the formula
\[
P(x,A)\eqdef P^*\delta_x(A)
\]
defines a transition probability:\index{transition probability@\(P(x,A)\), transition probability} for each fixed \(x\in X\), the map \(A\mapsto P(x,A)\) is a probability measure, and for each fixed Borel set \(A\subset X\), the map \(x\mapsto P(x,A)\) is measurable. Hence, \(P\) extends to the space \(B(X)\) of bounded measurable functions by
\[
Pf(x)\eqdef \int f(y)\,P(x,dy).
\]
Now let \(\mu\) be a \(P^*\)-invariant probability measure. Then \(P\) naturally induces an operator on \(L^\infty(\mu)\) by \(P[g]=[Pg]\), where \([g]\) denotes the equivalence class of \(g\in B(X)\). In particular, this definition is independent of the chosen representative. Moreover, \(P\) remains a Markov operator on \(L^\infty(\mu)\), so that \(P1_X=1_X\) \(\mu\)-almost everywhere. As is well known (see~\cite[Proposition~V.4.2]{Neveu1967existence}), a Markov operator on \(L^\infty(\mu)\) that also satisfies
\[
\int P\varphi\,d\mu=\int \varphi\,d\mu
\qquad\text{for all }\varphi\in L^\infty(\mu)
\]
extends uniquely to a Markov operator on \(L^1(\mu)\), still denoted by \(P\). Therefore, Theorem~\ref{Kingman} applies under the assumptions of Theorem~\ref{Kingmanuniform}.
\end{rem}

\begin{rem}
Let \(f:X\to X\) be a measurable map preserving a probability measure \(\mu\), and let \((\phi_n)_{n\geq 1}\) be a subadditive sequence of measurable extended real-valued functions on \(X\) such that \(\phi_1^+\in L^1(\mu)\). That is,
\[
\phi_{n+m}\leq \phi_n+\phi_m\circ f^n=\phi_n+P^n\phi_m
\qquad\text{for all } n,m\geq 1,
\]
where \(P\) is the Koopman operator associated with \(f\), defined by \(P\varphi=\varphi\circ f\). By the previous remark, Theorem~\ref{Kingman} applies in this setting as well. This recovers the generalized version of Kingman's subadditive ergodic theorem given by Ruelle in~\cite[Thm.~I.1]{ruelle1979ergodic}. In particular, it yields the extension of Birkhoff's ergodic theorem to quasi-integrable extended real-valued functions; see~\cite[Thm.~1]{hess2010ergodic}.
\end{rem}

{
\section{Asymptotically subadditive processes}\label{ss:asymp-P-subadd}

In \S\ref{sec:kigmanuniform} and \S\ref{sec:kingman} we established uniform and almost-everywhere
Kingman theorems (Theorem~\ref{Kingmanuniform} and~\ref{Kingman}) for \(P\)-subadditive processes.
In applications, the subadditivity relation is often available only up to a sublinear error, or only for
convenient representatives of \(L^\infty\)-classes (hence merely \(\mu\)-a.e.). This subsection aims to record that the main conclusions of Theorem~\ref{Kingmanuniform} and~\ref{Kingman} are stable under such $o(n)$-perturbations.

The following definition is the natural analogue, in our Markov-operator setting, of the notion of asymptotically subadditive process introduced by Feng-Huang~\cite{feng2010lyapunov}. Since it is formulated in terms of the norm \(\|\cdot\|_\infty\), throughout this subsection we restrict attention to processes $(\phi_n)_{n\geq 1}$ in the \emph{bounded version} of the two settings considered above. Namely, in the compact setting of Theorem~\ref{Kingmanuniform}, $(\phi_n)_{n\geq 1}$ are assumed to be real-valued upper semicontinuous functions on \(X\), endowed with the uniform norm, whereas in the \(L^1(\mu)\)-setting of Theorem~\ref{Kingman},  $(\phi_n)_{n\geq 1}$ are assumed to belong to \(L^\infty(\mu)\), endowed with the essential-sup norm.

\begin{defi}\label{def:asymp-P-subadd}
A sequence \((\phi_n)_{n\ge1}\) is called \emph{asymptotically \(P\)-subadditive}\index{subadditive processes!asymptotically \(P\)-subadditive sequence} if for every \(\varepsilon>0\) there exists a \(P\)-subadditive process \((\psi_n)_{n\ge1}\) {such that}
\begin{equation}\label{eq:asymp-def}
\limsup_{n\to\infty}\frac1n\|\phi_n-\psi_n\|_\infty\le \varepsilon.
\end{equation}
Here \(\|\cdot\|_\infty\) denotes the uniform norm in the compact setting, and the essential-sup norm in the
\(L^\infty(\mu)\) setting.
\end{defi}

The following corollary shows that, within the bounded versions fixed above, the conclusions of Theorem~\ref{Kingmanuniform} and~\ref{Kingman} remain stable under asymptotic \(P\)-subadditivity.

\begin{cor}\label{cor:asymp-CD}
If the process \((\phi_n)_{n\ge1}\) is asymptotically \(P\)-subadditive in the sense of Definition~\ref{def:asymp-P-subadd}, then
Theorem~\ref{Kingmanuniform} and~\ref{Kingman} remain valid, except for the conclusion that the limit may be replaced by the infimum.
\end{cor}

The basic mechanism is the following stability estimate.

\begin{lem}\label{lem:asymp-stability}
Let $(\phi_n)_{n\ge1}$ and $(\psi_n)_{n\ge1}$ be  processes such  that for some~$\varepsilon>0$,
\begin{equation}\label{eq:asymp-stability}
\limsup_{n\to\infty}\frac1n\|\phi_n-\psi_n\|_\infty\le \varepsilon.
\end{equation}
Then
\begin{enumerate}
\item For every probability measure $\mu$ for which the integrals are defined,
\begin{align*}
\Bigl|\limsup_{n\to\infty}\frac1n\!\int\!\phi_n\,d\mu-\limsup_{n\to\infty}\frac1n\!\int\!\psi_n\,d\mu\Bigr| &\le \varepsilon, \\
\Bigl|\liminf_{n\to\infty}\frac1n\!\int\!\phi_n\,d\mu-\liminf_{n\to\infty}\frac1n\!\int\!\psi_n\,d\mu\Bigr| &\le \varepsilon,
\end{align*}
and likewise with $\int(\cdot)\,d\mu$ replaced by $\max_X(\cdot)$ in the compact setting.
\item In the $L^1(\mu)$ setting, for $\mu$-a.e.\ $x$,
\begin{align*}
\liminf_{n\to\infty}\frac1n\psi_n(x)-\varepsilon &\leq \liminf_{n\to\infty}\frac1n\phi_n(x)  \\ &\leq \limsup_{n\to\infty}\frac1n\phi_n(x)\le \limsup_{n\to\infty}\frac1n\psi_n(x)+\varepsilon. 
\end{align*}
\end{enumerate}
\end{lem}

\begin{proof}
Fix \(\delta>0\). By \eqref{eq:asymp-stability}, there exists \(N\geq 1\) such that
\begin{equation}\label{eq:asymp-stability-delta}
\|\phi_n-\psi_n\|_\infty\leq (\varepsilon+\delta)n
\qquad\text{for all }n\geq N.
\end{equation}

We first prove item~(1). Let \(\mu\) be a probability measure for which the integrals are defined. For every \(n\geq N\), by the definition of the sup norm,
\[
\left|\frac1n\int \phi_n\,d\mu-\frac1n\int \psi_n\,d\mu\right|
\leq \frac1n\int |\phi_n-\psi_n|\,d\mu
\leq \frac1n\|\phi_n-\psi_n\|_\infty
\leq \varepsilon+\delta.
\]
Hence
\[
\frac1n\int \phi_n\,d\mu
\leq
\frac1n\int \psi_n\,d\mu+\varepsilon+\delta
\qquad\text{for all }n\geq N.
\]
Taking \(\limsup\) as \(n\to\infty\), we get
\[
\limsup_{n\to\infty}\frac1n\int \phi_n\,d\mu
\leq
\limsup_{n\to\infty}\frac1n\int \psi_n\,d\mu+\varepsilon+\delta.
\]
Interchanging the roles of \((\phi_n)\) and \((\psi_n)\) yields an analogous expresion and 
therefore, since $\delta>0$ is arbitrary, we obtain
\[
\Bigl|\limsup_{n\to\infty}\frac1n\!\int\!\phi_n\,d\mu
-
\limsup_{n\to\infty}\frac1n\!\int\!\psi_n\,d\mu\Bigr|
\leq \varepsilon.
\]
The proof for the \(\liminf\) is identical. 

In the compact setting, for every \(n\geq N\) and every \(x\in X\),
\[
\left|\frac1n\phi_n(x)-\frac1n\psi_n(x)\right|
\leq \frac1n\|\phi_n-\psi_n\|_\infty
\leq \varepsilon+\delta.
\]
Taking maxima over \(x\in X\), we get
\[
\left|\max_X\frac1n\phi_n-\max_X\frac1n\psi_n\right|
\leq \varepsilon+\delta
\qquad\text{for all }n\geq N.
\]
The same argument as above then gives the corresponding estimates for \(\limsup\) and \(\liminf\) with \(\int(\cdot)\,d\mu\) replaced by \(\max_X(\cdot)\).

We now prove item~(2). Assume that we are in the \(L^1(\mu)\)-setting. By \eqref{eq:asymp-stability-delta}, for each \(n\geq N\) there exists a \(\mu\)-null set \(E_n\) such that
\[
|\phi_n(x)-\psi_n(x)|\leq (\varepsilon+\delta)n
\qquad\text{for all }x\in X\setminus E_n.
\]
Since \(\bigcup_{n\geq N}E_n\) is still \(\mu\)-null, there exists a full \(\mu\)-measure set \(F\subset X\) such that
\[
|\phi_n(x)-\psi_n(x)|\leq (\varepsilon+\delta)n
\qquad\text{for all }x\in F \text{ and all }n\geq N.
\]
Fix \(x\in F\). Then, for every \(n\geq N\),
\[
\frac1n\psi_n(x)-(\varepsilon+\delta)
\leq
\frac1n\phi_n(x)
\leq
\frac1n\psi_n(x)+(\varepsilon+\delta).
\]
Taking \(\liminf\) in the left inequality and \(\limsup\) in the right inequality, we obtain
\[
\liminf_{n\to\infty}\frac1n\psi_n(x)-(\varepsilon+\delta)
\leq
\liminf_{n\to\infty}\frac1n\phi_n(x),
\]
and
\[
\limsup_{n\to\infty}\frac1n\phi_n(x)
\leq
\limsup_{n\to\infty}\frac1n\psi_n(x)+(\varepsilon+\delta).
\]
Thus, we conclude that for every \(x\in F\),
\begin{align*}
\liminf_{n\to\infty}\frac1n\psi_n(x)-(\varepsilon+\delta)
&\leq
\liminf_{n\to\infty}\frac1n\phi_n(x) \\
&\leq
\limsup_{n\to\infty}\frac1n\phi_n(x)
\leq
\limsup_{n\to\infty}\frac1n\psi_n(x)+(\varepsilon+\delta).
\end{align*}
Finally, letting \(\delta\downarrow0\), we obtain the desired inequalities for \(\mu\)-a.e.\ \(x\).
\end{proof}


\begin{proof}[Proof of Corollary~\ref{cor:asymp-CD}]
Fix $\varepsilon_k\downarrow0$. For each $k$ choose a $P$-subadditive
$\varepsilon_k$-approximation $\Psi^{(k)}=(\psi_n^{(k)})_{n\ge1}$ of $\Phi=(\phi_n)_{n\geq 1}$ so that
\begin{equation}\label{eq:asymp-approx-k}
\limsup_{n\to\infty}\frac1n\|\phi_n-\psi_n^{(k)}\|_\infty\le \varepsilon_k .
\end{equation}
Then, by the triangle inequality,
\[
\limsup_{n\to\infty}\frac1n\|\psi_n^{(k)}-\psi_n^{(\ell)}\|_\infty
\le
\varepsilon_k+\varepsilon_\ell
\qquad\text{for every }k,\ell .
\]
Hence, Lemma~\ref{lem:asymp-stability} applied to the pair
$(\Psi^{(k)},\Psi^{(\ell)})$ yields the uniform estimate
\begin{equation}\label{eq:Cauchy-transfer}
\Bigl|\lim_{n\to\infty}A_n(\Psi^{(k)})-\lim_{n\to\infty}A_n(\Psi^{(\ell)})\Bigr|
\le \varepsilon_k+\varepsilon_\ell,
\end{equation}
whenever $A_n(\cdot)$ is one of the expressions appearing in Theorems~\ref{Kingmanuniform} and~\ref{Kingman}
{(e.g.\ $A_n(\Theta)=\frac1n\int\theta_n\,d\mu$ or $A_n(\Theta)=\max\frac1n\theta_n$),}
and where the limits for $\Psi^{(k)}$ exist by
Theorems~\ref{Kingmanuniform} and~\ref{Kingman}.
{In the compact setting, applying Theorem~\ref{Kingmanuniform} to each $\Psi^{(k)}$ and using~\eqref{eq:Cauchy-transfer} exactly as above gives all the conclusions of Theorem~\ref{Kingmanuniform} for $\Phi$.}
{In the $L^1(\mu)$ setting, Theorem~\ref{Kingman} provides $P$-invariant limits $g_k\in L^1(\mu)$ for $\Psi^{(k)}$ such that
\[
\frac1n\psi_n^{(k)}(x)\longrightarrow g_k(x)
\qquad\text{for }\mu\text{-a.e. }x,
\]
and
\[
\int g_k\,d\mu=\lim_{n\to\infty}\frac1n\int\psi_n^{(k)}\,d\mu.
\]
By Lemma~\ref{lem:asymp-stability} applied to the pair $(\Psi^{(k)},\Psi^{(\ell)})$ and also to the reversed pair $(\Psi^{(\ell)},\Psi^{(k)})$, we get for $\mu$-a.e.~that
$|g_k-g_\ell|\le \varepsilon_k+\varepsilon_\ell$.
Hence, for $\mu$-a.e.~$x$, the sequence $(g_k(x))_{k\ge1}$ is Cauchy. Define
\[
g(x)\eqdef \lim_{k\to\infty}g_k(x)
\qquad\text{for }\mu\text{-a.e. }x.
\]
Passing to the limit in the previous inequality gives for $\mu$-a.e.~that
$|g-g_k|\le \varepsilon_k$.
In particular, $g\in L^1(\mu)$. Since each $g_k$ is $P$-invariant and $P$ is a contraction on $L^1(\mu)$,
\[
\|Pg-g\|_{L^1}\le \|P(g-g_k)\|_{L^1}+\|g_k-g\|_{L^1}\le 2\|g-g_k\|_{L^1}\le 2\varepsilon_k.
\]
Letting $k\to\infty$, we obtain $Pg=g$.
Now apply Lemma~\ref{lem:asymp-stability} to the pair $(\Phi,\Psi^{(k)})$. Since $\frac1n\psi_n^{(k)}\to g_k$ $\mu$-a.e., we get
\[
g_k-\varepsilon_k
\le
\liminf_{n\to\infty}\frac1n\phi_n
\le
\limsup_{n\to\infty}\frac1n\phi_n
\le
g_k+\varepsilon_k
\qquad \mu\text{-a.e.}
\]
Using $|g-g_k|\le \varepsilon_k$ $\mu$-a.e., this yields
\[
g-2\varepsilon_k
\le
\liminf_{n\to\infty}\frac1n\phi_n
\le
\limsup_{n\to\infty}\frac1n\phi_n
\le
g+2\varepsilon_k
\qquad \mu\text{-a.e.}
\]
Letting $k\to\infty$, we conclude that
\[
\frac1n\phi_n(x)\longrightarrow g(x)
\qquad\text{for }\mu\text{-a.e. }x.
\]
Finally, applying Lemma~\ref{lem:asymp-stability} to the integral functionals and using
\[
\left|\int g\,d\mu-\int g_k\,d\mu\right|\le \|g-g_k\|_{L^1}\le \varepsilon_k,
\]
we obtain
\[
\int g\,d\mu=\lim_{n\to\infty}\frac1n\int\phi_n\,d\mu.
\]
If, in addition, $\mu$ is ergodic, then the $P$-invariant function $g$ is constant $\mu$-almost everywhere, exactly as in Theorem~\ref{Kingman}.} {This concludes the proof.}
\end{proof}

\begin{rem}\label{rem:ae-classes}
In the $L^1(\mu)$ framework, functions are identified $\mu$-a.e.\ and $P$ acts canonically on $L^\infty(\mu)$-classes.
Thus, in many situations, the inequality $\phi_{n+m}\le \phi_n+P^n\phi_m$ is only verified $\mu$-a.e.\ for convenient
representatives.  The asymptotic framework above is robust under such modifications: changing finitely many terms or changing
$\phi_n$ on $\mu$-null sets does not affect~\eqref{eq:asymp-def}. In particular, Corollary~\ref{cor:asymp-CD} applies
whenever an $L^\infty$-class admits a $P$-subadditive approximation in the sense of~\eqref{eq:asymp-def}.
\end{rem}

A convenient source of asymptotic $P$-subadditivity is when the subadditivity defect is
\emph{removable} after adding a sublinear correction (a coboundary) and, possibly, a residual
\emph{block error}. This mechanism is commonly referred to  
\emph{almost subadditive}~\cite{Derriennic, Schurger}. We first provide the simplest form of a sufficiently useful criterion for asymptotic $P$-subadditivity.
\begin{prop}\label{prop:suff-asymp-P-subadd}
{Let $\Phi=(\phi_n)_{n\ge1}$ be a process.}
If there exists a sequence of constants  $c_n\geq 0$ with  $c_n /n \to 0$ such that
\begin{equation} \label{eq:block-defect-constant}
       \phi_{n+m}\le \phi_n + P^n\phi_m  + c_m\qquad\text{for all }n,m\ge1,
\end{equation}
then $\Phi$ is asymptotically $P$-subadditive.
In particular, this sufficient condition holds if there exists $C\ge0$ such that
\[
   \phi_{n+m}\le \phi_n + P^n\phi_m + C \qquad\text{for all }n,m\ge1.
\]
\end{prop} 

\begin{proof}
Define a new process $\Psi=(\psi_n)_{n\ge1}$ by
\begin{equation}\label{eq:eta-def}
\psi_n
:=
\inf\Bigl\{
\sum_{j=1}^r P^{s_{j-1}}\phi_{n_j}
\;+\; \sum_{j=1}^r c_{n_j}
:\ r\ge1,\ \sum_{j=1}^r n_j=n
\Bigr\}
\end{equation}
where $s_0=0$ and $s_j = n_1 + \dots + n_j$. Concatenating a decomposition of $n$ with a decomposition of $m$ produces a decomposition of $n+m$.
Since each $c_k$ is a constant, $P^n c_k=c_k$, and therefore concatenation yields
\(
\psi_{n+m}\le \psi_n+P^n\psi_m
\)
for all $n,m\ge1$. Thus, $\Psi$ is $P$-subadditive.

On the other hand, for every $n$ we have $\psi_n\le \phi_n+c_n$ by taking the one-block decomposition ($r=1$) in~\eqref{eq:eta-def}. Hence
\begin{equation}\label{eq:eta-upper}
\psi_n-\phi_n\ \le\ c_n.
\end{equation}
Next, fix a decomposition $n=n_1+\cdots+n_r$. Iterating~\eqref{eq:block-defect-constant} along this decomposition gives
\[
\phi_n
\le
\phi_{n_1}+P^{n_1}\phi_{n_2}+\cdots+P^{n_1+\cdots+n_{r-1}}\phi_{n_r}
\;+\;c_{n_2}+\cdots+c_{n_r}.
\]
Since all $c_{n_i}\ge0$, the right-hand side is bounded above by the corresponding candidate
in~\eqref{eq:eta-def} (which additionally includes $c_{n_1}$). Therefore $\phi_n$ is bounded above
by every candidate in the infimum \eqref{eq:eta-def}, and hence
\begin{equation}\label{eq:eta-lower}
\phi_n\ \le\ \psi_n.
\end{equation}
Combining \eqref{eq:eta-upper}--\eqref{eq:eta-lower} yields
$0\ \le\ \|\psi_n-\phi_n\|_\infty\ \le\ c_n$,
so
\[
\limsup_{n\to\infty}\frac1n\|\psi_n-\phi_n\|_\infty
\le
\lim_{n\to\infty}\frac1n c_n
=0.
\]
In the compact setting, if each $\phi_n$ is upper semicontinuous, then every candidate appearing in~\eqref{eq:eta-def} is upper semicontinuous, and so is their infimum. Hence $\Psi$ belongs to the same class as $\Phi$.
Since $\Psi$ is $P$-subadditive, this is exactly~\eqref{eq:asymp-def} for $\Phi$.
\end{proof}

{Now we extend the previous criterion in a slightly more general framework.}

\begin{cor}\label{prop:removable-defect}
{Let $\Phi=(\phi_n)_{n\ge1}$ be a process. Assume there exist bounded measurable functions
$g_n,\rho_n$ on $X$ such that}
\begin{equation}\label{eq:removable-defect-sublinear}
\lim_{n\to\infty}\frac1n\|g_n\|_\infty=0,
\qquad
\lim_{n\to\infty}\frac1n\|\rho_n\|_\infty=0,
\end{equation}
and for all $n,m\ge1$,
\begin{equation}\label{eq:removable-defect-ineq}
\phi_{n+m}\ \le\ \phi_n+P^n\phi_m\ +\ \bigl(g_n+P^n g_m-g_{n+m}\bigr)\ +\ P^n\rho_m.
\end{equation}
Then $\Phi$ is asymptotically $P$-subadditive.
\end{cor}

\begin{proof}  Define the process $\Psi=(\psi_n)_{n\ge1}$ by $\psi_n:=\phi_n+g_n$.
Using~\eqref{eq:removable-defect-ineq} we obtain, for all $n,m\ge1$,
\begin{equation*}\label{eq:block-defect}
\begin{aligned}
\psi_{n+m}
=\phi_{n+m}+g_{n+m}
&\le
\phi_n+P^n\phi_m+g_n+P^n g_m+P^n\rho_m
\\ &=
\psi_n+P^n\psi_m+P^n\rho_m.
\end{aligned}
\end{equation*}
Using that $P^n\rho_m \leq c_m:=\|\rho_m\|_\infty$,  the above inequality becomes 
$$
  \psi_{n+m} \leq \psi_n+P^n\psi_m+c_m.
$$
Hence, by Proposition~\ref{prop:suff-asymp-P-subadd} we obtain that $\Psi$ is asymptotically $P$-subadditive.  
{Returning to $\Phi$, since $\psi_n-\phi_n=g_n$ and $\|g_n\|_\infty/n\to 0$, by the triangle inequality any asymptotic \(P\)-subadditive approximation of $\Psi$ also serves as an asymptotic \(P\)-subadditive approximation of $\Phi$.}
Thus $\Phi$ is asymptotically $P$-subadditive.
\end{proof}

}

\chapter{Mostly contracting, local contraction on average and quasi-compactness}
\label{s:mostly-contracting}

\abstract{{
This chapter establishes the main structural consequences of the mostly contracting
condition. We first study the subadditivity of the logarithm of local Lipschitz constants and apply Kingman's ergodic theorems for Markov operators to obtain several
equivalent formulations of the maximal Lyapunov exponent. In particular, mostly
contracting is characterized by uniform negativity of finite-time annealed exponents. We then show that this negativity implies local
contraction on average. This yields a
Lasota-Yorke type estimate for the annealed Koopman operator on
\(C^\alpha(X)\), and hence quasi-compactness. The chapter also describes the
ergodic and spectral consequences of quasi-compactness: finiteness of ergodic
stationary measures, identification of their supports with minimal invariant
sets, and criteria for simplicity of the eigenvalue \(1\), absence of other
peripheral eigenvalues, and spectral gap in terms of unique ergodicity,
aperiodicity, and mingling. The final part relates global contraction on
average to synchronization, proximality, and unique ergodicity of the associated
two-point motion.
}}

{
\section{Local Lipschitz constants and subadditivity}
\label{sec:local-lipschitz-subadditivity}

We begin with the elementary properties of local Lipschitz constants that will
be used to construct the subadditive processes associated with the Lyapunov exponents for
random maps.} Recall the definition of the local Lipschitz constant \(Lg(x)\)
from~\S\ref{sec:lyapunov-def}: {
\[
  Lg(x)
  \eqdef
  \lim_{r\to 0^+} \operatorname{Lip}(g|_{B(x,r)})
  =
  \limsup_{\substack{(y,z)\to(x,x)\\ y\ne z}}
  \frac{d(g(y),g(z))}{d(y,z)}.
\]
}
\begin{lem} \label{lem1} The map $Lg: X\to [0,\infty)$ is upper semicontinuous for any  Lipschitz map $g$ on $X$.  Moreover, for any pair of Lipschitz maps $g$ and $h$ on $X$, it satisfies the chain rule inequality
\[L(g \circ h)(x) \leq Lg(h(x))\cdot Lh(x), \quad \text{for every $x\in X$}.\]
\end{lem}
\begin{proof} Let us prove that $Lg$ is upper semicontinuous, that is, the set $E_a\eqdef \{x\in X :  Lg(x)<a\}$ is an open set for any $a>0$. To do this, for a fixed $z\in E_a$, by definition of  $Lg(z)$, there exists $r>0$ such that
$$a>L_rg(z) \eqdef \mathrm{Lip}(g|_{B(z,r)})
\geq \sup_{x\in B(z,r)} Lg(x) .$$
Thus, $B(z,r) \subset E_a$ and consequently, $E_a$ is open.

We now prove the chain rule inequality. Since $h$ is a Lipschitz map and hence uniformly continuous,  given $\rho > 0$, there exists $r > 0$ such that if $d(x, y) < r$, then $d(h(x), h(y)) < \rho$. Hence,
\[
\frac{d(g(h(x)), g(h(y)))}{d(x, y)} = \frac{d(g(h(x)), g(h(y)))}{d(h(x), h(y))} \cdot \frac{d(h(x), h(y))}{d(x, y)} \leq L_\rho g(h(x)) \cdot L_r h(x).
\]
Taking $\rho$ and $r$ tending to $0^+$, we get the desired chain rule.
\end{proof}

\section{Consequences of Kingman's subadditive theorems}
{First, we} recall the notation of the maximal  Lyapunov exponents $\lambda(\mu)$,  $\lambda(f)$ and $\lambda(x)$ introduced in~\S\ref{ss:mostlycontracting}:
{\begin{align*}
\lambda(\mu) &\eqdef \lim_{n\to\infty} \frac{1}{n} \int \log Lf^n_\omega(x) \, d\mathbb{P}d\mu \quad \text{and} \quad 
\lambda(x) \eqdef
\limsup_{n\to\infty} \
\frac{1}{n} \int \log Lf^n_\omega(x) \, d\mathbb{P}
\end{align*}
and
\begin{align*}
\lambda(f) &\eqdef \sup \{ \lambda(\mu): \mu \text{\ $f$-stationary}\}. 
\end{align*}
}
We also define the \emph{global maximal annealed Lyapunov exponent} by
$$
\lambda_{\max} = \lim_{n\to\infty}  \max_{x\in X} \frac{1}{n} \int \log Lf^n_\omega(x) \, d\mathbb{P}=\inf_{n\geq 1} \max_{x\in X} \frac{1}{n} \int \log Lf^n_\omega(x) \, d\mathbb{P}.
$$
\index{maximal Lyapunov exponents!\(\lambda_{\max}\), global maximal annealed exponent}
The existence of this limit is justified by Fekete's subadditive lemma 
since the sequence $(\max \int \log Lf^n_\omega \, d\mathbb{P})_{n\geq 1}$ is subadditive.  From Theorems~\ref{Kingmanuniform} and~\ref{Kingman}, we obtain: 
\begin{thm} \label{prop:equivalence}
Let $(X,d)$ and $(\Omega,\mathscr{F},\mathbb{P})$ be, respectively, a compact metric space and a Bernoulli probability space. Consider a Lipschitz random map $f:\Omega\times X\to X$ satisfying the integrability condition~\eqref{eq:integrability}. Then,
\begin{equation} \label{eq:exponent1}
    \lambda(f)=\max \left\{\lambda(\mu): \  \mu \in \mathcal{I} \ \text{ergodic}\right\}  =   \lambda_{\max}  = \sup_{x\in X} \lambda(x)= \sup_{\mu\in\mathcal{I}} \int \lambda(x) \, d\mu
\end{equation}
where $\mathcal{I}$\index{space of measures!\(\mathcal I\), $f$-stationary measures} is the set of stationary probability measures  for $f$. Moreover,
\begin{equation} \label{eq:exponent2}
   \lambda(f)  = \lim_{n\to \infty} \sup_{\mu \in \mathcal{I}} \frac{1}{n} \int \log Lf^n_\omega(x) \, d\mathbb{P}d\mu = \inf_{n\geq 1} \sup_{\mu \in \mathcal{I}} \frac{1}{n} \int \log Lf^n_\omega(x) \, d\mathbb{P}d\mu.
\end{equation}
Also, for every $\mu\in \mathcal{I}$, the following holds:
\begin{enumerate}[leftmargin=1cm, label=(\roman*), itemsep=0.1cm]
\item $\displaystyle \lambda(\mu) = \int \lambda(x)\, d\mu$  and $\lambda^+ \in L^1(\mu)$,
\item $\displaystyle \lambda(x)=\lim_{n\to\infty} \frac{1}{n} \int \log Lf^n_\omega(x) \, d\mathbb{P}$  for $\mu$-a.e.~$x\in X$, \\[-0.2cm]
\item $\lambda(x)=\lambda(\mu)$ for $\mu$-a.e.~$x$, provided $\mu$ is ergodic.
\end{enumerate}
{Furthermore, 
\[
\lambda(x) =\lambda(\omega,x)\eqdef \lim_{n\to\infty}\frac1n\log Lf_\omega^n(x)
\quad\text{for }(\mathbb P\times\mu)\text{-a.e.~}(\omega,x)\in \Omega\times X.
\]}
\end{thm}

{

\begin{proof}
For each $n \in \mathbb{N}$, by the integrability condition~\eqref{eq:integrability}, we define the extended real-valued function
$$
\phi_n(x) = \int \log Lf^{n}_\omega(x) \, d\mathbb{P} \quad \text{for } x \in X.
$$
 From Lemma~\ref{lem1} and the reverse Fatou lemma, $\phi_n$ is upper semicontinuous. 
Moreover, since the $m$-th iterate of the annealed Koopman operator $P:C(X)\to C(X)$ introduced in~\eqref{eq:koopman0} is given by
$$
P^m\varphi(x) = \int \varphi \circ f^m_\omega(x) \, d\mathbb{P}\quad \text{for } x \in X,
$$
we have, by the chain rule, and since $\mathbb{P} = p^\mathbb{N}$ is a Bernoulli probability measure, that
\begin{align*}
  \phi_{n+m}(x) &\leq \int \log Lf^n_{\omega}(x) \, d\mathbb{P} + \int \log Lf^m_{\sigma^n\omega}(f^n_\omega(x))\,d\mathbb{P} \\
                 &= \phi_n(x) +  \int \int \log Lf^m_{\omega'}(f^n_\omega(x))\,d\mathbb{P}(\omega') d\mathbb{P}(\omega) = \phi_n(x) + P^n\phi_m(x).
\end{align*}
This implies that $(\phi_n)_{n\geq 1}$ is a $P$-subadditive sequence. Furthermore, as is well known,  the $P^*$-invariant probability measures $\mu$ are exactly the $f$-stationary measures.
Hence, in view of~\eqref{eq:Kingman2}, \eqref{eq:Kingman1}, and~\eqref{eq:Lambdamax}, we have,  we have that for every $x\in X$ and $f$-stationary measure $\mu$,
\begin{align*}
   \lambda(x) &= \limsup_{n\to \infty} \frac{1}{n} \phi_n(x), &
\lambda_{\max}&=\lim_{n\to\infty}  \max_{x\in X}   \frac{1}{n} \phi_n(x)=\Lambda_{\max},
 \\ \lambda(\mu)&=\lim_{n\to \infty}  \frac{1}{n} \int \phi_n \, d\mu= \Lambda(\mu) & \quad  \lambda(f)&=\Lambda.
\end{align*}
With the above notation, from Theorem~\ref{Kingmanuniform}, we 
obtain~\eqref{eq:exponent1} and~\eqref{eq:exponent2}.

Now, fixing an $f$-stationary probability measure $\mu$, again by condition~\eqref{eq:integrability}, we have that $\phi^+_1\in L^1(\mu)$, and thus by Remark~\ref{rem:kingman}, we can apply Theorem~\ref{Kingman} to the operator $P$ and the $P$-subadditive sequence $(\phi_n)_{n\geq 1}$. Then the function $g$ in Theorem~\ref{Kingman} satisfies  $g(x)=\lambda(x)$ for $\mu$-a.e.~$x\in X$, and from this, one easily concludes (i), (ii) and (iii).

{
Finally, we prove the last part of the theorem. First, let $\mu$ be an ergodic $f$-stationary measure.  As  introduced in~\S\ref{sec:lyapunov-def}, by Kingman's subadditive ergodic theorem, the pointwise Lyapunov exponent 
\[
\lambda(\omega,x)\eqdef \lim_{n\to\infty}\frac1n\log Lf_\omega^n(x) 
\]
exists for \((\mathbb{P}\times \mu)\)-a.e.~\((\omega,x)\). Moreover, since \(\bar\mu=\mathbb{P}\times \mu\) is ergodic, it is equal, \(\bar\mu\)-almost everywhere, to the constant 
\[
\lambda(\mu) \eqdef \lambda(\bar \mu) =\lim_{n\to\infty}\frac1n\int\log Lf_\omega^n(x)\,d\bar\mu.
\]
On the other hand, item~\textup{(iii)} gives
$\lambda(x)=\lambda(\mu)$ for $\mu$-a.e.~$x\in X$. Therefore,
\[
\lambda(\omega,x)=\lambda(x)
\qquad\text{for }(\mathbb P\times\mu)\text{-a.e.~}(\omega,x)\in \Omega\times X
\]
whenever \(\mu\) is ergodic. Now let \(\mu\) be an arbitrary \(f\)-stationary probability measure. Define
\[
G\eqdef \Bigl\{(\omega,x)\in \Omega\times X:
\lambda(\omega,x)=\lambda(x) \ \ \text{and the limits exist}\Bigr\}. 
\]
Let $G_x$ be the section of $G$ at $x$ and write $F\eqdef \{x\in X:\mathbb P(G_x)=1\}$.
Then \(F\) is measurable. In the ergodic case already proved, every ergodic \(f\)-stationary probability measure \(\nu\) satisfies $(\mathbb P\times \nu)(G)=1$,
and therefore, by Fubini's theorem, $\nu(F)=1$. By the ergodic decomposition of \(\mu\), since \(\nu(F)=1\) for every ergodic component \(\nu\), we obtain
$\mu(F)=1$.  Applying Fubini's theorem once more, we conclude that
$(\mathbb P\times\mu)(G) \geq \int_F \mathbb P(G_x)\,d\mu(x)=1$ and hence
\[
\lambda(\omega,x)=\lambda(x)
\qquad\text{for }(\mathbb P\times\mu)\text{-a.e.~}(\omega,x)\in \Omega\times X.
\]
This proves the last part of the theorem.}
\end{proof}

\begin{cor} \label{cor:equivalencia}
Under the assumption of Theorem~\ref{prop:equivalence}, the following assertions are equivalent:
\begin{enumerate}[leftmargin=1cm,label=(\roman*)]
    \item $f$ is mostly contracting (i.e., $\lambda(f)<0$);
    \item $\lambda(x) < 0$ for every $x \in X$;
    \item $\lambda_{\max}<0$;
    \item There exists  $n\geq 1$ such that $\int \log Lf^{n}_\omega (x) \, d\mathbb{P} <0$ for every $x \in X$.
\end{enumerate}
\end{cor}
\begin{proof}
In view of~\eqref{eq:exponent1}, it is evident that (i), (ii), and (iii) are equivalent. Moreover, given~that
$$
\lambda_{\max} = \inf_{n\geq 1}  \max_{x\in X} \frac{1}{n} \int \log Lf^n_\omega(x) \, d\mathbb{P},
$$
(iii) directly implies (iv). Finally, by assuming (iv), we deduce (i) from the last representation of $\lambda(f)$ in~\eqref{eq:exponent2}.
\end{proof}

\begin{rem} \label{rem:equivalencia}
{
Since $x \mapsto \int \log Lf^{n}_\omega (x) \, d\mathbb{P}$ is upper semicontinuous and \(X\) is compact, condition~\textup{(iv)} implies that this function attains a negative maximum. Hence there exists \(\eta>0\) such that
\[
\int \log Lf^{n}_\omega (x) \, d\mathbb{P} \le -\eta
\qquad\text{for every }x\in X.
\]
In particular, condition~\textup{(iv)} provides a uniform bound away from zero.}
\end{rem}

\section{Local contraction on average} In what follows, we prove two key propositions. The first proposition asserts that, under the assumptions of Theorem~\ref{thmA}, a mostly contracting random map is locally contracting on average (recall Definition~\ref{def:local-contracting-average}).

\begin{prop} \label{prop:alpha0}  
Let $f$ be a mostly contracting random map such that~\eqref{eq:integral_condition} holds. Then, 
there exist $\alpha_0>0$, $r>0$ and $n\in\mathbb{N}$  such that for any $0<\alpha \leq \alpha_0$  there is  $q<1$  satisfying
$$
\int d(f^n_\omega(x),f^n_\omega(y))^\alpha \, d\mathbb{P} \leq q \, d(x,y)^\alpha \quad \text{for any $x,y\in X$ with $d(x,y)<r$.}
$$
\end{prop}
\begin{proof}
Fix \(x\in X\) and let \(n\in \mathbb{N}\) be as specified in item (iv) of Corollary~\ref{cor:equivalencia}. For any \(\alpha > 0\), consider the functions
\[
\psi(\omega) \eqdef \log Lf_\omega^n(x) \ \text{and} \ \psi_\alpha(\omega) \eqdef \frac{Lf^n_\omega(x)^\alpha-1}{\alpha} = \frac{e^{\alpha \log Lf^n_\omega(x)}-1}{\alpha}= \psi(\omega) + O(\alpha)
\]
defined for \(\omega \in \Omega\). We apply Lemma~\ref{Fatou} to the sequence \((\psi_\alpha)_\alpha\) on the probability space \((\Omega,\mathscr{F},\mathbb{P})\). To do this, we need to bound $\psi_\alpha$ by a $\mathbb{P}$-integrable function $g$ for every $\alpha>0$ small enough. We consider two cases.

First, \(Lf^n_\omega(x) \leq 1\). Then \(\psi_\alpha \leq 0\) for any \(\alpha > 0\), and we can take \(g=0\) as the required integrable function. Second, \(Lf^n_\omega(x) > 1\). In this case,  since \((e^t-1)/t \leq e^t\) for  \(t>0\), we have
\[
\frac{e^{\alpha \log Lf^n_\omega(x)}-1}{\alpha \log Lf^n_\omega(x)} \leq e^{\alpha \log Lf^n_\omega(x)}.
\]
Thus, for any \(0<\alpha \leq \beta/2\) with \(\beta>0\) is given in the integrability condition~\eqref{eq:integral_condition}, and using that $\log t \leq t^{\alpha} / \alpha$ for $t\geq 1$ (and $\alpha=\beta/2$), it holds 
\[
\psi_\alpha (\omega) \leq Lf^n_\omega(x)^\alpha \log Lf^n_\omega(x) \leq \frac{2}{\beta} \, Lf^n_\omega(x)^{\alpha+\frac{\beta}{2}} \leq \frac{2}{\beta} \, \mathrm{Lip}(f^n_\omega)^\beta \eqdef g(\omega).
\]
By the chain rule and taking into account that the random variables $X_n(\omega)=\mathrm{Lip}(f_{\omega_n})^\beta$ are i.i.d., we have
\[
\int g(\omega) \, d\mathbb{P} \leq 
\frac{2}{\beta} \int \mathrm{Lip}(f_{\omega_{n-1}})^\beta \cdot \ldots \cdot \mathrm{Lip}(f_{\omega_{0}})^\beta \, d\mathbb{P} = \frac{2}{\beta} \left(\int \mathrm{Lip}(f_\omega)^{\beta} \, d\mathbb{P}\right)^n <\infty.
\]

Therefore, in both cases above, we can apply Lemma~\ref{Fatou}, and according to (iv) of Corollary~\ref{cor:equivalencia}, we obtain
\[
\limsup_{\alpha \to 0} \int \psi_\alpha(\omega) \, d\mathbb{P} \leq \int \psi(\omega) \, d\mathbb{P} =\int \log Lf_\omega^n(x) \, d\mathbb{P}< 0.
\]
Thus, there exists \(0<\alpha_0(x)\leq \beta/2 \) small enough such that
\[
\int \frac{Lf^n_\omega(x)^{\alpha_0(x)}-1}{\alpha_0(x)}\, d\mathbb{P} = \int \psi_{\alpha_0}(\omega) \, d\mathbb{P} < 0,
\]
which implies $\int Lf^n_\omega(x)^{\alpha_0(x)}\,  d\mathbb{P}<1$.
Then, there exist \(r(x)>0\) and \(q(x)<1\) such that
\begin{equation}\label{eq:qmenor1}
\int L_{r(x)}f^n_\omega(x)^{\alpha_0(x)}  d\mathbb{P}<q(x).
\end{equation}
Since \(X\) is compact, we can find finitely many points \(x_1,\dots,x_k\) such that 
\[
X=B_1\cup \dots \cup B_k, \quad  \text{where \ \  \(B_i=B(x_i,r_i)\)}
\]  
with \(r_i=r(x_i)\) for \(i=1,\dots, k\).
  Let \(r\) be the Lebesgue number of this open cover, and set \(q_i=q(x_i)\), \(\alpha_i=\alpha_0(x_i)\), \(i=1,\dots,r\), and \(\alpha_0=\min\{ \alpha_i: i=1,\dots, k\}\).
Now, for every \(0<\alpha\leq \alpha_0\), denote 
$p_i=\alpha_i/\alpha>1$ for \(i=1,\dots,k\) 
and 
$$
q=\max\{ q_i^{1/p_i}: i=1,\dots, k\}<1.
$$ 
Then, for \(x,y\in X\) with \(d(x,y)<r\), we have that \(x,y \in B_i\) for some \(i\in \{1,\dots,k\}\). By H\"older's inequality for \(p_i\) and~\eqref{eq:qmenor1}, we get
\begin{align*} 
    \int L_{r_i}f^n_\omega(x_i)^\alpha \, d\mathbb{P} &\leq \left(\int L_{r_i}f^n_\omega(x_i)^{p_i\alpha} \, d\mathbb{P}\right)^{1/p_i} \\ &=\left(\int L_{r_i}f^n_\omega(x_i)^{\alpha_i} \, d\mathbb{P}\right)^{1/p_i }<q_i^{1/p_i}\leq q.
\end{align*}
This implies that
\[
\int d(f^n_\omega(u),f^n_\omega(v))^\alpha\, d\mathbb{P} \leq q \, d(u,v)^\alpha \quad \text{for all $u,v\in B(x_i,r_i)=B_i$.}
\]
In particular, taking \(u=x\) and \(v=y\), we obtain the desired inequality.
\end{proof}


\begin{prop} \label{prop:quasi-compact}
Let $f:\Omega\times X \to X$ be a locally contracting on average random map and let $r>0$, $\alpha>0$, $q<1$ and $n\geq 1$ be the constants in Definition~\ref{def:local-contracting-average}. Moreover,  assume that $\mathrm{Lip}(f_\omega)^\beta$  is $\mathbb{P}$-integrable for some $\beta \geq \alpha$. Then   
\begin{align} \label{eq:quasi-compact}
P(C^\alpha(X))\subset C^\alpha(X) \quad  \text{and} \quad
|P^n\phi|_{\alpha} \leq q |\phi|_\alpha + C \|\phi\|_{\infty} \quad \text{for all } \phi\in C^\alpha(X)
\end{align}
where $C=\frac{2}{r^\alpha}>0$. 

\end{prop}
\begin{proof}
We aim to prove that $P(C^\alpha(X))\subset C^\alpha(X)$. To do this, for $x,y\in X$,
\begin{align*}
 \left|P\phi(x)-P\phi(y)\right|
 \leq |\phi|_\alpha \int d(f_\omega(x),f_\omega(y))^\alpha \, d\mathbb{P}  \leq |\phi|_\alpha \int \mathrm{Lip}(f_\omega)^\alpha\, d\mathbb{P} \cdot d(x,y)^\alpha.
\end{align*}
Given that $\alpha \leq \beta$ and $\mathrm{Lip}(f_\omega)^\beta$ is $\mathbb{P}$-integrable, then $\mathrm{Lip}(f_\omega)^\alpha$ is also $\mathbb{P}$-integrable. 
Thus, we deduce $\int \mathrm{Lip}(f_\omega)^\alpha\, d\mathbb{P}<\infty$. Hence, $P\phi\in C^\alpha(X)$.

Now, let us prove the inequality in~\eqref{eq:quasi-compact}. Consider any $x, y\in X$ and $\phi\in C^\alpha(X)$. 
If $d(x,y)<r$, then Definition~\ref{def:local-contracting-average} implies
\begin{equation}\label{eq:1} \begin{aligned}
  \left|P^n\phi(x)-P^n\phi(y)\right| &\leq \int \left|\phi ( f^n_\omega(x)) - \phi (f^n_\omega(y))\right|\, d\mathbb{P}\\
  &\leq   |\phi|_\alpha  \int  d(f^n_\omega(x),f^n_\omega(y))^\alpha \, d\mathbb{P} \leq q \, |\phi|_\alpha d(x,y)^\alpha.
\end{aligned}\end{equation}
On the other hand, if $d(x,y)\geq r$, and considering that $\|P\varphi\|_{\infty}\leq \|\varphi\|_{\infty}$ for every continuous function $\varphi:X\to \mathbb{R}$, we obtain
\begin{equation} \label{eq:2}
  \left|P^n\phi(x)-P^n\phi(y)\right| \leq 2\|P^n\phi\|_{\infty} \leq 2 \|\phi\|_\infty \leq C \,\|\phi\|_\infty d(x,y)^\alpha,
\end{equation}
where $C\eqdef\frac{2}{r^\alpha}>0$. Combining~\eqref{eq:1} and~\eqref{eq:2}, we deduce that
$$
\left|P^n\phi(x)-P^n\phi(y)\right| \leq \left( q |\phi|_\alpha + C\|\phi\|_\infty\right) d(x,y)^\alpha \quad \text{for all } x,y\in X.
$$
This confirms~\eqref{eq:quasi-compact} as required.
  \end{proof}

  \section{Proof of Theorem~\ref{thmA}} We apply \cite[Thm.~II.5]{HH:01} which states that a Markov operator $P$ of a Banach space $(E,\|\cdot\|)$ is quasi-compact provided 
  \begin{enumerate}[leftmargin=0.8cm,label=(\roman*)]
      \item $P(\{\varphi\in E: \|\varphi\|\leq 1\})$ is relatively compact in $(E,|\cdot |)$
      \item  $|P\varphi|\leq M \|\varphi\|$ for all $\varphi \in E$, 
      \item there exist $n\in \mathbb{N}$, $q<1$ and $K>0$ such that $\|P^n\phi\|\leq q \|\phi\| + K |\phi|$,
  \end{enumerate}
  where $|\cdot|$ denotes a continuous seminorm on $E$.
  To verify these assumptions,   
  we consider $E=C^\alpha(X)$, $\|\cdot \|=\|\cdot\|_\alpha$ and $|\cdot|=\|\cdot\|_\infty$ where $0<\alpha \leq \alpha_0$ given in Proposition~\ref{prop:alpha0}. By this proposition, we know that  $f$ is locally contracting on average with constant $\alpha\leq \beta$ where $\beta>0$ is given in~\eqref{eq:integral_condition}, and thus, we are under the assumptions of Proposition~\ref{prop:quasi-compact}. 
  
  \begin{lem} \label{lem:local-contracting-average-quasi-compact} Let $f$ be a random map under the assumptions of Proposition~\ref{prop:quasi-compact}. Then, the Koopman operator $P$ associated with $f$ is quasi-compact on $C^\alpha(X)$.
  \end{lem}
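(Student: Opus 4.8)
The plan is to verify the three hypotheses (i), (ii), (iii) of \cite[Thm.~II.5]{HH:01} for the annealed Koopman operator $P$ acting on the Banach space $E = C^\alpha(X)$, with norm $\|\cdot\| = \|\cdot\|_\alpha$ and continuous seminorm $|\cdot| = \|\cdot\|_\infty$. Conditions (ii) and (iii) are essentially already in hand from Proposition~\ref{prop:quasi-compact}; the only genuine work is condition (i), the relative compactness of the image of the unit ball in the sup-norm topology.

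First I would dispatch (ii): for $\varphi \in C^\alpha(X)$ we have $\|P\varphi\|_\infty \le \|\varphi\|_\infty$ since $P$ is Markov, and $|P\varphi|_\alpha \le \big(\int \mathrm{Lip}(f_\omega)^\alpha\,d\mathbb{P}\big)\,|\varphi|_\alpha$ by the estimate at the start of the proof of Proposition~\ref{prop:quasi-compact} (the integral is finite because $\alpha \le \beta$ and $\mathrm{Lip}(f_\omega)^\beta$ is $\mathbb{P}$-integrable). Hence $\|P\varphi\|_\alpha \le M\|\varphi\|_\alpha$ with $M = 1 + \int \mathrm{Lip}(f_\omega)^\alpha\,d\mathbb{P}$, and in particular $P$ maps $C^\alpha(X)$ to itself continuously. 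Next, (iii) is exactly the Lasota--Yorke type inequality~\eqref{eq:quasi-compact} from Proposition~\ref{prop:quasi-compact}: $|P^n\phi|_\alpha \le q|\phi|_\alpha + C\|\phi\|_\infty$ with $q<1$, which upon adding $\|P^n\phi\|_\infty \le \|\phi\|_\infty \le \|\phi\|_\alpha$ to both sides yields $\|P^n\phi\|_\alpha \le q\|\phi\|_\alpha + (C+1)\|\phi\|_\infty$, i.e.\ the required form with constant $q<1$ in front of $\|\phi\|_\alpha$ and $K = C+1$ in front of $|\phi| = \|\phi\|_\infty$.

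The main point is (i): showing that $\mathcal{K} \eqdef P(\{\varphi \in C^\alpha(X) : \|\varphi\|_\alpha \le 1\})$ is relatively compact in $(C(X),\|\cdot\|_\infty)$. I would invoke the Arzel\`a--Ascoli theorem. Uniform boundedness is immediate: $\|P\varphi\|_\infty \le \|\varphi\|_\infty \le 1$. For equicontinuity, take $\varphi$ in the unit ball and $x,y\in X$; then
\[
|P\varphi(x) - P\varphi(y)| \le \int |\varphi(f_\omega(x)) - \varphi(f_\omega(y))|\,d\mathbb{P}(\omega) \le |\varphi|_\alpha \int d(f_\omega(x),f_\omega(y))^\alpha\,d\mathbb{P}(\omega).
\]
Since $|\varphi|_\alpha \le 1$, it suffices to show the modulus of continuity $x,y \mapsto \int d(f_\omega(x),f_\omega(y))^\alpha\,d\mathbb{P}(\omega)$ tends to $0$ as $d(x,y)\to 0$, uniformly. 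Bounding the integrand by $\min\{\mathrm{Lip}(f_\omega)^\alpha d(x,y)^\alpha,\, (\diam X)^\alpha\}$ and using that $\mathrm{Lip}(f_\omega)^\alpha$ is $\mathbb{P}$-integrable, the dominated convergence theorem gives that this integral goes to $0$ as $d(x,y) \to 0$; the bound is independent of $x,y$, so the family $\mathcal{K}$ is equicontinuous. By Arzel\`a--Ascoli, $\mathcal{K}$ is relatively compact in $C(X)$, establishing (i). With (i), (ii), (iii) verified, \cite[Thm.~II.5]{HH:01} yields that $P$ is quasi-compact on $C^\alpha(X)$, completing the proof. The only mild subtlety to watch is making the equicontinuity estimate genuinely uniform over the unit ball and over $x,y$; the split into the Lipschitz bound and the trivial diameter bound, combined with integrability of $\mathrm{Lip}(f_\omega)^\alpha$, handles this cleanly.
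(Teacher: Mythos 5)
Your proof is correct and follows the paper's overall strategy — verify conditions (i), (ii), (iii) of \cite[Thm.~II.5]{HH:01} with $E = C^\alpha(X)$, $\|\cdot\| = \|\cdot\|_\alpha$, $|\cdot| = \|\cdot\|_\infty$ — but you verify condition (i) by a more elaborate route than the paper. You prove equicontinuity of the image set $P(B)$ directly, splitting the integrand $d(f_\omega(x),f_\omega(y))^\alpha$ into a Lipschitz-controlled part and a trivial diameter bound, and then invoking dominated convergence for the integrability of $\mathrm{Lip}(f_\omega)^\alpha$. The paper instead observes that the unit ball $B$ of $C^\alpha(X)$ is itself compact in $(C(X),\|\cdot\|_\infty)$ by Arzel\`a--Ascoli (uniform boundedness and equicontinuity of $B$ are immediate from $\|\varphi\|_\alpha\le 1$), and then deduces compactness of $P(B)$ simply because $P$ is $\|\cdot\|_\infty$-continuous (being Markov) and $P(B)\subset C^\alpha(X)$ by Proposition~\ref{prop:quasi-compact}. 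Your argument is a bit heavier but works without appealing to compactness of $B$ itself; the paper's is shorter and bypasses the dominated-convergence step. One small point you should make explicit for full rigor: relative compactness in $C(X)$ is not verbatim relative compactness in $(C^\alpha(X),\|\cdot\|_\infty)$, since the limit must land in $C^\alpha(X)$; this is indeed the case because, as your (ii) estimate shows, $P(B)$ lies in a ball of $C^\alpha(X)$ with uniformly bounded $|\cdot|_\alpha$ seminorm, and a uniform limit of such functions retains the same H\"older bound. Also, your paragraph for (ii) proves more than is needed: condition (ii) only asks for $\|P\varphi\|_\infty \le M\|\varphi\|_\alpha$, which follows instantly from $\|P\varphi\|_\infty \le \|\varphi\|_\infty \le \|\varphi\|_\alpha$; the full $\|\cdot\|_\alpha$-boundedness of $P$ is harmless extra information but not required at that step.
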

  \begin{proof}
  By Proposition~\ref{prop:quasi-compact},   
  $P$ is a Markov operator acting on $C^\alpha(X)$. By the Arzela-Ascoli theorem, the closed and equicontinuous subset $B=\{\varphi\in C^\alpha(X): \|\varphi\|_\alpha \leq 1\}$ of the space of continuous functions of $X$ is compact.
  Thus, we have that $P(B)$ is compact in $(C^\alpha(X), \|\cdot\|_\infty)$. Moreover, clearly $\|P\phi\|_\infty \leq \|\phi\|_\infty \leq \|\phi\|_\alpha$.  Finally, for $n\in \mathbb{N}$, $q<1$ and $C>0$ mentioned in the assumptions of  Proposition~\ref{prop:quasi-compact},  by equation~\eqref{eq:quasi-compact}, we have that
  $$\|P^n\phi\|_\alpha =\|P^n\phi\|_\infty + |P^n\phi|_\alpha  \leq \|\phi\|_\infty + q \, |\phi|_\alpha + C\, \|\phi\|_\infty \leq    q\|\phi\|_\alpha + 2C \, \|\phi\|_\infty.$$
  Consequently, $P$ is quasi-compact. 
  \end{proof}


The above lemma establishes the first part of Theorem~\ref{thmA}.  
The final part of this theorem follows from the subsequent result. Let $\delta_{ij}$ denote the Kronecker delta and recall that \( f \) is 
\begin{enumerate}[leftmargin=0.8cm,itemsep=0.05cm,label=$\bullet$]    
\item \emph{aperiodic} if there do not exist \( p \geq 2 \) pairwise disjoint closed subsets \( F_1, \ldots, F_p \) of \( X \) such that \( f_\omega(F_i) \subset F_{i+1} \) for \( i = 1, \ldots, p-1 \) and \( f_\omega(F_p) \subset F_1 \) for \(\mathbb{P}\)-a.e.~\(\omega \in \Omega\).
\item \emph{mingled} if there are no two disjoint closed subsets $f^k$-invariant for some positive integer $k$.
\end{enumerate}
\index{spectral properties!peripheral eigenvalues}
\begin{thm} \label{thm:Herver-improved}
Let $(X,d)$ and $(\Omega,\mathscr{F},\mathbb{P})$ be a compact metric space and a Bernoulli product probability space.
Consider the Koopman operator {\( P:C(X) \to C(X) \)} associated with a random map \( f:\Omega \times X \to X \).  
Suppose that for some \( \alpha \geq 0 \), \( P \) is quasi-compact on \( C^\alpha(X) \). Then:
\begin{enumerate}[leftmargin=0.8cm,label=(\roman*)]
    \item The multiplicity of the eigenvalue \( 1 \) is equal to the number of minimal \( f \)-invariant closed subsets of \( X \). This number coincides with the number of ergodic \( f \)-stationary probability measures. Furthermore, the topological supports of these measures are pairwise disjoint and are exactly the minimal \( f \)-invariant closed subsets of \( X \). 
    
    In particular, \( 1 \) is a simple eigenvalue of \( P \) if and only if there is a unique minimal \( f \)-invariant set, which is equivalent to \( f \) being uniquely ergodic. \\[-0.3cm]
    
    More specifically, if \( r \geq 1 \) denotes the dimension of \( \mathrm{Ker}(P-1) \), then there exist minimal \( f \)-invariant closed subsets \( F_1,\dots,F_r \) of \( X \), a basis \( \{g_1,\dots,g_r\} \) of \( \mathrm{Ker}(P-1) \) formed by non-negative functions in \( C^\alpha(X) \), and ergodic \( f \)-stationary probability measures \( \mu_1,\dots,\mu_r \) such that:
    $$
    g_i|_{F_j} = \delta_{ij}, \quad F_j = \mathrm{supp}(\mu_j), \quad \text{and} \quad \sum_{i=1}^r g_i = 1.
    $$
    \item \( 1 \) is the unique eigenvalue of \( P \) with modulus \( 1 \) if and only if \( f \) is aperiodic.
    \item \( P \) has a spectral gap if and only if \( f \) is mingled.
\end{enumerate}
\end{thm}

\begin{proof} 
The proof of the first item follows the argument of~\cite[Theorems~2.3 and~2.4]{Her:94}. 

\emph{$\bullet$ Proof of (i):} By quasi-compactness, the sequence of operators \( A_n = \frac{1}{n}\sum_{k=0}^{n-1}P^{k} \) converges in the strong operator topology to the spectral projection \( \Pi \) onto \( \mathrm{Ker}(P-1) \), which satisfies \( \Pi P = P\Pi=\Pi \), cf.~\cite[Corollary~2 and Theorem~5]{Na:18}. Consequently, every non-empty closed \( P \)-invariant subspace \( \mathcal{E} \) of \( C^\alpha(X) \) contains a \( P \)-invariant function.  

\begin{claim} \label{claim:delta} 
Let \( F_1, \dots, F_r \) be pairwise disjoint \( f \)-invariant closed subsets of \( X \). Then, for each \( i = 1, \dots, r \), there exists a \( P \)-invariant function \( g_i \) in \( C^\alpha(X) \) such that \( g_i|_{F_j} = \delta_{ij} \) for \( j = 1, \dots, r \).  
\end{claim}

\begin{proof}
For each \( i = 1, \ldots, r \), consider 
\[
\mathcal{E}_i = \{\phi \in C^\alpha(X) : \phi|_{F_j} = \delta_{ij} \ \text{for} \ j = 1, \ldots, r \}.
\] 
By Lemma~\ref{lem:Urysohn}, there exists \( \phi \in C^\alpha(X) \) such that \( \phi|_{F_i} = 1 \) and \( \phi|_{F_j} = 0 \) for \( j \neq i \); thus, \( \mathcal{E}_i \) is non-empty. Moreover, if \( \phi \in \mathcal{E}_i \), since \( F_j \) is \( f \)-invariant, 
\[
P\phi(x) = \int \phi \circ f_\omega(x) \, d\mathbb{P} = \delta_{ij} \quad \text{for } x \in F_j \text{ and } j = 1, \dots, r.
\]
Consequently, \( \mathcal{E}_i \) is also a \( P \)-invariant closed subset of \( C^\alpha(X) \). Thus, it contains a \( P \)-invariant function \( g_i \). 
\end{proof}

Let \( F_1, \ldots, F_r \) be some minimal \( f \)-invariant closed subsets of \( X \). In particular, they are pairwise disjoint. By Claim~\ref{claim:delta}, there exist \( P \)-invariant functions \( g_1, \ldots, g_r \) in \( C^\alpha(X) \) such that \( g_i|_{F_j} = \delta_{ij} \). Hence, these functions are linearly independent, implying their number is less than or equal to the dimension of \( \mathrm{Ker}(P-1) \). Thus, the number of minimal $f$-invariant closed subsets is finite and less than this dimension.

From now on, assume \( F_1, \ldots, F_r \) are all the minimal \( f \)-invariant closed subsets. 

\begin{claim} \label{claim:constant} 
For any  \( P \)-invariant \( \phi \in C^\alpha(X) \), \( \phi|_{F_i} \) is constant for every \( i = 1, \dots, r \). 
\end{claim}

\begin{proof}
Let \( M_i = \max \{ \phi(x) : x \in F_i \} \) and \(\tilde{F_i} =\{x \in F_i : \phi(x) = M_i \} \). Since, 
\[
M_i = \phi(x) = P\phi(x) = \int \phi \circ f_\omega(x) \, d\mathbb{P} \quad \text{for every } x \in \tilde{F}_i,
\]
and \( \phi \circ f_\omega(x) \leq M_i \) for \( \mathbb{P} \)-a.e.~\( \omega \in \Omega \), 
we have \( \phi \circ f_\omega(x) = M_i \) almost surely. This implies that the set $\tilde{F}_i$ is a non-empty \( f \)-invariant closed subset of \( F_i \), so by minimality, it must equal \( F_i \). 
\end{proof}

\begin{claim} \label{claim:zero}  
For any eigenvector \( \phi \in C^\alpha(X) \) associated with an eigenvalue \( \rho \) of modulus \( 1 \), if \( \phi|_{F_i} = 0 \) for every \( i = 1, \dots, r \), then \( \phi = 0 \). 
\end{claim}

\begin{proof}
Let \( M = \max \{ |\phi(x)| : x \in X \} \) and $\tilde{F}=\{x\in X\colon |\phi(x)|=M\}$. Similarly as before, since
\begin{equation*}
        \begin{split} 
    M =|\phi(x)|=|\rho \phi(x)| = |P\phi(x)|\leq \int \big|\phi\circ f_\omega(x) \big| \, d\mathbb{P} \quad \text{and} \\ 
    |\phi \circ f_\omega(x)| \leq M  \quad 
 \text{for every $x\in \tilde{F}$},
 \end{split}
\end{equation*}
 we have $|\phi \circ f_\omega(x)|=M$ almost surely. 
This implies that the set $\tilde{F}$ is a non-empty $f$-invariant closed subset of $X$, so it contains at least one set $F_i$, and thus $M=0$.
\end{proof}

Let \( g_1, \ldots, g_r \) be defined as above. By Claim~\ref{claim:constant}, for any \( P \)-invariant function \( \phi \in C^\alpha(X) \), \( c_i = \phi|_{F_i} \) is constant. Then \( \phi - (c_1g_1 + \dots + c_r g_r) \) is \( P \)-invariant and null on every \( F_i \), \( i = 1, \dots, r \). By Claim~\ref{claim:zero}, \( \phi = c_1g_1 + \dots + c_r g_r \). Thus, \( \{g_1, \ldots, g_r\} \) forms a basis of \( \mathrm{Ker}(P-1) \). In particular, the multiplicity of \( 1 \), i.e., the dimension of \( \mathrm{Ker}(P-1) \), is equal to the number of minimal \( f \)-invariant subsets of \( X \). Moreover, since \( 1_X \) is \( P \)-invariant and \( c_i = 1_X|_{F_i} = 1 \) for \( i = 1, \dots, r \), we also have \( 1 = g_1 + \dots + g_r \).

To conclude the first item of the theorem, we now describe the ergodic \( f \)-stationary probability measures. Let us consider the space \( \mathcal{S}_P \) of linear functionals 
\( \mu\colon C(X) \to \mathbb{R} \) such that \( \mu(\phi) = \mu(P\phi) \) for 
every \( \phi \in C(X) \). By the Riesz-Markov theorem, this vector space can be identified with the affine space of complex-valued Radon \( f \)-stationary measures. 

\begin{claim} \label{claim:SP} 
\( \mathcal{S}_P \) has dimension \( r = \dim \mathrm{Ker}(P-1) \). Moreover, there exists a basis \( \{\mu_1, \ldots, \mu_r\} \) of \( \mathcal{S}_P \), where each \( \mu_i \) can be identified with an \( f \)-stationary probability measure.  
\end{claim}

\begin{proof}
For any \( \mu \in \mathcal{S}_P \), \( \phi \in C(X) \), and \( n \geq 1 \), we have \( \mu(\phi) = \mu(A_n\phi) =  \mu(\Pi\phi) \). 
Recall that \( \Pi \phi \) is \( P \)-invariant. Hence, \( \Pi \phi = c_1 g_1 + \dots + c_r g_r \), where \( c_i = (\Pi \phi)|_{F_i} \). Picking \( x_i \in F_i \) for \( i = 1, \dots, r \), we have \( c_i = \mu_i(\phi) \), where \( \mu_i = \delta_{x_i} \circ \Pi \). 

Moreover, since
\[
\mu_i(\phi) =  \delta_{x_i}(\Pi\phi) = \delta_{x_i}(P\Pi\phi) = \delta_{x_i}(\Pi P \phi) = \mu_i(P\phi),
\]
it follows that \( \mu_i \in \mathcal{S}_P \). Furthermore, the family \( \{\mu_1, \ldots, \mu_r\} \) is linearly independent. For any \( \mu \in \mathcal{S}_P \), we have 
\[
\mu(\phi) =  \mu(\Pi\phi) = \sum_{i=1}^r c_i \mu(g_i) = \sum_{i=1}^r \mu(g_i) \mu_i(\phi) \quad \text{for every \( \phi \in C(X) \)},
\]
which proves that the family \( \{\mu_1, \ldots, \mu_r\} \) forms a basis of \( \mathcal{S}_P \). Since each \( \mu_i \) can be identified with an \( f \)-stationary probability measure, the claim is proved. 
\end{proof}

Since the ergodic \( f \)-stationary probability measures are linearly independent, Claim~\ref{claim:SP} implies that the number of such measures is less than or equal to \( r \). Let us now show that this number is exactly \( r \). 

The following claim is well known in the folklore of random dynamical systems, though a reference in this generality is not easily found. For completeness, we provide a detailed proof.

\begin{claim} \label{claim:invariant-support}
Let \( \mu \) be an \( f \)-stationary measure associated with a continuous random map. Then the topological support \( \mathrm{supp}\, \mu \) is \( f \)-invariant.
\end{claim}

\begin{proof}
Define \( S = \mathrm{supp}\, \mu \), which is, by definition, the smallest closed subset of \( X \) satisfying \( \mu(S) = 1 \). Consequently, its complement \( S^c = X \setminus S \) is open and satisfies \( \mu(S^c) = 0 \). Using the stationarity of \( \mu \), we obtain
\[
0 = \mu(S^c) = \int \mu(f_\omega^{-1}(S^c)) \, d\mathbb{P}.
\]
It follows that \( \mu(f_\omega^{-1}(S^c)) = 0 \) for \( \mathbb{P} \)-a.e.~\( \omega \in \Omega \). Let \( \Omega_0 \subseteq \Omega \) be the set of full \( \mathbb P \)-measure where \( f_\omega \) is continuous and \( \mu(f_\omega^{-1}(S^c)) = 0 \). Fix an arbitrary \( \omega \in \Omega_0 \). We aim to show that \( f_\omega(S) \subset S \).

Suppose, for contradiction, that \( f_\omega(S) \not\subseteq S \). Then there exists \( x_0 \in S \) such that \( f_\omega(x_0) \in S^c \). Since \( S^c \) is open and \( f_\omega \) is continuous, the preimage \( U = f_\omega^{-1}(S^c) \) is an open subset of \( X \) containing \( x_0 \), i.e., \( x_0 \in U \cap S \). By the definition of \( S \), every open set intersecting \( S \) must have positive \( \mu \)-measure. However, \( U = f_\omega^{-1}(S^c) \), and for \( \omega \in \Omega_0 \), we established that \( \mu(f_\omega^{-1}(S^c)) = 0 \). 
This yields \( \mu(U) = 0 \), which contradicts that \( \mu(U) > 0 \).  This contradiction shows that \( f_\omega(S) \subset S \) for all \( \omega \in \Omega_0 \), completing the proof.
\end{proof}

If \( F \) is a minimal \( f \)-invariant closed subset of \( X \), then there exists an ergodic \( f \)-stationary probability measure \( \mu \) supported in \( F \). By Claim~\ref{claim:invariant-support}, \( \mathrm{supp}\, \mu \) is an \( f \)-invariant closed set. By the minimality of \( F \), we conclude \( F = \mathrm{supp}\, \mu \). Since the number of disjoint minimal \( f \)-invariant closed subsets of \( X \) is exactly \( r \), the number of ergodic \( f \)-stationary probability measures is greater than or equal to \( r \). Combining this with the previous inequality, we conclude that this number is exactly \( r \). This completes the proof of~(i).

\emph{$\bullet$ Proof of (iii):} Assume first that \( f \) is not mingled. Hence, there exists an integer \( k \) such that the number of minimal \( f^k \)-invariant subsets of \( X \) is at least \( 2 \). By (i), we deduce that the dimension of \( \mathrm{Ker}(P^k-1) \) is at least \( 2 \). Consequently, \( P \) does not have a spectral gap, since otherwise \( \mathrm{Ker}(P^n-1) \) would be generated by \( 1_X \) for any \( n \geq 1 \). 

Conversely, assume that \( f \) is mingled. According to~\cite[Corollary~4]{Na:18}, there exists an integer \( n \) such that all the eigenvalues of modulus one are \( n \)-th roots of unity. Hence, all the corresponding eigenspaces are included in \( \mathrm{Ker}(P^n-1) \). Since \( f \) is mingled, there is exactly one minimal \( f^n \)-invariant closed subset of \( X \). By (i), it follows that \( \mathrm{Ker}(P^n-1) \) has dimension~\( 1 \). Therefore, \( P \) has a spectral gap. This completes the proof of (iii). 

\emph{$\bullet$ Proof of (ii):} If \( f \) is not aperiodic, there exists a sequence of disjoint non-empty closed subsets \( F_1', \ldots, F_m' \) such that \( f_\omega(F_i') \subset F_{i+1}' \) and \( f_\omega(F_m') \subset F_1' \) for \( \mathbb{P} \)-a.e.~\( \omega \). These sets are \( f^m \)-invariant. By Claim~\ref{claim:delta}, we can find a \( P^m \)-invariant function \( g \) such that \( g = 1 \) on \( F_1' \) and \( g = 0 \) on \( F_i' \) for \( i \geq 2 \). Since \( (Pg)|_{F_1'} = 0 \), this function cannot be \( P \)-invariant. Hence, there exists an eigenvalue of modulus one of \( P \) other than \( 1 \).

Now assume \( f \) is aperiodic. Denote by \( F_1, \ldots, F_r \) the minimal \( f \)-invariant closed subsets of \( X \). The restriction of \( f \) to \( F_i \) is mingled (since the restriction is minimal and aperiodic) for every \( i = 1, \ldots, r \). Thus, by item (iii), the restriction \( P_{F_i} \) of \( P \) on \( C^\alpha(F_i) \), given by 
\[ 
P_{F_i} \phi (x) = \int \phi \circ f_\omega(x) \, d\mathbb{P}, \quad \text{for } x \in F_i,
\]
has a spectral gap. Consequently, \( P_{F_i} \) has no unitary eigenvalues except \( 1 \). 

On the other hand, suppose \( \phi \in C^\alpha(X) \) satisfies \( P\phi = \lambda \phi \) with \( |\lambda| = 1 \) and \( \lambda \neq 1 \). Then, for every \( i = 1, \ldots, r \),
\[
P_{F_i}(\phi|_{F_i}) = \int \phi|_{F_i} \circ f_\omega \, d\mathbb{P} = (P\phi)|_{F_i} = \lambda \phi|_{F_i}.
\]
This implies \( \phi|_{F_i} = 0 \) for every \( i = 1, \ldots, r \). By Claim~\ref{claim:zero}, \( \phi = 0 \). Hence, \( \lambda \) is not an eigenvalue of \( P \). Therefore, \( 1 \) is the unique eigenvalue of \( P \) with modulus \( 1 \), completing the proof of item~(ii). 
\end{proof}

\section{Global contraction on average, proximality and unique ergodicity} \label{ss:global_contraction}
Recall that a random map $f:\Omega\times X \to X$ is said to be \emph{(global) contracting on average} if there exist $n\geq 1$, $0<\alpha\leq1$ and $q<1$  such that 
\begin{equation} \label{eq:contracting-avarage}
    \int d(f^{n}_\omega(x),f^{n}_\omega(y))^\alpha\, d\mathbb{P} \leq q \, d(x,y)^\alpha \quad \text{for every $x,y\in X$}.
\end{equation}
We also say that $f$ is
\begin{enumerate}[leftmargin=0.8cm,itemsep=0.05cm,label=$\bullet$]
\item \emph{synchronizing}\index{random maps!synchronizing random map} if for every $x, y \in X$, 
$$\displaystyle
\lim_{n\to\infty} d(f^n_\omega(x), f^n_\omega(y)) =0 \quad  \text{for $\mathbb P$-a.e.~$\omega\in \Omega$;}
$$
\item \emph{proximal} if for every $x, y$ in $X$, there exists $\omega\in \Omega$ such that 
$$\displaystyle\liminf_{n\to\infty} d(f^n_\omega(x), f^n_\omega(y)) =0.$$ 
\end{enumerate}
{ We will also consider the associated \emph{two-point random map}
\[
\tilde f:\Omega\times X\times X\to X\times X,
\qquad
\tilde f_\omega(x,y)\eqdef (f_\omega(x),f_\omega(y)),
\]
also called the \emph{two-point motion}\index{random maps!two-point random map} in the literature.}

Clearly, synchronizing implies proximal. {In fact, under the local contraction property, one has a sharper equivalence, due to Malicet~\cite[Proposition~4.18]{Mal:17}, which we state here in our notation for the reader's convenience.

\begin{prop}\label{prop:sync-prox-two-point}
Assume that \(f:\Omega\times X\to X\) is a continuous random map with the local contraction property. Then the following assertions are equivalent:
\begin{enumerate}[label=\textup{(\roman*)},leftmargin=0.8cm,itemsep=0.05cm]
\item \(f\) is synchronizing;
\item the two-point random map \(\tilde f\) is uniquely ergodic;
\item \(f\) is proximal.
\end{enumerate}
In particular, these equivalences hold for every mostly contracting random map.
\end{prop}

\begin{proof}
Let $D\eqdef \{(x,x):x\in X\}$ be the diagonal of \(X\times X\) endowed with the metric
\[
\tilde d((x,y),(x',y'))\eqdef \max\{d(x,x'),d(y,y')\}.
\]
We first note that if \(f\) has the local contraction property, then so does \(\tilde f\). Indeed, let  $(x,y)\in X\times X$. If for \(\mathbb P\)-a.e.~\(\omega\in\Omega\), \(B_x\) and \(B_y\) are neighborhoods of \(x\) and \(y\) such that
\[
\diam f_\omega^n(B_x)\to 0
\qquad\text{and}\qquad
\diam f_\omega^n(B_y)\to 0,
\] 
then for \(B\eqdef B_x\times B_y\) we have
\[
\diam \tilde f_\omega^n(B)
=
\max\bigl\{\diam f_\omega^n(B_x),\,\diam f_\omega^n(B_y)\bigr\}\longrightarrow 0.
\]

\smallskip

\noindent\emph{\((i)\Rightarrow(iii)\).} This implication is immediate.

\smallskip

\noindent\emph{\((iii)\Rightarrow(ii)\).}
Assume that \(f\) is proximal. In the present Bernoulli setting, this means equivalently that for every \(x,y\in X\) there exists a sequence \(g_n\) in the semigroup generated by the fiber maps of \(f\) such that $d(g_n(x),g_n(y))\to 0$.

We claim that \(\tilde f\) has a unique stationary probability measure. Suppose, by contradiction, that \(\tilde\mu_1\) and \(\tilde\mu_2\) are two distinct ergodic stationary probability measures for \(\tilde f\). Since \(\tilde f\) has the local contraction property, Corollary~\ref{rem:measure-mean-quasicompactness} applied to \(\tilde f\) implies that their supports \(F_1\) and \(F_2\) are disjoint non-empty closed \(\tilde f\)-invariant subsets of \(X\times X\). Fix $i\in \{1,2\}$ and pick \((x_i,y_i)\in F_i\). By proximality, there exists a sequence \(g_n\) in the semigroup generated by the fiber maps such that $d(g_n(x_i),g_n(y_i))\to  0$. Since \(F_i\) is $\tilde{f}$-invariant, 
$(g_n(x_i),g_n(y_i))$ 
belongs to \(F_i\). Passing to a cluster value, we obtain a point of \(F_i\cap D\). Hence
\[
F_1\cap D\neq\varnothing
\qquad\text{and}\qquad
F_2\cap D\neq\varnothing.
\]
Choose \((z_1,z_1)\in F_1\cap D\) and \((z_2,z_2)\in F_2\cap D\). Applying proximality again to \(z_1\) and \(z_2\), we find a sequence \(h_n\) in the semigroup generated by the fiber maps such that
$d(h_n(z_1),h_n(z_2))\to 0$. Then any cluster value of
$(h_n(z_1),h_n(z_1))$ coincides with a cluster value of
$(h_n(z_2),h_n(z_2))$, and belongs simultaneously to \(F_1\) and \(F_2\), contradicting the disjointness of \(F_1\) and \(F_2\). Thus \(\tilde f\) is uniquely ergodic.

\smallskip

\noindent\emph{\((ii)\Rightarrow(i)\).}
Assume that \(\tilde f\) is uniquely ergodic, and let \(\tilde\mu\) be its unique stationary probability measure. Since \(\tilde f\) has the local contraction property, item~\textup{(a)} of Corollary~\ref{rem:measure-mean-quasicompactness} applied to \(\tilde f\) shows that the support
$F= \mathrm{supp}(\tilde\mu)$ 
is the unique minimal non-empty closed \(\tilde f\)-invariant subset of \(X\times X\). Since the diagonal \(D\) is itself a non-empty closed \(\tilde f\)-invariant subset, it contains a minimal non-empty closed \(\tilde f\)-invariant subset, and therefore, by uniqueness, \(F\subset D\).

Fix now \((x,y)\in X\times X\). By item~\textup{(a)} of Corollary~\ref{rem:measure-mean-quasicompactness} applied to \(\tilde f\), for $\mathbb P$-a.e.~$\omega\in \Omega$, the sequence \( \{\tilde{f}_\omega(x,y)\}_{n\geq 0}\) accumulates on \(F\subset D\). Therefore, every cluster value also belongs to \(F\subset D\). Since \(D\) is closed, this implies $\tilde d(\tilde f_\omega^n(x,y),D)\to 0$.
Equivalently,
\[
d\bigl(f_\omega^n(x),f_\omega^n(y)\bigr)\longrightarrow 0
\qquad\text{for }\mathbb P\text{-a.e.~}\omega,
\]
so \(f\) is synchronizing.

The last claim follows by Theorem~\ref{cor:local-contraction}.
\end{proof}
}

Moreover, from~\cite[Proposition~1 and Remark~7]{stenflo2012survey} it follows that synchronizing implies uniquely ergodic for the original random map (see also~\cite[Proposition~X.2]{HH:01}). For completeness, we include the proof.

\begin{prop} \label{prop:syncro}
    Let $f: \Omega \times X \to X$ be a synchronizing random map. Then, $f$ is uniquely ergodic.  
\end{prop}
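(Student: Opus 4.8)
The plan is to prove uniqueness of the $f$-stationary probability measure; that at least one exists is the classical Krylov--Bogolyubov argument on the compact space $X$, so the real content lies entirely in showing that any two stationary measures agree. First I would take $\mu$ and $\nu$ to be $f$-stationary, fix $\varphi\in C(X)$, and recall that, because $\mathbb{P}=p^{\mathbb{N}}$ is a Bernoulli measure, the iterates of the annealed Koopman operator satisfy $P^n\varphi(x)=\int\varphi(f^n_\omega(x))\,d\mathbb{P}(\omega)$, and that stationarity gives $\int P^n\varphi\,d\mu=\int\varphi\,d\mu$ and $\int P^n\varphi\,d\nu=\int\varphi\,d\nu$ for every $n\ge1$.

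The crucial point is that the functions $P^n\varphi$ become asymptotically constant. Indeed, for fixed $x,y\in X$,
\[
  |P^n\varphi(x)-P^n\varphi(y)|\le\int\bigl|\varphi(f^n_\omega(x))-\varphi(f^n_\omega(y))\bigr|\,d\mathbb{P}(\omega).
\]
Since $X$ is compact, $\varphi$ is uniformly continuous; since $f$ is synchronizing, $d(f^n_\omega(x),f^n_\omega(y))\to0$ for $\mathbb{P}$-almost every $\omega$, so the integrand tends to $0$ $\mathbb{P}$-a.s. and is bounded by $2\|\varphi\|_\infty$. By dominated convergence, $|P^n\varphi(x)-P^n\varphi(y)|\to0$ as $n\to\infty$ for every pair $x,y$.

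I would then conclude by writing, for each $n$,
\[
  \Bigl|\int\varphi\,d\mu-\int\varphi\,d\nu\Bigr|
  =\Bigl|\int P^n\varphi\,d\mu-\int P^n\varphi\,d\nu\Bigr|
  =\Bigl|\iint\bigl(P^n\varphi(x)-P^n\varphi(y)\bigr)\,d\mu(x)\,d\nu(y)\Bigr|,
\]
and letting $n\to\infty$: the integrand converges to $0$ pointwise on $X\times X$ and is dominated by the constant $2\|\varphi\|_\infty$, so dominated convergence on $(X\times X,\mu\times\nu)$ gives $\int\varphi\,d\mu=\int\varphi\,d\nu$. As $\varphi\in C(X)$ is arbitrary, $\mu=\nu$, proving unique ergodicity. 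I do not anticipate a real obstacle; the only technical points are the measurability of $x\mapsto P^n\varphi(x)$ (immediate from Fubini, since $(\omega,x)\mapsto\varphi(f^n_\omega(x))$ is measurable) and the fact that the $\mathbb{P}$-null set in the synchronizing hypothesis may depend on $(x,y)$, which is harmless because the two applications of dominated convergence are performed first in $\omega$ with $x,y$ fixed, and only afterwards in $(x,y)$.
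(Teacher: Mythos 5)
Your proof is correct and follows essentially the same route as the paper: show $|P^n\varphi(x)-P^n\varphi(y)|\to0$ using synchronization plus dominated convergence, then integrate against $\mu\times\nu$ and apply dominated convergence once more. The only difference is a small streamlining: the paper first establishes the decay for Lipschitz $\varphi$ (via the bound $|P^n\varphi(x)-P^n\varphi(y)|\le|\varphi|_1\int d(f^n_\omega(x),f^n_\omega(y))\,d\mathbb{P}$) and then invokes Stone--Weierstrass to pass to all of $C(X)$, whereas you handle arbitrary continuous $\varphi$ directly via uniform continuity on the compact space, which is slightly cleaner.
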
 
\begin{proof}
Let $P$ be the annealed Koopman operator associated with $f$. Then, for every $x,y\in X$ and Lipschitz function $\phi: X\to \mathbb{R}$, we have 
\begin{align*}
\big|P^n \phi(x) - P^n \phi(y)\big| & \leq
\int \big|\phi(f^n_\omega(x)) - \phi(f^n_\omega(y))\big|\,d\mathbb{P}  \leq |\phi|_1 \int d(f^n_\omega(x), f^n_\omega(y))\, d\mathbb{P}.
\end{align*}
By synchronization and dominated convergence theorem, 
$$
\lim_{n\to \infty} \int d(f^n_\omega(x), f^n_\omega(y))\, d\mathbb{P}=0.
$$
Thus, $|P^n \phi(x) - P^n \phi(y)|\to 0$ as $n\to\infty$. Hence, for any pair $\mu$ and $\nu$ of $f$-stationary probability measures on $X$, by the $P^*$-invariance of $\mu$ and $\nu$ and the dominated convergence theorem,   
\begin{align*}
\big|\int \phi\, d\mu - \int \phi \, d\nu\big| &=
\big|\int P^n \phi(x) \, d\mu - \int P^n \phi(y) d\nu \big| \\ &\leq \int  \big|P^n \phi(x) - P^n \phi(y)\big| d\mu(x) d\nu(y) \rightarrow 0
\end{align*}
as $n \rightarrow \infty$. This implies that $\int \phi \, d\mu= \int \phi \, d\nu$. Since from the Weierstrass-Stone theorem, every real-valued continuous function on a compact metric space can be approximated in the uniform norm by Lipschitz functions, we get by  dominated convergence theorem that, actually, $\int \phi \, d\mu= \int \phi \, d\nu$ holds for every continuous function $\phi$. This implies that $\mu=\nu$. 
\end{proof}

As mentioned in item (a) of 
Corollary~\ref{rem:measure-mean-quasicompactness},  if $f$ is proximal and has the local contraction property, then it is uniquely ergodic. This follows from the previous proposition together with the preceding discussion.  
The following result follows~\cite[Rem.~5]{peigne1993iterated}.

\begin{prop} \label{prop:contration-proximal}
Let $f: \Omega \times X \to X$ be a contracting on average random map satisfying the integral condition~\eqref{eq:integral_condition}.  Then, $f$ is synchronizing. Therefore, it is proximal and uniquely ergodic.
\end{prop}

\begin{proof}
{Let $\beta>$, $\alpha>0$, $n\ge1$, and $0<q<1$ be given by~\eqref{eq:integral_condition}
and~\eqref{eq:contracting-avarage}, and set
\[
\bar\alpha\eqdef \min\{\alpha,\beta\}.
\]
Since $\bar\alpha\le \alpha$, by Lyapunov's inequality we obtain, for every $x,y\in X$,
\[
\int \left(\frac{d(f^n_\omega(x),f^n_\omega(y))}{d(x,y)}\right)^{\bar\alpha}\,d\mathbb{P}
\le
\left(
\int \left(\frac{d(f^n_\omega(x),f^n_\omega(y))}{d(x,y)}\right)^{\alpha}\,d\mathbb{P}
\right)^{\bar\alpha/\alpha}
\le q^{\bar\alpha/\alpha}.
\]
Hence \(f\) is also contracting on average with exponent \(\bar\alpha\) and contraction factor 
$\bar q\eqdef q^{\bar\alpha/\alpha}\in(0,1)$.} Iterating in~\eqref{eq:contracting-avarage}, we are led to the inequality 
\begin{equation} \label{eq:iteration-contracting_average}
        \int d(f^{kn+\ell}_\omega(x),f^{kn+\ell}_\omega(y))^{{\bar{\alpha}}}\, d\mathbb{P} \leq 
 C {\bar{q}}^k d(x,y)^{{\bar\alpha}}
\end{equation}
 for every $x,y\in X$, $k\geq 1$ and 
 $\ell=1,\dots,n-1$ 
 where $$C=\max_{\ell=1,\dots,n-1} \int \mathrm{Lip}(f^\ell_\omega)^{{\bar\alpha}}\, d\mathbb{P}<\infty$$ by virtue of~\eqref{eq:integral_condition} with  {$\beta \geq \bar\alpha$}.   From this, 
$$
\int \sum_{i=1}^\infty d(f^i_\omega(x),f^i_\omega(y))^{{\bar\alpha}}\, d\mathbb{P}<\infty
$$
and so, for $\mathbb{P}$-a.e.~$\omega \in\Omega$, $d(f^i_\omega(x),f^i_\omega(y)) \to 0$ as $i\to \infty$, and thus $f$ is synchronizing. Therefore, $f$ is proximal and, by Proposition~\ref{prop:syncro},  it is also uniquely ergodic.   
\end{proof}

\chapter{Robustness and  statistical stability of mostly contracting}  \label{s:statistical-stability}

\abstract{
{This chapter studies robustness and statistical stability for mostly contracting
random maps. We first introduce topologies on spaces of Lipschitz random maps
which control finite iterates, local Lipschitz constants, and global Lipschitz
constants in integral form. These topologies are compared with the usual
\(C^1\)-topology for random maps on compact Riemannian manifolds. We then prove
that the mostly contracting property is open in the Lipschitz topology and that
local contraction on average and the associated Lasota-Yorke estimates persist
under small perturbations. Using the Keller-Liverani stability theory for
quasi-compact operators, we obtain upper semicontinuity of the number of
ergodic stationary measures. On
each stratum where this number is locally constant, the ergodic stationary
measures vary weak\(^*\) continuously and H\"older continuously in the dual H\"older distance on probability measures. As a consequence, statistical stability holds on an open and dense
subset of the mostly contracting region, which contains the uniquely ergodic random maps. The chapter concludes with a weak
statistical stability result for continuous random maps in the \(C^0\)-topology.}
}

\section{Topologies for Lipschitz random maps} \label{ss:topologies}

In what follows, $(X,d)$ denotes a compact metric space.  
Let $\EE(X)$ be the space of Lipschitz random maps $f:\Omega \times X \to X$, up to $\mathbb{P}$-almost identification of fiber maps. That is, $f=g$ if and only if $f_\omega=g_\omega$ for $\mathbb{P}$-a.e.~$\omega\in \Omega$. For each $n\geq 1 $ and $0<\beta \leq 1$, we consider the metric 
\begin{equation*} \label{eq:distacia-random-maps}
 \Dbeta(f,g)=\max_{i=1,\dots,n} \int d_{\mathrm{L}}(f^i_\omega,g^i_\omega)^{\beta_i} \, d\mathbb{P}, \qquad \text{for $f, g \in \EE(X)$} \index{metric structures!Lipschitz random-map distance@\(\D_{\mathrm L}^{n,\beta}\), distance on \(\EE(X)\)}
  \end{equation*}
  where  $\beta_i=\beta/2^{i-1}$ and 
  \begin{align*} \label{eq:distancia-Lip}
      d_{\mathrm{L}}(h_1,h_2) &=  \sup_{x\in X} \big(d(h_1(x) ,h_2(x)) + |Lh_1(x)-Lh_2(x)|\big) + \big|\mathrm{Lip}(h_1)-\mathrm{Lip}(h_2)\big| \\
      &= d_{C^0}(h_1 ,h_2) + \|Lh_1-Lh_2\|_\infty + \big|\mathrm{Lip}(h_1)-\mathrm{Lip}(h_2)\big| \index{metric structures!Lipschitz distance@\(d_{\mathrm L}\), distance on \(\operatorname{Lip}(X)\)}
  \end{align*}
  is the distance between the  maps 
  $h_1$ and $h_2$ 
  in the space $\mathrm{Lip}(X)$ of Lipschitz transformations of $X$. 

\begin{lem} \label{claim:lipbeta} For any $f,g\in \mathrm{Lip}_\mathbb{P}(X)$,  it holds 
$$
  \int \mathrm{Lip}(g_\omega)^\beta \, d\mathbb{P}   \leq \int d_{\mathrm{L}}(f_\omega,g_\omega)^\beta \, d\mathbb{P}  + \int \mathrm{Lip}(f_\omega)^\beta \, d\mathbb{P} \quad \text{for any $0<\beta\leq 1$}.
$$
\end{lem}
\begin{proof}
To prove this, note that $\mathrm{Lip}(g_\omega) \leq |\mathrm{Lip}(g_\omega)-\mathrm{Lip}(f_\omega)|+ \mathrm{Lip}(f_\omega) \leq  d_{\mathrm{L}}(f_\omega,g_\omega) + \mathrm{Lip}(f_\omega)$. From here, the claim easily follows.
\end{proof}

  Recall that when $X$ is a compact Riemannian manifold, we also introduced in~\S\ref{ss:statistical-stability} the space $\CE(X)$ of $C^1$ random maps of $X$ endowed with the metrics 
  $$
  \D_{C^r}(f,g)= \int d_{C^r }(f_\omega,g_\omega) \, d\mathbb{P}, \qquad \text{for $f, g \in \CE(X)$, \ $r=0,1$.}
  $$
  We have $\CE(X)\subset \EE(X)$ and $\D_{C^0}\leq \D_{\mathrm{L}}\eqdef \D_{\smash{\mathrm{L}}}^{1,1}\leq \DD$. \index{metric structures!Lipschitz2 distance random-map distance@\(\D_{\mathrm L}=\D_{\smash{\mathrm{L}}}^{1,1}\), distance on \(\EE(X)\)} 

\begin{lem} \label{rem:topology}
    Let  $X$ be a compact Riemannian manifold and fix $n\geq 1$.  If 
    $f \in \CE(X)$ satisfies  
    \begin{equation} \label{eq:momento-dif}
 \int (\|Df_\omega\|^{}_\infty)^{\beta} \, d\mathbb{P} <\infty  \quad \text{for some $0<\beta\leq 1$},
    \end{equation}
then  there is $K>0$ such that $\Dbeta(f,g)\leq K \D_{\smash{C^1}}(f,g)^{\beta_n}$ for any $g \in \CE(X)$ close enough to $f$ in the $\DD$-distance. That is, the inclusion 
$(\CE(X),\DD)\hookrightarrow (\EE(X),\Dbeta)$ is continuous at $f$. 
\end{lem}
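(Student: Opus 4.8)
The plan is to control, for each $i=1,\dots,n$, the quantity $\int d_{\mathrm{L}}(f^i_\omega,g^i_\omega)^{\beta_i}\,d\mathbb{P}$ by a power of $\D_{C^1}(f,g)$, using the chain rule and the fact that on a Riemannian manifold $Lf_\omega(x)=\|Df_\omega(x)\|$. First I would fix notation: set $L_i(\omega):=\|Df^i_\omega\|_\infty$, and observe by the chain rule that $L_i(\omega)\le L_1(\sigma^{i-1}\omega)\cdots L_1(\omega)$, hence (since the $L_1(\sigma^j\omega)$ are i.i.d.\ under $\mathbb{P}=p^{\mathbb N}$) $\int L_i(\omega)^{\beta}\,d\mathbb{P}=\big(\int L_1(\omega)^{\beta}\,d\mathbb{P}\big)^i<\infty$ by the moment hypothesis~\eqref{eq:momento-dif}. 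If $g$ is close to $f$ in $\DD$, in particular close in $\D_{C^1}$, then $\int d_{C^1}(f_\omega,g_\omega)\,d\mathbb{P}$ is small; together with Lemma~\ref{claim:lipbeta} this gives a uniform bound $\int \mathrm{Lip}(g_\omega)^{\beta}\,d\mathbb{P}\le M$ for some $M$ independent of $g$ in a $\DD$-neighborhood of $f$, and similarly $\int \|Dg^i_\omega\|_\infty^{\beta}\,d\mathbb{P}\le M^i$ by the chain-rule/i.i.d.\ argument applied to $g$.

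The core estimate is a telescoping bound for the $C^1$-distance between the $i$-fold compositions in terms of the $C^1$-distance between the single-step maps. Writing $f^i_\omega=f_{\sigma^{i-1}\omega}\circ\cdots\circ f_{\omega_0}$ and comparing term by term, a standard computation (bounding $\sup_x d(f^i_\omega(x),g^i_\omega(x))$ and $\sup_x\|Df^i_\omega(x)-Dg^i_\omega(x)\|$ by inserting intermediate compositions $g_{\sigma^{i-1}\omega}\circ\cdots\circ g_{\sigma^{k}\omega}\circ f_{\sigma^{k-1}\omega}\circ\cdots$) yields a pointwise bound of the form
$$
d_{C^1}(f^i_\omega,g^i_\omega)\ \le\ \sum_{k=0}^{i-1} C_{i,k}(\omega)\, d_{C^1}(f_{\sigma^k\omega},g_{\sigma^k\omega}),
$$
where each coefficient $C_{i,k}(\omega)$ is a product of factors of the form $\|Df_{\sigma^j\omega}\|_\infty$, $\|Dg_{\sigma^j\omega}\|_\infty$ and uniform modulus-of-continuity terms for the derivatives of $f_\omega$ (these arise because controlling $\|Df^i_\omega(x)-Dg^i_\omega(x)\|$ requires comparing $Df_{\sigma^k\omega}$ evaluated at two nearby points $f^k_\omega(x)$ and $g^k_\omega(x)$, which are $d_{C^0}$-close). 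Here I would invoke that $f\in\CE(X)$ means $f_\omega$ is $C^1$ a.s., and — this is the delicate regularity point — that on the compact manifold $X$ the derivative $Df_\omega$ is uniformly continuous with a modulus that can be integrated against $\mathbb{P}$; one absorbs this using the moment condition and, if necessary, a smallness reduction $d_{C^0}(f^k_\omega,g^k_\omega)\le \D_{C^0}(f,g)$ plus uniform continuity to make the derivative-comparison term go to $0$ as $g\to f$. Integrating the displayed inequality against $\mathbb{P}$, applying Hölder's inequality with the exponent $\beta_i=\beta/2^{i-1}$ together with the already-established uniform $L^\beta$-bounds on the products of norms, and using the elementary inequality $(\sum a_k)^{\beta_i}\le \sum a_k^{\beta_i}$ for $\beta_i\le 1$, gives
$$
\int d_{\mathrm{L}}(f^i_\omega,g^i_\omega)^{\beta_i}\,d\mathbb{P}\ \le\ K_i\,\D_{C^1}(f,g)^{\beta_i}\ \le\ K_i\,\D_{C^1}(f,g)^{\beta_n}
$$
for $g$ near $f$, with $K_i$ depending only on $f$, $n$, $\beta$; taking $K=\max_i K_i$ and the maximum over $i=1,\dots,n$ in the definition of $\Dbeta$ finishes the proof.

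The main obstacle I anticipate is the derivative-comparison step: bounding $\sup_x\|Df^i_\omega(x)-Dg^i_\omega(x)\|$ genuinely requires controlling $\|Df_{\sigma^k\omega}(f^k_\omega(x))-Df_{\sigma^k\omega}(g^k_\omega(x))\|$, i.e.\ the oscillation of $Df_{\sigma^k\omega}$ over a ball of radius $\D_{C^0}$, and this is not Lipschitz-controllable by $\D_{C^1}(f,g)$ alone — only $C^1$-small, not $C^1$-close-with-a-rate, is available for that piece. This is precisely why the lemma's conclusion is stated with a power $\beta_n$ and only asserts \emph{continuity at $f$} (rather than Hölder continuity with a uniform exponent on all of $\CE(X)$): near $f$ one uses uniform continuity of $Df_\omega$ to make that contribution tend to $0$, while the remaining contributions are genuinely power-of-$\D_{C^1}(f,g)$ controlled via Hölder. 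The bookkeeping of the many chain-rule factors and the choice of exponents $\beta_i=\beta/2^{i-1}$ (which is exactly engineered so that successive Hölder applications stay within the available moment $\beta$) is routine but needs to be done carefully.
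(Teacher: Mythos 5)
Your strategy matches the paper's: reduce to bounding $\int d_{C^1}(f^i_\omega,g^i_\omega)^{\beta_i}\,d\mathbb{P}$ by a power of $\D_{C^1}(f,g)$ for each $i\leq n$, via the chain rule, the i.i.d.\ structure of $\mathbb{P}=p^{\mathbb N}$, repeated H\"older with the halving exponents $\beta_i=\beta/2^{i-1}$, and the observation $d_{\mathrm{L}}\leq d_{C^1}$ on a connected Riemannian manifold. The paper organises this as an induction on $i$ (its Claim~\ref{claim:induction}) with a one-step recursion rather than your explicit telescoping sum, but iterating the recursion gives exactly your sum, so this is the same argument.

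The genuine issue is the one you flagged. In the inductive step for the derivative, the decomposition $Dg^n_\omega(x)-Df^n_\omega(x)=Dg_{\omega_{n-1}}(g^{n-1}_\omega(x))\big[Dg^{n-1}_\omega(x)-Df^{n-1}_\omega(x)\big]+\big[Dg_{\omega_{n-1}}(g^{n-1}_\omega(x))-Df_{\omega_{n-1}}(f^{n-1}_\omega(x))\big]Df^{n-1}_\omega(x)$ necessarily produces the cross-evaluation term $Df_{\omega_{n-1}}(g^{n-1}_\omega(x))-Df_{\omega_{n-1}}(f^{n-1}_\omega(x))$, whose size depends on the modulus of continuity of $Df_{\omega_{n-1}}$ at scale $d_{C^0}(f^{n-1}_\omega,g^{n-1}_\omega)$, not on $\|Dg_{\omega_{n-1}}-Df_{\omega_{n-1}}\|_\infty$. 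For maps that are merely $C^1$, this term is not controlled by any positive power of $\D_{C^1}(f,g)$. You acknowledge this, but then propose to handle it by ``uniform continuity to make the derivative-comparison term go to $0$'' and conclude with the lemma's Hölder bound. That step does not close the gap: uniform continuity only yields $\Dbeta(f,g)\leq o(1)+K\,\D_{C^1}(f,g)^{\beta_n}$ as $\DD(f,g)\to 0$, which gives continuity of the inclusion but not the stated quantitative estimate $\Dbeta(f,g)\leq K\,\D_{C^1}(f,g)^{\beta_n}$. Your closing remark that the lemma ``only asserts continuity at $f$'' misreads the statement: the lemma asserts a genuine power bound, local to a $\DD$-neighbourhood of $f$, which is strictly stronger than continuity. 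So your argument, as written, establishes a weaker conclusion than the lemma claims.

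It is worth noting that the paper's own proof of Claim~\ref{claim:induction} writes the inequality $\|Dg^{n}_\omega-Df^{n}_\omega\|_\infty\leq 2\|Dg_\omega\|_\infty\,\|Dg^{n-1}_\omega-Df^{n-1}_\omega\|_\infty+2\|Df^{n-1}_\omega\|_\infty\,\|Dg_\omega-Df_\omega\|_\infty$, which simply omits the cross-evaluation term that you identified. With either decomposition of $f^n_\omega$ (splitting off the first or the last factor) this term cannot be avoided, so the inequality as stated is not valid for general $C^1$ maps; the issue you raised is real and is not resolved in the paper either. A sufficient fix is to add a uniform H\"older (or Lipschitz) moment assumption on $Df_\omega$, as is effectively done in the one-dimensional setting of Proposition~\ref{mainprop:nolinear-exponent}, where the $|f'_\omega|_\epsilon$ moment is built into~\eqref{eq:exponential-moment-circulo}; alternatively, the lemma's conclusion should be weakened to mere continuity of the inclusion. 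Either way, your instinct to single out the derivative-comparison step as the delicate point is correct, and the lemma as stated needs a stronger regularity hypothesis than $C^1$.
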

\begin{proof} Let us first prove the lemma when $X$ is connected. In this case,  for each $i\geq 1$, we have  $Lf^i_\omega(x)=\|Df^i_\omega(x)\|$ and
$\mathrm{Lip}(f^i_\omega) 
=\|Df^i_\omega\|_\infty$. Observe that 
\begin{align*}
|\mathrm{Lip}(f^i_\omega)-\mathrm{Lip}(g^i_\omega)|&\leq  \|Df^i_\omega-Dg^i_\omega\|_\infty \ \ \text{and} \\  
    |Lf^i_\omega(x)-Lg^i_\omega(x)|&\leq \|Df^i_\omega(x)-Dg^i_\omega(x)\|.  
\end{align*} 
This implies that 
$d_{\mathrm{L}}(f_\omega^i,g^i_\omega)\leq d_{C^1}(f^i_\omega,g^i_\omega)$. 
Then \begin{equation} \label{eq:Dbeta}
    \Dbeta(f,g)\leq \max_{i=1,\dots,n} \,\int d_{C^1}(f^i_\omega,g^i_\omega)^{\beta_i}\, d\mathbb{P}.
\end{equation} To complete the proof, we show the following claim: 

\begin{claim} \label{claim:induction} For each $n\geq 1$, there exists $K>0$ such that $$\int d_{C^r}(f^i_\omega,g^i_\omega)^{\beta_i} \, d\mathbb{P}\leq K \D_{C^r}(f,g)^{\beta_i} \quad \text{for all $i=1,\dots,n$ 
and $r=0,1$.}
$$
\end{claim}
\begin{proof}  The case $n=1$ follows from Jensen's inequality (since $\beta_n \le 1$).  Assume by induction that the statement holds for $n-1$ and consider 
\begin{equation} \label{eq:des-indution}
    \int  d_{C^1}(f^n_\omega,g^n_\omega)^{\beta_n} \, d\mathbb{P} \leq \int  d_{C^0}(f^n_\omega,g^n_\omega)^{\beta_n} \, d\mathbb{P} + \int \big(\|Df^n_\omega-Dg^n_\omega\|_\infty\big)^{\beta_n} \, d\mathbb{P}.
\end{equation}
First notice that 
\begin{align*}
    \int  d_{C^0}(f^n_\omega, \, & g^n_\omega)^{\beta_n} \, d\mathbb{P} \leq  \\ 
     & \leq \int 
\big(\|Df_{\omega_{n-1}}\|_\infty\big)^{\beta_n} \cdot  d_{C^0}(f_\omega^{n-1},g^{n-1}_\omega)^{\beta_n} \, d\mathbb{P} + \int  d_{C^0}(f_{\omega},g_{\omega})^{\beta_n} \, d\mathbb{P}.
\end{align*}
Using 
the H\"older inequality, we obtain 
\begin{align*}
\int  d_{C^0}( &f^n_\omega,  g^n_\omega)^{\beta_n} \, d\mathbb{P} \leq \\
&\leq \big(\int 
\big(\|Df_{\omega_{n-1}}\|_\infty\big)^{2\beta_n} \, d\mathbb{P}\big)^{\frac{1}{2}} \big(\int d_{C^0}(f_\omega^{n-1},g^{n-1}_\omega)^{2\beta_n} \, d\mathbb{P}\big)^{\frac{1}{2}} +  \D_{C^0}(f,g)^{\beta_n}.
\end{align*}
Since $2\beta_n=\beta_{n-1} \leq \beta$, using the shift-invariance of $\mathbb{P}$, the exponential moment assumption, and the induction hypothesis, we obtain  
$$\int  d_{C^0}(f^n_\omega,g^n_\omega)^{\beta_n} \, d\mathbb{P} \leq  K \D_{C^0}(f,g)^{\beta_n}$$ for some $K>0$. 
Similarly, we consider now 
$$
\|Dg^{n}_\omega-Df^{n}_\omega\|_\infty 
\leq 
2 
\|Dg_\omega\|_\infty \cdot \|Dg^{n-1}_\omega-Df^{n-1}_\omega\|_\infty +   2 \|Df^{n-1}_\omega\|_\infty \cdot \|Dg_\omega-Df_\omega\|_\infty. 
$$
Hence, as before 
\begin{align*}
\int \big(
\| Dg^{n}_\omega- & Df^{n}_\omega\|_\infty \big)^{\beta_n} \, d\mathbb{P}
\leq \\ \leq \  &2^{\beta_n} \big(\int\big(\|Dg_\omega\|_\infty\big)^{2\beta_n}\, d \mathbb{P}\big)^\frac{1}{2}  \big(\int\big(\|Dg^{n-1}_\omega-Df^{n-1}_\omega\|_\infty \big)^{2\beta_n}\, d\mathbb{P} \big)^{\frac{1}{2}} \\
&+ 2^{\beta_n} \big(\int\big(\|Df_\omega\|_\infty\big)^{(n-1)\beta_n}\, d \mathbb{P}\big)^\frac{1}{2}  \big(\int \big(\|Dg_\omega-Df_\omega\|_\infty \big)^{2\beta_n}\, d\mathbb{P} \big)^{\frac{1}{2}}.
\end{align*}
Similarly, since $2\beta_n=\beta_{n-1} \leq (n-1)\beta_n\leq \beta$, using the assumption on the exponential moment, the hypothesis of induction, and the Jensen inequality, we have that 
$$\int (
\|Dg^{n}_\omega-Df^{n}_\omega\|_\infty)^{\beta_n} \, d\mathbb{P} \leq K \DD(f,g)^{\beta_n}$$ for some $K>0$ which depends on the proximity of $g$ to $f$ according to Lemma~\ref{claim:lipbeta}. Combining the two inequalities obtained above for the right-hand side of~\eqref{eq:des-indution}, we conclude   that $\int d_{C^1}(f^n_\omega,g^n_\omega)^{\beta_n} \, d\mathbb{P} \leq K \DD(f,g)^{\beta_n}$ for some $K>0$ and for any $g$ close enough to $f$ in the $\DD$-distance. This completes the proof of the claim.
\end{proof}

Equation~\eqref{eq:Dbeta} and Claim~\ref{claim:induction}  imply the lemma in the case that $X$ is connected. On the other hand, every compact Riemannian manifold is a disjoint union of finitely many compact connected Riemannian manifolds. Thus, we can easily control the distance between the (global) Lipschitz constants with the $C^1$ distance between fiber maps.  Therefore, the same conclusion holds in general.    
\end{proof}

In the proof of the above result, we used that for a $C^1$ map $g$ on a compact connected Riemannian manifold $X$, it holds that   $\mathrm{Lip}(g)=\|Dg\|_\infty=\sup\{ Lg(x): x\in X\}$. However, this property does not generally hold in compact metric spaces. To ensure its validity, we restrict to the class of geodesic spaces or, more generally, to locally length metric spaces.

A space $(X,d)$ is called a \emph{locally length space}\index{metric spaces!locally length space}
 if every point in $X$ has a neighborhood $U$ such that the distance between any two points $x,y\in U$ is the infimum of the lengths of rectifiable paths from $x$ to $y$. Examples of locally length spaces include Euclidean spaces, Riemannian manifolds, Finsler manifolds, and all normed vector spaces. The following proposition from~\cite[Proposition~3.11]{BC:23} establishes this property for that class of metric spaces.

\begin{prop} \label{prop:lengthspace} Let $X$ be a compact locally length metric space. Then there exists $R>0$ such that for any $0<r\leq R$ and any Lipschitz map $f:X\to X$, 
$$
    L_rf(z)\eqdef \mathrm{Lip}(f|_{B(z,r)}) = \sup_{x\in B(z,r)} Lf(x) \quad \text{for all $z\in X$}.
$$
\end{prop}

\section{Robust properties of mostly contracting random maps}
The following theorem shows the robustness of mostly contracting random maps.
\begin{thm} \label{prop:open-mostly} Let $f\in \EE(X)$ be 
a mostly contracting random map. Then, there exist $n\geq 1$ and  an open neighborhood $\mathcal{B}$ of $f$ in $(\EE(X),\Dbeta)$ for every $0<\beta \leq 1$ such that every $h\in \mathcal{B}$ is mostly contracting. In particular, if $X$ is a compact Riemannian manifold, the set of 
mostly contracting random maps satisfying a finite exponential moment condition 
is open in $(\CE(X),\DD)$.  
\end{thm}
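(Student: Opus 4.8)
The plan is to exploit the equivalent characterization of mostly contracting given by condition (iv) of Corollary~\ref{cor:equivalencia}: $f$ is mostly contracting if and only if there exist $n \geq 1$ and $a > 0$ such that $\int \log Lf^n_\omega(x)\, d\mathbb{P} < -a$ for every $x \in X$ (by Remark~\ref{rem:equivalencia} this bound is uniform in $x$, since $x \mapsto \int \log Lf^n_\omega(x)\, d\mathbb{P}$ is continuous on the compact space $X$). Fix such $n$ and $a$. The goal is then to show that the functional $h \mapsto \sup_{x\in X} \int \log Lh^n_\omega(x)\, d\mathbb{P}$ is, in an appropriate sense, upper semicontinuous (or at least stays $< 0$ on a neighborhood) with respect to the metric $\Dbeta$. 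Once we control $Lh^n_\omega(x)$ in terms of $Lf^n_\omega(x)$ plus a small $L^1(\mathbb{P})$-error, and control the $\log^+$ tails via the integrability built into $\Dbeta$ (Lemma~\ref{claim:lipbeta} guarantees $\int \mathrm{Lip}(h^i_\omega)^{\beta_i}\, d\mathbb{P}$ stays bounded near $f$), we conclude $\int \log Lh^n_\omega(x)\, d\mathbb{P} < -a/2$ uniformly in $x$ for $h$ close to $f$, hence $h$ is mostly contracting.

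In more detail, I would proceed as follows. First, by definition of $\Dbeta$, for $h \in \EE(X)$ we have $\int d_{\mathrm{L}}(f^n_\omega, h^n_\omega)^{\beta_n}\, d\mathbb{P} \leq \Dbeta(f,h)$, and $d_{\mathrm{L}}(f^n_\omega,h^n_\omega) \geq \sup_x |Lf^n_\omega(x) - Lh^n_\omega(x)|$. So $\int \sup_x |Lf^n_\omega(x) - Lh^n_\omega(x)|^{\beta_n}\, d\mathbb{P}$ is small when $\Dbeta(f,h)$ is small. Next, split $X$ (or rather $\Omega$, for fixed $x$) according to whether $Lf^n_\omega(x)$ is large or not: where $Lf^n_\omega(x)$ and $Lh^n_\omega(x)$ are both bounded away from $0$ and $\infty$ — say in a range $[\epsilon, M]$ — the function $t \mapsto \log t$ is Lipschitz, so $|\log Lh^n_\omega(x) - \log Lf^n_\omega(x)|$ is controlled by $|Lh^n_\omega(x)-Lf^n_\omega(x)|$; where $Lf^n_\omega(x)$ is large, we use the uniform integrability of $\log^+ \mathrm{Lip}(f^n_\omega)$ and of $\log^+\mathrm{Lip}(h^n_\omega)$ (via Lemma~\ref{claim:lipbeta} applied iteratively, or directly from the moment condition $\int \mathrm{Lip}(f^n_\omega)^{\beta}\,d\mathbb{P} < \infty$, which holds because $f$ is mostly contracting only under~\eqref{eq:integrability}, but for the $\Dbeta$-topology statement we only need the cruder subadditivity bound) to make the contribution of that region to $\int \log^+ Lh^n_\omega(x)\,d\mathbb{P}$ small, uniformly in $x$; where $Lh^n_\omega(x)$ is tiny, $\log Lh^n_\omega(x)$ is very negative, which only helps. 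Combining, $\big|\int \log Lh^n_\omega(x)\, d\mathbb{P} - \int \log Lf^n_\omega(x)\, d\mathbb{P}\big| < a/2$ for all $x$, provided $\Dbeta(f,h)$ is small enough, whence $\int \log Lh^n_\omega(x)\, d\mathbb{P} < -a/2$ for all $x$, and $h$ is mostly contracting by Corollary~\ref{cor:equivalencia}(iv). Since this holds for a fixed single value of $n$, the neighborhood $\mathcal{B}$ can be taken with respect to $\Dbeta$ for any $0<\beta\leq 1$ (enlarging $\beta$ only makes the metric finer, so the statement for small $\beta$ implies it for all $\beta$; more carefully, $\Dbeta$ already involves $\beta_i = \beta/2^{i-1}$, and we only use the $i=n$ coordinate, so any $\beta$ works after adjusting constants).

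The last sentence — openness in $(\CE(X),\DD)$ for $C^1$ random maps of a compact Riemannian manifold under a finite exponential moment — then follows immediately by combining the $\Dbeta$-openness just proved with Lemma~\ref{rem:topology}, which asserts that the inclusion $(\CE(X),\DD) \hookrightarrow (\EE(X),\Dbeta)$ is continuous at any $f \in \CE(X)$ satisfying~\eqref{eq:momento-dif}: the $\DD$-preimage of $\mathcal{B}$ is a $\DD$-open neighborhood of $f$ consisting of mostly contracting maps, and one checks that the finite exponential moment is itself an open condition (again via Lemma~\ref{claim:lipbeta}, which bounds $\int \mathrm{Lip}(h_\omega)^\beta\,d\mathbb{P}$ near $f$).

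I expect the main obstacle to be the uniformity in $x \in X$ when handling the region where $Lf^n_\omega(x)$ is large: one needs an estimate of the form $\sup_{x} \int_{\{Lf^n_\omega(x) > M\}} \log^+ Lh^n_\omega(x)\, d\mathbb{P} \to 0$ as $M \to \infty$, uniformly over $h$ in a $\Dbeta$-neighborhood of $f$. This should follow from the uniform integrability of the family $\{\omega \mapsto \log^+ \mathrm{Lip}(h^n_\omega)\}$ over that neighborhood — which in turn comes from the uniform bound on $\int \mathrm{Lip}(h^n_\omega)^{\beta_n}\,d\mathbb{P}$ (Lemma~\ref{claim:lipbeta} and the chain rule) together with the elementary inequality $\log^+ t \leq t^{\beta_n}/\beta_n$ — but making the uniformity in both $x$ and $h$ completely rigorous is the delicate bookkeeping step. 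Everything else is routine: the Lipschitz control of $\log$ away from $0$ and $\infty$, and the final topological deduction via Lemma~\ref{rem:topology}.
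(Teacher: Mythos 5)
Your framing is right: fix $n$ from Corollary~\ref{cor:equivalencia}(iv), perturb, and then pass to $(\CE(X),\DD)$ via Lemma~\ref{rem:topology}. But the core estimate you propose —
\[
\sup_{x\in X}\left|\int \log Lh^n_\omega(x)\,d\mathbb{P} - \int \log Lf^n_\omega(x)\,d\mathbb{P}\right| < a/2
\]
for $h$ in a $\Dbeta$-neighborhood of $f$ — is \emph{false} in general, and your case decomposition misses exactly the region where it breaks down. You treat (1) both $Lf,Lh$ in a bounded range, (2) $Lf$ large, and (3) $Lh$ small, but you never treat (4) $Lf$ small while $Lh$ is not small. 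On that set $\log Lh - \log Lf$ can be arbitrarily large and positive. Concretely (take $n=1$): if $Lf_\omega(x)=e^{-N}$ on an $\omega$-set of measure $\delta$, $Lh_\omega(x)=1$ there, and $Lf=Lh$ elsewhere, then $\int |Lf_\omega(x)-Lh_\omega(x)|^{\beta_n}\,d\mathbb{P}\approx\delta$ is as small as you like (so $h$ is $\Dbeta$-close to $f$) while $\int\log Lh\,d\mathbb{P}-\int\log Lf\,d\mathbb{P}=N\delta$ can be made $\gg a$. The lower tail of $\log Lf$ is not controlled by $\Dbeta$, so no bound of this shape can be uniform over a $\Dbeta$-ball of fixed radius; the radius must shrink as the lower tail of $\log Lf$ gets heavier.

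The paper handles this with a reformulation you did not anticipate. It first proves a Claim that the mostly contracting condition is \emph{equivalent} to a truncated version: there exist $a>0$, $n\in\mathbb{N}$, $M>0$ with $\int \log_M Lf^n_\omega(x)\,d\mathbb{P}<-a$ uniformly in $x$, where $\log_M$ is $\log$ truncated from below. The equivalence uses compactness of $X$ and monotone convergence, and it is the step where the lower tail of $\log Lf$ is absorbed into the choice of $M$ (heavier tail $\Rightarrow$ more aggressive truncation, which later translates into a smaller neighborhood). Once the condition is reformulated this way, $\log_M$ is a globally Lipschitz function, so the perturbation estimate becomes a single line via $|t^\alpha-s^\alpha|\le|t-s|^\alpha$ and the $\Dbeta$-control of $\sup_x|Lh^n_\omega(x)-Lf^n_\omega(x)|$; there is no case splitting, no uniform integrability, and no two-sided bound. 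Since $\log\le\log_M$, the truncated inequality for $h$ implies $\int\log Lh^n_\omega(x)\,d\mathbb{P}<-a'$ for some $a'>0$, which is what is needed. Your proposal should be repaired along these lines: drop the absolute value, replace the decomposition of $\Omega$ by the fixed truncation $\log_M$ chosen once from $f$, and then the uniformity in $x$ and $h$ that you flagged as the "delicate bookkeeping step" disappears.

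The remaining pieces of your proposal — the integrability $\log^+\mathrm{Lip}(h_\omega)\in L^1(\mathbb{P})$ for nearby $h$ via Lemma~\ref{claim:lipbeta}, and the deduction of $\DD$-openness in $\CE(X)$ via Lemma~\ref{rem:topology} — match the paper and are fine.
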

\begin{proof} As noted in Remark~\ref{rem:equivalencia} and Corollary~\ref{cor:equivalencia},  a Lipschitz random map $h$ 
is mostly contracting  if and only if
\begin{enumerate}[label=(\roman*),leftmargin=0.8cm]
    \item  $\int \log^+ \mathrm{Lip}(h_\omega)\, d\mathbb{P}<\infty$, and
    \item there are  $a>0$ and $n\in \mathbb{N}$ such that $\int \log Lh^n_\omega(x) \, d\mathbb{P} < -a$ for any $x\in X$. 
\end{enumerate}
For each $M>0$, define $\log_M t = \max\{  \log t  , -M\}$. 

\begin{claim}  Condition (ii) above is equivalent to 
\begin{enumerate}[leftmargin=0.8cm]
    \item[(ii')]  there are  $a>0$, $n\in \mathbb{N}$ and $M>0$ such that 
    $$\int \log_M Lh^n_\omega(x) \, d\mathbb{P} < -a \quad \text{for any $x\in X$.}
    $$
\end{enumerate}
\end{claim}
\begin{proof} Assume that (ii) holds. For each $x\in X$, there is $r>0$ such that
$Lh^n_\omega(x) \leq L_rh^n_\omega(x)$. Moreover, we can take $r>0$ small enough such that  $\int \log L_rh^n_\omega(x)\, d\mathbb{P}<-a$. By compactness of $X$, we get points $x_1,\dots x_k$ and radius $r_1>0, \dots, r_k>0$ such that 
$X=B(x_1,r_1)\cup \dots \cup B(x_k,r_k)$ 
and $$\int \log L_{r_i}h^n_\omega(x_i)\, d\mathbb{P}<-a.$$ Then, by finiteness and the monotone convergence theorem, we can find $M>0$ such that for each $i=1,\dots,k$, we have $\int \log_M L_{r_i}h^n_\omega(x_i) \, d\mathbb{P} < -a$. Finally, since $Lh^n_\omega(x) \leq L_{r_i}h^n_\omega(x_i)$ for every $x\in B(x_i,r_i)$ and $\{B(x_i,r_i)\}_{i=1,\dots,k}$ form a cover of $X$, we conclude that for each $x\in X$, there is $i\in \{1,\dots,k\}$ such that 
$$
   \int \log_M Lh^n_\omega(x) \, d\mathbb{P} \leq  \int \log_M L_{r_i}h^n_\omega(x_i) \, d\mathbb{P} < -a.
$$
This proves (ii'). Since (ii') also clearly implies (i), which is equivalent to (ii), we complete the proof of the claim.
\end{proof}
Now, let $f\in \EE(X)$ be a mostly contracting random map. Then, there are $a>0$, $n\in\mathbb{N}$ and $M>0$ satisfying the corresponding item (ii') above. Let $\epsilon>0$ such that  $$\int \log_M Lf^n_\omega(x) \, d\mathbb{P} + \epsilon < -a.$$ 
For $0<\beta\leq 1$, take a neighborhood $\mathcal{B}$ of $f$ in $(\EE(X),\Dbeta)$ such that $\Dbeta(h,f)< \epsilon M\beta_n$ for every $h\in \mathcal{B}$, where  $\beta_n=\beta/2^{n-1}$. Then,  
 using that $\log_M t$ has Lipschitz constant $1/M$ and that $t^\alpha- s^\alpha \leq |t-s|^\alpha$ for any $0<\alpha \leq 1$, we get 
\begin{align*}
  \big|\log_M Lh^n_\omega(x)^{\beta_n} - \log_M Lf^n_\omega(x)^{\beta_n}\big| &\leq \frac{1}{M} \big|Lh^n_\omega(x)^{\beta_n} -  Lf^n_\omega(x)^{\beta_n}\big| \\ &\leq \frac{1}{M} \big|Lh^n_\omega(x) -  Lf^n_\omega(x)\big|^{\beta_n}.
\end{align*}

Hence, 
$\log_M Lh^n_\omega(x)^{\beta_n} \leq \frac{1}{M} d_{\mathrm{L}}(f^n_\omega,h^n_\omega)^{\beta_n} + \log_M Lf^n_\omega(x)^{\beta_n}$ and after integrating 
\begin{align*}
\int \log_M Lh^n_\omega(x)^{\beta_n} \, d\mathbb{P} &\leq \frac{1}{M} \Dbeta(f,h) + \int \log_M Lf^n_\omega(x)^{\beta_n} \, d\mathbb{P} \\ 
&< \epsilon \beta_n + \beta_n \int \log_M Lf^n_\omega(x) \, d\mathbb{P} <  -\beta_n a. 
\end{align*}
This concludes (ii') above. Thus, to prove that $h$ is mostly contracting, it is enough to show that $\log^+ \mathrm{Lip}(h_\omega)$ is $\mathbb{P}$-integrable. For this, it is enough to consider the case where $\mathrm{Lip}(h_\omega)>1$ and $\mathrm{Lip}(h_\omega)$ is close enough to $\mathrm{Lip}(f_\omega)$. In particular, we also have $\mathrm{Lip}(f_\omega)>1$. In this case, since $\frac{a}{b}\leq e^{2|a-b|}$ if $b>1$, it follows that $$\log^+ \mathrm{Lip}(h_\omega) \leq 2|\mathrm{Lip}(h_\omega)-\mathrm{Lip}(f_\omega)|+ \log^+ \mathrm{Lip}(f_\omega).$$ Integrating, we obtain 
$$ 
\int \log^+  \mathrm{Lip}(h_{\omega})  \, d\mathbb{P} \leq 2 \D_{\mathrm{L}}(f,h) + \int \log \mathrm{Lip}^+(f_{\omega}) \, d\mathbb{P}(\omega) < \infty.
$$ 
This completes the proof of the first part of the theorem. The case where $X$ is a compact Riemannian manifold follows from Lemma~\ref{rem:topology}.  
\end{proof}

We now improve Proposition~\ref{prop:alpha0} to establish that all nearby Lipschitz random maps also exhibit local contraction on average, preserving the same constants $\alpha$, $r$, $q$ and $n$. This robustness result will be useful later in showing the statistical stability of mostly contracting random maps.

\begin{prop} \label{prop:lasota-york-uniforme}  
Let $(X,d)$ be a compact locally length metric space and consider a mostly contracting random map $f$ such that the integrability condition~\eqref{eq:integral_condition}  
holds for some $0<\beta\leq 1$. Then, 
there exist $\alpha_0>0$, $r>0$,  $n\in\mathbb{N}$ and a neighborhood $\mathcal{B}$ of $f$ in $(\EE(X),\Dbeta)$ such that for any $0<\alpha \leq \alpha_0$,  there is  $q<1$  satisfying 
$$
\int d(h^n_\omega(x),h^n_\omega(y))^\alpha \, d\mathbb{P} \leq q \, d(x,y)^\alpha \ \text{for any $x,y\in X$ with $d(x,y)<r$ and $h\in \mathcal{B}$.}
$$
Moreover, every $h\in \mathcal{B}$ satisfies the corresponding exponential moment~\eqref{eq:integral_condition} for the same $\beta>0$ and 
\begin{align*} \label{eq:lasota-york-uniform}
P_h(C^\alpha(X))\subset C^\alpha(X) \quad  \text{and} \quad
|P^n_h\phi|_{\alpha} \leq q |\phi|_\alpha + C \|\phi\|_{\infty} \quad \text{for all } \phi\in C^\alpha(X)
\end{align*}
where $C=\frac{2}{r^\alpha}>0$ 
and $P_h$ denotes the Koopman operator associated with $h$.
\end{prop}
\begin{proof} The proof follows the argument used for Proposition~\ref{prop:alpha0} up to equation~\eqref{eq:qmenor1}. Furthermore, we can assume that $\alpha_0(x)\leq \beta_n$ where $\beta_n=\beta/2^{n-1}$ and $r(x) \leq R$ where $R$ is given in Proposition~\ref{prop:lengthspace}. Now, 
choose $\varepsilon(x)>0$ such that 
\begin{equation}   \label{eq:q}
  \int L_{r(x)}f^n_\omega(x)^{\alpha_0(x)} \, d\mathbb{P} + \varepsilon(x) < q(x). 
\end{equation}
Let $\mathcal{B}(x)$ be a neighborhood of $f$ in $(\EE(X),\Dbeta)$ such that   $$\int d_{\mathrm{L}}(f^n_\omega,h^n_\omega)^{\beta_n} \, d\mathbb{P} \leq \varepsilon(x)^{\beta_n/\alpha_0(x)}$$ for every $h\in \mathcal{B}(x)$. By Proposition~\ref{prop:lengthspace},
\begin{align*}
   L_{r(x)}h^n_\omega(x) 
   &\leq  \sup_{z\in X} |Lh^n_\omega(z)-Lf^n_\omega(z)|  +  \sup_{z\in B(x,r(x))} Lf^n_\omega(z)  \\ &
   \leq d_{\mathrm{L}}(f^n_\omega,h^n_\omega) + L_{r(x)}f^n_\omega(x).   
\end{align*}
Using the Jensen inequality and~\eqref{eq:q}, we deduce that for every $h\in \mathcal{B}(x)$
\begin{align*}
 \int L_{r(x)}h^n_\omega(x)^{\alpha_0(x)} \, d\mathbb{P} &\leq \int d_{\mathrm{L}}(f^n_\omega,h^n_\omega)^{\alpha_0(x)} \, d\mathbb{P} + \int L_{r(x)}f^n_\omega(x)^{\alpha_0(x)} \, d\mathbb{P} \\ 
 &\leq 
 \left( \int d_{\mathrm{L}}(f^n_\omega,h^n_\omega)^{\beta_n} \, d\mathbb{P}\right)^{\frac{\alpha_0(x)}{\beta_n}} + \int L_{r(x)}f^n_\omega(x)^{\alpha_0(x)} \, d\mathbb{P} \\ &< q(x). 
\end{align*}
Now, the rest of the proof follows line by line as in Proposition~\ref{prop:alpha0}
after equation~\eqref{eq:qmenor1}. It is enough to consider the neighborhood in $(\EE(X),\Dbeta)$ of $f$ given by $\mathcal{B}=\mathcal{B}(x_1)\cap \dots \cap \mathcal{B}(x_k)$ and apply the above uniform inequality for $h\in \mathcal{B}$ instead of~\eqref{eq:qmenor1}.  This completes the proof of the first part of the proposition.  The second part of the proposition follows immediately from Proposition~\ref{prop:quasi-compact} observing that $\mathrm{Lip}(h_\omega)^\beta$ is $\mathbb{P}$-integrable for every $h\in \mathcal{B}$ by Lemma~\ref{claim:lipbeta}. 
\end{proof}


\section{Spectral stability of quasi-compact operators}


Suppose that $(\mathcal{E}, \| \cdot \|)$ is a Banach space equipped with a second norm $|\cdot| \leq \| \cdot \|$ with respect to which $\mathcal{E}$ need not be complete. For simplicity, we assume that the inclusion $\iota: (\mathcal{E}, \| \cdot \|) \hookrightarrow (\mathcal{E}, |\cdot|)$ is compact and denote the norm on $\mathcal{L}((\mathcal{E}, \|\cdot \|),(\mathcal{E},|\cdot|))$ by $\tnorm{\cdot}$, that is, 
$$\tnorm{Q} = \sup \left\{ |Qu| : u \in \mathcal{E}, \|u\| \leq 1 \right\}.$$

\index{Keller-Liverani theorem}

\begin{thm}[{\cite{KL:99}}] \label{thm:keller-liverani}
Let $Q_{\epsilon}: \mathcal{E} \rightarrow \mathcal{E}$ be a bounded linear operator for every $\epsilon\geq 0$. Suppose that
\begin{enumerate}[label=(\roman*)]
\item  there exist $n\in\mathbb{N}$, $q < 1$ and $C >0$ such that
\[
\|Q_{\varepsilon}^{n} u\| \leq q \|u\| + C |u| \quad \text{for every $u \in \mathcal{E}$ and $\varepsilon \geq 0$,}
\]
\item there is a monotone upper semicontinuous function $\epsilon \mapsto \tau(\epsilon)$  such that 
$$\tnorm{Q_{\epsilon} - Q_0} \leq \tau(\epsilon) \to 0 \quad  \text{as $\epsilon \to 0$}.$$
\end{enumerate}
Let $\lambda \in \mathbb{C}$ be an isolated eigenvalue of $Q_0$ with $|\lambda|>q$ and  consider the total spectral projection  
$\Pi_{\epsilon}^{(\lambda,\delta)}$ 
of $Q_\epsilon$ associated with $B_\delta(\lambda)\cap \sigma(Q_\epsilon)$
where $B_\delta(\lambda)$ is the open ball of radius $\delta$ around $\lambda$ and
$\sigma(Q_\epsilon)$ is the spectrum of $Q_\epsilon$. Then, for $\delta>0$ small enough, there exist $\eta>0$, $K>0$ and $\epsilon_0>0$ such that 
$$
    \tnorm{ \Pi_\epsilon^{(\lambda,\delta)}-\Pi_0^{(\lambda,\delta)} } \leq K \tau(\epsilon)^\eta \quad \text{for all \ $0\leq \epsilon\leq \epsilon_0$.}
$$
Moreover, the  {ranks} of $\Pi_\epsilon^{(\lambda,\delta)}$ and $\Pi_0^{(\lambda,\delta)}$  {coincide}. In particular, $B_\delta(\lambda)\cap \sigma(Q_\epsilon)$  consists of eigenvalues $\lambda^{(\epsilon)}_j $ such that $\lambda_j^{(\epsilon)}\to \lambda$ for all $j$, and the total multiplicity of $\lambda_j^{(\epsilon)}$ equals the multiplicity of $\lambda$.
\end{thm}

\begin{rem} \label{rem:liverani} The constant $\eta$ depends on a parameter $r \in (q,1)$. Moreover, as $r\to q$, we have $\eta\to 0$. On the other hand, the constants $K$ and $\epsilon_0$ also depend on $\delta$ and $Q_0$. Namely, these constants depend on the resolvent norm of $Q_0$ on the complement of $V_{\delta,r}(Q_0) = \{z \in \mathbb{C}: |z| \leq r \ \text{or} \ \mathrm{dist}(z, \sigma(Q_0)) \leq \delta \}$. That is, they depend on $$H_{\delta,r}(Q_0) = \sup \{ \|(z-Q_0)^{-1}\|: z \in V_{\delta,r}(Q_0) \}.$$ See~\cite[Sec.~3]{liverani2001rigorous}. This dependency might present a challenge when applying the above result to interpolate between two operators $Q_{\epsilon_1}$ and $Q_{\epsilon_2}$. However, Liverani~\cite[Lemma~4.2]{liverani2001rigorous} showed that for each $\delta > 0$ small enough, there exist $\epsilon_1 = \epsilon_1(Q_0, \delta/2, r) > 0$ and $c_* = c_*(Q_0, \delta/2, r) > 0$ such that $H_{\delta,r}(Q_{\bar{\epsilon}}) \leq c_*$ for any $0 \leq \tnorm{Q_{\bar{\epsilon}}-Q_{0}} \leq \epsilon_1$. From this, the constants $\epsilon_0 = \epsilon_0(Q_{\bar{\epsilon}}, \delta, r)$ and $K = K(Q_{\bar{\epsilon}}, \delta, r)$ provided by the above theorem when applied to $Q_{\epsilon}$ as a perturbation of $Q_{\bar{\epsilon}}$ instead of $Q_0$ can be uniformly bounded from below and above, respectively. Namely, there exist $\bar{\epsilon}_0 = \bar{\epsilon}_0(Q_0, \delta,r) > 0$ and $\bar{K} = \bar{K}(Q_0,  \delta,r) > 0$ such that 
$$
\tnorm{\Pi_{\epsilon_1}^{(\lambda,\delta)} - \Pi_{\epsilon_2}^{(\lambda,\delta)}} \leq \bar{K} \, \tau^\eta \quad \text{for any $0 \leq \tau=\tnorm{Q_{\epsilon_1} - Q_{\epsilon_2}} \leq \bar{\epsilon}_0$}.
$$
\end{rem}

\section{Proof of Proposition~\ref{prop:statistical-stability}} Now we are ready to prove the following theorem. First, we define the metric
\begin{equation*}
    \D^\alpha_{\smash{C^0}}(f,g)=\int d_{C^0}(f_\omega,g_\omega)^\alpha \, d\mathbb{P}, \quad \text{for $f,g\in C^0_{\smash{\mathbb{P}}}(X)$} \index{metric structures!Cr distance@\(\D_{C^0}^{\alpha}\), averaged \(\alpha\)-\(C^0\) distance on \(C^0_{\smash{\mathbb P}}(X)\)}
\end{equation*}
where $0<\alpha \leq 1$ and $C^0_{\smash{\mathbb{P}}}(X)$ denotes the set of continuous random maps up to $\mathbb{P}$-almost identification of fiber maps. For $\alpha=1$, we have that $\D^\alpha_{\smash{C^0}}=\D_{C^0}$ where the metric $\D_{C^0}$ was introduced in~\S\ref{ss:topologies}. Moreover, by the Jensen inequality,  $\D_{\smash{C^0}}^\alpha(\cdot,\cdot) \leq \D_{{C^0}}(\cdot,\cdot)^\alpha$.

\index{stationary measures!statistical stability}
\begin{thm} \label{prop:statistical-stability-length} Let $(X,d)$ be a locally length metric space and consider a mostly contracting random map $f\in \EE(X)$  satisfying~\eqref{eq:integral_condition} for some $0<\beta\leq 1$. Then, there exist $n\geq 1$ 
and a neighborhood $\mathcal{B}$ of $f$ in $(\EE(X),\Dbeta)$ such that the number of ergodic stationary probability measures is upper semicontinuous in $\mathcal{B}$.

{In addition,  if $\mathcal{C}$ denotes the subset of random maps $g$ in $\mathcal{B}$ having exactly the same number $r\in \mathbb{N}$ of ergodic stationary measure as $f$, then the $g$-stationary measures can be labeled as $\mu_{g,1},\dots,\mu_{g,r}$ in such a way that  for every $i=1,\dots,r$
\begin{enumerate}[leftmargin=1cm]
   \item  $\mu_{g,i}$ depends continuously in the weak$^*$ topology on $g\in \mathcal C$, and 
    \item  there is $\alpha_0>0$ such that  for any $0<\alpha\leq \alpha_0$, there exist a neighborhood $\mathcal{B}_\alpha \subset  \mathcal B$ of $f$ and constants $K>0$ and $\eta \in (0,1]$  satisfying 
\[
   \left|
      \int\phi\,d\mu_{g,i}
      -
      \int\phi\,d\mu_{h,i}
   \right|
   \leq
   K\|\phi\|_\alpha
   \D^\alpha_{\smash{C^0}}(g,h)^\eta  \quad \text{for every $g,h\in \mathcal{C}\cap\mathcal{B}_\alpha$.}
\]
\end{enumerate} 
}
\end{thm}


\begin{proof}
For each $h \in \EE(X)$, denote by $P_h$ the corresponding Koopman operator given by $P_h\phi=\int \phi\circ h_\omega \, d\mathbb{P}$.  Let $f\in \EE(X)$ be a mostly contracting random map satisfying~\eqref{eq:integral_condition}. Since $X$ is a compact locally length metric space, by Proposition~\ref{prop:lasota-york-uniforme}, there exist $0<\alpha_0<\beta$, $r>0$ and $n\geq 1$ such that every random map $h$ sufficiently close to $f$ in $(\EE(X),\Dbeta)$  satisfies the assumption~\eqref{eq:integral_condition} for $\beta$ and 
it is locally contracting on average for any $0<\alpha\leq \alpha_0$ and some $q=q(\alpha)<1$. By Lemma~\ref{lem:local-contracting-average-quasi-compact}, $P_h$ is quasi-compact. This implies that $\lambda=1$ is an isolated eigenvalue of $P_h$.  Moreover, we also find a neighborhood $\mathcal{B}_0$ of $f$ in $(\EE(X),\Dbeta)$
such that 
$|P^n_h\phi|_{\alpha} \leq q |\phi|_\alpha + C \|\phi\|_{\infty}$ for all $\phi\in C^\alpha(X)$ and $h\in\mathcal{B}_0$. In particular,
$$
 \|P^n_h\phi\|_{\alpha} \leq q \|\phi\|_\alpha + 2C \|\phi\|_{\infty}.
$$
Consider $\mathcal{E}=C^\alpha(X)$,  $|\cdot|=\|\cdot\|_{\infty}$ and $\|\cdot\|=\|\cdot\|_\alpha$. 
With this notation, the above estimate is exactly assumption~(i) in Theorem~\ref{thm:keller-liverani}.
Since the inclusion $C^\alpha(X) \hookrightarrow C(X)$ is also compact, we are in a position to apply that theorem and Remark~\ref{rem:liverani} provided that assumption (ii) holds. To show (ii), for any 
$g,h\in\mathcal{B}_0$,  $\phi\in C^\alpha(X)$ and $x\in X$,
\begin{align*}
\big|P_g\phi(x)-P_h\phi(x) \big| &=\left|\int (\phi\circ g_\omega(x)-\phi\circ h_\omega(x)) d\mathbb{P}\,\right| \\ 
&\leq |\phi|_{\alpha} \int d_{C^0}(g_\omega,h_\omega)^\alpha \,d\mathbb{P} 
= \|\phi\|_{\alpha} \D^\alpha_{\smash{C^0}}(g,f).  
\end{align*}
From here, we get that
$\tnorm{P_g-P_h}\leq \D^\alpha_{\smash{C^0}}(g,h)$
where $\tnorm{\cdot}$ is the  {operator norm} on the space of bounded operators $\mathcal{L}(C^\alpha(X),C(X))$. Thus, by Theorem~\ref{thm:keller-liverani} and Remark~\ref{rem:liverani} applied at $\lambda=1$ and $\delta>0$ small enough, we have a neighborhood ${\mathcal{B}_\alpha} \subset \mathcal{B}_0$ of $f$ in $(\EE(X),\Dbeta)$ such that 
\begin{equation}\label{continuityProj}
\tnorm{\Pi^{(1,\delta)}_g-\Pi^{(1,\delta)}_h}\leq \bar{K}  \D^\alpha_{\smash{C^0}}(g,h)^{\eta}
\end{equation}
for any $g,h$ in ${\mathcal{B}_\alpha}$, where $\bar{K}>0$ and $\eta>0$ are positive constants and $\Pi^{(1,\delta)}_g$ and $\Pi^{(1,\delta)}_h$ are the corresponding total spectral projections associated with $B_\delta(1)\cap \sigma(P_g)$  and $B_\delta(1)\cap \sigma(P_h)$ respectively. Moreover, the rank of these spectral projections coincides with the rank of $\Pi^{\smash{(1,\delta)}}_{f}$. Since $\Pi^{\smash{(1,\delta)}}_{f}$ projects onto $\mathrm{Ker}(1-P_f)$ for $\delta>0$ small enough, we get that 
$$
\dim \mathrm{Ker}(P_h-1) \leq \dim  \mathrm{Ker}(P_f-1)  \quad \text{for every $h\in{\mathcal{B}_\alpha}$.}
$$
 Since by Theorem~\ref{thm:Herver-improved}\textup{(i)} the dimension of these kernels coincides with the number of ergodic stationary measures, we get the first assertion of the theorem.

{
If \(g\in\mathcal C\cap \mathcal{B}_\alpha\), then \(g\) has
exactly \(r\) ergodic stationary measures, and again by
Theorem~\ref{thm:Herver-improved}\textup{(i)}, 
$$
\dim\mathrm{Ker}(P_g-1)=r =\dim\mathrm{Ker}(P_f-1)= \dim \operatorname{Img}(\Pi_g^{(1,\delta)}) \qquad g\in\mathcal C\cap \mathcal{B}_\alpha.
$$ 
Hence
\begin{equation}
\label{eq:projection-equals-stationary-projection}
   \operatorname{Img}(\Pi_g^{{(1,\delta)}})
   =
  \mathrm{Ker}(1-P_g),
   \qquad g\in\mathcal C\cap \mathcal{B}_\alpha .
\end{equation}
Thus, \(\Pi_g^{\smash{(1,\delta)}}\) is the projection onto $\mathrm{Ker}(P_g-1)$ for every \(g\in\mathcal C\cap \mathcal{B}_\alpha\). By quasi-compactness of $P_g$ on $C^\alpha(X)$, this spectral projection satisfies \[ \Pi_g^{\smash{(1,\delta)}} P_g = P_g\Pi_g^{\smash{(1,\delta)}}=\Pi_g^{\smash{(1,\delta)}}\] and  the sequence of operators 
\[ 
A^g_n = \frac{1}{n}\sum_{k=0}^{n-1}P^{k}_g \to  \Pi_g^{{(1,\delta)}} 
\]  
in the strong operator topology, cf.~\cite[Corollary~2 and Theorem~5]{Na:18}. Therefore, denoting by \(\mathcal I_g\) the  compact convex set of \(g\)-stationary probability
measures, we have the following:
\begin{claim} \label{claim:invariante-pii}
For every probability measure $\nu$ on $X$ it holds that $(\Pi_g^{\smash{(1,\delta)}})^*\nu \in \mathcal{I}_g$. Moreover, $(\Pi_g^{\smash{(1,\delta)}})^*\nu=\nu$ provided $\nu \in \mathcal{I}_g$.  
\end{claim}
\begin{proof}
For every \(\phi\in C^\alpha(X)\),
\[
\begin{aligned}
   \int P_g\phi\,d\bigl((\Pi_g^{(1,\delta)})^*\nu\bigr)
   =
   \int \Pi_g^{(1,\delta)}(P_g\phi)\,d\nu  
   =
   \int \Pi_g^{(1,\delta)}\phi\,d\nu       
   =
   \int \phi\,d\bigl((\Pi_g^{(1,\delta)})^*\nu\bigr).
\end{aligned}
\]
Thus \((\Pi_g^{(1,\delta)})^*\nu\) is \(g\)-stationary.  Moreover, since \(A_n^g\to\Pi_g^{(1,\delta)}\) strongly and each \(A_n^g\) is a Markov operator, \(\Pi_g^{(1,\delta)}\) is positive and satisfies \(\Pi_g^{(1,\delta)}1_X=1_X\). Hence, 
the functional
\[
   \phi\mapsto \int \Pi_g^{(1,\delta)}\phi\,d\nu
\]
is positive and sends \(1_X\) to \(1\). Therefore
\((\Pi_g^{(1,\delta)})^*\nu\) is also a probability measure.

On the other hand, if \(P^*_g\nu=\nu\), then for every \(\phi\in C^\alpha(X)\),
\[
   \int A_n^g\phi\,d\nu
   =
   \frac1n\sum_{k=0}^{n-1}\int P_g^k\phi\,d\nu
   =
   \int\phi\,d\nu .
\]
Letting \(n\to\infty\), we obtain
\[
   \int_X \Pi_g^{(1,\delta)}\phi\,d\nu
   =
   \int_X\phi\,d\nu .
\]
Hence \((\Pi_g^{(1,\delta)})^*\nu=\nu\).
\end{proof}

Consider $g,h\in\mathcal{C}$ and \(\mu\in\mathcal  I_g\). By Claim~\ref{claim:invariante-pii} \((\Pi_g^{\smash{(1,\delta)}})^*\mu=\mu\) and
\((\Pi_h^{\smash{(1,\delta)}})^*\mu\in\mathcal I_h\). Hence, for every
\(\phi\in C^\alpha(X)\) with \(\|\phi\|_\alpha\leq1\),
\[
\begin{aligned}
\left|
   \int\phi\,d\mu
   -
   \int\phi\,d\big((\Pi_h^{(1,\delta)})^*\mu\big)
\right|
=
\left|
   \int
   \left(
      \Pi_g^{(1,\delta)}\phi
      -
      \Pi_h^{(1,\delta)}\phi
   \right)
   d\mu
\right| \leq
   \tnorm{\Pi_g^{(1,\delta)}-\Pi_h^{(1,\delta)}} .
\end{aligned}
\]
Therefore,
$\operatorname{dist}_{d_\alpha^*}(\mu,\mathcal I_h)
   \leq
   \tnorm{\Pi_g^{(1,\delta)}-\Pi_h^{(1,\delta)}}$ where \(d_\alpha^*\) denotes the dual $\alpha$-H\"older distance on
probability measures given by
\[
   d_\alpha^*(\nu,\eta)
   \eqdef
   \sup_{\|\phi\|_\alpha\leq1}
   \left|
      \int_X\phi\,d\nu-\int_X\phi\,d\eta
   \right|.
\]
Interchanging \(g\) and \(h\), and using
\eqref{continuityProj}, we obtain
\begin{equation}
\label{eq:stationary-simplex-holder-hausdorff}
   d_{H,\alpha}(\mathcal I_g,\mathcal I_h)
   \leq
   \bar{K}\,\D^\alpha_{\smash{C^0}}(g,h)^\eta,
   \qquad g,h\in\mathcal C,
\end{equation}
where \(d_{H,\alpha}\) denotes the Hausdorff distance associated with
\(d_\alpha^*\). 

Let $E$ denote the vector space of finite signed Borel measures on
\(X\), endowed with the dual Hölder norm
\[
   \|\rho\|_\alpha^*
   \eqdef
   \sup_{\|\phi\|_\alpha\leq1}
   \left|
      \int_X\phi\,d\rho
   \right|.
\]
Thus, for probability measures \(\nu,\eta\), one has
$d_\alpha^*(\nu,\eta)=\|\nu-\eta\|_\alpha^*$. 
Since the ergodic \(f\)-stationary measures
\(\mu_{f,1},\dots,\mu_{f,r}\) are the vertices of the simplex
\(\mathcal I_f\), we can write
\[
   \mathcal I_f
   =
   \operatorname{conv}\{\mu_{f,1},\dots,\mu_{f,r}\}.
\]
Applying Lemma~\ref{lem:vertex-stability-simplex} in the normed space
\((E,\|\cdot\|_\alpha^*)\), and shrinking
\(\mathcal B_\alpha\) if necessary, the vertices of \(\mathcal I_g\), namely the ergodic \(g\)-stationary measures, can be labelled
$
   \mu_{g,1},\dots,\mu_{g,r}$,
for $g\in\mathcal C\cap\mathcal B_\alpha$,
so that $$ d_\alpha^*(\mu_{g,i},\mu_{h,i})
   =
   \|\mu_{g,i}-\mu_{h,i}\|_\alpha^*
   \leq
   C_0\,d_{H,\alpha}(\mathcal I_g,\mathcal I_h)\quad \text{for every \(i=1,\dots,r\).}$$ 
 Combining this with
\eqref{eq:stationary-simplex-holder-hausdorff}, we conclude that
 \[
   d_\alpha^*(\mu_{g,i},\mu_{h,i})
   \leq
   K\,\D^\alpha_{\smash{C^0}}(g,h)^\eta,
    \qquad
   \text{ for $g,h\in\mathcal{C}\cap \mathcal{B}_\alpha$ }  
   i=1,\dots,r.
 \]
Equivalently, for every
\(\phi\in C^\alpha(X)\),
 \[
   \left|
      \int\phi\,d\mu_{g,i}
      -
      \int\phi\,d\mu_{h,i}
   \right|
   \leq
   K\|\phi\|_\alpha
   \D^\alpha_{\smash{C^0}}(g,h)^\eta .
 \]}
 As $\alpha$-H\"older functions are uniformly dense in the set of continuous functions, we have the convergence of $\mu_h$ to $\mu_g$ in the weak$^*$ topology for every $g\in \mathcal{B}_\alpha$.  Taking $\mathcal{B}$ as the union of $\mathcal{B}_\alpha$ for $0<\alpha\leq \alpha_0$, we complete the proof of the proposition.
\end{proof}

Given $g\in \mathcal{C}$ and $\phi \in C(X)$, denoting 
\[
   \Lambda_g(\phi)   
   \eqdef  \sup\left\{\int \phi: \mu \ \text{$g$-stationary} \right\} 
   =
   \max_{1\leq i\leq r}
   \int\phi\,d\mu_{g,i},
\]
and  using the elementary inequality $|
      \max_{1\leq i\leq r} a_i
      -
      \max_{1\leq i\leq r} b_i|
   \leq
   \max_{1\leq i\leq r}|a_i-b_i|$, 
we obtain the following consequence:
\begin{cor}
\label{cor:holder-maximal-average}
Let \(f\in\EE(X)\) be under the assumptions of
Theorem~\ref{prop:statistical-stability-length}.  Then, 
\[
   \big|\Lambda_g(\phi)-\Lambda_h(\phi)\big|
   \le
   K\|\phi\|_\alpha\,
   \D^\alpha_{\smash{C^0}}(g,h)^\eta  \quad \text{for every \(g,h\in\mathcal C\cap \mathcal{B}_\alpha\) and  $\phi \in C^\alpha(X)$.}
\]
\end{cor}

We then conclude  Proposition~\ref{prop:statistical-stability} from Theorem~\ref{prop:statistical-stability-length}. 

\begin{proof}[Proof of Proposition~\ref{prop:statistical-stability}]
{Theorem~\ref{prop:statistical-stability-length} and
Lemma~\ref{rem:topology} gives a
neighborhood \(\mathcal B\) of \(f\) in \((\CE(X),\D_{C^1})\) where the number of
ergodic stationary measures is upper semicontinuous. Since \(f\) is uniquely ergodic, every \(g\in\mathcal B\) has at most one ergodic stationary measure.
As the set of stationary probability measures is non-empty, every
\(g\in\mathcal B\) is uniquely ergodic. Denote its stationary measure by
\(\mu_g\). The weak\(^*\)-continuity of \(g\mapsto\mu_g\) follows from the
continuity conclusion in Theorem~\ref{prop:statistical-stability-length}, since
in the uniquely ergodic case the constant-rank stratum \(\mathcal C\) coincides
with \(\mathcal B\).}

{It remains only to justify item \textup{(iii)} for an arbitrary H\"older
observable. Let \(\phi:X\to\mathbb R\) be H\"older continuous. Choose an exponent
\(\theta=\theta(\phi)\in (0,1]\) such that \(\phi\in C^\theta(X)\). By replacing the Riemannian
distance \(d\) by the equivalent normalized metric
\[
   d'=\frac{d}{\operatorname{diam}(X)},
\]
we may assume that \(\operatorname{diam}(X)=1\). This does not affect the
mostly contracting property, because local Lipschitz constants and Lyapunov
exponents are unchanged by multiplying the metric by a constant. It only
changes the metrics \(\D_{C^0}\) and the H\"older norms by multiplicative
constants.}

{Let \(\alpha_0>0\) be the exponent given by
Theorem~\ref{prop:statistical-stability-length}, and choose
$\alpha= \min\{\theta,\alpha_0\}$. 
Since \(\operatorname{diam}(X)=1\), the function \(\phi\) is also
\(\alpha\)-H\"older. Applying
Theorem~\ref{prop:statistical-stability-length} with this exponent \(\alpha\), using again that the uniquely ergodic stratum is the whole neighborhood, and observing that $\D^\alpha_{\smash{C^0}}(\cdot,\cdot) \leq \D_{\smash{C^0}}(\cdot,\cdot)^\alpha$,  
there exist a neighborhood \(\mathcal B_\alpha\subset\mathcal B\) of \(f\), and
constants \(K>0\) and \(\gamma \in(0,1]\), such that
\[
   \left|
      \int \phi\,d\mu_g-\int \phi\,d\mu_h
   \right|
   \leq
   K\|\phi\|_\alpha
   \D_{\smash{C^0}}(g,h)^\gamma
   \qquad
   \text{for every }g,h\in\mathcal B_\alpha.
\]
This proves item~\textup{(iii)} and completes the proof.}
\end{proof}

}

{
Since the number $r=r(f)$ of $f$-stationary measures is upper semicontinuous on the set of mostly contracting random maps $g$, the stability result on the stratum $\mathcal{C}$, where $r(f)$ is constant, yields the following generic result. 

\begin{thm}
\label{prop:generic-statistical-stability}
Let \(X\) be a compact Riemannian manifold, and consider 
  the open subset \(\mathcal{MC}\) of mostly contracting random maps in $(\CE(X),\D_{C^1})$ satisfying~\eqref{eq:integral_condition}.
Then, the set $\mathcal{SS}$ of statistically stable random maps is open and dense in \(\mathcal{MC}\). 

More precisely,  for every \(f\in\mathcal{SS}\) there exists a neighborhood $\mathcal{U}$ where the number $r\in\mathbb{N}$ of ergodic stationary probability measures is constant and admits a labelling
$\mu_{g,1},\dots,\mu_{g,r}$ such that the map $\mu_{g,i}$ varies weak\(^*\)-continuous for $g\in \mathcal{U}$ for $i=1,\dots,r$.  Moreover, for every sufficiently small \(\alpha>0\), after possibly shrinking \(\mathcal U\), the
$\alpha$-H\"older estimates of Theorem~\ref{prop:statistical-stability-length} hold on
\(\mathcal U\).  
\end{thm}

\begin{proof}
By Theorem~\ref{prop:statistical-stability-length}, the function 
$r:\mathcal{MC}\to\mathbb N$ of the number of ergodic stationary measures is upper semicontinuous. Let
$$\mathcal{SS}\eqdef
   \{f\in\mathcal{MC}: r \text{ is locally constant at } f\}.$$
Then \(\mathcal{SS}\) is open by definition. We prove that it is dense. Let
\(\mathcal O\subset\mathcal{MC}\) be a non-empty open set. Since \(r\) takes values in \(\mathbb N\), there exists \(f_0\in\mathcal O\) such that $r(f_0)=\min\{r(g):g\in\mathcal O\}$. By upper semicontinuity, there is a neighborhood
\(\mathcal V\subset\mathcal O\) of \(f_0\) such that
$r(g)\le r(f_0)$ for every $g\in\mathcal V$. By the minimality of \(r(f_0)\) in \(\mathcal O\), we also have
$ r(g)\ge r(f_0)$ for every $g\in\mathcal V$. Thus \(r(g)=r(f_0)\) for every \(g\in\mathcal V\), so \(r\) is locally constant
on \(\mathcal V\). Hence \(\mathcal O\cap\mathcal{SS}\neq\emptyset\), proving
that \(\mathcal{SS}\) is dense in \(\mathcal{MC}\).

Finally, let \(f\in\mathcal{SS}\). By definition, there is a neighborhood
\(\mathcal U\subset\mathcal{MC}\) of \(f\) on which \(r(g)=r(f)\) is constant.
The stratum part of Theorem~\ref{prop:statistical-stability-length} then gives
a labelling
\[
   \mu_{g,1},\dots,\mu_{g,r},
   \qquad g\in\mathcal U,
\]
of the ergodic \(g\)-stationary probability measures such that each
\(g\mapsto\mu_{g,i}\) is weak\(^*\)-continuous. The same theorem gives the
corresponding H\"older estimates, after possibly shrinking \(\mathcal U\) for
the chosen H\"older exponent. This proves the theorem.
\end{proof}
}

\begin{rem}[Metastability]
\label{rem:metastability-loss-stationary-regime}
Although the number of ergodic stationary measures is upper semicontinuous under mostly contracting perturbations, it need not be locally constant, and may drop
under arbitrarily small perturbations. For instance, consider a mostly contracting random map $f$ from an IFS generated by three maps $f_1$, $f_2$ and $f_3=\mathrm{id}$ 
on \(\mathbb S^1\), with equal probabilities, and suppose that there are two
disjoint closed intervals \(I_1,I_2\) which are invariant under all generators. 
Assume that the restrictions of $f$ to \(I_1\) and \(I_2\) are uniquely ergodic. Then
\(f\) has two ergodic stationary measures. If the identity generator is replaced
by a small irrational rotation \(R_\varepsilon\), the two invariant components
are destroyed and, by minimality, the perturbed random map $h_\varepsilon$ has a unique stationary measure.  This loss of a stationary regime has a spectral interpretation. For \(f\), the
eigenvalue \(1\) of the annealed Koopman operator \(P\) has multiplicity two, corresponding to the two stationary components. For \(h_\varepsilon\), only one
copy of the eigenvalue \(1\) remains, corresponding to the unique stationary probability measure. The other part of the former stationary eigenspace becomes
a near-invariant spectral mode with spectral value close to \(1\). In a simple
two-component metastable situation, this spectral value is a real eigenvalue
\(\xi_\varepsilon<1\), with \(\xi_\varepsilon\to1\) as
\(\varepsilon\to0\). The associated dual eigenvector is not a probability measure. Since
\(P_\varepsilon 1_{\mathbb{S}^1}=1_{\mathbb{S}^1}\) 
where $P_\varepsilon$ is the annealed Koopman operator of
$h_\varepsilon$, any finite signed measure \(\eta_\varepsilon\)
satisfying $P_{\varepsilon}^*\eta_\varepsilon
   =
   \xi_\varepsilon\eta_\varepsilon$ with $\xi_\varepsilon\neq1$,
must have total mass zero. Thus \(\eta_\varepsilon\) should be interpreted as a
signed measure measuring the imbalance between the two former stationary
components. Dynamically, it describes the long transient time during which the
system behaves as if the two old stationary regimes still existed, before the
slow exchange produced by the perturbation becomes visible.
\end{rem}

\section{Weak statistical stability}  To conclude the section, we prove the following result. Compare with~\cite[Prop.~2.3]{andersson2020statistical}. First, recall that $C^0_{\smash{\mathbb{P}}}(X)$ denotes the set of continuous random maps up to $\mathbb{P}$-almost identification of fiber maps and the metric $\D_{C^0}$ was introduced in~\S\ref{ss:topologies}.

\index{stationary measures!weak statistical stability}
\begin{thm} \label{thm:Weak statistically stability} The set of $f$-stationary measures varies upper semicontinuously in the weak$^*$ topology with respect to $f$ in $(C^0_{\smash{\mathbb{P}}}(X),\D_{C^0})$. That is,  for every sequence $(f_{n})_{n\geq 1}$
 of continuous random maps with $\D_{C^0}(f_n,f)\to 0$ and every sequence $(\mu_n)_{n\ge 1}$ of $f_n$-stationary measures, it holds that any accumulation point $\mu$ in the weak$^*$ topology of
 $(\mu_n)_{n\geq 1}$ is an $f$-stationary measure. 
\end{thm}

\begin{proof}
Let $\alpha\in (0,1)$ and consider an $\alpha$-H\"older continuous map $\phi: X \to \mathbb{R}$. By taking a subsequence if necessary, assume that  $\mu_n$ converges to $\mu$ in the weak$^*$ topology.  Then $\int \phi \, d\mu_n \to \int \phi \, d\mu$.  Since 
$\mu_n$ is $f_n$-stationary, we have that $\int P_n \phi \, d\mu_n = \int \phi \,d\mu_n$ where $P_n$ denotes the Koopman operator associated with $f_n$.
Hence
\begin{align*}
\big|\int \phi \, d\mu_n - \int P\phi \,d\mu\big| &\leq 
\int |P_n\phi-P\phi| \, d\mu_n 
+ \big|\int P\phi \, d\mu_n - \int P\phi \, d\mu\big| \\ 
&\leq \|P_n\phi-P\phi\|_{\infty} +  \big|\int P\phi \, d\mu_n - \int P\phi \, d\mu\big|. 
\end{align*}
Since $P\phi$ is also continuous, then the second term on the right-hand side of the above estimate goes to zero as $n\to \infty$. The first term can be estimated as follows: for every $x\in X$,
$$
   |P_n\phi(x)-P\phi(x)| \leq \int |\phi(f_{n,\omega}(x))-\phi(f_{\omega}(x))|\, d\mathbb{P} \leq |\phi|_\alpha \, \D_{C^0}(f_n,f)^\alpha. 
$$
Hence, since $\D_{C^0}(f_n,f)\to 0$, we get that $\|P_n\phi-P\phi\|_\infty \to 0$ as $n\to \infty$. Consequently, we get that $\int \phi d\mu_n \to \int P\phi \, d\mu$ as $n\to\infty$. Since the limit is unique, then $\int P\phi \, d\mu= \int \phi \, d\mu$ for every $\alpha$-H\"older continuous function $\phi$. By approximation, this implies $\int P\varphi \, d\mu= \int \varphi \, d\mu$ for every continuous function $\varphi:X\to \mathbb{R}$. Consequently, $\mu$ is $f$-stationary. 
\end{proof}

\chapter{Locally constant linear cocycles}
\label{s:linear}

\abstract{
{This chapter studies locally constant linear cocycles through their induced
projective random maps. We establish a variational representation of the
first Lyapunov exponent in terms of stationary measures on projective space and
annealed growth along individual projective directions. We then
characterize quasi-irreducibility through the equality of these projective averages and introduce the equator as the maximal invariant subspace where the first exponent is not attained. 
Maximizing stationary measures assigns zero mass to the equator, and vectors outside this subspace satisfy a strong law of large
numbers for the top exponent. The last part of the chapter studies the projective dynamics when the first Lyapunov
exponent is simple.  In this regime, we establish the equivalence between quasi-irreducibility of the cocycle, the unique ergodicity, the global contraction on average, being mostly contracting of the induced projective random map, and the spectral gap of the associated Koopman operator. } 
}


\section{{Setting and notation}}
Let $(\Omega,\mathscr{F},\mathbb{P})$ be a Bernoulli product probability space, and consider a measurable, locally constant linear cocycle \( A:\Omega \to \mathrm{GL}(m) \) with \( m \geq 2 \), such that \( \log^+ \|A\| \) is \(\mathbb{P}\)-integrable. Define the associated projective random map \( f_A:\Omega \times P(\mathbb{R}^m) \to P(\mathbb{R}^m) \) and the upper Lyapunov exponent \(\lambda_1(A)\) by 
\[
(f_A)_\omega(\hat{x}) \eqdef A(\omega)\hat{x}, \quad \text{and} \quad
\lambda_1(A) \eqdef \lim_{n\to\infty} \frac{1}{n} \int \log \|A^n(\omega)\| \, d\mathbb{P}.
\]

For a projective class \(\hat{x} \in P(\mathbb{R}^m)\), we denote by \(x \in \mathbb{R}^m\) any unit vector representative of \(\hat{x}\). Using this representation, we introduce the following quantities:
\begin{align*} \index{linear cocycles!\(\phi_A\), projective potential}
    \lambda(\hat{x}) \eqdef \limsup_{n\to\infty} \frac{1}{n} \int \log \|A^n(\omega)x\|\, d\mathbb{P}, \quad \text{and} \quad 
    \phi_A(\hat{x}) \eqdef \int \log \|A(\omega)x\|\, d\mathbb{P}, 
\end{align*} 
for $\hat{x} \in P(\mathbb{R}^m)$. By the integrability assumption on \(\log^+ \|A\|\), the function \(\phi_A\) is upper semicontinuous and satisfies \(\phi_A < \infty\). Moreover, if \(\log^+ \|A^{-1}\|\) is also \(\mathbb{P}\)-integrable, then \(\phi_A\) becomes a continuous real-valued function.

We denote by \(\LP\) the space of all locally constant linear cocycles \(A:\Omega \to \mathrm{GL}(m)\) satisfying that  \(\log^+ \|A^{\pm 1}\|\) are \(\mathbb{P}\)-integrable, up to \(\mathbb{P}\)-almost sure identification. That is, two cocycles \(A\) and \(B\) are identified if \(A(\omega) = B(\omega)\) for \(\mathbb{P}\)-a.e.~\(\omega \in \Omega\).  
Sometimes, we further require the exponential moment condition~\eqref{eq:exponential-moment-condition}, that is,
\[
\int  (\max\{\|A(\omega)\|,\, \|A(\omega)^{-1}\|\})^\beta \, d\mathbb{P} < \infty, \quad \text{for some } \beta > 0.
\]

\begin{rem} \label{rem:beta}
Since \(\ell_A \eqdef  \max\{\|A\|,\, \|A^{-1}\|\} \geq 0\), the above condition can be 
written as \(\ell_A \in L^\beta(\mathbb{P})\). Moreover, as \(\mathbb{P}\) is a probability measure, \(\ell_A \in L^{\beta'}(\mathbb{P})\) for any \(0 < \beta' \leq \beta\). Therefore, without loss of generality, we may assume \(0 < \beta \leq 1\) in~\eqref{eq:exponential-moment-condition} if necessary.
\end{rem}

\vspace{-0.5cm}

\section{Representation of Lyapunov exponents}
According to Furstenberg-Kesten~\cite{FK:60}, if $\log^+ \|A\|\in L^1(\mathbb{P})$, then the Lyapunov exponent of $A$ equals the pointwise Lyapunov exponent for $\mathbb{P}$-a.e.~$\omega$. Namely,
\begin{equation} \label{eq:limsup4} 
 \lambda_1(A)= 
\lim_{n\to\infty} \frac{1}{n}  \log  \|A^n(\omega)\|   \qquad  \text{for $\mathbb{P}$-a.e.~$\omega\in \Omega$}.   \index{linear cocycles!\(\lambda_1(A)\), first Lyapunov exponent}
\end{equation}
This follows from Kingman's subadditive ergodic theorem applied to the subadditive sequence $(F_n)_{n\geq 1}$ given by $F_n(\omega)=\log \|A^n(\omega)\|$ and the invariance of the ergodic measure~$\mathbb{P}$. 
{The following theorem gives a complementary description of $\lambda_1(A)$ in terms of stationary measures and annealed growth rates along projective directions.
}
\index{Lyapunov exponents!variational representation}
\index{Lyapunov exponents!asymptotic representation}
\begin{thm} \label{cor:Furstenberg}  
 Let $A\in \LP$ 
 be a locally constant linear cocycle.  
 Then
\begin{align} 
    \label{eq:Ledrapier-formula}  \lambda_1(A) &=  \max\big\{\int \phi_A \, d\mu: \ \  \mu \in \mathcal{I} \ \text{ergodic}\big\} 
 = \sup_{\hat{x}\in P(\mathbb{R}^m)} \lambda(\hat{x})
\end{align}
where $\mathcal{I}$ is the set of $f_A$-stationary probability measures. Furthermore, 
\begin{align*} 
   \lambda_1(A)  
    = 
 \lim_{n\to\infty} \, \max_{\hat{x} \in P(\mathbb{R}^m)} 
 \frac{1}{n}\int \log \|A^n(\omega)x\|\, d\mathbb{P} = \lim_{n\to\infty} \sup_{\mu\in\mathcal{I}}\frac{1}{n} 
 \int \log \|A^n(\omega)x\|\, d\mathbb{P} d\mu
\end{align*}
where the limit may be replaced by the infimum.
In particular,
\begin{equation}  \label{eq:Furstenberg-formula}
    \lambda_1(A)=\int \phi_A  \,d\mu  
\end{equation}
provided $\int \phi_A \, d\mu$ takes the same value for every $\mu \in \mathcal{I}$. Moreover, in this case, for every $\hat{x}\in P(\mathbb{R}^m)$ and $\mathbb{P}$-a.e.~$\tilde{\omega}\in \Omega$, 
\begin{equation} \label{eq:Kifer-Furstenberg}
\index{strong law of large numbers}
 \lambda_1(A)= \lim_{n\to\infty} \frac{1}{n} 
 \int \log \|A^n(\omega)x\|\, d\mathbb{P}=
 \lim_{n\to\infty} \frac{1}{n} \log \|A^n(\tilde{\omega})x\|.
 \end{equation}
 \end{thm}

Furstenberg's theorem~\cite[Thm.~8.5]{Fur:63} asserts the integral formula~\eqref{eq:Furstenberg-formula} in the case of irreducible random walks on  $\mathrm{GL}(m)$. 
This also follows from the previous  {theorem}. Indeed, if $A$ is irreducible, then $A$ is quasi-irreducible. Moreover, according to Proposition~\ref{prop:quasi-irreducible} below, $A$ is quasi-irreducible if and only if $\int \phi_A \, d\mu$ takes the same value for all $f_A$-stationary measures~$\mu$. Thus, the previous theorem applies to obtain the mentioned integral formula. 

On the other hand, the first equality in~\eqref{eq:Ledrapier-formula} was also independently obtained  by Furstenberg and Kifer~\cite[Thm.~2.2]{furstenberg1983random} and Hennion~\cite[Cor.~2]{hennion1984loi} (see also~\cite{Led:84} and \cite[Sec.~6.1 and~6.2]{viana2014lectures}). Furthermore, the so-called \emph{strong law of large numbers} for the maximal Lyapunov exponent of linear cocycles, i.e.,~\eqref{eq:Kifer-Furstenberg}, was also previously achieved in~\cite[Thm.~2.1 and Cor.~3.11]{furstenberg1983random} and \cite[Thm~1]{hennion1984loi}. {The novelty of the theorem lies in the representation
\[
\lambda_1(A)=\sup_{\hat x\in P(\mathbb R^m)}\lambda(\hat x)
=\lim_{n\to\infty}\max_{\hat x\in P(\mathbb R^m)}
\frac1n\int \log \|A^n(\omega)x\|\,d\mathbb P,
\]
which allows one to recover the maximal Lyapunov exponent from the annealed growth of individual projective directions.  Moreover, our proof of Theorem~\ref{cor:Furstenberg} is conceptually different from the classical arguments: it arises as a direct application of the uniform Kingman theorem for Markov operators (Theorem~\ref{Kingmanuniform}) to the projective random map associated with the cocycle.}

\begin{proof}[Proof of Theorem~\ref{cor:Furstenberg}]
Let $X=P(\mathbb{R}^m)$. Consider the operator $P:C(X)\to C(X)$ associated with $f_A$ given by
$$
P\varphi(\hat{x}) = \int \varphi(A(\omega)\hat{x}) \, d\mathbb{P},  \quad \varphi\in C(X), \ \ \hat{x}\in X.  
$$
Let $(\phi_n)_{n\geq 1}$ be the $P$-additive sequence of real-valued continuous functions
$$
\phi_1 =\phi_A \quad \text{and} \quad \phi_n= \sum_{i=0}^{n-1} P^i\phi_1 \ \  \text{for $n>1$}.
$$
\begin{claim}  \label{claim:aditiva} $\phi_n(\hat{x})=\int \log \|A^n(\omega)x\|\, d\mathbb{P}$ for every $\hat{x}\in X$. 
\end{claim}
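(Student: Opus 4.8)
The plan is to prove the claim by combining the cocycle relation $A^{i+1}(\omega) = A(\omega_i)\,A^i(\omega)$ with the Bernoulli product structure of $\mathbb{P} = p^{\mathbb{N}}$, which is precisely what makes the iterates $P^i$ of the annealed operator behave well. Fix $\hat{x}\in X = P(\mathbb{R}^m)$ and a unit representative $x$ of $\hat{x}$. For $\mathbb{P}$-almost every $\omega$ and each $i\geq 0$, let $y_i(\omega) \eqdef A^i(\omega)x/\|A^i(\omega)x\|$; this is a unit representative of $A^i(\omega)\hat{x} = (f_A)^i_\omega(\hat{x})$, it depends only on $\omega_0,\dots,\omega_{i-1}$, and $y_0 = x$.

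First I would record the telescoping identity, valid for every $\omega$ in the full-measure set where all $A^i(\omega)$ are defined:
$$\log\|A^n(\omega)x\| \;=\; \sum_{i=0}^{n-1}\bigl(\log\|A^{i+1}(\omega)x\| - \log\|A^i(\omega)x\|\bigr) \;=\; \sum_{i=0}^{n-1}\log\|A(\omega_i)\,y_i(\omega)\|,$$
where the first equality uses $A^0(\omega) = \mathrm{Id}$ and the second uses the cocycle relation together with $\|A^{i+1}(\omega)x\| = \|A^i(\omega)x\|\cdot\|A(\omega_i)\,y_i(\omega)\|$. Since $y_i(\omega)$ is a unit vector, $\|A(\omega_i)^{-1}\|^{-1}\leq \|A(\omega_i)\,y_i(\omega)\|\leq \|A(\omega_i)\|$, so each summand has positive part $\leq \log^+\|A(\omega_i)\|$ and negative part $\leq \log^+\|A(\omega_i)^{-1}\|$; by the integrability built into $A\in\LP$, every integral below converges absolutely and the sum may be interchanged with the integral.

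Next, I would integrate term by term against $\mathbb{P}$. Because $y_i(\omega)$ is measurable with respect to $\omega_0,\dots,\omega_{i-1}$ while $\omega_i$ is independent of these coordinates, Fubini gives
$$\int \log\|A(\omega_i)\,y_i(\omega)\|\,d\mathbb{P}(\omega) \;=\; \int\!\Bigl(\int\log\|A(\omega_i)\,y_i(\omega)\|\,dp(\omega_i)\Bigr)d\mathbb{P}(\omega) \;=\; \int \phi_A\!\bigl(A^i(\omega)\hat{x}\bigr)\,d\mathbb{P}(\omega) \;=\; P^i\phi_1(\hat{x}),$$
using in the middle step that $y_i(\omega)$ represents $A^i(\omega)\hat{x}$ together with $\phi_1 = \phi_A$, and in the last step the identity $P^i\varphi(\hat{x}) = \int \varphi(A^i(\omega)\hat{x})\,d\mathbb{P}$, which follows from the definition of $P$ and the product structure of $\mathbb{P}$. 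Summing over $i=0,\dots,n-1$ yields $\int\log\|A^n(\omega)x\|\,d\mathbb{P} = \sum_{i=0}^{n-1}P^i\phi_1(\hat{x}) = \phi_n(\hat{x})$, which is the claim. (Alternatively, one may induct on $n$ using the recursion $\phi_{n+1} = \phi_n + P^n\phi_1$ and the same cocycle/Bernoulli identity, re-absorbing the freshly integrated coordinate into a single $\mathbb{P}$-distributed sequence.)

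I do not expect a genuine obstacle here: the proof is short, and the only points needing attention are bookkeeping — selecting the unit representatives consistently so that $\log\|A(\omega_i)\,y_i(\omega)\|$ is exactly the $i$-th increment of $\log\|A^n(\omega)x\|$, and verifying the two-sided integrability that licenses Fubini and the exchange of sum and integral, both immediate from $A\in\LP$.
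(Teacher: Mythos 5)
Your proposal is correct and takes essentially the same approach as the paper: both compute the $i$-th increment $\int\bigl(\log\|A^{i+1}(\omega)x\|-\log\|A^{i}(\omega)x\|\bigr)\,d\mathbb{P}$ and identify it with $P^i\phi_1(\hat{x})$ via the Bernoulli product structure, then sum the telescoping series. The only cosmetic differences are the direction of the computation (you telescope $\log\|A^n(\omega)x\|$ first and then integrate term by term, the paper starts from $P^i\phi_1$ and expands it) and your explicit two-sided integrability check, which the paper leaves implicit.
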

\begin{proof} For each $i=0,\dots,n-1$ and $\hat{x}\in X$, it holds that
\begin{align*}    
P^i\phi_1(\hat{x})&=\int \phi_1(A^i(\omega)\hat{x})\, d\mathbb{P}= 
 \int \log \bigg\| A(\tilde{\omega})\frac{A^i(\omega)x}{\|A^i(\omega)x\|} \bigg\| \, d\mathbb{P}(\tilde{\omega})d\mathbb{P}(\omega) \\
 &= \int \log \|A^{i+1}(\omega)x\|\, d\mathbb{P} - \int \log \|A^i(\omega)x\|\, d\mathbb{P}.
\end{align*}
From here, noting that $\|x\|=1$, we obtain that
\begin{equation*}
    \begin{aligned}
    \phi_n(\hat{x}) &=\sum_{i=0}^{n-1} P^i\phi_1(\hat{x})= \int \log \|A^n(\omega)x\|\, d\mathbb{P}- \int \log \|x\| \,d\mathbb{P} \\ &= \int \log \|A^n(\omega)x\|\, d\mathbb{P}. 
\end{aligned}
\end{equation*}
This concludes the proof of the claim.
\end{proof}

In view of the above observations, we can apply Theorem~\ref{Kingmanuniform} and Corollary~\ref{cor:Kingman-uniform}  to  $(\phi_n)_{n\geq 1}$. Moreover, since the $f_A$-stationary measures are exactly the $P^*$-invariant measures, for each $f_A$-stationary measure $\mu$, we have 
$$
\Lambda(\mu)\eqdef \lim_{n\to \infty } \frac{1}{n} \int \phi_n \, d\mu = \int \phi_1\, d\mu. 
$$
From Theorem~\ref{Kingmanuniform} (see~\eqref{eq:formula-Furstenberg} and (ii)), recalling the definition of the pointwise annealed Lyapunov exponent $\lambda(\hat{x})$ and Claim~\ref{claim:aditiva}, we obtain that
$$
\Lambda \eqdef \sup_{\mu\in \mathcal{I}}\Lambda(\mu)  = \max \big\{\int \phi_1\, d\mu:  \ \mu \ \text{ergodic $f_A$-stationary}\big\}= \sup_{\hat{x}\in X} \lambda(\hat{x}).$$
To conclude the proof of~\eqref{eq:Ledrapier-formula}, we will prove $\Lambda=\lambda_1(A)$.

First, from~\eqref{eq:Kingman1}, we have that 
\begin{equation*} 
    \Lambda =\lim_{n\to\infty} \max_{\hat{x}\in X} \frac{1}{n}  \phi_n(\hat{x})  
\leq 
\lim_{n\to\infty}  \frac{1}{n}\int \log \sup_{\hat{x}\in X}  \|A^n(\omega)x\| \, d\mathbb{P} \eqdef \lambda_1(A).
\end{equation*}
Secondly, let $\phi(\omega,\hat x)=\log \|A(\omega)x\|$. Clearly $\phi(\omega,\cdot) \in C(X)$ for all $\omega\in \Omega$ and $\int \|\phi(\omega,\cdot)\|_\infty\, d\mathbb{P} \leq \int \big|\log \|A(\omega)\|\big| \,d\mathbb{P}<\infty$ since $\log^+ \|A^{\pm 1}\|$ are $\mathbb{P}$-integrable. Hence, we can apply Corollary~\ref{thm:Bierman-Furstenberg} to $\phi$, $f_A$ and its associated skew-product $F$. Moreover, since $$\phi(F^i(\omega,\hat x))=\log \|A^{i+1}(\omega)x\|-\log \|A^i(\omega)x\| \ \ \text{for all} \ i\ge 0,$$ 
we have, similarly as before, that $\sum_{i=0}^{n-1} \phi(F^i(\omega,\hat x))=\log \|A^n(\omega)x\|$, and  thus, 
from Corollary~\ref{thm:Bierman-Furstenberg} we obtain that for every $\hat x\in X$, 
\begin{equation} \label{eq:lamabdamenor0}
    \limsup_{n\to\infty} \frac{1}{n} \log \|A^n(\omega)x\| \leq \max_{\mu \ \text{$f_A$-stationary}} \ \int \phi_1 \, d\mu = \Lambda \ \ \ \text{for $\mathbb{P}$-a.e.~$\omega\in \Omega$}. 
\end{equation}
On the other hand, given a basis $v_1,\dots,v_m$ of unit vectors in $\mathbb{R}^m$, we can identify $\|M\|$  with $\max_i\|Mv_i\|$ for every matrix $M$. 
Hence, by~\eqref{eq:limsup4}, for $\mathbb{P}$-a.e.~$\omega\in \Omega$,
\begin{align*}
    \lambda_1(A)&=\lim_{n\to\infty} \frac{1}{n} \log \|A^n(\omega)\| =\lim_{n\to\infty}  \frac{1}{n} \log \max_{i=1,\dots,m} \|A^n(\omega)v_i\| \\
&\leq \max_{i=1,\dots,m}  \limsup_{n\to\infty}  \frac{1}{n} \log \|A^n(\omega)v_i\|.
\end{align*}
Combining this with~\eqref{eq:lamabdamenor0}, we have $\lambda_1(A) \leq \Lambda$. This implies that $\Lambda=\lambda_1(A)$ as required and completes the proof of~\eqref{eq:Ledrapier-formula}. Moreover, this and~(iii) in Theorem~\ref{Kingmanuniform} also conclude the representation of $\lambda_1(A)$ as a uniform limit.  

Finally, \eqref{eq:Furstenberg-formula} follows immediately from~\eqref{eq:Ledrapier-formula} provided $\int \phi_A \, d\mu$ takes the same value for every $\mu\in \mathcal{I}$. Moreover, also in this case, 
for every $\hat{x}\in X$, from~\eqref{eq:uniform-Furstenberg-Kifer},
\begin{align*}
\lambda_1(A) =\lim_{n\to\infty} \frac{1}{n} \int \log \|A^n(\omega)x\|\, d\mathbb{P} 
\end{align*}
and from Corollary~\ref{thm:Bierman-Furstenberg},
$$\lambda_1(A)=\lim_{n\to\infty} \frac{1}{n}  \log \|A^n(\omega)x\| \quad \text{for $\mathbb{P}$-a.e.~$\omega\in\Omega$}
$$
concluding~\eqref{eq:Kifer-Furstenberg} and completing the proof of the theorem.
\end{proof}

We also obtain as a consequence of Kingman's ergodic theorem for Markov  operators (Theorem~\ref{Kingman}) the following:

\begin{cor} \label{cor:kigman-fusternberg}
Let $A :\Omega \to \mathrm{GL}(m)$ be a locally constant linear cocycle such that $\log^+ \|A\|$ is $\mathbb{P}$-integrable. Then, for every $f_A$-invariant measure $\mu$, 
\begin{enumerate}[label=(\roman*), leftmargin=1.2cm]
\item $\int \lambda(\hat{x})\, d\mu = \int \phi_A\, d\mu$ and $\lambda^+ \in L^1(\mu)$, \\[-0.2cm]
\item $\lambda(\hat{x}) = {\displaystyle\lim_{n \to \infty}} \,\frac{1}{n} \int \log \|A^n(\omega)x\|\, d\mathbb{P}$ for $\mu$-a.e.~$\hat{x} \in P(\mathbb{R}^m)$, \\[-0.2cm]
\item $\lambda(\hat{x}) = \int \phi_A\, d\mu$ for $\mu$-a.e.~$\hat{x}$, provided $\mu$ is ergodic.
\end{enumerate}
\end{cor}

\begin{proof} 
By Remark~\ref{rem:kingman}, Theorem~\ref{Kingman} applies under the assumptions of Theorem~\ref{Kingmanuniform}. In particular, we apply it to the Koopman operator \(P\) associated with \(f_A\) and to the sequence \((\phi_n)_{n\geq 1}\) considered in the proof of Theorem~\ref{cor:Furstenberg}. The conclusions then follow from the identity
$\Lambda(\mu)=\int \phi_A \, d\mu$.
\end{proof}

\section{Quasi-irreducibility}
Here, we characterize quasi-irreducible cocycles; see~\eqref{eq:reducible-Lyapunov-exponent} for the definition. We first fix some notation. A subspace \(L\subset \mathbb{R}^m\) is \(A\)-invariant if \(A(\omega)L=L\) for \(\mathbb{P}\)-a.e.~\(\omega\in\Omega\). For notational simplicity, we use the same symbol, say \(V\), for a subspace of \(\mathbb{R}^m\) and for the corresponding subset of \(P(\mathbb{R}^m)\), that is, the set of projective directions represented by vectors in \(V\).

\begin{lem}\label{prop:lambdaLint} 
Let $A: \Omega \to \mathrm{GL}(m)$ be  a locally constant linear cocycle such that $\log^+ \|A\|$ is $\mathbb{P}$-integrable. Consider an ergodic $f_A$-stationary measure $\mu$. Then
\begin{equation} \label{eq:phi-KF}
    \lim_{n\to\infty} \frac{1}{n} \log \|A^n(\omega)x\| = \int \phi_A \, d\mu \quad \text{for $\bar{\mu}$-a.e.~$(\omega, \hat{x}) \in \Omega \times P(\mathbb{R}^m)$}
\end{equation}
and there is an $A$-invariant subspace $L$ of $\mathbb{R}^m$ (the vector space spanned by the support of $\mu$) such that  $\mu(L) = 1$ and 
\begin{equation} \label{eq:L-mu}
    \lambda_1(A|_L) = 
    \lim_{n\to\infty} \frac{1}{n} \log \big\|A^n(\omega) \big\arrowvert_{L}\,\big\| 
    = \int \phi_A \, d\mu \ \ \text{for $\mathbb{P}$-a.e.~$\omega \in \Omega$.}
\end{equation}
\end{lem}

\begin{proof} 
Let $\tilde{\phi}_A(\omega, \hat{x}) \eqdef \log \|A(\omega)x\|$ and consider the skew-product $F$ associated with the projective random map $f_A$. Take the sequence $(\phi_n)_{n \geq 1}$ of continuous functions,
$$
\phi_1 = \tilde{\phi}_A \quad \text{and} \quad \phi_n = \sum_{i=0}^{n-1} \phi_1 \circ F^i \ \ \text{for $n > 1$}.
$$
Similar to Claim~\ref{claim:aditiva}, we have that 
$$
\phi_n(\omega, \hat{x}) = \log \|A^n(\omega)x\| \quad  \text{for every $(\omega, \hat{x}) \in \Omega \times P(\mathbb{R}^m)$}.
$$
Moreover, since $\bar{\mu} = \mathbb{P} \times \mu$ is $F$-ergodic and $\phi_1^+ \leq \log^+ \|A\| \in L^1(\bar{\mu})$, by Kingman's subadditive ergodic theorem, we conclude~\eqref{eq:phi-KF}. 

To prove~\eqref{eq:L-mu}, consider $L$ as the vector subspace of $\mathbb{R}^m$ spanned by the support of $\mu$. Equivalently, $L$ is the minimal subspace for which $\mu(L)=1$. Since $\mu$ is $f_A$-stationary,
$$
1 = \mu(L) = \int (f_A)_\omega \mu(L) \, d\mathbb{P} = \int \mu(A(\omega)^{-1}L) \, d\mathbb{P}.
$$
Hence $\mu(A(\omega)^{-1}L) = 1$ for $\mathbb{P}$-a.e.~$\omega \in \Omega$. But then $A(\omega)^{-1}L \supset L$ and so $A(\omega)L = L$ almost surely. 

On the other hand, by Fubini's theorem and~\eqref{eq:phi-KF}, there exists a Borel set
\(G\subset P(\mathbb{R}^m)\) with \(\mu(G)=1\) such that for every \(\hat{x}\in G\),
\[
\lim_{n\to\infty}\frac1n\log \|A^n(\omega)x\|
=
\int \phi_A\,d\mu
\qquad\text{for $\mathbb{P}$-a.e.~$\omega\in \Omega$}.
\]
Since \(G\cap \operatorname{supp}\mu\) is dense in \(\operatorname{supp}\mu\), and the latter spans \(L\), we can choose
projective directions \(\hat v_1,\dots,\hat v_k\in G\cap \operatorname{supp}\mu\) such that the corresponding unit vectors
\(v_1,\dots,v_k\) form a basis of \(L\).

Because \(L\) is finite-dimensional, there exists \(C>0\) such that for every linear map \(M:L\to L\),
\[
C^{-1}\|M\arrowvert_L\|
\leq
\max_{1\leq i\leq k}\|Mv_i\|
\leq
\|M\arrowvert_L\|.
\]
Moreover, since \(\log^+\|A(\omega)|_L\|\leq \log^+\|A(\omega)\|\in L^1(\mathbb{P})\), the Furstenberg-Kesten theorem applies to the restricted cocycle \(A|_L\). Hence, there exists a full \(\mathbb{P}\)-measure set \(\Omega_0\subset \Omega\) such that for every \(\omega\in \Omega_0\),
\[
\lim_{n\to\infty}\frac1n\log \|A^n(\omega)v_i\|
=
\int \phi_A\,d\mu
\quad\text{for all }i=1,\dots,k,
\]
and
\[
\lambda_1(A|_L)=\lim_{n\to\infty}\frac1n\log \big\|A^n(\omega)\big\arrowvert_L\big\|.
\]
Fix \(\omega\in \Omega_0\). Using the norm equivalence above, and since the maximum of finitely many sequences having the same limit has the same limit again, we obtain
\[
\lambda_1(A|_L)=\lim_{n\to\infty}\frac1n\log \big\|A^n(\omega)\big\arrowvert_L\big\|
= \lim_{n\to\infty}\frac1n\log \max_{1\leq i\leq k}\|A^n(\omega)v_i\|
= 
\int \phi_A\,d\mu.
\]
This proves~\eqref{eq:L-mu}.
\end{proof}

Finally, we obtain the announced characterization: 

\begin{prop} \label{prop:quasi-irreducible}
    Let $A \in \LP$ 
    be a locally constant linear cocycle. Then, $A$ is quasi-irreducible if and only if $\int \phi_A \, d\mu$ takes the same value for every $f_A$-stationary measure $\mu$, namely $\int \phi_A \, d\mu = \lambda_1(A)$. 
\end{prop}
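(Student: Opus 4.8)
The plan is to prove the two implications separately, using the integral formula from Theorem~\ref{cor:Furstenberg} together with the structure lemma (Lemma~\ref{prop:lambdaLint}).

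\emph{($\Leftarrow$)} Suppose $\int \phi_A\,d\mu$ is the same for every $f_A$-stationary $\mu$. Then by~\eqref{eq:Furstenberg-formula} in Theorem~\ref{cor:Furstenberg} this common value equals $\lambda_1(A)$. Now argue by contraposition: assume $A$ is \emph{not} quasi-irreducible, i.e.\ there is a proper $A$-invariant subspace $L\subsetneq\mathbb{R}^m$ with $\lambda_1(A|_L)<\lambda_1(A)$. The restriction $A|_L$ is a locally constant cocycle on $L$ with $\log^+\|(A|_L)^{\pm1}\|$ integrable (it is dominated by $\log^+\|A^{\pm1}\|$), so by Theorem~\ref{cor:Furstenberg} applied to $A|_L$ there is an ergodic $f_{A|_L}$-stationary measure $\mu_L$ on $P(L)$ with $\int \phi_{A|_L}\,d\mu_L = \lambda_1(A|_L)$. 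Pushing $\mu_L$ forward under the inclusion $P(L)\hookrightarrow P(\mathbb{R}^m)$ gives an $f_A$-stationary measure $\mu$ supported on $P(L)$, and since $\phi_A$ restricted to $P(L)$ is exactly $\phi_{A|_L}$, we get $\int \phi_A\,d\mu = \lambda_1(A|_L) < \lambda_1(A)$, contradicting that all such integrals coincide. Hence $A$ is quasi-irreducible.

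\emph{($\Rightarrow$)} Suppose $A$ is quasi-irreducible. Let $\mu$ be any ergodic $f_A$-stationary measure (it suffices to handle ergodic ones, since by ergodic decomposition the integral $\int\phi_A\,d\mu$ over a general stationary measure is an average of the ergodic values, and once all ergodic values equal $\lambda_1(A)$ so do all of them). By Lemma~\ref{prop:lambdaLint} there is an $A$-invariant subspace $L$ (spanned by $\operatorname{supp}\mu$) with $\mu(L)=1$ and $\lambda_1(A|_L)=\int\phi_A\,d\mu$. If $L$ were a \emph{proper} subspace, then since $\int\phi_A\,d\mu = \int\lambda(\hat x)\,d\mu \le \lambda_1(A)$ (by Corollary~\ref{cor:kigman-fusternberg}(i) and~\eqref{eq:Ledrapier-formula}), we would have $\lambda_1(A|_L)\le\lambda_1(A)$; the quasi-irreducibility hypothesis forbids the strict inequality $\lambda_1(A|_L)<\lambda_1(A)$, so we must have $\lambda_1(A|_L)=\lambda_1(A)$, hence $\int\phi_A\,d\mu=\lambda_1(A)$. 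If instead $L=\mathbb{R}^m$, then directly $\int\phi_A\,d\mu = \lambda_1(A|_L) = \lambda_1(A)$. Either way $\int\phi_A\,d\mu=\lambda_1(A)$ for every ergodic stationary $\mu$, and therefore for every stationary $\mu$.

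\emph{Main obstacle.} The delicate point is the direction ($\Rightarrow$): one must rule out the possibility that $\int\phi_A\,d\mu$ is strictly smaller than $\lambda_1(A)$ for some stationary $\mu$ living on a proper invariant subspace. The resolution hinges on Lemma~\ref{prop:lambdaLint}, which identifies $\int\phi_A\,d\mu$ with the top Lyapunov exponent $\lambda_1(A|_L)$ of the cocycle restricted to the span $L$ of $\operatorname{supp}\mu$; the definition of quasi-irreducibility then applies verbatim to $L$. A secondary technical care is the reduction from ergodic to arbitrary stationary measures, which is straightforward via ergodic decomposition and linearity of $\mu\mapsto\int\phi_A\,d\mu$, but should be stated explicitly since $\phi_A$ is only upper semicontinuous (still bounded above and $\mu$-integrable for every stationary $\mu$, so the decomposition integral is well-defined in $[-\infty,\lambda_1(A)]$).
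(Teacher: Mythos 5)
Your proof is correct and uses the same key ingredients as the paper's own argument: Theorem~\ref{cor:Furstenberg} (applied to the restricted cocycle $A|_L$) for the implication from constant integrals to quasi-irreducibility, and Lemma~\ref{prop:lambdaLint} for the converse. The only cosmetic difference is that you argue one direction by contraposition and the other directly, while the paper does the opposite; the logical content is the same.
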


\begin{proof}  
First, we show that $f_A$ is quasi-irreducible provided $\int \phi_A \, d\mu$ takes the same value for every $f_A$-stationary measure $\mu$. To see this, assume that there exists an $A$-invariant subspace $L$ of $\mathbb{R}^m$. Otherwise, $A$ is irreducible and immediately quasi-irreducible. Now, as $L$ is $A$-invariant, we can consider the cocycle given by the restriction $A|_{L}$ and apply Theorem~\ref{cor:Furstenberg}. This implies 
$$
\lambda_{1}(A|_L) \eqdef \lim_{n\to \infty} \frac{1}{n} \int \log \big\|A^n(\omega)\big\arrowvert_{L}\,\big\| \,d\mathbb{P} = \max\big\{\int \phi_{A|_{L}} \, d\mu :  \mu \in \mathcal{I}_L \, \text{ergodic} \big\} 
$$
where $\mathcal{I}_L$ is the set of stationary measures of the projective random map associated with $A|_{L}$. This projective random map coincides with the restriction $(f_A)|_{L}: \Omega \times P(L) \to P(L)$ of the projective random map $f_A$ associated with $A$. Hence, every $\mu \in \mathcal{I}_L$ is an $f_A$-stationary measure. Moreover, since we also have $\phi_{A|_{L}} = (\phi_A)|_{L}$, by assumption, we find that $\int \phi_{A|_{L}} \, d\mu = \int \phi_A \, d\mu$ takes the same value for every $\mu \in \mathcal{I}_L$. This value needs to be $\lambda_1(A)$ because of~\eqref{eq:Furstenberg-formula}. Thus, we get that $\lambda_1(A|_L) = \lambda_1(A)$ and therefore $A$ is quasi-irreducible. 

Now, we prove the converse. To do this, assume that there exist two ergodic $f_A$-stationary measures $\nu$ and $\mu$ such that $\int \phi_A \, d\nu > \int \phi_A \, d\mu$. By Lemma~\ref{prop:lambdaLint}, we find an $A$-invariant subspace $L$ of $\mathbb{R}^m$ such that $\lambda_1(A|_L) = \int \phi_A \, d\mu$. Since by Theorem~\ref{cor:Furstenberg}, $\lambda_1(A) \geq  \int \phi_A \, d\nu > \int \phi_A \, d\mu = \lambda_1(A|_L)$, we get that $A$ is not quasi-irreducible, concluding the proof. 
\end{proof}

{
If \(f_A\) is uniquely ergodic, then \(\int \phi_A\,d\mu\) takes the same value for every
\(f_A\)-stationary measure \(\mu\), and Proposition~\ref{prop:quasi-irreducible} yields that \(A\) is
quasi-irreducible. Conversely, under the assumption
\(\lambda_1(A)>\lambda_2(A)\), Aoun and Guivarc'h~\cite[Thm.~2.4(a)]{aoun2020random} show that there exists a unique \(f_A\)-stationary measure $\mu$ with $\mu(E)=0$, where \(E\) is the maximal \(A\)-invariant subspace on which the
top Lyapunov exponent is strictly smaller than \(\lambda_1(A)\); in our notation, this is precisely
the equator (see Definition~\ref{def:equator}). In 
Proposition~\ref{thm:Aoun-a}, we revisit and simplify the proof of such a result. 
Thus, if \(A\) is quasi-irreducible, then \(E\) is 
trivial and \(f_A\) is uniquely ergodic. }


\section{The equator and localization of maximizing stationary measures}  We first introduce the notion of a maximizing measure,  which plays a central role in understanding the dynamics of linear cocycles:

\begin{defi} \index{Lyapunov exponents!maximizing stationary measure}
An $f_A$-stationary measure $\mu$ on $P(\mathbb{R}^m)$ is \emph{maximizing} if $$\lambda_1(A) = \int \phi_A \, d\mu.$$
\end{defi}

Next, we revisit and deepen the concept of the equator, introduced earlier in Definition~\ref{def:equator}.
Consider any subspace \( L \) of \( \mathbb{R}^m \) that is invariant under \( A \). The maximal Lyapunov exponent \( \lambda_1(A|_L) \) of \( A \) restricted to \( L \), as introduced in~\eqref{eq:reducible-Lyapunov-exponent}, is well defined. Furthermore, for two \( A \)-invariant subspaces \( L_1 \) and \( L_2 \), the sum \( L_1 + L_2 \) is also \( A \)-invariant, and the maximal Lyapunov exponent satisfies 
\(
\lambda_1(A|_{L_1 + L_2}) = \max\{\lambda_1(A|_{L_1}), \lambda_1(A|_{L_2})\}.
\)
This implies a key property: if \( \lambda_1(A|_{L_i}) < \lambda_1(A) \) for \( i = 1, 2 \), then the combined subspace \( L_1 + L_2 \) also satisfies \( \lambda_1(A|_{L_1 + L_2}) < \lambda_1(A) \). Now, suppose \( A \) is not quasi-irreducible. In this case, there exists at least one \( A \)-invariant subspace \( L \) with \( \lambda_1(A|_L) < \lambda_1(A) \). By iteratively considering the sum of such subspaces, the above property ensures that there is a unique maximal \( A \)-invariant subspace \( E \) such that \( \lambda_1(A|_E) < \lambda_1(A) \). This subspace \( E \) is called the \emph{equator} of \( A \).

\begin{cor} \label{cor:localization}
    Let $A :\Omega \to \mathrm{GL}(m)$ 
    be a locally constant linear cocycle 
    such that $\log^+ \|A\|$ is $\mathbb{P}$-integrable. Assume that the equator $E$ is non-trivial and consider an $f_A$-stationary measure $\mu$. Then $\mu$ is maximizing if and only if $\mu(E)=0$. 
\end{cor}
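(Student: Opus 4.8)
## Proof proposal for Corollary~\ref{cor:localization}

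The plan is to decompose an arbitrary ergodic stationary measure according to its mass on the equator and exploit the key property that $\lambda_1(A|_E) < \lambda_1(A)$ by definition of the equator. The statement asserts: $\mu$ is maximizing if and only if $\mu(E) = 0$. Since the set of maximizing stationary measures and the condition $\mu(E)=0$ are both preserved under convex combinations (by linearity of $\mu \mapsto \int \phi_A\,d\mu$ and of $\mu \mapsto \mu(E)$, noting that maximizing measures are exactly those achieving the supremum $\lambda_1(A)$ by Theorem~\ref{cor:Furstenberg}), it suffices to prove the equivalence for \emph{ergodic} $f_A$-stationary measures $\mu$.

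First I would handle the forward-easy direction by contraposition: suppose $\mu$ is ergodic with $\mu(E) > 0$. Since $E$ is $A$-invariant, $P(E) \subset P(\mathbb{R}^m)$ is an $f_A$-invariant closed set, so $\mu(E) \in \{0,1\}$ by ergodicity; hence $\mu(E) = 1$. Then $\mu$ is carried by $P(E)$, and applying Lemma~\ref{prop:lambdaLint} to the restriction $A|_E$ (whose projective random map is exactly $(f_A)|_E$), together with the identity $\phi_{A|_E} = (\phi_A)|_E$, gives $\int \phi_A\,d\mu = \int \phi_{A|_E}\,d\mu \le \lambda_1(A|_E) < \lambda_1(A)$. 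So $\mu$ is not maximizing. Conversely, suppose $\mu$ is ergodic with $\mu(E) = 0$; I must show $\int \phi_A\,d\mu = \lambda_1(A)$. By Lemma~\ref{prop:lambdaLint} there is an $A$-invariant subspace $L$ (the span of $\mathrm{supp}\,\mu$) with $\mu(L) = 1$ and $\lambda_1(A|_L) = \int \phi_A\,d\mu$. The point is that $L \not\subset E$: indeed if $L \subset E$ then $\mu(E) \ge \mu(L) = 1$, contradicting $\mu(E) = 0$.

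Now I would invoke the maximality property of the equator. Since $L$ is $A$-invariant and $L \not\subset E$, and $E$ is by definition the \emph{largest} $A$-invariant subspace with first Lyapunov exponent strictly below $\lambda_1(A)$, it follows that $\lambda_1(A|_L) = \lambda_1(A)$ — otherwise $L$ (or rather $E + L$, using $\lambda_1(A|_{E+L}) = \max\{\lambda_1(A|_E),\lambda_1(A|_L)\}$) would be an $A$-invariant subspace strictly larger than $E$ with Lyapunov exponent $< \lambda_1(A)$, contradicting maximality of $E$. Therefore $\int \phi_A\,d\mu = \lambda_1(A|_L) = \lambda_1(A)$, i.e.\ $\mu$ is maximizing. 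Combining the two directions and passing back from ergodic measures to general stationary measures via the ergodic decomposition (each ergodic component of a maximizing measure is maximizing, since $\int \phi_A\,d\mu = \lambda_1(A)$ forces $\mathbb{P}$-a.e.\ component to attain the maximal value $\lambda_1(A)$, the others being $\le \lambda_1(A)$) completes the proof.

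The main obstacle is the careful bookkeeping in the converse direction: one must be sure that the span $L$ of $\mathrm{supp}\,\mu$ from Lemma~\ref{prop:lambdaLint} genuinely fails to be contained in $E$ (this uses $\mu(E)=0$ and $\mu(L)=1$), and then that the maximality clause in the definition of the equator rules out $\lambda_1(A|_L) < \lambda_1(A)$ — the subtle point being that one may need to pass to $E + L$ rather than $L$ itself, and to know $\lambda_1(A|_{E+L}) = \max\{\lambda_1(A|_E), \lambda_1(A|_L)\}$, a fact recorded in the discussion preceding this corollary. Everything else is a direct assembly of Lemma~\ref{prop:lambdaLint}, Theorem~\ref{cor:Furstenberg}, and ergodicity.
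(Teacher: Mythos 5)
Your proof is correct, and in the direction $\mu(E)=0\Rightarrow\mu$ maximizing your argument is essentially the paper's: invoke Lemma~\ref{prop:lambdaLint} to get $L=\spn(\mathrm{supp}\,\mu)$ with $\mu(L)=1$ and $\lambda_1(A|_L)=\int\phi_A\,d\mu$, then use the maximality of $E$ to conclude $\lambda_1(A|_L)=\lambda_1(A)$. (The paper states this contrapositively—if $\lambda_1(A|_L)<\lambda_1(A)$ then $L\subset E$, contradicting $\mu(E)=0$—which is the same content with less bookkeeping; you do not really need the detour through $E+L$.) Where you genuinely diverge is in the direction maximizing $\Rightarrow\mu(E)=0$: the paper uses item~(iii) of Corollary~\ref{cor:kigman-fusternberg} to get $\lambda(\hat{x})=\lambda_1(A)$ for $\mu$-a.e.\ $\hat{x}$, then locates such an $\hat{x}\in E$ and derives $\lambda_1(A)\le\lambda_1(A|_E)<\lambda_1(A)$, a contradiction. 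You instead observe that $P(E)$ is a closed $f_A$-invariant set, so ergodicity forces $\mu(E)\in\{0,1\}$, and if $\mu(E)=1$ then $\mu$ lives on $P(E)$ and Lemma~\ref{prop:lambdaLint} (applied to $A|_E$, or to $A$ itself, noting $L\subset E$) yields $\int\phi_A\,d\mu\le\lambda_1(A|_E)<\lambda_1(A)$. Your version is arguably cleaner and self-contained—it avoids invoking the Kingman-for-Markov-operators pointwise statement—at the minor cost of making the zero-one law explicit. Both your reduction to ergodic measures and your handling of the span $L$ not being contained in $E$ are correct.
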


\begin{proof}
First, note that since any $f_A$-stationary measure is a convex combination of ergodic ones, it suffices to prove the corollary for ergodic $f_A$-stationary measures.

If an ergodic $f_A$-stationary measure $\mu$ satisfies that $\mu(E)=0$, then it is maximizing. Otherwise, by~\eqref{eq:L-mu}, we find a vector space $L$ with $\mu(L)=1$ such that $\lambda_1(A|_L)=\int \phi_A \, d\mu < \lambda_1(A)$. This implies that $L \subset E$ (because the equator is the maximal subspace such that $\lambda_1(A|_E)<\lambda_1(A)$). However, $\mu(E)=0$ and consequently $\mu(L)$ cannot be one.

Conversely, let $\mu$ be an ergodic maximizing $f_A$-stationary measure and assume  $\mu(E)>0$. Moreover, by item (iii) in Corollary~\ref{cor:kigman-fusternberg} and since $\mu$ is a maximizing ergodic measure, we have $\lambda(\hat{x})=\lambda_1(A)$ for $\mu$-a.e.~$\hat{x} \in P(\mathbb{R}^m)$. Then, we can find $\hat{x}$ in $E$ satisfying this condition. For this point, we have $\lambda_1(A)=\lambda(\hat{x}) \leq \lambda_1(A|_E)<\lambda_1(A)$, which is a contradiction.
\end{proof}

\begin{rem} \label{rem:maximazing}
If the equator $E$ is trivial, then $A$ is quasi-irreducible, and thus $\mu(E) = 0$. Consequently, Proposition~\ref{prop:quasi-irreducible} and Corollary~\ref{cor:localization} show that maximizing measures are characterized by their property of assigning zero measure to the equator, regardless of whether it is trivial or not.
\end{rem}

Recall the notion of almost-irreducible cocycle in Definition~\ref{def:almost}. 

\begin{prop} \label{rem:almost}
Let $A \in \LP$ be a locally constant linear cocycle, and denote its equator by $E$. Then, $A$ is almost-irreducible if and only if there exists a compact set $X \subset P(\mathbb{R}^m)$ satisfying:
\begin{enumerate}[leftmargin=1cm]
    \item[(i)] 
    there exists an open set $V \subset X$ such that $A(\omega)X \subset V$ for $\mathbb{P}$-a.e.~$\omega \in \Omega$;
    \item[(ii)] 
    every $f_A$-stationary measure $\mu$ supported on $X$ is maximizing. 
\end{enumerate}
\end{prop}

\begin{proof}
Let $X$ be the compact set from the definition of an almost-irreducible cocycle. Since $X \subset P(\mathbb{R}^m) \setminus E$, every $f_A$-stationary measure $\mu$ supported on $X$ satisfies $\mu(E) = 0$. By Corollary~\ref{cor:localization}, $\mu$ must therefore be maximizing, proving~(ii). Additionally, condition (i) follows directly from the definition of almost-irreducibility. This completes the proof of the forward implication.

For the reverse implication, assume that conditions (i) and (ii) hold. We need to show that $X \subset P(\mathbb{R}^m) \setminus E$. Suppose, for contradiction, that $X \cap E \neq \emptyset$. In this case, $X \cap E$ would form a non-empty, {forward} $A$-invariant compact set. Consequently, there would exist an $f_A$-stationary measure $\mu$ supported on this set. However, such a measure would not be maximizing since it is supported on $E$, contradicting condition (ii), which requires all $f_A$-stationary measures supported on $X$ to be maximizing. Thus, $X \subset P(\mathbb{R}^m) \setminus E$, completing the proof.
\end{proof}

\section{Strong law of large numbers} \index{strong law of large numbers}
Here, we revisit~\cite[Prop.~3.8]{furstenberg1983random} to understand the asymptotic behavior of the vector norms $\|A^n(\omega)x\|$ for unit vectors $x$ far from the equator.

\begin{prop} \label{prop:far-equator}
    Let $A \in \LP$ 
    be a locally constant linear cocycle. If the equator $E$ is non-trivial, then for every unit vector $x \in \mathbb{R}^m \setminus E$,  
    \[
    \lambda_1(A) = \lim_{n \to \infty} \frac{1}{n} \int \log \|A^n(\omega)x\| \, d\mathbb{P} = \lim_{n \to \infty} \frac{1}{n} \log \|A^n(\tilde{\omega})x\| \quad \text{for $\mathbb{P}$-a.e.~$\tilde{\omega} \in \Omega$}.
    \]
\end{prop}

\begin{proof}
According to~\cite[Lem.~3.6]{furstenberg1983random}, for any $A$-invariant subspace $L$ of $\mathbb{R}^m$, 
$$
\lambda_1(A)=\max\big\{\lambda_{1}(A|_L),\,\lambda_{1}(A|_{\mathbb{R}^m/L})\big\}$$ 
{where \(A|_L:\Omega\to \mathrm{GL}(L)\) denotes the restriction cocycle, and
\[
A|_{\mathbb{R}^m/L}:\Omega\to \mathrm{GL}(\mathbb{R}^m/L),
\qquad
A|_{\mathbb{R}^m/L}(\omega)[v]\eqdef [A(\omega)v],
\]
denotes the cocycle induced by \(A\) on the quotient space \(\mathbb{R}^m/L\). Here \([v]\) stands for the class of \(v\in \mathbb{R}^m\) modulo \(L\).}
In particular, since $E$ is the equator of $A$, we have $\lambda_1(A|_E)<\lambda_1(A)$ and thus $\lambda_1(A)=\lambda_{1}(A|_{\mathbb{R}^m/ E})$. We can identify the quotient space $\mathbb{R}^m/E$ with the orthogonal subspace $E^\bot$ to $E$ and the action of $A$ on $\mathbb{R}^m/E$ with the action of $A^\bot$ on $E^\bot$ where 
\[
A=
\begin{pmatrix}
A|_E &  B \\ 0 & A^\bot 
\end{pmatrix} 
\quad \text{with $A|_E: \Omega \to  \mathrm{GL}(E)$ and $A^\bot :\Omega \to  \mathrm{GL}(E^\bot)$}.
\]
Moreover, we have $\lambda_{1}(A|_{\mathbb{R}^m/E})=\lambda_{1}(A^\bot)$ and thus $\lambda_1(A)=\lambda_{1}(A^\bot)$. Consider the projective random map $f_{A^\bot}: \Omega \times P(E^\bot) \to P(E^\bot)$. 

\begin{claim} \label{claim:quasi-irreducible}
     $\lambda_1(A)=\int \phi_{A^\bot} \, d\mu$ for every $f_{A^\bot}$-stationary measure $\mu$. In particular,  ${A^\bot}$ is quasi-irreducible.
\end{claim}

\begin{proof}
By contradiction, we assume that the claim does not hold. Since these integrals can never be greater than $\lambda_{1}(A^\bot) = \lambda_1(A)$, we can assume that there exists an $f_{A^\bot}$-stationary measure $\mu$ such that $\int \phi_{A^\bot} \, d\mu < \lambda_1(A)$. Hence, according to Lemma~\ref{prop:lambdaLint}, there is an $A^\bot$ invariant subspace $L$ of $E^\bot$ such that $\lambda_{1}(A^\bot|_L)= \int \phi_{A^\bot} \, d\mu$. In particular, $\lambda_1(A^\bot|_L)<\lambda_1(A)$.  

Consider $E' = E \oplus L$. Notice that $E'$ is $A$ invariant and since $\lambda_{1}(A|_{E'}) \leq \max\{\lambda_1(A|_E), \lambda_{1}(A^\bot|_L)\}$, we get $\lambda_{1}(A|_{E'}) < \lambda_1(A)$. This contradicts the definition of the equator, since $E'$ contains $E$ strictly.    
\end{proof}

By the above claim, the Law of Large Numbers~\eqref{eq:Kifer-Furstenberg} in Theorem~\ref{cor:Furstenberg} applied to  \(A^\bot: \Omega \to \mathrm{GL}(E^\bot)\) implying that, for every non-zero \(v\in E^\bot\),
\begin{equation*}
\lambda_1(A^\bot)
=\lim_{n\to\infty}\frac1n\int \log \|(A^\bot)^n(\omega)v\|\,d\mathbb{P}
=\lim_{n\to\infty}\frac1n\log \|(A^\bot)^n(\omega)v\|
\end{equation*}
for $\mathbb{P}$-a.e.~$\omega\in\Omega$. 
Since \(\lambda_1(A)=\lambda_1(A^\bot)\), it remains to compare \(A^n(\omega)x\) with \((A^\bot)^n(\omega)v\). Let \(x\in \mathbb{R}^m\setminus E\) be a unit vector and write
\[
x=u+v,
\qquad u\in E,\ \ v\in E^\bot.
\]
Because \(x\notin E\), we have \(v\neq 0\). Moreover, from the block decomposition of \(A\), the lower-right block of \(A^n(\omega)\) is \((A^\bot)^n(\omega)\), and therefore
\[
A^n(\omega)x=\bigl(*,\,(A^\bot)^n(\omega)v\bigr)
\qquad\text{for every }n\ge1.
\]
In particular,
\[
\|(A^\bot)^n(\omega)v\|\le \|A^n(\omega)x\|\le \|A^n(\omega)\|.
\]
Thus, both pointwise and after integration, we have
\[
\frac1n\log \|(A^\bot)^n(\omega)v\|
\le
\frac1n\log \|A^n(\omega)x\|
\le
\frac1n\log \|A^n(\omega)\|
\]
and
\[
\frac1n\int \log \|(A^\bot)^n(\omega)v\|\,d\mathbb{P}
\le
\frac1n\int \log \|A^n(\omega)x\|\,d\mathbb{P}
\le
\frac1n\int \log \|A^n(\omega)\|\,d\mathbb{P}.
\]
The left-hand side converges to \(\lambda_1(A^\bot)=\lambda_1(A)\) by the previous paragraph, while the right-hand side converges to \(\lambda_1(A)\) by~\eqref{eq:limsup4} and the definition of \(\lambda_1(A)\). Therefore, 
\[
\lambda_1(A)=\lim_{n\to\infty}\frac1n\log \|A^n(\omega)x\|
\quad\text{for $\mathbb{P}$-a.e.~$\omega\in\Omega$},
\]
and also
\[
\lambda_1(A)=\lim_{n\to\infty}\frac1n\int \log \|A^n(\omega)x\|\,d\mathbb{P}.
\]
This completes the proof.
\end{proof}

As noted earlier in  Remark~\ref{rem:maximazing}, we can write a unified result for the asymptotic behavior of the iterations of $\|A^n(\omega)x\|$ as follows:  
\begin{rem} \label{rem:law-large-number}  If the equator $E$ is trivial, $A$ is quasi-irreducible and from Proposition~\ref{prop:quasi-irreducible}, all the $f_A$-stationary measure are maximizing. Therefore,~\eqref{eq:Kifer-Furstenberg} and Proposition~\ref{prop:far-equator} show that 
$$
\lambda_1(A)=\lim_{n\to\infty} \frac{1}{n} \int \log \|A^n(\omega)x\| \, d\mathbb{P}= \lim_{n \to \infty} \frac{1}{n} \log \|A^n(\tilde{\omega})x\|
$$
for every $\hat{x}\in P(\mathbb{R}^m)\setminus E$ and $\mathbb{P}$-a.e.~$\tilde{\omega} \in \Omega$
whether the equator $E$ is trivial or not.  
\end{rem}

\section{Angular distance} \label{ss:angular}
Let $d$ be the natural angular distance in $P(\mathbb{R}^m)$, i.e.,
$$
d(\hat{x},\hat{y}) = |\sin(\theta)|, \quad \text{for $\hat{x}, \hat{y} \in P(\mathbb{R}^m)$ and $\theta = \mathrm{angle}(\hat{x}, \hat{y})$.}
\index{metric structures!00angular distance@angular distance on \(P(\mathbb R^m)\)}
$$
Note that this distance does not depend on how the angle $\theta$ between $\hat{x}$ and $\hat{y}$ is measured. That is, $\theta$ can be calculated as the angle between any two non-zero vectors $v$ and $w$ in $\hat{x}$ and $\hat{y}$ respectively.
Thus, taking unit vectors $x$ and $y$ in $\mathbb{R}^m$ representing $\hat{x}$ and $\hat{y}$ respectively, since $\cos(\theta)= \langle x, y \rangle$, one can write 
$$
d(\hat{x}, \hat{y}) = \left(1 - \langle x, y \rangle^2 \right)^{\frac{1}{2}} = \|x \exterior{} y\|.
$$
Also, since $\|x-y\|^2 = 2(1-\cos\theta) \ge 1-\cos^2\theta = d(\hat{x},\hat{y})^2$, it follows that
\[
d(\hat{x},\hat{y})\le \|x-y\|.
\]
Moreover, if \(x\) and \(y\) are chosen such that \(\theta=\mathrm{angle}(\hat x, \hat y)\in[0,\frac{\pi}{2}]\), then
\[
d(\hat{x},\hat{y})=\sin\theta \ge \frac{2}{\pi}\theta \ge \frac{2}{\pi}\|x-y\|. 
\]

A useful representation of the angular distance is given by the orthogonal projection. To do this, for each $\hat{x}\in P(\mathbb{R}^m)$, we define the projection on the orthogonal complement of $\hat{x}$ as the linear map 
$$
 \Pi_{\hat{x}}(v)= v-\langle v,x\rangle x  \quad \text{for $v\in\mathbb{R}^m.$}
$$ 
Then, $$d(\hat{x},\hat{y})=\left\|\Pi_{\hat x}(y)\right\|=\left\|\Pi_{\hat x}-\Pi_{\hat y}\right\|.$$
Now, we can calculate the locally Lipschitz constant for a projective random map $f_A$.

\begin{lem}
    \label{lem:lip-fA}
    Let $A\in \LP$ be a locally constant linear cocycle. Then,
    $$
     L(f_A)^n_\omega(\hat{x})\leq \frac{\|\exterior{2} A^n(\omega)\|}{\|A^n(\omega)x\|^2} \quad \text{for any $(\omega,\hat{x}) \in \Omega\times P(\mathbb{R}^m)$ and $n\geq 1$.}
    $$
    Moreover, 
    \begin{equation} \label{eq:limitation-Lipchitz}
    \mathrm{Lip}((f_A)_\omega) \leq \|A(\omega)\|^2 \|A^{-1}(\omega)\|^2  \leq \max\big\{\|A(\omega)\|, \, \|A^{-1}(\omega)\|\big\}^4.
\end{equation}
\end{lem}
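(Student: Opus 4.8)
The claim has two parts: a bound on the local Lipschitz constant $L(f_A)^n_\omega(\hat x)$ and the resulting global bound on $\mathrm{Lip}((f_A)_\omega)$. The first and main part is a direct computation with the projective action. Writing $g = A^n(\omega) \in \mathrm{GL}(m)$, I want to estimate, for $\hat x, \hat y \in P(\mathbb R^m)$ close to each other, the quotient $d(g\hat x, g\hat y)/d(\hat x,\hat y)$ and take $\hat y \to \hat x$. The plan is to use the representation $d(\hat x,\hat y) = \|x\wedge y\|$ for unit representatives $x,y$ (recalled just above in \S\ref{ss:angular}), together with the fact that $\exterior{2}g$ maps $x\wedge y$ to $gx\wedge gy$. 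Concretely,
$$
d(g\hat x, g\hat y) = \frac{\|gx\wedge gy\|}{\|gx\|\,\|gy\|} = \frac{\|(\exterior{2}g)(x\wedge y)\|}{\|gx\|\,\|gy\|} \leq \frac{\|\exterior{2}g\|}{\|gx\|\,\|gy\|}\, \|x\wedge y\| = \frac{\|\exterior{2}g\|}{\|gx\|\,\|gy\|}\, d(\hat x,\hat y).
$$
Taking $\hat y \to \hat x$, the factor $\|gy\| \to \|gx\|$ by continuity of $v\mapsto \|gv\|$ on the unit sphere, so $L g(\hat x) \leq \|\exterior{2}g\|/\|gx\|^2$, which is exactly the asserted inequality with $g = A^n(\omega)$. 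A small care point: the local Lipschitz constant is defined as an infimum over radii of $\mathrm{Lip}(g|_{B(\hat x,r)})$, so I should phrase the estimate as: for every $\hat y$ in a small ball around $\hat x$ the displayed inequality holds with the denominator $\|gx\|\|gy\|$ bounded below by $\|gx\|^2(1-\epsilon)$, hence $L_r g(\hat x) \leq \|\exterior{2}g\|/(\|gx\|^2(1-\epsilon))$ and let $r\to 0$.

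\textbf{Second part.} For the global bound~\eqref{eq:limitation-Lipchitz} I specialize to $n=1$, write $B = A(\omega)$, and bound $\mathrm{Lip}((f_A)_\omega)$ by the supremum over $\hat x$ of $L(f_A)_\omega(\hat x)$ — wait, that inequality goes the wrong way in general ($\mathrm{Lip}\geq \sup L$), so instead I bound $\mathrm{Lip}((f_A)_\omega)$ directly using the same chain of inequalities above without passing to the limit: $d(B\hat x, B\hat y) \leq \frac{\|\exterior{2}B\|}{\|Bx\|\,\|By\|}\, d(\hat x,\hat y)$ for all $\hat x,\hat y$. Now $\|\exterior{2}B\| \leq \|B\|^2$ and $\|Bx\| \geq \|B^{-1}\|^{-1}$ for every unit vector $x$, likewise $\|By\| \geq \|B^{-1}\|^{-1}$; substituting gives $\mathrm{Lip}((f_A)_\omega) \leq \|B\|^2\|B^{-1}\|^2$. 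The final inequality $\|B\|^2\|B^{-1}\|^2 \leq \max\{\|B\|,\|B^{-1}\|\}^4$ is immediate. Both steps are routine; there is no serious obstacle. The only thing to be slightly careful about is the passage to the limit in the first part (ensuring the denominator is controlled uniformly on a shrinking ball) and the standard facts $\|\exterior{2}B\|\le\|B\|^2$ and $\min_{\|v\|=1}\|Bv\| = \|B^{-1}\|^{-1}$, which I will invoke without proof as elementary linear algebra.
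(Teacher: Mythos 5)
Your proof is correct and follows essentially the same route as the paper: the angular-distance identity $d(\hat x,\hat y)=\|x\wedge y\|$, the action of $\exterior{2}g$ on wedges to get $d(g\hat y,g\hat z)\le\frac{\|\exterior{2}g\|}{\|gy\|\|gz\|}d(\hat y,\hat z)$, a limit to obtain the local bound, and the elementary estimates $\|\exterior{2}M\|\le\|M\|^2$, $\|Mv\|\ge\|M^{-1}\|^{-1}$ for the global bound. One minor imprecision: the local Lipschitz constant $L(f_A)^n_\omega(\hat x)$ involves the ratio $d(g\hat y,g\hat z)/d(\hat y,\hat z)$ over \emph{pairs} $\hat y,\hat z$ in a shrinking ball around $\hat x$, not just pairs with one endpoint fixed at $\hat x$; since your displayed inequality already holds for an arbitrary pair and both denominators $\|gy\|,\|gz\|\to\|gx\|$ as the ball shrinks, the fix is immediate, and the paper states it exactly this way with a $\limsup_{\hat y,\hat z\to\hat x}$.
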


\begin{proof}
Given a matrix $M\in \mathrm{GL}(m)$ and unit vectors $y,z\in\mathbb{R}^m$, we have 
$$
\frac{d(M\hat{y},M\hat{z})}{d(\hat{y},\hat{z})} = \frac{\|My\exterior{} Mz\|}{\|My\|\,\|Mz\|\, \|y\exterior{} z\|} \leq \frac{\|\exterior{2} M\|}{\|My\|\,\|Mz\| } \quad \text{for $\hat{y},\hat{z}\in P(\mathbb{R}^m)$}.
$$
Then, 
\begin{equation} \label{eq:LfA}
    L(f_A)^n_\omega(\hat{x})
= \limsup_{\hat{y},\hat{z}\to \hat{x}} \frac{d(A^n(\omega)\hat{y},A^n(\omega)\hat{z})}{d(\hat{y},\hat{z})} 
\leq  \frac{\|\exterior{2} A^n(\omega)\|}{\|A^n(\omega)x\|^2}
\end{equation}
for any $(\omega,\hat{x}) \in \Omega\times P(\mathbb{R}^m)$ and $n\geq 1$.
Moreover, since $\|\exterior{2} M\| \leq \|M\|^2$ and $\|Mx\|\geq \|M^{-1}\|^{-1}$ for any unitary $x\in \mathbb{R}^m$ and matrix $M$ in $\mathrm{GL}(m)$,  from~\eqref{eq:LfA}  we find~\eqref{eq:limitation-Lipchitz}.
\end{proof}

{Lemma~\ref{lem:lip-fA} gives pointwise upper bounds for the local Lipschitz constants of the projective action. The next lemma complements this by showing how these local quantities control the distortion between two projective directions. This estimate is essentially in~\cite[Prop.~4]{baraviera2019approximating}.

\begin{lem}\label{lem:projective-distance-local}
Let \(A\in \LP\) and \(\alpha>0\). Then, for every \(n\geq 1\), \(\omega\in \Omega\), and distinct
\(\hat{x},\hat{y}\in P(\mathbb{R}^m)\),
\[
\frac{d\big(A^n(\omega)\hat{x},A^n(\omega)\hat{y}\big)^\alpha}{d(\hat{x},\hat{y})^\alpha}
\leq \frac{1}{2}\Big( L(f_A)^n_\omega(\hat{x})^\alpha
+ L(f_A)^n_\omega(\hat{y})^\alpha \Big).
\]
\end{lem}

\begin{proof}
Fix \(n\geq 1\), \(\omega\in \Omega\), and write \(M=A^n(\omega)\). Let \(g=\widehat{M}=(f_A)^n_\omega\).
Take unit representatives \(x,y\in \mathbb{R}^m\) of \(\hat{x},\hat{y}\). By the definition of the angular distance,
\[
\frac{d(g(\hat{x}),g(\hat{y}))^\alpha}{d(\hat{x},\hat{y})^\alpha}
=
\left(\frac{\|Mx\exterior{} My\|}{\|x\exterior{} y\|}\right)^\alpha
\frac{1}{\|Mx\|^\alpha \|My\|^\alpha}.
\]
Using \(ab\leq \frac12(a^2+b^2)\) with \(a=\|Mx\|^{-\alpha}\) and \(b=\|My\|^{-\alpha}\), we get
\[
\frac{d(g(\hat{x}),g(\hat{y}))^\alpha}{d(\hat{x},\hat{y})^\alpha}
\leq \frac12\left[
\left(\frac{\|Mx\exterior{} My\|}{\|x\exterior{} y\|\,\|Mx\|^2}\right)^\alpha
+
\left(\frac{\|Mx\exterior{} My\|}{\|x\exterior{} y\|\,\|My\|^2}\right)^\alpha
\right].
\]
Now set
\[
u_x\eqdef \frac{\Pi_{\hat{x}}(y)}{\|\Pi_{\hat{x}}(y)\|}\in T_{\hat{x}}P(\mathbb{R}^m),
\qquad
u_y\eqdef \frac{\Pi_{\hat{y}}(x)}{\|\Pi_{\hat{y}}(x)\|}\in T_{\hat{y}}P(\mathbb{R}^m).
\]
Since \(\|\Pi_{\hat{x}}(y)\|=\|x\exterior{} y\|\), the derivative formula for the projective action gives
\[
\|Dg(\hat{x})u_x\|
=
\frac{\|Mx\exterior{} My\|}{\|x\exterior{} y\|\,\|Mx\|^2},
\qquad
\|Dg(\hat{y})u_y\|
=
\frac{\|Mx\exterior{} My\|}{\|x\exterior{} y\|\,\|My\|^2}.
\]
Therefore,
\[
\frac{d(g(\hat{x}),g(\hat{y}))^\alpha}{d(\hat{x},\hat{y})^\alpha}
\leq \frac12\Big(\|Dg(\hat{x})\|^\alpha+\|Dg(\hat{y})\|^\alpha\Big).
\]
Finally, since \(g=(f_A)^n_\omega\) is a smooth map on the projective manifold,
\[
\|Dg(\hat{z})\|=L(f_A)^n_\omega(\hat{z})
\qquad\text{for every }\hat{z}\in P(\mathbb{R}^m),
\]
which concludes the proof.
\end{proof}
}

{
\section{Mostly contracting and unique ergodicity} Here, we study  locally constant cocycles $A \in \LP$ under the assumption that $\lambda_1(A)>\lambda_2(A)$. The following result was first established by Aoun and Guivarc'h~\cite[Theorem~2.4(a)]{aoun2020random}. We provide a different simplified proof. 

\begin{prop} \label{thm:Aoun-a} Let $A: \Omega \to \mathrm{GL}(m)$ be a locally constant linear cocycle and denote by  $E$ the (perhaps trivial) equator of $A$. Assume that
\begin{enumerate}[leftmargin=1cm,label=(\roman*)]
    \item  $\log^+\|A^{\pm1}\|$ 
    is $\mathbb{P}$-integrable, and
    \item  $\lambda_1(A)>\lambda_2(A)$.
    \end{enumerate} 
Then there exists a unique $f_A$-stationary measure such that $\mu(E)=0$. 
\end{prop}

\begin{proof}
Let \(\mathcal I\) denote the set of \(f_A\)-stationary probability measures on \(P(\mathbb{R}^m)\).

We first prove existence. Since \(P(\mathbb{R}^m)\) is compact, \(\mathcal I\neq\emptyset\). By Theorem~\ref{cor:Furstenberg}, there exists an ergodic measure \(\mu\in\mathcal I\) such that
$\int \phi_A\,d\mu=\lambda_1(A)$.
If the equator \(E\) is trivial, then \(\mu(E)=0\) is automatic. If \(E\) is non-trivial, then Corollary~\ref{cor:localization} shows that every maximizing stationary measure satisfies \(\mu(E)=0\). This proves existence.

We now prove uniqueness. Let \(\mu,\nu\in\mathcal I\) satisfy \(\mu(E)=\nu(E)=0\). 

\begin{claim} \label{claim:mean-proximality} For every
\(\hat{x},\hat{y}\in P(\mathbb{R}^m)\setminus E\),
\begin{equation}\label{eq:contraction-outside-equator}
\lim_{n\to\infty}\int d(A^n(\omega)\hat{x},A^n(\omega)\hat{y})\,d\mathbb{P}=0.
\end{equation}
\end{claim}
\begin{proof} If \(x,y\) are unit representatives of \(\hat{x},\hat{y}\), then
\[
d(A^n(\omega)\hat{x},A^n(\omega)\hat{y})
\leq
\frac{\|\exterior{2} A^n(\omega)\|}{\|A^n(\omega)x\|\,\|A^n(\omega)y\|}.
\]
By definition, $\lambda_1(A)+\lambda_2(A)$ is the upper Lyapunov exponent associated with $\exterior{2} A^n(\omega)$. Hence, by the Furstenberg-Kesten theorem (see~\eqref{eq:limsup4}), 
\[
\lim_{n\to\infty}\frac1n\log \|\exterior{2} A^n(\omega)\|
=\lambda_1(A)+\lambda_2(A)
\qquad\text{for $\mathbb{P}$-a.e.~\(\omega\in\Omega\)}.
\]
Now, Proposition~\ref{prop:far-equator} gives
\[
\lim_{n\to\infty}\frac1n\log \|A^n(\omega)x\|
=
\lim_{n\to\infty}\frac1n\log \|A^n(\omega)y\|
=
\lambda_1(A)
\qquad\text{for $\mathbb{P}$-a.e.~\(\omega\in\Omega\)}.
\]
Therefore, for \(\mathbb{P}\)-a.e.~\(\omega\in\Omega\),
\[
\limsup_{n\to\infty}\frac1n
\log d(A^n(\omega)\hat{x},A^n(\omega)\hat{y})
\leq
\lambda_1(A)+\lambda_2(A)-2\lambda_1(A)
=
\lambda_2(A)-\lambda_1(A)
<0.
\]
Hence $d(A^n(\omega)\hat{x},A^n(\omega)\hat{y})\to 0$ for $\mathbb{P}$-a.e.~\(\omega\in\Omega\).
Therefore, since the metric \(0\leq d(\cdot,\cdot)\leq 1\), the dominated convergence theorem yields~\eqref{eq:contraction-outside-equator}.
\end{proof}

Let \(\varphi:P(\mathbb{R}^m)\to \mathbb{R}\) be a Lipschitz function, and let \(P\) be the annealed Koopman operator associated with \(f_A\). 
Then, for every \(n\geq 1\) and \(\hat{x},\hat{y}\in P(\mathbb{R}^m)\setminus E\), we have
\begin{align*}
|P^n\varphi(\hat{x})-P^n\varphi(\hat{y})|
&
\leq
\mathrm{Lip}(\varphi)\int d(A^n(\omega)\hat{x},A^n(\omega)\hat{y})\,d\mathbb{P}.
\end{align*}
By~Claim~\ref{claim:mean-proximality}, the last term converges to \(0\) as \(n\to\infty\). Therefore,
\[
P^n\varphi(\hat{x})-P^n\varphi(\hat{y})\longrightarrow 0
\qquad\text{for every }\hat{x},\hat{y}\in P(\mathbb{R}^m)\setminus E.
\]

Fix \(\hat{x}_0\in P(\mathbb{R}^m)\setminus E\). Since \(\mu(E)=0\), we may write
\[
\int P^n\varphi\,d\mu - P^n\varphi(\hat{x}_0)
=
\int_{P(\mathbb{R}^m)\setminus E}
\Big(P^n\varphi(\hat{x})-P^n\varphi(\hat{x}_0)\Big)\,d\mu(\hat{x}).
\]
Since for each \(\hat{x}\in P(\mathbb{R}^m)\setminus E\), the above last integrand converges to \(0\) and $|P^n\varphi(\hat{x})-P^n\varphi(\hat{x}_0)| \leq 2\|\varphi\|_\infty$,
by the dominated convergence theorem,
\[
\int P^n\varphi\,d\mu - P^n\varphi(\hat{x}_0)\longrightarrow 0.
\]
Exactly the same argument, using \(\nu(E)=0\), gives
$\int P^n\varphi\,d\nu - P^n\varphi(\hat{x}_0)\to 0$.

On the other hand, since \(\mu\) and \(\nu\) are \(f_A\)-stationary, 
\begin{align*}
    \int \varphi\,d\mu-\int \varphi\,d\nu &= \int P^n\varphi\,d\mu - \int P^n\varphi\,d\nu \\ &= \big( \int P^n\varphi\,d\mu - P^n\varphi(\hat{x}_0) \big) - \big(\int P^n\varphi\,d\nu - P^n\varphi(\hat{x}_0) \big) \longrightarrow 0.
\end{align*}
We obtain $\int \varphi\,d\mu=\int \varphi\,d\nu$.
Hence \(\mu=\nu\), because Lipschitz functions are uniformly dense in \(C(P(\mathbb{R}^m))\). 
\end{proof}

The following proposition is the counterpart, in our notation, of~\cite[Thm.~2.4(b) and Cor.~2.10]{aoun2020random}. For completeness, we include a proof based on the variational characterization of \(\lambda_1(A)\) established above. Recall the notion of pinnacle in Definition~\ref{def:equator} and that $\Gamma_A$ denotes the semigroup in $\mathrm{GL}(m)$ generated by the support of the distribution~$\nu_A = A_*\mathbb{P}$. 
}
\begin{prop} \label{claim:F-invariant} 
Let \(A\in \LP\) be a cocycle with equator $E$ (perhaps trivial)  such that \(\lambda_1(A)>\lambda_2(A)\). Denote by $S$ the support of the unique \(f_A\)-stationary measure $\mu$ satisfying \(\mu(E)=0\). Then, the following conditions are equivalent: 
\begin{enumerate}[leftmargin=0.8cm]
    \item $S \subset P(\mathbb{R}^m) \setminus E$,
    \item $(f_A)|_S$ is minimal,
    \item there exists $\hat{x}$ such that the closure of its $\Gamma_A$-orbit is contained in $P(\mathbb{R}^m) \setminus E$,
    \item there exists an $f_A$-invariant compact set $X \subset P(\mathbb{R}^m) \setminus E$. 
\end{enumerate}
Moreover, the pinnacle subspace $F$ of $A$ coincides with the subspace of $\mathbb{R}^m$ spanned by $S$. Furthermore, for any set $X$ satisfying (iv), the subspace $L = \langle x \in \mathbb{R}^m : \hat{x} \in X \rangle$ is $A$-invariant, and $F \subset L$.
\end{prop}

\begin{proof}
{We begin with two elementary facts.

\smallskip

\noindent\emph{Fact 1.} If \(Y\subset P(\mathbb{R}^m)\) is a non-empty compact \(f_A\)-invariant set, then there exists an \(f_A\)-stationary probability measure supported on \(Y\).

\smallskip

\noindent\emph{Fact 2.} If \(Y\subset P(\mathbb{R}^m)\) is a closed \(f_A\)-invariant set, then \(Y\) is \(\Gamma_A\)-invariant. Indeed, \(A(\omega)Y\subset Y\) for \(\mathbb{P}\)-a.e.~\(\omega\). Hence, if \(g\in \operatorname{supp}\nu_A\), we can choose \(\omega_n\) in a full \(\mathbb{P}\)-measure set such that \(A(\omega_n)\to g\). Since the projective action is continuous and \(Y\) is closed, we obtain \(gY\subset Y\). Therefore \(Y\) is invariant under the semigroup \(\Gamma_A\) generated by \(\operatorname{supp}\nu_A\).

\smallskip

We now prove the equivalence of \emph{(i)}--\emph{(iv)}.

\smallskip

\noindent\emph{\((i)\Rightarrow(ii)\).}
Assume that \(S\subset P(\mathbb{R}^m)\setminus E\), and let \(Y\subset S\) be a non-empty closed \(f_A\)-invariant set. By Fact~1, there exists an \(f_A\)-stationary probability measure \(\eta\) supported on \(Y\). Since \(Y\cap E=\emptyset\), we have \(\eta(E)=0\). By the uniqueness of the \(f_A\)-stationary measure assigning zero mass to \(E\), it follows that \(\eta=\mu\). Hence
$S=\operatorname{supp}\mu=\operatorname{supp}\eta\subset Y$.
Since \(Y\subset S\), we conclude that \(Y=S\). Thus \((f_A)|_S\) is minimal.

\smallskip

\noindent\emph{\((ii)\Rightarrow(i)\).}
Assume that \((f_A)|_S\) is minimal. Since \(E\) is a closed \(f_A\)-invariant set, the intersection \(S\cap E\) is a closed \(f_A\)-invariant subset of \(S\). If \(S\cap E\neq\emptyset\), minimality gives \(S\subset E\), hence \(\mu(E)=1\), contradicting the choice of \(\mu\). Therefore \(S\cap E=\emptyset\), that is,
$S\subset P(\mathbb{R}^m)\setminus E$.

\smallskip

\noindent\emph{\((i)\Rightarrow(iii)\).}
Fix \(\hat{x}\in S\), and let
$Y\eqdef \overline{\Gamma_A\cdot \hat{x}}$.
Since \(S\) is closed and \(f_A\)-invariant, Fact~2 implies that \(S\) is \(\Gamma_A\)-invariant. As \(\hat{x}\in S\), it follows that $\Gamma_A\cdot \hat{x}\subset S$, and therefore \(Y\subset S\subset P(\mathbb{R}^m)\setminus E\) by $(i)$.

\smallskip

\noindent\emph{\((iii)\Rightarrow(iv)\).}
Set $X\eqdef \overline{\Gamma_A\cdot \hat{x}}$.
Then \(X\) is compact, \(\Gamma_A\)-invariant, and contained in \(P(\mathbb{R}^m)\setminus E\). Since \(A(\omega)\in \operatorname{supp}\nu_A\subset \Gamma_A\) for \(\mathbb{P}\)-a.e.~\(\omega\), the set \(X\) is \(f_A\)-invariant. This proves \emph{(iv)}.

\smallskip

\noindent\emph{\((iv)\Rightarrow(i)\).}
Assume that there exists a compact \(f_A\)-invariant set \(X\subset P(\mathbb{R}^m)\setminus E\). By Fact~1, there exists an \(f_A\)-stationary probability measure \(\eta\) supported on \(X\). Since \(X\cap E=\emptyset\), we have \(\eta(E)=0\), and by uniqueness \(\eta=\mu\). Hence
$S=\operatorname{supp}\mu=\operatorname{supp}\eta\subset X\subset P(\mathbb{R}^m)\setminus E$,
which proves \emph{(i)}.

\smallskip

We now prove the final claims. Let $L_S\eqdef \langle x\in \mathbb{R}^m:\hat{x}\in S\rangle$.
Since \(S\) is compact and \(f_A\)-invariant, the subspace \(L_S\) is \(A\)-invariant. We first prove that \(F\subset L_S\). Since \(\mu\) is supported on \(P(L_S)\), it is an \(f_{A|_{L_S}}\)-stationary measure. Moreover, \(\mu(E)=0\), so \(\mu\) is maximizing; indeed, this is automatic if \(E=\{0\}\), while if \(E\neq\{0\}\), it follows from Corollary~\ref{cor:localization}. Hence
\[
\int \phi_A\,d\mu=\lambda_1(A).
\]
Thus, since \(\phi_{A|_{L_S}}=\phi_A\) on \(P({L_S})\), applying Theorem~\ref{cor:Furstenberg}  to   \(A|_{L_S}\), we obtain
\[
\lambda_1(A|_{L_S})\ge \int \phi_{A|_{L_S}}\,d\mu =\int \phi_A\,d\mu=\lambda_1(A).
\]
Since always \(\lambda_1(A|_{L_S})\le \lambda_1(A)\), we conclude that
$\lambda_1(A|_{L_S})=\lambda_1(A)$ .
By the definition of the pinnacle subspace, this implies \(F\subset {L_S}\).

We now prove the converse inclusion \({L_S}\subset F\). Let \(G\subset \mathbb{R}^m\) be any \(A\)-invariant subspace such that
$\lambda_1(A|_G)=\lambda_1(A)$.

\begin{claim} \(\mu(P(G))=1\). \end{claim} 
\begin{proof} Let \(A|_{\mathbb{R}^m/G}\) be the  cocycle induced by $A$ on quotient space \(\mathbb{R}^m/G\). Since
\(\lambda_1(A)>\lambda_2(A)\) and \(G\) carries the top Lyapunov exponent, it holds $\lambda_1(A|_{\mathbb{R}^m/G})<\lambda_1(A)$.
Choose the quotient norm on \(\mathbb{R}^m/G\) induced by the Euclidean norm, that is, $\|\,[z]\,\|_G \eqdef \inf_{w\in G}\|z-w\|$. Identifying \(\mathbb{R}^m/G\) with the orthogonal complement \(G^\perp\), one has
$\|\,[z]\,\|_G=\|\pi_{G^\perp}(z)\|$,
where \(\pi_{G^\perp}\) denotes the orthogonal projection onto \(G^\perp\). Let \(z\in \mathbb{R}^m\setminus\{0\}\), and write
$z=u+v$ with $u\in G$ and  $v\in G^\perp$.
Then $\|\,[z]\,\|_G=\|v\|$. 
On the other hand, by the definition of the angular distance,
\[
d(\hat z,P(G))
=
\inf_{\hat y\in P(G)} d(\hat z,\hat y) \leq d(\hat z, \hat u) = \left\|\Pi_{\hat u}\!\left(\frac{z}{\|z\|}\right)\right\|.
\]
Observing that \(\Pi_{\hat u}(z/\|z\|)\) is just the component of \(z/\|z\|\) orthogonal to the line \(\mathbb Ru\subset G\), we obtain
\begin{equation} \label{eq:dz}
    d(\hat z,P(G))\le \frac{\|v\|}{\|z\|}
=\frac{\|\,[z]\,\|_G}{\|z\|}.
\end{equation}
Fix \(\hat{x}\in P(\mathbb{R}^m)\setminus E\), and let \(x\) be a unit representative. Applying~\eqref{eq:dz} with \(z=A^n(\omega)x\), and using that the cocycle induced on the quotient satisfies
$[A^n(\omega)x]=(A|_{\mathbb{R}^m/G})^n(\omega)[x]$,
we obtain
\[
d(A^n(\omega)\hat{x},P(G))
\le \frac{\|\,(A|_{\mathbb{R}^m/G})^n(\omega)[x]\,\|_G}{\|A^n(\omega)x\|}.
\]
Taking logarithms, dividing by \(n\), and using Proposition~\ref{prop:far-equator} together with the Furstenberg--Kesten theorem for the quotient cocycle, we get
\[
\limsup_{n\to\infty}\frac1n
\log d(A^n(\omega)\hat{x},P(G))
\le \lambda_1(A|_{\mathbb{R}^m/G})-\lambda_1(A)<0 \quad \text{for \(\mathbb{P}\)-a.e.~\(\omega\).}
\]
Hence
\begin{equation} \label{eq:lim-0-PG}
\lim_{n\to\infty} d(A^n(\omega)\hat{x},P(G))=0 \quad \text{for \(\mathbb{P}\)-a.e.~\(\omega\) and 
for every \(\hat{x}\in P(\mathbb{R}^m)\setminus E\)}.
\end{equation}

Fix \(\varepsilon>0\), and set
$U_\varepsilon \eqdef \{\hat{x}\in P(\mathbb{R}^m): d(\hat{x},P(G))>\varepsilon\}$. Since \(G\) is \(A\)-invariant, the projective subspace \(P(G)\) is \(f_A\)-invariant. Thus, for every \(n\ge1\),
$$
P^n 1_{U_\varepsilon}(\hat{x}) = \int  1_{U_\varepsilon}(A^n(\omega)\hat x)\, d\mathbb{P}
=
\mathbb{P}\big(\{\omega\in \Omega: d(A^n(\omega)\hat{x},P(G))>\varepsilon\}\big).
$$
Moreover, since \(\mu\) is \(f_A\)-stationary, it is \(P^*\)-invariant, and hence
$\mu(U_\varepsilon)= \int  1_{U_\varepsilon}\,d\mu
= \int P^n 1_{U_\varepsilon}\,d\mu$. Using also that \(\mu(E)=0\), we can write
\[
\mu(U_\varepsilon)
=
\int_{P(\mathbb{R}^m)\setminus E}
\mathbb{P}\big(\{\omega\in \Omega: d(A^n(\omega)\hat{x},P(G))>\varepsilon\}\big)\,d\mu(\hat{x}).
\]
By~\eqref{eq:lim-0-PG}, for each \(\hat{x}\in P(\mathbb{R}^m)\setminus E\), the integrand converges to \(0\) as \(n\to\infty\), and it is bounded by \(1\). Hence, by the dominated convergence theorem,
$\mu(U_\varepsilon)=0$. Since this holds for every \(\varepsilon>0\), we conclude that $\mu(P(G))=1$.
\end{proof}

Since \(P(G)\) is closed, every point
\(\hat{x}\notin P(G)\) admits an open neighborhood \(U\subset P(\mathbb{R}^m)\setminus P(G)\). Thus, as  \(\mu(P(G))=1\) by the previous claim, 
$\mu(U)\le \mu(P(\mathbb{R}^m)\setminus P(G))=0$. Hence \(\hat{x}\notin \operatorname{supp}\mu\). Thus
$ S=\operatorname{supp}\mu\subset P(G)$. This  implies \({L_S}\subset G\). Since the above inclusion holds for every \(A\)-invariant subspace \(G\) satisfying
\(\lambda_1(A|_G)=\lambda_1(A)\), it follows from the definition of \(F\) that \({L_S}\subset F\).

Combining both inclusions,} we have that the pinnacle subspace $F$ of $A$ coincides with the subspace $L_S$ spanned by $\mu$. Moreover, since $\mu$ is the unique stationary measure assigning zero measure to the equator, any set $X$ satisfying \emph{(iv)} must contain $S$, and thus $F \subset L = \langle x \in \mathbb{R}^m : \hat{x} \in X \rangle$. Finally, the projectivization $\hat{L}$ of $L$ satisfies $X \subset \hat{L}$, and since $X$ is $f_A$-invariant, it follows that $A(\omega)L \subset L$ for $\mathbb{P}$-a.e.~$\omega\in\Omega$. Since $L$ and $A(\omega)L$ have the same dimension, then $A(\omega)L = L$ for $\mathbb{P}$-a.e.~$\omega \in \Omega$. This completes the last claim of the statement.
\end{proof}

\begin{prop} \label{prop:projective2}
Let $A: \Omega \to \mathrm{GL}(m)$ be a locally constant linear cocycle and denote by  $E$ the (perhaps trivial) equator of $A$. 
{Assume that} 
\begin{enumerate}[leftmargin=1cm,label=(\roman*)]
    \item  $\log^+\|A^{\pm1}\|$ 
    is $\mathbb{P}$-integrable, 
    \item  $\lambda_1(A)>\lambda_2(A)$, and
    \item  there is an $f_A$-invariant compact set $X$ in $P(\mathbb{R}^m)\setminus E$.
\end{enumerate} 
Then the random map $(f_A)|_X:\Omega \times X \to X$ is mostly contracting {and uniquely ergodic}. 
Moreover, under the exponential moment condition~\eqref{eq:exponential-moment-condition} 
it follows that $\mathrm{Lip}((f_A)|_X)^\beta$ is $\mathbb{P}$-integrable for some $\beta>0$ and $(f_A)|_X$ is contracting on average where the exponent $\alpha>0$ in~\eqref{eq:contracting-avarage} can be taken arbitrarily small. Consequently, $(f_A)|_X$ is also proximal and  its associated Koopman operator has spectral gap on $C^\alpha(X)$ for any $\alpha>0$ small enough.  
\end{prop}


\begin{proof} 
By (iii), 
$(f_A)|_X$ is a well-defined random map. 
{Moreover, since $X$ is compact, the set of $(f_A)|_X$-stationary measures $\mu$ is non-empty. Since $\mu$ needs to be supported on $X \subset {P}(\mathbb{R}^m)\setminus E$, we have that $\mu(E)=0$. Hence, Proposition~\ref{thm:Aoun-a} implies that $\mu$ is unique. This proves that $(f_A)|_X$ is uniquely ergodic. }

\begin{claim} \label{claim:mostly-contracting}
$(f_A)|_X$ is mostly contracting.    
\end{claim}
\begin{proof} By Lemma~\ref{lem:lip-fA}, 
$L(f_A)^n_\omega(\hat{x})
\leq  \|\exterior{2} A^n(\omega)\|\cdot \|A^n(\omega)x\|^{-2}$
for any $(\omega,\hat{x}) \in \Omega\times X$ and $n\geq 1$ and hence, 
\begin{equation} \label{eq:log}
     \frac{1}{n}  \log L(f_A)^n_\omega(\hat{x})  
\leq \frac{1}{n} \log \|\exterior{2} A^n(\omega)\| -   \frac{2}{n}  \log \|A^n(\omega)x\|.
\end{equation}
By definition, $\lambda_1(A)+\lambda_2(A)$ is the upper Lyapunov exponent associated with $\exterior{2} A^n(\omega)$, i.e.,   
\begin{equation} \label{eq:L1plusL2}
 \lim_{n\to \infty} \frac{1}{n} \int \log \|\exterior{2} A^n(\omega)\| \, d\mathbb{P} =\lambda_1(A)+\lambda_2(A). 
\end{equation}
Since 
$X\subset P(\mathbb{R}^m)\setminus E$, where  $E$ is the (perhaps trivial) equator of $A$,  by Remark~\ref{rem:law-large-number},  
\begin{equation} \label{eq:Furstenberg}
  \lim_{n\to\infty} \frac{1}{n}  \int \log \|A^n(\omega)x\| \, d\mathbb{P} =\lambda_1(A) \quad \text{for every $\hat{x}\in X$}.
\end{equation}
Thus, integrating and taking upper limit in~\eqref{eq:log}, we get in view of~\eqref{eq:L1plusL2} and~\eqref{eq:Furstenberg} that
\begin{align*}
\lambda(\hat{x}) &\eqdef  
\limsup_{n\to\infty} \frac{1}{n} \int  \log L(f_A)^n_\omega(\hat{x}) \, d\mathbb{P} 
\\ &\leq \lambda_1(A)+\lambda_2(A)-2\lambda_1(A)=\lambda_2(A)-\lambda_1(A)
\end{align*}
for any $\hat{x}\in X$. Since $\lambda_1(A)>\lambda_2(A)$ by assumption, we get that $\lambda(\hat{x})<0$ for every $\hat{x}\in X$ and thus, from Corollary~\ref{cor:equivalencia}, we get that $f_A$ is mostly contracting provided $\log^+ \mathrm{Lip}(f_A)$ will be $\mathbb{P}$-integrable.   
To see this, again by Lemma~\ref{lem:lip-fA}, we have 
$\mathrm{Lip}((f_A)_\omega) \leq \|A(\omega)\|^2 \|A^{-1}(\omega)\|^2$ and hence $$\log^+ \mathrm{Lip}((f_A)_\omega) \leq 2 \log \|A(\omega)\| + 2 \log \|A^{-1}(\omega)\|.$$ Then, since $\log^+ \|A^{\pm 1}\|$ is $\mathbb{P}$-integrable by assumption, we also get the required integrability of $\log^+ \mathrm{Lip}(f_A)$.   \end{proof}

Now we prove the second part of the proposition.  To do this, we additionally assume~\eqref{eq:exponential-moment-condition} in the sequel. By Lemma~\ref{lem:lip-fA} and the exponential moment condition~\eqref{eq:exponential-moment-condition}, $\mathrm{Lip}(f_A)^{\beta/4}$ is $\mathbb{P}$-integrable and~\eqref{eq:integral_condition} holds. 
Thus, $(f_A)|_X$ satisfies the conclusions of Theorem~\ref{thmA}.

We can also easily obtain the following (cf.~\cite[Prop.~1]{page1989regularite}).

\begin{claim} \label{lem:quasi-irreducible-contracting-average}
      $(f_A)|_X$ is contracting on average where  $\alpha>0$ in~\eqref{eq:contracting-avarage} can be taken arbitrarily small.
\end{claim}
\begin{proof}  From  Claim~\ref{claim:mostly-contracting} and the above observation, $(f_A)|_X$ is mostly contracting and the integrability condition~\eqref{eq:integral_condition} in Theorem~\ref{thmA} holds. Thus, 
arguing as in the proof of Proposition~\ref{prop:alpha0} to get~\eqref{eq:qmenor1}, we obtain an integer $n\geq 1$ such that for each $\hat{x}\in P(\mathbb{R}^m)$, there are $\alpha_0(x)>0$ and $q(x)<1$ satisfying
$\int  L(f_A)^n_\omega(\hat{x})^{\alpha_0(x)} \, d\mathbb{P}<q(x)$. By continuity and since \(X\) is compact, we can find 
a finite open cover $\{B_1,\dots,B_k\}$ of $X$, and numbers $\alpha_i>0$ and $0<q_i<1$ for $i=1,\dots,k$ 
such that for each $x\in B_i$
\begin{equation} \label{eq:qmenor2}
    \int  L(f_A)^n_\omega(\hat{x})^{\alpha_i} \, d\mathbb{P}<q_i.
\end{equation}
Set \(\alpha_0=\min\{ \alpha_i: i=1,\dots, k\}\).
Now, for every \(0<\alpha\leq \alpha_0\), denote \(p_i=\alpha_i/\alpha>1\) for \(i=1,\dots,k\), and \(q=\max\{ q_i^{1/p_i}: i=1,\dots, k\}<1\). Then, for \(x\in B_i\), by H\"older's inequality for \(p_i\) and~\eqref{eq:qmenor2}, we get
\begin{equation} \label{eq:qmemor3}
\begin{aligned}
    \int L(f_A)^n_\omega(x)^\alpha \, d\mathbb{P} &\leq \left(\int L(f_A)^n_\omega(x)^{p_i\alpha} \, d\mathbb{P}\right)^{1/p_i} \\ &=\left(\int L(f_A)^n_\omega(x_i)^{\alpha_i} \, d\mathbb{P}\right)^{1/p_i }<q_i^{1/p_i}\leq q.
\end{aligned}
\end{equation}
{Applying Lemma~\ref{lem:projective-distance-local} to \(A^n(\omega)\), we get}
$$
\frac{d\big(A^n(\omega)\hat{x},A^n(\omega)\hat{y}\big)^\alpha}{d(\hat{x},\hat{y})^\alpha}  
\leq \frac{1}{2} \big( L(f_A)^n_\omega(\hat{x})^\alpha 
+ L(f_A)^n_\omega(\hat{y})^\alpha \big)  \quad \text{for $\hat{x},\hat{y}\in P(\mathbb{R}^m)$.} 
$$
Integrating, taking supremum and using~\eqref{eq:qmemor3}, we get that 
$$
\sup_{\hat{x}\not = \hat{y}}\int \frac{d\big(A^n(\omega)\hat{x},A^n(\omega)\hat{y}\big)^\alpha}{d(\hat{x},\hat{y})^\alpha}  \, d\mathbb{P}  \leq \sup_{\hat{x}\in P(\mathbb{R}^m)} \int L(f_A)^n_\omega(\hat{x})^\alpha \, d\mathbb{P} \leq q. 
$$
This concludes that $(f_A)|_X$ is contracting on average. 
\end{proof} 

Finally, Claim~\ref{lem:quasi-irreducible-contracting-average} and  Propositions~\ref{prop:contration-proximal}  imply that $(f_A)|_X$ is also proximal,   and in particular mingled. {Moreover, Theorem~\ref{thmA} shows that the Koopman operator $P$ associated with $f_A$ also has a spectral gap on $C^\alpha(X)$ for any $\alpha>0$ small enough}, completing the proof of the proposition. 
\end{proof}

\section{Proof of Proposition~\ref{prop:projective}}
{Let $A\in \LP$ with $\lambda_1(A)>\lambda_2(A)$. If $A$ is a quasi-irreducible, then the equator $E$ is trivial. Thus, we can take $X = P(\mathbb{R}^m)$ in  Proposition~\ref{prop:projective2}. Then, we get that~(1) implies~(3) and also~(1) implies~(2). Clearly~(2) implies (1) from Proposition~\ref{prop:quasi-irreducible}. The following lemma concludes (3) implies (1).

\begin{lem} Let $A \in \LP$ with $\lambda_1(A)>\lambda_2(A)$.  If $f_A$ is mostly contracting, then $A$ is quasi-irreducible.
\end{lem}

\begin{proof}
Arguing by contradiction, assume that \(A\) is not quasi-irreducible, and let \(E\) be its equator. Then \(E\) is a non-trivial \(A\)-invariant subspace and $\lambda_1(A|_E)<\lambda_1(A)$.
Choose the orthogonal decomposition
$\mathbb{R}^m=E\oplus E^\perp$,
and write \(A\) in block form as
\[
A(\omega)=
\begin{pmatrix}
A|_E(\omega) & B(\omega)\\
0 & A^\perp(\omega)
\end{pmatrix}
\qquad\text{for \(\mathbb{P}\)-a.e.~\(\omega\in\Omega\)}.
\]
As in the proof of Proposition~\ref{prop:far-equator}, the cocycle \(A^\perp\) represents the action of \(A\) on the quotient \(\mathbb{R}^m/E\), and therefore $\lambda_1(A)=\max\{\lambda_1(A|_E),\,\lambda_1(A^\perp)\}$.
Since \(\lambda_1(A|_E)<\lambda_1(A)\), it follows that $\lambda_1(A^\perp)=\lambda_1(A)$. Moreover, by Claim~\ref{claim:quasi-irreducible}, the cocycle \(A^\perp\) is quasi-irreducible. Hence, by Proposition~\ref{prop:quasi-irreducible} and the last part of Theorem~\ref{cor:Furstenberg}, for every unit vector \(v\in E^\perp\),
\[
\lim_{n\to\infty}\frac1n\int \log \|(A^\perp)^n(\omega)v\|\,d\mathbb{P}
=
\lambda_1(A^\perp)
=
\lambda_1(A).
\]
Fix unit vectors \(x\in E\) and \(v\in E^\perp\), and let \(\hat x\in P(E)\subset P(\mathbb{R}^m)\) be the projective point determined by \(x\). 

\begin{claim} \label{eq:lowerbound-local-lip-clean} $L(f_A)^n_\omega(\hat x)
\ge
\frac{\|(A^\perp)^n(\omega)v\|}{\|A^n(\omega)x\|}$ for every \(n\ge 1\) and \(\mathbb{P}\)-a.e.~\(\omega\in\Omega\).
\end{claim}
\begin{proof} Since the projective action is smooth,
$
L(f_A)^n_\omega(\hat x)=\|D(f_A)^n_\omega(\hat x)\|$. Identifying \(T_{\hat x}P(\mathbb{R}^m)\) with \(x^\perp\), the derivative formula gives
\[
D(f_A)^n_\omega(\hat x)u
=
\frac{\Pi_{A^n(\omega)\hat x}\big(A^n(\omega)u\big)}{\|A^n(\omega)x\|}
\qquad\text{for }u\in x^\perp.
\]
Since \(x\in E\) and \(v\in E^\perp\), we have \(v\in x^\perp\). Also, \(A^n(\omega)x\in E\), while the \(E^\perp\)-component of \(A^n(\omega)v\) is \((A^\perp)^n(\omega)v\). Therefore
\[
\Big\|\Pi_{A^n(\omega)\hat x}\big(A^n(\omega)v\big)\Big\|
\ge
\|(A^\perp)^n(\omega)v\|.
\]
Substituting \(u=v\) into the derivative formula yields the claim.
\end{proof}
Taking logarithms in Claim~\ref{eq:lowerbound-local-lip-clean}, integrating, and dividing by \(n\), 
\[
\frac1n\int \log L(f_A)^n_\omega(\hat x)\,d\mathbb{P}
\ge
\frac1n\int \log \|(A^\perp)^n(\omega)v\|\,d\mathbb{P}
-
\frac1n\int \log \|A^n(\omega)x\|\,d\mathbb{P}.
\]
Passing to the upper limit and using
\[
\limsup_{n\to\infty}\frac1n\int \log \|A^n(\omega)x\|\,d\mathbb{P}
\le
\lambda_1(A|_E),
\]
we get
\begin{align*}
\lambda(\hat x)
&\eqdef
\limsup_{n\to\infty}\frac1n\int \log L(f_A)^n_\omega(\hat x)\,d\mathbb{P} \\ &\ge
\lambda_1(A^\perp)-\lambda_1(A|_E)  =
\lambda_1(A)-\lambda_1(A|_E)
>0.
\end{align*}
Thus, the maximal annealed Lyapunov exponent of \(f_A\) at the point \(\hat x\in P(E)\) is positive. On the other hand, by Lemma~\ref{lem:lip-fA},
\[
\mathrm{Lip}((f_A)_\omega)\le \|A(\omega)\|^2\|A(\omega)^{-1}\|^2,
\]
so the integrability condition in the definition of mostly contracting is satisfied. Therefore, by Corollary~\ref{cor:equivalencia}, if \(f_A\) were mostly contracting, then
\[
\lambda(\hat y)<0
\qquad\text{for every }\hat y\in P(\mathbb{R}^m).
\]
This contradicts \(\lambda(\hat x)>0\). Hence \(A\) must be quasi-irreducible.
\end{proof}

Now, we suppose that the exponential moment condition~\eqref{eq:exponential-moment-condition} holds. As shown in Proposition~\ref{prop:projective2}, the integral condition~\eqref{eq:integral_condition} follows from this exponential moment condition.   Assuming quasi-irreducibility, Proposition~\ref{prop:projective2} concludes  by taking $X=P(\mathbb{R}^m)$ that (1) implies~(4). 
Moreover, Proposition~\ref{prop:contration-proximal} shows that if $f_A$ is contracting on average and satisfies the integral condition~\eqref{eq:integral_condition}, then $f_A$ is also proximal and uniquely ergodic. That is,~(4) implies~(2). This concludes the equivalence between~(1)--(4). Moreover, under any of these equivalent conditions, $f_A$ is proximal and thus, mingled. Hence, Theorem~\ref{thmA} concludes that~(3) implies~(5). Also by Theorem~\ref{thm:Herver-improved} we have that~(5) implies~(2), completing the equivalence between~(1)--(5) and thus the proof of Proposition~\ref{prop:projective}. }

\begin{rem}
\label{rem:mingled-proximal-not-enough-linear}
In Proposition~\ref{prop:projective}, the properties of proximality or mingledness are consequences of the quasi-irreducibility and its equivalent conditions. However, even in the presence of a
simple top Lyapunov exponent and an exponential moment, these properties do not imply quasi-irreducibility. To see this, let \(0<r<1\), and let \(B:\Omega\to\mathbb R\) be a one-step  random variable with
unbounded essential support such that
\[
   \int (1+|B(\omega)|)^\beta\,d\mathbb P<\infty \quad \text{for some \(\beta>0\)}.
\]
Consider the locally constant linear cocycle $A:\Omega \to \mathrm{GL}(2)$ given by
\[
   A(\omega)=
   \begin{pmatrix}
      r & B(\omega)\\
      0 & 1
   \end{pmatrix}.
\]
Then, possibly replacing \(\beta\) by a smaller positive number, \(A\) satisfies
the exponential moment condition
\[
   \int
   \left(
      \max\{\|A(\omega)\|,\|A(\omega)^{-1}\|\}
   \right)^\beta
   d\mathbb P(\omega)<\infty .
\]
Moreover,
\[
   \lambda_1(A)=0,
   \qquad
   \lambda_2(A)=\log r<0.
\]
The line \(E\) spanned  by the vector $e_1=(1,0)$ is \(A\)-invariant and carries the exponent
\(\log r\). Hence, the equator is non-trivial, and \(A\) is not
quasi-irreducible. Let \(f_A\) be the induced projective random map. In the affine chart
\(u=x/y\) of \(P(\mathbb R^2)\setminus E\), the action is
\[
   u\longmapsto r u+B(\omega),
\]
while \(E\) is the point at infinity. Moreover, for two finite points \(u,v\in\mathbb R\), the affine
distance after \(n\) iterates is
\[
   |(f_A)^n_\omega(u)-(f_A)^n_\omega(v)|
   =
   r^n |u-v|,
\]
and therefore the corresponding projective distance tends to zero. If one of
the two points is \(E\), the unbounded essential support of \(B\) allows one
to choose symbols for which the finite point is sent arbitrarily far in the
affine chart, hence arbitrarily close to \(E\) in projective space. Thus
\(f_A\) is proximal, and consequently, it is mingled by
Remark~\ref{rem:topological-notions}(ii).
\end{rem}

{
\section{Characterization of mostly contracting projective cocycles}

We now complete the characterization of those locally constant linear cocycles
whose projective action is mostly contracting. Proposition~\ref{prop:projective}
gives one implication under the simplicity assumption
\(\lambda_1(A)>\lambda_2(A)\). The next result shows that this assumption is
not an additional technical hypothesis: it is forced by the mostly contracting
property of the projective random map.

\begin{prop} \label{prop:mostly-simplicity}
Let \(A\in\LP\) be a locally constant linear cocycle. If $f_A$   is mostly contracting, then $\lambda_1(A)>\lambda_2(A)$.
\end{prop}

\begin{proof}
We argue by contradiction and assume that
$\lambda_1(A)=\lambda_2(A)\eqdef \lambda$. 
Since \(f_A\) is mostly contracting, Theorem~\ref{cor:local-contraction}
applied to the projective random map gives \(q<1\) with the following property:
for every \(\hat u\in P(\mathbb R^m)\), for \(\mathbb P\)-a.e.
\(\omega\), there exist a neighborhood \(B=B(\omega,\hat u)\) of \(\hat u\)
and a constant \(C=C(\omega,\hat u)>0\) such that
\begin{equation}
\label{eq:projective-local-contraction}
   d\big(A^n(\omega)\hat u,A^n(\omega)\hat v\big)
   \le Cq^n
   \qquad
   \text{for every }\hat v\in B \text{ and every }n\ge1.
\end{equation}
Here \(d\) denotes the angular distance on projective space.

Let \(E\subset\mathbb R^m\) be the equator of \(A\), with the convention
\(E=\{0\}\) when \(A\) is quasi-irreducible. Let also
$H\subset \exterior{2}\mathbb R^m$ be the equator of the exterior-square cocycle \(\exterior{2} A\). Since
\[
   \lambda_1(\exterior{2} A)=\lambda_1(A)+\lambda_2(A)=2\lambda,
\]
Proposition~\ref{prop:far-equator}, applied to \(A\) and to \(\exterior{2} A\),
gives
\begin{equation}
\label{eq:growth-outside-E}
   \lim_{n\to\infty}
   \frac1n\log \|A^n(\omega)x\|
   =
   \lambda
   \qquad
   \text{for every }x\notin E,\ \mathbb P\text{-a.e. }\omega,
\end{equation}
and
\begin{equation}
\label{eq:growth-outside-H}
   \lim_{n\to\infty}
   \frac1n
   \log\|(\exterior{2} A^n(\omega))\xi\|
   =
   2\lambda
   \qquad
   \text{for every }\xi\notin H,\ \mathbb P\text{-a.e. }\omega.
\end{equation}
If one of the equators is trivial, the corresponding statement is understood
for every non-zero vector, as in Remark~\ref{rem:law-large-number}.

We now choose the two projective directions. Since \(E\) is a proper subspace
of \(\mathbb R^m\) and \(H\) is a proper subspace of
\(\exterior{2}\mathbb R^m\), we may choose \(u\in\mathbb R^m\setminus E\) such that
\[
   u\exterior{}\mathbb R^m\not\subset H.
\]
Indeed, the set of directions \(\hat u\) for which
\(u\exterior{}\mathbb R^m\subset H\) is a proper algebraic subset of
\(P(\mathbb R^m)\). If it were all of \(P(\mathbb R^m)\), then \(H\) would
contain every simple bivector \(u\exterior{} v\), hence all of
\(\exterior{2}\mathbb R^m\), which is impossible.

Fix such a unit vector \(u\). Define
$V_u
   \eqdef
   \{v\in\mathbb R^m:\ v\notin E \text{ and } u\exterior{} v\notin H\}$. 
The projectivization \(P(V_u)\) is dense in \(P(\mathbb R^m)\). Indeed, its
complement is contained in the union of the two proper projective subspaces
\[
   P(E)
   \quad\text{and}\quad
   P\big(\{v\in\mathbb R^m:\ u\exterior{} v\in H\}\big).
\]
Choose a countable dense set $\mathcal D\subset P(V_u)$.

For the fixed direction \(\hat u\), the contraction estimate
\eqref{eq:projective-local-contraction} holds for \(\mathbb P\)-a.e.
\(\omega\). Moreover, by \eqref{eq:growth-outside-E} and
\eqref{eq:growth-outside-H}, and since \(\mathcal D\) is countable, there is a
full \(\mathbb P\)-measure set of \(\omega\)'s such that simultaneously
\[
   \lim_{n\to\infty}\frac1n\log\|A^n(\omega)u\|=\lambda,
\]
and, for every \(\hat v\in\mathcal D\),
\[
   \lim_{n\to\infty}\frac1n\log\|A^n(\omega)v\|=\lambda,
   \qquad
   \lim_{n\to\infty}
   \frac1n\log\|A^n(\omega)u\exterior{} A^n(\omega)v\|
   =
   2\lambda .
\]
Choose \(\omega\) in the intersection of these full-measure sets, and let
\(B=B(\omega,\hat u)\) be the neighborhood given by
\eqref{eq:projective-local-contraction}. Since \(\mathcal D\) is dense, we can
choose \(\hat v \in\mathcal D\cap B\).

Since the angular distance satisfies
\[
   d(\hat a,\hat b)
   =
   \frac{\|a\exterior{} b\|}{\|a\|\,\|b\|}
   \qquad
   (a,b\neq0),
\]
we have that
\[
\begin{aligned}
   \lim_{n\to\infty}
   \frac1n
   \log d\big(A^n(\omega)\hat u,A^n(\omega)\hat v\big)
   &=
   \lim_{n\to\infty}
   \frac1n
   \log
   \frac{\|A^n(\omega)u\exterior{} A^n(\omega)v\|}
        {\|A^n(\omega)u\|\,\|A^n(\omega)v\|}  \\
   &= 2\lambda-\lambda-\lambda =0.
\end{aligned}
\]
On the other hand, since \(\hat v\in B\), the contraction estimate
\eqref{eq:projective-local-contraction} gives
\[
   d\big(A^n(\omega)\hat u,A^n(\omega)\hat v\big)
   \le Cq^n
   \qquad\text{for every }n\ge1.
\]
Hence
\[
   \limsup_{n\to\infty}
   \frac1n
   \log d\big(A^n(\omega)\hat u,A^n(\omega)\hat v\big)
   \le \log q<0,
\]
which contradicts the previous limit. Thus
\(\lambda_1(A)=\lambda_2(A)\) is impossible, and the proof is complete.
\end{proof}

As an immediate consequence of Proposition~\ref{prop:projective} and Lemma~\ref{prop:mostly-simplicity}, we get the following corollary.

\begin{cor} Let $A\in \LP$ be a locally constant linear cocycle. Then, $f_A$ is mostly contracting if and only if $A$ is quasi-irreducible and $\lambda_1(A)>\lambda_2(A)$.  
\end{cor}
}

\section{{Quasi-irreducibility of the transposed cocycle}}

Here, we also provide the following result that allows us to get the results of continuity and H\"older continuity of Lyapunov exponents for any cocycle having at most one invariant subspace.  

\begin{cor} \label{cor:atmostone}
Let $A\in\LP$. If $A$ has at most one $A$-invariant space, then either $A$ or $A^T$ is quasi-irreducible.
\end{cor}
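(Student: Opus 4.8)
The plan is to split into the two obvious cases and dispose of the remaining one by a contradiction argument built on the equator and the duality between $A$-invariant and $A^T$-invariant subspaces. Throughout, by ``$A$-invariant subspace'' I mean a \emph{proper non-trivial} one, since $\{0\}$ and $\mathbb{R}^m$ are always invariant; so the hypothesis reads: $A$ has at most one proper non-trivial invariant subspace. The first step is to record that the orthogonal-complement map $L\mapsto L^\perp$ is a bijection between the proper non-trivial $A$-invariant subspaces and the proper non-trivial $A^T$-invariant subspaces: for an invertible matrix $A(\omega)$ one has $A(\omega)L=L$ if and only if $A(\omega)^TL^\perp=L^\perp$, so this equivalence holds $\mathbb{P}$-a.s.\ simultaneously for all $\omega$ in a full-measure set. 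In particular $A^T$ also has at most one invariant subspace. If $A$ (hence $A^T$) has none, then $A$ is irreducible, hence quasi-irreducible, and we are done; so it remains to treat the case where $A$ has exactly one invariant subspace $L$, in which case $A^T$ has exactly one, namely $L^\perp$.

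In that case I argue by contradiction: suppose neither $A$ nor $A^T$ is quasi-irreducible. Then their equators $E_A,E_{A^T}$ are non-trivial. Since $\lambda_1(A|_{E_A})<\lambda_1(A)$ forces $E_A\neq\mathbb{R}^m$, the equator $E_A$ is a proper non-trivial $A$-invariant subspace, so by uniqueness $E_A=L$ and $\lambda_1(A|_L)<\lambda_1(A)$. Symmetrically $E_{A^T}=L^\perp$ and, using $\lambda_1(A)=\lambda_1(A^T)$ (the Furstenberg--Kifer identity recalled above), $\lambda_1(A^T|_{L^\perp})<\lambda_1(A)$.

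The key computation is to identify $\lambda_1(A^T|_{L^\perp})$ with $\lambda_1(A|_{\mathbb{R}^m/L})$. Fixing an orthonormal basis whose first $\dim L$ vectors span $L$, the $A$-invariance of $L$ gives the block form $A(\omega)=\begin{pmatrix}A_L(\omega)&B(\omega)\\ 0&C(\omega)\end{pmatrix}$, so $C$ represents the induced action of $A$ on $\mathbb{R}^m/L$ and $A^T(\omega)=\begin{pmatrix}A_L(\omega)^T&0\\ B(\omega)^T&C(\omega)^T\end{pmatrix}$ has $L^\perp$ invariant with $A^T|_{L^\perp}=C^T$; hence $\lambda_1(A|_{\mathbb{R}^m/L})=\lambda_1(C)=\lambda_1(C^T)=\lambda_1(A^T|_{L^\perp})$. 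Now invoke the identity $\lambda_1(A)=\max\{\lambda_1(A|_L),\lambda_1(A|_{\mathbb{R}^m/L})\}$ from~\cite[Lem.~3.6]{furstenberg1983random} (used already in the proof of Proposition~\ref{prop:far-equator}). Since $\lambda_1(A|_L)<\lambda_1(A)$, this forces $\lambda_1(A|_{\mathbb{R}^m/L})=\lambda_1(A)$, i.e.\ $\lambda_1(A^T|_{L^\perp})=\lambda_1(A)$, contradicting $\lambda_1(A^T|_{L^\perp})<\lambda_1(A)$. This contradiction proves the corollary.

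The main obstacle I expect is purely bookkeeping: making the identification $\lambda_1(A^T|_{L^\perp})=\lambda_1(A|_{\mathbb{R}^m/L})$ fully rigorous by checking that the quotient cocycle on $\mathbb{R}^m/L$ is conjugate to the transpose of $A^T|_{L^\perp}$ (via the block decomposition), and confirming the intended reading of ``at most one $A$-invariant space'' so that a non-trivial equator is forced to coincide with that unique subspace. Everything else follows from the cited facts $\lambda_1(M)=\lambda_1(M^T)$, the $\max$-formula for the top exponent across an invariant subspace, and the characterization ``quasi-irreducible $\iff$ equator trivial''.
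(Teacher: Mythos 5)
Your proposal is correct and follows essentially the same route as the paper's proof: orthogonal-complement duality between $A$- and $A^T$-invariant subspaces, the identification of $A^T|_{E^\perp}$ with the transpose of the quotient cocycle on $\mathbb{R}^m/E$ via the block decomposition, the identity $\lambda_1(A)=\lambda_1(A^T)$, and the max formula $\lambda_1(A)=\max\{\lambda_1(A|_L),\lambda_1(A|_{\mathbb{R}^m/L})\}$ (which the paper invokes indirectly through its derivation in Proposition~\ref{prop:far-equator}). The only difference is cosmetic: you phrase it as a contradiction assuming both $A$ and $A^T$ fail to be quasi-irreducible, whereas the paper directly shows that if $A$ is not quasi-irreducible then $A^T$ is.
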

\begin{proof} Assume that $A$ is not quasi-irreducible and denote by  $E$ the equator. Then, $E^\bot$ is $A^T$-invariant {because
$\langle A^Tv,u\rangle=\langle v,Au\rangle=0$
for every \(v\in E^\perp\) and \(u\in E\). In fact, this subspace is the unique non-trivial proper \(A^T\)-invariant subspace. Indeed,  if \(F\) is a non-trivial proper \(A^T\)-invariant subspace, then arguing as above, \(F^\perp\) is \(A\)-invariant, and hence \(F^\perp=E\) by the assumption that \(A\) has at most one invariant subspace. Therefore \(F=E^\perp\).

Now choose the orthogonal decomposition \(\mathbb R^m=E\oplus E^\perp\). Relative to this splitting, we may write
\[
A=
\begin{pmatrix}
A|_E & B\\
0 & A^\perp
\end{pmatrix}
\quad \text{and} \quad 
A^T=
\begin{pmatrix}
(A|_E)^T & 0\\
B^T & (A^\perp)^T
\end{pmatrix}. \]
In particular, the restriction \(A^T|_{E^\perp}\) is exactly \((A^\perp)^T\).} As shown in the proof of Proposition~\ref{prop:far-equator}, \[\lambda_1(A)=\lambda_{1}(A|_{\mathbb{R}^m/E})=\lambda_{1}(A^\bot).\] Since $\lambda_1(A^T)=\lambda_1(A)$ and   $\lambda_{1}(A^\bot)=\lambda_1((A^\bot)^T)=\lambda_{1}(A^T|_{E^\bot})$, we get that $\lambda_1(A^T)=\lambda_{1}(A^T|_{E^\bot})$. Therefore, $A^T$ is quasi-irreducible.   
\end{proof}

A cocycle $A\in \LP$ is said to be \emph{diagonalizable} if for $\mathbb{P}$-a.e.~$\omega\in \Omega$, the matrices $A(\omega)$ are simultaneously diagonalizable over $\mathbb{R}$. That is, there is $P\in \mathrm{GL}(m)$ such that $D(\omega)=P^{-1}A(\omega)P$ is a diagonal matrix for $\mathbb{P}$-a.e.~$\omega\in \Omega$. For locally constant cocycles valued at $\mathrm{GL}(2)$, this is equivalent to the existence of two different transverse $A$-invariant lines.  
The following result provides a similar classification in the cases (not mutually exclusive) exhibited in~\cite[Lemma~2.3]{duarte2020large}. 
\enlargethispage{0.2cm}
\begin{cor} \label{prop:dichotomy}
    For a cocycle $A \in \mathrm{L}_{\mathbb{P}}(2)$ one of the  following conditions hold: 
    \begin{enumerate}[leftmargin=1cm]
        \item $A$ is diagonalizable,
        \item $A$ or $A^T$ is quasi-irreducible.
    \end{enumerate}
    Moreover, a diagonalizable cocycle $A$ is quasi-irreducible if and only if \mbox{$\lambda_1(A)=\lambda_2(A)$.}
\end{cor}
\begin{proof}
    Assume that $A$ is not diagonalizable.  This means $A$ has either exactly one invariant line, or no invariant lines. If $A$ has no invariant lines, then $A$ is quasi-irreducible by definition (see Equation~\eqref{eq:reducible-Lyapunov-exponent}). If $A$ has exactly one invariant line $L$, by Corollary~\ref{cor:atmostone} we get that $A$ or $A^T$ is quasi-irreducible.

    To show the last observation in the statement of the proposition, observe that $\lambda_1(A)=\lambda_2(A)$ immediately implies that $A$ is quasi-irreducible.  Conversely, if $A$ is diagonalizable and quasi-irreducible, then $\lambda_1(A)=\lambda_2(A)$ because otherwise $A$ has an invariant line $L$ where $\lambda_1(A)>\lambda_1(A|_L)=\lambda_2(A)$.  This concludes the proof.
\end{proof}


\chapter{Limit theorems for linear cocycles} \label{chap:CLT-linear}

\abstract{ {
This chapter proves limit theorems for locally constant linear cocycles by combining the projective contraction theory developed earlier with the quasi-compact perturbation method of Hennion and Herv\'e. We first explain how limit theorems for Lipschitz random maps with a quasi-compact annealed Koopman operator apply to one-step observables depending on both the random parameter and the phase point. We then apply this framework to the projective action of a linear cocycle. Quasi-irreducibility and simplicity of the first Lyapunov exponent correspond to the spectral gap of the projective action; from this, we obtain central limit theorems, large deviation estimates, and Berry-Esseen bounds for the logarithmic growth of random matrix products, both along fixed directions and in operator norm. We also discuss the non-degeneracy of the variance in terms of the pinnacle subspace. Finally, the chapter treats cases beyond the uniquely ergodic, or spectral-gap, regime. In particular, we obtain these results  for cocycles in \(\mathrm{GL}(2)\) with a simple top Lyapunov exponent.
}
}


\section{Limit theorems for random maps via quasi-compactness}
\label{ss:limit-theorems-HH}
The results in~\cite[\S X.4]{HH:01} are established in a framework characterized by the following elements: \begin{enumerate}[label=$\bullet$, leftmargin=0.5cm] 
\item a measurable space $(E,\mathscr{E})$,
\item a measurable function $\xi:E\to \mathbb{R}$,
\item an annealed Koopman operator $Q$ associated with a random map $f:\Omega\times E \to E$, 
\item a Banach subspace $\mathcal{B}$ of the space $B(E)$ of complex-valued bounded   functions on~$E$. 
\end{enumerate} 
The required conditions are  (see~\cite[Condition $\mathcal{H}{[2]}$ and $\mathcal{K}{[2,d]}$]{HH:01}):

\begin{enumerate}[label=(\alph*)]
    \item $1\in \mathcal{B}$, and $\mathcal{B}$ is both a Banach lattice and a Banach algebra;
    \item $Q$ is quasi-compact on $\mathcal{B}$;   
    \item There is a real neighborhood $I_0$ of $t=0$ such that the map $t\mapsto Q(t)$, defined by
    $$
     Q(t)\phi= Q(e^{it\xi}\phi), \quad \text{for any $\phi\in \mathcal{B}$}, 
    $$
    has continuous derivatives up to the second order, and the derivative operator satisfies  {$Q^{(k)}(0)\phi = Q\big((i\xi)^k\phi\big)$} for $k=1,2$.  
\end{enumerate}

By~\cite[Lemma~VIII.10]{HH:01}, if $\xi \in \mathcal{B}$, then the Fourier-twisted operator $Q(\cdot)$ is an entire function, and its derivatives at zero satisfy the required relation in (c). This significantly simplifies the hypotheses needed to apply the limit theorems (CLT and LD), reducing them essentially to having a suitable function space and the quasi-compactness of the annealed Koopman operator on that space.

In~\cite[Chapter X]{HH:01}, the authors also developed a technique to apply the limit theorems to Lipschitz random maps $f:\Omega \times X \to X$, where the associated annealed Koopman operator $P$ is quasi-compact on $C^\alpha(X)$, and the potential $\xi$ is defined on $T\times X$. Note that this potential is not in $C^\alpha(X)$ and does not meet the assumption (c) above for the Fourier-twisted operator associated with $P$, as $\xi$ is not defined on $X$ alone. For our purposes, $(X,d)$ is a compact metric space, but the theory developed by Hennion and Herv\'e also applies to the non-compact case. We will now describe the method outlined in~\cite{HH:01} using our notation.

We first extend the compact metric space $X$ to the measurable product space $E=T\times X$, where $(T,\mathscr{A},p)$ is the probability space defining the sample space $(\Omega,\mathscr{F},\mathbb{P})=(T^\mathbb{N},\mathscr{A}^\mathbb{N},p^{\mathbb{N}})$. The random map $f:\Omega\times X\to X$ is then extended to $\bar{f}:\Omega \times E \to E$ defined by
$$
\bar{f}_\omega(t,x)=(\omega_0,f_t(x))\quad \text{for $(t,x) \in E$ and $\omega=(\omega_i)_{i\geq 0} \in \Omega$.}
$$
The annealed Koopman operator associated with $\bar{f}$ is given by
$$
Q\phi(t,x) = \int \phi(\omega_0,{f}_t(x))\, d\mathbb{P} \quad \text{for $(t,x) \in E$ and  $\phi\in C_b(E)$}.  
$$
For $\phi\in C^\alpha(X)$, define $J(\phi)\in B(E)$ by 
$$
J(\phi)(\bar{x})=\phi(f_t(x)), \quad \text{for $\bar{x}=(t,x)\in E$}.
$$
The function $J$ is  {injective} from $C^\alpha(X)$ to $B(E)$. Setting $\mathcal{B}=J(C^\alpha(X))$ and $\|J(\phi)\|=\|\phi\|_\alpha$, $J:C^\alpha(X)\to \mathcal{B}$ is an isometry, making  $(\mathcal{B},\|\cdot\|)$  a Banach space that satisfies the assumption~(a) above. 
Moreover, as shown in~\cite[Proof of Corollary~X.8]{HH:01}, $J(P\phi)=QJ(\phi)$ for every $\phi\in C^\alpha(X)$. Therefore, the quasi-compactness of $P$ is transferred to $Q$, fulfilling condition~(b). It has also been demonstrated that    if $\mu$ is an $f$-stationary measure, then $\hat{\mu}=p\times \mu$ is $\bar{f}$-stationary and
    $\int J(\phi)\, d\hat{\mu}=\int \phi \, d\mu$.
Similarly, the Fourier-twisted operator $Q(\cdot)$ satisfies
\begin{equation} \label{eq:relation}
\begin{aligned}
     Q(t)J(\phi) &= J(\bar{P}(t)\phi), \ \ \ \text{where} \\
\bar{P}(t)\phi(x) &=\int e^{it\xi(t,x)}\phi(f_t(x))\, dp(t) \quad \text{for $\phi\in C^\alpha(X), \ x\in X$}. 
\end{aligned}
\end{equation}
See \cite[proof of Theorem X.9]{HH:01}. 
Under the finite moment assumption,
\begin{equation}
    \label{eq:moment-HH}
K_\xi(\theta) \eqdef \int e^{\theta \|\xi_t\|_\infty}(1+|\xi_t|_1+\mathrm{Lip}(f_t))\, dp(t) <\infty  \quad \text{for some $\theta> 0$},
\end{equation}
where $\xi_t$ denotes the function $\xi_t(x)=\xi(t,x)$, and $|\xi_t|_1$ is the Lipschitz constant of this function,
Hennion and Herv\'e~\cite[Proposition~X.10]{HH:01} showed that $\bar{P}(\cdot)$ is holomorphic near $z=0$ and computed its derivatives at zero. Combined with~\eqref{eq:relation}, this result implies that the Fourier-twisted operator $Q(\cdot)$ satisfies condition (c).
Therefore, the limit theorems apply 
to the Birkhoff sum
$$
S_n\xi(\omega,(t,x))\eqdef\sum_{k=1}^n \xi(\bar{f}^k_\omega(t,x)), \quad \text{for $(t,x)\in T\times X$ and $\omega\in \Omega$},
$$
under the assumption that $\xi$ is orthogonal to the space of  $\bar{f}$-stationary product measures. Moreover, we can assume, without loss of generality, that there exists an $e\in T$ such that $f_e=\mathrm{id}_X$. Hence, 
\begin{equation} \label{eq:Sn}
S_n\xi(\omega,(e,x))=\sum_{k=1}^n \xi(\omega_{k-1},f^{k-1}_\omega(x)) \quad \text{for all $x\in X$, $\omega\in \Omega$, and $n\geq 1$}. 
\end{equation}

In summary: 

\begin{rem} \label{rem:sum} All the conclusions in Propositions~\ref{maincor2},~\ref{maincor3} and~\ref{maincor4} hold for the sequence of Birkhoff sums~\eqref{eq:Sn} instead $S_n\phi(\omega,x)$ under the assumption: 
\begin{itemize}
    \item $f:\Omega\times X\to X$ is a Lipschitz random map whose associated annealed Koopman operator $P$ is quasi-compact on $C^\alpha(X)$; 
    \item the potential $\xi:T\times X\to\mathbb{R}$ satisfies~\eqref{eq:moment-HH} and has zero integral with respect to any measure $\hat{\mu}_i=p\times \mu_i$ where $\mu_i$  is $f$-stationary.  
\end{itemize} 
The unique point that must be rewritten is the following:  $\sigma_i=0$ if and only if there is $\psi \in C^\alpha(X)$ such that
$$
\xi(t, x)=\psi(x)-\psi(f_{t}(x)) \quad  \text{for \  $\hat{\mu}_i$-a.e.~$(t,x)\in T\times X$}.
$$
\end{rem}

\section{General limit theorem}  
Let $A \in \LP$ be a locally constant cocycle, and consider the equator subspace $E$ and the pinnacle subspace $F$ of $A$, as introduced in Definition~\ref{def:equator}.   We recall that given an $A$-invariant subspace $F \subset \mathbb{R}^m$, the restriction of $A$ to $F$ is identified with the locally constant cocycle $A|_F: \Omega \to \mathrm{GL}(F)$.

\begin{thm} \label{thm:CLT-linear-general}
Let $A \in \LP$ be a locally constant linear cocycle, and denote by $E$ its equator. Assume:
\begin{enumerate}
    \item $A$ satisfies the exponential moment condition~\eqref{eq:exponential-moment-condition},
    \item $\lambda_1(A) > \lambda_2(A)$, and 
    \item there exists an $f_A$-invariant compact set $X \subset P(\mathbb{R}^m) \setminus E$.
\end{enumerate}
Then, there exists $\sigma \geq 0$ such that for every $\hat{x} \in X$, 
\begin{equation} \label{eq:CLT-point}
\frac{\log \|A^n(\omega)x\| - n\lambda_1(A)}{\sqrt{n}}  
\xrightarrow[n\to +\infty]{\text{law}} \mathcal{N}(0, \sigma^2), \index{central limit theorem}
\end{equation}
and there are positive constants $C$ and $h$ (independent of $\hat{x}$) such that  for any $\epsilon>0$ small enough, one can find $n_0=n_0(\epsilon)\in \mathbb{N}$ for which
\begin{equation} \label{eq:LD-point} \index{large deviations}
  \mathbb{P}\left( \left|\frac{1}{n}\log \|A^n(\omega)x\|-\lambda_1(A)\right| > \epsilon \right) \leq C e^{-n h \epsilon^2} \quad \text{for all $n\geq n_0$}.
\end{equation}
Moreover,~\eqref{eq:CLT-point} and~\eqref{eq:LD-point} also hold for $\log \|A^n(\omega)|_L\|$ instead of $\log \|A^n(\omega)x\|$, where $L$ is the $A$-invariant subspace of $\mathbb{R}^m$ spanned by $X$. \\[-0.4cm]

\noindent Furthermore, 
{$$
 \sigma=0 \iff \dim \tilde{F}=1 \ \ \text{and} \ \ \|\tilde{A}(\omega)|_{\tilde{F}}\|  \ \ \text{is $\mathbb{P}$-a.s.~constant}
$$
where $\tilde{F}$ is the pinnacle subspace of $\tilde{A}=A|_{\mathbb{R}^m/E}$ on $\mathbb{R}^m/E$. In particular, if $\sigma>0$,
then} for any $\hat{x} \in X$, we have
\begin{equation} \label{eq:CLT2}
        \sup_{u \in \mathbb{R}} \left|\mathbb{P}\left( \frac{\log \|A^n(\omega)x\| - n\lambda_1(A)}{\sigma \sqrt{n}} \leq u  \right)- \frac{1}{\sqrt{2\pi}} \int_{-\infty}^u e^{-\frac{s^2}{2}}\, ds\right| = O\left(n^{-1/2}\right)
        \index{Berry-Esseen theorem}
\end{equation}
and
\begin{equation} \label{eq:LD2}
    \lim_{n \to \infty} \frac{1}{n} \log \mathbb{P}\left( \left|\frac{1}{n} \log \|A^n(\omega)x\|-\lambda_1(A)\right| > \epsilon \right) = -c(\epsilon)
\end{equation}
where $c(\cdot)$ is a non-negative, strictly convex function that vanishes only at $\epsilon = 0$. Moreover,~\eqref{eq:CLT2} also holds for $\log \|A^n(\omega)|_L\|$, and
\begin{equation} \label{eq:LD3}
    \lim_{n \to \infty} \frac{1}{n} \log \mathbb{P}\left( \frac{1}{n} \log \|A^n(\omega)|_L\|-\lambda_1(A) > \epsilon\right) = -c(\epsilon).
\end{equation}
\end{thm}

\begin{proof}
According to Proposition~\ref{prop:projective2}, under assumptions (i)--(iii), $\mathrm{Lip}((f_A)|_X)^\beta$ is $\mathbb{P}$-integrable for some $\beta > 0$, and $(f_A)|_X$ contracting on average where the exponent $\alpha > 0$ in~\eqref{eq:contracting-avarage} can be made arbitrarily small. Additionally, $(f_A)|_X$ is uniquely ergodic, and its associated annealed Koopman operator is quasi-compact on $C^\alpha(X)$. As noted in~\cite[pg.~76]{HH:01}, this is the key point of the theory.

\begin{claim} \label{claim:bound8}
For every $\hat{x}, \hat{y} \in X$ and $r > 0$, 
$$
\int \sup_{n \geq 1} \big| \log \|A^n(\omega)y\| - \log \|A^n(\omega)x\| \,\big|^r \, d\mathbb{P} < \infty.
$$    
\end{claim}

\begin{proof}
By Lemma~\ref{lem:Holder}, for any $\alpha > 0$, there exists $K_\alpha > 0$ such that for every $\hat{x}, \hat{y} \in X$,
\begin{align} \label{eq:intral}
\int  & \sup_{i \geq 0} \big|\log \|A^{i+1}(\omega)y\| - \log \|A^{i+1}(\omega)x\|\,\big|^r \, d\mathbb{P} \leq  \notag\\
& \leq K_\alpha \int \sum_{i=0}^\infty
  \|A(\sigma^{i}\omega)^{-1}\|^{r\alpha} \, \|A(\sigma^i\omega)\|^{r\alpha} \cdot d(A^{i}(\omega)\hat{x},A^{i}(\omega)\hat{y})^{r\alpha} \, d\mathbb{P}  \\
& \leq K_\alpha \left(\int \|A(\omega)^{-1}\|^{4r\alpha} \, d\mathbb{P}\right)^{1/4} \cdot \left(\int \|A(\omega)\|^{4r\alpha} \, d\mathbb{P}\right)^{1/4} \cdot \notag  \\ &\hspace{4.3cm} \cdot \sum_{i=0}^\infty \left(\int  d(A^{i}(\omega)\hat{x},A^{i}(\omega)\hat{y})^{2r\alpha} \, d\mathbb{P}\right)^{1/2}. \notag
\end{align}
By taking $\alpha > 0$ sufficiently small, the exponential moment~\eqref{eq:exponential-moment-condition} and Lemma~\ref{lemma:2exponents} ensure the finiteness of the first two integrals in the last line of~\eqref{eq:intral}. Furthermore, since $\mathrm{Lip}((f_A)|_X)^\beta$ is $\mathbb{P}$-integrable for some $\beta > 0$ and $(f_A)|_X$ is contracting on average, one can iterate the last integral in~\eqref{eq:intral} similarly to~\eqref{eq:iteration-contracting_average} to obtain
$$
\sum_{i=0}^\infty \left(\int  d(A^i(\omega)\hat{x},A^i(\omega)\hat{y})^{2r\alpha} \, d\mathbb{P}\right)^{1/2} < \infty.
$$
This concludes the proof of the claim.
\end{proof}

\begin{claim} \label{claim:bound9}
For any $\hat{x} \in X$ and for $\mathbb{P}$-a.e.~$\omega \in \Omega$, there exists $C = C(\omega) > 0$ such that
$$
1 \leq \frac{\|A^n(\omega)|_L\|}{\|A^n(\omega)x\|} \leq C \quad \text{for any } n \geq 1.
$$
\end{claim}

\begin{proof}
From Claim~\ref{claim:bound8}, for every $\hat{x}, \hat{y} \in X$, we have
$$
\int \sup_{n \geq 1} \log \frac{\|A^n(\omega)y\|}{\|A^n(\omega)x\|} \, d\mathbb{P} < \infty.
$$
This implies that for $\mathbb{P}$-a.e.~$\omega \in \Omega$, there exists $K > 0$ such that
\begin{equation} \label{eq:final9}
\frac{\|A^n(\omega)y\|}{\|A^n(\omega)x\|} < K \quad \text{for any } n \geq 1.
\end{equation}
Now, consider a (unitary) basis $\{{x}_1,\dots,{x}_d\}$ of $L = \langle x \in \mathbb{R}^m : \hat{x} \in X \rangle$, where $\hat{x}_i \in X$ for $i = 1,\dots,d$. By Proposition~\ref{claim:F-invariant}, $L$ is $A$-invariant, and we write $A(\omega)|_L$ as a matrix in $\mathrm{GL}(L)$, identifying $\|A^n(\omega)|_L\|$ with $\max_i \|A^n(\omega)x_i\|$. Hence, the desired estimate follows by taking $y = x_i$ for $i = 1,\dots,d$ in~\eqref{eq:final9}.
\end{proof}

{\bf \noindent $\bullet$ {CLT, Berry-Esseen and LD for $\boldsymbol{\log \|A^n(\omega)x\|$}:}}  
In~\cite[Thm.~X.14]{HH:01}, the authors proved that {condition (i) implies}  {the moment condition~\eqref{eq:moment-HH} with} 
$$
\xi(t,\hat{x}) = \log \|A(t)x\| - \lambda_1(A), \quad (t,\hat{x}) \in T \times X.
$$
{By Remark~\ref{rem:maximazing},}  $\lambda_1(A) = \int \phi_A d\mu$  where $\phi_A(\hat{x}) = \int \log \|A(\omega)x\|\, d\mathbb{P}$, and $\mu$ is the unique $f_A$-stationary measure supported on $X \subset P(\mathbb{R}^m) \setminus E$. Therefore, $\int \xi \, d\hat{\mu} = 0$, where $\hat{\mu} = p \times \mu$.  

By the results outlined in~\S\ref{ss:limit-theorems-HH}, we apply the theorems in~\cite[Theorems~A, B, and E]{HH:01} to the sequence of Birkhoff sums
\begin{align*}
    S_n\xi(\omega,(e,\hat{x})) &= \sum_{i=1}^n \xi(\omega_{i-1},A^{i-1}(\omega)\hat{x}) 
    = \sum_{i=1}^n \left(\log \frac{\|A^i(\omega)x\|}{\|A^{i-1}(\omega)x\|} - \lambda_1(A) \right) \\
    &= \log \|A^n(\omega)x\| - n\lambda_1(A)
\end{align*}
for every $\hat{x} \in X$. Thus, the conclusions from Propositions~\ref{maincor2},~\ref{maincor3}, and~\ref{maincor4} (with $r = 1$) apply to this sequence. This implies~\eqref{eq:CLT-point} and~\eqref{eq:LD-point} for 
$$Z^x_n(\omega) \eqdef \frac{1}{n} \log \|A^n(\omega)x\| - \lambda_1(A).$$ 
Furthermore,~\eqref{eq:CLT2} and~\eqref{eq:LD2} hold for $Z^x_n$ provided that $\sigma > 0$. 

To obtain the large deviation principle for $|Z^x_n|$, we first note that the results in~\cite{HH:01} can be applied to $-\xi$ as well, since $K_{-\xi} = K_{\xi}$ and $\int -\xi \, d\hat{\mu} = - \int \xi \, d\hat{\mu}$. Therefore, we also have
\begin{align*}
    \mathbb{P}\left( Z^x_n  < - \epsilon \right) \leq  C e^{-n h\epsilon^2}  \ \text{and} 
    \
    \lim_{n \to \infty} \frac{1}{n} \log \mathbb{P}\left( Z^x_n < - \epsilon\right) = -c(\epsilon) \ \text{provided $\sigma > 0$}.
\end{align*}
Since $\mathbb{P}(|Z^x_n| > \epsilon) \leq \mathbb{P}(Z^x_n > \epsilon) + \mathbb{P}(Z^x_n < -\epsilon)$, we conclude~\eqref{eq:LD-point}. To derive~\eqref{eq:LD2}, we apply 
the fact that when the rate function is continuous and strictly convex, the large deviation principle holds for any borel set in $\mathbb{R}$ if and only if the corresponding limit exists for tail events (see~\cite[Lemma~4.4]{huang2012moments}). 

{\bf \noindent  $\bullet$ {LD for $\boldsymbol{\log \|A^n(\omega)|_L\|}$:}}  
Let $\hat{x} \in X$ and define
$$
    W_n(\omega) \eqdef \frac{1}{n}\log \|A^n(\omega)|_L\| - \lambda_1(A).
$$
Since $\|A^n(\omega)x\| \leq \|A^n(\omega)\|$, we have $Z^x_n \leq W_n$. Using the large deviation principle for $Z^x_n$ and identifying $\|M\|$ with $\max_i \|Mx_i\|$ where $M \in \mathrm{GL}(L)$ and $\{x_1,\dots,x_d\}$ is a unitary basis of $L$ with $\hat{x}_i \in X$ for $i = 1,\dots,d$, we obtain
\begin{align*}
\mathbb{P}(|W_n| > \epsilon) &\leq \mathbb{P}(W_n < -\epsilon) + \mathbb{P}(W_n > \epsilon) \\&\leq \mathbb{P}(Z^x_n < -\epsilon) + \sum_{i=1}^{m} \mathbb{P}(Z^{x_i}_n > \epsilon) \leq C e^{-nh \epsilon^2}.
\end{align*}
For the case $\sigma > 0$, we now show~\eqref{eq:LD3}. Using that $Z^x_n \leq W_n$, we first derive
\begin{equation*} 
\liminf_{n\to \infty} \frac{1}{n}\log \mathbb{P}(W_n > \epsilon) \geq \liminf_{n\to \infty} \frac{1}{n}\log \mathbb{P}(Z^x_n > \epsilon) = - c(\epsilon).
\end{equation*}
Next, we prove the upper bound
\begin{equation} \label{eq:rr}
\limsup_{n\to \infty} \frac{1}{n} \log \mathbb{P}(W_n > \epsilon) \leq \limsup_{n\to \infty} \frac{1}{n} \log \sum_{i=1}^{d} \mathbb{P}(Z^{x_i}_n > \epsilon).
\end{equation}
Since $\limsup_{n\to \infty} \frac{1}{n} \log \mathbb{P}(Z^{x_i}_n > \epsilon) = -c(\epsilon)$ for $i = 1,\dots,d$, the sum in~\eqref{eq:rr} is bounded by $d e^{-n(c(\epsilon) - \delta)}$ for large $n$ and small $\delta > 0$. Thus
$$
\limsup_{n\to \infty} \frac{1}{n} \log \sum_{i=1}^{m} \mathbb{P}(Z^{x_i}_n > \epsilon) \leq - c(\epsilon) + \delta.
$$
Since $\delta > 0$ is arbitrary, we obtain the desired upper bound.

{\bf \noindent  $\bullet$ {CLT for $\boldsymbol{\log \|A^n(\omega)|_L\|}$:}}  
Let $\hat{x} \in X$ and consider the sequence of random variables:
\begin{align*}
    V_n(\omega) &\eqdef \frac{1}{\sqrt{n}} \big(\log \|A^n(\omega)|_L\| - n\lambda_1(A)\big), \\
    Y_n(\omega) &\eqdef \frac{1}{\sqrt{n}} \big(\log \|A^n(\omega)|_L\| - \log \|A^n(\omega)x\|\big),  \\
    Z_n(\omega) & \eqdef \frac{1}{\sqrt{n}} \big(\log \|A^n(\omega)x\| - n \lambda_1(A)\big).
\end{align*}
By Claim~\ref{claim:bound9}, $Y_n$ converges almost surely to zero. Hence,  since $V_n = Z_n + Y_n$, we obtain~\eqref{eq:CLT-point} for $\log \|A^n(\omega)|_L\|$ instead of $\log \|A^n(\omega)x\|$.

{\bf \noindent $\bullet$ {Berry-Esseen for $\boldsymbol{\log \|A^n(\omega)|_L\|}$:}}  
Let $Z: \Omega \to \mathbb{R}$ be a random variable. We define
$$
\delta(Z) \eqdef \sup_{u \in \mathbb{R}} \left|\mathbb{P}(Z \leq u) - \frac{1}{\sqrt{2\pi}} \int_{-\infty}^u e^{-\frac{s^2}{2}}\, ds\right|.
$$
According to~\cite[Lemma~1]{bolthausen1982exact}, for any pair of real-valued random variables $Y$ and $Z$, 
$$
\delta(Z+Y) \leq 2\delta(Z) + 3 \left( \int |Y|^2 \, d\mathbb{P} \right)^{1/2}.
$$
Thus, since $V_n = Z_n + Y_n$ and $\delta(Z_n) = O(n^{-1/2})$, we need only show that $\int |Y_n|^2 \, d\mathbb{P} = O(n^{-1})$. By Claim~\ref{claim:bound8}, we have
\begin{align*}
\int |Y_n|^2\, d\mathbb{P} &\leq \frac{1}{\sigma^2 n} \int \max_{j=1,\dots,d} \big|\log \|A^n(\omega)x_j\| - \log \|A^n(\omega)x\|\,\big|^2 \, d\mathbb{P} \\
&\leq \frac{1}{\sigma^2 n} \sum_{j=1}^{d} \int \big|\log \|A^n(\omega)x_j\| - \log \|A^n(\omega)x\|\,\big|^2 \, d\mathbb{P} \\
&\leq \frac{1}{\sigma^2 n} \sum_{j=1}^{d} \int \sup_{i \geq 1} \big|\log \|A^i(\omega)x_j\| - \log \|A^i(\omega)x\|\,\big|^2 \, d\mathbb{P} = O(n^{-1}).
\end{align*}
This concludes~\eqref{eq:CLT2} for $\log \|A^n(\omega)|_L\|$ instead of $\|\log A^n(\omega)x\|$. 

{
{\bf \noindent $\bullet$ {Positivity of the variance:}}
Now, under assumptions \emph{(i)}--\emph{(iii)}, we prove that
$\sigma=0$ if and only if  $\dim \tilde F=1$ and 
$\|\tilde A(\omega)|_{\tilde F}\|$ is $\mathbb P$-a.s.~constant,
where \(\tilde A=A|_{\mathbb R^m/E}\) is the quotient cocycle on
\(\tilde V=\mathbb R^m/E\), and \(\tilde F\) denotes its pinnacle subspace. Since we quotient by the equator \(E\), we have
\[
\lambda_1(\tilde A)=\lambda_1(A)>\lambda_2(A)\ge \lambda_2(\tilde A).
\]
Moreover, by definition of the equator, \(\tilde A\) has a trivial equator, that is, \(\tilde A\) is quasi-irreducible; see Claim~\ref{claim:quasi-irreducible}. Thus \(\tilde A\) satisfies assumptions \emph{(i)}--\emph{(iii)} with \(X=P(\tilde V)\). Furthermore:

\begin{claim}\label{claim:tildeA-tildeF-irreducible}
The restricted cocycle \(\tilde A|_{\tilde F}\) is irreducible.
\end{claim}

\begin{proof}
Let \(H\subset \tilde F\) be a non-trivial \(\tilde A\)-invariant subspace.    Since $\tilde{F}$ is the pinnacle of $\tilde{A}$, we have $\lambda_1(\tilde{A}|_H)<\lambda_1(\tilde{A})$. As $\tilde{A}|_H=(\tilde{A}|_{\tilde{F}})|_H$ and $\lambda_1(\tilde{A})=\lambda_1(\tilde{A}|_{\tilde{F}})$, we get $\lambda_1((\tilde{A}|_{\tilde{F}})|_H)<\lambda_1(\tilde{A}|_{\tilde F}))$, which is not possible since $\tilde{A}|_{\tilde{F}}$ is quasi-irreducible. Thus, there are no proper $\tilde{A}$-invariant subspaces of $\tilde{F}$. In other words, $\tilde{A}|_{\tilde{F}}$  is irreducible. 
\end{proof}

\smallskip

\noindent\emph{-- The reduction:} Let $\hat\pi:P(\mathbb R^m)\setminus E\longrightarrow P(\tilde V)$ be defined by $\hat\pi(\hat x)\eqdef \widehat{\pi(x)}$,
where \(\pi:\mathbb R^m\to \tilde V\) is the canonical projection.  Since \(X\subset P(\mathbb R^m)\setminus E\) is compact, after identifying \(\tilde V\) with \(E^\perp\), the map
$\hat x\longmapsto \|\pi(x)\|/\|x\|$
is continuous and strictly positive on \(X\). Thus, there exists \(c>0\) such that $\|\pi(x)\|\ge c \,\|x\|$ for every $\hat x\in X$. On the other hand, always
$\|\pi(x)\|\le \|x\|$. As \(X\) is \(f_A\)-invariant and by definition  $A(\omega)\pi(x)=\pi(A(\omega)x)$, the same estimates hold for every iterate \(A^n(\omega)x\). Therefore
$
c\,\|A^n(\omega)x\|
\le
\|\tilde A^n(\omega)\pi(x)\|
\le
\|A^n(\omega)x\|
$
for every \(\hat x\in X\), \(\mathbb P\)-a.e.~\(\omega\), and every \(n\ge1\). Hence
\[
\bigl|\log \|A^n(\omega)x\|-\log \|\tilde A^n(\omega)\pi(x)\|\bigr|
\le |\log c|.
\]
From this and since $\lambda_1(A)=\lambda_1(\tilde A)$, it follows that all limit theorems for \(\log\|A^n(\omega)x\|\)  and for
\(\log\|\tilde A^n(\omega)\pi(x)\|\)  have the same centering and the same variance. In particular, it is enough to prove the desired equivalence with the variance associated with the quotient cocycle \(\tilde A\)  on \(\hat\pi(X)\). 

We reduce further from \(\tilde A\) on $\pi(X)$ to its restriction $\tilde A|_{\tilde F}$ on $P(\tilde{F})$. Since  \(\tilde{A}|_{\tilde{F}}\) is irreducible and
 $\lambda_1(\tilde{A}|_{\tilde{F}})=\lambda_1(\tilde A)>\lambda_2(\tilde A)\geq \lambda_2(\tilde{A}|_{\tilde{F}})$,
 by Proposition~\ref{prop:projective}, the projective random map \(f_{\tilde{A}|_{\tilde{F}}}\) is uniquely ergodic. Let \(\mu\) be its unique stationary probability measure on \(P(\tilde F)\). By Proposition~\ref{claim:F-invariant} applied to \(\tilde{A}|_{\tilde{F}}\), the support of \(\mu\) spans \(\tilde F\). 
Choose \(\hat v\) in the support of $\mu$, and let \(v\in \tilde F\setminus\{0\}\) be a representative. Since $\tilde{A}^n(\omega)|_{\tilde{F}}v=\tilde A^n(\omega)v$,
\[
\log\|\tilde{A}^n(\omega)|_{\tilde{F}}v\|=\log\|\tilde A^n(\omega)v\|
\qquad\text{for every }n\ge1\text{ and every }\omega\in\Omega.
\]
Hence, as in addition $\lambda_1(\tilde{A}|_{\tilde{F}})=\lambda_1(\tilde{A})$, the asymptotic variance associated with \(\tilde{A}|_{\tilde{F}}\) at the point \(\hat v\) is the same as the asymptotic variance associated with \(\tilde A\) at \(\hat v\). Since the first part of the theorem applies to both cocycles, this variance is independent of the initial direction. Consequently, it suffices to prove the equivalence with the asymptotic variance of \(\tilde A|_{\tilde F}\) on $P(\tilde{F})$.

\smallskip

\noindent\emph{-- Direct implication:} First assume that $\dim \tilde{F}=1$ and there is $c\geq 0$ such that $\|\tilde{A}(\omega)|_{\tilde{F}}\|=c$ for $\mathbb{P}$-a.e.~$\omega\in \Omega$. Then $\|\tilde A^n(\omega)|_{\tilde F}\|=c^n$ for every $n\ge1$,
and so
\[
\lambda_1(\tilde A|_{\tilde F})
=
\lim_{n\to\infty}\frac1n\log \|\tilde A^n(\omega)|_{\tilde F}\|
=
\log c.
\]
Thus $c=e^{\lambda_1(\tilde{A}|_{\tilde{F}})}$ and hence  for every \(n\ge1\),
$\|\tilde A^n(\omega)|_{\tilde F}\|=e^{n\lambda_1(\tilde{A}|_{\tilde{F}})}$ for \(\mathbb P\)-a.e.~\(\omega\). 
Then
$\log\|\tilde A^n(\omega)|_{\tilde F}\|-n\lambda_1(\tilde{A}|_{\tilde{F}})=0$ for \(\mathbb P\)-a.e.~\(\omega\), 
so the asymptotic variance of $\tilde{A}|_{F}$ is zero. Therefore, $\sigma=0$.

\smallskip
\noindent\emph{-- Converse implication:}
Assume that the asymptotic variance of $\tilde{A}|_{\tilde{F}}$ is zero. To simplify the notation, in what follows we rewrite  $A=\tilde{A}|_{\tilde{F}}$. Notice that $A:\Omega \to \mathrm{GL}(\tilde{F})$ satisfies (i)-(ii), (iii) with $X=P(\tilde F)$ and, by Claim~\ref{claim:tildeA-tildeF-irreducible},  it is also irreducible. Thus, the random map $f_{A}:\Omega \times X \to X$ and the potential $\xi(t,x)=\log \|{A}(t)\|-\lambda_1({A})$, $(t,x)\in T\times X$, satisfy the requeriments to apply Remark~\ref{rem:sum}. 
Then, there exists a function $\psi \in C^\alpha(X)$   such that
\begin{equation} \label{eq:xitx}    
\xi(t, x) = \psi(\hat{x}) - \psi( A(t)\hat{x}) \quad  \text{for $(p\times \mu)$-a.e.~} (t,\hat{x}) \in T \times X.
\end{equation}
{If \(\dim \tilde F=1\), let \(e\) be a unit vector spanning \(\tilde F\). In this case, we have
 \(\mu=\delta_{\hat e}\) and $A(t)\hat e= \hat e$ for $p$-a.e.~$t\in T$. Evaluating the above identity at \(\hat x=\hat e\), 
\[
 \log \|{A}(t)e\| - \lambda_1({A}) = \xi(t,\hat e)=\psi(\hat e)-\psi({A}(t)\hat e)=0
\qquad
\text{for $p$-a.e.~\(t\in T\)}.
\]
Hence, 
$\|{A}(t)\|=e^{\lambda_1(A)}$ for \(p\)-a.e.~\(t\in T\). That is, recalling that here $A$ denotes the restricted cocycle $\tilde{A}|_{\tilde{F}}$, we have that  $\|\tilde{A}(\omega)|_{\tilde{F}}\|$ is $\mathbb{P}$-a.s.~constant as required. This concludes the implication after showing that the case that $\dim \tilde F \geq 2$ is impossible.  
}

Now, we assume that $\dim \tilde F \geq 2$.  
We consider the cocycle defined by $$B(\omega) = e^{-\lambda_1(A)} {A}(\omega).$$ Note that the projective random map $f_B = f_A$, and thus $\mu$ remains the unique $f_B$-stationary measure. Furthermore, $B$ is also irreducible since $A$ is. Once again, we can refine this conclusion:

\begin{claim} \label{claim:contracting222} 
$B:\Omega \to \mathrm{GL}(\tilde{F})$ is strongly irreducible and contracting in the sense that there exists a sequence $(g_n)_{n\geq 1}$ in the semigroup on $\mathrm{GL}(\tilde{F})$ generated by the support of $\nu_{B}=B_*\mathbb{P}$ such that $\|g_n\|^{-1}g_n$ converges to a rank-one endomorphism.     
\end{claim}
\begin{proof} Since $\|B^n(\omega)\|=e^{-n\lambda_1({A})}\|{A}^n(\omega)\|$, we have that $\lambda_1(B)=0$ and 
\begin{align*}
    \lambda_1(B)+\lambda_2(B)
&=
\lim_{n\to\infty}\frac1n\int
\log \big\|\wedge^2(B^n(\omega))\big\|\,d\mathbb{P} \\ 
&=
\lim_{n\to\infty}\frac1n\int
\log \big\|\wedge^2({A}^n(\omega))\big\|\,d\mathbb{P}
-2\lambda_1({A}) \\
&=
\lambda_1({A})+\lambda_2({A})-2\lambda_1({A}).
\end{align*}
We conclude that $\lambda_2(B)=\lambda_2({A})-\lambda_1({A}) <0=\lambda_1(B)$. Since $B$ is also irreducible, by the result of Guivarc'h and Raugi~\cite{GR85} (see also~\cite[Thm.~6.1, Chap.~III]{BouLac:85}) yields that \(B\) is strongly irreducible and contracting. 
\end{proof}

On the other hand, by~\eqref{eq:xitx} and since the projective random maps of $B$ and $A$ coincide, we have that 
$$
\log \|B^n(\omega)x\| = \sum_{i=0}^{n-1} \xi(\omega_i, B^{i-1}(\omega)\hat{x}) = \psi(\hat{x}) - \psi(B^n(\omega)\hat{x}).
$$
for $\mathbb{P}$-a.e.~$\omega = (\omega_i)_{i \geq 0} \in \Omega$, $\mu$-a.e.~$\hat x \in X$, and all $n \geq 1$. 
Since $\psi$ is a continuous function on the compact set $X$, this equation implies the existence of $K>0$ such that $K^{-1}\leq \|B^n(\omega)x\| \leq K$ for $(\mathbb{P}\times \mu)$-a.e.~$(\omega,\hat{x})$ and $n\geq 1$. Now, consider a basis $\{x_1, \dots, x_d\}$ of $\tilde{F}$ with $\hat{x}_1, \dots, \hat{x}_d$ in the support of $\mu$. We can then identify the norm $\|B^n(\omega)\|$ with $\max_i \|B^n(\omega)x_i\|$. Therefore, we have
\begin{equation} \label{eq:limitado}
K^{-1} \leq \|B^n(\omega)\| \leq K \quad \text{for $\mathbb{P}$-a.e.~}\omega \in \Omega \text{ and for all } n \geq 1.
\end{equation}

\begin{claim} \label{eq:annulus-T-correct} It holds that 
    $K^{-1}\le \|M\|\le K$ for every $M\in \Gamma_{B}$, where $\Gamma_{B}$ denotes the semigroup generated by the support of \(\nu_{B}=B_*\mathbb P\). 
\end{claim} 
\begin{proof} Let \(M=M_n\cdots M_1\) be a matrix in $\Gamma_{B}$  with \(M_i\in \operatorname{supp}\nu_{B}\). For each \(i\), choose an open neighborhood \(U_i\) of \(M_i\) with \(\nu_{B}(U_i)>0\). Then
$U=U_n\cdots U_1$ is an open neighborhood of \(M\), and the convolution
$(\nu_{B})^{*n}(U)>0$. Hence \(M\in \operatorname{supp}((\nu_{B})^{*n})\). Since \eqref{eq:limitado} holds for \(n\)-step products almost surely, the support of \((\nu_{B})^{*n}\) is contained in $\{N\in \mathrm{GL}(\tilde{F}): K^{-1}\le \|N\|\le K\}$. Thus, the claim follows.
\end{proof}

By rescaling the norm by $\|M\|_\ast\eqdef K\|M\|$, then Claim~\ref{eq:annulus-T-correct} becomes
\[
1\le \|M\|_\ast\le K^2
\qquad\text{for every }M\in \Gamma_B.
\]
Hence, according to~\cite[Lem.~8.2, Chap.~V]{BouLac:85}, the irreducibility of $B$ and the above estimate implies that \(\Gamma_B\) is contained in a compact subgroup \(G\subset \mathrm{GL}(\tilde F)\). However, this is impossible because \(\dim \tilde F\ge2\) and \(B\) is contracting by Claim~\ref{claim:contracting222} . Indeed, choose \(g_n\in \Gamma_B\) such that $g_n/\|g_n\|_\ast \to R$
with \(\operatorname{rank}R=1\). Since \(G\) is compact, after passing to a subsequence, we have \(g_n\to g\in G\). Because \(1\le \|g_n\|_\ast\le K^2\), we may also assume \(\|g_n\|_\ast^{-1}\to a\in [K^{-2},1]\). Thus
${g_n}/\|g_n\|_\ast\to ag$. Hence $R=ag$, which is a contraction since \(ag\) is invertible, whereas \(R\) has rank one. 

Now, the proof of the equivalence is complete.} 
\end{proof}

{
The following proposition studies the relation between the pinnacle of $A$ and the pinnacle of $\tilde{A}=A|_{\mathbb{R}^m/E}$. 

\begin{prop}\label{prop:old-implies-new}
Let \(E\) and $F$ be the equator and pinnacle of $A\in \LP$. Consider the quotient cocycle $\tilde A=A|_{\mathbb R^m/E}$ and denote by \(\tilde F\) its  pinnacle subspace.  Then
\[
\tilde F=(F+E)/E \cong F/(F\cap E).
\]
\end{prop}

\begin{proof}
The canonical projection $W \mapsto W/E$ is a bijection between $A$-invariant subspaces of $\mathbb{R}^m$ containing $E$ and $\tilde{A}$-invariant subspaces of $\mathbb{R}^m/E$. Since $\lambda_1(A|_E) < \lambda_1(A)$, the relation
$\lambda_1(A|_W) = \max\{\lambda_1(\tilde{A}|_{W/E}), \lambda_1(A|_E)\}$
ensures that $W/E$ carries the top exponent $\lambda_1(\tilde{A}) = \lambda_1(A)$ if and only if $W$ carries the top exponent $\lambda_1(A)$. By definition, the pinnacle $\tilde{F}$ is the intersection of all such $W/E$. Therefore, $\tilde{F} = W_0/E$, where $W_0$ is the intersection of all $A$-invariant subspaces $W \supset E$ carrying the top exponent. 

We claim $W_0 = F+E$. First, because the pinnacle $F$ is the intersection of {all} $A$-invariant subspaces carrying the top exponent, it is contained in $W_0$. Since $E \subset W_0$ trivially, we have $F+E \subset W_0$. Conversely, $F+E$ is an $A$-invariant subspace containing $E$, and it carries the top exponent since $\lambda_1(A|_{F+E}) = \max\{\lambda_1(A|_F), \lambda_1(A|_E)\} = \lambda_1(A)$. Thus, $F+E$ is one of the subspaces being intersected to form $W_0$, which forces $W_0 \subset F+E$. 

Hence $W_0 = F+E$, giving $\tilde{F} = (F+E)/E$. The second isomorphism theorem provides $(F+E)/E \cong F/(F\cap E)$, completing the proof.
\end{proof}

As a consequence, we get the following sufficient condition to ensure the positivity of the asymptotic variance $\sigma$.

\begin{cor} \label{cor:positive-sigma} Under the assumption (i)--(iii) of Theorem~\ref{thm:CLT-linear-general}, 
\begin{enumerate}[label=(\arabic*)]
    \item If $\dim F-\dim F\cap E\geq 2$, then $\sigma>0$; 
    \item If $E\cap F=\{0\}$, then
    $$
 \sigma=0 \iff \dim {F}=1 \ \ \text{and} \ \ \|A(\omega)|_F\|  \ \ \text{is $\mathbb{P}$-a.s.~constant}.
    $$
\end{enumerate}
\end{cor}
\begin{proof} By Proposition~\ref{prop:old-implies-new}, $\dim \tilde{F}=\dim F-\dim F\cap E$. Thus, the first item follows immediately by the equivalence shown in Theorem~\ref{thm:CLT-linear-general}. To see the second item, suppose $E\cap F=\{0\}$. Then $\dim \tilde{F} = \dim F$, and the canonical projection induces an isomorphism \(J:F\to \tilde F\). By the definition of the quotient cocycle, $\tilde A(\omega)|_{\tilde F}\circ J = J\circ A(\omega)|_F$ for \(\mathbb P\)-a.e.~\(\omega\).
Thus, the restricted cocycles \(A|_F\) and \(\tilde A|_{\tilde F}\) are conjugate. By Theorem~\ref{thm:CLT-linear-general}, $\sigma=0$ if and only if $\dim \tilde{F}=1$ and $\|\tilde A(\omega)|_{\tilde F}\|$ is $\mathbb{P}$-a.s.~constant. Since, in one dimension, the operator norm is simply the absolute value of the corresponding scalar multiplier, we have $\|\tilde A(\omega)|_{\tilde F}\| = \|A(\omega)|_F\|$ almost surely. This establishes the required equivalence and concludes the proof.
\end{proof}

In Theorem~\ref{thm:CLT-linear-general}, we get the CLT and LD limit theorems for $\log \|A^n(\omega)|_L\|$ where $L$ is the $A$-invariant subspace of $\mathbb{R}^m$ spanned by the $f_A$-invariant compact set $X\subset  P(\mathbb{R}^m)\setminus E$ in assumption (iii). When $L=\mathbb{R}^m$, then the limit theorems hold for $\log \|A^n(\omega)\|$. This is the case for instance when $A$ is quasi-irreducible because in such case $X=P(\mathbb{R}^m)$ and $L=\langle x\in \mathbb{R}^m: \hat x \in X\rangle = \mathbb{R}^m$. Moreover, in this quasi-irreducible case, the equator is trivial and thus $\sigma>0$ because of $\dim F-\dim F\cap E =\dim F =m\geq 2$ and Corollary~\ref{cor:positive-sigma}. In the following proposition, we provide a more general sufficient condition to get the CLT with a rate of convergence for $\log \|A^n(\omega)\|$. }

\begin{prop} \label{cor:CLT-geral} 
Let $A \in \LP$ be a locally constant linear cocycle, and denote by $E$ and $F$ its equator and pinnacle, respectively. Assume that
\begin{enumerate}
    \item $A$ satisfies the exponential moment condition~\eqref{eq:exponential-moment-condition},
     \item $\lambda_1(A) > \lambda_2(A)$, 
    \item there exists an $f_A$-invariant compact set $X\subset  P(\mathbb{R}^m) \setminus E$ such that \\ $\mathbb{R}^m = E \oplus L$, where $L$ is the $A$-invariant subspace of $\mathbb{R}^m$ spanned by $X$.
\end{enumerate}
Then, there exists $\sigma \geq 0$ such that
\begin{equation} \label{CLT-norm}
    \frac{\log \|A^n(\omega)\| - n\lambda_1(A)}{\sqrt{n}}  
    \xrightarrow[n\to +\infty]{\rm{law}} \mathcal{N}(0,\sigma^2)
\end{equation} 
and there are positive constants $C$ and $h$ such that  
 for any $\epsilon>0$ small enough, one can find $n_0=n_0(\epsilon)\in \mathbb{N}$ for which
\begin{equation} \label{LD-norm}
  \mathbb{P}\left( \left|\frac{1}{n}\log \|A^n(\omega)\|-\lambda_1(A)\right| > \epsilon \right) \leq C e^{-n h\epsilon^2}  \quad \text{for all $n\geq n_0$}. 
\end{equation} 
~\\[-0.1cm]
\noindent 
Furthermore, 
$$
{\sigma=0 \iff \dim {F}=1 \ \ \text{and} \ \ \|A(\omega)|_F\|  \ \ \text{is $\mathbb{P}$-a.s.~constant}.}
$$
where $F$ denotes the pinnacle of $A$. In particular, if $\sigma>0$, then
\begin{equation} \label{BE-norm} 
    \sup_{u \in \mathbb{R}} \left|\mathbb{P}\left(\frac{\log \|A^n(\omega)\| - n\lambda_1(A)}{\sigma \sqrt{n}} \leq u  \right) - \frac{1}{\sqrt{2\pi}} \int_{-\infty}^u e^{-\frac{s^2}{2}}\, ds\right| = O\left(n^{-1/2}\right).
\end{equation}
\end{prop}

{
\begin{proof}
Fix 
the adapted norm
\[
\|z\|_\ast\eqdef \max\{\|u\|,\|v\|\} \quad \text{where} \ \ z=u+v \in E\oplus L=\mathbb R^m.
\]
Since all norms on \(\mathbb R^m\) are equivalent, there exists \(K>0\) such that
\[
\bigl|\log\|A^n(\omega)\|-\log\|A^n(\omega)\|_\ast\bigr|\le K
\qquad\text{for all }n\ge1\text{ and }\omega\in\Omega.
\]
Thus it is enough to prove the proposition for \(\|\cdot\|_\ast\). With respect to the splitting \(\mathbb R^m=E\oplus L\), we have
\[
A(\omega)=
\begin{pmatrix}
A(\omega)|_E & 0\\
0 & A(\omega)|_L
\end{pmatrix},
\qquad
\|A^n(\omega)\|_\ast
=
\max\{\|A^n(\omega)|_E\|,\|A^n(\omega)|_L\|\}.
\]
By Proposition~\ref{prop:far-equator},
\[
\lambda_1(A|_L)=\lambda_1(A)
\qquad\text{and}\qquad
\lambda_1(A|_E)<\lambda_1(A).
\]


\begin{lem}\label{claim:E-upper-bound-split}
For every \(a\) with $\lambda_1(A|_E)<a<\lambda_1(A)$,
there exist constants \(C>0\) and \(c>0\) such that
\[
\mathbb P\bigl(\|A^n(\omega)|_E\|\ge e^{an}\bigr)\le Ce^{-cn}
\qquad\text{for all }n\ge1.
\]
\end{lem}

\begin{proof} Set $Y_n(\omega) \eqdef \log\|A^n(\omega)|_E\|$. Since $\lambda_1(A|_E)=\inf_{n \geq 1} \frac{1}{n} \int Y_n(\omega)\, d\mathbb{P}$, we can choose \(\ell\ge1\) such that
\begin{equation} \label{eq:menorquea}
    \frac1\ell\int Y_\ell(\omega)\,d\mathbb P<a.
\end{equation}
As in the proof of Proposition~\ref{prop:alpha0}, we have the following:
\begin{claim} It holds that
    \[
\lim_{\alpha\to 0}\frac1{\alpha\ell}\log \int e^{\alpha Y_\ell(\omega)}\,d\mathbb P
=
\frac1\ell\int Y_\ell(\omega)\,d\mathbb P.
\]
\end{claim} \label{claim:derivada}
\begin{proof} Let $M(\omega)\eqdef \max\{\|A(\omega)\|,\|A(\omega)^{-1}\|\}$.
By the exponential moment condition, there exists \(\beta>0\) such that $\int M(\omega)^\beta\,d\mathbb P<\infty$.
Moreover,
\[
|Y_\ell(\omega)|
\le \sum_{j=0}^{\ell-1}\log M(\sigma^j\omega).
\]
Choose \(\eta>0\) so small that \(2\eta\le \beta\). Then
\[
e^{2\eta |Y_\ell(\omega)|}
\le \prod_{j=0}^{\ell-1} M(\sigma^j\omega)^{2\eta},
\]
and therefore, since \(\mathbb P=p^{\mathbb N}\) is a Bernoulli product measure,
\[
\int e^{2\eta |Y_\ell(\omega)|}\,d\mathbb P
\le
\prod_{j=0}^{\ell-1}\int M(\sigma^j\omega)^{2\eta}\,d\mathbb P
=
\left(\int M(\omega)^{2\eta}\,d\mathbb P\right)^\ell
<\infty.
\]
Moreover, since \(u\le \eta^{-1}e^{\eta u}\) for every \(u\ge0\), we also get
\[
|Y_\ell|e^{\eta |Y_\ell|}
\le \eta^{-1}e^{2\eta |Y_\ell|}\in L^1(\mathbb P).
\]
Now define
\[
\phi(\alpha)\eqdef \int e^{\alpha Y_\ell(\omega)}\,d\mathbb P,
\qquad |\alpha|\le \eta.
\]
Then \(\phi(\alpha)<\infty\) for \(|\alpha|\le\eta\), and since $|e^t-1|\leq |t|e^{|t|}$,  we have that 
\[
\left|\frac{e^{\alpha Y_\ell}-1}{\alpha}\right|
\le |Y_\ell|e^{|\alpha||Y_\ell|}
\le |Y_\ell|e^{\eta |Y_\ell|}\in L^1(\mathbb P).
\]
Hence, by dominated convergence,
\[
\lim_{\alpha\to0}\frac{\phi(\alpha)-1}{\alpha}
= \int \lim_{\alpha\to0} \frac{e^{\alpha Y_\ell(\omega)}-1}{\alpha} \, d\mathbb P =
\int Y_\ell(\omega)\,d\mathbb P.
\]
Since \(\phi(0)=1\), it follows that
\[
\phi(\alpha)=1+\alpha\int Y_\ell\,d\mathbb P+o(\alpha).
\]
Setting $u=\alpha \int Y_\ell \, d\mathbb{P}+o(\alpha)$ we have $\phi(\alpha)=1+u$. Applying the expansion \(\log(1+u)=u+O(u^2)\) as $u\to 0$ and using that $u=O(\alpha)$, we have $O(u^2)=O(\alpha^2)$ and
\[
\log \phi(\alpha) = \alpha\int Y_\ell\,d\mathbb P+o(\alpha)+O(\alpha^2)=\alpha\int Y_\ell\,d\mathbb P + o(\alpha).
\]
Therefore,
\[
\lim_{\alpha\to 0}\frac1{\alpha\ell}\log \phi(\alpha)\,d\mathbb P
=
\frac1\ell\int Y_\ell(\omega)\,d\mathbb P,
\]
concluding the proof of the claim.
\end{proof}

Combining~\eqref{eq:menorquea} with Claim~\eqref{claim:derivada}, we get, for \(\alpha>0\) sufficiently small that
\[
0<q\eqdef e^{-\alpha a\ell}\int e^{\alpha Y_\ell(\omega)}\,d\mathbb P
=
e^{-\alpha a\ell}\int \|A^\ell(\omega)|_E\|^\alpha\,d\mathbb P <1
\]
Write \(n=k\ell+r\), where \(0\le r<\ell\). By submultiplicativity,
\[
\|A^n(\omega)|_E\|
\le
\|A^r(\sigma^{k\ell}\omega)|_E\|
\prod_{j=0}^{k-1}\|A^\ell(\sigma^{j\ell}\omega)|_E\|.
\]
Taking \(\alpha\)-th powers and expectations, and using independence, we obtain
\[
\int \|A^n(\omega)|_E\|^\alpha \, d\mathbb P
\le
C_r\left(\int \|A^\ell(\omega)|_E\|^\alpha\,d\mathbb P\right)^k
\]
where
\[
C_r\eqdef \int \|A^r(\omega)|_E\|^\alpha\,d\mathbb P<\infty.
\]
Markov's inequality now gives
\begin{align*}
\mathbb P\bigl(\|A^n(\omega)|\|\ge e^{an}\bigr)
&\le
e^{-\alpha an}\,\int \|A^n(\omega)|_E\|^\alpha \, d\mathbb P \\
&\le
e^{-\alpha a(k\ell+r)}\,C_r
\left(\int \|A^\ell(\omega)|_E\|^\alpha\,d\mathbb P\right)^k \\
&=
e^{-\alpha ar}C_r\,q^k.
\end{align*}
Since \(k\ge n/\ell-1\) and \(0<q<1\), this yields
\[
\mathbb P\bigl(\|A^n(\omega)|_E\|_E\ge e^{an}\bigr)\le Ce^{-cn}
\]
for suitable constants \(C,c>0\).
\end{proof}

We next establish the corresponding limit theorems for the cocycle \(A|_L\). If \(\dim L=1\), let \(x\) be a unit vector spanning \(L\). Since \(L\) is \(A\)-invariant, there exists a measurable map \(\rho:T\to\mathbb R\setminus\{0\}\) such that
\[
A(t)x=\rho(t)x
\qquad\text{for \(p\)-a.e.~\(t\in T\)}.
\]
Set
\[
X_j(\omega)\eqdef \log|\rho(\omega_j)|,
\qquad
S_n(\omega)\eqdef \sum_{j=0}^{n-1}X_j(\omega).
\]
Then \[
\log \|A^n(\omega)|_L\|=S_n(\omega).
\]
Since
\[
|\rho(t)|=\|A(t)|_L\|\le \|A(t)\|,
\qquad
|\rho(t)|^{-1}=\|A(t)|_L^{-1}\|\le \|A(t)^{-1}\|,
\]
the random variable \(X_0\) has finite exponential moments in a neighborhood of the origin. Moreover,
$\lambda_1(A)=\lambda_1(A|_L)=\mathbb E[X_0]$.
Hence the classical central limit theorem and Cram\'er's theorem apply to \((X_j)_{j\ge0}\). In particular, 
there exists \(\sigma\ge0\) such that
\begin{equation} \label{eq:CLT-ultimo}
    \frac{\log\|A^n(\omega)|_L\|-n\lambda_1(A)}{\sqrt n}
\xrightarrow[n\to\infty]{\rm law}\mathcal N(0,\sigma^2),
\end{equation}
and there are \(C_0,h_0>0\) such that, for every sufficiently small \(\varepsilon>0\),
\begin{equation}\label{eq:LD-L-unified}
\mathbb P\left(\left|\frac1n\log\|A^n(\omega)|_L\|-\lambda_1(A)\right|>\varepsilon\right)
\le C_0e^{-h_0n\varepsilon^2}
\end{equation}
for all large \(n\). If \(\sigma>0\), the classical Berry--Esseen theorem also gives
\begin{equation}\label{eq:BE-L-unified}
\sup_{u\in\mathbb R}
\left|
\mathbb P\left(
\frac{\log\|A^n(\omega)|_L\|-n\lambda_1(A)}{\sigma\sqrt n}\le u
\right)-\Phi(u)
\right|
=
O(n^{-1/2}),
\end{equation}
where \(\Phi\) denotes the standard Gaussian distribution function.

Assume now that \(\dim L>1\). Since \(X\subset P(\mathbb R^m)\) spans \(L\), we have \(X\subset P(L)\). We claim that \(A|_L\) is quasi-irreducible. Indeed, if \(H\subset L\) were a non-trivial \(A\)-invariant subspace such that
$\lambda_1(A|_H)<\lambda_1(A|_L)=\lambda_1(A)$,
then \(E\oplus H\) would be an \(A\)-invariant subspace strictly containing \(E\), and
\[
\lambda_1(A|_{E\oplus H})
=
\max\{\lambda_1(A|_E),\lambda_1(A|_H)\}
<
\lambda_1(A),
\]
contradicting the maximality of the equator \(E\). Thus \(A|_L\) is quasi-irreducible. Since
\[
\lambda_2(A|_L)\le \lambda_2(A)<\lambda_1(A)=\lambda_1(A|_L),
\]
Theorem~\ref{thm:CLT-linear-general} applies to \(A|_L\) and the compact \(f_{A|_L}\)-invariant set \(X\subset P(L)\). Therefore 
it holds~\eqref{eq:CLT-ultimo},~\eqref{eq:LD-L-unified} and~\eqref{eq:BE-L-unified}.

Choose \(a\) such that $\lambda_1(A|_E)<a<\lambda_1(A)$,
and so close to \(\lambda_1(A)\) that \(\delta\eqdef \lambda_1(A)-a\) lies in the small-deviation regime of~\eqref{eq:LD-L-unified}. Then
\[
\bigl\{\|A^n(\omega)|_L\|\le e^{an}\bigr\}
=
\left\{\frac1n\log\|A^n(\omega)|_L\|-\lambda_1(A)\le -\delta\right\},
\]
and hence~\eqref{eq:LD-L-unified} gives
\begin{equation}\label{eq:L-lower-tail-unified}
\mathbb P\bigl(\|A^n(\omega)|_L\|\le e^{an}\bigr)\le Ce^{-cn}
\end{equation}
for suitable constants \(C,c>0\) and all large \(n\).

Now define
\[
B_n\eqdef \bigl\{\|A^n(\omega)|_E\|\ge \|A^n(\omega)|_L\|\bigr\}.
\]
Then
\[
B_n\subset
\bigl\{\|A^n(\omega)|_E\|\ge e^{an}\bigr\}
\cup
\bigl\{\|A^n(\omega)|_L\|\le e^{an}\bigr\}.
\]
By Lemma~\ref{claim:E-upper-bound-split} and~\eqref{eq:L-lower-tail-unified}, there exist constants \(C,c>0\) such that
\begin{equation}\label{eq:bad-event-unified}
\mathbb P(B_n)\le Ce^{-cn}
\qquad\text{for all large }n.
\end{equation}
On the complement of \(B_n\), one has $\|A^n(\omega)\|_\ast=\|A^n(\omega)|_L\|$. Set
\[
U_n(\omega)\eqdef \frac{\log\|A^n(\omega)\|_\ast-n\lambda_1(A)}{\sqrt n},
\qquad
V_n(\omega)\eqdef \frac{\log\|A^n(\omega)|_L\|-n\lambda_1(A)}{\sqrt n}.
\]
Since \(U_n=V_n\) on \(\Omega\setminus B_n\), for every \(u\in\mathbb R\),
\[
\bigl|\mathbb P(U_n\le u)-\mathbb P(V_n\le u)\bigr|
\le \mathbb P(B_n).
\]
By~\eqref{eq:bad-event-unified}, the right-hand side tends to \(0\). Since \(V_n\) converges in law to \(\mathcal N(0,\sigma^2)\), the same is true of \(U_n\). This proves~\eqref{CLT-norm} for the adapted norm. Morevor, assuming \(\sigma>0\) and dividing the previous variables by \(\sigma\), the same comparison gives
\[
\sup_{u\in\mathbb R}\left|\mathbb P(U_n/\sigma\le u)-\mathbb P(V_n/\sigma\le u)\right|
\le \mathbb P(B_n).
\]
Combining this with~\eqref{eq:BE-L-unified} and~\eqref{eq:bad-event-unified}, we obtain~\eqref{BE-norm} for the adapted norm, and hence for the original norm.

Similarly, if
\[
\widetilde U_n(\omega)\eqdef \frac1n\log\|A^n(\omega)\|_\ast-\lambda_1(A),
\qquad
\widetilde V_n(\omega)\eqdef \frac1n\log\|A^n(\omega)|_L\|-\lambda_1(A),
\]
then \(\widetilde U_n=\widetilde V_n\) on \(\Omega\setminus B_n\), and therefore
\[
\mathbb P\bigl(|\widetilde U_n|>\varepsilon\bigr)
\le
\mathbb P\bigl(|\widetilde V_n|>\varepsilon\bigr)+\mathbb P(B_n).
\]
Using~\eqref{eq:LD-L-unified} and~\eqref{eq:bad-event-unified}, we obtain~\eqref{LD-norm} for the adapted norm.

Finally, since \(F\subset L\) by Proposition~\ref{claim:F-invariant} and \(E\cap L=\{0\}\), we have
$E\cap F=\{0\}$. Therefore item~(2) of Corollary~\ref{cor:positive-sigma} gives that $\sigma=0$ if and only if  $\dim F=1$ and $\|A(\omega)|_F\|$ is $\mathbb P$-a.s.~constant. This concludes the proof.
\end{proof}
}

{
\begin{exap}[A diagonal counterexample to LD with optimal rate] \label{exap:LD}
Let \(\Omega=\{1,2,3,4\}^{\mathbb N}\) be endowed with the uniform Bernoulli measure
\[
\mathbb P=\left(\tfrac14\delta_1+\tfrac14\delta_2+\tfrac14\delta_3+\tfrac14\delta_4\right)^{\mathbb N},
\]
and define the locally constant cocycle \(A:\Omega\to \mathrm{GL}(2)\) by \(A(\omega)=A_{\omega_0}\), where
\[
A_1= \begin{pmatrix} e^{-5} & 0\\ 0 & 1 \end{pmatrix}, \quad
A_2= \begin{pmatrix} e^{5} & 0\\ 0 & 1 \end{pmatrix}, \quad
A_3= \begin{pmatrix} e^{-5} & 0\\ 0 & e^2 \end{pmatrix}, \quad
A_4= \begin{pmatrix} e^{5} & 0\\ 0 & e^2 \end{pmatrix}.
\]
Let us define the random variables
\[
Y_i(\omega)\eqdef \log \bigl(A(\omega_i)_{11}\bigr)\in\{-5,5\},
\qquad
X_i(\omega)\eqdef \log \bigl(A(\omega_i)_{22}\bigr)\in\{0,2\}.
\]
By construction, \((X_i)_{i\ge0}\) and \((Y_i)_{i\ge0}\) are two independent sequences of i.i.d. random variables taking their respective values with equal probabilities of $1/2$. Their expectations are $\mathbb E[X_0]=1$ and $\mathbb E[Y_0]=0$.
Consequently, the Lyapunov exponents of the cocycle are \(\lambda_1(A)=1\) and \(\lambda_2(A)=0\). The equator is \(E=\mathbb R e_1\), and the pinnacle subspace is \(F=\mathbb R e_2\). Because \(\|A(\omega)|_F\|=e^{X_0}\) is not \(\mathbb P\)-a.s.~constant, the positivity condition for the asymptotic variance is satisfied. Now, fix the maximum norm on \(\mathbb R^2\). Since all matrices \(A_i\) are diagonal with positive entries, the operator norm of the product is simply the maximum of the products of the diagonal entries. Taking the logarithm and dividing by $n$ yields
\[
\frac1n\log \|A^n(\omega)\|
=
\max\left\{\overline Y_n, \overline X_n\right\},
\]
where we denote the empirical means by
\[
\overline X_n\eqdef \frac1n\sum_{i=0}^{n-1}X_i,
\qquad
\overline Y_n\eqdef \frac1n\sum_{i=0}^{n-1}Y_i.
\]
For the normalized logarithmic norm to exhibit a large deviation exceeding \(\epsilon \in (0,1)\), the maximum of the two empirical means must exceed \(1+\epsilon\). This corresponds to a union of two events:
\[
\left\{\frac1n\log\|A^n(\omega)\|-\lambda_1(A)>\epsilon\right\}
=
\{\overline X_n>1+\epsilon\}\cup \{\overline Y_n>1+\epsilon\}.
\]
Let \(a_n(\epsilon)\eqdef \mathbb P(\overline X_n>1+\epsilon)\) and \(b_n(\epsilon)\eqdef \mathbb P(\overline Y_n>1+\epsilon)\). By standard probability bounds for unions of events, we have
\[
\max\{a_n(\epsilon),b_n(\epsilon)\}
\le
\mathbb P\left(\frac1n\log\|A^n(\omega)\|-\lambda_1(A)>\epsilon\right)
\le
a_n(\epsilon)+b_n(\epsilon).
\]
By Cram\'er's theorem~\cite[Thm.~23.3]{Klenke:2020}, the tail probabilities decay exponentially at rates given by their respective rate functions
\[
\lim_{n\to\infty}\frac1n\log a_n(\epsilon)=-I_X(1+\epsilon),
\qquad
\lim_{n\to\infty}\frac1n\log b_n(\epsilon)=-I_Y(1+\epsilon),
\]
where \(I_X\) and \(I_Y\) are the Cram\'er rate functions of \(X_0\) and \(Y_0\). Because the sum of two exponentially decaying sequences is asymptotically dominated by the term with the slower decay (i.e., the smaller rate function), we deduce that
\[
\lim_{n\to\infty}\frac1n
\log \mathbb P\left(\frac1n\log\|A^n(\omega)\|-\lambda_1(A)>\epsilon\right)
=
-\min\{I_X(1+\epsilon),I_Y(1+\epsilon)\}
\]
for every \(\epsilon\in(0,1)\). We now compute these two rate functions explicitly. Since \(X_0\in\{0,2\}\) with equal probabilities, standard convex duality~\cite[Thm.~23.1]{Klenke:2020} yields
\[
I_X(x)=
\begin{cases}
\displaystyle \frac{x}{2}\log x+\left(1-\frac{x}{2}\right)\log(2-x), & x\in[0,2],\\[2mm]
+\infty, & \text{otherwise}.
\end{cases}
\]
Similarly, since \(Y_0\in\{-5,5\}\) with equal probabilities,
\[
I_Y(y)=
\begin{cases}
\displaystyle \frac{1+y/5}{2}\log(1+y/5)+\frac{1-y/5}{2}\log(1-y/5), & y\in[-5,5],\\[2mm]
+\infty, & \text{otherwise}.
\end{cases}
\]
Thus, the upper-tail rate function for the norm is given by the minimum
\[
c(\epsilon)=\min\{I_X(1+\epsilon),I_Y(1+\epsilon)\},
\qquad \epsilon\in(0,1).
\]
Crucially, this function is \emph{not convex}. To see this, we evaluate $c(\epsilon)$ at three points:
\[
c(0.1)=\min\{I_X(1.1), I_Y(1.1)\} = I_X(1.1)\approx 0.005,
\]
\[
c(0.5)=\min\{I_X(1.5), I_Y(1.5)\} = I_Y(1.5)\approx 0.045,
\]
\[
c(0.9)=\min\{I_X(1.9), I_Y(1.9)\} = I_Y(1.9)\approx 0.074.
\]
By evaluating the midpoint of the secant line, we find:
\[
c(0.5)\approx 0.045
>
\frac{c(0.1)+c(0.9)}{2}
\approx 0.039.
\]
Because the function value at the midpoint strictly exceeds the average of the endpoints, \(c(\epsilon)\) is not convex, and therefore certainly not strictly convex. 
\end{exap}
}

The following consequence of the previous results provides a clean sufficient condition to get the limit theorems with positive asymptotic variance for non-quasi-irreducible cocycles in $\mathrm{GL}(m)$ with $m\geq 3$.  

\begin{cor} \label{prop:suf-cond}
Let $A \in \LP$ be a locally constant linear cocycle with $m\geq 3$, and denote by $E$  its equator. Assume that
\begin{enumerate}[leftmargin=1cm]
    \item $A$ satisfies the exponential moment condition~\eqref{eq:exponential-moment-condition},
     \item $\lambda_1(A) > \lambda_2(A)$, 
    \item there exists an $f_A$-invariant compact set $X\subset  P(\mathbb{R}^m) \setminus E$, 
   \item {$\dim E = 1$ and $A|_{\mathbb{R}^m/E}$ is irreducible.}
\end{enumerate}
Then, there exists $\sigma > 0$ such that \eqref{LD-norm} and \eqref{BE-norm} hold. 
\end{cor}

\begin{proof}
 Since condition (ii) and (iii) hold, Proposition~\ref{claim:F-invariant} provides a unique \(f_A\)-stationary measure \(\mu\) such that \(\mu(E)=0\), whose support
$S\subset P(\mathbb R^m)\setminus E$
is compact and \(f_A\)-invariant. Moreover, the pinnacle subspace \(F\) of \(A\) is the subspace spanned by \(S\). Taking
$X=S$, we have $L=\langle x\in \mathbb R^m:\hat x\in X\rangle=F$.

{Denote by $\tilde{A}=A|_{\mathbb{R}^m/E}$ the quotient cocycle induced by $A$ on $\mathbb{R}^m/E$. Let \(\tilde F\) be the pinnacle of \(\tilde A\). Since \(\tilde A\) is irreducible, $\tilde F=\mathbb R^m/E$. On the other hand, by Proposition~\ref{prop:old-implies-new}, we have $\tilde F=(F+E)/E$. Therefore
$\mathbb R^m/E=(F+E)/E$, which implies $F+E=\mathbb R^m$. Since \(\dim E=1\), either \(E\subset F\) or \(E\cap F=\{0\}\), and 
$$m=\dim F+1-\dim E\cap F.$$ Hence, $\dim F-\dim E\cap F \geq m-1\geq 2$ and hence $\sigma>0$ by Proposition~\ref{cor:positive-sigma}.

If \(E\subset F\), then $\mathbb R^m=F+E=F$, 
so \(L=F=\mathbb R^m\). Thus, Theorem~\ref{thm:CLT-linear-general} implies \eqref{BE-norm} and \eqref{LD-norm} --actually, in this case we get the LD with optimal rate.
If \(E\cap F=\{0\}\), then $\mathbb R^m=F\oplus E=  L\oplus E$ and $\dim F= m-1\geq 2$. Thus, Proposition~\ref{cor:CLT-geral} is satisfied. This concludes the proof.}
\end{proof}

The next example illustrates Corollary~\ref{prop:suf-cond} in a genuinely non-quasi-irreducible setting.

{
\begin{exap}[A Heisenberg-type cocycle]\label{ex:heisenberg}
Consider the locally constant cocycle \(A:\Omega\to \mathrm{GL}(3)\) generated by
\[
g_1=
\begin{pmatrix}
\frac12 & 2 & 3\\
0 & 1 & 1\\
0 & 0 & 1
\end{pmatrix},
\qquad
g_2=
\begin{pmatrix}
\frac12 & 1 & 2\\
0 & 0 & -1\\
0 & 1 & 0
\end{pmatrix},
\]
that is,  $A(\omega)=\omega_0$ for $\omega=(\omega_i)_{i\geq 0}\in \Omega=\mathrm{GL}(3)^\mathbb{N}$ and Bernoulli law \(\mathbb{P}=p^\mathbb{N}\), where \(p=\frac12(\delta_{g_1}+\delta_{g_2})\). These matrices belong to the subgroup
\[
G=
\left\{
\begin{pmatrix}
a & b\\
0 & C
\end{pmatrix}
:\ a\in\mathbb R\setminus\{0\},\ b\in\mathbb R^2,\ C\in \mathrm{GL}(2,\mathbb R)
\right\}\subset \mathrm{GL}(3),
\]
which models the automorphism group of the Heisenberg group; see~\cite[Example~3]{aoun2020random}. In this framework, the one-dimensional subspace
$E=\mathbb Re_1$
is the equator. The quotient cocycle \(A|_{\mathbb R^3/E}\) is generated by the matrices
\[
C_1=
\begin{pmatrix}
1 & 1\\
0 & 1
\end{pmatrix},
\qquad
C_2=
\begin{pmatrix}
0 & -1\\
1 & 0
\end{pmatrix}.
\]
These are exactly the quotient matrices appearing in~\cite[Example~1 in \S5.2.2]{aoun2020random}, where Aoun and Guivarc'h prove that the corresponding semigroup is a non-compact strongly irreducible subsemigroup of \(\mathrm{SL}(2)\). Since Example~2 in~\cite[\S5.2.2]{aoun2020random} has the same projection on \(\mathrm{GL}(\mathbb R^3/E)\), it follows that \(A|_{\mathbb R^3/E}\) is irreducible. Moreover, for this pair \(g_1,g_2\), Aoun and Guivarc'h prove that $\lambda_1(A)>\lambda_2(A)$
and that the support of the unique stationary measure \(\mu\) on
$P(\mathbb R^3)\setminus E$
is compact; see~\cite[Example~2 in \S5.2.2]{aoun2020random}. Denote this support by
$X=\operatorname{supp}\mu$.
Then \(X\subset P(\mathbb R^3)\setminus E\) is a compact \(f_A\)-invariant set. Since \(\dim E=1\), all the hypotheses of Corollary~\ref{prop:suf-cond} are satisfied. Therefore this Heisenberg-type cocycle satisfies the Berry--Esseen estimate~\eqref{BE-norm} and the large deviation principle~\eqref{LD-norm} for \(\log\|A^n(\omega)\|\).
\end{exap}
}


\section{Proof of Propositions~\ref{mainprop:CLT-LD} and~\ref{mainprop:BE}}

Since quasi-irreducibility implies that $E$ is trivial and $L = \mathbb{R}^m$, Propositions~\ref{mainprop:CLT-LD} and~\ref{mainprop:BE} follow immediately from Theorem~\ref{thm:CLT-linear-general} and Corollary~\ref{cor:positive-sigma}. 

\section{Proof of Propositions~\ref{mainprop:CLT-LD2} and~\ref{mainprop:BE2}}
 Let $A\in \mathrm{L}_{\mathbb{P}}(2)$ be a locally constant cocycle with $\lambda_1(A)>\lambda_2(A)$ and satisfying the finite exponential moment~\eqref{eq:exponential-moment-condition}.  We analyze the cases provided by Corollary~\ref{prop:dichotomy}. If $A$ is quasi-irreducible, by~Propositions~\ref{mainprop:CLT-LD} and~\ref{mainprop:BE}, we obtain the CLT, LD and Berry-Esseen concluding the propositions in this case. {Moreover, since in this case the equator is trivial, Proposition~\ref{mainprop:BE} gives
 \[
\sigma=0
\iff
\dim F=1 \ \text{and}\ \|A(\omega)|_F\|\ \text{is $\mathbb{P}$-a.s.~constant}.
\]
Hence Proposition~\ref{mainprop:BE2} also follows in this case.}
 If $A^T$ is quasi-irreducible, observe that since for an adequate norm $\|M^T\|=\|M\|$ and $(M^{T})^{-1}=(M^{-1})^T$ for any $M\in\mathrm{GL}(2)$, the finite exponential moment condition~\eqref{eq:exponential-moment-condition} also holds for $A^T$. 
 Furthermore,  $\lambda_1(A^T)=\lambda_1(A)>\lambda_2(A)=\lambda_2(A^T)$, cf.~\cite{key1988lyapunov}, 
  Moreover, since $$\lambda_1(A^T)=\lambda_1(A) \quad  \text{and} \quad 
 \|(A^T)^n(\omega)\|=
 \|A(\omega_{0})\cdot\ldots\cdot A(\omega_{n-1})\|
 $$
 and taking into account that $\mathbb{P}$ is a Bernoulli measure, 
 we have 
 \begin{align*}
     \mathbb{P}\bigg(\frac{1}{\sqrt{n}}\big(\log &\|(A^T)^n(\omega)\|-n\lambda_1(A^T)\big)\leq \alpha\bigg)  \\ &= \mathbb{P}\bigg(\frac{1}{\sqrt{n}}\big(\log \|A(\omega_{0})\cdot\ldots\cdot A(\omega_{n-1})\|-n\lambda_1(A)\big)\leq \alpha\bigg) \\
     &=\mathbb{P}\bigg(\frac{1}{\sqrt{n}}\big(\log \|A^n(\omega)\|-n\lambda_1(A)\big)\leq \alpha\bigg).
  \end{align*}
 This implies that CLT and Berry-Esseen also hold for $A$ (i.e.~for $\log \|A^n(\omega)\|$). A similar argument works for LD for $\log \|A^n(\omega)\|$. {Furthermore, Proposition~\ref{mainprop:BE} applied to $A^T$ yields
\[
\sigma=0
\iff
\dim F(A^T)=1 \ \text{and}\ \|A^T(\omega)|_{F(A^T)}\|\ \text{is $\mathbb{P}$-a.s.~constant}.
\]
If \(L^\ast=F(A^T)\), then \(E=(L^\ast)^\perp\) is a one-dimensional \(A\)-invariant subspace. Since, in dimension one, the quotient cocycle \(A|_{\mathbb{R}^2/E}\) is dual to \(A^T|_{L^\ast}\), we have
\[
\|A^T(\omega)|_{L^\ast}\|=\|A(\omega)|_{\mathbb{R}^2/E}\|
\qquad\text{for $\mathbb{P}$-a.e.~$\omega$}.
\]
Hence
\[
\sigma=0
\iff
\dim E=1 \ \text{and}\ \|A(\omega)|_{\mathbb{R}^2/E}\|\ \text{is $\mathbb{P}$-a.s.~constant},
\]
which proves Proposition~\ref{mainprop:BE2} in this case.}

To complete the proof of Propositions~\ref{mainprop:CLT-LD2} and~\ref{mainprop:BE2} remains to analyze the diagonal case. The following result also concludes this case (cf.~\cite[Theorem~2.1]{duarte2020large} for LD of compactly supported diagonalizable cocycles).

 \begin{cor}
 Consider a locally constant linear cocycle $A\in \mathrm{L}_{\mathbb{P}}(2)$ and assume: 
 \begin{enumerate}
     \item $A$ satisfies an exponential moment condition~\eqref{eq:exponential-moment-condition},
     \item $\lambda_1(A) > \lambda_2(A)$, and 
     \item $A$ is diagonalizable. 
  \end{enumerate}
 Then~\eqref{CLT-norm} and~\eqref{LD-norm} hold. 
 {Moreover,
\[
\sigma=0
\iff
\|A(\omega)|_F\|\ \text{is $\mathbb{P}$-a.s.~constant}
\]
where $F$ is the pinnacle of $A$. 
 In particular, if \(\sigma>0\), then~\eqref{LD-norm} and~\eqref{BE-norm} hold.}
\end{cor}
\begin{proof}
{By assumption~(iii), there exists \(P\in \mathrm{GL}(2)\) such that
\[
P^{-1}A(\omega)P=\mathrm{diag}(a(\omega_0),b(\omega_0))
\qquad\text{for $\mathbb{P}$-a.e.~$\omega\in\Omega$}.
\]
Hence \(A\) admits two one-dimensional invariant subspaces. After possibly permuting the diagonal coordinates, we may assume that the first eigendirection \(F\) satisfies
\[
\lambda_1(A|_F)=\lambda_1(A)
\qquad\text{and}\qquad
\lambda_1(A|_E)=\lambda_2(A),
\]
where \(E\) is the other eigendirection. Since \(\lambda_1(A)>\lambda_2(A)\), it follows that \(F\) is the pinnacle of \(A\), \(E\) is the equator of \(A\), and
$\mathbb{R}^2=E\oplus F$. Now set
\[
X=P(F)\subset P(\mathbb{R}^2)\setminus E.
\]
Then \(X\) is compact and \(f_A\)-invariant, and the subspace \(L\) spanned by \(X\) is precisely \(F\). Therefore, all assumptions of Proposition~\ref{cor:CLT-geral} are satisfied. Consequently,~\eqref{CLT-norm} and~\eqref{LD-norm} hold.

Moreover, since \(\dim F=1\), the criterion for the positivity of the variance given in Proposition~\ref{cor:CLT-geral} becomes
$\sigma>0$ if and only if $\|A(\omega)|_F\|$ is not $\mathbb{P}$-a.s.~constant. Finally, if \(\sigma>0\), Proposition~\ref{cor:CLT-geral} also yields~\eqref{LD-norm} and~\eqref{BE-norm}.}
\end{proof}

  {This concludes the proof of Propositions~\ref{mainprop:CLT-LD2} and~\ref{mainprop:BE2}.}

\chapter{Continuity of Lyapunov exponents of linear cocycles}
\label{chap:continuity-linear}

\abstract{{
This chapter studies the regularity of the top Lyapunov exponent for locally
constant linear cocycles. We first introduce the topologies on spaces of
measurable cocycles used in the sequel and compare them with the Lipschitz
topologies of the induced projective random maps. This allows us to transfer
the robustness and statistical stability results for mostly contracting random
maps to the projective actions of linear cocycles. We then prove continuity of the first Lyapunov exponent  \(\lambda_1(\cdot)\) at cocycles $A$ for which either \(A\) or its
transpose is quasi-irreducible. Under the simplicity of the first Lyapunov exponent and
an exponential moment condition, we prove the robustness of quasi-irreducibility and establish local H\"older continuity of \(\lambda_1(\cdot)\) at quasi-irreducible or with transpose quasi-irreducible cocycles. In the compactly supported setting, the same method
also applies where the maximizing projective dynamics is localized on a compact attracting region away from the equator, that is, for almost-irreducible cocycles.
}}

\section{{Topology on the space of linear cocycles}}
We want to understand the regularity of the map $A \mapsto f_A$ or, more specifically, the inclusion of $\LP$ into $\EE(P(\mathbb{R}^m))$. To do this, we recall some previously introduced metrics. Given $A, B \in \LP$,
\begin{align*}
\Dp(A,B) &\eqdef \int \|A(\omega) - B(\omega)\| \, d\mathbb{P}  
\\ 
\Dpm(A,B) &\eqdef \int \|A(\omega) - B(\omega)\| + \|A(\omega)^{-1} - B(\omega)^{-1}\|\, d\mathbb{P}.
\end{align*}
Moreover, for each $n \geq 1$ and $0<\beta \leq 1 $, 
\begin{equation*} \label{eq:distancia-random-maps}
 \Dbeta(f_A, f_B) \eqdef \max_{i = 1, \dots, n} \int d_{\mathrm{L}}((f_A)^i_\omega, (f_B)^i_\omega)^{\beta_i} \, d\mathbb{P},
  \end{equation*}
  where $\beta_i = \beta/2^{i-1}$ and 
  \begin{equation*} 
      d_{\mathrm{L}}((f_A)^i_\omega, (f_B)^i_\omega) =  \sup_{\hat{x} \in P(\mathbb{R}^m)} \big(d(A^i(\omega)\hat{x}, B^i(\omega)\hat{x}) + |L(f_A)^i_\omega(\hat{x}) - L(f_B)^i_\omega(\hat{x})|\big).
  \end{equation*}
  
  \noindent{Here we do not include a separate term involving the global Lipschitz constants. Indeed, since \(P(\mathbb{R}^m)\) is a connected compact Riemannian manifold, it is a compact locally length space, and Proposition~\ref{prop:lengthspace} implies that $\mathrm{Lip}(g)=\sup_{\hat{x}\in P(\mathbb{R}^m)} Lg(\hat{x})$ for every Lipschitz map \(g:P(\mathbb{R}^m)\to P(\mathbb{R}^m)\). 
Consequently, for any two Lipschitz maps \(g,h\),
\[
|\mathrm{Lip}(g)-\mathrm{Lip}(h)|
\le \sup_{\hat{x}\in P(\mathbb{R}^m)} |Lg(\hat{x})-Lh(\hat{x})|,
\]
so the distance between the global Lipschitz constants is already controlled by the local term in \(d_{\mathrm L}\).}

\begin{lem} \label{lemma:2exponents}
For any $0<\alpha \leq \min\{\beta, 1\}$ and for $A,B\in \LP$, it holds that 
\begin{align*}
\int {\|B(\omega)\|^{\alpha}} \,d\mathbb{P} &\leq 1 +\int \|A(\omega)\|^\beta \, d\mathbb{P} +  \Dp(A,B)^{\alpha} 
\end{align*}
and
    \begin{align*}
\int {\|B(\omega)^{-1}\|^{\alpha}} \,d\mathbb{P} &\leq 1+ \int \|A(\omega)^{-1}\|^\beta \, d\mathbb{P} +  \D(A,B)^{\alpha}.
\end{align*}
\end{lem}
\begin{proof}
    Let us prove the first inequality. The second inequality follows similarly. To do this,  observe that, since $0<\alpha\leq 1$ and $t^\alpha - s^\alpha \leq |t-s|^\alpha$ for any $t,s\geq 0$, it holds that 
\begin{align*}
\int \|B(\omega)\|^\alpha  \, d\mathbb{P} 
\leq \int \|A(\omega)\|^\alpha \, d\mathbb{P} + 
\int \big| \|A(\omega)\|-\|B(\omega)\|\big|^\alpha \, d\mathbb{P}.
\end{align*}
From this, taking into account that $\alpha\leq \beta$,  $\big|\|A(\omega)\|- \|B(\omega)\|\big| \leq \|A(\omega) - B(\omega) \|$ and using the Jensen inequality, we conclude the required inequality.
\end{proof}

  The following result is a modification of Lemma~\ref{claim:lipbeta} in this context.
  
\begin{lem} \label{lem:convergencia}
Let $n \geq 1$ and consider $A \in \LP$ satisfying~\eqref{eq:exponential-moment-condition} for some $0<4\beta \leq 2$. Then there is $K > 0$ such that $$\Dbeta(f_A, f_B) \leq K \Dp(A, B)^{\beta_n}$$ for any $B$ close enough to $A$ in $(\LP, \Dpm)$. That is, the inclusion 
$$(\LP, \Dpm) \hookrightarrow (\EE(P(\mathbb{R}^m)), \Dbeta)$$ is continuous at $A$. 
\end{lem}
\begin{proof} For simplicity of notation, we denote $f=f_A$ and $g=f_B$. For each $i\geq 1$ and $\hat x\in P(\mathbb{R}^m)$, we have  $Lf^i_\omega(\hat x)=\|Df^i_\omega(\hat x)\|$. Hence,  $|Lf^i_\omega(\hat x)-Lg^i_\omega(\hat x)|\leq \|Df^i_\omega(\hat x)-Dg^i_\omega(\hat x)\|$. 
This implies 
$d_{\mathrm{L}}(f_\omega^i,g^i_\omega)\leq d_{C^1}(f^i_\omega,g^i_\omega)$. 
Then \begin{equation} \label{eq:Dbeta2}
    \Dbeta(f,g)\leq \max_{i=1,\dots,n} \,\int d_{C^1}(f^i_\omega,g^i_\omega)^{\beta_i}\, d\mathbb{P}.
\end{equation} 

\begin{claim} \label{claim:contas} For every $N,M\in \mathrm{GL}(m)$, $\hat{x} \in P(\mathbb{R}^m)$ and $v\in\mathbb{R}^m$ unitary, 
    $$\left\|\frac{Nv}{\|Nx\|}-\frac{Mv}{\|Mx\|}\right\|\leq \big(\|N^{-1}\|+ \|N^{-1}\| \, \|M^{-1}\|\, \|M\|\big)\cdot \|N-M\|
    $$
    and
    $$
    \Dp(N\hat{x},M\hat{x})\leq 2 \|N^{-1}\| \, \|N-M\|.
    $$
\end{claim}
\begin{proof}
To shorten notation, we write $a=\|Nx\|$ and 
$b=\|Mx\|$. Then, since $v\in\mathbb{R}^m$ is unitary, 
$$
\big\|\textstyle\frac{1}{a}Nv-\frac{1}{b} Mv\big\|\leq \frac{1}{a} \big\|N-M\big\| + \big|\frac{1}{a}-\frac{1}{b}\big|\cdot \big\|Mv\big\|.
$$
Moreover, since $x$ is also unitary, we have  $|\frac{1}{a}-\frac{1}{b}| =\frac{1}{ab}|a-b|\leq \frac{1}{ab}\|N-M\|$. Then, using that $a^{-1}\leq \|N^{-1}\|$ and $b^{-1}\leq \|M^{-1}\|$, we get the first inequality in the statement of the claim. If $v=x$, we can get a better estimate since  $|\frac{1}{a}-\frac{1}{b}|\cdot \|Mv\| \leq \frac{1}{a}\|N-M\|$. Namely, we get 
$$
\left\|\frac{Nx}{\|Nx\|}-\frac{Mx}{\|Mx\|}\right\|\leq 
    2\|N^{-1}\| \cdot \|N-M\| \quad \text{if $\hat{v}=\hat{x}$} 
$$ 
Since the left-hand side of the above inequality is greater than or equal to $d(N\hat{x},M\hat{x})$, we also conclude the second inequality of the claim.  
\end{proof}

\begin{claim} \label{claim:induction2} For each $n\geq 1$, there exists $K>0$ such that $$\int d_{C^1}(f^i_\omega,g^i_\omega)^{\beta_i} \, d\mathbb{P}\leq K \Dp(A,B)^{\beta_i} \quad \text{for all $i=1,\dots,n$.}
$$
\end{claim}
\begin{proof} The proof of this result is a straightforward modification of the induction in Claim~\ref{claim:induction}, but it requires checking some assumptions and steps of the proof. 

First, we have to prove that the integrability condition~\eqref{eq:momento-dif} is satisfied. To see this, by Lemma~\ref{lem:lip-fA}, we have $\mathrm{Lip}(f_\omega) \leq \max\{\|A(\omega)\|,\|A^{-1}(\omega)\|\}^4$  and since we are assuming the exponential moment condition~\eqref{eq:exponential-moment-condition} for the exponent $4\beta>0$, then  $(\|Df_\omega\|_\infty)^\beta=\mathrm{Lip}(f)^{\beta}$ is $\mathbb{P}$-integrable. This concludes~\eqref{eq:momento-dif}.

We also have to modify the induction assumption of Claim~\ref{claim:induction} in accordance with the corresponding inductive step here. Moreover, the proof also requires the inductive step for $n=1$, which does not hold by the same argument as in Claim~\ref{claim:induction}. In what follows, we will prove this step.

We first prove that $ \int d_{C^0}(f_\omega,g_\omega)^{\beta} \, d\mathbb{P}\leq K \Dp(A,B)^{\beta}$ for some $K>0$. To see this, observe that by Claim~\ref{claim:contas} we have that
\begin{align*}
\int d_{C^0}(f_\omega,g_\omega)^{\beta}  d\mathbb{P} &= \int \bigg(\sup_{\hat x \in P(\mathbb{R}^m)} d\big(A(\omega)\hat{x},B(\omega)\hat{x}\big) \bigg)^\beta d\mathbb{P} \\ &\leq 
\int \big(2\big\|A(\omega)^{-1}\big\| \cdot \big\|A(\omega)-B(\omega)\big\|\big)^\beta \, d\mathbb{P}.
\end{align*}
Hence, by H\"older inequality, the integrability assumption~\eqref{eq:exponential-moment-condition} and Jensen inequality ($2\beta \leq 1$), we obtain that  
\begin{align*}
\int d_{C^0}&(f_\omega,g_\omega)^{\beta} \, d\mathbb{P} \leq \\   &\leq 2^\beta \bigg(\int \|A(\omega)^{-1}\|^{2\beta}\,d\mathbb{P}\bigg)^{\frac{1}{2}} \cdot \bigg(\int \|A(\omega)-B(\omega)\|^{2\beta}\,d\mathbb{P}\bigg)^{\frac{1}{2}} \leq K \Dp(A,B)^\beta 
\end{align*}
for some constant $K>0$. 

Now, we will prove that $\int (\|Df_\omega-Dg_\omega\|_\infty)^\beta \, d\mathbb{P} \leq K \Dp(A,B)^\beta$ for some $K>0$. To see this, 
observe that
$$
\left\|Df_\omega(\hat{x})v-Dg_\omega(\hat{x})v\right\| =\left\|\Pi_{A(\omega)\hat{x}}(u)-\Pi_{B(\omega)\hat{x}}(w)\right\| 
$$ 
with 
$$
u=\frac{A(\omega)v}{\|A(\omega)x\|} \ \ \text{and} \ \ w=\frac{B(\omega)v}{\|B(\omega)x\|}
$$
where $v$ is a unit vector in $\mathbb{R}^m$, cf.~\cite[Prop.~2.28]{duarte2016lyapunov}. Hence,
$$
\left\|\Pi_{A(\omega)\hat{x}}(u)-\Pi_{B(\omega)\hat{x}}(w)\right\| \leq \left\|\Pi_{A(\omega)\hat{x}}\right\|\,\big\|u-w\big\| + \left\|\Pi_{A(\omega)\hat{x}}-\Pi_{B(\omega)\hat{x}}\right\| \, \big\|w\big\|.
$$
From Claim~\ref{claim:contas}, 
$$
\|u-w\|\leq \big(\|A(\omega)^{-1}\| + \|A(\omega)^{-1}\|\, \|B(\omega)^{-1}\|\, \|B(\omega)\| \big) \cdot \|A(\omega)-B(\omega)\|. 
$$
We also have that
$$\|\Pi_{A(\omega)\hat{x}}-\Pi_{B(\omega)\hat{x}}\|=d(A(\omega)\hat x, B(\omega)\hat x) \leq 2 \|A(\omega)^{-1}\| \cdot \|A(\omega)-B(\omega)\|$$ and $\|w\| \leq \|B(\omega)^{-1}\|\, \|B(\omega)\|$.  Moreover, $\|\Pi_{A(\omega)\hat{x}}\|\leq 1$ because it is a projection. 
Putting together all of them, we get 
\begin{align*}
\|Df_\omega(\hat{x})v -Dg_\omega(\hat{x})v\| \leq \|A(\omega)^{-1}\|\, (1+\, 3 \, \|B(\omega)^{-1}\|  \, \|B(\omega)\| )\cdot\|A(\omega)-B(\omega)\|.
\end{align*}
From here, using, as before, H\"older and Jensen inequalities, the assumption~\eqref{eq:exponential-moment-condition} and Lemma~\ref{lemma:2exponents} for $B$ close enough to $A$, we can easily get an upper bound of the expectation of $(\|Df_\omega-Dg_\omega\|_\infty)^\beta$ of the form $K\cdot \Dp(A,B)^\beta$ where $K>0$ is a constant.   

Both estimates above together imply that $\int d_{C^1}(f_\omega,g_\omega)^\beta \,d\mathbb{P} \leq K \Dp(A,B)^\beta$, concluding the first step of the induction, which concludes  the proof. \end{proof}

Finally,~\eqref{eq:Dbeta2} and Claim~\ref{claim:induction2}  conclude the proof of the lemma.
\end{proof}

\section{Continuity of maximal Lyapunov exponent}
\index{Lyapunov exponents!continuity}
Here we prove the following result.

\begin{thm} \label{thm:continuity-quasi}
Let $A \in \LP$ be a quasi-irreducible 
locally constant linear cocycle
satisfying~\eqref{eq:exponential-moment-condition}.  
Then, $\lambda_1: (\LP, \Dp) \to \mathbb{R}$ is continuous at $A$.  
\end{thm}
\begin{proof} 
    Let $(B_n)_{n \geq 1}$ be a sequence of locally constant linear cocycles converging to $A$ in $(\LP, \Dp)$. To prove the continuity of $\lambda_1(\cdot)$ at $A$, we need to see that $\lambda_1(B_n) \to \lambda_1(A)$ as $n \to \infty$. Actually, to prove this convergence, it suffices to show that every subsequence of $(\lambda_1(B_n))_{n\geq 1}$ has a further subsequence which converges to $\lambda_1(A)$. 
    Indeed, if $\lambda_1(B_n) \not\to \lambda_1(A)$, there exists an $\epsilon > 0$, such that for all $k$, there exists an $n_k > k$ satisfying $|\lambda_1(B_{n_k}) - \lambda_1(A)| \geq \epsilon$ (since if there is some $k$ which does not have such $n_k$, then we can take it as $N$, so $\lambda_1(B_n)$ converges to $\lambda_1(A)$). Thus, the subsequence $(\lambda_1(B_{n_k}))_{k\geq 1 }$ does not have any subsequence converging to $\lambda_1(A)$, contradicting the assumption.

    Let $\mu_n$ be a maximizing stationary measure of $f_{B_n}$. That is, $\int \phi_{B_n} \, d\mu_n = \lambda_1(B_n)$. By taking a subsequence if necessary, we can assume that we have a sequence of $f_{B_n}$-stationary measures $\mu_n$ converging to a measure $\mu$. Then, from Lemma~\ref{lem:convergencia} and Theorem~\ref{thm:Weak statistically stability}, we have that $\mu$ is an $f_A$-stationary measure. Since $A$ is quasi-irreducible, from Proposition~\ref{prop:quasi-irreducible} it follows that $\mu$ is maximizing, i.e., $\lambda_1(A) = \int \phi_A \, d\mu$. Then,
\begin{align} 
\big|\lambda_1(B_n) - &\lambda_1(A)\big| = \big|\int \phi_{B_n}\, d\mu_n - \int \phi_A \, d\mu\big |  \notag\\ 
&\leq \big|\int \phi_{B_n}\, d\mu_n - \int \phi_A \, d\mu_n\big| + \big|\int \phi_A \, d\mu_n - \int \phi_A \, d\mu\big|. \label{eq:notag}
\end{align}
Since $\log^+ \|A^{\pm1}\|$ is $\mathbb{P}$-integrable, $\phi_A$ is a continuous real-valued function. Thus,  since $\mu_n \to \mu$ in the weak$^*$ topology, the second term in~\eqref{eq:notag} tends to zero as $n \to \infty$. On the other hand,
\begin{align*}
     \lim_{n \to \infty}
     \bigg| \int \phi_{B_n}\, d\mu_n - \int \phi_A\, d\mu_n\bigg| &\leq
  \lim_{n \to \infty}\int \left|\log \frac{\|B_n(\omega)\|}{ \|A(\omega)\|} \right| \, d\mathbb{P}. 
\end{align*}
According to Remark~\ref{rem:beta},  fix $0<\beta \leq 1$ in~\eqref{eq:exponential-moment-condition} and consider $0 < \alpha \leq \beta/2$.  Using that $\log t \leq \frac{1}{\alpha} (t^\alpha - 1)$ for $t > 0$ and $|t|^\alpha - |s|^\alpha \leq |t - s|^\alpha$ for $t, s \in \mathbb{R}$, we obtain 
\begin{align*}
    \int \bigg| \log \frac{\|B_n(\omega)\|}{\|A(\omega)\|} \bigg| \, d\mathbb{P} 
    &\leq \frac{1}{\alpha} \int \frac{\big| \|B_n(\omega)\| - \|A(\omega)\|\big|^\alpha}
    {\|A(\omega)\|^\alpha} \, d\mathbb{P} \\
    &\leq \frac{1}{\alpha} \int \frac{\| B_n(\omega) - A(\omega) \|^\alpha}
    {\|A(\omega)\|^\alpha} \, d\mathbb{P}\\
     &\leq \frac{1}{\alpha} \left(\int \|A(\omega)\|^{-2\alpha} \, d\mathbb{P}\right)^{1/2} \left(\int \| B_n(\omega) - A(\omega) \|^{2\alpha} \, d\mathbb{P}\right)^{1/2}. 
\end{align*}
The last inequality follows from H\"older's inequality. Moreover, using that $2\alpha \leq \beta$, the Jensen inequality, and since $\|M^{-1}\|^{-1} \leq \|M\|$ for any matrix $M$, we arrive at
\begin{align*}
    \int \bigg| \log \frac{\|B_n(\omega)\|}{\|A(\omega)\|} \bigg| \, d\mathbb{P} 
         &\leq \frac{1}{\alpha} \left( 1 + \int \|A(\omega)^{-1}\|^{\beta} \, d\mathbb{P}\right)^{1/2} \Dp(B_n, A)^{\alpha}. 
\end{align*}
By the exponential moment condition~\eqref{eq:exponential-moment-condition}, the factor multiplying $\Dp(B_n, A)^{\alpha}$ is finite. Thus, since this distance tends to zero as $n \to \infty$, we get that the first term in~\eqref{eq:notag}  goes to zero. Therefore, $\lambda_1(B_n) \to \lambda_1(A)$. This concludes that $\lambda_1(\cdot)$ is continuous at $A$.
\end{proof}

\section{Robustness of quasi-irreducibility and almost-irreducibility} First the notion of almost-irreducible cocycle in Definition~\ref{def:almost}. 
We demonstrate the robustness of the properties of quasi-irreducibility and almost-irreducibility for linear cocycles under small perturbations in different topologies. 

\begin{prop} \label{prop:open-quasi}
Let $A \in \LP$ be a quasi-irreducible cocycle such that $\lambda_1(A) > \lambda_2(A)$ and~\eqref{eq:exponential-moment-condition} holds. Then, there exists a neighborhood $\mathcal{U}$ of $A$ in $(\LP, \Dpm)$ such that every $B \in \mathcal{U}$ is quasi-irreducible {and $\lambda_1(B)>\lambda_2(B)$. In particular,} $f_B$ is a uniquely ergodic mostly contracting random map. 
\end{prop}

\begin{proof}
By Proposition~\ref{prop:projective2}, $f_A$ is a uniquely ergodic mostly contracting random map and satisfies~\eqref{eq:integral_condition}. Thus, by Lemma~\ref{lem:convergencia} and Theorem~\ref{prop:statistical-stability-length}, we have a neighborhood $\mathcal{U}$ of $A$ in $(\LP, \Dpm)$ such that $f_B$ is a uniquely ergodic mostly contracting random map for any $B \in \mathcal{U}$. In particular, every $f_B$-stationary measure is maximizing, and we will conclude that $B$ is also quasi-irreducible from Proposition~\ref{prop:quasi-irreducible}. {We also have that $\lambda_1(B)>\lambda_2(B)$ from Corollary~\ref{prop:mostly-simplicity}.}
\end{proof}

Let $\LPC$ be the spaces of compactly supported locally constant linear measurable cocycles as introduced in~\S\ref{ss:continuity-intro}.   In the compactly supported case, when the cocycle is almost-irreducible but not quasi-irreducible, we can still get some robustness under perturbations on the topology induced by the metric 
$$
\Delta(A, B) = \Dpm(A, B) + \delta_H(\mathrm{supp}\, \nu_A, \mathrm{supp}\, \nu_B) \quad \text{with } A, B \in \LPC.
$$
Recall that $\delta_H$ is the Hausdorff distance induced by the metric $$\delta(g_1, g_2) = \|g_1 - g_2\|+\|g_1^{-1} - g_2^{-1}\|, \quad g_1,g_2\in \mathrm{GL}(m)$$  and $\mathrm{supp}\, \nu_C$ denotes the support of the distribution $\nu_C=C_*\mathbb{P}$ in $\mathrm{GL}(m)$  associated with a cocycle $C \in \LPC$. 

\begin{prop} \label{prop:open-almost}
Let $A \in \LPC$ be an almost-irreducible  cocycle such that $\lambda_1(A) > \lambda_2(A)$. Then, there exists a neighborhood $\mathcal{U}$ of $A$ in $(\LPC, \Delta)$ such that every $B \in \mathcal{U}$ is almost-irreducible (on the same compact set $X$). Moreover, $f_B$ has a unique ergodic measure supported in $X$ which is maximizing. 
\end{prop}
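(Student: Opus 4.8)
The strategy is to reduce the statement to an application of the robustness result for almost-irreducible cocycles in the weak topology (Proposition~\ref{prop:open-quasi}) combined with the stability of the compact invariant set $X$ and its open neighborhood $V$ under $\Delta$-perturbations. First I would recall that, by Definition~\ref{def:almost}, almost-irreducibility of $A$ means there exist a compact set $X \subset P(\mathbb{R}^m) \setminus E_A$ and an open set $V \subset X$ with $A(\omega) X \subset V$ for $\mathbb{P}$-almost every $\omega \in \Omega$. Since $\mathrm{supp}\,\nu_A$ is compact, the inclusion $A(\omega) X \subset V$ actually holds for \emph{every} $g$ in $\mathrm{supp}\,\nu_A$, i.e., $gX \subset V$ for all such $g$; moreover, by compactness of $X$ and of $\mathrm{supp}\,\nu_A$ together with openness of $V$, there is a uniform margin: there exists $\rho > 0$ such that $g X \subset V$ for every $g$ in the $\rho$-neighborhood (in the metric $\delta$) of $\mathrm{supp}\,\nu_A$. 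This is the geometric heart of the argument and is where compact support is essential.

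The key steps, in order, are as follows. (1) Fix $\rho > 0$ as above. If $B \in \LPC$ satisfies $\delta_H(\mathrm{supp}\,\nu_A, \mathrm{supp}\,\nu_B) < \rho$, then $\mathrm{supp}\,\nu_B$ is contained in the $\rho$-neighborhood of $\mathrm{supp}\,\nu_A$, hence $g X \subset V$ for every $g \in \mathrm{supp}\,\nu_B$; equivalently, $B(\omega) X \subset V$ for $\mathbb{P}$-almost every $\omega$. So the same pair $(X, V)$ witnesses condition (i) of Proposition~\ref{rem:almost} for $B$. (2) It remains to verify condition (ii) of Proposition~\ref{rem:almost} for $B$, namely that every $f_B$-stationary measure supported on $X$ is maximizing; equivalently, by Proposition~\ref{rem:almost}, this is precisely what upgrades the combinatorial condition (i) to genuine almost-irreducibility (i.e.\ $X \subset P(\mathbb{R}^m) \setminus E_B$). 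For this I would invoke Proposition~\ref{prop:projective2}: since $(f_A)|_X$ is mostly contracting, uniquely ergodic, and, under the exponential moment condition (automatic for $A \in \LPC$), contracting on average with constants robust under $\Dbeta$-perturbations by Proposition~\ref{prop:lasota-york-uniforme} (or directly Proposition~\ref{prop:open-mostly} for openness), there is a $\Dbeta$-neighborhood $\mathcal{B}$ of $(f_A)|_X$ inside $\EE(X)$ on which every map is mostly contracting and uniquely ergodic with a unique stationary measure $\mu_g$ on $X$, varying continuously by Theorem~\ref{prop:statistical-stability-length}. (3) Use Lemma~\ref{lem:convergencia} to translate $\Delta$-closeness of $B$ to $A$ into $\Dbeta$-closeness of $(f_B)|_X$ to $(f_A)|_X$: indeed $\D(B,A) \le \D_\pm(B,A) \le \Delta(B,A)$ controls the $\Dbeta$-distance of the restricted projective maps once $X$ is common. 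Hence for $B$ in a suitably small $\Delta$-neighborhood $\mathcal{U}$ of $A$, the restriction $(f_B)|_X$ lies in $\mathcal{B}$, so it has a unique stationary measure $\mu_B$ supported on $X$. (4) Finally, since $\lambda_1(B) > \lambda_2(B)$ persists in a $\D$-neighborhood by continuity of both $\lambda_1$ and $\lambda_1 + \lambda_2$ (the latter is the top exponent of $\exterior{2} B$, again continuous at $A$ by Theorem~\ref{thm:continuity-quasi} applied to the exterior power, or more simply shrink $\mathcal{U}$ so that the gap survives), we may apply Proposition~\ref{prop:projective2} to $(f_B)|_X$ to conclude that $\mu_B$ is maximizing. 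By Proposition~\ref{rem:almost}, $B$ is therefore almost-irreducible with the same compact set $X$, and $\mu_B$ is the asserted unique ergodic measure supported on $X$.

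One technical point worth spelling out is the persistence of the spectral gap $\lambda_1(B) > \lambda_2(B)$: I would handle this by noting that $\lambda_1 + \lambda_2$ is the top Lyapunov exponent of the exterior-square cocycle $\exterior{2} B$, which is continuous at $\exterior{2} A$ in the $\D$-metric (since $\exterior{2} A$ is quasi-irreducible when restricted to the appropriate invariant subspace, or one may simply use upper semicontinuity of $\lambda_1 + \lambda_2$ together with lower semicontinuity of $\lambda_1$ at the quasi-irreducible point to get the strict inequality in a neighborhood). Shrinking $\mathcal{U}$ finitely many times to accommodate: (a) the Hausdorff-margin $\rho$, (b) the $\Dbeta$-neighborhood $\mathcal{B}$ from Proposition~\ref{prop:lasota-york-uniforme}, and (c) the persistence of the gap, yields the desired neighborhood.

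\textbf{Main obstacle.} The principal difficulty is not any single step but the bookkeeping of the three distinct topologies ($\Delta$ on $\LPC$, $\D_\pm$ on $\LP$, and $\Dbeta$ on $\EE(X)$) and ensuring the chain of inclusions of neighborhoods is coherent — in particular, that the \emph{same} compact set $X$ and open set $V$ work for all nearby $B$, which genuinely requires the Hausdorff-distance term in $\Delta$ and would fail if one only controlled $\D_\pm$. The subtle part is Step (1): one must use compactness of $\mathrm{supp}\,\nu_A$ (not merely $\mathbb{P}$-almost sure statements) to extract the uniform margin $\rho$, since an almost-sure inclusion $A(\omega) X \subset V$ gives no quantitative room a priori. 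Everything else — mostly contracting, unique ergodicity, maximality of $\mu_B$ — then follows mechanically from Proposition~\ref{prop:projective2} and the statistical stability machinery already established.
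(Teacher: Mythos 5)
Your steps (1)--(3) match the paper's approach: persistence of the geometric condition $B(\omega)X\subset V$ via the Hausdorff-distance margin (you spell out the uniform $\rho$-margin more fully than the paper, which merely asserts openness of condition (i) of Proposition~\ref{rem:almost}), together with Lemma~\ref{lem:convergencia} and Theorem~\ref{prop:statistical-stability-length} to guarantee that $(f_B)|_X$ remains uniquely ergodic and mostly contracting with a unique stationary measure $\mu_B$ supported on $X$.

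The gap is in step (4). You invoke Proposition~\ref{prop:projective2} on $(f_B)|_X$ to conclude that $\mu_B$ is maximizing, but that proposition's hypothesis~(ii) requires that the compact invariant set lie in $P(\mathbb{R}^m)\setminus E_B$, where $E_B$ is the \emph{equator of $B$} --- not of $A$. You have only established that $X\subset P(\mathbb{R}^m)\setminus E_A$ and that $X$ is $B$-invariant; nothing in steps (1)--(3) rules out $E_B\cap X\neq\emptyset$. If $E_B\cap X$ were non-empty, it would be a compact $B$-invariant subset of $X$ carrying an $f_B$-stationary measure, which by unique ergodicity of $(f_B)|_X$ would be $\mu_B$ itself; then by Remark~\ref{rem:maximazing} $\mu_B$ would \emph{not} be maximizing, contradicting your desired conclusion. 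Moreover, even if hypothesis (ii) held, Proposition~\ref{prop:projective2} only asserts proximality and unique ergodicity --- maximality of the measure needs the extra input of Corollary~\ref{cor:localization} together with $\mu_B(E_B)=0$. The persistence of the spectral gap $\lambda_1(B)>\lambda_2(B)$, which you do address carefully, is a separate matter from the location of $E_B$ relative to $X$ and does not imply it.

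The paper closes exactly this hole with a Claim showing $E_B\cap X=\emptyset$: arguing by contradiction, if the intersection were non-empty then $\mu_B$ would be supported there and fail to be maximizing; comparing the Kingman-type formula~\eqref{eq:uniform-Furstenberg-Kifer} for $\int\phi_B\,d\mu_B$ with the strong-law identity of Remark~\ref{rem:law-large-number} forces $X\subset E_B$; and then the presence of an \emph{open} set $V\subset X\subset E_B$ would force $E_B=\mathbb{R}^m$, absurd since $\dim E_B<m$. This is the only place in the argument where the open set $V$ from Definition~\ref{def:almost} is genuinely used, and omitting it leaves the proof incomplete.
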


\begin{proof} 
Since condition (i) in Proposition~\ref{rem:almost} is open in $(\LPC, \Delta)$, we only need to prove that condition (ii) persists under perturbation. Namely, we need to show that any $f_B$-stationary measure supported on $X$ is maximizing for any cocycle $B$ close enough to $A$ in $(\LPC, \Delta)$.

Since by Definition~\ref{def:almost}, 
$X\subset P(\mathbb{R}^m)\setminus E$  where $E$ is the equator, then $A$ meets the assumptions of  
Proposition~\ref{prop:projective2}. Hence,  $(f_A)|_X$ is a uniquely ergodic mostly contracting random map {on $X$} and satisfies~\eqref{eq:integral_condition}. Thus, by Lemma~\ref{lem:convergencia} and Theorem~\ref{prop:statistical-stability-length}, there exists a neighborhood $\mathcal{U}$ of $A$ in $(\LPC, \Delta)$ such that $(f_B)|_X$ is a uniquely ergodic mostly contracting random map {on $X$} for any $B \in \mathcal{U}$. Denote by $\mu_B$ this measure, and let $E_B$ be the (perhaps trivial) equator of $B$. 

\begin{claim} $E_B \cap X = \emptyset$.
\end{claim}

\begin{proof} 
Assume that $E_B \cap X \neq \emptyset$. Since $E_B$ and $X$ are both $B$-invariant sets, then $E_B \cap X$ is also $B$-invariant. Thus, there exists an $f_B$-stationary measure in $E \cap X$. Since $(f_B)|_X$ is uniquely ergodic, $\mu_B$ must be supported in $E_B \cap X$. In particular, by Remark~\ref{rem:maximazing}, $\mu_B$ cannot be a maximizing measure.   

Moreover, since $(f_B)|_X$ is uniquely ergodic, by Claim~\ref{claim:aditiva} and~\eqref{eq:uniform-Furstenberg-Kifer} in Theorem~\ref{Kingmanuniform}, we have  
$$
\lambda_1(B) > \int \phi_B \, d\mu_B = \lim_{n \to \infty} \frac{1}{n} \int \log \|B^n(\omega)x\| \, d\mathbb{P} \quad \text{for every } \hat{x} \in X.
$$
However, by Remark~\ref{rem:law-large-number}, 
$$
\lambda_1(B) = \lim_{n \to \infty} \frac{1}{n} \int \log \|B^n(\omega)x\| \, d\mathbb{P} \quad \text{for every  $\hat{x} \in P(\mathbb{R}^m) \setminus E_B$.}
$$
Thus, $X \subset E_B$. Consequently, $X = E_B \cap X$. But, since there is an open set $V \subset X = E_B \cap X \subset E_B$, then $E_B = \mathbb{R}^m$. This is a contradiction because the equator has dimension $\dim E_B < m$. 
\end{proof}

The above claim implies that $\mu_B$ is supported in $P(\mathbb{R}^m) \setminus E_B$ and thus, Corollary~\ref{cor:localization} concludes that $\mu_B$ is maximizing. Therefore, $B$ is almost-irreducible.  
\end{proof}

\section{Proof of the H\"older continuity} \index{Lyapunov exponents!H\"older continuity}
We prove the H\"older continuity of~$\phi_A$. 

\begin{lem}
    \label{lem:Holder}
    Let $A:\Omega \to \mathrm{GL}(m)$ be a locally constant linear cocycle. Then for any $\alpha>0$, 
\begin{align*}
\big|\log \|A(\omega)x\| - \log \|A(\omega)y\|\big|  
\leq \frac{2^\alpha}{\alpha\pi^\alpha} 
 \, \|A(\omega)^{-1}\|^\alpha \, \|A(\omega)\|^\alpha \cdot d(\hat{x},\hat{y})^\alpha
\end{align*}
for every $\hat{x}, \hat{y} \in P(\mathbb{R}^m)$.
Moreover, if $A$ satisfies~\eqref{eq:exponential-moment-condition} for some $0<\beta \leq 1$, then $\phi_A$ is $\alpha$-H\"older continuous for any $0<\alpha \leq \beta/2$.   
\end{lem}
\begin{proof}
Let $ \alpha >0$. Using that $\log t \leq \frac{1}{\alpha} (t^\alpha-1)$ for $t>0$,  $|t|^\alpha-|s|^\alpha \leq |t-s|^\alpha$ for $t,s\in \mathbb{R}$, $\big|\|u\|-\|v\| \big|\leq \|u-v\|$ for $u,v\in\mathbb{R}^m$, and $\|M^{-1}\|^{-1} \leq \|Mz\|\leq \|M\|$ for any unitary vector $z\in\mathbb{R}^m$ and $M\in \mathrm{GL}(m)$,  we obtain that for every $\hat{x},\hat{y}\in P(\mathbb{R}^m)$,
  \begin{align*}
    \big|\log \|A(\omega)x\| - \log \|A(\omega)y\|\big|  &
    = \big|  \log \frac{\|A(\omega){x}\|}{\|A(\omega){y}\|}  \, \,\big| 
     \leq \frac{1}{\alpha} \,  \frac{\big| \|A(\omega){x}\|-\|A(\omega){y}\| \big|^\alpha}
    {\|A(\omega){y}\|^\alpha} \\
      &\leq \frac{1}{\alpha} \, \|A(\omega)^{-1}\|^\alpha  \|A(\omega)\|^\alpha  \|x-y\|^{\alpha}. 
\end{align*}
Choosing the unitary representative $x$ and $y$ of $\hat{x}$ and $\hat{y}$ such that $\theta=\mathrm{angle}(x,y)\in [0,\frac{\pi}{2}]$, as discussed at the beginning of~\S\ref{ss:angular}, we have $\|x-y\|\leq \frac{2}{\pi} d(\hat{x},\hat{y})$.  Putting together  this observation and the above inequality, we obtain the first part of the lemma.
   
   By assumption, $A$ satisfies~\eqref{eq:exponential-moment-condition}  for some $0<\beta \leq 1$. 
Now, let $0< \alpha \leq \beta/2$. Then, from the first part of the lemma, for every $\hat{x},\hat{y}\in P(\mathbb{R}^m)$,
\begin{align*}
    \big|\phi_A(\hat{x})-\phi_A(\hat{y})\big| &
    \leq  \int \big|\log \|A(\omega){x}\|- \log \|A(\omega){y}\| \big|  \, d\mathbb{P} 
     \\
      &\leq \frac{2^\alpha}{\alpha\pi^\alpha}  \int \|A(\omega)^{-1}\|^\alpha  \|A(\omega)\|^\alpha \, d\mathbb{P} \cdot d(\hat{x},\hat{y})^{\alpha} \\
     &\leq \frac{2^\alpha}{\alpha\pi^\alpha} \left(\int \|A(\omega)^{-1}\|^{2\alpha} \, d\mathbb{P}\right)^{1/2} \left(\int \|A(\omega)\|^{2\alpha} \, d\mathbb{P}\right)^{1/2} \cdot d(\hat{x},\hat{y})^{\alpha}. 
\end{align*}
The last inequality follows from H\"older's inequality. Moreover, since  $2\alpha \leq \beta$, the exponential moment~\eqref{eq:exponential-moment-condition} 
imply that the factors multiplying $d(\hat{x},\hat{y})^{\alpha}$  are finite. Thus, $\phi_A$ is H\"older continuous with H\"older exponent arbitrarily small.   
\end{proof}

Here we show the following result.  


\begin{thm} \label{thm:holder}
    Let $A$ be a quasi-irreducible (resp.~almost-irreducible)  cocycle in $\LP$ (resp.~$\LPC$) such that $\lambda_1(A) > \lambda_2(A)$ and~\eqref{eq:exponential-moment-condition} holds. Then, there exists a neighborhood $\mathcal{U}$ of $A$ in $(\mathrm{L}_{\mathbb{P}}(m), \Dpm)$ (resp.~$(\LPC, \Delta)$) such that the map $$B \in \mathcal{U} \mapsto \lambda_1(B)$$ is H\"older continuous. Moreover, there exist $C > 0$ and $\gamma \in (0,1]$ such that 
    \begin{equation*}
        |\lambda_1(B) - \lambda_1(B')| \leq C \cdot \Dp(B, B')^\gamma \quad \text{for all } B, B' \in \mathcal{U}.
    \end{equation*}
\end{thm}

\begin{proof}
In what follows, $X$ denotes either $P(\mathbb{R}^m)$ or a projective  compact subset satisfying Definition~\ref{def:almost}, depending on whether $A$ is quasi-irreducible or almost-irreducible, respectively.

By Proposition~\ref{prop:projective2}, $(f_A)|_X$ is a uniquely ergodic mostly contracting random map, and~\eqref{eq:integral_condition} holds for some $\beta>0$ which we can assume $\beta\leq 1/2$ and satisfying~\eqref{eq:exponential-moment-condition}. See Remark~\ref{rem:beta}. Hence, since $\Dpm\leq \Delta$, we can apply Lemma~\ref{lem:convergencia} and Theorem~\ref{prop:statistical-stability-length} and conclude the existence of a neighborhood $\mathcal{U}$ of $A$ in either $(\LP, \Dpm)$ or $(\LPC, \Delta)$ such that $f_B$ is close enough to $f_A$ in $(\EE(P(\mathbb{R}^m)), \Dbeta)$ for any $B \in \mathcal{U}$. Moreover, by Lemma~\ref{lemma:2exponents}, we can assume that any $B\in \mathcal{U}$ still satisfies the corresponding finite exponential moment~\eqref{eq:exponential-moment-condition} for $\beta$. Thus, by Lemma~\ref{lem:Holder}, $\phi_B$ is $\alpha$-H\"older continuous for any $0<\alpha\leq\beta/2$.  
Also, by shrinking this neighborhood $\mathcal{U}$ if necessary, we can ensure that either Proposition~\ref{prop:open-quasi} or Proposition~\ref{prop:open-almost} applies. Thus, $B$ is almost-irreducible for any $B \in \mathcal{U}$. Moreover, $(f_B)|_X$ is a mostly contracting random map having a unique ergodic stationary measure $\mu_B$ supported in $X$ which is maximizing. 

Now, for each $B, C \in \mathcal{U}$, 
    \begin{align*}
    \big|\lambda_1(B)-\lambda_1(C)\big| &=\big|\int \phi_B \, d\mu_B -  \int \phi_C \, d\mu_C \big| \\ &\leq 
    \int |\phi_B - \phi_C |\, d\mu_B +  \big|\int \phi_C \, d\mu_B-\int \phi_C \, d\mu_C\big|. 
    \end{align*}
Since $\phi_C$ is $\alpha$-H\"older continuous where the exponent $\alpha>0$ can be chosen arbitrarily small, from Theorem~\ref{prop:statistical-stability-length} there are uniform contacts $K>0$ and $\eta>0$ on $\mathcal{U}$ such that 
\begin{equation} \label{eq:dalphaC}
    \big|\lambda_1(B)-\lambda_1(C)\big|\leq 
    \|\phi_B - \phi_C \|_\infty +  K \, \|\phi_C\|_\alpha \, \D^\alpha_{\smash{C^0}}(f_B,f_C)^{\eta}.  
    \end{equation}
Fix $\alpha>0$ such that $\gamma=\eta\alpha\leq \beta/2<1$. Hence,  as before,  
we obtain 
\begin{align*}
    \big\|\phi_B-\phi_C\big\|_\infty &= \left| \, \sup_{\hat x\in P(\mathbb{R}^m)} \int \log \frac{\|B(\omega)x\|}{\|C(\omega)x\|}  \, d\mathbb{P} \, \right|
    \leq 
\frac{1}{\gamma}\sup_{\hat x\in P(\mathbb{R}^m)} \int \frac{\|B(\omega)-C(\omega)\|^\gamma}{\|C(\omega)x\|^\gamma} \,d\mathbb{P} \\ &\leq \frac{1}{\gamma} \left(\int {\|C(\omega)^{-1}\|^{2\gamma}} \,d\mathbb{P}\right)^{1/2} \cdot \left(\int \|B(\omega)-C(\omega)\|^{2\gamma} \,d\mathbb{P}\right)^{1/2} \\
&\leq \frac{1}{\gamma} \left(\int {\|C(\omega)^{-1}\|^{2\gamma}} \,d\mathbb{P}\right)^{1/2} \cdot \Dp(B,C)^\gamma
\end{align*}
The above inequality, the exponential moment~\eqref{eq:exponential-moment-circulo} and Lemma~\ref{lemma:2exponents}  
implies that $$\|\phi_B-\phi_C\|_\infty\leq K_\gamma \Dp(B,C)^\gamma \quad \text{for some  $K_\gamma>0$.}
$$
Moreover, by Lemma~\ref{lem:convergencia}, we have that $\D^\alpha_{\smash{C^0}}(f_B,f_C)\leq \Dp(B,C)^\alpha$ and hence, above inequality and~\eqref{eq:dalphaC} imply
$|\lambda(B)-\lambda(B)|\leq \bar{K} \Dp(B,C)^\gamma$  
where $\bar{K}>0$ depends on $\mathcal{U}$ and $\alpha$ (note that $\gamma=\eta\alpha$). This proves the H\"older continuity of $\lambda(\cdot)$ and completes the proof. 
\end{proof}

\begin{rem} \label{rem:remover-momento} If $A \in \LPC$, then~\eqref{eq:exponential-moment-condition} holds. Thus, the exponential moment condition in Theorem~\ref{thm:holder} is immediately fulfilled in the compactly supported case.  
\end{rem}

\section{Proof of Propositions~\ref{mainthm:continuity},~\ref{mainprop:linear-exponent-quasi} and~\ref{mainprop:linear-exponent-almost}}

Since in the appropriate norm $\|M\|=\|M^T\|$ for any matrix $M\in \mathrm{GL}(m)$, Furstenberg and Kifer~\cite{furstenberg1983random} (see also~\cite{key1988lyapunov}) remark that 
\begin{align*}
    \lambda_1(A^T)&=\lim_{n\to\infty} \frac{1}{n} \int \log \|A(\omega_{n-1})^T\cdot \ldots \cdot A(\omega_0)^T\|\, d\mathbb{P}  \\ &= \lim_{n\to\infty} \frac{1}{n} \int \log \|(A(\omega_{0})\cdot \ldots \cdot A(\omega_{n-1}))^T\|\, d\mathbb{P}  \\
    &= \lim_{n\to\infty} \frac{1}{n} \int \log \|A^n(\omega_{n-1})\cdot \ldots \cdot A(\omega_{0})\|\, d\mathbb{P} = \lambda_1(A).
\end{align*}
Moreover,  since also $(M^T)^{-1}=(M^{-1})^T$, the distance between any pair of cocycles $A$ and $B$ coincides with the distance between $A^T$ and $B^T$. Thus, Propositions~\ref{mainthm:continuity},~\ref{mainprop:linear-exponent-quasi} and~\ref{mainprop:linear-exponent-almost} follow from the corresponding Theorems~\ref{thm:continuity-quasi},~\ref{thm:holder} and Remark~\ref{rem:remover-momento}.

\section{Proof of Proposition~\ref{mainprop:regularity-distribution}}
\label{s:proof-regularity-distribution} \index{Lyapunov exponents!continuity}
\index{Lyapunov exponents!H\"older continuity}

We now prove Proposition~\ref{mainprop:regularity-distribution}.  The only
additional ingredient is the gluing lemma from optimal transport, which allows
one to realize several distributions as one-step laws of locally constant
cocycles over a common Bernoulli space.

Given two probability measures \(\mu\) and \(\nu\) on a measurable space \(Y\), we denote
by \(\Pi(\mu,\nu)\) the set of couplings of \(\mu\) and \(\nu\), that is, the
set of probability measures on \(Y\times Y\) whose first and second marginals
are \(\mu\) and \(\nu\), respectively. The following gluing lemma is standard in the literature of optimal transport, see~\cite[Lemma~7.6]{villani2003topics}.

\begin{lem}
\label{lem:gluing}
Let \(Y\) be a Polish space and consider probability measures \(\mu_1,\mu_2,\mu_3\) on $Y$.
If $\pi_{12}\in\Pi(\mu_1,\mu_2)$ and $\pi_{23}\in\Pi(\mu_2,\mu_3)$, then there exists
$\rho\in\mathcal P(Y^3)$ 
such that
\[
   (\mathrm{pr}_{12})_*\rho=\pi_{12},
   \qquad
   (\mathrm{pr}_{23})_*\rho=\pi_{23}.
\]
where $\mathrm{pr}_{12}(y_1,y_2,y_3)=(y_1,y_2)$ and $\mathrm{pr}_{23}(y_1,y_2,y_3)=(y_2,y_3)$. 
\end{lem}

Using this lemma, we get the following realization lemma. Recall from~\S\ref{ss:wasserstein-distributions-intro} the definitions of the metrics $\delta$, $\delta^+$ and the associated Wasserstein distances $\delta_W$, $\delta^+_{\smash{W}}$ on the space of probability measures $\mathcal P_1(m)=\mathcal{P}_1(\mathrm{GL}(m),\delta)$, $\mathcal P^+_1(m)=\mathcal{P}_1(\mathrm{GL}(m),\delta^+)$ with finite first moment.

\begin{lem}
\label{lem:wasserstein-realization}
Let \(c\in\{\delta^+,\delta\}\), and let \(c_W\) be the corresponding
Wasserstein distance. Fix $\nu_0\in \mathcal P_1(\mathrm{GL}(m),c)$ and consider
$$(\Omega,\mathscr{F},\mathbb P)=([0,1]\times \mathrm{GL}(m),\mathscr{A},\mathrm{Leb} \times \nu_0 )^{\mathbb{N}}$$ 
where $\mathscr{A}$ is the product Borel $\sigma$-algebra. 
Then there exists a locally constant cocycle $A_0:\Omega\to\mathrm{GL}(m)$
 with $(A_0)_*\mathbb P=\nu_0$ such that for every $ \nu,\nu'\in \mathcal P_1(\mathrm{GL}(m),c)$ 
and \(\varepsilon>0\), there are locally constant cocycles $A_\varepsilon,A'_\varepsilon:\Omega\to\mathrm{GL}(m)$ satisfying 
\[
   (A_\varepsilon)_*\mathbb P=\nu,
   \qquad
   (A'_\varepsilon)_*\mathbb P=\nu',
\]
and
\[
   \int c(A_0,A_\varepsilon)\,d\mathbb P
   \leq
   c_W(\nu_0,\nu)+\varepsilon,
   \qquad
   \int c(A_\varepsilon,A'_\varepsilon)\,d\mathbb P
   \leq
   c_W(\nu,\nu')+\varepsilon .
\]
\end{lem}

\begin{proof}
For \(\omega\in\Omega\), write $\omega_0=(u_0,g_0)\in [0,1]\times \mathrm{GL}(m)$ and define
$A_0(\omega)\eqdef g_0$. 
Then \((A_0)_*\mathbb P=\nu_0\). Now fix \(\nu,\nu'\in\mathcal P_1(\mathrm{GL}(m),c)\) and
\(\varepsilon>0\). By the primal Wasserstein definition (see also the Kantorovich--Rubinstein duality), we can write
$$
c_W(\mu,\mu')=\inf_{\pi \in \Pi(\mu,\mu')} \int c(g,g')\, d\pi(g,g').
$$
Hence, we can choose couplings
$\pi_{12}\in\Pi(\nu_0,\nu)$ and $\pi_{23}\in\Pi(\nu,\nu')$
such that
\[
   \int c(g_0,g)\,d\pi_{12}(g_0,g)
   \leq c_W(\nu_0,\nu)+\varepsilon, \quad 
   \int c(g,g')\,d\pi_{23}(g,g')
   \leq c_W(\nu,\nu')+\varepsilon .
\]
Applying Lemma~\ref{lem:gluing} with \(Y=\mathrm{GL}(m)\), we obtain $\rho_\epsilon\in\mathcal P(\mathrm{GL}(m)^3)$
such that
\[
   (\mathrm{pr}_{12})_*\rho_\varepsilon=\pi_{12},
   \qquad
   (\mathrm{pr}_{23})_*\rho_\varepsilon=\pi_{23}.
\]
In particular, the first marginal of \(\rho_\varepsilon\) is \(\nu_0\). Disintegrating \(\rho_\varepsilon\) with respect to the first coordinate, we
obtain a probability kernel $K_\varepsilon: \mathrm{GL}(m) \times \mathscr{B}(\mathrm{GL}(m)^2) \to [0,1]$ such that $d\rho_{\varepsilon}= K_\varepsilon(g_0,\cdot) d\nu_0(g_0)$. 
Since \(\mathrm{GL}(m)\) is Polish, by the  random mapping representation theorem (c.f.~\cite[Lemma~3.22]{kallenberg2002foundations} and~\cite[Theorem~1.1]{kifer2012ergodic}) there exists a measurable map
$h_\varepsilon : [0,1]\times 
   \mathrm{GL}(m)\to\mathrm{GL}(m)^2$ 
such that, 
\[
   (h_\varepsilon(\cdot,g_0))_*\mathrm{Leb}
   =
   K_\varepsilon(g_0,\cdot) \quad \text{for \(\nu_0\)-a.e. \(g_0\).}
\]
Write $h_\varepsilon(u_0,g_0) =
(g_\varepsilon(u_0,g_0),
         g'_\varepsilon(u_0,g_0))$
and define
\[
   A_\varepsilon(\omega)
   \eqdef
   g_\varepsilon(u_0,g_0),
   \qquad
   A'_\varepsilon(\omega)
   \eqdef
   g'_\varepsilon(u_0,g_0),
   \qquad
   \omega_0=(u_0,g_0).
\]
These are locally constant cocycles on the fixed Bernoulli space
\((\Omega,\mathscr{F},\mathbb P)\).

By construction, when \(\omega_0=(u_0,g_0)\)
is distributed according to \(p=\mathrm{Leb}\times\nu_0\), the triple
$(A_0(\omega),A_\varepsilon(\omega),A'_\varepsilon(\omega))$ has distribution \(\rho_\varepsilon\) on \(\mathrm{GL}(m)^3\). Equivalently,
for every bounded Borel function
\(\varphi:\mathrm{GL}(m)^3\to\mathbb R\),
\[
   \int \varphi(A_0,A_\varepsilon,A'_{\smash{\varepsilon}})\,d\mathbb P
   =
   \int
   \varphi \,d\rho_\varepsilon. 
\]
Taking \(\varphi(g_0,g,g')= 1_E(g)\) and
\(\varphi(g_0,g,g')= 1_E(g')\), respectively, shows that
\[
   (A_\varepsilon)_*\mathbb P=\nu,
   \qquad
   (A'_\varepsilon)_*\mathbb P=\nu'.
\]
Taking instead
 $\varphi(g_0,g,g')=c(g_0,g)$ and $\varphi(g_0,g,g')=c(g,g')$ 
and using the two marginal identities for \(\rho_\varepsilon\), we obtain
\[
\begin{aligned}
   \int c(A_0,A_\varepsilon)\,d\mathbb P
   &=
   \int c(g_0,g)\,d\rho_\varepsilon(g_0,g,g')  \\
   &=
   \int c(g_0,g)\,d\pi_{01}(g_0,g)  \leq c_W(\nu_0,\nu)+\varepsilon,
\end{aligned}
\]
and similarly
\[
\begin{aligned}
   \int c(A_\varepsilon,A'_\varepsilon)\,d\mathbb P
   &=
   \int c(g,g')\,d\rho_\varepsilon(g_0,g,g')  \\
   &=
   \int c(g,g')\,d\pi_{12}(g,g')  \leq c_W(\nu,\nu')+\varepsilon.
\end{aligned}
\]
This completes the proof. 
\end{proof}

We prove the three assertions of Proposition~\ref{mainprop:regularity-distribution} separately.

\medskip

\noindent\emph{Proof of item \textup{(1)}.}
Let \((\nu_n)_{n\geq 1}\) be a sequence in
\(\mathcal P_{\smash{1}}^+(m)\) such that
$\delta_{\smash{W}}^+(\nu_n,\nu_0)\to 0$.
We shall prove that \(\lambda_1(\nu_n)\to \lambda_1(\nu_0)\).
Apply Lemma~\ref{lem:wasserstein-realization} with
\(c=\delta^+\).  We obtain a Bernoulli probability space
\((\Omega,\mathscr{F},\mathbb P)\) and locally constant  cocycle $A_{0}:\Omega\to \mathrm{GL}(m)$ with $(A_0)_*\mathbb{P}=\nu_0$ such that for each $n\geq 1$ there is a locally constant linear cocycle $A_n:\Omega \to \mathrm{GL}(m)$ satisying $(A_n)_*\mathbb{P}=\nu_n$ and 
\[
   \Dp(A_{0},A_n)
   =
   \int \delta^+(A_{0},A_n) \,d\mathbb P
   \leq
   \delta_W^+(\nu_0,\nu_n)+\frac1n .
\]
Hence $\Dp(A_{0},A_n)\to 0$. Since \(A_{0}\) has  distribution \(\nu_0\), it satisfies the same
exponential moment condition as \(\nu_0\). Moreover, the assumption that
\(\nu_0\), or \(\nu_{\smash{0}}^T\), is quasi-irreducible is exactly the corresponding
assumption for \(A_{0}\), or \(A_{\smash{0}}^T\). Therefore
Proposition~\ref{mainthm:continuity} applies to \(A_{0}, A_n \in \mathrm{L}_{\mathbb{P}}(m)\). Consequently,
\[
   |\lambda_1(\nu_n)-\lambda_1(\nu_{0})|=|\lambda_1(A_n)-\lambda_1(A_{0})|
   \to 0  \quad \text{as $n\to \infty$}.
\]
This proves the continuity of 
\(\lambda_1:(\mathcal P_{\smash{1}}^+(m),\delta_{\smash{W}}^+) \to \mathbb{R}\) at \(\nu_0\). 

\medskip

\noindent\emph{Proof of item \textup{(2)}.}
Assume that \(\nu_0\in\mathcal P_1(m)\),
$\lambda_1(\nu_0)>\lambda_2(\nu_0)$, and that either \(\nu_0\) or \(\nu_{\smash{0}}^T\) is quasi-irreducible.
Apply Lemma~\ref{lem:wasserstein-realization} with \(c=\delta\). This gives
a Bernoulli probability space \((\Omega,\mathscr F,\mathbb P)\) and a locally constant cocycle
$A_0:\Omega\to\mathrm{GL}(m)$ with $(A_0)_*\mathbb P=\nu_0$.
Then $\lambda_i(A_0)=\lambda_i(\nu_0)$ for $i=1,2$ 
and \(A_0\), or \(A_{\smash{0}}^T\), is quasi-irreducible. Hence, by
Proposition~\ref{mainprop:linear-exponent-quasi}, there exist a neighborhood
\(\mathcal B\) of \(A_0\) in \((\mathrm L_{\mathbb P}(m),\Dpm)\),
constants \(C>0\) and \(\gamma\in(0,1]\), such that
\begin{equation} \label{eq:delta-B}
   |\lambda_1(B)-\lambda_1(B')|
   \leq
   C\,\Dpm(B,B')^\gamma
   \qquad
   \text{for every } B,B'\in\mathcal B .
\end{equation}
Choose \(r>0\) such that
$\{B:\Dpm(B,A_0)<2r\}\subset \mathcal B$ and consider 
\[
   \mathcal U
   \eqdef
   \{\nu\in\mathcal P_1(m):\delta_W(\nu,\nu_0)<r/4\}.
\]
Let \(\nu,\nu'\in\mathcal U\), and let \(\varepsilon>0\). Also by
Lemma~\ref{lem:wasserstein-realization}, there exist locally constant cocycles
$A_\varepsilon,A'_\varepsilon:\Omega\to\mathrm{GL}(m)$ 
such that
$(A_\varepsilon)_*\mathbb P=\nu$, $(A'_\varepsilon)_*\mathbb P=\nu'$ 
and
\begin{align*}
       \Dpm(A_0,A_\varepsilon)
   &=
   \int \delta(A_0,A_\varepsilon)\,d\mathbb P
   \leq
   \delta_W(\nu_0,\nu)+\varepsilon, \\
   \Dpm(A_\varepsilon,A'_\varepsilon)
   &=
   \int \delta(A_\varepsilon,A'_\varepsilon)\,d\mathbb P
   \leq
   \delta_W(\nu,\nu')+\varepsilon .
\end{align*}
Taking \(\varepsilon>0\) sufficiently small, the first inequality gives $\Dpm(A_0,A_\varepsilon)<r$ and thus $A_\varepsilon \in \mathcal B$. 
Moreover, since \(\nu,\nu'\in\mathcal U\),
\[
   \delta_W(\nu,\nu')
   \leq
   \delta_W(\nu,\nu_0)+\delta_W(\nu_0,\nu')
   <\frac r2.
\]
Thus, for \(\varepsilon>0\) sufficiently small,
$\Dpm(A_\varepsilon,A'_\varepsilon)<r$ and consequently we have that  $\Dpm(A_0,A'_\varepsilon)
   \leq
   \Dpm(A_0,A_\varepsilon)
   +
   \Dpm(A_\varepsilon,A'_\varepsilon)
   <2r$. Hence also $A'_\varepsilon\in\mathcal B$. 
Applying~\eqref{eq:delta-B},
\[
\begin{aligned}
   |\lambda_1(\nu)-\lambda_1(\nu')|
   =
   |\lambda_1(A_\varepsilon)-\lambda_1(A'_\varepsilon)|   \leq
   C\,\Dpm(A_\varepsilon,A'_\varepsilon)^\gamma  
   \leq
   C\bigl(\delta_W(\nu,\nu')+\varepsilon\bigr)^\gamma .
\end{aligned}
\]
Letting \(\varepsilon\to0\), we get that
$ \lambda_1:
   (\mathcal P_1(m),\delta_W)\to\mathbb R$
is locally H\"older continuous at \(\nu_0\).

\medskip

\noindent\emph{Proof of item \textup{(3)}.}
Assume that $\nu_0\in\mathcal P_{\hspace{-0.7mm}\scriptscriptstyle\rm cpt}(m)$, $\lambda_1(\nu_0)>\lambda_2(\nu_0)$,
and that either \(\nu_0\) or \(\nu_{\smash{0}}^T\) is almost-irreducible. As before, applying Lemma~\ref{lem:wasserstein-realization} with \(c=\delta\), one has a fixed Bernoulli probability space \((\Omega,\mathscr F,\mathbb P)\) and a cocycle $A_0\in \LPC$ with $(A_0)_*\mathbb P=\nu_0$ such that  \(A_0\), or \(A_{\smash{0}}^T\), is almost-irreducible, and $\lambda_i(A_0)=\lambda_i(\nu_0)$ for $i=1,2$.  Hence Proposition~\ref{mainprop:linear-exponent-almost} gives a neighborhood
\(\mathcal B\) of \(A_0\) in \((\LPC,\Delta)\), constants \(C>0\) and \(\gamma\in(0,1]\) such that
\begin{equation} \label{eq:Delta-B}
   |\lambda_1(B)-\lambda_1(B')|
   \leq
   C\,\Delta(B,B')^\gamma
   \qquad
   \text{for every } B,B'\in\mathcal B .
\end{equation}
Similar as above, we choose \(r>0\) such that $\{B\in\LPC:\Delta(B,A_0)<2r\}\subset \mathcal B$ and consider
$\mathcal U
   \eqdef
   \{
   \nu\in\mathcal P_{\hspace{-0.7mm}\scriptscriptstyle\rm cpt}(m):
   \delta_{\mathcal T}(\nu,\nu_0)<r/4
   \}$ .
Let \(\nu,\nu'\in\mathcal U\), and let \(\varepsilon>0\). By
Lemma~\ref{lem:wasserstein-realization}, there exist also locally constant cocycles $ A_\varepsilon,A'_\varepsilon:\Omega\to\mathrm{GL}(m)$
such that
$(A_\varepsilon)_*\mathbb P=\nu$, $(A'_\varepsilon)_*\mathbb P=\nu'$, and
\[
   \Dpm(A_0,A_\varepsilon)
   \leq
   \delta_W(\nu_0,\nu)+\varepsilon,
   \qquad
   \Dpm(A_\varepsilon,A'_\varepsilon)
   \leq
   \delta_W(\nu,\nu')+\varepsilon .
\]
Since \(\nu\) and \(\nu'\) are compactly supported, the cocycles \(A_\varepsilon\) and \(A'_\varepsilon\) are compactly supported and $\operatorname{supp}(A_\varepsilon{}_*\mathbb P)=\operatorname{supp}\nu$, $   \operatorname{supp}(A'_\varepsilon{}_*\mathbb P)=\operatorname{supp}\nu'$. 
Therefore
\[
\begin{aligned}
   \Delta(A_0,A_\varepsilon)
   &=
   \Dpm(A_0,A_\varepsilon)
   +
   \delta_H(\operatorname{supp}\nu_0,\operatorname{supp}\nu)  \\
   &\leq
   \delta_W(\nu_0,\nu)
   +
   \delta_H(\operatorname{supp}\nu_0,\operatorname{supp}\nu)
   +
   \varepsilon  =
   \delta_{\mathcal T}(\nu_0,\nu)+\varepsilon,
\end{aligned}
\]
and similarly $\Delta(A_\varepsilon,A'_\varepsilon)
   \leq
   \delta_{\mathcal T}(\nu,\nu')+\varepsilon$. 
Taking \(\varepsilon>0\) sufficiently small, the first estimate gives $\Delta(A_0,A_\varepsilon)<r$ and hence $A_\varepsilon \in \mathcal{B}$. 
Moreover, since \(\nu,\nu'\in\mathcal U\),
\[
   \delta_{\mathcal T}(\nu,\nu')
   \leq
   \delta_{\mathcal T}(\nu,\nu_0)
   +
   \delta_{\mathcal T}(\nu_0,\nu')
   <\frac r2.
\]
Thus, for \(\varepsilon>0\) sufficiently small,
$\Delta(A_\varepsilon,A'_{\smash{\varepsilon}})<r$ and consequently we have that 
$\Delta(A_0,A'_{\smash{\varepsilon}}) \leq
   \Delta(A_0,A_{\smash{\varepsilon}})
   +
   \Delta(A_\varepsilon,A'_\varepsilon)
   <2r$.
Hence $A'_\varepsilon$ also belongs to $\mathcal{B}$. 
By~\eqref{eq:Delta-B}, 
\[
   |\lambda_1(\nu)-\lambda_1(\nu')|
   =
   |\lambda_1(A_\varepsilon)-\lambda_1(A'_\varepsilon)|  
   \leq
   C\,\Delta(A_\varepsilon,A'_\varepsilon)^\gamma  
   \leq
   C\bigl(\delta_{\mathcal T}(\nu,\nu')+\varepsilon\bigr)^\gamma .
\]
Letting \(\varepsilon\to0\), we get that
$\lambda_1:
   \bigl(
   \mathcal P_{\hspace{-0.7mm}\scriptscriptstyle\rm cpt}(m),
   \delta_{\mathcal T}
   \bigr)
   \to \mathbb R
$   
is locally H\"older continuous at \(\nu_0\).

\begin{rem} Realization arguments above show that if $\lambda_1(\cdot)$ is continuous, repectively locally H\"older continuous, at a locally constant cocycle $A:(\Omega,\mathbb{P})\to \mathrm{GL}(m)$ with respect to the metrics \(\Dp\), \(\Dpm\), or \(\Delta\), then $\lambda_1(\cdot)$ have the same continuity property at the distribution $\nu=A_*\mathbb{P}$ with respect to \(\delta_W^+\), \(\delta_W\), or \(\delta_{\mathcal T}\). The converse passage, from distributional regularity to the corresponding
fixed-base cocycle regularity, also follows directly from the fact that the law map
is non-expanding for the relevant metrics. Indeed, if \(A,B\in\LP\) and
\(\nu_A=A_*\mathbb P\), \(\nu_B=B_*\mathbb P\), then \((A,B)_*\mathbb P\) is a
coupling of \(\nu_A\) and \(\nu_B\). Hence
$\delta_W^+(\nu_A,\nu_B)\leq \Dp(A,B)$ and $ \delta_W(\nu_A,\nu_B)\leq \Dpm(A,B)$. 
Moreover, if \(A,B\in\LPC\), then
$\delta_{\mathcal T}(\nu_A,\nu_B)
   =
   \delta_W(\nu_A,\nu_B)
   +
   \delta_H(\operatorname{supp}\nu_A,\operatorname{supp}\nu_B)
   \leq
   \Delta(A,B)$. 
Consequently, continuity, respectively local H\"older continuity, of
\(\lambda_1(\cdot)\) with respect to a distribution $\nu$ in $\mathrm{GL}(m)$ in the metrics 
\(\delta_W^+\), \(\delta_W\), or \(\delta_{\mathcal T}\) is equivalent to the same property for any locally constant realization \(A:(\Omega,\mathbb{P})\to \mathrm{GL}(m)\) of \(\nu\), with
respect to \(\Dp\), \(\Dpm\), or \(\Delta\), respectively. 
\end{rem}

\chapter{Mostly contracting random maps of one-dimensional diffeomorphisms} \label{s:examples}

\abstract{ {
This chapter develops the one-dimensional examples of the mostly contracting
theory. We begin with the exponent of contraction for random circle maps and
compare it with the usual Lyapunov exponent in the \(C^1\)-diffeomorphism
setting. This comparison allows us to use Malicet's invariance principle to
obtain a criterion for mostly contracting random circle diffeomorphisms: in the
absence of a common invariant probability measure, every stationary regime has
negative Lyapunov exponent. We then extend this criterion to random
diffeomorphisms of compact intervals onto their images. The chapter
concludes with random diffeomorphisms of Cantor sets, where the absence of a
common invariant measure again forces uniform contraction away from finite
exceptional sets and hence the mostly contracting property. In all these
settings, the abstract results of the previous chapters yield the corresponding
ergodic, spectral, and statistical consequences.}
}


\section{Exponent of contraction} 
\label{ss:Exponent_contraction}
 Let $(\Omega, \mathscr{F})$ be a standard Borel space. Consider a  measurable skew-product transformation
$$
F\colon\Omega\times \mathbb{S}^1 \to \Omega\times \mathbb{S}^1, \quad F(\omega,x) = (T(\omega), f_\omega(x)).
$$ 
As usual, we denote by $f^n_\omega(x)$ the second coordinate of $F^n(\omega,x)$ for $n\geq 0$. The maximal Lyapunov exponent $\lambda(\omega,x)$ measures the exponential rate of contraction of $f^n_\omega$ in a neighborhood of $x$ but requires at least Lipschitz regularity of the fiber maps to be well-defined. In~\cite[Def.~3.2]{Mal:17}, Malicet introduces the \emph{exponent of contraction} \index{Lyapunov exponents!\(\lambda_{con}(\omega,x)\), exponent of contraction}
$$
\lambda_{con}(\omega,x)\eqdef \limsup_{y\to x}\limsup_{n\to \infty} \frac{1}{n}\log d(f_\omega^n(x),f_\omega^n(y))$$
where $d$ is the standard legth-metric on $\mathbb{S}^1$.  This exponent provides analogous information without assuming any regularity, {i.e., assuming just that the fiber maps $f_\omega$ are measurable functions.} As usual, for any $F$-invariant probability measure $\bar{\mu}$, the exponent of contraction of $\bar{\mu}$ is defined as
$$
\lambda_{con}(\bar{\mu})=\int \lambda_{con}(\omega,x)\, d\bar{\mu}.
$$
{Normalizing the metric so that \(d\le 1\), we have $\lambda_{con}(\omega,x)\leq 0$  and $\lambda_{con}(\bar\mu)\leq0$. 
{For each fixed $\omega\in \Omega$, we also have the following  properties:

{ 
\begin{lem} \label{lem:semicontoflambdacon}
    The function
\(x\mapsto \lambda_{con}(\omega,x)\) is upper semicontinuous. 
\end{lem}
\begin{proof} If
\(\lambda_{con}(\omega,x)<a\), choose \(\lambda_{con}(\omega,x)<b<a\) and a neighborhood \(U\) of \(x\)
such that, for every \(y\in U\),
\[
\limsup_{n\to\infty}
\frac1n\log d(f_\omega^n(x),f_\omega^n(y))<b.
\]
If \(z\) is sufficiently close to \(x\) and \(w\) is sufficiently close to \(z\),
then both \(z\) and \(w\) belong to \(U\). Hence
\[
d(f_\omega^n(z),f_\omega^n(w))
\le
d(f_\omega^n(z),f_\omega^n(x))
+
d(f_\omega^n(x),f_\omega^n(w)),
\]
and the exponential rate of the right-hand side is at most \(b\). Indeed, if two non-negative sequences \(A_n,B_n\) satisfy
\[
\limsup_{n\to\infty}\frac1n\log A_n\le b,
\qquad
\limsup_{n\to\infty}\frac1n\log B_n\le b,
\]
then
\[
\limsup_{n\to\infty}\frac1n\log(A_n+B_n)\le b,
\]
because \(A_n+B_n\le 2e^{n(b+\eta)}\) for all large \(n\), for every \(\eta>0\).
 Therefore
\(\lambda_{con}(\omega,z)\le b<a\), proving upper semicontinuity.
\end{proof}
}

\begin{lem} \label{lem:subinvariance}  If $f_\omega$ is  continuous at $x$, then 
$\lambda_{con}(\omega,x)\leq \lambda_{con}(F(\omega,x))$. 
\end{lem}
\begin{proof} Fix \(y\) close to \(x\). For \(n\geq 1\), we have
\[
d(f_\omega^n(x),f_\omega^n(y))
=
d(
f_{T\omega}^{\,n-1}(f_\omega(x)),
f_{T\omega}^{\,n-1}(f_\omega(y))
).
\]
Since replacing \(n\) by \(n-1\) does not change the exponential rate, we obtain
\[
\limsup_{n\to\infty}
\frac1n
\log d(f_\omega^n(x),f_\omega^n(y))
=
\limsup_{n\to\infty}
\frac{1}{n-1}
\log d(
f_{T\omega}^{n-1}(f_\omega(x)),
f_{T\omega}^{n-1}(f_\omega(y))
).
\]
 Now let \(y\to x\). Since \(f_\omega\) is continuous at $x$, we have
$f_\omega(y)\to f_\omega(x)$. Therefore the points \(f_\omega(y)\) form a family of points tending to
\(f_\omega(x)\). Taking the limsup over this restricted family can only be smaller than taking the limsup over all points \(z\to f_\omega(x)\). Hence
\[
\lambda_{con}(\omega,x)
\le
\limsup_{z\to f_\omega(x)}
\limsup_{m\to\infty}
\frac1m
\log d(
f_{T\omega}^{m}(f_\omega(x)),
f_{T\omega}^{m}(z))=
\lambda_{con}(F(\omega,x)).
\]
This concludes the proof.
\end{proof}
}

{
\begin{lem} \label{lem:ergodico-contration} Let $\bar{\mu}$ be an ergodic $F$-invariant probability measure.  Assume $f_\omega$ is continuous at $x$ for $\bar \mu$-a.e.~$(\omega,x)\in \Omega \times X$. Then 
$$\text{$\lambda_{con}(\bar\mu)=\lambda_{con}(\omega,x)$ \quad for $\bar\mu$-a.e.~$(\omega,x)$.}$$
\end{lem}
\begin{proof} 
For \(M>0\), set
$\lambda_{con}^M=\max\{\lambda_{con},-M\}$.
Since the metric is normalized so that \(d\le 1\), we have
$\lambda_{con}\le 0$,
and therefore \(\lambda_{con}^M\) is bounded. Moreover, by Lemma~\ref{lem:subinvariance}, 
$
\lambda_{con}^M\le \lambda_{con}^M\circ F$ $\bar\mu$-a.e. Since \(\bar\mu\) is \(F\)-invariant,
\[
\int \lambda_{con}^M\,d\bar\mu
\le
\int \lambda_{con}^M\circ F\,d\bar\mu
=
\int \lambda_{con}^M\,d\bar\mu.
\]
Hence, equality holds, and so
$\lambda_{con}^M=\lambda_{con}^M\circ F$  $\bar\mu$-a.e.  By ergodicity, \(\lambda_{con}^M\) is constant \(\bar\mu\)-a.e. Letting
\(M\to\infty\), we conclude that \(\lambda_{con}\) is constant
\(\bar\mu\)-a.e.
\end{proof}
}

First, we show that, under Lipschitz regularity on the fibers, we can compare the maximal Lyapunov exponent and the contraction exponent as follows:

\begin{lem}  \label{eq:desigualdad}
Let $\bar{\mu}$ be an $F$-invariant probability measure satisfying 
$$
\int \log^+ \mathrm{Lip}(f_\omega)  \, d\bar{\mu}<\infty.
$$
Then, for $\bar\mu$-a.e.~$(\omega,x)$,
\begin{equation*}
  \lambda_{con}(\omega,x) \leq \lambda(\omega,x) \eqdef \lim_{n\to \infty} \frac{1}{n} \log Lf^n_\omega(x)
\end{equation*}
and hence also 
\begin{equation*} 
  \lambda_{con}(\bar{\mu})\leq \lambda(\bar{\mu}) \eqdef \int \lambda(\omega,x)\ d\bar\mu.  \end{equation*}
\end{lem}

\begin{proof}
Since $\lambda_{con}(\omega, x) \le 0$, if $\lambda(\omega, x) \ge 0$, then $\lambda_{con}(\omega,x) \le 0 \le \lambda(\omega,x)$ and the asserted inequality holds immediately. It remains to consider the case that $\lambda(\omega,x) < 0$. By Theorem~\ref{contraction}, for $\bar{\mu}$-almost every such $(\omega,x)$ and any $\chi > \lambda(\omega,x)$, there exist constants $\delta > 0$ and $C > 0$ such that 
\[
    d(f_\omega^n(x), f_\omega^n(y)) \le C e^{n\chi} d(x,y)
\]
for all $y \in B(x, \delta)$ and $n \ge 0$. For any fixed $y \in B(x, \delta) \setminus \{x\}$, taking logarithms, dividing by $n$, and passing to the limsup  as $n \to \infty$ yields
\[
    \limsup_{n \to \infty} \frac{1}{n} \log d(f_\omega^n(x), f_\omega^n(y)) \le \chi.
\]
Taking limsup as $y \to x$, we obtain $\lambda_{con}(\omega, x) \le \chi$. Since this  holds for any $\chi > \lambda(\omega, x)$, we conclude that $\lambda_{con}(\omega, x) \le \lambda(\omega, x)$, completing the proof.
\end{proof}
}

When the fiber maps are circle $C^1$ diffeomorphisms, we can conclude the following stronger conclusion: 

\begin{prop}
\label{prop:Lyapunov-contraction}
Let \(\bar\mu\) be an ergodic
\(F\)-invariant probability measure satisfying
\begin{equation} \label{eq:integrability-contraction}
   \int \|\log |f'_\omega|\,\|_\infty\,
   d\bar\mu<\infty.
\end{equation}
Then, $\lambda_{\mathrm{con}}(\bar\mu)
   =
   \min\{\lambda(\bar\mu),0\}$.
\end{prop}

This result essentially follows from~\cite[Prop.~3.3]{Mal:17}. The original
statement assumes that $(\omega,x)\mapsto \log |f_\omega'(x)|$ is bounded,
{whereas here we only require~\eqref{eq:integrability-contraction}. Note
that, for  circle $C^1$ diffeomorphisms,
\[
\|\log |f'_\omega|\,\|_\infty
=
\max\left\{
\log^+\|f'_\omega\|_\infty,\,
\log^+\|(f_\omega^{-1})'\|_\infty
\right\}.
\]
Thus~\eqref{eq:integrability-contraction} is equivalent to the integrability of both the derivative and the inverse derivative.}

\begin{proof}[Proof of Proposition~\ref{prop:Lyapunov-contraction}]
{By Lemma~\ref{eq:desigualdad}, we have $\lambda_{con}(\bar{\mu}) \leq \lambda(\bar{\mu})$. Hence, if $\lambda_{con}(\bar{\mu}) = 0$, then
\[
0=\lambda_{con}(\bar{\mu})\leq \lambda(\bar{\mu}),
\]
so $\lambda_{con}(\bar{\mu})=\min\{\lambda(\bar{\mu}),0\}$. Thus, since $\lambda_{con}(\omega,x)\leq 0$, it remains only to consider the case $\lambda_{con}(\bar{\mu}) < 0$. In this case $\min\{0,\lambda(\bar\mu)\}=\lambda(\bar\mu)$. Moreover, by Lemma~\ref{eq:desigualdad}, $\lambda_{con}(\bar \mu)\leq \lambda(\bar \mu)$. Thus, to conclude the proof, we need to show that $\lambda_{con}(\bar \mu) \geq \lambda(\bar \mu)$.  

Denote by $\mathbb{P}$ the first marginal of $\bar\mu$. Since $\bar\mu$ is $F$-invariant, $\mathbb P$ is $T$-invariant; moreover, the ergodicity of $\bar\mu$ implies the ergodicity of $\mathbb P$.}
{By the ergodicity of $\bar{\mu}$ and Lemma~\ref{lem:ergodico-contration}, 
there exists a full $\bar\mu$-measure set $E\subset \Omega\times \mathbb S^1$ such that for every $(\omega,x)\in E$,}
$$
\lambda(\bar{\mu}) = \lim_{n \to \infty} \frac{1}{n}\log |(f_\omega^n)'(x)| \quad \text{and} \quad  {\lambda_{con}(\bar{\mu}) = \lambda_{con}(\omega,x).}
$$ 
{Fix $(\omega,x)\in E$. For $r>0$, define
\[
\alpha_\omega(r)\eqdef \sup\Big\{\big|\log |f'_\omega(u)|-\log |f'_\omega(v)|\big|:\ d(u,v)\le r\Big\}.
\]}
Let $\epsilon > 0$ be small enough so that $\lambda_{con}(\bar{\mu}) + \epsilon < 0$. There exists an interval $I$ containing $x$ such that for sufficiently large $n$,
$$
\frac{1}{n} \log \left(\frac{\mathrm{diam}\, f_\omega^n(I)}{\mathrm{diam}\, I} \right) \leq \lambda_{con}(\bar{\mu}) + \epsilon.
$$ 
In particular, $\mathrm{diam}\, f_\omega^n(I) \to 0$ as $n \to \infty$. Now, let $\delta > 0$. By shrinking $I$ if necessary, we can assume that for every $n$, $\mathrm{diam}\, f_\omega^n(I) \leq \delta$. We can then control the oscillations of $\log(f_\omega^n)'$ on $I$ by the modulus of continuity $\alpha_\omega(\cdot)$ of $\log |f_\omega'|$ as follows: for every $x_1, x_2 \in I$,
\begin{align*}
\log \big|(f_\omega^n)'(x_1)\big| - \log \big|(f_\omega^n)'(x_2)\big| &= \sum_{k=0}^{n-1} \left(\log \big|f_{T^k\omega}'(f_\omega^k(x_1))\big| - \log \big|f_{T^k\omega}'(f_\omega^k(x_2))\big|\right) \\ &\leq \sum_{k=0}^{n-1} \alpha_{T^k\omega}(\delta).
\end{align*}
Taking $x_1 = x$ and choosing $x_2$ such that {$\frac{\mathrm{diam}\, f_\omega^n(I)}{\mathrm{diam}\, I} = \big|(f_\omega^n)'(x_2)\big|$} (which may depend on $n$), we obtain
$$
\log \big|(f_\omega^n)'(x)\big| \leq \log \frac{\mathrm{diam}\, f_\omega^n(I)}{\mathrm{diam}\, I} + \sum_{k=0}^{n-1} \alpha_{T^k\omega}(\delta),
$$
and thus 
$$
\frac{1}{n} \log \big|(f_\omega^n)'(x)\big| \leq \lambda_{con} (\bar{\mu}) + \epsilon + \frac{1}{n} \sum_{k=0}^{n-1} \alpha_{T^k\omega}(\delta).
$$ 
Letting $n \to \infty$, we get
$$
\lambda(\bar{\mu}) \leq \lambda_{con}(\bar{\mu}) + \epsilon + \limsup_{n \to \infty} \frac{1}{n} \sum_{k=0}^{n-1} \alpha_{T^k\omega}(\delta).
$$ 
{For each fixed $\delta > 0$, this inequality holds for $\mathbb{P}$-a.e.~$\omega$. Moreover,
\[
\alpha_\omega(\delta)\le 2\|\log |f'_\omega|\,\|_\infty.
\]
Hence, by~\eqref{eq:integrability-contraction} and Birkhoff's ergodic theorem, for each fixed $\delta>0$ and for $\mathbb{P}$-a.e.~$\omega$,}
$$
\lim_{n \to \infty} \frac{1}{n} \sum_{k=0}^{n-1} \alpha_{T^k\omega}(\delta) = \int \alpha_{\omega'}(\delta) \, d\mathbb{P}(\omega').
$$ 
{Since $\log |f'_\omega|$ is continuous on $\mathbb S^1$, we have $\alpha_\omega(\delta)\to 0$ as $\delta\to 0$. Therefore, by the dominated convergence theorem,
\[
\int \alpha_{\omega'}(\delta)\, d\mathbb{P}(\omega') \longrightarrow 0
\qquad\text{as }\delta\to 0.
\]
Hence, $\lambda(\bar{\mu}) \leq \lambda_{con}(\bar{\mu}) + \epsilon$. Letting $\epsilon\to 0$, we obtain $\lambda(\bar{\mu}) \leq \lambda_{con}(\bar{\mu})$ as we wanted to show, concluding the proof of the proposition. }
\end{proof}

\section{Random circle diffeomorphisms: invariance principle} 
The invariance principle refers to a rigidity result for the invariant measure, provided that the extremal Lyapunov exponents are equal. In the one-dimensional settings, this condition is equivalent to the maximal Lyapunov exponent being zero. The first instance of this principle can be found in the pioneering work of Ledrappier~\cite{Le86}, which deals with the setting of linear cocycles from random products of matrices. The smooth (or differentiable) non-linear version of this principle was established by Crauel~\cite{crauel:1990}. 

The following {invariance} principle for the contraction exponent was proved in~\cite[Thm~F]{Mal:17}. 

\index{Lyapunov exponents!invariance principle}

\begin{thm} \label{thm:invariant-Malicet}
Let $(\Omega, \mathscr{F}, \mathbb{P})$ be a standard Borel probability space. Consider a  measurable transformation
$$
F\colon\Omega\times \mathbb{S}^1 \to \Omega\times \mathbb{S}^1, \quad F(\omega,x) = (T(\omega), f(\omega,x))
$$  
where $f_\omega \eqdef f(\omega,\cdot) : \mathbb{S}^1 \to \mathbb{S}^1$ are homeomorphisms. Then, for every $F$-invariant probability measure $\bar{\mu}$ of the form $d\bar{\mu} = \mu_\omega \, d\mathbb{P}(\omega)$, one of the following holds:
\begin{itemize}
    \item {Contraction:} $\lambda_{con}(\bar{\mu}) < 0$, or
    \item {Invariance:} $\mu_{T(\omega)} = (f_\omega)_*\mu_\omega$ for $\mathbb{P}$-a.e.~$\omega \in \Omega$.
\end{itemize}
\end{thm}

As an application of the previous results, we get the following invariance principle for the maximal Lyapunov exponent. For a similar result in the context of continuous-time random dynamical systems, see~\cite[Cor.~4.4]{Crauel:02}.

\begin{thm} \label{thm:Markovian-inv-principle}
Let $(\Omega, \mathscr{F}, \mathbb{P})$ be a Bernoulli product probability space and consider a random map $f: \Omega \times \mathbb{S}^1 \to \mathbb{S}^1$ such that  {$\| \log |f'_\omega|\,\|_\infty$} is $\mathbb{P}$-integrable. Then, for every ergodic $f$-stationary probability measure $\mu$, one of the following holds:
   \begin{itemize}
       \item \text{Contraction:} $\lambda(\mu) < 0$, or
       \item \text{Invariance:} $(f_\omega)_*\mu = \mu$ for $\mathbb{P}$-a.e.~$\omega \in \Omega$.
   \end{itemize}
\end{thm}

\begin{proof}
Assume that $\lambda(\mu) \geq 0$. Note that $\lambda(\mu)$ refers to $\lambda(\bar{\mu})$, where $\bar{\mu} = \mathbb{P} \times \mu$. Moreover, the ergodicity of $\mu$ implies that $\bar{\mu}$ is an ergodic invariant probability measure for the associated skew-shift $F(\omega,x)=(\sigma(\omega),f_\omega(x))$ on $\Omega \times \mathbb{S}^1$. 
{Since $\bar\mu=\mathbb P\times \mu$,  the assumption that $\|\log |f'_\omega|\,\|_\infty$ is $\mathbb P$-integrable is exactly~\eqref{eq:integrability-contraction}. Hence Proposition~\ref{prop:Lyapunov-contraction} applies and yields
$\lambda_{con}(\bar\mu)=\min\{\lambda(\bar\mu),0\}=0$.
Therefore, by Theorem~\ref{thm:invariant-Malicet}, the disintegration of $\bar\mu$,  satisfies
\[
\mu_{\sigma(\omega)}=(f_\omega)_*\mu_\omega
\qquad\text{for }\mathbb P\text{-a.e. }\omega\in\Omega.
\]
Since $\mu_\omega=\mu$, we conclude that
\[
(f_\omega)_*\mu=\mu
\qquad\text{for }\mathbb P\text{-a.e. }\omega\in\Omega.
\]
Thus, the maps $f_\omega$ share a common invariant measure, which gives the invariance alternative.}
\end{proof}

\section{Proof of Proposition~\ref{cor:mostly-contraction-circle-cantor}}

By assumption, $f$  is a random map of circle $C^1$ diffeomorphisms with no common invariant measure, and {$\| \log |f'_\omega|\,\|_\infty$} is $\mathbb{P}$-integrable. Therefore, for every ergodic $f$-stationary probability measure $\mu$, Theorem~\ref{thm:Markovian-inv-principle} implies $\lambda(\mu) < 0$. Consequently, by Theorem~\ref{prop:equivalence}, we have $$\lambda(f) = \max\{\lambda(\mu) : \mu \text{ is ergodic and } f\text{-{stationary}} \} < 0.$$ This establishes that $f$ is mostly contracting.
Furthermore, since $\mathrm{Lip}(f_\omega) = \|f'_\omega\|_\infty$, Theorem~\ref{thmA} tells us that the Koopman operator is quasicompact, and that it has a spectral gap if $f$ is mingled. Moreover, if $f$ is either proximal or minimal, then it is mingled by Remark~\ref{rem:topological-notions}(ii), since $\mathbb{S}^1$ is connected. 
Finally, the following lemma concludes the proof of Proposition~\ref{cor:mostly-contraction-circle-cantor}.

\begin{lem} If \(f:\Omega \times \mathbb{S}^1 \to \mathbb{S}^1\) is quasi-symmetric and has no common invariant probability
measure, then $f$ is mingled. 
\end{lem}

\begin{proof} Let \(G\) be the smallest closed semigroup of
\((\operatorname{Homeo}(\mathbb S^1),\circ)\) containing
\(\mathbb P\)-a.e. realization \(f_\omega\). By quasi-symmetry, \(G\) is a
group. Fix \(n\geq 1\). Let \(G_n\) be the smallest closed semigroup of
\((\operatorname{Homeo}(\mathbb S^1),\circ)\) containing
\(\mathbb P\)-a.e. realization \(f_\omega^n\). We first claim that \(G_n\) is
also a group.  Since $f_\omega^n$ is also quasi-symmetric (by using for instead that $f$ is quasi-symmetric if and only $\mathbb{P}(\{\omega : \exists k\ge 1 \ \text{such that } f_\omega^k\in V\})>0$ for any neighborhood $V$ of $\mathrm{Id}$ in $\mathrm{Homeo}(X)$), we have that \(G_n\) is
also a group. 
Moreover, since \(G\) contains \(\mathbb P\)-a.e. realization \(f_\omega\), it
also contains \(\mathbb P\)-a.e. realization \(f_\omega^n\). Thus
\(G_n\subset G\).

We next prove that \(G_n\) has finite index in \(G\). For
\(0\leq r<n\), let \(C_r\) denote the smallest closed subset of
\(\operatorname{Homeo}(\mathbb S^1)\) containing
\(\mathbb P\)-a.e. realization \(f_\omega^r\), with \(C_0=\{\operatorname{Id}\}\). Fix \(0<r<n\), and take \(a,b\in C_r\). Choose
\(c\in C_{n-r}\). Since \(a\circ c\) and \(b\circ c\) are limits of
\(n\)-step realizations, they belong to \(G_n\). Hence $(a\circ c)\circ (b\circ c)^{-1}
   a\circ b^{-1}
   \in G_n$.
Thus \(a\) and \(b\) belong to the same left coset of \(G_n\). Consequently,
\(C_r\) is contained in one left coset of \(G_n\).
Every finite composition of \(\mathbb P\)-a.e. realizations has length
\(qn+r\), with \(q\geq0\) and \(0\leq r<n\). Splitting it into a product of
\(qn\) factors followed by \(r\) factors, we see that it belongs to
\(G_n C_r\). Since \(C_r\) is contained in one left coset of \(G_n\), it follows
that \(G\) is covered by finitely many left cosets of \(G_n\). Hence \(G_n\) has
finite index in \(G\).

Now let \(F\subset\mathbb S^1\) be closed. Then \(F\) is $f^n$-invariant if and only if it is invariant by the group \(G_n\).
Indeed, invariance under \(\mathbb P\)-a.e. realization \(f_\omega^n\) extends,
by closedness of \(F\) and continuity of composition, to the closed semigroup
\(G_n\). Since \(G_n\) is a group, forward invariance is the same as invariance.

Suppose, by contradiction, that there are two disjoint non-empty closed subsets
of \(\mathbb S^1\) invariant by \(f^n\). Equivalently, they are invariant by
\(G_n\). Each of them contains a minimal closed \(G_n\)-invariant set. Therefore
\(G_n\) has two disjoint minimal closed invariant sets. By the standard
classification of minimal closed invariant sets for groups of homeomorphisms of
the circle, this can happen only if \(G_n\) has a finite orbit.

Since \(G_n\) has finite index in \(G\), a finite \(G_n\)-orbit gives a finite
\(G\)-invariant set: indeed, the union of its images under finitely many coset
representatives of \(G_n\) in \(G\) is finite and \(G\)-invariant. Therefore
\(G\) has a finite orbit. The uniform probability measure on this finite orbit
is invariant under every element of \(G\), and hence is invariant under
\(\mathbb P\)-a.e. realization \(f_\omega\). This gives a common invariant
probability measure, contradicting the assumption.
Thus, for every \(n\geq1\), the \(n\)-step random map admits no two disjoint
non-empty closed invariant subsets. Hence \(f\) is mingled.
\end{proof}


\section{Random interval diffeomorphisms onto their images}

Let \(\mathbb I\) be a compact interval. We consider a random map \(f:\Omega\times\mathbb I\to\mathbb I\) of \(C^1\) diffeomorphisms onto their images such that {\(\|\log |f'_\omega|\,\|_\infty\)} is \(\mathbb P\)-integrable. 
{We shall use the following extension lemma.

\begin{lem}
\label{lem:extension-exterior-measure}
Let \(I\subset\mathbb S^1\) be a compact interval and let \(J=\mathbb S^1\setminus I\). Let \(g:I\to I\) be a \(C^1\) diffeomorphism onto its image.

\begin{enumerate}
\item If \(g(I)\ne I\), then, for every \(c\in J\), there exists a \(C^1\) circle diffeomorphism \(\tilde g:\mathbb S^1\to\mathbb S^1\) such that \(\tilde g|_I=g\), \(\tilde g(I)\subset I\), and every \(\tilde g\)-invariant probability measure \(\nu\) satisfies \(\nu(J\setminus\{c\})=0\).
\item For every \(c\in J\), there exists a \(C^1\) circle diffeomorphism \(\tilde g:\mathbb S^1\to\mathbb S^1\) such that \(\tilde g|_I=g\), \(\tilde g(I)\subset I\), and \(\tilde g(c)\ne c\).
\end{enumerate}
\end{lem}

\begin{proof}
Write \(I=[a,b]\) in the cyclic order so that \(J=(b,a)\). We identify \(\overline J\) with \([0,1]\). In these coordinates, the target interval \(J_g = {\mathbb S^1 \setminus g(\operatorname{int} I)}\) is given by \([\alpha, \beta]\). Since \(g(I)\subset I\) and \(g(I)\ne I\), we have \(\alpha \le 0 \le 1 \le \beta\), with at least one inequality being strict. To obtain \(\tilde g\), we construct a \(C^1\) diffeomorphism \(h: [0,1] \to [\alpha, \beta]\) whose endpoint values and derivatives (\(r_0\) at \(0\), and \(r_1\) at \(1\)) match those prescribed by \(g\).

We first prove item (1). We divide the construction into two cases based on the orientation of \(g\).

\emph{Case 1: \(g\) preserves orientation.} The extension \(h\) must be strictly increasing with \(h(0) = \alpha\) and \(h(1) = \beta\).
If \(\alpha < 0\), or if \(\alpha = 0\) and \(r_0 \le 1\), we can choose \(h(x) < x\) for \(x > 0\) sufficiently small. 
Similarly, if \(\beta > 1\), or if \(\beta = 1\) and \(r_1 \le 1\), we can choose \(h(x) > x\) for \(x < 1\) sufficiently close to \(1\).  Thus, in these cases, we can construct \(h\) with \(c_0\) as its unique fixed point in \((0,1)\), satisfying
\[
h(x) < x \quad \text{for } 0 < x < c_0, \qquad h(x) > x \quad \text{for } c_0 < x < 1.
\]
Thus, \(c_0\) acts as a repelling fixed point, and all other points in \((0,1)\) move monotonically toward the boundaries \(0\) or \(1\). If \(\alpha=0\) and \(r_0 > 1\), the graph is forced above the diagonal near \(0\). However, since \(g(I)\ne I\), we must have \(\beta > 1\), allowing the graph to remain above the diagonal near \(1\). We can then simply choose \(h(x) > x\) for all \(x \in (0,1)\). All points in \((0,1)\) move monotonically to the right. By symmetry, if \(\beta=1\) and \(r_1 > 1\), we choose \(h(x) < x\) for all \(x \in (0,1)\), moving all points to the left.

\emph{Case 2: \(g\) reverses orientation.} The extension \(h\) must be strictly decreasing with \(h(0) = \beta \ge 1\) and \(h(1) = \alpha \le 0\). The graph of \(h\) necessarily crosses the diagonal exactly once; we place this fixed point at \(c_0\). To prevent the formation of period-2 orbits, we make \(h\) sufficiently steep near \(c_0\) such that \(c_0\) is a repelling fixed point for \(h^2\). That is, we interpolate \(h\) such that:
\[
h^2(x) < x \quad \text{for } 0 < x < c_0, \qquad h^2(x) > x \quad \text{for } c_0 < x < 1.
\]
Under this condition, every point in \((0,1) \setminus \{c_0\}\) eventually has its iterates (or even iterates) move away from \(c_0\) until it exits \((0,1)\).

In all cases, defining \(\tilde g = g\) on \(I\) and \(\tilde g = h\) on \(J\) yields the desired \(C^1\) circle diffeomorphism. 
Let \(\nu\) be a \(\tilde g\)-invariant probability measure. 
We now conclude (1) by proving that \(\nu(J\setminus\{c_0\})=0\).  Indeed, in the orientation-preserving cases, the interval \(J\setminus\{c_0\}\) is the union of at most two intervals on which either \(h(x)<x\) for all \(x\), or \(h(x)>x\) for all \(x\).  
Consider one such interval \(L\subset J\setminus\{c_0\}\).  If \(h(x)<x\) for all \(x\in L\), then for any \(x\in L\), the sequence \(h^n(x)\) is strictly decreasing. Hence the intervals
$(h^{n+1}(x),h^n(x)]$, $n\ge0$, are pairwise disjoint. Since \(\nu\) is finite, invariant and $h$ is injective, all these intervals must have zero measure, which implies \(\nu(L)=0\).  
The case \(h(x)>x\) is analogous.   In the orientation-reversing case, we apply the same argument to the increasing map \(h^2\). By construction, on each component of \(J\setminus\{c_0\}\), either \(h^2(x)<x\) for all \(x\), or \(h^2(x)>x\) for all \(x\). Since \(\nu\) is \(h\)-invariant, it is also \(h^2\)-invariant, and the same argument gives zero measure.  

For item (2), we choose any \(C^1\) extension \(h\) matching the prescribed endpoint jets. Since \(c\) is in the interior of \(J\), we may modify \(h\) exclusively within a small neighborhood of \(c\) (preserving monotonicity and the boundary jets) to ensure \(h(c) \ne c\). 
\end{proof}}

We also use the following elementary criterion.

\begin{prop}
\label{prop:no-invariant}
Let \(f:\Omega\times\mathbb I\to\mathbb I\) be a random map such that \(f_\omega:\mathbb I\to\mathbb I\) is a homeomorphism onto its image for every \(\omega\in\Omega\). If there are no \(p,q\in\mathbb I\) such that \(f_\omega(\{p,q\})=\{p,q\}\) for \(\mathbb P\)-a.e. \(\omega\in\Omega\), then there is no common \(f\)-invariant probability measure.
\end{prop}

{
\begin{proof}
Suppose, by contradiction, that \(\mu\) is a common \(f\)-invariant probability measure: \((f_\omega)_*\mu=\mu\) for \(\mathbb P\)-a.e. \(\omega \in \Omega\). Let \(K=\operatorname{supp}\mu\). Set \(p=\min K\) and \(q=\max K\). Since \(f_\omega\) is a homeomorphism onto its image and preserves \(\mu\), it maps \(K\) onto \(K\) for \(\mathbb P\)-a.e. \(\omega \in \Omega\). Therefore, it must map the endpoints of the convex hull of \(K\) to themselves, possibly interchanging them. Hence \(f_\omega(\{p,q\})=\{p,q\}\) for \(\mathbb P\)-a.e. \(\omega \in \Omega\), contradicting the hypothesis.
\end{proof}
}

{
\section{Proof of Proposition~\ref{cor:interval}}
Let \(I=\mathbb I\) and \(J=\mathbb S^1\setminus I\). By hypothesis, there is no pair of points \(p,q\in I\) such that \(f_\omega(\{p,q\})=\{p,q\}\) for \(\mathbb P\)-a.e. \(\omega\in\Omega\). This hypothesis has two immediate consequences. First, the set 
\[
A=\{\omega\in\Omega:f_\omega(I)\ne I\}
\]
must have positive \(\mathbb P\)-measure. Indeed, if \(f_\omega(I)=I\) almost surely, then \(f_\omega\) would preserve the boundary \(\partial I\), which consists of two endpoints. This yields an invariant two-point set, contradicting the hypothesis. Second, if the system were deterministic (\(f_\omega \equiv g\) almost surely), the continuous interval map \(g\) would necessarily possess a fixed point \(p\), yielding an invariant set \(\{p,p\}\), which again violates the hypothesis. 

Since \(\mathbb P(A)>0\) and the system is not deterministic, 
we can choose a measurable sets \(A_0\subset A\) and $A_1=\Omega \setminus A_0$ with \(0<\mathbb P(A_i)<1\), $i=0,1$. Fix a point \(c\in J\). We construct a \(C^1\) circle extension \(\tilde f_\omega : \mathbb S^1 \to \mathbb S^1\) for each \(\omega \in \Omega\) as follows:
\begin{itemize}
    \item For \(\omega \in A_0\), we have \(f_\omega(I) \ne I\). By Lemma~\ref{lem:extension-exterior-measure}(1), \(f_\omega\) extends to $\tilde{f}_\omega$ such that every \(\tilde f_\omega\)-invariant probability measure satisfies \(\nu(J\setminus\{c\})=0\).
    \item For \(\omega \in A_1\), we apply Lemma~\ref{lem:extension-exterior-measure}(2) to extend \(f_\omega\) such that \(\tilde f_\omega(c)\ne c\).
\end{itemize}
We claim that every common invariant probability measure \(\nu\) for the extended circle system \(\tilde f\) is supported entirely on \(I\). Suppose \((\tilde f_\omega)_*\nu=\nu\) for \(\mathbb P\)-a.e. \(\omega\in\Omega\). Since \(\mathbb P(A_0)>0\), there is an \(\omega_0 \in A_0\) for which this invariance holds. The construction on \(A_0\) then forces 
\[
\nu(J\setminus\{c\})=0, \quad \text{which implies} \quad \nu(J)=\nu(\{c\}).
\]
Similarly, since \(\mathbb P(A_1)>0\), there is an \(\omega_1 \in A_1\) for which \((\tilde f_{\omega_1})_*\nu=\nu\). Because \(\tilde f_{\omega_1}(I) \subset I\), the complementary open interval \(J\) is backward invariant, meaning \(\tilde f_{\omega_1}^{-1}(J) \subset J\). In particular, \(\tilde f_{\omega_1}^{-1}(c)\in J\). Furthermore, since \(\tilde f_{\omega_1}(c)\ne c\), it follows that \(\tilde f_{\omega_1}^{-1}(c)\in J\setminus\{c\}\). Invariance then yields
\[
\nu(\{c\}) = \nu\big(\tilde f_{\omega_1}^{-1}(\{c\})\big) \le \nu(J\setminus\{c\}) = 0.
\]
Consequently, \(\nu(J)=0\), proving the claim.}

If \(\tilde f\) admitted a common invariant probability measure on \(\mathbb S^1\), its restriction to \(I\) would be a common invariant probability measure for \(f\), which is forbidden by Proposition~\ref{prop:no-invariant}. Therefore, the extended system \(\tilde f\) admits no common invariant measure. Applying Proposition~\ref{cor:mostly-contraction-circle-cantor} to \(\tilde f\), we conclude that \(\tilde f\) is mostly contracting. Since \(I\) is forward invariant and \(\tilde f_\omega|_I=f_\omega\), every \(f\)-stationary probability measure is also a \(\tilde f\)-stationary probability measure supported on \(I\), and its Lyapunov exponent for \(f\) coincides with its Lyapunov exponent for \(\tilde f\). Since \(\tilde f\) is mostly contracting, all these exponents are negative. Thus \(f\) is mostly contracting.
Under the exponential moment condition, the conclusions follow by restricting Theorem~\ref{thmA} from \(\tilde f\) to \(I\).

\section{Random Cantor diffeomorphisms: Proof of Proposition~\ref{prop:mostly-cantor}}
Let $f: \Omega \times \mathbb{K} \to \mathbb{K}$ be a random map of $C^1$ diffeomorphisms on a Cantor set $\mathbb{K}$, where $\Omega = T^\mathbb{N}$ with $T$ finite. Since we assume that $f$ has no common invariant measure, according to~\cite[Prop.~6.1]{MM23}, there are  positive constants $\lambda$ and $C$, and  $m\in \mathbb{N}$ such that the following holds: For $\mathbb{P}$-a.e.~$\omega \in \Omega$, there is a finite set $E$ with cardinality less than $m$, and for any neighborhood $V$ of $E$, there is $n_0$ such that for all $n \geq n_0$, we have
$$
\sup_{x \in \mathbb{K} \setminus V} |(f^n_\omega)'(x)| \leq e^{-n\lambda}.
$$

Since any stationary measure $\mu$ (which is not a common invariant) has no atoms (see~\cite[Lemma~3.3]{MM23}), we get
$$
\frac{1}{n} \int \log |Df^n_\omega(x)| \, d\mu \leq - \lambda \quad \text{for $\mathbb{P}$-a.e.~$\omega \in \Omega$.}
$$
This implies that $f$ is mostly contracting. Moreover, since $T$ is finite, by Remark~\ref{rem:thmA-finite-moment}, $f$ also satisfies the conclusions of Theorem~\ref{thmA}. This completes the proof of Proposition~\ref{prop:mostly-cantor}.

\chapter{Locally constant nonlinear cocycles: circle random maps}
\label{s:circle}

\abstract{ {
This chapter applies the mostly contracting theory to locally constant random
products of circle diffeomorphisms with no common
invariant probability measure. We study the H\"older continuity and the asymptotic distribution of the Lyapunov exponents associated
with the stationary measures and their behavior under perturbations. The results provide nonlinear analogues of the regularity and limit theorems
proved earlier for linear cocycles. 
}
}

\section{Integral representation of the Lyapunov exponent}
\label{sec:circle-derivative-cocycle}

We first deduce an integral formula for the derivative Lyapunov exponent.  In
the smooth setting, this formula is standard, but the formulation below requires weaker regularity and integrability assumptions. Thus, it applies in certain non-smooth settings with discontinuities or singularities, allowing the integral of the logarithmic derivative to take the value \(-\infty\).

\begin{lem} \label{lemma:5exponents} 
Let \(X\subset\mathbb R\) be a compact interval or \(X=\mathbb S^1\) and consider a Bernoulli random map $f:\Omega \times X \to X$. Let $\mu$ be a $f$-stationary measure and assume that $f'_\omega(x)$ exists for $(\mathbb{P}\times \mu)$-a.e.~$(\omega,x)$ and $\log^+ |f'_\omega(x)|\in L^1(\mathbb{P}\times \mu)$. Then, 
$$\lambda(\mu)=\int \log |f'_\omega(x)|\, d\mathbb{P} d\mu.$$
\end{lem}
\begin{proof}
    As explained in~\S\ref{sec:lyapunov-def}, the (maximal) Lyapunov exponent of $\mu$ is given~by 
$$
\lambda(\mu) = \lim_{n\to \infty} \frac{1}{n} \int \log |(f^n_\omega)'(x)|\, d\mathbb{P}d\mu.
$$
On the other hand, note that 
$$
\phi_n(x)\eqdef \int \log |(f^n_\omega)'(x)|\, d\mathbb{P} = \sum_{i=0}^{n-1} P^i\phi_1(x) 
$$
where $P$ is the annealed Koopman operator. Then, since $(\phi_n)_{n\geq 1}$ is an $P$-additive sequence and $\phi_1^+\in L^1(\mathbb{P}\times\mu)$, according to the first part of Corollary~\ref{cor:Kingman-uniform}, we get the following integral formula for the Lyapunov exponent
$$
\lambda(\mu)=\int \phi_1 \, d\mu. 
$$
This concludes the proof.
\end{proof}

\section{Limit theorems: proof of Proposition~\ref{mainprop:CLT-circle}}

Let $f\in \DE{1+\epsilon}(\mathbb{S}^1)$ be a uniquely ergodic random map of $C^{1+\epsilon}$ diffeomorphisms ($0<\epsilon\leq 1$) of the circle $(\mathbb{S}^1,d)$ with no common invariant measure, and satisfying a finite exponential moment 
\begin{equation} \label{eq:finite-exponential-moment}
\int \ell_\epsilon(f_\omega)^\beta \, d\mathbb{P} < \infty \quad \text{for some $\beta > 0$},
\end{equation}
where $\ell_\epsilon(g)=\max\{\|g'\|_\infty, \|(g^{-1})'\|_\infty, |g'|_\epsilon\}$ for a diffeomorphism $g$ on $\mathbb{S}^1$. Without loss of generality, we can choose a representative of $f$ satisfying that $f_\omega=f_{\omega_0}$ for $\mathbb{P}$-a.e.~$\omega=(\omega_i)_{i\ge 0}\in \Omega$.   

According to Proposition~\ref{cor:mostly-contraction-circle-cantor}, $f$ is mostly contracting and Theorem~\ref{thmA} holds. In particular, the annealed Koopman operator associated with $f$ is quasi-compact on $C^\alpha(\mathbb{S}^1)$ for any $\alpha > 0$ small enough. We can write the following identity as a Birkhoff sum
\begin{align*}
\log |(f_\omega^n)'(x)| - n\lambda(\mu) &= \sum_{i=1}^{n} \left( \log|f_{\omega_{i-1}}'(f_\omega^{i-1}(x))| - \lambda(\mu)\right) \\ &= \sum_{i=1}^n \xi(\omega_{i-1},f^{i-1}_\omega(x)) = S_n\xi(\omega,(e,x)),
\end{align*}
where $\mu$ is the unique $f$-stationary measure, $f_e=\mathrm{id}_{\mathbb{S}^1}$, and 
$$
\xi(t,x) = \log |f'_t(x)| - \lambda(\mu) \quad \text{for $(t,x)\in T\times \mathbb{S}^1$}.
$$

Although the corollaries in~\S\ref{ss:limit laws} and \S\ref{ss:large-deviation} apply to $f$, we cannot immediately use them because the potential $\xi$ does not belong to $C^\alpha(\mathbb{S}^1)$. However, as explained in~\S\ref{ss:limit-theorems-HH}, we can fix this. We only need to check that $\int \xi \, d\hat{\mu} = 0$ and $K_\xi(\theta) < \infty$ for some $\theta > 0$, where $\hat{\mu} = p \times \mu$ and $K_\xi$ is defined in~\eqref{eq:moment-HH}. 

The orthogonality of $\xi$ and $\hat{\mu}$ follows by Lemma~\ref{lemma:5exponents}. On the other hand, we choose $\theta > 0$ and $0 < \alpha \leq 1$ such that $\theta + \max\{\epsilon \alpha, 2\alpha\} < \beta$. We endow $\mathbb{S}^1$ with the metric $d(\cdot,\cdot)^{\epsilon \alpha}$. In this metric, $\mathrm{Lip}(f_t) \leq \ell(f_t)^{\epsilon \alpha}$, and
$$
|\xi_t|_1 = \sup_{x \neq y} \frac{\left|\log|f_t'(x)| - \log|f'_t(y)|\right|}{d(x,y)^{\epsilon \alpha}}.
$$
Using that $\log s \leq \frac{1}{\alpha}(s^\alpha - 1)$ for $s > 0$, we obtain 
$$
\left|\log|f_t'(x)| - \log|f'_t(y)|\right| \leq \frac{1}{\alpha} \frac{|f'_t(x) - f'_t(y)|^\alpha}{|f'_t(x)|^\alpha} \leq \frac{1}{\alpha} \big(\|(f_t)^{-1}\|_\infty\big)^\alpha \big(|f'_t|_\epsilon\big)^\alpha d(x,y)^{\epsilon \alpha}. 
$$
Consequently, $|\xi_t|_1 \leq \frac{1}{\alpha} \ell(f_t)^{2\alpha}$. Finally, since
$e^{\theta\|\xi_t\|_\infty} \leq \ell(f_t)^\theta$, we get that
\begin{align*}
K_\xi(\theta) &= \int e^{\theta \|\xi_t\|_\infty}(1 + |\xi_t|_1 + \mathrm{Lip}(f_t)) \, dp \\& \leq \int \ell(f_t)^\theta \, dp + \frac{1}{\alpha}\int \ell(f_t)^{\theta + 2\alpha} \, dp + \int \ell(f_t)^{\theta + \epsilon \alpha}\,dp.
\end{align*}
By the choice of $\theta$, $\alpha$, and~\eqref{eq:finite-exponential-moment}, all of the above integrals are finite, and therefore $K_\xi(\theta)$ is finite as well. Thus, the conclusions of the limit theorems in~\S\ref{ss:limit laws} and~\S\ref{ss:large-deviation} hold (with $r=1$) for $S_n\xi(\omega,(e,x)) = \log |(f_\omega^n)'(x)| - n\lambda(\mu)$ instead of $S_n\phi(\omega,x)$. We also have that the variance $\sigma^2 = 0$ if and only if there is $\psi \in C^\alpha(\mathbb{S}^1)$ such that
\begin{equation}
 \label{eq:coboundary}   
\xi(t, x) = \psi(x) - \psi(f_{t}(x)) \quad  \text{for \  $\hat{\mu}$-a.e.~$(t,x)\in T\times \mathbb{S}^1$}.
\end{equation}
However, notice that Corollary~\ref{maincor4} only provides estimates of the decay of the tail event $\{Z_n^x>\epsilon\}$ where $Z^x_n(\omega)\eqdef\frac{1}{n}\log |(f_\omega^n)'(x)| - \lambda(\mu)$. As we did in the proof of Theorem~\ref{thm:CLT-linear-general}, to get the large deviation principle for the modulus random variable $|Z^x_n|$, we apply~\S\ref{ss:limit-theorems-HH} to~$-\xi$. This also provides estimates for the rate of decay of the tail event $\{Z_n^x<-\epsilon\}$. Hence, we conclude in an analogous way the corresponding decay for $|Z^x_n|$ using~\cite[Lemma~4.4]{huang2012moments}.

The final requirement to conclude Proposition~\ref{mainprop:CLT-circle} is to show the positivity of $\sigma^2$ under the assumption that $f$ is minimal. To do this, assume $\sigma^2 = 0$. Then~\eqref{eq:coboundary} holds. Since the action is minimal, the topological support of $\mu$ is $\mathbb{S}^1$, and so by continuity, we get that for $p$-a.e.~$t \in T$, the equality above holds for every $x$ in $\mathbb{S}^1$. Thus, setting 
$\Psi(x) = e^{\psi(x)}/\int e^{\psi}\, d\mathrm{Leb}$,
we can rewrite~\eqref{eq:coboundary} for $p$-a.e.~$t\in T$ as 
$$ 
e^{-\lambda(\mu)}\left|f'_t(x)\right| = e^{\xi(t,x)} = e^{\psi(x) - \psi(f_t(x)) } = \frac{\Psi(x)}{\Psi(f_t(x))} \quad \text{for any $x\in \mathbb{S}^1$.} 
$$ 
Since the map $\Psi$ is positive and $\int \Psi \, d\mathrm{Leb} = 1$, we can write $\Psi = g'$ for some diffeomorphism $g$ of the circle. Then we have that for $p$-a.e.~$t \in T$ and every $x \in \mathbb{S}^1$,
\begin{align*}
\left|(g\circ f_t \circ g^{-1})'(x)\right| &=\left|\frac{(g\circ f_t)'(g^{-1}(x))}{g'(g^{-1}(x))}\right| =\left|\frac{\Psi\left(f_t\circ g^{-1}(x)\right)}{\Psi\left(g^{-1}(x)\right)} \cdot f'_t\left(g^{-1}(x)\right)\right| \\ 
&= \left|\frac{\Psi\left(f_t\circ g^{-1}(x)\right)}{\Psi\left(g^{-1}(x)\right)} \cdot \frac{\Psi\left(g^{-1}(x)\right)}{\Psi\left(f_t\circ g^{-1}(x)\right)} \cdot e^{\lambda(\mu)} \right| = e^{\lambda(\mu)} < 1,
\end{align*} 
since $f$ is mostly contracting and thus $\lambda(\mu) < 0$. But this is a contradiction, as $g\circ f_t \circ g^{-1}$ is a circle diffeomorphism. Thus, $\sigma^2 > 0$, and we conclude the proof.

\begin{rem}
    To get the statement on CLT and LD provided in Proposition~\ref{mainprop:CLT-circle} we choose $\nu=\delta_x$ in Corollaries~\ref{maincor2},~\ref{maincor3} and~\ref{maincor4}. However, Proposition~\ref{mainprop:CLT-circle}  holds with respect to any probability measure $\nu$ on $\mathbb{S}^1$.    
\end{rem}

\section{Strong law of large numbers: proof of Proposition~\ref{mainprop:SLLN}}




We now prove the strong law. Unlike the previous limit theorems, this part only uses the Furstenberg law for one-step observables and does not require the quasi-compact perturbation argument. Given $f \in \DE{1}(\mathbb{S}^1)$, observe that 
$$
\log |(f^n_\omega)'(x)| = \sum_{i=0}^{n-1} \phi(F^i(\omega,x)),
$$
where $F$ is the skew-product associated with $f$, and 
$\phi(\omega,x) = \log |f'_\omega(x)|$. By the assumptions of Proposition~\ref{mainprop:SLLN}, $\phi(\omega,\cdot)\in C(\mathbb{S}^1)$ for every $\omega\in \Omega$ and 
$\int \|\phi(\omega, \cdot)\|_\infty \,d\mathbb{P} < \infty$.
 Furthermore, since $f$ is a uniquely ergodic random map, and by Lemma~\ref{lemma:5exponents} we have $\lambda(\mu) = \int \phi(\omega,x)\, d\mathbb{P} d\mu$, it follows that this integral is independent of $\mu$. Hence, Corollary~\ref{thm:Bierman-Furstenberg} directly implies Proposition~\ref{mainprop:SLLN}.

\section{H\"older continuity: proof of Proposition~\ref{mainprop:nolinear-exponent}}

We now turn to the perturbative result. Recall the metric spaces
\((\DE{\,r}(\mathbb S^1), d_{C^r}^{\pm})\) introduced in
\S\ref{ss:holder-circle}. We start with the following elementary estimate,
which will be used to keep the exponential moment condition under perturbation.



\begin{lem} \label{lemma:1exponents} For any $\beta>0$, $0<\alpha \leq \min\{1,\beta\}$ and $0<\epsilon\leq 1$, 
    \begin{align*}
\int \big(\|(h^{-1}_\omega)'\|_\infty\big)^\alpha \,d\mathbb{P}
&\leq 1 + \int \big(\|(f_\omega^{-1})'\|_\infty\big)^\beta \, d\mathbb{P} +  \D_{\smash{C^1}}^\pm(h,f)^{\alpha} \ \ \text{for $h,f\in \DE{\,1}(\mathbb{S}^1)$} 
\end{align*}
and
    \begin{align*}
    \int \big(|(h^{-1}_\omega)'|_\epsilon\big)^\alpha \,d\mathbb{P}
&\leq 1 + \int \big(|(f_\omega^{-1})'|_\epsilon \big)^\beta \, d\mathbb{P} +  \D_{\smash{C^{1+\epsilon}}}^\pm(h,f)^{\alpha} \ \ \text{for $h,f\in \DE{1+\epsilon}(\mathbb{S}^1)$}.
\end{align*}
\end{lem}

\begin{proof} 
Let us prove the first inequality. The second inequality follows similarly. To do this,  observe that, since $t^\alpha - s^\alpha \leq |t-s|^\alpha$ for any $t,s\in\mathbb{R}$, 
\begin{align*}
\int \big(\|(h^{-1}_\omega)'\|_\infty\big)^\alpha  \, d\mathbb{P} 
\leq \int\big(\|(f_\omega^{-1})'\|_\infty \big)^\alpha \, d\mathbb{P} + 
\int \big| \|(h_\omega^{-1})'\|_\infty-\|(f_\omega^{-1})'\|_\infty\big|^\alpha \, d\mathbb{P}.
\end{align*}
From here, taking into account that $\alpha\leq \beta$ and $\alpha \leq 1$, using the Jensen inequality and that $\big|\|(h^{-1}_\omega)'\|_\infty - \|(f^{-1}_\omega)'\|_\infty\big| \leq \|(h^{-1}_\omega)' - (f^{-1}_\omega)' \|_\infty \leq d_{C^1}(f^{-1}_\omega,h_{\omega}^{-1})$, we conclude the required inequality. 
\end{proof}


\begin{thm} \label{mainprop:nolinear-exponent-improved}
    Let $f\in \DE{r}(\mathbb{S}^1)$, $r>1$, be a random map with      no common invariant probability measure and satisfying the exponential moment condition~\eqref{eq:exponential-moment-circulo}. Let $\mathcal{C}$ be the stratum of random maps with the same number $s\in\mathbb{N}$ of ergodic stationary measures as $f$. Then, there exists a neighborhood $\mathcal{B}$ of $f$ in $(\DE{r}(\mathbb{S}^1),\D_{\smash{C^{r}}}^\pm)$ such that for every $g\in \mathcal{C}\cap \mathcal{B}$, the ergodic $g$-stationary  measures can be labeled as $\mu_{g,1},\dots,\mu_{g,s}$ so  that $\mu_{g,i}$ varies continuously in the weak$^*$ topology for $g\in\mathcal{C}\cap\mathcal{B}$ and
     $$g\in\mathcal{C}\cap\mathcal{B} \mapsto \lambda(\mu_{g,i})\in (-\infty,0)$$ is H\"older continuous for $i=1,\dots,s$. 
    Moreover, there are $C>0$, $\gamma \in (0,1]$ such that
$$
   |\lambda(\mu_{g,i})-\lambda(\mu_{h,i})|\leq C  \D_{C^1}(g,h)^\gamma_{} \quad \text{for all $g,h\in \mathcal{B}\cap \mathcal{C}$.}
$$ 
In particular, $$|\lambda(g)-\lambda(h)|\leq C \D_{C^1}(g,h)^\gamma_{}.$$
\end{thm}

\begin{proof}
Let $f\in \DE{\,r}(\mathbb{S}^1)$ be 
{a random map with} $0<\epsilon\leq 1$ such that the exponential
moment~\eqref{eq:exponential-moment-circulo} holds for some $\beta>0$ and with
no common invariant measure. According to
Proposition~\ref{cor:mostly-contraction-circle-cantor}, $f$ is mostly
contracting and, since~\eqref{eq:exponential-moment-circulo}
implies~\eqref{eq:integral_condition} in Theorem~\ref{thmA}, we can apply Theorem~\ref{prop:statistical-stability-length}. Moreover, since
the inclusion
\[
   (\DE{\,r}(\mathbb{S}^1),\D_{\smash{C^{1}}}^{\pm})
   \hookrightarrow
   (\CE(\mathbb{S}^1),\D_{C^{1}})
\]
is continuous, we can take a neighborhood $\mathcal{B}$ of $f$ in
$(\DE{\,r}(\mathbb{S}^1),\D_{\smash{C^{1+\epsilon}}}^{\pm})$
such that 
{Theorem~\ref{prop:statistical-stability-length} works on
\(\mathcal C\cap\mathcal B\). Thus, after shrinking \(\mathcal B\) if
necessary, the ergodic \(g\)-stationary measures, \(g\in\mathcal C\cap
\mathcal B\), can be labelled
$ \mu_{g,1},\dots,\mu_{g,s}$, 
so that each \(g\mapsto\mu_{g,i}\) is weak\(^*\)-continuous on
\(\mathcal C\cap\mathcal B\), and the H\"older estimates of
Theorem~\ref{prop:statistical-stability-length} hold on this stratum}.

For each $h\in \mathcal{B}$, define the map
\[
\phi_h(x)= \int \log |h'_\omega(x)|\, d\mathbb{P},
\qquad x\in\mathbb{S}^1 .
\]

\begin{claim} \label{claim:holder}
\(\phi_h:\mathbb{S}^1\to \mathbb{R}\) is
\(\alpha\)-H\"older continuous
{for every sufficiently small \(\alpha>0\). Moreover, for each such
\(\alpha\), after shrinking \(\mathcal B\) if necessary, the quantities
\(\|\phi_h\|_\alpha\) are uniformly bounded for \(h\in\mathcal B\).}
\end{claim}

\begin{proof}
Since $h\in\mathcal{B}$, by Proposition~\ref{prop:lasota-york-uniforme} we have
that $(\|h'_\omega \|_\infty)^\beta$ is $\mathbb{P}$-integrable. Moreover,
Lemma~\ref{lemma:1exponents} and the exponential
moment~\eqref{eq:exponential-moment-circulo} also imply the
$\mathbb{P}$-integrability of $(\|(h^{-1}_\omega)'\|_\infty)^\beta$. Using that
$\log t < t^\beta /\beta$ for $t>0$, this implies that
\[
-\infty
<
-\frac{1}{\beta}
\int  \big(\|(h_\omega^{-1})'\|_\infty\big)^\beta \, d\mathbb{P}
\leq
\phi_h(x)
\leq
\frac{1}{\beta}
\int  \big(\|h_\omega'\|_\infty\big)^\beta\,d\mathbb{P}
<\infty.
\]

Let $0< a \leq \min\{1,\beta/2\}$. Using that
$ \log t \leq \frac{1}{a} (t^a-1)$ for $t>0$,
and $|t|^a-|u|^a \leq |t-u|^a$ for $t,u\in \mathbb R$, we obtain
\begin{align*}
    \big|\phi_h(x)-\phi_h(y)\big| &
    =
    \left| \int \log \frac{|h'_\omega(x)|}{|h'_\omega(y)|}  \, d\mathbb{P}\right| \leq
    \frac{1}{a}
    \int
    \frac{\big| h'_\omega(x)-h'_\omega(y) \big|^a}
    {|h'_\omega(y)|^a} \, d\mathbb{P} \\
     &\leq
     \frac{1}{a}
     \int
     \big(\|(h^{-1}_\omega)'\|_\infty\big)^a
     \big(|h'_\omega|_\epsilon\big)^a
     \, d\mathbb{P}\cdot d(x,y)^{\epsilon a} \\
     &\leq
     \frac{1}{a}
     \left(\int
     \big(\|(h^{-1}_\omega)'\|_\infty\big)^{2a}
     \, d\mathbb{P}\right)^{1/2}
     \left(\int
     \big(|h'_\omega|_\epsilon \big)^{2a}
     \, d\mathbb{P}\right)^{1/2}
     d(x,y)^{\epsilon a}.
\end{align*}
The last inequality follows from H\"older's inequality. Since $h\in\mathcal B$
and $2a\leq\beta$, the exponential
moment~\eqref{eq:exponential-moment-circulo} and
Lemma~\ref{lemma:1exponents} imply that the factors multiplying
$d(x,y)^{\epsilon a}$ are finite. {Moreover, after shrinking
\(\mathcal B\), these factors are uniformly bounded for \(h\in\mathcal B\).}
This proves that $\phi_h$ is H\"older continuous with arbitrarily small
H\"older exponent, {and gives the asserted uniform bound on
\(\|\phi_h\|_\alpha\) for each sufficiently small \(\alpha>0\).}
\end{proof}

{By the choice of \(\mathcal B\), for every \(h\in\mathcal C\cap
\mathcal B\) the ergodic \(h\)-stationary measures are labelled as
\(\mu_{h,1},\dots,\mu_{h,s}\), and each \(\mu_{h,i}\) varies continuously in
the weak\(^*\) topology on  \(\mathcal C\cap\mathcal B\).}

By Lemma~\ref{lemma:5exponents}, for every \(h\in\mathcal C\cap\mathcal B\)
and every \(i=1,\dots,s\), we get 
\[
   \lambda(\mu_{h,i})
   =
   \int \phi_h\,d\mu_{h,i}.
\]
Moreover, since 
{every \(h\in\mathcal B\) is mostly contracting,} we have
\[
   \lambda(\mu_{h,i})\in (-\infty,0),
   \qquad h\in\mathcal C\cap\mathcal B,\quad i=1,\dots,s.
\]
Now, for $g,h \in \mathcal{C}\cap\mathcal{B}$ and \(i=1,\dots,s\), it holds
\begin{align*}
\big|\lambda(\mu_{g,i})-\lambda(\mu_{h,i})\big|
&=
\left|
   \int \phi_g \, d\mu_{g,i}
   -
   \int \phi_h \, d\mu_{h,i}
\right| \\
&\leq
\int |\phi_g - \phi_h |\, d\mu_{g,i}
+
\left|
   \int \phi_h \, d\mu_{g,i}
   -
   \int \phi_h \, d\mu_{h,i}
\right|.
\end{align*}

From Claim~\ref{claim:holder}, \(\phi_h\) is H\"older continuous. Therefore,
using the H\"older estimate in
Theorem~\ref{prop:statistical-stability-length} on the stratum
\(\mathcal C\cap\mathcal B\), we get
\begin{equation} \label{eq:lambda-triangular}
\big|\lambda(\mu_{g,i})-\lambda(\mu_{h,i})\big|
\leq
\|\phi_g-\phi_h\|_\infty
+
C\,\D_{C^0}(g,h)^\gamma
\end{equation}
for some constant $C>0$ and exponent $\gamma\in(0,1]$, uniform for
\(g,h\in\mathcal C\cap\mathcal B\) and \(i=1,\dots,s\).
{Shrinking \(\gamma=O(\alpha)\) if necessary, we may assume that
\(\gamma<\min\{\beta/2,1\}\).} Then, as before, using $\log t \leq \frac{1}{\gamma}(t^\gamma-1)$, for $t>0$, and $|t|^\gamma-|u|^\gamma\leq |t-u|^\gamma$, for $t,u\in\mathbb R$, together with H\"older's inequality, we obtain
\begin{align*}
\big\|\phi_g-\phi_h\big\|_\infty
&=
\sup_{x\in\mathbb S^1}
\left|
   \int \log \frac{|g'_\omega(x)|}{|h'_\omega(x)|}
   \, d\mathbb P
\right| 
\leq
\frac{1}{\gamma}
\sup_{x\in\mathbb S^1}
\int
\frac{\|g'_\omega-h'_\omega\|_\infty^\gamma}
{|h'_\omega(x)|^\gamma}
\, d\mathbb P \\
&\leq
\frac{1}{\gamma}
\sup_{x\in\mathbb S^1}
\left(
   \int |h'_\omega(x)|^{-2\gamma}\,d\mathbb P
\right)^{1/2}
\left(
   \int \|g'_\omega-h'_\omega\|_\infty^{2\gamma}\,d\mathbb P
\right)^{1/2} \\
&\leq
\frac{1}{\gamma}
\left(
   \int
   \big\|(h^{-1}_\omega)'\big\|_\infty^{2\gamma}\,d\mathbb P
\right)^{1/2}
\left(
   \int
   \|g'_\omega-h'_\omega\|_\infty\,d\mathbb P
\right)^\gamma.
\end{align*}
The above inequality, the exponential
moment~\eqref{eq:exponential-moment-circulo}, Lemma~\ref{lemma:1exponents},
and~\eqref{eq:lambda-triangular} imply
\[
   |\lambda(\mu_{g,i})-\lambda(\mu_{h,i})|
   \leq
   K\,\D_{C^1}(g,h)^\gamma
\]
for every \(g,h\in\mathcal C\cap\mathcal B\) and every \(i=1,\dots,s\), where
\(K>0\) is a constant depending only on the neighborhood \(\mathcal B\). This
proves the H\"older continuity of
\[
   g\in\mathcal C\cap\mathcal B
   \longmapsto
   \lambda(\mu_{g,i})
\]
for every \(i=1,\dots,s\). {Finally  $|\lambda(g)-\lambda(h)|\leq K \,\D_{C^1}(g,h)^\gamma$  follows as in Corollary~\ref{cor:holder-maximal-average}, completing the proof of the theorem.} 
\end{proof}

We then conclude Proposition~\ref{mainprop:nolinear-exponent} from the previous theorem:

\begin{proof}[Proof of Proposition~\ref{mainprop:nolinear-exponent}]
We apply Theorem~\ref{mainprop:nolinear-exponent-improved} to $f$. Since in this case we are also assuming that $f$ is uniquely ergodic, by the upper
semicontinuity of the number of ergodic stationary measures, the constant-rank stratum coincides, after shrinking the
neighborhood if necessary, with the whole neighborhood. Now, Proposition~\ref{mainprop:nolinear-exponent} follows from Theorem~\ref{mainprop:nolinear-exponent-improved}. 
\end{proof}

{As in Theorem~\ref{prop:generic-statistical-stability},  the stability result on the stratum $\mathcal{C}$ 
and the upper semicontinuity of the number of ergodic stationary measures for mostly contracting random maps yields the genericity of the persistence and continuity of Lyapunov exponents in the region of no common invariant measures. The following proposition studies the genericity of this region in the space $(\DE{1}(\mathbb{S}^1),\D_{\smash{C^{r}}}^{\pm})$.

\begin{prop}
\label{prop:generic-no-common-circle-DE}
Let \( r \geq 1 \), and assume that the probability measure \(\mathbb{P}\) is not a Dirac measure. Then, the set of random maps with no common invariant probability measure contains an open and dense subset of the space \( (\DE{r}(\mathbb{S}^1), \D_{C^r}^{\pm}) \).  Moreover, this conclusion holds when restricted to the subspace of random maps satisfying a finite moment condition of the form
\[
   \int \ell_\epsilon(f_\omega)^\beta\,d\mathbb{P}<\infty ,
\]
where \(0<\epsilon\leq\min\{r-1,1\}\), \(\beta>0\), and
\[
   \ell_\epsilon(g)
   =
   \max\left\{
      \|g'\|_\infty,\,
      \|(g^{-1})'\|_\infty,\,
      |g'|_\epsilon
   \right\}.
\]
\end{prop}

\begin{proof}
Take Morse-Smale circle $C^r$ diffeomorphisms
\(a,b\) such that
$\mathrm{Per}(a)\cap \mathrm{Per}(b)=\emptyset$.
After taking sufficiently small \(C^r\)-neighborhoods \(U\) of \(a\) and \(V\) of \(b\), every \(u\in U\) and every \(v\in V\) are Morse-Smale and
satisfy $\mathrm{Per}(u)\cap\mathrm{Per}(v)=\emptyset$. Define
\[
   \mathcal G(U,V)
   =
   \left\{
      f\in\DE{r}(\mathbb S^1):
      p(\{t:f_t\in U\})>0
      \ \text{and}\
      p(\{t:f_t\in V\})>0
   \right\}
\]
where $p$ is the initial law of $\mathbb{P}$, that is, $\mathbb{P}=p^\mathbb{N}$. 

We first show that every \(f\in\mathcal G(U,V)\) has no common invariant
probability measure. Suppose, by contradiction, that a probability measure 
\(\mu\) on $\mathbb{S}^1$ satisfies
$(f_t)_*\mu=\mu$ for \(\mathbb P\)-a.e.~$t$. 
Since $p(\{t:f_t\in U\})>0$ and $p(\{t:f_t\in V\})>0$, we can choose
\(u\in U\) and \(v\in V\) such that
$ u_*\mu=\mu$ and $v_*\mu=\mu$. Every invariant probability measure of a Morse-Smale circle diffeomorphism is
supported on its periodic orbits. Hence
$ \mu(\mathrm{Per}(u))=1$ and $\mu(\mathrm{Per}(v))=1$,
contradicting \(\mathrm{Per}(u)\cap\mathrm{Per}(v)=\emptyset\). Thus \(f\) has no common
invariant probability measure.

We now prove that \(\mathcal{G}(U,V)\) is open. Let \(f \in \mathcal{G}(U,V)\), and let \(d_{C^r}^{\pm}\) denote the pointwise metric on \(\mathrm{Diff}^r(\mathbb{S}^1)\) given by the integrand of \(\D_{C^r}^{\pm}\), namely
\[
   d_{C^r}^{\pm}(u,v) = d_{C^{\lfloor{r}\rfloor}}(u,v) + d_{C^{\lfloor{r}\rfloor}}(u^{-1},v^{-1}) + |u'-v'|_{r-\lfloor{r}\rfloor}.
\]
Since \(U\) is open and $p(\{t:f_t\in U\})>0$, there is
\(\rho>0\) and a measurable set \(A\) with \(p(A)>0\) such that
\[
   f_t \in U
   \quad\text{and}\quad
   d_{C^r}^{\pm}(f_t,\mathrm{Diff}^r_+(\mathbb S^1)\setminus U)>\rho
   \qquad\text{for every }t\in A .
\]
Thus, for any random map \(g\) and any \(t \in A\), the condition \(g_t \notin U\) forces the pointwise distance to satisfy \(d_{C^r}^{\pm}(f_t, g_t) > \rho\). Consequently, Markov's inequality yields
\[
   p(\{t \in A : g_t \notin U\}) 
   \leq \frac{1}{\rho} \int_A d_{C^r}^{\pm}(f_t, g_t) \, dp 
   \leq \frac{1}{\rho} \D_{C^r}^{\pm}(f,g) .
\]
If \(\D_{C^r}^{\pm}(f,g) < \delta_U\eqdef \rho \cdot p(A)\), we get that $p(\{t \in A : g_t \notin U\})  < p(A)\) and thus, 
\[
   p(\{t \in A : g_t \in U\}) = p(A) - p(\{t \in A : g_t \notin U\}) > 0.
\]
This directly implies that the total probability \(p(\{t : g_t \in U\})\) is strictly positive. Applying an identical geometric argument to \(V\) provides a constant \(\delta_V > 0\) such that if \(\D_{C^r}^{\pm}(f,g) < \delta=\min\{\delta_U,\delta_V\}\), then  \(p(\{t : g_t \in U\})\cdot p(\{t : g_t \in V\}) > 0\). This concludes that \(\mathcal{G}(U,V)\) is open.

Finally, let \(f \in \DE{r}(\mathbb{S}^1)\) and \(\varepsilon > 0\). Since \(p\) is not a Dirac measure, there exist disjoint measurable sets \(A\) and \(B\) such that \(p(A) > 0\) and \(p(B) > 0\). 
Because Morse-Smale diffeomorphisms are dense in \(\mathrm{Diff}^r(\mathbb{S}^1)\), we can cover the space of diffeomorphisms with open balls of radius \(\varepsilon/2\) centered at a Morse-Smale. Since \(A\) has positive measure, there exists an open ball \(B_{\varepsilon/2}(a)\) such that the set 
\[
   \tilde{A} = \{t \in A : f_t \in B_{\varepsilon/2}(a)\}
\]
has strictly positive \(p\)-measure. Similarly, there exists an open ball \(B_{\varepsilon/2}(b)\) such that the set
\[
   \tilde{B} = \{t \in B : f_t \in B_{\varepsilon/2}(b)\}
\]
has strictly positive \(p\)-measure. Here $a$ and $b$ are Morse-Smale diffeomorphisms. Because the set of periodic orbits of a Morse-Smale diffeomorphism is finite, we can perturb \(a\) and \(b\) infinitesimally (keeping them within their respective \(\varepsilon/2\)-balls) to ensure that \(\mathrm{Per}(a) \cap \mathrm{Per}(b) = \emptyset\). Taking sufficiently small neighborhoods \(U\) of \(a\) and \(V\) of \(b\), every \(u \in U\) and \(v \in V\) are Morse-Smale and satisfy \(\mathrm{Per}(u) \cap \mathrm{Per}(v) = \emptyset\). 

Now, define the perturbed random map \(h\) by
\[
   h_t
   =
   \begin{cases}
      a, & t \in \tilde{A},\\
      b, & t \in \tilde{B},\\
      f_t, & t \notin \tilde{A} \cup \tilde{B} .
   \end{cases}
\]
By construction, for \(t \in \tilde{A}\), the distance \(d_{C^r}^{\pm}(f_t, h_t) = d_{C^r}^{\pm}(f_t, a) < \varepsilon/2\), and similarly for \(t \in \tilde{B}\), \(d_{C^r}^{\pm}(f_t, b) < \varepsilon/2\). Integrating this pointwise bound yields
\[
   \D_{C^r}^{\pm}(f,h) = \int_{\tilde{A}} d_{C^r}^{\pm}(f_t, a) \, dp + \int_{\tilde{B}} d_{C^r}^{\pm}(f_t, b) \, dp < \varepsilon.
\]
Moreover, \(h \in \mathcal{G}(U,V)\) because \(h_t = a \in U\) on \(\tilde{A}\) and \(h_t = b \in V\) on \(\tilde{B}\), with \(p(\tilde{A}) > 0\) and \(p(\tilde{B}) > 0\). Thus since $\mathcal{G}(U,V)$ is open and its elements has no common invariant probability measure, we get that $f$ is $\varepsilon$-approximated by an open set of random maps with no common invariant measures.  This proves that the set of random maps without a common invariant measure contains an open and dense subset of \( (\DE{r}(\mathbb{S}^1), \D_{C^r}^{\pm}) \). 

If \(f\) satisfies the moment condition
\[
   \int \ell_\epsilon(f_t)^\beta \, dp(t) < \infty,
\]
then the perturbed map \(h\) also satisfies it, as \(h\) differs from \(f\) only by the insertion of the two fixed smooth diffeomorphisms \(a\) and \(b\). Hence, the density statement remains valid when restricted to this subspace. This completes the proof.
\end{proof}

Now, we get the mentioned corollary: 

\begin{cor} \label{cor:generic-lyapunov-circle}
For \(r>1\), the set of random maps for which the Lyapunov
exponents associated with all ergodic stationary measures persist and vary H\"older continuously is open and dense in the subspace of $(\DE{r}(\mathbb{S}^1),\D_{\smash{C^{r}}}^{\pm})$ 
satisfying~\eqref{eq:exponential-moment-circulo}.
\end{cor}

\begin{proof}
By Proposition~\ref{prop:generic-no-common-circle-DE}, the set of random maps
having no common invariant probability measure contains an open and dense in $(\DE{r}(\mathbb{S}^1),\D_{\smash{C^{r}}}^{\pm})$ satisfying the finite moment condition.  By Proposition~\ref{cor:mostly-contraction-circle-cantor}, every random map of
circle diffeomorphisms with no common invariant probability measure is mostly
contracting. On the mostly contracting region, the number of ergodic stationary probability
measures is upper semicontinuous by
Theorem~\ref{prop:statistical-stability-length}.  Moreover, since \(r>1\), we have
\(f\in \DE{1+\epsilon}(\mathbb S^1)\) for some
\(0<\epsilon\leq1\). Thus Theorem~\ref{mainprop:nolinear-exponent-improved} applies to the constant-rank stratum. Arguing now as in Theorem~\ref{prop:generic-statistical-stability}, we conclude the proof. 
\end{proof}
}

\chapter{Local contraction}
\label{s:local-contraction}

\abstract{{
This chapter proves a local contraction theorem for measurable Lipschitz
cocycles over an arbitrary measurable base. Namely, we show that a negative pointwise Lyapunov exponent forces exponential contraction on a neighborhood of the base point along the corresponding fiberwise orbit. This non-smooth Pesin-type theorem is then applied to mostly contracting random maps, yielding
exponential local contraction in compact spaces and, more generally, under a
tightness assumption in separable complete metric spaces. The chapter concludes
with a statistical application inspired by the global Palis' conjecture: random systems
with the local contraction property admit only finitely many physical measures,
and their basins cover almost every point for the natural product reference
measure.}
}

\section{Negative  Lyapunov exponent} \index{local contraction!non-smooth Pesin theorem}
In this subsection, we consider a general skew product 
\begin{equation} \label{eq:skew-product-F}
F(\omega,x)=(T(\omega),f_\omega(x)), \quad \omega\in \Omega, \ \ x \in X
\end{equation}
where $\Omega\times X$  is a product space of a 
 measurable space $(\Omega,\mathscr{F})$ and a metric space~$(X,d)$. Moreover, we assume that $T:\Omega \to \Omega$ is a measurable transformation  and  $f_\omega\in \mbox{Lip}(X)$ for all $\omega\in \Omega$. As usual,  the second coordinate of $F^n(\omega,x)$ is denoted by $f_\omega^n(x)$ and  the maximal Lyapunov exponent of $F$ is pointwise defined at  $(\omega,x)$ by
$$\lambda(\omega,x) \eqdef \limsup_{n\to +\infty} \frac{1}{n}\log Lf_\omega^n(x).$$

The following theorem is a well-known fact of Pesin's theory in the smooth case.  Despite our Lipschitz setting not allowing for the application of Pesin theory on stable manifolds, we will demonstrate that the exponential contraction remains valid under the assumption of a negative maximal Lyapunov exponent.

\begin{mainthm}\label{contraction}
{Let $F$ be a skew-product as~\eqref{eq:skew-product-F} of the product $\Omega \times X$ of a 
 measurable space $(\Omega,\mathscr{F})$ and a metric space~$(X,d)$.} Let $\bar{\mu}$ be an $F$-invariant measure satisfying the integrability condition
\begin{equation}\label{integrability}
\int \log^+ \mathrm{Lip}(f_{\omega}) \, d\bar{\mu}<\infty .
\end{equation}
For $\bar{\mu}$-a.e.~$(\omega,x)\in \Omega\times X$, if $\lambda(\omega,x)<0$, then
{for every $\chi>\lambda(\omega,x)$,
there exist $\delta=\delta(\omega,x,\chi)>0$ and
\(C=C(\omega,x,\chi)>0\) such that}
\[
d(f_\omega^n(y),f_\omega^n(z))
\leq C e^{n\chi} d(y,z)
\quad
\text{for all } y,z\in B(x,\delta) \text{ and all } n\ge 0.
\]
\end{mainthm}

The proof of the above theorem will be provided in the subsequent sections.  

{
\begin{rem}
The skew-product notation in Theorem~\ref{contraction} is only a convenient
way of writing a fiberwise statement. The proof applies, without essential
changes, to standard Borel bundles with metric fibers. That is, one may work with a measurable bundle \(\pi:\hat{X}\to\Omega\) whose fibers are metric spaces modeled by $(X,d)$ in the following sense: there is a Borel set $\mathcal{E}$ of $\Omega \times X$ with nonempty sections and a fiber-preserving Borel isomorphism $\iota:\hat{X} \to \mathcal E$  
whose restriction to each fiber is a bi-Lipschitz homeomorphism. The skew-product map corresponds in this setting to a bundle map covering $T$, i.e.,
\[
   \mathcal F:\mathcal E\to\mathcal E,
   \qquad
   \pi\circ\mathcal F=T\circ\pi,
\]
whose fiber maps are ($\bar\mu$-a.e.) Lipschitz and satisfy the corresponding integrability condition~\eqref{integrability}. Thus, the theorem
applies, for instance, to derivative cocycles on tangent bundles, or more
generally to cocycles arising from differentiable dynamics after passing to the appropriate fiber bundle.
 \end{rem}
}

\subsection{{The flat Lyapunov exponent}} \label{ss:equivalent} 
To prove Theorem~\ref{contraction}, we define the {\emph{flat Lyapunov exponent}}\index{Lyapunov exponents!0@\(\lambda^\flat(\omega,x)\), flat Lyapunov exponent} as follows:
$$
\lambda^{\flat}
(\omega,x)\eqdef\lim_{r\to 0^+} \limsup_{n\to \infty} \frac{1}{n}\log L_rf_\omega^n(x).$$
Note that the limit in $r$ exists due to monotonicity.  In fact, since $Lf^n_\omega(x)=\lim_{r\to 0^+} L_rf^n_\omega(x)$, if we reverse the order of the two limits in the above expression, we get $\lambda(\omega,x)$. {Thus, the \emph{flat} exponent detects asymptotic growth at a neighborhood level,
whereas the usual \emph{pointwise} exponent detects it at an infinitesimal level.} On the other hand, it is clear that 
\begin{equation} \label{eq:inequailty0}
\lambda^{\flat}(\omega,x)\ge \lambda(\omega,x) 
\quad \text{for all $(\omega,x)\in \Omega\times X$}.
\end{equation}
It can be verified that the functions $\lambda$ and $\lambda^{\flat}$ are both sub-invariant by $F$, i.e., $\lambda\leq \lambda\circ F$ and $\lambda^{\flat}\leq \lambda^{\flat}\circ F$. Therefore, if $\bar{\mu}$ is an ergodic $F$-invariant measure, these two functions are $\bar{\mu}$-almost everywhere constants, denoted respectively by  $\lambda(\bar{\mu})$ and $\lambda^{\flat}(\bar{\mu})$.

We can reformulate Theorem~\ref{contraction} as follows:

\begin{prop}\label{contraction'}
If $\bar{\mu}$ is an $F$-invariant measure satisfying~\eqref{integrability}, then the equality $\lambda^{\flat}=\lambda$ holds $\bar{\mu}$-almost everywhere in the set $\{\lambda<0\}$. 
\end{prop}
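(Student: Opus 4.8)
The statement to establish is Proposition~\ref{contraction'}: under the integrability hypothesis~\eqref{integrability}, the two exponents $\lambda^\flat$ and $\lambda$ coincide $\bar\mu$-almost everywhere on $\{\lambda<0\}$. In view of the trivial inequality~\eqref{eq:inequailty0}, the whole content is the reverse inequality $\lambda^\flat\le\lambda$ a.e.\ on $\{\lambda<0\}$. The natural strategy is to fix a small $\varepsilon>0$, restrict to the set $E_\varepsilon=\{\lambda<-\varepsilon\}$ (which exhausts $\{\lambda<0\}$ as $\varepsilon\to0$), and show that on this set $\lambda^\flat\le\lambda+\text{(something going to }0)$. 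The key mechanism is a self-improving argument: the chain rule $L(g\circ h)(z)\le Lg(h(z))\cdot Lh(z)$ from Lemma~\ref{lem1}, together with upper semicontinuity of $z\mapsto Lf_\omega(z)$, lets one convert pointwise exponential contraction of the derivative cocycle along a typical orbit into genuine contraction of small balls, but only after one controls how small a ball must be chosen — and that radius depends on $\omega$ in a measurable but non-uniform way. This is exactly the place where one cannot invoke smooth Pesin theory and must argue by hand.

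\textbf{Key steps.} First, I would record the subadditive cocycle structure: by~\eqref{eq:subaditive1}, $\log L_n\le \log L_k\circ F^{n-k}+\log L_{n-k}$, and by Kingman's subadditive ergodic theorem (applicable thanks to~\eqref{integrability}), $\frac1n\log Lf^n_\omega(x)\to\lambda(\omega,x)$ $\bar\mu$-a.e., with $\lambda$ $F$-invariant. Second, fix $\chi$ with $\lambda(\omega,x)<\chi<0$ on a positive-measure piece, and for each $r>0$ consider $\psi_r(\omega,x)=\log L_rf_\omega(x)$; by upper semicontinuity of $Lf_\omega$ and monotone convergence, $\psi_r\downarrow \log Lf_\omega(x)$ pointwise as $r\to0$, and $\psi_r^+\le\log^+\mathrm{Lip}(f_\omega)$, so by dominated/monotone convergence $\int\psi_r\,d\bar\mu\to\int\log Lf_\omega(x)\,d\bar\mu$. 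Third — the heart of the matter — I would run a Pesin-type telescoping estimate along a $\bar\mu$-typical orbit: choose $r(\omega,x)>0$ (measurable, via a Lusin/Egorov argument on the convergence in $r$) small enough that the local Lipschitz constants $L_{r}f_{T^k\omega}(f^k_\omega(x))$ are controlled by $e^{\chi}$ on average along the orbit, then show inductively that $f^n_\omega$ maps $B(x,\delta)$ into a ball of radius $Ce^{n\chi}\delta$ for a suitable $\delta=\delta(\omega,x)$, so the orbit of $B(x,\delta)$ never escapes the region where the radius-$r$ local Lipschitz bounds are valid; this forces $\limsup_n\frac1n\log L_\delta f^n_\omega(x)\le\chi$, hence $\lambda^\flat(\omega,x)\le\chi$. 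Letting $\chi\downarrow\lambda(\omega,x)$ over a countable dense set finishes the a.e.\ equality, and then Theorem~\ref{contraction} follows by specializing this contraction of balls.

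\textbf{Main obstacle.} The delicate point is the non-uniformity of the radius $r(\omega,x)$ and of the constant $C(\omega,x)$: unlike in the smooth Pesin setting one has no a priori modulus of continuity, only upper semicontinuity of $Lf_\omega$, so the "good radius" at which $L_rf_\omega(x)$ is close to $Lf_\omega(x)$ genuinely depends on the point along the orbit. The fix is to avoid asking for a uniform radius and instead feed the contraction back into itself: once the diameter of $f^n_\omega(B(x,\delta))$ is shrinking exponentially, eventually every iterate lands in an arbitrarily small neighbourhood of the orbit, where the already-chosen $r$ suffices; the induction closes because the geometric series $\sum_n e^{n\chi}$ converges ($\chi<0$). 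Managing the measurability of all these choices (so that the exceptional set is genuinely $\bar\mu$-null) and the bookkeeping of the telescoped product $\prod_{k=0}^{n-1}L_{r}f_{T^k\omega}(f^k_\omega(x))$ via Birkhoff's theorem applied to $\psi_r$ is the main technical work; conceptually it is a standard "negative exponent implies local exponential contraction" argument, executed with local Lipschitz constants in place of derivative norms.
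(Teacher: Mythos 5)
Your inductive mechanism is sound — choosing a radius $s$ so that the first $n_0$ iterates of $B(x,s)$ stay within radius $r$, then closing the induction because the contraction factor $e^{\chi n}$ with $\chi<0$ keeps the orbit of the ball inside radius $r$ forever — this is exactly how the paper argues (the Claim inside Lemma~\ref{contractionlem1}). Your reverse Fatou step to get $\int \log L_r f_\omega\,d\bar\mu$ close to $\int\log Lf_\omega\,d\bar\mu$, and the reduction to the ergodic case, are also correct.

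However, there is a genuine gap in the exponent you can reach. You fix $\chi$ just above $\lambda(\omega,x)=\lambda(\bar\mu)$ and then want the Birkhoff averages of the \emph{one-step} local Lipschitz constants $\log L_r f_{T^k\omega}(f^k_\omega(x))$ to fall below $\chi$. By Birkhoff, these averages converge to $\int\log L_r f_\omega\,d\bar\mu$, which as $r\to 0^+$ decreases only to the \emph{one-step} average $\int\log Lf_\omega\,d\bar\mu$. Kingman's subadditive theorem gives $\lambda(\bar\mu)=\inf_n\frac1n\int\log Lf^n_\omega\,d\bar\mu\le\int\log Lf_\omega\,d\bar\mu$, with strict inequality in general, so there is no reason $\int\log Lf_\omega\,d\bar\mu<\chi$ when $\chi$ is close to $\lambda(\bar\mu)$. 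The one-step average can even be positive while $\lambda(\bar\mu)<0$. Your Lusin/Egorov refinement of the choice of $r$ does not help: it only tightens the convergence $L_r f_\omega\downarrow Lf_\omega$, but the floor $\int\log Lf_\omega\,d\bar\mu$ stays the same. As written, your argument establishes $\lambda^\flat(\bar\mu)\le\int\log Lf_\omega\,d\bar\mu$ (a version of Lemma~\ref{contractionlem1}), which is strictly weaker than $\lambda^\flat(\bar\mu)\le\lambda(\bar\mu)$.

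The missing idea is the blocking/iteration step: one must apply the one-step argument to the iterated skew product $F^m$ rather than to $F$, since $\frac1m\int\log Lf^m_\omega\,d\bar\mu\to\lambda(\bar\mu)$ as $m\to\infty$. This then forces a separate lemma (the paper's Lemma~\ref{contractionlem2}) to translate the exponent $\lambda^\flat_m$ computed along $F^m$-blocks back into $m\lambda^\flat$; that step is not automatic, because from the definition one only gets $\lambda^\flat_m\le m\lambda^\flat$ immediately, and the reverse inequality requires a temperedness estimate on $\mathrm{Lip}(f_{\omega_i})$ (the Birkhoff corollary that $\frac1i\log^+\mathrm{Lip}(f_{T^i\omega})\to 0$) to absorb the fluctuations inside a block. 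Without both the passage to $F^m$ and the identity $\lambda^\flat_m=m\lambda^\flat$, the proof does not reach the Lyapunov exponent $\lambda$, only the one-step expansion rate.
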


{
Let us explain why Proposition~\ref{contraction'} is equivalent to
Theorem~\ref{contraction}. Assume first that Theorem~\ref{contraction} holds.
Fix a point \((\omega,x)\) with \(\lambda(\omega,x)<0\), and let
\(\chi\) satisfy \(\chi>\lambda(\omega,x)\). Then the theorem gives
\(\delta>0\) and \(C>0\) such that
$L_\delta f_\omega^n(x)\le C e^{n\chi}$ for every $n\ge0$.
Hence $\lambda^\flat(\omega,x)\le \chi$. Letting \(\chi\to \lambda(\omega,x)\), and using
\(\lambda(\omega,x)\le \lambda^\flat(\omega,x)\), we obtain
$\lambda^\flat(\omega,x)=\lambda(\omega,x)$. Conversely, if
$\lambda^\flat(\omega,x)=\lambda(\omega,x)<0$, then for any \(\chi>\lambda(\omega,x)\), by the definition of \(\lambda^\flat\), there exist \(r>0\) and \(n_0\ge1\)
such that $L_r f_\omega^n(x)\le e^{n\chi}
\le e^{n\chi}$  for all $n\ge n_0$.  Decreasing \(r\) if
necessary, setting $\delta=r$, and absorbing the finitely many values \(0\le n<n_0\) into a constant
\(C\), one obtains the estimate of Theorem~\ref{contraction}.
}

\subsection{Preliminary lemmas}  We begin with a simpler statement.

\begin{lem}\label{contractionlem1}
Let $\bar{\mu}$ be an ergodic $F$-invariant measure satisfying~\eqref{integrability}. If 
\[
\int \log Lf_{\omega}(x) \, d\bar{\mu} < 0,
\]
then 
\[
\lambda^{\flat}(\bar{\mu}) \leq \int \log Lf_{\omega}(x) \, d\bar{\mu}.
\]
\end{lem}

\begin{proof}
Take $\chi<0$ such that $\int \log Lf_{\omega} \, d\bar{\mu}<\chi$.  
Since  $\log^+ \mathrm{Lip} (f_{\omega})$ is   
$\bar{\mu}$-integrable by~\eqref{integrability} and 
$\log L_rf_\omega(x) \leq \log \mathrm{Lip}(f_\omega)$, the reverse Fatou lemma implies that
$$\limsup_{r\to 0^+} \int \log L_rf_{\omega}(x) \, d\bar{\mu}\leq \int \log Lf_{\omega}(x) \, d\bar{\mu}.$$
Hence, we can find an $r>0$ such that 
$ 
\int \log L_rf_{\omega}(x)\, d\bar{\mu}<\chi$. 
Again, by the $\bar{\mu}$-integrability condition~\eqref{integrability} and 
the Birkhoff ergodic theorem\footnote{The Birkhoff ergodic theorem is usually stated for observables $\phi$ in $L^1$. However, since this theorem is a consequence of Kingman's subadditive ergodic theorem (see Theorem~\ref{Kingman}), it also holds if $\phi^+\in L^1$.}, 
for $\bar{\mu}$-a.e.~$(\omega,x)$, we have 
\begin{equation} \label{eq:birkhoff}
\lim_{n\to \infty}\frac{1}{n}\sum_{k=0}^{n-1} 
\log L_rf_{\omega_k}(f_\omega^k(x))=\int \log L_rf_{\omega}(x)\,  d\bar{\mu}<\chi
\end{equation}
where $\omega_k=T^k(\omega)$. Let us fix such a point $(\omega,x)$. In view of~\eqref{eq:birkhoff}, we find $n_0\in\N$ such that
\begin{equation*} \label{eq:Birkoff2}
\prod_{k=0}^{n-1}L_rf_{\omega_k}(f_\omega^k(x)) \leq e^{\chi n} \quad \text{for all  $n \geq n_0$.}
\end{equation*}
Let $B=B(x,s)$ be a small ball centered at $x$ such that $\diam f^{i}_\omega(B) \leq r$ for $i=0,\dots,n_0$. 
\begin{claim}
$f_\omega^n$ is $e^{\chi n}$-contracting on $B$ for any $n\ge n_0$.
\end{claim}
\begin{proof} 
We proceed by induction. Take $n\geq  n_0$ and let us assume that $f_\omega^k$ is $e^{\chi k}$-contracting on~$B$ for any $k=n_0,\dots, n-1$ if $n>n_0$. 
This implies that $f_\omega^k(B)$ has diameter less than $r$ for every $k=0,\dots,n-1$ since $\chi<0$ and $\diam f^{i}_\omega(B) \leq r$ for $i=0,\dots,n_0$. Thus, 
the Lipschitz constant of $f_{\omega_k}$ on  $f_\omega^k(B)$ is less than $L_rf_{\omega_k}(f_\omega^k(x))$. So we get
$$\mbox{Lip}(f_\omega^n|_B)\leq  \prod_{k=0}^{n-1}L_rf_{\omega_k}(f_\omega^k(x)) \leq e^{\chi n},$$ 
which completes the induction.
\end{proof}

As a consequence of the above claim, for $\mu$-a.e.~$(\omega,x)$, it holds that $L_s f_\omega^n(x)\leq e^{\chi n}$ for every $n\ge n_0$. Then $\lambda^{\flat}(\omega,x)\leq \chi$  for $\bar{\mu}$-a.e.~$(\omega,x)$. By the ergodicity, this implies that $\lambda^+(\bar{\mu})\leq \chi$.   Since $\chi>\int \log Lf_{\omega}(x)\,  d\bar{\mu}$ is arbitrary, the conclusion follows.
\end{proof}

For any integer $m$, let us denote by $\lambda_m^{\flat}$ the function $\lambda^{\flat}$ associated with the iterated skew product $F^m$, i.e.,
$$\lambda_m^{\flat}(\omega,x)=\lim_{r\to 0^+} \limsup_{k\to \infty} \frac{1}{k}\log L_rf_\omega^{km}(x).$$
\begin{lem}\label{contractionlem2}
If $\bar{\mu}$ is an $F$-invariant measure, then  $\lambda^{\flat}_m=m\lambda^{\flat}$ holds $\bar{\mu}$-almost everywhere. 
\end{lem}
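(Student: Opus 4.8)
The statement to prove is $\lambda^\flat_m = m\lambda^\flat$ $\bar\mu$-almost everywhere, where $\lambda^\flat_m$ is the exponent $\lambda^\flat$ computed for the iterated skew product $F^m$. I would begin by unwinding the two definitions directly from their formulas. By definition,
\[
\lambda^\flat_m(\omega,x) = \lim_{r\to 0^+}\limsup_{k\to\infty}\frac1k\log L_r f^{km}_\omega(x),
\qquad
\lambda^\flat(\omega,x) = \lim_{r\to 0^+}\limsup_{n\to\infty}\frac1n\log L_r f^{n}_\omega(x).
\]
The inequality $\lambda^\flat_m \le m\lambda^\flat$ is the easy direction and requires no invariance: for each fixed $r$, the sequence $n\mapsto \frac1n\log L_r f^n_\omega(x)$ along the subsequence $n = km$ has $\limsup$ at most $m$ times the full $\limsup$ divided through appropriately; more precisely $\frac1k\log L_r f^{km}_\omega(x) = m\cdot\frac{1}{km}\log L_r f^{km}_\omega(x)$, so taking $\limsup$ over $k$ of the left side is bounded by $m$ times $\limsup_{n}\frac1n\log L_r f^n_\omega(x)$, and then letting $r\to 0^+$ on both sides gives $\lambda^\flat_m \le m\lambda^\flat$ pointwise everywhere (no measure needed here).

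\textbf{The reverse inequality.} The content is $\lambda^\flat_m \ge m\lambda^\flat$ a.e., and here is where $F$-invariance of $\bar\mu$ enters. The plan is to interpolate: for general $n$ write $n = km + j$ with $0\le j < m$, and use the chain rule for the local Lipschitz constant (Lemma~\ref{lem1}) to split
\[
L_r f^n_\omega(x) \le L_\rho f^{km}_{T^j\omega}(f^j_\omega(x))\cdot L_r f^j_\omega(x),
\]
for a suitable radius $\rho$ controlling the image $f^j_\omega(B(x,r))$; one shrinks $r$ so that $f^j_\omega(B(x,r))\subset B(f^j_\omega(x),\rho)$, which is possible because the $f_\omega$ are Lipschitz (hence continuous). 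Taking $\frac1n\log$, the factor $\frac1n\log L_r f^j_\omega(x)$ is, for fixed $j<m$ and bounded by $\log^+\mathrm{Lip}(f^j_\omega)$, negligible in the $\limsup$ after dividing by $n\to\infty$ — this is where the integrability hypothesis \eqref{integrability} is used, together with the fact that along the orbit $\frac1n\log^+\mathrm{Lip}(f_{T^n\omega})\to 0$ $\bar\mu$-a.e. by Birkhoff applied to $\log^+\mathrm{Lip}(f_\omega)\in L^1(\bar\mu)$. Then $\frac1n\log L_r f^n_\omega(x) \approx \frac{m}{n}\cdot\big(\frac{1}{k}\log L_\rho f^{km}_{T^j\omega}(f^j_\omega(x))\big)$ up to vanishing errors, and since $n/k\to m$, the $\limsup$ over $n$ picks out $m$ times the exponent $\lambda^\flat_m$ evaluated at $F^j(\omega,x)$. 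Thus $m\lambda^\flat(\omega,x) \le m\,\lambda^\flat_m(F^j(\omega,x))$ for each $j$; averaging over $j\in\{0,\dots,m-1\}$ and using that $\bar\mu$ is $F$-invariant (so $\lambda^\flat_m\circ F^j$ has the same $\bar\mu$-integral / distribution as $\lambda^\flat_m$, more carefully using the sub-invariance $\lambda^\flat_m \le \lambda^\flat_m\circ F^m$ already noted in \S\ref{ss:equivalent}), one concludes $m\lambda^\flat \le \lambda^\flat_m$ $\bar\mu$-a.e.

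\textbf{Main obstacle.} The delicate point is the interchange of the two limits (in $r$ and in $n$) and making the chain-rule splitting compatible with the $r\to 0^+$ limit: the radius $\rho$ needed so that $f^j_\omega(B(x,r))\subset B(f^j_\omega(x),\rho)$ depends on $r$ and on $\omega$ through the modulus of continuity of $f^j_\omega$, but $\rho\to 0$ as $r\to 0$, so after taking $\limsup_n$ first and then $r\to 0^+$ the estimate survives; one must be careful to take the $r$-limit on the correct side and to note that $\rho(r)\to 0$. A secondary technical point is that the negligible boundary factor $\frac1n\log L_r f^j_\omega(x)$ must be controlled uniformly in $r$ (it is, since $L_r f^j_\omega(x)\le \mathrm{Lip}(f^j_\omega)$ independent of $r$), and that the a.e.\ convergence $\frac1n\log^+\mathrm{Lip}(f_{T^n\omega})\to 0$ holds on a full-measure $F$-invariant set. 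None of the individual estimates is hard; the care is in the bookkeeping of the double limit, which I would isolate as the crux of the argument.
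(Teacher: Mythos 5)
The overall architecture---the easy inequality $\lambda^\flat_m \le m\lambda^\flat$ pointwise, then Birkhoff control of the "remainder" iterates for the reverse---matches the paper's, but your choice of decomposition introduces a genuine gap. You peel off the \emph{first} $j$ iterates of $n = km+j$, writing $f^{km+j}_\omega = f^{km}_{T^j\omega}\circ f^j_\omega$, so your estimate on the $\limsup$ along the arithmetic progression $n\equiv j\ (\mathrm{mod}\ m)$ produces $\tfrac1m\lambda^\flat_m(F^j(\omega,x))$---the exponent evaluated at a \emph{shifted} base point. Taking the max over $j$ (as you must, since $\lambda^\flat$ is the $\limsup$ over all $n$) gives only $m\lambda^\flat(\omega,x)\le \max_{0\le j<m}\lambda^\flat_m(F^j(\omega,x))$. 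To pass from this to $m\lambda^\flat\le\lambda^\flat_m$ a.e.\ you invoke "averaging over $j$ and sub-invariance $\lambda^\flat_m\le\lambda^\flat_m\circ F^m$", but $\lambda^\flat_m$ is sub-invariant only with respect to $F^m$ (it is the $\lambda^\flat$ of the iterated system), and with $F$-invariance of $\bar\mu$ this yields $\lambda^\flat_m=\lambda^\flat_m\circ F^m$ a.e., which says nothing about $\lambda^\flat_m\circ F^j$ for $0<j<m$. The interim claim "$m\lambda^\flat(\omega,x)\le m\lambda^\flat_m(F^j(\omega,x))$ for each $j$" (even after correcting the stray factor of $m$) is not what your decomposition delivers: you control only the $\limsup$ along the residue class $j$, not all of $\lambda^\flat$.

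The paper avoids both difficulties by peeling off the \emph{last} $l$ iterates: writing $n=km+l$ and
\[
L_r f^n_\omega(x) \le L_r f^{km}_\omega(x)\,\prod_{i=km}^{n-1}\mathrm{Lip}(f_{T^i\omega}),
\]
the remainder factor involves only global Lipschitz constants (no shrinking radius $\rho$ to track), controlled by $\tfrac1i\log^+\mathrm{Lip}(f_{T^i\omega})\to 0$ a.e.\ from Birkhoff applied to $\log^+\mathrm{Lip}(f_\omega)\in L^1(\bar\mu)$; crucially, the main term $L_r f^{km}_\omega(x)$ is still based at $(\omega,x)$, so the $\limsup$ is compared directly with $\tfrac1m\lambda^\flat_m(\omega,x)$ with no shift to undo. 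This one change of decomposition eliminates both the "main obstacle" you isolate (the $\rho(r)\to 0$ bookkeeping) and the $F^j$-shift that your averaging step does not actually remove.
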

\begin{proof}
For any $(\omega,x)\in \Omega\times X$, we have
\begin{align*}
    \lambda^{\flat}_m(\omega,x) &=m \cdot  \lim_{r\to 0^+} \limsup_{k\to \infty} \frac{1}{km}\log L_rf_\omega^{km}(x) \\ &\leq   m\cdot \lim_{r\to 0^+} \limsup_{n\to \infty} \frac{1}{n}\log L_rf_\omega^n(x)=m \cdot \lambda^{\flat}(\omega,x).
\end{align*}
Thus we get $\lambda^{\flat}_m\leq m\lambda^{\flat}$. 
On the other hand, let us fix an $\epsilon>0$ arbitrarily. For $\bar{\mu}$-almost every~$(\omega,x)$, denoting $\omega_i=T^i(\omega)$, we have by a corollary of the Birkhoff ergodic theorem that $\frac{1}{i}\log^+ \mathrm{Lip}(f_{\omega_{i}})$ tends to $0$ as $i\to \infty$.  Thus, for $i$ large enough, we have $$\mathrm{Lip}(f_{\omega_{i}})\leq e^{\frac{i\epsilon}{m}}.$$ Then, for $n$ large, writing  $n=km+l$ with $0\leq l<m$, we have
$$L_rf_\omega^{n}(x)\leq L_rf_\omega^{km}(x)\left(\prod_{i=km}^{n-1}\mbox{Lip}(f_{\omega_{i}})\right)\leq L_rf_\omega^{km}(x)\prod_{i=km}^{n-1}e^{\frac{n\epsilon}{m}} \leq L_rf_\omega^{km}(x)e^{n\epsilon}.$$
From here it follows that
$$\frac{1}{n}\log L_rf_\omega^{n}(x)\leq \frac{1}{km}\log L_rf_\omega^{km}(x)+\epsilon$$
Letting $n\to \infty$ (and so $k$) and then $r\to 0^+$, we obtain
$\lambda^{\flat}(\omega,x)\leq \frac{1}{m}\lambda^{\flat}_m(\omega,x)+\epsilon$.
Since $\epsilon$ is arbitrary, we get the converse inequality $\lambda_m^{\flat}\geq m\lambda^{\flat}$ and thus conclude the proof.
\end{proof}

\subsection{Proof of Theorem~\ref{contraction}} 
As we have seen in~\S\ref{ss:equivalent}, it suffices to show Proposition~\ref{contraction'}.  Moreover, in light of~\eqref{eq:inequailty0}, to demonstrate Proposition~\ref{contraction'},
it is sufficient to prove that if $\bar{\mu}$ is an ergodic $F$-invariant measure satisfying~\eqref{integrability}, then the following implication holds: 
\begin{equation}\label{objectif}\lambda(\bar{\mu})<0 \ \ \Longrightarrow \ \ \lambda^{\flat}(\bar{\mu})\leq \lambda(\bar{\mu}).\end{equation}
Indeed, from the ergodic decomposition we can write $d\bar{\mu}=\bar{\mu}_\alpha \, d\alpha$ where $\bar{\mu}_\alpha$ are  ergodic $F$-invariant  measures and hence,~\eqref{objectif} implies 
$$
 \int_{\{\lambda<0\}} \lambda^{\flat}(\omega,x)\,d\bar{\mu} =\int_{\{\alpha: \, \lambda<0\}} \lambda^{\flat}(\bar{\mu}_\alpha)\,d\alpha
\leq \int_{\{\alpha: \,\lambda<0\}} \lambda(\bar{\mu}_\alpha) \, d\alpha = \int_{\{\lambda<0\}} \lambda(\omega,x)\, d\bar{\mu}. 
$$
Consequently, by ~\eqref{eq:inequailty0}, $\lambda^{\flat}(\omega,x)=\lambda(\omega,x)$ for $\bar{\mu}$-almost everywhere on $\{\lambda<0\}$.   

We will now proceed to show~\eqref{objectif}.
Let $\bar{\mu}$ be an ergodic $F$-invariant measure satisfying~\eqref{integrability}. Lemma~\ref{contractionlem1} can be expressed as the following inequality:
$$
\min\{\lambda^{\flat}(\bar{\mu}),0\} \leq \int \log Lf_{\omega}(x) \, d\bar{\mu}.
$$
 For any integer $m$, by applying this inequality on  $F^m$ instead of $F$, we get
$$
\min\{\lambda_m^{\flat}(\bar{\mu}),0\} 
\leq \int \log Lf_{\omega}^m(x) \, d\bar{\mu}.$$
Using Lemma~\ref{contractionlem2}, we obtain
$$
\min\{\lambda^{\flat}(\bar{\mu}),0\} 
\leq \frac{1}{m}\int \log Lf_{\omega}^m(x) \, d\bar{\mu}.$$
Letting $m$ tend to $\infty$ and using the reverse Fatou lemma (or Kingman's subadditive ergodic theorem), we conclude that  
$$\min\{\lambda^{\flat}(\bar{\mu}),0\} \leq \int \lambda(\omega,x) \, d\bar{\mu} = \lambda(\bar{\mu}).$$
This implies~\eqref{objectif} and concludes the proof.

\section{Proof of Theorem~\ref{cor:local-contraction}} 
In this subsection, we apply Theorem~\ref{contraction} to the setting of a skew-product $F$ arising from a random map $f$. 
{Namely, we assume that $f:\Omega\times X \to X$ is a random map where $(\Omega,\mathscr{F},\mathbb{P})$ is a Bernoulli product probability space and $(X,d)$ is a separable complete metric space.}  

{
We say that $f$ is \emph{tight} if for every $x\in X$ the sequence $\{e_n(\omega,x)\}_{n\geq 1}$ of empirical measure 
\[
e_n(\omega,x)\eqdef \frac1n\sum_{k=0}^{n-1}\delta_{f_\omega^k(x)}
\]
is relatively compact on the space of probability measures endowed with the weak$^*$ topology for $\mathbb{P}$-a.e.~$\omega\in \Omega$. If $X$ is compact, then $f$ is tight. Thus, the following result proves Theorem~\ref{cor:local-contraction}.

\begin{thm} \label{thm:generalization-thmA}
    Let $f$ be a tight continuous random map of a separable complete metric space $X$. If $f$ is mostly contracting, then $f$ has exponential local contraction. Namely, for every $\chi \in (\lambda(\mu),0)$ and $x\in X$,  for $\mathbb{P}$-a.e.~$\omega\in \Omega$ there exists a neighborhood $B$ of $x$ and a constant $C$ such that 
    $$ \mathrm{diam} f^n_\omega(B) \leq C e^{n\chi} \quad \text{for all $n\geq 1$}
    $$
    where $\lambda(f) =
   \sup\{\lambda(\mu): \mu \text{ is $f$-stationary} \}<0$. 
\end{thm}}
{
\begin{proof} Since $f$ is mostly contracting $\lambda(f) <0$. 
Choose $\lambda(f)<\chi<0$  and set \(q=e^\chi<1\). Since $X$ is a separable metric space, we can take a countable basis $\mathcal{U}$ of open sets. Define
\(E_q\subset\Omega\times X\) by declaring that \((\omega,x)\in E_q\) if there
exist \(U\in\mathcal U\) and \(C>0\) such that
\[
   x\in U
   \quad\text{and}\quad
   \operatorname{diam} f_\omega^n(U)\le Cq^n
   \quad\text{for every } n\ge0 .
\]
Equivalently, $E_q$ can be expressed as a countable union of  rectangles
\[
   E_q
   =
   \bigcup_{U \in \mathcal{U}} \bigcup_{C \in \mathbb{N}} 
   \left( \Omega_{U,C} \times U \right),
\]
where 
\[
   \Omega_{U,C} = \bigcap_{n \ge 0} \left\{ \omega \in \Omega : \operatorname{diam} f_\omega^n(U) \le Cq^n \right\}.
\]

\begin{claim}
    $E_q$ is a measurable set in $\Omega \times X$. 
\end{claim}
\begin{proof}
    Since $X$ is separable, it contains a countable dense subset $D$. For any open set $U$, the intersection $D \cap U$ is countable and dense in $U$. By the continuity of the fiber maps $f_\omega^n$, we can compute the diameter using only this countable subset
\[
    \operatorname{diam} f_\omega^n(U) = \sup_{y, z \in D \cap U} d(f_\omega^n(y), f_\omega^n(z)).
\]
Because the supremum is taken over a countable set and $\omega \mapsto d(f_\omega^n(y), f_\omega^n(z))$ is measurable, the map $\omega \mapsto \operatorname{diam} f_\omega^n(U)$ is measurable. Consequently, the sets $\Omega_{U,C}$ are measurable, ensuring that $E_q \in \mathscr{F} \otimes \mathscr{B}$ where $\mathscr{B}$ is the Borel $\sigma$-algebra of $X$.
\end{proof}

\begin{claim}  For each $\omega \in \Omega$, $E_q(\omega) = \{x \in X : (\omega,x) \in E_q\}$ is an open set in $X$. 
\end{claim}
\begin{proof} Observe from the product representation of $E_q$ that the fiber for a fixed $\omega$ can be written exactly as
\[
   E_q(\omega) = \bigcup \big\{ U \in \mathcal{U} : \text{there exists } C \in \mathbb{N} \text{ such that } \omega \in \Omega_{U,C} \big\}.
\]
Since $E_q(\omega)$ is a union of open basis elements, it is necessarily open.
\end{proof}
\begin{claim} The set \(E_q\) is backward invariant, i.e., $  F^{-1}(E_q)\subset E_q$.
\end{claim}
\begin{proof} Suppose that \(F(\omega,x)=(\sigma\omega,f_\omega(x))\in E_q\).
Then there are \(U\in\mathcal U\) with \(f_\omega(x)\in U\) and \(C>0\) such that
\[
   \operatorname{diam} f_{\sigma\omega}^n(U)\le Cq^n
   \quad\text{for all }n\ge0 .
\]
By continuity of \(f_\omega\), there is \(V\in\mathcal U\) with \(x\in V\) and
\(f_\omega(V)\subset U\). Hence, for every \(n\ge1\),
\[
   \operatorname{diam} f_\omega^n(V)
   =
   \operatorname{diam}
   f_{\sigma\omega}^{\,n-1}(f_\omega(V))
   \le
   \operatorname{diam}
   f_{\sigma\omega}^{\,n-1}(U)
   \le
   Cq^{n-1}.
\]
Increasing the constant if necessary to take care of \(n=0\), we get
\[
   \operatorname{diam} f_\omega^n(V)\le C' q^n
   \quad\text{for every }n\ge0.
\]
Thus \((\omega,x)\in E_q\), proving \(F^{-1}(E_q)\subset E_q\).
\end{proof}

Now we want to apply the zero-one law for random open sets, Corollary~\ref{cor:malicet-zero-one-open-sets}, to $E_q$. Since $f$ is a continuous random map, the assumption of Feller continuity of such a result holds. Moreover,  since $f$ is tight, we have that for every $x\in X$ the set of accumulation points $\Pi(\omega,x)$ of the empirical measure $e_n(\omega,x)$ is non-empty for $\mathbb{P}$-a.e.~$\omega$. Thus, in view of the three previous claims, to apply Corollary~\ref{cor:malicet-zero-one-open-sets}, it remains to prove that $(\mathbb{P}\times \mu)(E_q)>0$ for every ergodic $f$-stationary measure $\mu$.

Let \(\mu\) be an ergodic $f$-stationary measure.  Then \(\mathbb P\times\mu\) is an ergodic
\(F\)-invariant probability measure. Since \(f\) is mostly contracting, 
$$\lambda(\mu)\le \lambda(f)<\chi<0.$$ 
Thus, for \((\mathbb P\times\mu)\)-a.e.~\((\omega,x)\), the pointwise maximal
Lyapunov exponent satisfies
$\lambda(\omega,x)=\lambda(\mu)<\chi<0$.
By Theorem~\ref{contraction}, applied with this fixed number \(\chi\), for
\((\mathbb P\times\mu)\)-a.e.~\((\omega,x)\) there exist a neighborhood \(B\)
of \(x\) and a constant \(C>0\) such that
\[
   d(f_\omega^n(y),f_\omega^n(z))
   \le
   C e^{n\chi} d(y,z)
   =
   Cq^n d(y,z)
\]
for every \(y,z\in B\) and every \(n\ge0\). Taking a basis element
\(U\in\mathcal U\) with \(x\in U\subset B\), we obtain
\[
   \operatorname{diam} f_\omega^n(U)
   \le
   C\,\operatorname{diam}(U)\,q^n
   \quad\text{for every }n\ge0.
\]
Hence \((\omega,x)\in E_q\). Thus  $(\mathbb P\times\mu)(E_q)=1$.

Since \(E_q\) is measurable, has open fibers, and satisfies
\(F^{-1}(E_q)\subset E_q\) and $(\mathbb P\times\mu)(E_q)>0$ for every ergodic $f$-stationary measure $\mu$, the zero-one law for random open sets gives
\[
   (\mathbb P\times\nu)(E_q)=1
   \qquad\text{for every probability measure }\nu\text{ on }X.
\]
Hence, for every \(x\in X\), taking \(\nu=\delta_x\), we conclude that $\mathbb P(\Omega_x)=1$ where $\Omega_x=\{\omega:(\omega,x)\in E_q\}$. Therefore, for every $\omega\in \Omega_x$ we have $(\omega,x)\in E_q$ and by the definition of \(E_q\), there exist a neighborhood \(B\) of \(x\) and
a constant \(C>0\) such that $\operatorname{diam} f_\omega^n(B)\le Cq^n$ for every $n\ge0$.
This proves the exponential local contraction property.
\end{proof}
}

\section{Palis' global conjecture} \label{ss:palis-conjeture} \index{Palis' global conjecture}
Palis proposed~\cite{palis2000global} that for most dynamical systems, there exists a finite number of physical measures that capture the statistics of almost every orbit.
Physicality means that the basin of attraction has positive measure (with respect to the reference measure). In this section, we prove that the skew-product associated with a mostly contracting random map satisfies this statistical description of the dynamics. Moreover, we prove this conjecture for the class of dynamics that have the following local contraction property. 

\index{local contraction!1@local contraction property}
\begin{defi} A random map $f:\Omega\times X \to X$ has the \emph{local contraction property} if
for every $x\in X$, for $\mathbb{P}$-a.e.~$\omega\in \Omega$ there is a neighborhood $B$ of $x$ such that 
$$
\diam f^n_\omega(B) \to 0 \quad \text{as $n\to\infty$.}
$$
\end{defi}
As mentioned, by Theorem~\ref{cor:local-contraction}, every mostly contracting random map has the (exponential) local contraction property. 

\begin{thm} \label{thm:palis-conjecture}
Let $(X ,d,m)$ and $(\Omega,\mathscr{F},\mathbb{P})$ be a compact metric Borel probability space and a Bernoulli product probability space of a Polish probability space, respectively. Let $f:\Omega\times X\to X$ be a random map with the local contraction property and consider the associated skew-product $F$. 
Then, there exist finitely many ergodic $F$-invariant probability measures $\bar{\mu}_1,\dots,\bar{\mu}_s$ on $\Omega\times X$, such that
\begin{enumerate}[leftmargin=1.5cm,label=(\roman*)]
  \item $\bar{\mu}_i=\mathbb{P}\times \mu_i$ where $\mu_i$ is an $f$-stationary measure, 
  \item $\bar{m}(B(\bar{\mu}_i))>0$, i.e., $\bar{\mu}_i$ is a physical measure with respect to $\bar{m}=\mathbb{P}\times m$,  
  \item $\bar{m}\left(B(\bar{\mu}_1)\cup \dots \cup B(\bar{\mu}_s)\right)=1$,
\end{enumerate} 
wherein
$$B(\bar{\mu}_i)=\left\{(\omega,x)\in \Omega\times X:  \lim_{n\to\infty}\frac{1}{n}\sum_{k=0}^{n-1}\delta_{F^k(\omega,x)}=\bar{\mu}_i \ \text{in the weak$^*$ topology }  \right\}.$$
\end{thm}
\begin{proof}
    The proof follows the arguments of~\cite[Prop.~4.9]{Mal:17}. 
    
    According to~\cite[Prop.~4.8]{Mal:17} (see Corollary~\ref{rem:measure-mean-quasicompactness}(b)), a random map $f$ satisfying the local contraction property has a finite number of ergodic $f$-stationary probability measures. Denote these measures by $\mu_1,\dots, \mu_r$.
   Note that the ergodicity and $f$-invariance of $\mu_i$ implies that $\bar{\mu}_i\eqdef \mathbb{P}\times \mu_i$ is an ergodic $F$-invariant probability measure for all $i=1,\dots,r$. 
    Consider $\mathcal{E}_0$ to be the set of points $(\omega, x)$ such that there exists a neighborhood of $x$ contracted by $f^n_\omega$ for $n\geq 1$. Note that since $f$ satisfies the local contraction property, for
    every $x\in X$, we have a set $\Omega_x \subset \Omega$ with $\mathbb{P}(\Omega_x)=1$ such that 
    $\Omega_x \times \{x\}\subset \mathcal{E}_0$. 
    Hence, $(\mathbb{P}\times\mu)(\mathcal{E}_0)=1$ 
    for any   probability measure $\mu$ on $X$. 
    
    Let
$$
\mathcal{E}_i = \left\{(\omega, x) \in \mathcal{E}_0: 
\, \lim_{n\to\infty}\frac{1}{n} \sum_{k=0}^{n-1}
\delta_{F^k(\omega,x)} = \bar{\mu}_i \right\}  \quad \text{for $i=1,\dots,r$}.
$$
Here, the limits of measures are taken in the weak$^*$ topology. That is, $\nu_n\to \nu$ if and only if $\int \varphi \, d\nu_n \to \int \varphi \, d\nu$ for any {bounded} real-valued continuous bounded function $\varphi$.  Clearly, $F^{-1}(\mathcal{E}_i) \subset \mathcal{E}_i$ and 
$\mathcal{E}_i=\mathcal{E}_0\cap B(\bar{\mu}_i)$. Since, $\bar\mu_i$ is ergodic, by Birkhoff theorem\footnote{Here we are using that $\Omega\times X$ is a Polish space, and thus the convergence in the weak$^*$ topology is countably determined. See~\cite[Prop.~5.1 and Lemma~5.2]{BNNT22}.}, $\bar{\mu}_i(B(\bar{\mu}_i))=1$. Moreover, since $\bar{\mu}_i(\mathcal{E}_0)=1$, we also get that $\bar{\mu}_i(\mathcal{E}_i)=1$ for all $i=1,\dots,r$.

We write 
$$
\mathcal{E}_i = \bigcup_{\omega\in\Omega} \{\omega\} \times U_i(\omega).
$$ 
Moreover, if $\omega$ belongs to $\Omega$ and $x\in U_i(\omega)$, by the local contraction property, we have a neighborhood $B$ of $x$ such that $\diam(f^n_\omega(B))\to 0$ as $n\to\infty$.  Then, for any $y\in B$, we have $d(F^n(\omega,x),F^n(\omega,y))\to 0$ as $n\to\infty$, which implies that $(\omega,y)\in \mathcal{E}_i$. Therefore, $B \subset U_i(\omega)$, and consequently, $U_i(\omega)$ is open for all $i=1,\dots,r$.

Hence, the set $\mathcal{E} = \mathcal{E}_1\cup \dots \cup \mathcal{E}_r$ is forward $F$-invariant ($F^{-1}(\mathcal{E})=\mathcal{E}$) measurable set, has open fibers $U(\omega)=U_1(\omega)\cup \dots \cup U_r(\omega)$ for all $\omega\in \Omega$, and $(\mathbb{P}\times \mu)(\mathcal{E})>0$ for every ergodic $f$-stationary measure. Thus, we can apply the zero-one law for open random maps, {Corollary~\ref{cor:malicet-zero-one-open-sets},} to obtain that
$$
\bar{m}\left(B(\bar{\mu}_1)\cup \dots \cup B(\bar{\mu}_r)\right) \geq (\mathbb{P} \times m)(\mathcal{E}) = 1.
$$
Since the basins of attraction are pairwise disjoint, this implies (iii) and also (ii) for a number $0<s\leq r$ of measures (omitting the measures whose basin has null reference measure).
\end{proof}

\appendix

\chapter{Limit and integration lemmas}
\label{s:preliminar}

In what follows, $(X,d)$ is a metric space equipped with the Borel $\sigma$-algebra $\mathscr{B}$ and with a
probability measure $\mu$ on $\mathscr{B}$.

\section{Semicontinuity and weak convergence}
\label{app:semicontinuity-weak-convergence}

An extended real-valued function $g$ on \(X\) is said to be \emph{upper semicontinuous} if
$$
\limsup_{x \rightarrow x_0} g(x)\leq g(x_0) \quad \text{for every $x_0\in X$},
$$
or equivalently, if the set \(\{x: g(x) < a\}\) is open for every real number \(a\).

\begin{lem}[Baire's Theorem~{\cite[Prop.~1.4.17]{HL:12}}] \label{lem:Baire-teorema} An extended real-valued function \(g\) on \(X\) is upper semicontinuous if and only if there exists a decreasing sequence of continuous functions \(g_k\) of $X$ such that \(g_k(x) \to g(x)\) for all \(x \in X\).
\end{lem}


\begin{lem}[{\cite[Theorem 1.4.18]{HL:12}}]\label{weakineq}
Let $(\mu_n)$ be a sequence of probability measures on $X$ weakly$^*$ converging to  $\mu$. Then for any $\phi:X\to\R$ which is upper semicontinuous,
$$\limsup_{n\to \infty}  \int \phi \, d\mu_n \leq \int \phi \, d\mu.$$
\end{lem}

\section{Integration and subadditive limit lemmas}
\label{app:integration-subadditive}

\begin{lem}[Reverse Fatou's lemma~{\cite[Prop.~1.5.3(a)]{HL:12}}] \label{Fatou} 
Let $g$, $(f_n)_{n\geq 1}$ be extended real-valued measurable functions such that $\int g \, d\mu < \infty$ and  $f_n\leq g$  for all $n$ large enough. Then
\[\limsup_{n\to\infty} \int f_n \, d\mu \leq \int \limsup_{n\to\infty} f_n \, d\mu.\]
\end{lem}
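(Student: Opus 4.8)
The plan is to reduce the statement to the classical Fatou lemma by a sign change. First I would set $h_n \eqdef g - f_n$; by hypothesis $f_n \le g$ for all $n$ large enough, so $h_n \ge 0$ for all large $n$, and moreover $h_n$ is measurable (as a difference of measurable functions, with the usual conventions on $\pm\infty$, after discarding a $\mu$-null set if necessary). Since $\int g\,d\mu < \infty$, the quantities $\int g\,d\mu - \int f_n\,d\mu$ make sense in $[-\infty,\infty]$ and equal $\int h_n\,d\mu$ whenever $\int f_n\,d\mu$ is well defined; the point is that we never subtract $\infty$ from $\infty$ because $g$ has finite integral.

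Next I would apply the classical (nonnegative) Fatou lemma to the sequence $(h_n)$ restricted to the indices where $h_n \ge 0$: this gives
\[
\int \liminf_{n\to\infty} h_n \, d\mu \le \liminf_{n\to\infty} \int h_n\, d\mu.
\]
Then I would rewrite both sides in terms of $f_n$ and $g$. On the left, $\liminf_{n\to\infty} h_n = g - \limsup_{n\to\infty} f_n$ pointwise, and since $\int g\,d\mu < \infty$ we may split the integral: $\int \liminf h_n\,d\mu = \int g\,d\mu - \int \limsup f_n\,d\mu$. On the right, $\liminf_{n\to\infty}\int h_n\,d\mu = \liminf_{n\to\infty}\bigl(\int g\,d\mu - \int f_n\,d\mu\bigr) = \int g\,d\mu - \limsup_{n\to\infty}\int f_n\,d\mu$. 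Substituting these into the Fatou inequality and cancelling the finite quantity $\int g\,d\mu$ from both sides yields exactly
\[
\limsup_{n\to\infty} \int f_n\, d\mu \le \int \limsup_{n\to\infty} f_n\, d\mu,
\]
which is the claim.

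The only genuinely delicate point is bookkeeping with $\pm\infty$: one must make sure that all the additive splittings $\int(g - f_n) = \int g - \int f_n$ and $\int(g - \limsup f_n) = \int g - \int\limsup f_n$ are legitimate, which they are precisely because $\int g\,d\mu < \infty$ (so $g$ is finite $\mu$-a.e.\ and its integral can be freely transposed across the inequality). There is also the harmless matter of the hypothesis only giving $f_n \le g$ for $n$ large: since both $\limsup_n \int f_n$ and $\limsup_n f_n$ ignore finitely many terms, we may simply discard the initial indices and assume $f_n \le g$ for all $n$. No contraction or compactness input is needed; this is purely measure-theoretic.
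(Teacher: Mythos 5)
The paper does not prove this lemma; it cites it from Hern\'andez-Lerma--Lasserre \cite[Prop.~1.5.3(a)]{HL:12} as a standard fact. Your reduction to the classical Fatou lemma via $h_n = g - f_n$ is the standard textbook argument and is essentially correct, so there is nothing to compare against the paper other than to note that you have supplied a proof where the paper did not.

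One small imprecision worth flagging: you assert that ``$\int g\,d\mu < \infty$ (so $g$ is finite $\mu$-a.e.)''. That is not quite right. The hypothesis $\int g\,d\mu < \infty$ forces $\int g^{+}\,d\mu < \infty$, hence $g < \infty$ a.e., but it allows $g = -\infty$ on a set of positive measure (and also allows $\int g\,d\mu = -\infty$). On the set $\{g = -\infty\}$ the difference $h_n = g - f_n$ is of the form $-\infty-(-\infty)$ and is ill-defined, and if $\int g\,d\mu = -\infty$ you cannot cancel $\int g$ across the inequality. Both edge cases are easily dispatched before running your argument: since $f_n \le g$ and $\limsup_n f_n \le g$ a.e., the positive parts $f_n^{+}$ and $(\limsup_n f_n)^{+}$ are dominated by the integrable $g^{+}$, so the integrals $\int f_n\,d\mu$ and $\int \limsup_n f_n\,d\mu$ are well defined in $[-\infty,\infty)$; and if either $\mu(\{g=-\infty\})>0$ or $\int g\,d\mu = -\infty$, then both sides of the claimed inequality equal $-\infty$ and there is nothing to prove. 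Having reduced to the case where $g$ is a.e.\ real-valued with $\int g\,d\mu$ finite, your cancellation argument goes through verbatim. With that patch the proof is complete.
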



\begin{lem}[Dominated convergence~{\cite[Prop.~1.5.3(c)]{HL:12}}] Let $g$, $(f_n)_{n\geq1}$  be extended real-valued measurable functions such
that if $|f_n| \leq g$ for all $n$, and $g$ is $\mu$-integrable. If $f_n \to f$ pointwise, then
\[
\lim_{n \to \infty} \int f_n \, d\mu = \int f \, d\mu.
\]
\end{lem}


\begin{lem}[Fekete's lemma~{\cite[Lem.~1.2.1]{steele1997probability}}] \label{lem:Fekete}
 If a sequence $(a_n)_{n\geq 1} \subset [-\infty,\infty)$  satisfies the subadditive condition $a_{n+m} \leq a_n+a_m$, then
$$\lim_{n\to\infty} \frac{1}{n} a_n = \inf_{n\geq 1} \frac{1}{n} a_n \in [-\infty,\infty). $$
\end{lem}

\section{Separability and Stability lemmas}
\label{app:holder-separation}

Although the following lemma is a consequence of McShane's theorem~\cite{mcshane1934extension}, we provide here an easy direct proof: 

\begin{lem}[Lipschitz Urysohn's lemma] \label{lem:Urysohn}
Let $A,B$ be subsets of $X$ such that $$d(A,B)\eqdef \inf\{d(a,b): a\in A, b\in B\} > 0.$$ Then for each $0 < \alpha \leq 1$, there is a function $h \in C^\alpha(X)$ such that 
$0 \leq h \leq 1$,  $h|_A = 0$ and $h|_B = 1$. 
Moreover, if $U$ is a proper open set of $X$, there exists a sequence $(h_n)_{n\geq 1}$ of functions in $C^\alpha(X)$ that converge pointwise everywhere and monotonically increase to $1_U$.   
\end{lem}
\begin{proof} Consider the case $\alpha=1$. The case $0<\alpha<1$ follows by applying the same argument to the metric space $(X,d_\alpha)$ where $d_\alpha(x,y)=d(x,y)^\alpha$. 

Since $d(x,A)+d(x,B)\geq d(A,B) >0$, we can define the function 
$$h(x)=\frac{d(x,A)}{d(x,A)+d(x,B)}, \qquad x\in X.$$
Clearly $0\leq h\leq 1$, $h(x)=0$ for $x\in A$ and $h(x)=1$ for $x\in B$.
Moreover, since the functions $$x\mapsto d(x,Y)
\quad \text{and} \quad (x,y)  \mapsto \frac{x}{x+y}$$ 
where $Y$ is a subset of $X$ and $(x,y)$ runs over 
$\{(x,y): x,y\geq 0 \ \text{and} \ x+y\geq d(A,B)\}$ 
are Lipschitz,  $h$ is also Lipschitz, which concludes the proof of the first part. The second part of lemma follows by considering $A=X\setminus U$ and $B_n=\{x\in X : 
 d(x,A)\geq 1/n\}$, noting that $U=\cup B_n$ and taking $h_n$ the functions above associated with $A$ and $B_n$. 
\end{proof}

The following lemma establishes the Lipschitz stability of the vertices of a simplex with respect to the Hausdorff metric. While this result belongs to the folklore of convex analysis, its formulation in the generality of arbitrary normed spaces is not easily found in standard references (cf.~\cite[Lemma~1(b)]{Nguyen15}). For the sake of completeness, we provide a detailed proof below.

\begin{lem}[Stability of the vertices of a simplex]
\label{lem:vertex-stability-simplex}
Let \(E\) be a normed vector space and let
$S_0=\operatorname{conv}\{v_1,\dots,v_r\}\subset E$  be a simplex. Then there exist \(\varepsilon_0>0\) and \(C_0>0\) such that
any simplex \(S\subset E\) with exactly \(r\) vertices satisfying \(d_H(S,S_0)<\varepsilon_0\) admits a unique labelling of its vertices \(w_1(S),\dots,w_r(S)\) where \(w_i(S) \to v_i\) as \(d_H(S, S_0) \to 0\). Moreover, for any two such simplices \(S,S'\), 
\[
   \max_{1\le i\le r}\|w_i(S)-w_i(S')\| \le C_0\,d_H(S,S').
\]
\end{lem}

\begin{proof}
The case \(r=1\) is immediate. Let \(A=\operatorname{aff}(S_0)\) be the affine hull of \(S_0\), i.e., the smallest affine subspace containing the simplex.  Let
\(\lambda_i:A\to\mathbb R\), \(i=1,\dots,r\), be the barycentric coordinates of
\(S_0\), i.e., the unique set of coefficients  such that $x = \sum_i \lambda_i(x) v_i$ with $\sum_i \lambda_i = 1$. Thus \(\lambda_i(v_j)=\delta_{ij}\) and
\(\sum_i\lambda_i\equiv1\) on \(A\). Extend \(\lambda_1,\dots,\lambda_{r-1}\)
to continuous affine functionals on \(E\) by Hahn-Banach, and set
\[
   \lambda_r=1-\sum_{i=1}^{r-1}\lambda_i .
\]
Then \(\sum_i\lambda_i\equiv1\) on \(E\). Let
$L=\max_i \operatorname{Lip}(\lambda_i)$. 

Let \(S\) be a simplex with \(r\) vertices and put
\(\delta=d_H(S,S_0)\). For each \(i\), choose \(y_i\in S\) with
\(\|y_i-v_i\|\le\delta\). Since \(\lambda_i(y_i)\ge1-L\delta\), some vertex
\(u_i\) of \(S\) satisfies
$\lambda_i(u_i)\ge1-L\delta$. On the other hand, every vertex \(u\) of \(S\) is \(\delta\)-close to \(S_0\). Hence $\lambda_j(u)\ge -L\delta$ for any $j$. If the same vertex \(u\) satisfied
\(\lambda_i(u)\ge1-L\delta\) and \(\lambda_k(u)\ge1-L\delta\) for \(i\neq k\),
then
\[
   1=\sum_{j=1}^r\lambda_j(u)
   \ge 2(1-L\delta)-(r-2)L\delta
   =2-rL\delta,
\]
which is impossible for \(\delta<1/(rL)\). Therefore the vertices
\(u_1,\dots,u_r\) are distinct, and hence they are all the vertices of \(S\).

We now show that \(u_i\) is close to \(v_i\). Choose \(x_i\in S_0\) with
\(\|u_i-x_i\|\le\delta\). Then
$\lambda_i(x_i)\ge1-2L\delta$.
Writing \(x_i\) in barycentric coordinates in \(S_0\), and denoting
\(D=\operatorname{diam}(S_0)\),~we~get
\[
   \|x_i-v_i\|
   \le
   D\sum_{j\neq i}\lambda_j(x_i)
   =
   D(1-\lambda_i(x_i))
   \le 2DL\delta.
\]
Thus $\|u_i-v_i\|\le(1+2DL)\delta$. After decreasing $\varepsilon_0 \geq \delta$, the balls around the vertices \(v_i\) are
pairwise disjoint, and this gives the unique labelling \(w_i(S)=u_i\).

It remains to prove the Lipschitz estimate. Decreasing \(\varepsilon_0\) again,
we may assume that there is \(\gamma>0\) such that, for every such simplex
\(T\),
\[
   \lambda_i(w_i(T))-\lambda_i(w_j(T))\ge\gamma
   \qquad\text{whenever }j\neq i.
\]
Let \(S,S'\) be two simplexes in this neighbourhood, set
\[
   \eta=d_H(S,S'),
   \qquad
   u_i=w_i(S),
   \qquad
   u_i'=w_i(S'),
\]
and choose \(y\in S'\) with \(\|u_i-y\|\le\eta\). Write
$y=a_i u_i' +(1-a_i)z$, 
where \(z\) belongs to the face of \(S'\) opposite \(u_i'\). By the preceding
separation,
$\lambda_i(u_i')-\lambda_i(z)\ge\gamma$.
Hence
$\gamma(1-a_i)
   \le
   \lambda_i(u_i')-\lambda_i(y)$.
Since \(u_i\) and \(u_i'\) are the maximizers of \(\lambda_i\) on \(S\) and
\(S'\), respectively, the map \(T\mapsto\max_T\lambda_i\) is
\(L\)-Lipschitz for the Hausdorff metric. Therefore
\[
   |\lambda_i(u_i')-\lambda_i(u_i)|\le L\eta,
   \qquad
   |\lambda_i(u_i)-\lambda_i(y)|\le L\eta,
\]
and so \enlargethispage{0.25cm}
$1-a_i\le \eta (2L/\gamma)$. Finally, all simplexes in the \(\varepsilon_0\)-neighbourhood of \(S_0\) have
diameter bounded by \(D_*:=D+2\varepsilon_0\). Thus
\[
   \|u_i-u_i'\|
   \le
   \|u_i-y\|+\|y-u_i'\|
   \le
   \eta+(1-a_i)D_*
   \le
   \left(1+\frac{2LD_*}{\gamma}\right)\eta. 
\]
Taking the maximum over \(i\) proves the result. \end{proof}

\chapter{Martingale laws of random variables}
\label{s:appendix-martingale-zero-one}

Throughout this
appendix, \((\Omega,\mathscr F,\mathbb P)\) is a probability space.  If
\(\mathscr G\subset \mathscr F\) is a sub-\(\sigma\)-algebra and \(\psi\in
L^1(\mathbb P)\), we denote by
\(\mathbb E[\psi\mid\mathscr G]\) the conditional expectation of \(\psi\) with
respect to \(\mathscr G\).  If \(A\in\mathscr F\), we write
\(\mathbb P(A\mid\mathscr G)=\mathbb E[1_A\mid\mathscr G]\).

\section{L\'evy's zero-one law} \index{zero-one laws!L\'evy's zero-one law}

\begin{thm}[{\cite[Cor.~C.9]{Ber03}}]
\label{cor:levy-zero-one}
Let \((\mathscr F_n)_{n\ge 1}\) be an increasing filtration and set
$
   \mathscr F_\infty \eqdef \sigma(\bigcup_{n\ge 1}\mathscr F_n)$. 
If \(\psi\in L^1(\mathbb P)\), then
\[
   \mathbb E[\psi\mid \mathscr F_n]
   \longrightarrow
   \mathbb E[\psi\mid \mathscr F_\infty] \quad\text{almost surely and in \(L^1\).}
\]
  In particular, if \(A\in\mathscr F_\infty\), then $
   \mathbb P(A\mid \mathscr F_n)\to 1_A$ 
 almost surely,
and, if \(\psi\) is \(\mathscr F_\infty\)-measurable, then
\[
   \mathbb E(\psi\mid\mathscr F_n)\longrightarrow \psi
   \qquad\text{almost surely and in }L^1.
\]
\end{thm}

\section{Kolmogorov's law of large numbers}
\index{Kolmogorov's law of large numbers}

\begin{thm}[{\cite[{Thm.~A.6}]{benoist2016random}}]
\label{thm:BQ-A6}
Let \((\xi_n)_{n\ge 1}\) be a sequence of random variables and let
\((\mathscr F_n)_{n\ge 0}\) be an increasing filtration such that \(\xi_n\) is
\(\mathscr F_n\)-measurable.  If there is a non-negative integrable
random variable \(\xi\) such that, for every \(t\ge 0\) and  \(n\ge 1\),
\[
   \mathbb P\big(|\xi_n|>t\mid \mathscr F_{n-1}\big)
   \le
   \mathbb P(\xi>t)
   \qquad\text{almost surely},
\]
then
\[
   \frac1n\sum_{k=1}^n
   \Big(\xi_k-\mathbb E[\xi_k\mid\mathscr F_{k-1}]\Big)
   \longrightarrow 0 \quad \text{almost surely and in \(L^1\).}
\]
\end{thm} 

Corollary~\ref{cor:levy-zero-one} is
 a direct consequence of \emph{Doob's martingale convergence theorem}~\cite[Thm.~A.3]{benoist2016random}. Also, Benoist and Quint derive Theorem~\ref{thm:BQ-A6} from Doob's theorem by a
truncation and Kronecker-lemma argument.  

\chapter{Law of large numbers and zero-one law}
\label{s:appendix-feller-bernoulli}


Let
\((T,\mathscr A,p)\) be a probability space and let $(\Omega,\mathscr F,\mathbb P)
   =(T^{\mathbb N},\mathscr A^{\mathbb N},p^{\mathbb N})$
be a one-sided Bernoulli product space.  We write
\(\omega=(\omega_0,\omega_1,\ldots)\) and denote by \(\sigma:\Omega\to\Omega\) the left shift.  Unless explicitly stated otherwise, \(X\) is a Polish space with 
Borel \(\sigma\)-algebra $\mathscr{B}$. 

A Bernoulli random map, or simply a \emph{random map}, \(f:\Omega\times X\to X\) is a measurable map such that
\(f(\omega,\cdot)=f_{\omega_0}\) for \(\mathbb P\)-a.e.~\(\omega\).  We use the notation
\[
   f^0_\omega=\operatorname{id},
   \qquad
   f^n_\omega=f_{\omega_{n-1}}\circ\cdots\circ f_{\omega_0},
   \qquad n\ge 1,
\]
and thus the associated skew-product map $F:\Omega\times X \to \Omega \times X$ satisfies 
$F^n(\omega,x)=(\sigma^n(\omega),f^n_\omega(x))$, for $n\geq 0$.   
The random map \(f\) also induces the transition probability 
\[
   P(x,A)
   \eqdef
   \mathbb P(\{\omega\in\Omega: f_\omega(x)\in A\}),
   \qquad x\in X, \ \ A\in \mathscr{B}  
\]
from which the associated annealed Koopman operator is defined as
\[
   P\varphi(x)
   \eqdef
   \int \varphi(y)\,P(x,dy)
      =
   \int \varphi(f_\omega(x))\,d\mathbb P, \qquad x\in X
\]
for Borel measurable bounded functions $\varphi:X\to \mathbb{R}$. 

\section{Random Birkhoff average comparison} We compare the asymptotic behavior of the Birkhoff average along a random orbit for different types of observables. First, we consider a bounded observable $\varphi$ and $P\varphi$. 

\index{random Birkhoff averages}
\index{Breiman's lemma}

\begin{prop}[Breiman's lemma, {\cite{Bre:60}}]
\label{lem:breiman-feller} Let $f$ be a random map and consider a Borel measurable bounded function $\varphi: X \to \mathbb{R}$.  Then, for every \(x\in X\),
\[
   \frac1n\sum_{k=0}^{n-1}
   \Big(\varphi(f^k_\omega(x))-P\varphi(f^k_\omega(x))\Big)
   \longrightarrow 0 \quad \text{for \(\mathbb P\)-a.e.~\(\omega\).}
\]
\end{prop}

\begin{proof}
Fix \(x\in X\) and set $X_k(\omega)\eqdef f^k_\omega(x)$, $k\ge 0$. Thus \(X_0=x\) and $X_k=f_{\omega_{k-1}}(X_{k-1})$ for $k\ge 1$. Let $\mathscr F_k\eqdef \sigma(\omega_0,\ldots,\omega_{k-1})$, $k\ge 1$, and let \(\mathscr F_0\) be the trivial \(\sigma\)-algebra. Then \(X_k\) is
\(\mathscr F_k\)-measurable. Since \(\omega_{k-1}\) is independent of \(\mathscr F_{k-1}\) and has law \(p\), we have
\[
   \mathbb E\big[\varphi(X_k)\mid \mathscr F_{k-1}\big]
   =
   \int \varphi(f_t(X_{k-1}))\,dp(t)
   =
   P\varphi(X_{k-1}).
\]
For \(k\ge 1\), define
$\xi_k
   \eqdef
   \varphi(X_k)-P\varphi(X_{k-1})$.
Then \(\xi_k\) is \(\mathscr F_k\)-measurable,
$\mathbb E[\xi_k\mid \mathscr F_{k-1}]=0$, 
and $|\xi_k|\le 2\|\varphi\|_\infty$. 
Hence $
   \mathbb P(|\xi_k|>t\mid \mathscr F_{k-1})
   \le
   \mathbb P(\xi>t)$ for every \(t\ge0\) 
where \(\xi=2\|\varphi\|_\infty\) is viewed as a constant integrable random variable. Thus the sequence \((\xi_k)_{k\ge1}\) satisfies the hypotheses of
Theorem~\ref{thm:BQ-A6}.  Therefore
\[
   \frac1n\sum_{k=1}^n \xi_k
   =
   \frac1n\sum_{k=1}^n
   \Big(\varphi(X_k)-P\varphi(X_{k-1})\Big)
   \longrightarrow 0
   \quad \text{almost surely.}
\]
It remains only to shift the index. Since
\[
\begin{aligned}
&\frac1n\sum_{k=1}^n
   \Big(\varphi(X_k)-P\varphi(X_{k-1})\Big)
 -
 \frac1n\sum_{k=0}^{n-1}
   \Big(\varphi(X_k)-P\varphi(X_k)\Big)  =
   \frac{\varphi(X_n)-\varphi(X_0)}{n},
\end{aligned}
\]
the absolute value of this difference is at most
\(2\|\varphi\|_\infty/n\), which tends to zero. Therefore
\[
   \frac1n\sum_{k=0}^{n-1}
   \Big(\varphi(X_k)-P\varphi(X_k)\Big)
   \longrightarrow 0
   \quad \text{almost surely.}
\]
Since \(X_k(\omega)=f^k_\omega(x)\), this is the desired conclusion.
\end{proof}

We now compare the quenched and annealed asymptotic behaviors of the Birkhoff average along random orbits. First, denote by $C_b(X)$ \index{space of functions!$Cb@\(C_b(X)\), bounded continuous functions} the space of bounded real-valued continuous functions of $X$.

\begin{prop}[{\cite[Prop.~3.2]{benoist2016random}}]
\label{prop:one-step-observables} Let $f$ be a random map and consider a measurable one-step map  \(\phi:\Omega\times X\to\mathbb R\), i.e., such that $\phi(\omega,x)=\phi_{\omega_0}(x)$ only depends on the zero-coordinate of~$\omega$.  Assume that
\[
\text{\(\phi(\omega,\cdot)\in C_b(X)\) \  \ for  \(\mathbb P\)-a.e.~\(\omega\)} \quad \text{and}
\quad 
    \int \| \phi(\omega,\cdot)\|_{\infty} \, d\mathbb{P}<\infty. 
\]
Define $\bar \phi(x)\eqdef \int \phi(\omega,x)\,d\mathbb{P}$. Then \(\bar \phi\in C_b(X)\) and, 
for every \(x\in X\) and \(\mathbb P\)-a.e.~\(\omega\)
\[
   \frac1n\sum_{k=0}^{n-1}
   \big(
      \phi_{\omega_k}(f^k_\omega(x))-\bar\phi(f^k_\omega(x))
   \big)= \frac1n\sum_{k=0}^{n-1}
   \big(\phi(F^k(\omega,x))-\bar\phi(f^k_\omega(x))\big)
   \longrightarrow 0.
\]
\end{prop}

\begin{proof}
The function \(\bar\phi\) is well defined and bounded because \enlargethispage{0.5cm}
\[
   |\bar\phi(x)|
   \le
   \int_T |\phi_t(x)|\,dp(t)
   \le
   \int_T \|\phi_t\|_\infty\,dp(t)<\infty
   \qquad \text{for every }x\in X .
\]
Moreover, \(\bar\phi\in C_b(X)\) by dominated convergence.  Fix \(x\in X\)
and let \(X_k(\omega)=f^k_\omega(x)\) for $k\geq 0$.  With \(\mathscr F_{k+1}=\sigma(\omega_0,\ldots,
\omega_{k})\), the random variable \(X_{k+1}\) is \(\mathscr F_{k+1}\)-measurable.

Define  $\xi_{k+1}(\omega)\eqdef \phi_{\omega_k}(X_k(\omega))$ for $k\ge0$. Then $\xi_{k+1}$ is $\mathscr{F}_{k+1}$-measurable, and since \(\omega_k\) is
independent of \(\mathscr F_k\) and has law $p$, we have  
\[
   \mathbb E[\xi_{k+1} \mid\mathscr F_k]=\int  \phi_t(X_k)\, dp(t) = \bar \phi(X_k).
\]
We now check the domination hypothesis in Theorem~\ref{thm:BQ-A6}.  Put $\xi(\omega)\eqdef \|\phi_{\omega_0}\|_\infty$.  Then \(\xi\in L^1(\mathbb{P})\), and
$ |\xi_{k+1}(\omega)|   =
   |\phi_{\omega_k}(X_k(\omega))|
   \le
   \|\phi_{\omega_k}\|_\infty=\xi(\sigma^k\omega)$.
Since \(\omega_k\) is independent of \(\mathscr F_k\), and using that the shift map $\sigma$ is $\mathbb{P}$-invariant, we get that
\[
   \mathbb P(|\xi_{k+1}|>u\mid\mathscr F_k)
   \le
   \mathbb P (\xi >u) \quad  \text{for every \(u\ge0\)}.
\]
Therefore, the hypotheses of Theorem~\ref{thm:BQ-A6} are satisfied, and we obtain
\[
   \frac1n\sum_{k=0}^{n-1}
   \Big(
      \xi_{k+1}
      -
      \mathbb E(\xi_{k+1}\mid\mathscr F_k)
   \Big)
   \longrightarrow 0
   \quad\text{for \(\mathbb P\)-a.e.~\(\omega\).}
\]
Using the expressions above for \(\xi_{k+1}\) and its conditional expectation, this is
\[
   \frac1n\sum_{k=0}^{n-1}
   \big(
      \phi_{\omega_k}(X_k)-\bar\phi(X_k)
   \big)
   \longrightarrow 0.
\]
Finally, since \(X_k(\omega)=f^k_\omega(x)\) and
$\phi(F^k(\omega,x))  =
   \phi(\sigma^k\omega,f^k_\omega(x))
   =
   \phi_{\omega_k}(f^k_\omega(x))$,
the last convergence is exactly the desired statement.
\end{proof}

\section{Law of Large Numbers}  We consider the convergence of probability measures in the weak$^*$ topology\index{stationary measures!weak$^*$ topology}. That is, $\mu_n \to \mu$ if and only if $\int \varphi \,d\mu_n \to \int \varphi \, d\mu$ for every $\varphi \in C_b(X)$. For \(x\in X\) and \(\omega\in\Omega\), denote by $\Pi(\omega,x)$ the set of accumulation points in the weak$^*$-topology  on the space of probability measures on $X$ of the empirical measures along the random orbit $f^k_\omega(x)$, $k\geq 0$. That is,  
\[
   \Pi(\omega,x)
   \eqdef
   \operatorname{Acc}_{w^*}
   \big\{
      e_n(\omega,x): n\ge 1
   \big\} \quad \text{where} \ \ \  e_n(\omega,x)\eqdef \frac1n\sum_{k=0}^{n-1}\delta_{f^k_\omega(x)}.
   \index{empirical measures!\(\Pi(\omega,x)\), accumulation set}
\]
We say that the sequence of empirical measures $\{e_n(\omega,x)\}_{n\geq 1}$ is \emph{tight} \index{empirical measures!tightness} if for every \(\varepsilon>0\), there exists a compact set
\(K_\varepsilon\subset X\) such that
\[
    e_n(X\setminus K_\varepsilon)=\frac{1}{n}\sharp \{0\leq k \leq n-1: \ f^k_\omega(x) \not\in K_\varepsilon\}<\varepsilon \quad \text{for every $n\geq 1$}.
\] 
By Prokhorov's theorem, this is equivalent to relative compactness of the sequence \(\{e_n(\omega,x)\}_{n\ge1}\) in the weak$^*$ topology (on the space of probability measures), or equivalently to the property that every subsequence admits a weak$^*$ convergent further subsequence to a probability measure. In particular, under tightness the set of weak$^*$ accumulation points $\Pi(\omega,x)$ is non-empty
and compact. If \(X\) is compact metric, tightness is automatic.

\begin{lem}
\label{lem:limsup-max}
Let \(f\) be a random map on a Polish space $X$ and consider \(\phi\in C_b(X)\). Then, for every $(\omega,x)\in \Omega\times X$ such that $\{e_n(\omega,x)\}_{n\geq 1}$ is tight, it holds 
\[
   \limsup_{n\to\infty}
   \frac1n\sum_{k=0}^{n-1}\phi(f^k_\omega(x))
   =
   \max_{\mu\in\Pi(\omega,x)}\int\phi\,d\mu
\]
and
\[
   \liminf_{n\to\infty}
   \frac1n\sum_{k=0}^{n-1}\phi(f^k_\omega(x))
   =
   \min_{\mu\in\Pi(\omega,x)}\int\phi\,d\mu.
\]
\end{lem}

\begin{proof}   
Set $e_n=e_n(\omega,x)$. Notice that
$$
   \frac1n\sum_{k=0}^{n-1}\phi(f^k_\omega(x))=\int\phi\,de_n. 
$$
Since \(\mu\mapsto\int\phi\,d\mu\) is continuous and the sequence $\{e_n\}_{n\geq 1}$ is tight, the set of accumulation values of the real sequence \(\int\phi\,de_n\) is exactly $\{\int\phi\,d\mu:\mu\in\Pi(\omega,x)\}$. Moreover, since the set  $\Pi(\omega,x)$ is a non-empty compact,  the asserted identities of limsup and liminf follow.  
\end{proof}

A probability measure \(\mu\) on \(X\) is called \emph{(ergodic) \(f\)-stationary} if the product measure \(\mathbb P\times\mu\) is (ergodic) $F$-invariant.  Equivalently, $\mu$ is $f$-stationary if and only $P^*\mu=\mu$. We denote by \(\mathcal I\) the set of \(f\)-stationary probability
measures and by \(\mathcal I_{\rm erg}\) the set of its ergodic elements. 

\begin{defi} \index{random maps!Feller continuous random map}
\label{def:feller-random-map}
A random map \(f:\Omega \times X \to X\) is said to be \emph{Feller continuous} if the associated annealed Koopman operator $P$ satisfies  $P(C_b(X))\subset C_b(X)$. 
\end{defi}
Feller continuity is equivalent to the transition probabilities \(P(x,\cdot)\) varying continuously
with \(x\) in the weak$^*$ topology.  In particular, \(f\) is a Feller
continuous random map whenever \(f_\omega:X\to X\) is continuous for \(\mathbb P\)-a.e.~\(\omega\).

When $f$ is Feller continuous, $\mathcal{I}$ is a closed convex subset of the space of probability measures and its extreme points are precisely the ergodic \(f\)-stationary measures, i.e., $\mathcal{I}_{\rm erg}$. We emphasize that  $\mathcal I$  may be empty unless one assumes tightness of an empirical measure and need not be compact, as it automatically follows when $X$ is compact. 
The following result is the analogue of the Krylov-Bogolyubov theorem for random maps. In the case that $X$ is compact, this result can be found in~{\cite[Cor.~3.4]{benoist2016random}}. Here we adapt the proof for random maps on Polish spaces. 

\index{Krylov-Bogolyubov random theorem}
\begin{prop}
\label{cor:empirical-limits-stationary}
Let $f$ be a Feller continuous random map on a Polish space $X$.   Then, for every \(x\in X\), it holds that 
$\Pi(\omega,x)\subset \mathcal I$ for \(\mathbb P\)-a.e.
\(\omega\in \Omega\).
\end{prop}

\begin{proof} Since $X$ is a Polish space, we have a metric $d$ compatible with the topology such that $(X, d)$ is a complete separable metric space. In such a case, the convergence in the weak$^*$ topology on the space of probability measures on $X$ is
countably determined by a set $S$ of bounded Lipschitz (with respect to the metric $d$) functions, c.f.~\cite[Prop.~5.1]{BNNT22}.  By
Proposition~\ref{lem:breiman-feller}, after intersecting countably many full-measure
sets, we may assume that,  for every $x\in X$, for $\mathbb{P}$-a.e.~$\omega\in \Omega$ it holds
\[
   \frac1n\sum_{k=0}^{n-1}
   \big(\varphi(f^k_\omega(x))-P\varphi(f^k_\omega(x))\big)	\to 0 \quad \text{for every $\varphi \in S$}.
\]
If $\Pi(\omega,x)$ is empty, we have nothing to prove. Otherwise, let \(\mu\in\Pi(\omega,x)\).  Taking a subsequence
\(n_\ell\to\infty\) such that
\[
   \frac1{n_\ell}\sum_{k=0}^{n_\ell-1}\delta_{f^k_\omega(x)}
   \longrightarrow \mu
   \qquad\text{in the weak* topology},
\]
and using that \(P\varphi\in C_b(X)\) for every $\varphi \in S$, we obtain
\[
   \int \varphi\,d\mu=\int P\varphi\,d\mu = \int \varphi \, dP^*\mu,
   \qquad \text{$\varphi \in S$}.
\]
Therefore \(P^*\mu=\mu\), i.e., \(\mu\in\mathcal I\).
\end{proof}



The following result is due to Furstenberg-Kifer~\cite[Theorem~1.1]{furstenberg1983random} when $X$ is compact. This result implies the classical Breiman strong law of large numbers when $f$ is uniquely ergodic. We follow the approach of the proof provided by Benoist-Quint~\cite[Corollary~3.6]{benoist2016random}. 

\index{strong law of large numbers}
\begin{cor}[Furstenberg-Kifer]
\label{thm:Furstenber-kifer}
Let \(f\) be a Feller continuous random map on a Polish space $X$ and consider \(\phi\in C_b(X)\).  Then, for every \(x\in X\), for \(\mathbb P\)-a.e.~\(\omega\in \Omega\) such that $\{e_n(\omega,x)\}_{n\geq 1}$ is tight, it holds
\begin{align*}
\min_{\mu\in\mathcal I_{\rm erg}}\int\phi\,d\mu
&\le
\liminf_{n\to\infty}
\frac1n\sum_{k=0}^{n-1}\phi(f^k_\omega(x))  \\
&\le
\limsup_{n\to\infty}
\frac1n\sum_{k=0}^{n-1}\phi(f^k_\omega(x))
\le
\max_{\mu\in\mathcal I_{\rm erg}}\int\phi\,d\mu .
\end{align*}
\end{cor}

\begin{proof}
Since Proposition~\ref{cor:empirical-limits-stationary} implies the inclusion \(\Pi(\omega,x)\subset\mathcal I\),  the asserted bounds for limsup and liminf follow from Lemma~\ref{lem:limsup-max} and from the fact that a continuous affine
functional on the compact convex set \(\mathcal I\) attains its maximum and minimum at ergodic stationary measures. 
\end{proof}

The next consequence is an extension of a result of Furstenberg~\cite[Lemma~7.3]{Fur:63}, which also appears in~\cite[Theorem~3.9]{benoist2016random} for one-step observables with a unique average. 

\begin{cor}[Furstenberg] \label{thm:Bierman-Furstenberg} 
Let $f$  be a Feller continuous random map on a Polish space $X$ and consider a measurable one-step map $\phi: \Omega \times X \to \mathbb{R}$ such that 
\[
\text{\(\phi(\omega,\cdot)\in C_b(X)\) \  \ for  \(\mathbb P\)-a.e.~\(\omega\)} \quad \text{and}
\quad 
    \int \| \phi(\omega,\cdot)\|_{\infty} \, d\mathbb{P}<\infty. 
\]
Define $\bar\phi(x) = \int \phi(\omega,x)\,d\mathbb{P}$. Then, for every~$x \in X$, for $\mathbb{P}$-a.e.~$\omega\in \Omega$,
\begin{align*} 
    \min_{\mu \in \mathcal{I}_{\rm erg}}  \int \bar\phi\, d\mu &\leq \liminf_{n\to\infty} \frac{1}{n}\sum_{i=0}^{n-1}\phi(F^i(\omega,x))   \\ &\leq \limsup_{n\to\infty} \frac{1}{n}\sum_{i=0}^{n-1}\phi(F^i(\omega,x)) \leq \max_{\mu \in \mathcal{I}_{\rm erg}}\int \bar\phi\, d\mu.  
\end{align*}
\end{cor}
\begin{proof} According to Proposition~\ref{prop:one-step-observables}, we have that for any $x\in X$, 
$$\lim_{n\to\infty} \frac{1}{n}\sum_{i=0}^{n-1} \big(\phi(F^i(\omega,x))-\bar\phi(f_\omega^i(x))\big) =0  \quad \text{for $\mathbb{P}$-a.e.~$\omega \in \Omega$.}
$$ 
This implies that for every $x\in X$ and $\mathbb{P}$-a.e.~$\omega \in \Omega$, 
\begin{align*}
\liminf_{n\to\infty} \frac{1}{n}\sum_{i=0}^{n-1} \phi(F^i(\omega,x)) &= \liminf_{n\to\infty} \frac{1} {n}\sum_{i=0}^{n-1} \phi_1(f^i_\omega(x))  \quad \text{and} \\ \limsup_{n\to\infty} \frac{1}{n}\sum_{i=0}^{n-1} \phi(F^i(\omega,x)) &= \limsup_{n\to\infty} \frac{1}{n}\sum_{i=0}^{n-1} \phi_1(f^i_\omega(x)).
\end{align*}
Then, since $\bar\phi\in C_b(X)$,  applying Corollary~\ref{thm:Furstenber-kifer}, we conclude the required inequalities and complete the proof.  
\end{proof}

\section{Occupation bound and zero-one law} The following result is a random analogue of the classical mean-value inequality for superharmonic functions. That is,  superharmonic observables dominate their occupation averages.

\index{superharmonic occupation bound}
\begin{prop}[{\cite[Lemma~3.20]{Mal:17}}]
\label{lem:malicet-comparison}
Let $f$ be a random map on a Polish space $X$ and consider a measurable function \(u:\Omega\times X\to[0,\infty]\).  Assume
also that
\begin{enumerate}[label=(\roman*)]
   \item the map \(x\mapsto u(\omega,x)\) is lower
   semicontinuous for $\mathbb{P}$-a.e.~\(\omega\in\Omega\),
   \item \(u\circ F\le u\) on \(\Omega\times X\).
\end{enumerate}
Then, for every \(x\in X\),
\[
   u(\omega,x)
   \ge
   \sup_{\mu\in\Pi(\omega,x)}
   \int u \,d(\mathbb P\times \mu) \quad \text{for \(\mathbb P\)-a.e.~\(\omega\in \Omega\)}.
\]
\end{prop}

\begin{proof} We first assume that \(u\) is bounded. 
Fix \(x\in X\), and let \(\mathscr F_n\) be the \(\sigma\)-algebra generated by
\(\omega_0,\ldots,\omega_{n-1}\).  Put
\[
   \bar u(x) \eqdef \int u(\omega,x)\,d\mathbb P,
   \qquad
   M_n(\omega)\eqdef \mathbb E\big[u(\cdot,x)\mid\mathscr F_n\big](\omega).
\]
By Corollary~\ref{cor:levy-zero-one}, \(M_n(\omega)\to u(\omega,x)\) for
\(\mathbb P\)-a.e.~\(\omega\in \Omega\).  Since \(u\circ F^n\le u\), taking conditional
expectation with respect to \(\mathscr F_n\) gives
\[
   \bar u(f^n_\omega(x)) = \mathbb E\big[u( F^n(\cdot,x))\mid\mathscr F_n\big](\omega) \leq  M_n(\omega).
\]
Therefore, by Ces\`aro averaging,
\begin{equation}
\label{eq:malicet-cesaro-appendix}
   \limsup_{N\to\infty}
   \frac1N\sum_{n=0}^{N-1}\bar u(f^n_\omega(x)) \leq u(\omega,x) \quad \text{for \(\mathbb P\)-a.e.~\(\omega\in \Omega\).}
\end{equation}
By Fatou's lemma and the lower semicontinuity of \(x\mapsto u(\omega,x)\), the
function \(\bar u\) is lower semicontinuous.  Hence, by Lemma~\ref{lem:Baire-teorema}, \(\bar u\) is the pointwise
supremum of an increasing sequence of continuous functions \(\psi_m\le \bar u\). Let  $\omega$ be in the set of probability one for which~\eqref{eq:malicet-cesaro-appendix} holds. If $\Pi(\omega,x)$ is empty, we have nothing to prove. Otherwise, take \(\mu\in\Pi(\omega,x)\), and choose \(N_j\to\infty\) such that
\[ 
   \frac1{N_j}\sum_{n=0}^{N_j-1}\delta_{f^n_\omega(x)}
   \longrightarrow \mu.
\]
For every \(m\), using \(\psi_m\le\bar u\) and~\eqref{eq:malicet-cesaro-appendix},
\[
   \int \psi_m\,d\mu = \lim_{j\to\infty}
   \frac1{N_j}\sum_{n=0}^{N_j-1}\psi_m(f^n_\omega(x))  \leq u(\omega,x).
\]
Letting \(m\to\infty\) and using monotone convergence gives
\[
   \int u\,d(\mathbb P\times \mu)=\int \bar u\,d\mu = \lim_{m\to \infty} \int \psi_m \, d\mu \leq u(\omega,x).
\]
Since \(\mu\in\Pi(\omega,x)\) was arbitrary, the result follows when $u$ is bounded. 

For the general case, define $u_M\eqdef \min\{u,M\}$. 
Then \(u_M\) is bounded, non-negative, measurable, lower semicontinuous in
the \(X\)-variable, and satisfies
$u_M\circ F\le u_M$. 
Applying the bounded case to \(u_M\), we obtain, for every
\(\mu\in\Pi(\omega,x)\),
\[
   u_M(\omega,x)
   \ge
   \int u_M\,d(\mathbb P\times\mu).
\]
Letting \(M\to\infty\) and using monotone convergence gives
$   u(\omega,x)
   \ge
   \int  u\,d(\mathbb P\times\mu)$. 
Taking the supremum over \(\mu\in\Pi(\omega,x)\) gives the desired inequality.
\end{proof}

The preceding proposition becomes especially useful when applied to indicators of backward invariant random open sets $U(\omega)$, i.e., satisfying $1_{U(\omega)}\circ f_\omega(x)\leq 1_{U(\omega)}$. Hence, the superharmonic occupation
bound forces
$$
1_{U(\omega)}(x) \geq  \int \mu(U(\omega))\,d\mathbb P \quad \text{for every $\mu \in \Pi(\omega,x)$.}
$$
In particular, one has the following result:

\index{zero-one laws!zero-one law for random open sets}
\begin{cor}[Zero--one law for random open sets, {\cite[Prop.~4.2]{Mal:17}}]
\label{cor:malicet-zero-one-open-sets}
Let $f$ be a Feller continuous random map on a Polish space $X$ such that for every $x\in X$ the set  $\Pi(\omega,x)$ is non-empty for $\mathbb{P}$-a.e.~$\omega\in \Omega$.  Let
\[
   E=\bigcup_{\omega\in\Omega}\{\omega\}\times U(\omega)
   \subset \Omega\times X
\]
be a measurable set such that the fiber \(U(\omega)\) is open in \(X\) for $\mathbb{P}$-a.e.~$\omega\in \Omega$.  Assume
that \(E\) is backward invariant, that is, $F^{-1}(E)\subset E$.
If
\[
   (\mathbb P\times\mu)(E)>0
   \qquad\text{for every }\mu\in\mathcal I_{\rm erg},
\]
then
\[
   (\mathbb P\times\nu)(E)=1
   \qquad\text{for every probability measure }\nu\text{ on }X.
\]
\end{cor}

\begin{proof}
By ergodic decomposition, the assumption also implies
\((\mathbb P\times\mu)(E)>0\) for every \(\mu\in\mathcal I\).  Since almost surely the fibers
\(U(\omega)\) are open sets, the function \(1_E(\omega,\cdot)\) is lower semicontinuous for $\mathbb{P}$-a.e.~$\omega\in \Omega$.
The backward invariance \(F^{-1}(E)\subset E\) is equivalent to
\(1_E\circ F\le 1_E\). Hence, fixing \(x\in X\), applying Proposition~\ref{lem:malicet-comparison} to
\(u=1_E\), and using that $\Pi(\omega,x)\not=\emptyset$ for $\mathbb{P}$-a.e.~$\omega\in\Omega$ and  Proposition~\ref{cor:empirical-limits-stationary},  we get that
\[
   1_E(\omega,x)
   \ge
   \sup_{\mu\in\Pi(\omega,x)}(\mathbb P\times\mu)(E)>0 \quad  \text{for \(\mathbb P\)-a.e.~\(\omega\in \Omega\)}.
\]
Hence \(1_E(\omega,x)=1\) for \(\mathbb P\)-a.e.~\(\omega\).  Integrating this
identity with respect to an arbitrary probability measure \(\nu\) on \(X\), we obtain
\((\mathbb P\times\nu)(E)=1\).
\end{proof}

\bibliographystyle{alpha3}
\bibliography{bibliography_cleaned}


\end{document}